\documentclass{amsart}

\usepackage{amsmath, amsfonts, amssymb}  
\usepackage{enumerate}
\usepackage{enumitem}
\usepackage{hyperref}
\hypersetup{ 
  colorlinks   = true,
  urlcolor     = blue,
  linkcolor    = blue, 
  citecolor   = red 
}
\usepackage{graphicx}
\usepackage[all]{xy}
\usepackage{wrapfig}
\usepackage{fancyvrb}
\usepackage[mathscr]{euscript}
\usepackage[T1]{fontenc}
\usepackage{listings}
\usepackage{tikz}
\usepackage{amsthm}
\usepackage{dsfont}
\usepackage{tikz-cd}
\usepackage{adjustbox}
\usepackage{centernot}
\usepackage{mathtools}
\usepackage[noadjust]{cite}
\usepackage{bbm}
\usepackage{stmaryrd}

\usepackage[margin=3cm]{geometry}

\makeatletter
\DeclareRobustCommand{\cev}[1]{%
  {\mathpalette\do@cev{#1}}%
}
\newcommand{\do@cev}[2]{%
  \vbox{\offinterlineskip
    \sbox\z@{$\m@th#1 x$}%
    \ialign{##\cr
      \hidewidth\reflectbox{$\m@th#1\vec{}\mkern4mu$}\hidewidth\cr
      \noalign{\kern-\ht\z@}
      $\m@th#1#2$\cr
    }%
  }%
}
\makeatother

\let\oldtocsection=\tocsection

\let\oldtocsubsection=\tocsubsection

\let\oldtocsubsubsection=\tocsubsubsection

\renewcommand{\tocsection}[2]{\hspace{0em}\oldtocsection{#1}{#2}}
\renewcommand{\tocsubsection}[2]{\hspace{1em}\oldtocsubsection{#1}{#2}}
\renewcommand{\tocsubsubsection}[2]{\hspace{2em}\oldtocsubsubsection{#1}{#2}}

\title{Soergel calculus for monodromic Hecke categories}
\author{Colton Sandvik}
\date{}

\newcommand{\C}{\mathbb{C}}
\newcommand{\Q}{\mathbb{Q}}
\newcommand{\Z}{\mathbb{Z}}
\newcommand{\F}{\mathbb{F}}

\renewcommand{\O}{\mathbb{O}}

\newcommand{\K}{\mathbb{K}}
\newcommand{\A}{\mathbb{A}}

\renewcommand{\k}{\mathbbm{k}}
\newcommand{\R}{\mathbb{R}}
\renewcommand{\H}{\mathbb{H}}

\newcommand{\bfX}{\mathbf{X}}
\newcommand{\bfY}{\mathbf{Y}}

\newcommand{\scrF}{\mathcal{F}}
\newcommand{\scrG}{\mathcal{G}}
\newcommand{\scrH}{\mathcal{H}}
\newcommand{\scrC}{\mathcal{C}}

\newcommand{\scrE}{\mathcal{E}}

\newcommand{\scrA}{\mathcal{A}}
\newcommand{\scrK}{\mathcal{K}}

\newcommand{\scrL}{\mathcal{L}}

\newcommand{\scrR}{\mathcal{R}}

\DeclareMathOperator{\cons}{c}

\DeclareMathOperator{\Semis}{Semis}
\DeclareMathOperator{\Loc}{Loc}

\DeclareMathOperator{\Hom}{Hom}
\newcommand{\TwoHom}{2\textnormal{-}\Hom}
\newcommand{\TwoEnd}{2\textnormal{-}\End}

\DeclareMathOperator{\End}{End}
\DeclareMathOperator{\Free}{Free}
\DeclareMathOperator{\Sym}{Sym}
\DeclareMathOperator{\Ob}{Ob}
\DeclareMathOperator{\Ch}{Ch}

\renewcommand{\mod}[1]{#1\textnormal{-mod}}

\newcommand{\grrmod}[1]{\textnormal{mod}^{\Z}\textnormal{-}#1}
\newcommand{\grbim}[1]{#1\textnormal{-bim}^{\Z}}
\newcommand{\bim}[1]{#1\textnormal{-bim}}

\DeclareMathOperator{\supp}{supp}
\newcommand{\DD}{\mathbb{D}}
\DeclareMathOperator{\IC}{IC}

\newcommand{\RHom}{\textbf{R}\mathcal{H}om}
\newcommand{\uk}{\underline{\k}}

\DeclareMathOperator{\For}{For}
\DeclareMathOperator{\pt}{pt}
\DeclareMathOperator{\id}{id}

\DeclareMathOperator{\rk}{rk}
\DeclareMathOperator{\Fun}{Fun}

\newcommand{\fr}[1]{\mathfrak{#1}}
\DeclareMathOperator{\op}{op}

\DeclareMathOperator{\RW}{RW}

\DeclareMathOperator{\rex}{rex}

\DeclareMathOperator{\ev}{ev}

\DeclareMathOperator{\fg}{fg}

\DeclareMathOperator{\CM}{CM}

\DeclareMathOperator{\diag}{diag}

\DeclareMathOperator{\alg}{alg}
\DeclareMathOperator{\Abe}{A}
\DeclareMathOperator{\mon}{mon}

\DeclareMathOperator{\BS}{BS}

\newcommand{\eFl}{\widetilde{\mathcal{F}\ell}}
\newcommand{\BGB}{B \backslash G/B}
\newcommand{\UGU}{U \backslash G/U}
\newcommand{\DME}[2]{\:_{#1 \smallleftdash} D_{/ #2} }
\newcommand{\DEE}[2]{\:_{#1 \backslash} D_{/ #2} }
\DeclareMathOperator{\geom}{geom}
\newcommand{\DGEE}[2]{\:_{#1 \backslash} D_{/ #2}^{m, \geom} }
\newcommand{\DGME}[2]{\:_{#1 \smallleftdash} D_{/ #2}^{m, \geom}}

\newcommand{\PME}[2]{\:_{#1 \smallleftdash} \textnormal{Par}_{/ #2} }
\newcommand{\PEE}[2]{\:_{#1 \backslash} \textnormal{Par}_{/ #2} }

\newcommand{\ForME}[1]{\:_{#1 \backslash} \textnormal{For}}

\newcommand{\unabla}{\underline{\nabla}}
\newcommand{\uDelta}{\underline{\Delta}}

\newcommand{\TLcat}{2\mathcal{T}\mathcal{L}}
\DeclareMathOperator{\TL}{TL}

\DeclareMathOperator{\JW}{JW}
\DeclareMathOperator{\LL}{LL}
\newcommand{\dLL}{\mathbb{L}\mathbb{L}}

\newcommand{\EW}{\mathcal{D}}
\newcommand{\EWmon}[2]{\:_{#1 \backslash} \mathcal{D}_{/ #2} }
\newcommand{\EWmonME}[2]{\:_{#1 \smallleftdash} \mathcal{D}_{/ #2} }

\newcommand{\grEWmon}[2]{\:_{#1 \backslash} \!\! {}^{\circ} \mathcal{D}_{/ #2} }
\newcommand{\grEWmonME}[2]{\:_{#1 \smallleftdash} \!\!{}^{\circ} \mathcal{D}_{/ #2} }

\newcommand{\DDBE}[2]{\:_{#1 \backslash} D_{/ #2}^{m, \diag}}
\newcommand{\DDME}[2]{\:_{#1 \smallleftdash} D_{/ #2}^{m, \diag}}
\newcommand{\DDBEI}[3]{\:_{#2 \backslash} D_{#1 / #3}^{m, \diag} }

\newcommand{\BSBim}{\textbf{BS}\textnormal{Bim}}
\newcommand{\SBim}{\mathbb{S}\textnormal{Bim}}
\newcommand{\Cmon}[2]{\:_{#1 \backslash} \mathcal{C}_{/ #2} }
\newcommand{\CmonQ}[2]{\:_{#1 \backslash} \mathcal{C}_{Q / #2} }
\newcommand{\Amon}[2]{\:_{#1 \backslash} \mathcal{A}_{/ #2} }
\newcommand{\grAmon}[2]{\:_{#1 \backslash} \!\! {}^{\circ} \mathcal{A}_{/ #2} }

\newcommand{\uH}{\underline{H}}
\DeclareMathOperator{\Seq}{Seq}
\DeclareMathOperator{\lab}{lab}
\DeclareMathOperator{\dec}{dec}
\DeclareMathOperator{\Exp}{Exp}
\DeclareMathOperator{\Subexp}{Sub}
\newcommand{\SGraph}{\mathbb{S}\textnormal{Gr}}
\newcommand{\VSGraph}{\mu\mathbb{S}\textnormal{Gr}}
\newcommand{\W}[2]{{}_{#1}W_{#2}}
\newcommand{\uW}[2]{{}_{#1}\underline{W}_{#2}}
\newcommand{\ue}{\underline{e}}
\newcommand{\uf}{\underline{f}}
\newcommand{\uw}{\underline{w}}
\newcommand{\ub}{\underline{b}}
\newcommand{\ux}{\underline{x}}
\newcommand{\uy}{\underline{y}}
\newcommand{\us}{\underline{s}}
\DeclareMathOperator{\Bl}{Bl}

\newcommand{\grk}{\textnormal{rank}^{\Z}}
\renewcommand{\emptyset}{\varnothing}
\renewcommand{\binom}[2]{\begin{bmatrix} #1 \\ #2 \end{bmatrix}}
\newcommand{\naivequotient}{/\!\!/}
\newcommand{\abs}[1]{\lvert #1 \rvert}

\newcommand{\bfslash}{\mathbin{\fatbslash}}

\makeatletter
\newcommand{\customlabel}[2]{%
   \protected@write \@auxout {}{\string \newlabel {#1}{{#2}{\thepage}{#2}{#1}{}} }%
   \hypertarget{#1}{}
}
\makeatother

\makeatletter

\newcommand*\smallleftdash{\bfslash}

\makeatother

\newtheorem{corollary}[subsubsection]{Corollary}
\newtheorem{lemma}[subsubsection]{Lemma}
\newtheorem{proposition}[subsubsection]{Proposition}
\newtheorem{assumption}[subsubsection]{Assumption}

\newtheorem{theorem}[subsubsection]{Theorem}
\newtheorem{claim}[subsubsection]{Claim}

\theoremstyle{definition}
\newtheorem{definition}[subsubsection]{Definition}
\newtheorem{example}[subsubsection]{Example}
\newtheorem{remark}[subsubsection]{Remark}

\newtheorem{algorithm}[subsubsection]{Algorithm}
\newtheorem{convention}[subsubsection]{Convention}

\newenvironment{midsecproof}[1]{\vspace{\topsep} \noindent \textit{Proof of #1.}}{\hfill $\square$}

\numberwithin{equation}{subsection}

\usepackage{tikzit}
\tikzstyle{EW_s}=[fill=red, draw=red, shape=circle, inner sep=0pt, minimum size=3pt]
\tikzstyle{box}=[fill=white, draw=black, shape=rectangle]
\tikzstyle{circle_region}=[fill={rgb,255: red,255; green,205; blue,155}, draw=black, shape=circle, dashed, minimum width=6em]
\tikzstyle{JW}=[fill=white, draw=black, shape=rectangle, minimum width=1.75cm, minimum height=0.5cm]
\tikzstyle{EW_t}=[fill=blue, draw=blue, shape=circle, inner sep=0pt, minimum size=3pt]
\tikzstyle{black_dot}=[fill=black, draw=black, shape=circle, inner sep=0pt, minimum size=3pt]
\tikzstyle{small_none}=[fill=none, draw=none, shape=circle, scale=0.5]
\tikzstyle{gnode}=[fill=white, draw=black, shape=circle]
\tikzstyle{g2node}=[fill=cyan, draw=black, shape=circle]
\tikzstyle{small_dot_s}=[fill=red, draw=red, shape=circle, scale=0.5]
\tikzstyle{small_dot_t}=[fill=blue, draw=blue, shape=circle, scale=0.5]
\tikzstyle{EW_u}=[fill={rgb,255: red,128; green,0; blue,128}, draw={rgb,255: red,128; green,0; blue,128}, shape=circle, inner sep=0pt, minimum size=3pt]
\tikzstyle{mor_box}=[fill=white, draw=black, shape=rectangle, minimum width=1cm, minimum height=0.3cm]
\tikzstyle{small_mor_box}=[fill=white, draw=black, shape=rectangle, minimum width=0.6cm, minimum height=0.3cm]
\tikzstyle{med_mor_box}=[fill=white, draw=black, shape=rectangle, minimum width=1.5cm, minimum height=0.3cm]
\tikzstyle{large_mor_box}=[fill=white, draw=black, shape=rectangle, minimum width=2.3cm, minimum height=0.3cm]
\tikzstyle{small_white_node}=[fill=white, draw=black, shape=circle, scale=0.3]
\tikzstyle{3_strand_box}=[fill=white, draw=black, shape=rectangle, minimum height=0.3cm, minimum width=0.75cm]
\tikzstyle{4_strand_box}=[fill=white, draw=black, shape=rectangle, minimum height=0.3cm, minimum width=1cm]
\tikzstyle{5_strand_box}=[fill=white, draw=black, shape=rectangle, minimum height=0.3cm, minimum width=1.25cm]
\tikzstyle{6_strand_box}=[fill=white, draw=black, shape=rectangle, minimum height=0.3cm, minimum width=1.5cm]
\tikzstyle{7_strand_box}=[fill=white, draw=black, shape=rectangle, minimum height=0.3cm, minimum width=1.75cm]
\tikzstyle{8_strand_box}=[fill=white, draw=black, shape=rectangle, minimum height=0.3cm, minimum width=2cm]
\tikzstyle{9_strand_box}=[fill=white, draw=black, shape=rectangle, minimum height=0.3cm, minimum width=2.25cm]
\tikzstyle{small_mon_label}=[fill=none, draw=none, shape=circle, scale=0.5]
\tikzstyle{dot_green}=[fill=green, draw=green, shape=circle, inner sep=0pt, minimum size=3pt]
\tikzstyle{big_bracket_6}=[fill=none, draw=none, shape=circle, scale=2.5]

\tikzstyle{row}=[-, draw=black, thick]
\tikzstyle{dashed_row}=[-, draw=black, dashed]
\tikzstyle{red_edge}=[-, draw=red, thick]
\tikzstyle{blue edge}=[-, draw=blue, thick]
\tikzstyle{purple_edge}=[-, draw={rgb,255: red,128; green,0; blue,128}, thick]
\tikzstyle{green_edge}=[-, draw=green, thick]
\tikzstyle{fill_orange}=[-, fill={rgb,255: red,255; green,205; blue,155}, draw=none]
\tikzstyle{fill_purple}=[-, fill={rgb,255: red,225; green,137; blue,225}, draw=none]
\tikzstyle{fill_green}=[-, fill={rgb,255: red,155; green,255; blue,155}, draw=none]
\tikzstyle{fill_yellow}=[-, fill={rgb,255: red,255; green,255; blue,155}, draw=none]
\tikzstyle{fill_blue}=[-, fill={rgb,255: red,155; green,255; blue,255}, draw=none]
\tikzstyle{fill_coral}=[-, fill={rgb,255: red,255; green,205; blue,205}, draw=none]
\tikzstyle{fill_brown}=[-, fill={rgb,255: red,120; green,90; blue,20}, draw=none]
\tikzstyle{fill_cyan}=[-, fill={rgb,255: red,0; green,255; blue,255}, draw=none]
\tikzstyle{fill_magenta}=[-, fill={rgb,255: red,255; green,0; blue,255}, draw=none]
\tikzstyle{fill_darkgreen}=[-, fill={rgb,255: red,0; green,165; blue,55}, draw=none]
\tikzstyle{fill_lavender}=[-, fill={rgb,255: red,230; green,170; blue,255}, draw=none]
\tikzstyle{fill_gold}=[-, fill={rgb,255: red,255; green,180; blue,0}, draw=none]
\tikzstyle{fill_dark_blue}=[-, fill=blue, draw=none]
\tikzstyle{fill_red}=[-, fill=red, draw=none]
\tikzstyle{fill_white}=[-, fill=white, draw=none]
\tikzstyle{red_dashed_edge}=[-, fill=none, draw=red, dashed, thick]
\tikzstyle{blue_dashed_edge}=[-, draw=blue, dashed, thick]
\tikzstyle{purple_dashed_edge}=[-, draw={rgb,255: red,128; green,0; blue,128}, thick, dashed]
\tikzstyle{fill_light_gray}=[-, fill={rgb,255: red,198; green,198; blue,198}, draw=none]
\tikzstyle{fill_dark_gray}=[-, fill={rgb,255: red,109; green,109; blue,109}, draw=none]
\tikzstyle{fill_sky}=[-, fill={rgb,255: red,230; green,234; blue,255}, draw=none]
\tikzstyle{fill_bright_green}=[-, fill={rgb,255: red,66; green,255; blue,79}, draw=none]
\tikzstyle{red_tl_edge}=[-, draw={rgb,255: red,255; green,128; blue,128}]
\tikzstyle{blue_tl_edge}=[-, draw={rgb,255: red,128; green,128; blue,255}]
\tikzstyle{red_tl_region}=[-, fill={rgb,255: red,255; green,128; blue,128}, draw=none]
\tikzstyle{blue_tl_region}=[-, draw=none, fill={rgb,255: red,128; green,128; blue,255}]
\tikzstyle{purple_tl_region}=[-, draw=none, fill={rgb,255: red,220; green,135; blue,220}]
\tikzstyle{orange_line}=[-, draw={rgb,255: red,255; green,205; blue,155}]
\tikzstyle{purple_line}=[-, draw={rgb,255: red,225; green,137; blue,225}]
\tikzstyle{implies_arrow}=[double, double equal sign distance, -implies, ->]
\tikzstyle{densely_dashed}=[-, densely dashed]
\tikzstyle{mapsto}=[{|->}]
\tikzstyle{arrow}=[->]

\begin{document}
\begin{abstract}
        We introduce two 2-categories which categorify the monodromic Hecke algebra. 
        The first is algebraic in nature and generalizes Abe's theory of Soergel bimodules. 
        The second is a diagrammatic category defined via generators and relations which generalizes the Elias--Williamson diagrammatic calculus. 
        As our first main result, we prove that these algebraic and diagrammatic categorifications are equivalent, extending an earlier theorem of Abe. 
        Furthermore, we relate these new categorifications to a third categorification via parity sheaves which was previously studied by the author. 
        More precisely, we provide a monodromic analogue of a theorem of Riche and Williamson to show that the diagrammatic category is equivalent to the monodromic Hecke category of parity sheaves associated to a reductive group. 
        Finally, we show that these monodromic Hecke categories can be described by unipotent Hecke categories associated to endoscopic Coxeter groups.
\end{abstract}

	\maketitle

        \begingroup
        \hypersetup{hidelinks}
	\tableofcontents
        \endgroup

    \section{Introduction}
    \subsection{Hecke Algebras}

Let $G$ be a connected reductive group over $\F = \overline{\F}_p$. Pick a prime $\ell \neq p$ and a power $q = p^n$.
Let $B \subset G$ be a Borel subgroup with split maximal torus $T \subset B$.
The Hecke algebra 
\[ \scrH (\F_q) \coloneq \Fun_{B (\F_q) \times B (\F_q)} (G (\F_q), \overline{\Q}_\ell)\] 
is the algebra of $B (\F_q)$-biinvariant $\overline{\Q}_\ell$-valued functions on $G (\F_q)$ with multiplication given by convolution.
The Hecke algebra was originally introduced to study unipotent $\overline{\Q}_\ell$-representations of $G (\F_q)$; however, since their introduction, Hecke algebras have become ubiquitous throughout many areas of representation theory. 

In the study of the non-unipotent representation theory of $G (\F_q)$, Lusztig \cite{Lus19} introduced a generalization of the Hecke algebra which is now called the \emph{monodromic Hecke algebra}.
Let $\chi, \chi' : T (\F_q) \to \overline{\Q}_{\ell}^{\times}$ be two characters of the maximal torus.
We can regard $\chi$ and $\chi'$ as characters $B (\F_q) \to \overline{\Q}_{\ell}^{\times}$ by the projection $B \to T$.
We can then consider the $\overline{\Q}_{\ell}$-vector space of $(B (\F_q) \times B (\F_q), \chi \times \chi')$-invariant  $\overline{\Q}_\ell$-valued functions on $G (\F_q)$,
\begin{align*}
    {}_{\chi} \scrH_{\chi'}^{\mon} (\F_q) &\coloneq \Fun_{(B (\F_q) \times B (\F_q), \chi \times \chi')} (G (\F_q), \overline{\Q}_{\ell}) \\
    &= \{ f : G (\F_q) \to  \overline{\Q}_{\ell}^{\times} \mid f(b_1g b_2) = \chi (b_1) f(g) \chi' (b_2) \text{ for } b_1, b_2 \in B, g \in G\}.
\end{align*}
When $\chi = \chi'$, ${}_{\chi} \scrH_{\chi}^{\mon} (\F_q)$ is also an algebra under convolution. When $\chi \neq \chi'$,  ${}_{\chi} \scrH_{\chi'}^{\mon} (\F_q)$ is no longer an algebra.
Instead, convolution gives suitably associative maps
\[{}_{\chi} \scrH_{\chi'}^{\mon} (\F_q) \times {}_{\chi'} \scrH_{\chi''}^{\mon} (\F_q) \to {}_{\chi} \scrH_{\chi''}^{\mon} (\F_q).\]
If we fix a $W$-orbit $\fr{o}$ of characters $T (\F_q) \to \overline{\Q}_{\ell}$, we can define the \emph{monodromic Hecke algebra} as the direct sum of these vector spaces
\[\underline{\scrH}^{\mon} (\F_q) = \bigoplus_{\chi, \chi' \in \fr{o}} {}_{\chi} \scrH_{\chi'}^{\mon} (\F_q)\]
which becomes an algebra under convolution and the relation ${}_{\chi} \scrH_{\chi'}^{\mon} (\F_q) \cdot {}_{\chi''} \scrH_{\chi'''}^{\mon} (\F_q) = 0$ if $\chi' \neq \chi''$.

Iwahori gave a presentation of the Hecke algebra which is independent of $\F_q$ and only depends on the Weyl group $W$ \cite{Iwa}.
In particular, Iwahori defined a $\Z [v^{\pm}]$-algebra $\scrH$ such that $\scrH (\F_q) \cong \scrH \otimes_{\Z [v^{\pm}]} \overline{\Q}_{\ell}$ along the specialization $v^{-1} \mapsto \sqrt{q} \in \overline{\Q}_{\ell}$.
This presentation allowed for the generalization of the Hecke algebra to any Coxeter group and not just the crystallographic ones.
We write $\scrH (W)$ for the (non-specialized) Hecke algebra with Coxeter group $W$. 

Lusztig has given a presentation of the monodromic Hecke algebra closely inspired by Iwahori's presentation \cite{Lus16}. 
In particular, Lusztig defined a $\Z [v^{\pm}]$-algebra $\underline{\scrH}^{\mon} (\fr{o})$ which specializes to $\underline{\scrH}^{\mon} (\F_q)$ along $v^{-1} \mapsto \sqrt{q}$.
Note that this presentation still depends on $q$ since $\fr{o}$ depends on $q$; however, this presentation is still favorable for the purposes of categorification.
To remove the restriction to a fixed $W$-orbit $\fr{o}$ of characters, we instead define the \emph{monodromic Hecke algebroid}.
Instead of an algebra, the monodromic Hecke algebroid $\scrH^{\mon} (\F_q)$ is a category whose objects are characters $T (\F_q) \to \overline{\Q}_{\ell}^{\times}$ and whose morphisms are the vector spaces ${}_{\chi} \scrH_{\chi'}^{\mon} (\F_q)$.
The composition in $\scrH^{\mon} (\F_q)$ is given by convolution. 
We give a presentation of this algebroid in \S\ref{subsec:mha} by means of generators and relations.
This presentation allows us to define the monodromic Hecke algebroid over an arbitrary Coxeter group $W$ and allows us to replace the set of characters $T (\F_q) \to \overline{\Q}_{\ell}^{\times}$ by any $W$-set $\fr{o}$.

Lusztig has shown that the monodromic Hecke algebra is closely related to the ordinary Hecke algebra \cite{Lus19}.
For simplicity, assume that $G$ has connected center.
For each character $\chi$ of $T (\F_q)$, one can associate a reductive group $H_{\chi}^{\circ}$, called the \emph{endoscopic group}. We denote its Weyl group by $W_{\chi}^{\circ}$.
In \emph{loc. cit.}, it is then proved that there is an isomorphism of algebras
\[{}_{\chi} \scrH_{\chi}^{\mon} (\F_q, W) \cong \scrH (\F_q, W_{\chi}^{\circ}).\]
This isomorphism is called the \emph{monodromic-endoscopic equivalence}.
Moreover, a similar description of all of $\scrH^{\mon}$ can be given in terms of ordinary Hecke algebras for the Weyl groups of endoscopic groups.
This is closely related with Lusztig's classification of $G(\F_q)$-representations where the non-unipotent representations are parameterized by unipotent representations of the endoscopic groups.  

\subsection{Motivation from Geometry}

In their foundational work \cite{KL}, Kazhdan and Lusztig categorified the Hecke algebra $\scrH$ via the sheaf-function correspondence. 
In particular, one can define the geometric Hecke category as the category $\Semis (\BGB, \overline{\Q}_{\ell})$
of semisimple étale constructible sheaves on the stack $\BGB$. 
The geometric Hecke category is a monoidal category under convolution. 
The split Grothendieck group $[\Semis (\BGB, \overline{\Q}_\ell)]_{\oplus}$ is then an algebra. In \emph{loc. cit.}, it is then shown that there is an isomorphism of algebras
\[[ \Semis (\BGB, \overline{\Q}_\ell) ]_{\oplus} \cong \scrH.\]
Moreover, this isomorphism takes the simple IC-sheaves to the Kazhdan--Lusztig basis. 

The geometric Hecke category is closely related with the category of unipotent $\overline{\Q}_\ell$-representations of finite groups of Lie type (cf., \cite{BZN, BFO, Lus05}).
This poses the question: if one replaces $\overline{\Q}_\ell$-coefficients with $\overline{\F}_{\ell}$-coefficients, does the geometric Hecke category give information about unipotent $\overline{\F}_\ell$-representations?
The answer is yes, but with some degree of caution.
Namely, with $\overline{\F}_{\ell}$-coefficients, the decomposition theorem fails and the category of semisimple complexes is no longer the correct object of study.
There are two common replacements in the literature:
\begin{enumerate}
    \item One can instead work with the derived category of constructible sheaves on $\BGB$ instead of just the semisimple complexes.
    This category has many nice formal properties and in \cite{Zhu} it is shown that its categorical center is closely related with unipotent $\overline{\F}_\ell$-representations of finite groups of Lie type.
    However, the category is hard to compute with. Moreover, it no longer categorifies $\scrH$, but instead just the group algebra $\Z [W]$.
    \item Another option is to work with the category of parity sheaves $\textnormal{Par} (\BGB, \overline{\F}_{\ell})$ introduced in \cite{JMW}.
        While it is less clear what applications this category has to representations of finite groups of Lie type, it does provide a novel categorification of $\scrH$.
        Moreover, the indecomposable parity sheaves get sent to the ``$p$-Kazhdan--Lusztig basis'' which has found numerous applications throughout modular representation theory.
\end{enumerate}

One can ask how much of this story transfers over to the monodromic setting.
In \cite{LY}, Lusztig and Yun gave a geometric categorification of the monodromic Hecke algebra.
In this setup, a character $\chi : T (\F_q) \to \overline{\Q}_\ell^{\times}$ get associated to a character sheaf $\scrL_{\chi} \in \Loc (T, \overline{\Q}_\ell)$.
One can then define the geometric $(\scrL_{\chi}, \scrL_{\chi'})$-monodromic Hecke category as the category 
\[ \Semis_{(T \times T, \scrL_{\chi} \boxtimes \scrL_{\chi'}^{-1}) } (\UGU, \overline{\Q}_\ell)\] 
of $(T \times T,\scrL_{\chi} \boxtimes \scrL_{\chi'}^{-1}) $-equivariant semisimple  complexes on the stack $\UGU$.
This is a monoidal category under convolution of sheaves.

For simplicity, we again assume that $G$ has connected center.
The main result of \emph{loc. cit.} upgrades the description of the monodromic-endoscopic equivalence of Hecke algebras to Hecke categories.
Note that $H_{\chi}^{\circ}$ is defined so that $T$ is a maximal torus for $H_{\chi}^{\circ}$. A Borel subgroup $B$ of $G$ containing $T$ determines a Borel subgroup $B_{\chi}$ of $H_{\chi}^{\circ}$ containing $T$ as well. 
Lusztig and Yun \cite{LY} proved that there is an equivalence of monoidal categories
\[\Semis_{(T \times T, \scrL_{\chi} \boxtimes \scrL_{\chi})} (\UGU, \overline{\Q}_\ell) \cong \Semis (B_{\chi} \backslash H_{\chi}^{\circ} / B_{\chi}, \overline{\Q}_\ell).\]
This equivalence can also be extended to the case of unequal parameters $\chi, \chi'$ and $G$ having a disconnected center.

In \cite{Sandvik}, we extended the result of \cite{LY} to positive characteristic coefficients.
We defined categories of parity sheaves 
\[ \textnormal{Par}_{(T \times T, \scrL_{\chi} \boxtimes \scrL_{\chi'}^{-1} )} (\UGU, \overline{\F}_\ell).\] 
We then showed that there is an equivalence of monoidal categories
\[\textnormal{Par}_{(T \times T, \scrL_{\chi} \boxtimes \scrL_{\chi}^{-1} )} (\UGU, \overline{\F}_\ell) \cong \textnormal{Par} (B_{\chi} \backslash H_{\chi}^{\circ} / B_{\chi}, \overline{\F}_\ell).\]
These categories of parity sheaves can be assembled into a 2-category denoted by
\[\PEE{}{} (G, \overline{\F}_{\ell}) \]
We further showed in \cite[Theorem 3.8.4]{Sandvik} that the split Grothendieck algebroid of $\PEE{}{} (G,  \overline{\F}_{\ell})$ is isomorphic to the monodromic Hecke algebroid.
More generally, the $\overline{\F}_{\ell}$-coefficients appearing here can be replaced by $\k$-coefficients where $\k$ is a field or a complete local ring. 

\subsection{Soergel Bimodules}

In \cite{So90, So92, So00}, Soergel constructed another categorification of the Hecke algebra using bimodules over the equivariant cohomology ring
\[R = H_T^{\bullet} (\pt; \C) \cong \Sym (\fr{h}^*)\]
where $\fr{h}$ is the Lie algebra of $T$. Note that $R$ is a graded algebra over $\k$ with $\fr{h}^*$ in degree 2.
The Weyl group $W$ acts on $R$, and from this, one considers the subalgebra of $s$-invariants, denoted $R^s$, for $s \in S$.
Soergel bimodules are then generated by taking direct sums and direct summands from graded $R$-bimodules of the form
\[R \otimes_{R^{s_1}} R \otimes_{R^{s_2}} \ldots \otimes_{R^{s_k}} R (n)\]
where $s_1, \ldots, s_k \in S$ and $(n)$ denotes a shift in grading by $n \in \Z$.
The resulting category of Soergel bimodules, denoted $\SBim (\fr{h}, W)$, again categorifies the Hecke algebra.

The key advantage with using Soergel bimodules is that they can be easily generalized to non-crystallographic Coxeter groups.
In \cite{So07}, Soergel extends the eponymous theory to arbitrary Coxeter systems $W$ and ``reflection faithful'' representations $\fr{h}$ of $W$.
This affords a categorification of Hecke algebras for non-crystallographic Coxeter groups.

Unfortunately, Soergel bimodules starts to become ill-behaved whenever $W$ is infinite or $\k$ has small characteristic (in which case, reflection faithful representations are hard to find).
More recently, Abe \cite{Abe19} was able to rectify this undesirable behavior by enhancing the category of Soergel bimodules.
Roughly speaking, in Abe's incarnation of Soergel bimodules, one also keeps track of the various localization data associated to a given bimodule.
This localization data helps to shrink the Hom spaces and ensures that Soergel bimodules associated to distinct elements of $W$ stay distinct in the Grothendieck group.
As a consequence, Abe's enhancement of Soergel bimodules, denoted $\scrA (\fr{h}, W)$, again categorifies the Hecke algebra.

Building off of work in \cite{LY}, we will develop an analogous theory for the monodromic Hecke algebroid associated to a Coxeter group $W$ with a $W$-set $\fr{o}$ and a $W$-realization $\fr{h}$.
In the monodromic theory, one considers two types of building blocks for Soergel bimodules associated to a simple reflection $s \in S$.
The first is the non-monodromic Soergel bimodule $C_s \coloneq R \otimes_{R^s} R (1)$ associated to $s$ which we use whenever $s$ stabilizes the monodromy parameter $\scrL$.
The second is the $s$-twisted standard bimodule $R_s$ which we use whenever $s$ does not stabilize $\scrL$.
The two different types of building blocks can then be used together to produce a 2-category of monodromic Soergel bimodules over $\fr{o}$, denoted $\Amon{}{} (\fr{h}, W, \fr{o})$.

Monodromic Soergel bimodules have a convenient feature compared to parity sheaves. 
Since the category of monodromic Soergel bimodules is a subcategory of $R$-bimodules, it is quite straightforward to prove a categorified version of the monodromic-endoscopic equivalence.
For $\scrL \in \fr{o}$, write $\Amon{\scrL}{\scrL} (\fr{h}, W, \fr{o})$ for the endomorphism category of $\scrL$ viewed as an object of $\Amon{}{} (\fr{h}, W, \fr{o})$.
There is a decomposition of $\Amon{\scrL}{\scrL} (\fr{h}, W, \fr{o})$ into a direct sum of full subcategories called blocks. We write $\Amon{\scrL}{\scrL}^{\circ} (\fr{h}, W, \fr{o})$ for the block containing the monoidal unit $R$. 
There is a normal subgroup $W_{\scrL}^{\circ}$ of the stabilizer of $\scrL$ in $W$ called the \emph{endoscopic Coxeter group}. This recovers the Weyl group of the endoscopic group when we are in the setting from before.

\begin{proposition}[Proposition \ref{prop:abe_endo}]
    There is an equivalence of monoidal categories
    \[\Amon{\scrL}{\scrL}^{\circ} (\fr{h}, W, \fr{o}) \cong \scrA (\fr{h}, W_{\scrL}^{\circ}).\]
\end{proposition}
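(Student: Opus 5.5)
We will deduce the equivalence from Abe's construction together with a reduction of the block $\Amon{\scrL}{\scrL}^{\circ}$ to Bott--Samelson objects built only from reflections in $W_{\scrL}^{\circ}$. Recall that objects of $\Amon{\scrL}{\scrL}(\fr{h}, W, \fr{o})$ are generated, via direct sums, summands and shifts, by tensor products of $C_s$ for simple reflections $s$ fixing the current monodromy parameter and $R_s$ for $s$ moving it, along expressions $\underline{w}=(s_1,\dots,s_k)$ whose monodromy path starts and ends at $\scrL$. Each object carries localization data indexed by $W$, as in Abe's theory. The block decomposition is by the coset of $W_{\scrL}^{\circ}$ in the stabilizer $W_{\scrL}$ containing the element $w = s_1\cdots s_k$. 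Indeed, the localization support of such an object lies in $w W_{\scrL}^{\circ}$.

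\textbf{Step 1: reflections.} For each reflection $t\in W_{\scrL}^{\circ}$ that is simple for the Coxeter system of $W_{\scrL}^{\circ}$, we write $t = x s x^{-1}$. Here $s\in S$ stabilizes $x^{-1}\scrL$, and $x$ is minimal, so that $\ell(t)=2\ell(x)+1$. Consider the object
\[
R_{x}\otimes_R C_s \otimes_R R_{x^{-1}}, \qquad R_x := R_{s_1}\otimes_R\cdots\otimes_R R_{s_m},
\]
where $x=s_1\cdots s_m$ is a reduced expression all of whose simple reflections move the monodromy at each step. The key claim is that this object is isomorphic to $R\otimes_{R^{t}} R(1)$, together with its localization data at $\{e,t\}$. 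To prove this, we use that $R_y\otimes_R R_z\cong R_{yz}$ and $R_y\otimes_R M\otimes_R R_{y^{-1}}\cong {}^{y}M$ (the twist by $y$), so the object is $^{x}(R\otimes_{R^s}R(1))\cong R\otimes_{R^{xsx^{-1}}}R(1)$. We need the fact, established earlier for monodromic Soergel bimodules, that consecutive non-stabilizing steps produce standard bimodules.

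\textbf{Step 2: the functor.} Define a monoidal functor $\scrA(\fr{h},W_{\scrL}^{\circ})\to \Amon{\scrL}{\scrL}^{\circ}$ on Bott--Samelson objects by sending $C_{t_1}\cdots C_{t_k}$ to the corresponding tensor products of the objects from Step 1. Morphism spaces in both categories are defined as bimodule maps compatible with localization data. Both are full subcategories of the same category of $R$-bimodules with $W$-graded localizations, with the data supported on $W_{\scrL}^{\circ}$ inside $W$. Hence the functor is fully faithful tautologically, since it is an identity on underlying bimodules.

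\textbf{Step 3: essential surjectivity.} This is the step I expect to be the main obstacle. We must show that every Bott--Samelson object in the block is a direct sum of shifts of images, up to idempotent completion. Take an expression from $\scrL$ to $\scrL$. We write it, by induction on length, as alternating runs of monodromy-moving generators and $C_s$'s. Using $R_y\otimes C_s\otimes R_{y^{-1}}$ rewritings, we conjugate each $C_s$ to some $C_{t}$, where $t=ysy^{-1}$ is a reflection stabilizing $\scrL$. The leftover product of $R$'s gives $R_{w'}$ with $w'$ stabilizing $\scrL$. The block condition then forces $w'\in W_{\scrL}^{\circ}$ modulo the reflections already produced. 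The delicate points are twofold. First, $t$ need not be simple in $W_{\scrL}^{\circ}$. In that case we must express $R\otimes_{R^t}R$ as a summand of a Bott--Samelson object for a reduced expression of $t$ in $W_{\scrL}^{\circ}$, using Abe's classification of indecomposables by their support. Second, $R_{w'}$ for $w'\in W_{\scrL}^{\circ}$ must be recognized as lying in the idempotent completion. Here I would first try to show that $w'$ lies in $W_{\scrL}^{\circ}\cap\{$elements with no $\scrL$-reflection inversions$\}=\{e\}$, using the standard description of $W_{\scrL}=W_{\scrL}^{\circ}\rtimes\Omega_{\scrL}$; the block hypothesis then kills the $\Omega_{\scrL}$ component. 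Finally, compatibility of the monoidal structures follows since both tensor products are $\otimes_R$.
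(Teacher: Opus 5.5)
Your Steps 1--2 amount to the paper's functor $\Psi_{\scrL}^{\alg}$: it is the identity on underlying bimodules, and full faithfulness is formal. One minor imprecision: objects of $\scrA(\fr{h},W_{\scrL}^{\circ})$ carry localization data over $Q'=R[\alpha_r^{-1} : r\in\mathscr{R}(W_{\scrL}^{\circ})]$ indexed by $W_{\scrL}^{\circ}$, so the two categories are not full subcategories of one common category. The paper handles this by base-changing $M^w_{Q'}\mapsto M^w_{Q'}\otimes_{Q'}Q$.

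The genuine gap is in Step 3. You leave open that a conjugated reflection $t$ might not be simple in $W_{\scrL}^{\circ}$, and the remedy you propose for that case cannot work. For such $t$, the bimodule $R\otimes_{R^t}R(1)$ is not an object of $\scrA(\fr{h},W_{\scrL}^{\circ})$, so it cannot be realized as a summand of a Bott--Samelson object. Already when $W_{\scrL}^{\circ}$ is of type $A_2$ and $t$ is its non-simple reflection, $R\otimes_{R^t}R(1)$ is indecomposable of left rank $2$ with support $\{e,t\}$. By contrast, the indecomposable Soergel bimodule whose top support element is $t$ is $B_t$, which has left rank $6$.

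The missing idea is that this case never occurs. Let $i_1<\dots<i_r$ be the stabilizing positions of $\uw$, and let $\underline{z}_j$ be the word of non-stabilizing letters preceding $i_j$. Its product $z_j$ is exactly the conjugator your rewriting produces, and $t_j=z_js_{i_j}z_j^{-1}$ is the evaluation of the word $(\underline{z}_j,s_{i_j},\overline{\underline{z}_j})$ read from $\scrL$.
\begin{itemize}
\item Deleting a stabilizing letter does not change the later monodromy parameters. Hence this word has exactly one stabilizing step.
\item $\ell_{\scrL}(x)$ counts the left inversions of $x$ lying in $W_{\scrL}$. Those left inversions are the reflections $s_1\cdots s_{i-1}s_is_{i-1}\cdots s_1$ occurring an odd number of times along any expression $(s_1,\dots,s_k)$ of $x$. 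Moreover $i\in K$ exactly when that reflection lies in $W_{\scrL}$.
\item Therefore $\ell_{\scrL}(x)\le\#K$ for every expression of $x$. This gives $\ell_{\scrL}(t_j)\le 1$, and $\ell_{\scrL}(t_j)\ge1$ because $t_j$ is its own left inversion.
\item By Lemma~\ref{lem:calculating_endoscopy_groups}, $t_j\in W_{\scrL}^{\circ}$, so $t_j\in S_{\scrL}^{\circ}$.
\end{itemize}
This is precisely the step ``$\ell_{\scrL}(\uy)=1$ and $w^{\beta,-1}y$ is neutral, hence endosimple'' in the paper's induction on $\ell_{\scrL}(\uw)$.

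Similarly, the leftover element is $w'=b_{\scrL}(\uw)=\Bl^{\min}(w)=e$. This follows directly from Lemma~\ref{lem:block_prelims} and the definition of the neutral block, so no $W_{\scrL}^{\circ}\rtimes\Omega_{\scrL}$ argument is needed. With these two facts, Lemma~\ref{lem:soergel_product_formulas} applies, since $\ell_{\scrL}(z_j)=0$. It gives $B_{\uw}^{\scrL}\cong C_{t_1}\otimes_R\cdots\otimes_R C_{t_r}$, which is the image of a Bott--Samelson object for $W_{\scrL}^{\circ}$. Your Step 3 then becomes the paper's induction unrolled.
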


This can also be extended to an equivalence of the 2-category $\Amon{}{} (\fr{h}, W, \fr{o})$ with a 2-category of Abe--Soergel bimodules constructed by gluing the categories $\scrA (\fr{h}, W_{\scrL}^{\circ})$ together as $\scrL \in \fr{o}$ varies (see Theorem \ref{thm:alg_endoscopy}).

\subsection{Diagrammatic Hecke Category}

There is one final categorification of the Hecke algebra we will consider. Elias and Williamson asked, and answered, the following question: is it possible to present the category of Soergel bimodules by means of generators and relations \cite{EW}?
The morphisms of the Elias--Williamson diagrammatic Hecke category $\EW (\fr{h}, W)$ are defined using certain isotopy classes of decorated planar graphs.
We will not review these generators and relations at this time, and instead defer the construction to later on.

One key result from \cite{EW} is that the Grothendieck group of the diagrammatic Hecke category is isomorphic to the Hecke algebra.
The strategy for proving this result is rather involved. Namely, they define a basis for the Hom spaces $\EW (\fr{h}, W)$ called the double leaves basis which is closely based on Libedinsky's light leaves basis \cite{Lib08} for Soergel bimodules.
The hard part is showing that a given (potentially massive) diagram can be simplified via the defining relations into a linear combination of double leaves.

We will perform an analogous construction in the monodromic setting. Namely, we will give a presentation of monodromic Soergel bimodules in terms of generators and relations.
Since the categories involved are 2-categories, the resulting generators turn out to be planar graphs with faces labeled by monodromy parameters.
We also construct a double leaves basis for the monodromic diagrammatic Hecke category following ideas of \cite{EW}. 
The cellular structure of the double leaves basis along with a mixed-sheaf formalism allows us to deduce that the monodromic diagrammatic Hecke category categorifies the monodromic Hecke algebroid regardless of the base ring of the realization.

\subsection{Main Results}

We have at this point reviewed three different incarnations of the Hecke category.
Since they all have isomorphic Grothendieck groups (when they are all properly defined), one can ask whether the categories themselves are equivalent.
Two key theorems give answers in the affirmative. First, Abe \cite{Abe19} has shown that his incarnation of Soergel bimodules is equivalent to the Elias--Williamson diagrammatic category.
Abe's result extends a similar result by \cite{EW} when $\fr{h}$ is reflection faithful.
Second, Riche and Williamson \cite{RW} have shown that the parity sheaf incarnation of the Hecke category is equivalent to the Elias--Williamson diagrammatic category.

We are now ready to state our main results which are monodromic variants of the Abe and the Riche--Williamson theorems.

\begin{theorem}[Monodromic Abe]\label{thm:intro_Abe}
    There is an equivalence of 2-categories
    \[\Upsilon_{\textnormal{A}} : \EWmon{}{} (\fr{h}, W, \fr{o}) \stackrel{\sim}{\to} \Amon{}{} (\fr{h}, W, \fr{o}).\]
\end{theorem}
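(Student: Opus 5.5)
We follow the template of Abe's proof of the non-monodromic statement, adapted to the 2-categorical setting. First, define $\Upsilon_{\textnormal{A}}$ on generators. On objects, $\scrL \in \fr{o}$ goes to $\scrL$. A generating 1-morphism labelled by $s \in S$ with source $\scrL$ goes to one of two bimodules: the object $C_s = R \otimes_{R^s} R(1)$ (with its localization data) if $s\scrL = \scrL$, and the twisted standard bimodule $R_s$ if $s\scrL \neq \scrL$. The generating 2-morphisms are sent to the following:
\begin{enumerate}
    \item polynomial boxes go to multiplication;
    \item dots, trivalent vertices and $2m_{st}$-valent vertices between non-monodromic strands go to Abe's formulas;
    \item the new monodromic generators (caps/cups $R_s\otimes R_s\cong R$ and the mixed braid-type vertices relating $R_s$-strands to $C_t$-strands) go to the evident canonical isomorphisms of twisted bimodules.
\end{enumerate}
We then check, relation by relation, that these assignments respect localization data and satisfy the defining relations of $\EWmon{}{}(\fr{h},W,\fr{o})$. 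Relations involving only non-monodromic strands follow from Abe's theorem for $\scrA(\fr{h},W_{\scrL}^{\circ})$ via Proposition \ref{prop:abe_endo}. The remaining relations are computed directly in the localized category, where each $R_s$ is invertible, so checking them reduces to comparing scalars on standard summands.

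Essential surjectivity onto $\Amon{}{}(\fr{h},W,\fr{o})$ is immediate, since the latter is by definition the Karoubian closure of Bott--Samelson-type products of $C_s$ and $R_s$, all of which lie in the image. The real content is full faithfulness on Bott--Samelson objects. We would prove this via double leaves, in three steps.
\begin{enumerate}
    \item Using the monodromic double leaves basis, show that each Hom space $\Hom_{\EWmon{}{}}(\uw,\uy)$ is a free left $R$-module with graded rank given by the monodromic standard pairing in the Hecke algebroid.
    \item Show that $\Hom_{\Amon{}{}}(\Upsilon\uw,\Upsilon\uy)$ is free of the same graded rank. For this we use Abe's description of morphisms as those bimodule maps compatible with localization. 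Localization at $Q=\mathrm{Frac}(R)$ splits each object into standard pieces $R_x$, $x\in W$. A support/filtration argument (a monodromic version of Soergel's hin-und-her lemma, which Abe already proves in the unipotent case) then identifies the graded rank with the same Hecke-algebroid pairing.
    \item Show that $\Upsilon_{\textnormal{A}}$ sends double leaves to elements that are $R$-linearly independent. Since both sides are free of equal graded rank, this forces an isomorphism.
\end{enumerate}

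We expect step (3) to be the main obstacle. Following Abe and Elias--Williamson, we would localize, apply $Q\otimes_R-$, and show that the image of the double leaves is upper-triangular with respect to the Bruhat-type order on subexpressions. The diagonal entries should be invertible products of roots. Each light leaf then localizes to a sum over standard summands, with leading term at the endpoint of its subexpression. The monodromic twist adds a complication: in expressions mixing $R_s$ and $C_t$, the element $x$ indexing a standard summand does not in general lie in $W_{\scrL}^{\circ}$, and the coefficient of the leading term must be tracked through the twisted isomorphisms. We would handle this by factoring any mixed expression, up to the canonical rex-move isomorphisms already in the image, into a product of a single twisted standard $R_x$ and an endoscopic Bott--Samelson object. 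This reduces the leading-term computation to Abe's triangularity statement for $W_{\scrL}^{\circ}$. If the factorization is not compatible with the double-leaf construction on the nose, we would fall back to a direct inductive computation on the length of the expression, as in \cite{EW}.
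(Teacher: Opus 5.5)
Your outline has the right shape, but Step~1 presupposes the hardest part of the proof. You invoke ``the monodromic double leaves basis'' of $\Hom_{\EWmon{\scrL}{\scrL'}^{\BS}}(B_{\ux}^{\scrL},B_{\uy}^{\scrL})$, yet nothing in your plan shows that diagrammatic double leaves \emph{span} these Hom spaces.

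Their linear independence is cheap: it follows from the algebraic side, i.e.\ from your Step~3. Spanning is not. Without it you have neither faithfulness of $\Upsilon_{\textnormal{A}}$ (a diagram outside the span of double leaves could be sent to $0$) nor the freeness and graded rank asserted in Step~1. So Step~1, as stated, is unsupported.

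In the paper this is the spanning half of Theorem~\ref{thm:dll_is_a_basis}, and it occupies all of Section~\ref{sec:spanning}. The ingredients are monodromic negative--positive decompositions, working modulo the ideal $I_{\uw}^{\scrL}$ of maps through top boundary dots, and an induction on maxwidth that attaches one generator at a time below a light leaf. Cups and caps on strands with $\scrL s\neq\scrL$ are handled by block minimality. Attaching a $2m$-valent vertex needs a new induction over alternating dihedral expressions, because Elias--Williamson's version of this step relies on dihedral computations valid only under stronger hypotheses on $\fr{h}$.

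Once both sides have double-leaf bases, the theorem is immediate, since $\Upsilon_{\textnormal{A}}$ sends double leaves to double leaves. So the obstacle you single out, independence of the images, is the easy part: evaluate algebraic light leaves on the elements $b_{\ux,\uf}$ (Lemma~\ref{lem:ll_are_lin_indep}). Note also that independence plus equal graded rank forces an isomorphism only when $\k$ is a field; multiplication by $2$ on $R$ over $\k=\Z$ is a counterexample. In general you need the images to form a basis, which is Theorem~\ref{thm:abe_dl_are_basis}.

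Your definition of the functor is also wrong on mixed $2m_{s,t}$-valent vertices. Take $m_{s,t}=6$ and $W_{\{s,t\}}\cap W_{\scrL}^{\circ}=\langle s,tst\rangle$ of type $A_2$, so $\scrL s=\scrL\neq\scrL t$. Then
$B^{\scrL}_{{}_s\underline{m}}\cong C_s\otimes_R C_{tst}\otimes_R C_s\otimes_R R_t$ and $B^{\scrL}_{{}_t\underline{m}}\cong C_{tst}\otimes_R C_s\otimes_R C_{tst}\otimes_R R_t$.
Both mix $C$- and $R$-factors, but they are not isomorphic, so there is no canonical isomorphism to send the vertex to.

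Its image has to be $\beta_{s',t'}\otimes\id_{R_{w^{\beta}}}$, built via Lemma~\ref{lem:soergel_product_formulas} from Abe's rex move for the endoscopic dihedral group. The existence of this map is exactly where the monodromic Abe realization hypothesis is used (Proposition~\ref{prop:existence_of_rex_moves}), and your plan never invokes that hypothesis.

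For the same reason, the mixed relations are not mere scalar comparisons on standard summands.
\begin{itemize}
\item The paper checks two-color associativity and the Elias--Jones--Wenzl relation by transporting them through the monodromic--endoscopic equivalence to the endoscopic group.
\item It checks the Zamolodchikov relations by showing that the relevant degree-$0$ Hom spaces have rank one for almost all monodromy labelings.
\item Type $B_3$ with endoscopic $A_3$ needs a separate endoscopic computation, because lower-order terms appear there (Appendix~\ref{apdx:three_color}).
\end{itemize}
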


\begin{theorem}[Monodromic Riche--Williamson]\label{thm:intro_RW}
    Assume that $\k$ is either a finite extension of $\Z_{\ell}$, the residue field of such an extension, or the field of fractions of such an extension.
    Let $G$ be a connected complex reductive group with Weyl group $W$ and denote by $\fr{h}$ the Kac--Moody realization for $G$.
    Let $\fr{o}$ be the $W$-orbit of a torsion character sheaf $\scrL$ on the maximal torus $T$.
    There is an equivalence of 2-categories
    \[\Upsilon_{\RW} : \EWmon{}{} (\fr{h}, W, \fr{o}) \stackrel{\sim}{\to} \PEE{}{} (G, \k).\]
\end{theorem}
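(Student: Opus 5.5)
We prove Theorem~\ref{thm:intro_RW} by adapting the Riche--Williamson argument \cite{RW} to the 2-categorical monodromic setting. The parity sheaf 2-category $\PEE{}{}(G,\k)$ is generated under convolution, shifts, direct sums and summands by two kinds of objects attached to a simple reflection $s$. If $s\scrL=\scrL$, we use the monodromic Bott--Samelson object, i.e.\ the pullback--pushforward along the minimal parabolic $P_s$. If $s\scrL\neq\scrL$, we use the clean standard object supported on the Bruhat cell of $s$, which is also costandard and parity. So it suffices to build a 2-functor on the Bott--Samelson 2-category $\EWmon{}{}(\fr{h},W,\fr{o})$ and show that it is fully faithful. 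Essential surjectivity onto parity sheaves then follows by Karoubi completion, together with the classification of indecomposable parity sheaves from \cite{Sandvik}, which are indexed by $W$ and by pairs of monodromy parameters.

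\emph{Step 1: definition on generators.} A $1$-morphism in $\EWmon{}{}$, i.e.\ an expression $\uw$ with monodromy labels on its faces, is sent to the corresponding convolution of the generating objects above. For the $2$-morphism generators we take the following.
\begin{itemize}
\item Polynomial boxes go to the action of $H^\bullet_T(\pt)$ on the $T$-monodromic equivariant structure.
\item Dots and trivalent vertices for $s$-stable faces go to the standard adjunction morphisms for $P_s/B$, exactly as in \cite{RW}.
\item The new monodromic generators go to the canonical isomorphisms $\Delta_s\star\Delta_s\cong\Delta_e$ (under the identification of $\scrL$ with $s s\scrL$), when the face labels change across an $s$-strand.
\item The $2m_{st}$-valent vertices go to the morphisms that \cite{RW} builds from rank two parabolics. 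In the monodromic case we instead use the rank two Levi together with the rank two endoscopic analysis.
\end{itemize}

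\emph{Step 2: checking the relations.} We check the defining relations of $\EWmon{}{}$ without working sheaf-theoretically. To do this we pass to an algebraic model: the hypercohomology functor from $\PEE{}{}(G,\k)$ to $R$-bimodules with localization data. This is the monodromic analogue of the fact that $\mathbb{H}^\bullet$ is fully faithful on parity sheaves, as in Soergel's Erweiterungssatz. It lands in $\Amon{}{}(\fr{h},W,\fr{o})$, and by \cite{Sandvik} and the Grothendieck group computation it is fully faithful. The relations then hold because they hold in $\Amon{}{}$ under $\Upsilon_{\Abe}$ (Theorem~\ref{thm:intro_Abe}), and because faithfulness into $\Amon{}{}$ reflects equalities. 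Concretely, we set $\Upsilon_{\RW}=\mathbb{H}^{-1}\circ\Upsilon_{\Abe}$ on Bott--Samelson objects. The equivalence $\mathbb{H}$ onto the image is then the real content.

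\emph{Step 3: fully faithfulness.} We compare graded ranks. On the diagrammatic side, the double leaves basis gives $\Hom$ spaces that are free over $R$ of rank given by the standard pairing in the monodromic Hecke algebroid. On the parity side, \cite[Theorem 3.8.4]{Sandvik} and the parity vanishing for the stratification give $\Hom$ spaces that are free of the same rank. We then check surjectivity after base change to the fraction field or the residue field, or alternatively through localization to $T$-fixed points. Since both sides are free of equal graded rank, surjectivity gives an isomorphism. The alternative route avoids $\mathbb{H}$ entirely: show that $\Upsilon_{\RW}$ sends double leaves to a basis, by matching them with the light leaves morphisms constructed geometrically in \cite{Sandvik} through the filtration by Bruhat cells.

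The main obstacle is Step 2, specifically fully faithfulness of hypercohomology, or equivalently the rank two relations for $2m_{st}$-valent vertices, when face labels vary. For small $\ell$ this is exactly where \cite{RW} needed care. We first reduce to rank two Levi subgroups, where the monodromic-endoscopic equivalence of \cite{Sandvik} and Proposition~\ref{prop:abe_endo} identify the block with a non-monodromic rank two Hecke category of $W_\scrL^\circ$, for which \cite{RW} applies. For faces not stabilized by $s$, the relations reduce to the invertibility of the $\Delta_s$ and involve no new content.
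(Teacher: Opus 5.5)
The proof breaks at full faithfulness, which your Steps~2 and~3 assume rather than prove.

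\textbf{Full faithfulness.} You assert that a hypercohomology-type functor $\PEE{}{}(G,\k)\to\Amon{}{}(\fr{h},W,\fr{o})$ is fully faithful for every $\k$ ``by \cite{Sandvik} and the Grothendieck group computation''. A rank computation only shows that source and target are free of the same graded rank; it does not show the map between them is bijective.

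The paper proves such a statement only over $\K$ (Theorem~\ref{thm:H_functor_properties}, for $\mathbb{H}=\Hom^\bullet(\Theta^{[\scrL,\scrL']},-)$ built from maximal IC sheaves). That proof uses characteristic $0$ essentially: the decomposition theorem controls the maximal IC sheaves, and faithfulness of $\fr{h}_{\K}$ is needed for Lemma~\ref{lem:for_ff_on_faithful_realizations}. The paper also notes that analogues over $\F$ would need new arguments and restrictions on small primes. Given Theorem~\ref{thm:intro_Abe}, full faithfulness of $\mathbb{H}$ over $\F$ carries essentially the same content as the theorem you are proving. So defining $\Upsilon_{\RW}=\mathbb{H}^{-1}\circ\Upsilon_{\Abe}$ is circular, and over $\O$ it would additionally need an integrality argument.

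Step~3 does not supply the missing surjectivity. An isomorphism over $\K$ plus equal ranks over $\O$ does not exclude a torsion cokernel, and surjectivity over $\F$ is the whole difficulty. A reduction to rank-two Levi subgroups can verify relations, but not full faithfulness on arbitrary Bott--Samelson objects.

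Your Step~2 is sound if you restrict it to $\K$ and use it only to verify relations; that is what the paper does. Relations then pass to $\O$ because the Hom spaces are torsion-free, and to $\F$ by base change. Full faithfulness is a separate argument:
\begin{enumerate}
\item Graded Nakayama reduces to the functor $\tilde\Upsilon_{\RW}$ on the categories with Hom spaces $\k\otimes_R(-)$.
\item Ordinary Nakayama reduces $\O$ to $\F$.
\item Over $\F$ one passes to the mixed derived categories. There standards and costandards go to standards and costandards (Lemma~\ref{lem:Psi_on_stds}), and $\Hom(\Delta_x,\nabla_y(m)[n])$ is $\F$ or $0$ on both sides.
\item Beilinson's lemma then reduces everything to one check: the chain map $\Delta_w\to\nabla_w$ given by $\id_{B_{\uw}}$ in degree $0$ has nonzero image (Claim~\ref{claim:field_case}).
\end{enumerate}

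\textbf{Coherent choice of minimal IC sheaves.} This is the second missing ingredient. When $\scrL s\neq\scrL$, the object $\Delta^{\scrL}_s\cong\IC^{\scrL}_s$ is determined only up to a scalar, and there is no canonical isomorphism $\Delta^{\scrL}_s\star\Delta^{\scrL s}_s\cong\Delta^{\scrL}_e$. What is needed is a 2-functor $\IC_{\min}$ on the groupoid of blocks: representatives $\theta^{\scrL}_\beta$ together with isomorphisms $\theta_\beta\star\theta_\gamma\cong\theta_{\beta\gamma}$ satisfying associativity (Proposition~\ref{prop:min_IC_functor}). Its construction goes through a Whittaker model, and it is the only place where the hypothesis $\k\in\{\K,\O,\F\}$ enters. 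Your argument never uses that hypothesis, which signals the gap.

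This datum does three jobs:
\begin{itemize}
\item it defines the cups and caps;
\item it normalizes the $2m_{s,t}$-valent vertices against the maps $\epsilon_{{}_u\underline{m}}$ of \S\ref{subsec:ew_par} (see (\ref{eq:cst_constants_soergel}) and Lemma~\ref{lem:cst_constants_inverses_soergel});
\item it normalizes the comultiplication $\nu$ on maximal IC sheaves that makes $\mathbb{H}$ a 2-functor in the first place (Lemma~\ref{lem:existence_of_comult}).
\end{itemize}
So routing the construction through $\mathbb{H}^{-1}$ does not sidestep the coherence problem.
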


\begin{remark}
    These theorems along with the monodromic-endoscopic equivalence for Soergel bimodules gives rise to monodromic-endoscopic equivalences for the diagrammatic category.
\end{remark}

\begin{remark}
    One may hope to extend the monodromic Riche--Williamson theorem to arbitrary Kac--Moody groups or for more general rings $\k$.
    The only obstruction is in defining the image under $\Upsilon_{\RW}$ of a simple reflection in $s \in S$ when $\scrL s \neq \scrL$.
    In this case, the minimal IC sheaf associated to $\scrL$ and $s$ is only well-defined up to a choice of scalar.
    It is hard to check whether these choices can be made in a sufficiently coherent manner to ensure the functor is well-defined.
\end{remark}

\subsection{Organization}

In \S\ref{sec:coxeter}, we will study systems of monodromy parameters for arbitrary Coxeter groups. Associated to a monodromy parameter, we will construct the endoscopic Coxeter group.
We will further discuss realizations of Coxeter groups and the necessary 2-colored quantum combinatorics needed to have well-behaved theories of monodromic Abe--Bott--Samelson bimodules.

In \S\ref{sec:abe}, we introduce the 2-category of monodromic Abe--Bott--Samelson bimodules. As part of their study, we will establish an algebraic version of the monodromic-endoscopic equivalence. We continue the study of the algebraic category in \S\ref{sec:ll} where we develop a version of the double leaves basis for monodromic Abe--Bott--Samelson bimodules.

In \S\ref{sec:diag}, we introduce the main character of the paper-- the diagrammatic monodromic Hecke category. We will then define a functor from the diagrammatic category to the algebraic category.
Finally, we will provide a diagrammatic incarnation of the double leaves basis, and deduce an equivalence between the diagrammatic monodromic Hecke category and the category of Abe--Bott--Samelson bimodules (Theorem \ref{thm:intro_Abe}).
We delay the proof that double leaves span until \S\ref{sec:spanning}.

In \S\ref{sec:geom}, we recall that geometric monodromic Hecke category of \cite{Sandvik}. We then construct a functor from the geometric category to the diagrammatic category and show that this functor is an equivalence (Theorem \ref{thm:intro_RW}).

The paper includes two appendices. In \S\ref{apdx:three_color}, we prove that the functor from the diagrammatic Hecke category to the category of Bott--Samelson bimodules is invariant under 3-color relations.
This effort is largely computational and depends on an exhaustive search of the possible systems of monodromy parameters. In \S\ref{apdx:mdc}, we develop a general theory of ``mixed derived categories'' for object-adapted cellular categories closely based on \cite{ARV}.   

\subsection{Acknowledgements}

The author thanks Pramod Achar for numerous helpful discussions on this project.
The author also thanks Ben Elias for fostering an appreciation for diagrammatic calculus and for illuminating conversations regarding double leaves.
The author was partially supported by NSF Grant DMS-2231492.

    \section{Monodromic Coxeter Systems}\label{sec:coxeter}
    \subsection{Endoscopic Coxeter Group}

\subsubsection{Notation}

Let $(W,S)$ be a Coxeter system and let $e \in W$ denote the identity.
By definition $W$ is the group generated by the set of simple reflections $S$ subject to the relations:
\begin{equation}
    s^2 = e \text{ for all } s \in S,
\end{equation}
\begin{equation}
    \underbrace{sts\ldots}_{m_{s,t}} = \underbrace{tst\ldots}_{m_{s,t}} \text{ for all } s\neq t \in S.
\end{equation}
The numbers $m_{s,t} \in \Z_{> 0} \cup \{\infty\}$ associated to each pair of distinct simple reflections determine $W$. 
The group $W$ is equipped with the Bruhat order $\leq$ and the length function $\ell : W \to \Z_{\geq 0}$.
If $W$ is finite, then it admits the longest element $w_0$ under the Bruhat order.
The rank of $W$ is the cardinality of $S$ which we allow to be (countably) infinite.

For any subset $I \subset S$, the corresponding parabolic subgroup $W_I$ of $W$ is the subgroup generated by $s \in I$.
It follows that $(W_I, I)$ is also Coxeter system.

We write $\Exp (W)$ for the set of expressions in $W$. These consist of sequences $\uw = (s_1, \ldots, s_k)$ of simple reflections.
We write $\overline{\uw} = (s_k, \ldots, s_1)$ for the reversed expression of $\uw$.
The length of $\uw$ is $\ell (\uw) \coloneq k$. Note that $\ell (\uw) \geq \ell (w)$ where $w=s_1 \ldots s_k$ is the product, and equality holds if and only if $\uw$ is a reduced expression.
We adopt the convention of \cite{EW} and shorten ``reduced expression'' to \emph{rex} with the plural form being \emph{rexes}. 

A subexpression $\ue$ of an expression $\uw = (s_1, \ldots, s_k)$ is a tuple $(e_1, \ldots, e_k)$ where $e_i \in \{ 0, 1\}$ for all $i=1,\ldots, k$.
Given such a subexpression, we can consider the evaluation of $\uw$ at $\ue$ which is the element $\uw^{\ue} \coloneq s_1^{e_1} \ldots s_k^{e_k} \in W$.
The \emph{Bruhat stroll} associated to $\ue$ is the sequence $w_0, \ldots, w_k$ in $W$ defined by $w_i = s_1^{e_1} \ldots s_i^{e_i}$.
We denote the set of all subexpressions of $\uw$ by $\Subexp (\uw)$.

Suppose we have a pair of distinct simple reflections $s,t$. 
For $k \in \Z_{> 1}$, we denote the expressions given by alternating sequences starting with $s$ (resp. $t$) by 
\[{}_s \underline{k} \coloneq \underbrace{(s,t,s,\ldots)}_{k\text{ terms}} \qquad \text{and} \qquad {}_t \underline{k} \coloneq \underbrace{(t,s,t,\ldots)}_{k\text{ terms}}.\]
We write ${}_s k$ and ${}_t k$ for the elements in $W$ obtained by evaluating ${}_s \underline{k}$ and ${}_t \underline{k}$ respectively.
We could also consider alternating expressions ending in $s$ or $t$, denoted $\underline{k}_s = \overline{{}_s\underline{k}}$ and $\underline{k}_t = \overline{{}_t\underline{k}}$. Their evaluations will similarly be denoted by $k_s$ and $k_t$ respectively.

A \emph{reflection} in $W$ is an element of the form $wsw^{-1}$ for $w \in W$ and $s\in S$. Every reflection is clearly an involution. We denote the set of reflections in $W$ by $\mathscr{R} (W)$.

\subsubsection{Basic Definitions}

Let $(W,S)$ be a Coxeter system.

\begin{definition}
    A set of \emph{monodromy parameters} is a set $\fr{o}$ equipped with a right $W$-action on $\fr{o}$.
    Given monodromy parameters $\scrL, \scrL' \in \fr{o}$, we define
    \[\W{\scrL}{\scrL'} \coloneq \{w \in W \mid \scrL w = \scrL'\}.\]
    If $\scrL = \scrL'$, we will often write $W_{\scrL} \coloneq \W{\scrL}{\scrL}$.

    From $\fr{o}$, we can construct a groupoid $M^{\fr{o}} (W)$ whose object set is $\fr{o}$ and whose set of morphisms from $\scrL$ to $\scrL'$ is given by $\W{\scrL}{\scrL'}$.
    Composition in $M^{\fr{o}} (W)$ is given by multiplication in $W$.
\end{definition}

\begin{example}\label{ex:mon_params}
    \begin{enumerate}
        \item Let $\fr{o}$ be a set with a single object. Then $W$ acts trivially on $\fr{o}$. In this case, $M^{\fr{o}} (W)$ is isomorphic as a groupoid to $W$. We often call this example the \emph{non-monodromic} or \emph{unipotent} case. 
        We will sometimes write $\fr{o} = 1$ to refer to $\fr{o}$ having only one object which we also denote by $1$. 
        \item Let $\fr{o} = W$ and have $W$ act on $\fr{o}$ by right multiplication. In this case, $M^{\fr{o}} (W)$ is equivalent as a groupoid to the trivial group.
        \item The running example for a set of monodromy parameters comes from geometry. Namely, let $G$ be a connected reductive group (or more generally a Kac--Moody group) over $\C$ with maximal torus $T$ and Weyl group $W$.
        Let $\k$ be a commutative ring. We can then consider $T_{\k}^{\vee}$, the Langland's dual torus of $T$.
        In this case, $\fr{o}$ is the $\k$-points of $T_{\k}^{\vee}$.
        \item Let $I \subset S$ and consider $W_I \subseteq W$ the parabolic subgroup generated by $I$. We define $\fr{o} = W_I \backslash W$ and have $W$ act via right multiplication.
        Let $e$ denote the identity coset in $\fr{o}$. Then $W_e = W_I$. More generally, for $x,y \in W_I \backslash W$, then $\W{x}{y} = xW_I y^{-1}$. 
        \item Example \ref{ex:mon_params} (4) admits a mild generalization. Let $W' \subseteq W$ be a subgroup. Define $\fr{o} = W' \backslash W$ and have $W$ act via right multiplication.
        Let $e$ denote the identity coset. Then $W_e = W'$.
    \end{enumerate}
\end{example}

The set $W_{\scrL}$ is a subgroup of $W$, but it need not be a parabolic subgroup. In fact, $W_{\scrL}$ need not admit a Coxeter structure whatsoever.
However, we will show that there is a normal subgroup $W_{\scrL}^{\circ}$ of $W_{\scrL}$ which (naturally) can be endowed with a Coxeter structure.

Note that $\Exp (W)$ is a monoid under concatenation with identity $\emptyset$-- the empty expression.
There is an antiinvolution of $\Exp (W)$ taking $\uw \in \Exp (W)$ to the reversed expression $\overline{\uw}$.
Moreover, there is a map of monoids $\ev : \Exp (W) \to W$ given by evaluating an expression in $W$.
If $\fr{o}$ is a set of monodromy parameters for $W$, then there is a monoid action of $\Exp (W)$ on $\fr{o}$ via the evaluation map $\ev$.

Let $\uw = (s_1, \ldots, s_k) \in \Exp (W)$ and $\scrL \in \fr{o}$. For each $0 \leq i \leq k$, we write $\uw_{\leq i}$ for the expression given by the first $i$ terms of $\uw$.
By definition $\uw_{\leq 0} = \emptyset$. We then define
\[K(\uw, \scrL) \coloneq \{ 1 \leq i \leq k \mid \scrL \uw_{\leq i} = \scrL \uw_{\leq i-1} \}.\]
Associated to each $\uw$ and $\scrL \in \fr{o}$ is a certain subexpression $\ue = (e_1, \ldots, e_k)$ defined as follows:
\begin{equation*}
    e_i = \begin{cases} 1 & i \notin K (\uw, \scrL), \\ 0 & i \in K (\uw, \scrL). \end{cases}
\end{equation*}
This procedure defines a map $\ub_{\scrL} : \Exp (W) \to \Exp (W)$ taking $\uw$ to $\uw^{\ue}$. We also define $b_{\scrL} \coloneq \ev \circ \ub_{\scrL}$.

\begin{lemma}\label{lem:block_prelims}
    Let $\ux, \uy \in \Exp (W)$ and $\scrL \in \fr{o}$.
    \begin{enumerate}
        \item $b_{\scrL} (\ux \uy) = b_{\scrL} (\ux) b_{\scrL \ux} (\uy)$;
        \item $b_{\scrL} (\overline{\ux}) = b_{\scrL \overline{\ux}} (\ux)^{-1}$;
        \item $b_{\scrL} (\ux) = b_{\scrL} (\uy)$ if $\ux, \uy$ are both expressions for some $w \in W$.
    \end{enumerate}
    In particular, the maps $b_{\scrL}$ factor through a morphism of groupoids
    \[\Bl^{\min} : M^{\fr{o}} (W) \to M^{\fr{o}} (W).\]
\end{lemma}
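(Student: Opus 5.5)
The approach is to prove (1) and (2) directly from the definition of $K(\uw,\scrL)$, and to reduce (3) to two local checks via Matsumoto's theorem together with (1).

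For (1), write $\ux = (s_1,\ldots,s_k)$ and $\uy = (s_{k+1},\ldots,s_{k+l})$. For $i \leq k$ the condition $i \in K(\ux\uy,\scrL)$ only involves prefixes of $\ux$, so it is the condition $i \in K(\ux,\scrL)$. For $i = k+j$ it reads $\scrL\ux\,\uy_{\leq j} = \scrL\ux\,\uy_{\leq j-1}$, which is $j \in K(\uy,\scrL\ux)$. Hence $\ub_{\scrL}(\ux\uy) = \ub_{\scrL}(\ux)\,\ub_{\scrL\ux}(\uy)$, and evaluating gives (1).

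For (2), put $\scrL' = \scrL\overline{\ux}$. In $\ux$ read from $\scrL'$, the parameter just before the letter $s_j$ is
\[\scrL' \ux_{\leq j-1} = \scrL s_k\cdots s_1 s_1 \cdots s_{j-1} = \scrL s_k \cdots s_j .\]
So position $j$ is deleted iff $s_j$ fixes $\scrL s_k\cdots s_{j+1}$ (using $s_j^2 = e$). This is exactly the condition for the occurrence of $s_j$ in $\overline{\ux}$ read from $\scrL$ to be deleted. Thus $\ub_{\scrL}(\overline{\ux}) = \overline{\ub_{\scrL'}(\ux)}$. Since all letters are involutions, the evaluation of a reversed expression is the inverse, which gives (2).

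For (3), we use Matsumoto's theorem in its form for arbitrary expressions: two expressions for $w$ are related by braid moves and insertions or deletions of $(s,s)$. By (1), applied to prefixes and suffixes at the changed parameter, it suffices to prove $b_{\scrM}(s,s) = e$ and $b_{\scrM}({}_s\underline{m}) = b_{\scrM}({}_t\underline{m})$ for every $\scrM \in \fr{o}$ and every $m = m_{s,t} < \infty$.
\begin{enumerate}
    \item For $(s,s)$: if $\scrM s = \scrM$ both letters are deleted, otherwise both are kept. Either way the evaluation is $e$.
    \item The braid check is the step I expect to be the main obstacle. Here I would look at the orbit $\scrM\langle s,t\rangle$, whose Schreier graph with $s$- and $t$-edges is either an alternating cycle or an alternating path. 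At an endpoint of a path, one of $s,t$ acts as a loop. The stabilizer of $\scrM$ in the dihedral group is then either trivial, or generated by a single reflection, or a dihedral subgroup. The two alternating walks of length $m$ from $\scrM$ trace this graph in opposite directions.
    \item I would compute $\ub_{\scrM}$ of both words case by case: deleted letters are exactly the loop steps. I would then check that the surviving alternating words evaluate to the same element, using the relation $(st)^m = e$ and, in the odd/even cases, the symmetry of the path about its midpoint. Statement (2) should let me halve the casework by relating a walk to its reverse from the endpoint.
\end{enumerate}

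Finally, for the groupoid morphism, set $\Bl^{\min}(w) = b_{\scrL}(\uw)$ for $w \in \W{\scrL}{\scrL'}$ and any expression $\uw$ of $w$; this is well defined by (3). Since deleted letters fix the current parameter, $\scrL\, b_{\scrL}(\uw) = \scrL\uw = \scrL'$, so $\Bl^{\min}(w) \in \W{\scrL}{\scrL'}$. Compatibility with composition follows from (1), and identities go to identities because $b_{\scrL}(\emptyset) = e$.
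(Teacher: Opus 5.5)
You handle (1), (2), the reduction of (3) to the moves $(s,s)\leftrightarrow\emptyset$ and ${}_s\underline{m}\leftrightarrow{}_t\underline{m}$ via (1), the $(s,s)$ check, and the final groupoid paragraph correctly, and this all agrees with the paper. The gap is the braid step. It is the only non-formal part of the lemma, and you only announce a case-by-case computation (tracking ``surviving alternating words'', using $(st)^m=e$ and a midpoint symmetry) without carrying it out. Also, the surviving words are not alternating. Deleting a loop step that sits between two occurrences of a letter $u$ makes those two occurrences adjacent, giving $uu$. That cancellation is exactly the mechanism that makes the statement true.

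Within your own framework the step closes quickly.
\begin{enumerate}
\item If no point of $\scrL\langle s,t\rangle$ is fixed by $s$ or $t$, then no letter of either word is deleted, and both sides equal ${}_sm={}_tm$.
\item Otherwise, remove the loops from the Schreier graph. The result is connected, has maximal degree $2$, and has a vertex of degree at most $1$, so it is a path and hence a tree.
\item Observe that $b_{\scrL}(\uw)$ is the ordered product of the labels of the non-loop steps of the walk of $\uw$ starting at $\scrL$.
\item Removing a backtrack from such a walk removes a factor $uu=e$. In a tree, repeatedly removing backtracks turns any walk into the geodesic between its endpoints.
\item Hence $b_{\scrL}(\uw)$ depends only on $\scrL$ and $\scrL\uw$. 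Since ${}_s\underline{m}$ and ${}_t\underline{m}$ have the same endpoints, the two values agree.
\end{enumerate}

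The paper argues differently. If $\scrL s=\scrL$, then $\scrL\,{}_tm=\scrL\,{}_sm=\scrL\,{}_t(m-1)$, so the last letter of ${}_t\underline{m}$ is deleted and $b_{\scrL}({}_s\underline{m})=b_{\scrL}({}_t\underline{m-1})=b_{\scrL}({}_t\underline{m})$. In the loop-free case both sides are the longest element. The remaining cases are reduced to a deletion at the first letter by cyclically rotating the relation using (1) and (2).
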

\begin{proof}
    Statements (1) and (2) are clear from definitions.
    We will now prove (3).
    In general, if $\ux$ and $\uy$ are both expressions for some $w \in W$, then $\ux$ and $\uy$ differ by a series of relations of the form
    \[{}_s \underline{m} = \underbrace{(s,t,\ldots)}_{m_{s,t} \text{ terms}} \leftrightarrow \underbrace{(t,s,\ldots)}_{m_{s,t} \text{ terms}} = {}_t \underline{m} \qquad \text{and} \qquad (s,s) \leftrightarrow \emptyset.\]
    By (1), it suffices to show that $b_{\scrL}$ is invariant under these relations. It is clear from definitions that $b_{\scrL} ( (s,s)) = e = b_{\scrL} (\emptyset)$.
    
    We will now show that $b_{\scrL} ({}_s \underline{m}) = b_{\scrL} ({}_t \underline{m})$.
    Suppose that $\scrL s = \scrL$. 
    Then we have that 
    \[\scrL {}_t \underline{m} = \scrL {}_s \underline{m} = \scrL {}_t \underline{m-1}.\]
    We can then compute that
    \[b_{\scrL} ({}_s \underline{m}) = b_{\scrL} ({}_t \underline{m-1}) = b_{\scrL} ({}_t \underline{m}).\]
    The same argument shows that if $\scrL t = \scrL$, then $b_{\scrL} ({}_s \underline{m}) = b_{\scrL} ({}_t \underline{m})$.

    If $\# K ({}_s \underline{m}, \scrL) = 0$, then one can easily check that $b_{\scrL} ({}_s \underline{m}) = w_0$ where $w_0$ is the longest element of $\langle s,t \rangle$.
    Likewise, we have that $b_{\scrL} ({}_t \underline{m}) = w_0$.
    As a result, we may assume that $\scrL {}_s \underline{k} = \scrL {}_s \underline{k-1}$ for some $1 \leq k \leq m$. 
    Write $\scrL' = \scrL {}_s \underline{k}$ and set $u = s$ if $m$ is even and $u = t$ if $m$ is odd.
    By the previous case, we have that $b_{\scrL'} ({}_s \underline{m}) = b_{\scrL'} ({}_t \underline{m})$.
    We can then multiply on the left by $b_{\scrL} ({}_s \underline{k})$ and on the right by $b_{\scrL' {}_s \underline{m}} ({}_u \underline{k-1})^{-1}$.
    By iterated applications of (1) and (2), we can conclude that $b_{\scrL} ({}_s \underline{m}) = b_{\scrL} ({}_t \underline{m})$.
\end{proof}

Lemma \ref{lem:block_prelims} gives a groupoid morphism $\Bl^{\min} : M^{\fr{o}} (W) \to M^{\fr{o}} (W)$. In particular, this restricts to a map ${}_{\scrL} \Bl^{\min}_{\scrL'} : \W{\scrL}{\scrL'} \to \W{\scrL}{\scrL'}$ for $\scrL, \scrL' \in \fr{o}$.
We define the set of \emph{blocks} $\uW{\scrL}{\scrL'}$ as the set of nonempty fibers of ${}_{\scrL} \Bl^{\min}_{\scrL'}$.
Associated to each block $\beta \in \uW{\scrL}{\scrL'}$ is an element $w^\beta = {}_{\scrL} \Bl^{\min}_{\scrL w} (w)$ for any $w \in \beta$.

There is a monodromic length function on $\Exp (W)$ defined by
\[\ell_{\scrL} : \Exp (W) \to \Z_{\geq 0}, \qquad \uw \mapsto \# K (\uw, \scrL).\]
We also define $\ell_{\scrL} : W \to \Z_{\geq 0}$ by $\ell_{\scrL} (w) = \ell_{\scrL} (\uw)$ where $\uw$ is any reduced expression of $w$.
It is easy to check for any expression $\ux$ that $\ell_{\scrL} (\ux) = \ell (\ux) - \ell (b_{\scrL} (\ux))$.
By Lemma \ref{lem:block_prelims}, it then follows that $\ell_{\scrL}$ is independent of the choice of reduced expression.

If $\beta \in \uW{\scrL}{\scrL'}$ is a block, then $\ell_{\scrL} (w^\beta) = 0$ and $w^{\beta}$ is the unique element in $\beta$ with monodromic length 0.
We also note that $\ell_{\scrL}$ does not necessarily agree with the length in $W$.
However, we always have that $\ell_{\scrL} (w) \leq \ell (w)$.

One of the most important examples of blocks are the \emph{neutral blocks} which have minimal element $e \in W$.
We define the neutral block $W_{\scrL}^{\circ} \coloneq ({}_{\scrL} \Bl^{\min}_{\scrL})^{-1} (\{e\})$.
By Lemma \ref{lem:block_prelims}, the neutral block $W_{\scrL}^\circ$ is a normal subgroup of $W_{\scrL}$.

\begin{proposition}\label{prop:endo_gps_are_coxeter_gps}
    Let $S_{\scrL}^\circ = \ell_{\scrL}^{-1} (1) \cap W_{\scrL}^{\circ}$. Then $(W_{\scrL}^\circ, S_{\scrL}^\circ)$ is a Coxeter group with length function $\ell_{\scrL}$.
\end{proposition}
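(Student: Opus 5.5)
We plan to verify the exchange (or deletion) condition for $(W_{\scrL}^\circ, S_{\scrL}^\circ)$ with respect to $\ell_{\scrL}$. We will use the characterization that a group generated by involutions $S'$ is a Coxeter system with length function $\lambda$ as soon as $\lambda$ coincides with word length in $S'$ and satisfies the exchange condition.

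Throughout we fix $\scrL$ and use the relation $\ell_{\scrL}(\ux) = \ell(\ux) - \ell(b_{\scrL}(\ux))$ together with Lemma \ref{lem:block_prelims}. For $w \in W_{\scrL}^\circ$ we have $b_{\scrL}(w)=e$, so the relation gives $\ell_{\scrL}(w) = \ell(w)$ on the neutral block. This reduces many computations to ordinary lengths in $W$.

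\textbf{Step 1: reflections and the generating set.} For a reduced expression $\uw = (s_1,\dots,s_k)$ of $w \in W$ and $i \in K(\uw,\scrL)$, put $t_i = s_1\cdots s_{i-1} s_i s_{i-1}\cdots s_1$. Then $\scrL t_i = \scrL$. We will show that $t_i$ lies in $W_{\scrL}^\circ$ by applying part (1) of Lemma \ref{lem:block_prelims} to the expression $(s_1,\dots,s_i,\dots,s_1)$. Define $R_{\scrL} := \mathscr{R}(W) \cap W_{\scrL}^\circ$. For $t \in \mathscr{R}(W)$ with $\scrL t = \scrL$, we will check that $t \in R_{\scrL}$ exactly when, along a palindromic reduced expression of $t$, the middle letter fixes the parameter at that point.

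\textbf{Step 2: generation and length as word length.} We will prove that every $w \in W_{\scrL}^\circ$ is a product of exactly $\ell_{\scrL}(w)$ reflections from $R_{\scrL}$. Take a reduced word $\uw$. Deleting the letters in $K(\uw,\scrL)$ produces $b_{\scrL}(w)=e$. Writing $w$ as a product of the $t_i$ for $i \in K(\uw,\scrL)$, in the standard way one does for subexpressions, expresses $w$ as a product of $\ell_{\scrL}(w)$ elements of $R_{\scrL}$.

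Next we will identify $S_{\scrL}^\circ$ with the reflections $t \in R_{\scrL}$ having $\ell_{\scrL}(t)=1$. The main claim is that each $t \in R_{\scrL}$ is a product of such elements, obtained by conjugating $t$ down via Step 1. Reflections are needed here, because every element of length $1$ in the neutral block is a reflection: the deletion argument writes it as a single $t_i$. This gives the upper bound: word length in $S_{\scrL}^\circ$ is at most $\ell_{\scrL}$. For the reverse inequality we will use that $|\ell_{\scrL}(ws) - \ell_{\scrL}(w)| \le 1$ for $s \in S_{\scrL}^\circ$. Since $\ell_{\scrL} = \ell$ on $W_{\scrL}^\circ$, this reduces to showing that multiplying by a reflection changes $\ell$ in the block by exactly one. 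That in turn needs the Step 3 analysis.

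\textbf{Step 3: exchange condition (main obstacle).} Let $w \in W_{\scrL}^\circ$ and $s \in S_{\scrL}^\circ$ with $\ell_{\scrL}(ws) < \ell_{\scrL}(w)$. We must find an expression of $w$ as a product of simple reflections $r_1\cdots r_n$ from $S_{\scrL}^\circ$ with $ws = r_1 \cdots \widehat{r_j} \cdots r_n$. The plan is to adapt Dyer's argument, which shows that reflection subgroups are Coxeter groups. The key input is a twisted analogue of the inversion set, $N_{\scrL}(w) = \{t \in R_{\scrL} : \ell(wt) < \ell(w)\}$. We will verify the cocycle identity $N_{\scrL}(xy) = N_{\scrL}(y) \,\triangle\, y^{-1}N_{\scrL}(x)y$ using the corresponding identity in $W$ together with the $W_{\scrL}$-stability of $R_{\scrL}$; the latter holds because $W_{\scrL}^\circ$ is normal in $W_{\scrL}$. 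We then identify $\#N_{\scrL}(w) = \ell_{\scrL}(w)$ using the deletion description of Step 2.

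With the cocycle and the count in hand, exchange follows formally as in Dyer and Deodhar. Showing that $N_{\scrL}$ is compatible with $\ell_{\scrL}$ is the hardest part. If the direct approach stalls, we will instead try to cite Dyer's theorem that $(W', \chi(W'))$ is Coxeter for any reflection subgroup $W'$, applied to $W' = W_{\scrL}^\circ$, which is generated by $R_{\scrL}$ by Step 2. It would then remain to check that Dyer's canonical generators are exactly $S_{\scrL}^\circ$ and that his length function equals $\ell_{\scrL}$.
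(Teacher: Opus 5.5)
\textbf{The step that fails.} Your reduction ``$\ell_{\scrL}(w)=\ell(w)$ for $w\in W_{\scrL}^{\circ}$'' is false.

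By definition $\ell_{\scrL}(\ux)=\#K(\ux,\scrL)$ counts the letters \emph{deleted} to form $\ub_{\scrL}(\ux)$. So the valid relation is $\ell_{\scrL}(\ux)=\ell(\ux)-(\text{number of letters of the word }\ub_{\scrL}(\ux))$. That word is typically not reduced, so you cannot replace it by the length of the element $b_{\scrL}(\ux)$. For $w$ in the neutral block, $\ub_{\scrL}(\uw)$ is usually a nonempty word evaluating to $e$.

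Concretely, take the paper's $B_2$ example with $W'=\langle s,tst\rangle$ and $\fr{o}=W'\backslash W$. The reduced word $(t,s,t)$ has $K((t,s,t),e)=\{2\}$, so $\ell_e(tst)=1$ while $\ell(tst)=3$; in fact $tst\in S_e^{\circ}$. More structurally, if $\ell_{\scrL}=\ell$ held on $W_{\scrL}^{\circ}$, then $S_{\scrL}^{\circ}\subseteq S$ and every endoscopic group would be standard parabolic.

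This breaks several parts of the proposal:
\begin{itemize}
\item The lower bound in Step 2 loses its justification. Multiplying by a reflection does not change $\ell$ by one; for example $\ell(e)=0$ and $\ell(tst)=3$.
\item Your Step 3 count $\#N_{\scrL}(w)=\ell_{\scrL}(w)$ is true. Combined with your reduction, though, it would force every inversion of $w$ to lie in $W_{\scrL}^{\circ}$, which is false.
\item The upper bound in Step 2 does not follow as written. Knowing that $w$ is a product of $\ell_{\scrL}(w)$ elements of $R_{\scrL}$, each a product of elements of $S_{\scrL}^{\circ}$, does not bound the $S_{\scrL}^{\circ}$-word length by $\ell_{\scrL}(w)$ unless the factors themselves lie in $S_{\scrL}^{\circ}$. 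With your prefixes they need not: for $\fr{o}=1$ in type $A_2$ and $w=sts$, your $t_i$ are $s$, $sts$, $t$.
\end{itemize}

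\textbf{What to use instead.} Use the identity that your Step 3 needs anyway. Let $\uw=(s_1,\dots,s_k)$ be a reduced word for any $w\in W$.
\begin{itemize}
\item Your reflections $t_i$ enumerate $\{t\in\mathscr{R}(W):\ell(tw)<\ell(w)\}$ without repetition.
\item $i\in K(\uw,\scrL)$ if and only if $\scrL t_i=\scrL$.
\item Every reflection of $W_{\scrL}$ lies in $W_{\scrL}^{\circ}$. This is the second half of the proof of Lemma~\ref{lem:calculating_endoscopy_groups}, which does not use this proposition.
\end{itemize}
Hence $\ell_{\scrL}(w)=\#\{t\in\mathscr{R}(W)\cap W_{\scrL}^{\circ}:\ell(tw)<\ell(w)\}$.

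With this identity your fallback closes at once, and it then coincides with the paper's route: generation by reflections, then Dyer--Deodhar, then identification of the canonical generators.
\begin{itemize}
\item For a reflection $t\in W_{\scrL}^{\circ}$, one has $N(t)\cap W_{\scrL}^{\circ}=\{t\}$ if and only if $\ell_{\scrL}(t)=1$.
\item Together with your correct observation that elements of $\ell_{\scrL}$-length one in the neutral block are reflections, this shows Dyer's canonical generators are exactly $S_{\scrL}^{\circ}$.
\item Dyer's length function, the number of inversions lying in $W_{\scrL}^{\circ}$, is then $\ell_{\scrL}$. The paper does not verify this last point separately.
\end{itemize}
For the upper bound, you have two options. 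You can conjugate by the prefixes of the subexpression $\ub_{\scrL}(\uw)$ rather than of $\uw$; each resulting palindromic word then has monodromic length one, so each factor lies in $S_{\scrL}^{\circ}$. Alternatively, use the paper's induction $w=(xy)(y^{-1}sy)$.
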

\begin{proof}
    We first claim that $S_{\scrL}^{\circ}$ generates $W_{\scrL}^{\circ}$.
    Let $w \in W_{\scrL}^{\circ}$. We will argue by induction on $\ell_{\scrL} (w)$ that $w \in \langle S_{\scrL}^{\circ} \rangle$.
    If $\ell_{\scrL} (w) = 1$, the claim is obvious. Suppose $\ell_{\scrL} (w) > 1$. We can then write $w = xsy$ such that $\ell_{\scrL} (x) = \ell_{\scrL} (w) - 1$, $s \in S_{\scrL x}^{\circ}$, and $\ell_{\scrL x} (y) = 0$.
    Write $w = (xy)(y^{-1} s y)$ and observe that $xy \in W_{\scrL}^{\circ}$ and $y^{-1} sy \in S_{\scrL}^{\circ}$. Note that $\ell_{\scrL} (xy) = \ell_{\scrL} (w) - 1$. The claim then follows via induction.
    
    Next, we will show that the elements of $S_{\scrL}^{\circ}$ are reflections. 
    Let $s \in S_{\scrL}^{\circ}$. We can then find blocks $\beta \in \uW{\scrL}{\scrL'}$ and $\gamma \in \uW{\scrL'}{\scrL}$ and a simple reflection $t \in S$ such that $s = w^{\beta} t w^{\gamma}$ and $t \in S_{\scrL'}^{\circ}$.
    By definition of $W_{\scrL}^{\circ}$, we must have that $w^{\beta} w^{\gamma} = e$; therefore, $s = w^{\beta} t w^{\beta, -1}$.

    We have now shown that $W_{\scrL}^{\circ}$ is a reflection subgroup. By a classical theorem of Dyer \cite{Dyer} and Deodhar \cite{Deodhar}, $W_{\scrL}^{\circ}$ is a Coxeter group with a canonical set of generators $T_{\scrL}^{\circ}$.
    We can describe $T_{\scrL}^{\circ}$ as follows.
    For each $x \in W$, define $N(x) \coloneq \{r \in \mathscr{R} (W) \mid \ell (xr) < \ell (x) \}$. Then $T_{\scrL}^{\circ} = \{ t \in \mathscr{R} (W) \mid N(t) \cap W_{\scrL}^{\circ} = \{t\} \}$.
    
    We claim that $T_{\scrL}^{\circ} = S_{\scrL}^{\circ}$.
    First, let $x \in W_{\scrL}^{\circ}$. Let  $(s_1, \ldots, s_k)$ be a reduced expression of $x$. If $\ell_{\scrL} (x) > 1$, we can find some index $1 \leq i \leq k$ such that $\scrL s_1 \ldots s_i =  \scrL s_{1} \ldots s_{i+1}$ and that $i$ is minimal with respect to this property.
    Let $r = s_1 \ldots s_{i-1} s_i s_{i-1} \ldots s_1 \in \mathscr{R} (W) \cap W_{\scrL}^{\circ}$. By construction, $\ell (rx) < \ell (x)$; therefore, $x \notin T_{\scrL}^{\circ}$.
    As a result, $T_{\scrL}^{\circ} \subseteq S_{\scrL}^{\circ}$.
    For the other inclusion, let $s \in S_{\scrL}^{\circ}$ and $r \in \mathscr{R} (W) \cap W_{\scrL}^{\circ}$ such that $\ell (sr) < \ell (s)$. Since $s \in S_{\scrL}^{\circ}$, we have that $\ell_{\scrL} (sr) \leq \ell_{\scrL} (s) = 1$. 
    If $\ell_{\scrL} (sr) = 1$, then $\ell_{\scrL} (r) = 0$ and hence $r=e$ which is a contradiction. Therefore, $\ell_{\scrL} (sr) = 0$.
    As a result, $sr = e$, and hence, $N (s) \cap W_{\scrL}^{\circ} = \{s\}$.
\end{proof}

In light of Proposition \ref{prop:endo_gps_are_coxeter_gps}, the group $W_{\scrL}^{\circ}$ is also called the \emph{endoscopic Coxeter group} for $\scrL$.
We refer to the elements in $S_{\scrL}^{\circ}$ as the \emph{endosimple reflections}.

The following lemma gives an easy way to calculate the endoscopic Coxeter group.

\begin{lemma}\label{lem:calculating_endoscopy_groups}
    For all $\scrL \in \fr{o}$, one has that $W_{\scrL}^{\circ} = \langle \mathscr{R} (W) \cap W_{\scrL} \rangle$. 
\end{lemma}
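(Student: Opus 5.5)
We prove the two inclusions separately.

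For $W_{\scrL}^{\circ} \subseteq \langle \mathscr{R}(W) \cap W_{\scrL} \rangle$, we use Proposition \ref{prop:endo_gps_are_coxeter_gps}. It says that $W_{\scrL}^{\circ}$ is generated by $S_{\scrL}^{\circ}$. Its proof also shows that every $s \in S_{\scrL}^{\circ}$ has the form $w^{\beta} t (w^{\beta})^{-1}$ with $t \in S$, so $s$ is a reflection of $W$. Since $W_{\scrL}^{\circ} \subseteq W_{\scrL}$, each such generator lies in $\mathscr{R}(W) \cap W_{\scrL}$, and this inclusion follows.

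For the reverse inclusion, it suffices to show that every reflection $r \in W_{\scrL}$ lies in $W_{\scrL}^{\circ}$, i.e.\ that $b_{\scrL}(r) = e$. Write $r = w s w^{-1}$ with $w \in W$, $s \in S$, and use the expression $\uw\, s\, \overline{\uw}$ for some expression $\uw$ of $w$. Set $\scrL' = \scrL w$. Since $\scrL r = \scrL$, we have $\scrL' s = \scrL'$. By Lemma \ref{lem:block_prelims}(1),
\[ b_{\scrL}(\uw s \overline{\uw}) = b_{\scrL}(\uw)\, b_{\scrL'}(s)\, b_{\scrL' s}(\overline{\uw}). \]
Because $\scrL' s = \scrL'$, the index of $s$ lies in $K$, so $b_{\scrL'}(s) = e$. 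By Lemma \ref{lem:block_prelims}(2), $b_{\scrL'}(\overline{\uw}) = b_{\scrL'\overline{\uw}}(\uw)^{-1}$, and $\scrL' \overline{w} = \scrL w w^{-1} = \scrL$. Hence $b_{\scrL'}(\overline{\uw}) = b_{\scrL}(\uw)^{-1}$, the product collapses to $e$, and Lemma \ref{lem:block_prelims}(3) makes this independent of the chosen expression. So $r \in W_{\scrL}^{\circ}$, and since $W_{\scrL}^{\circ}$ is a subgroup, $\langle \mathscr{R}(W) \cap W_{\scrL}\rangle \subseteq W_{\scrL}^{\circ}$.

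The only delicate step is reading off from the proof of Proposition \ref{prop:endo_gps_are_coxeter_gps} that endosimple reflections really are reflections of $W$. If needed, one can instead redo that argument directly: for $s \in S_{\scrL}^{\circ}$, take a reduced expression, locate its unique index in $K$, and apply the same conjugation computation as above.
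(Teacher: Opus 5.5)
Your proposal is correct and follows the paper's proof. The paper also obtains $W_{\scrL}^{\circ} \subseteq \langle \mathscr{R}(W) \cap W_{\scrL}\rangle$ from the reflection-subgroup part of the proof of Proposition \ref{prop:endo_gps_are_coxeter_gps}, and it proves the reverse inclusion by the same conjugation computation $r = wsw^{-1}$ via Lemma \ref{lem:block_prelims}. Your expression-level bookkeeping (checking $\scrL w s = \scrL w$ and $\scrL w\, \overline{\uw} = \scrL$) is in fact slightly cleaner than the paper's displayed chain, whose middle factor ${}_{\scrL}\Bl^{\min}_{\scrL}(w)^{-1}$ should read ${}_{\scrL}\Bl^{\min}_{\scrL w}(w)^{-1}$.
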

\begin{proof}
    Proposition \ref{prop:endo_gps_are_coxeter_gps} implies that $W_{\scrL}^{\circ} \subseteq \langle \mathscr{R} (W) \cap W_{\scrL} \rangle$ since $W_{\scrL}^{\circ}$ is a reflection subgroup.
    It remains to check the opposite inclusion. Let $r \in \mathcal{R} (W) \cap W_{\scrL}$ and write $r = wsw^{-1}$ for some $w \in W$ and $s \in S$.
    Note that $s \in W_{\scrL w}^{\circ}$.
    By Lemma \ref{lem:block_prelims}, we have that 
    \[ {}_{\scrL} \Bl^{\min}_{\scrL} (r) = \left( {}_{\scrL} \Bl^{\min}_{\scrL w} (w) \right) \left( {}_{\scrL w} \Bl^{\min}_{\scrL w} (s) \right) \left( {}_{\scrL w} \Bl^{\min}_{\scrL} (w^{-1}) \right) = \left( {}_{\scrL} \Bl^{\min}_{\scrL w} (w) \right) \left( {}_{\scrL} \Bl^{\min}_{\scrL} (w)^{-1} \right) = e.\]
    Therefore, $r \in W_{\scrL}^{\circ}$ which completes the proof.
\end{proof}

\begin{example}\label{ex:endoscopic_groups}
    \begin{enumerate}
        \item Let $\fr{o} = 1$. Then $W_{1}^{\circ} = W$.
        \item Let $\fr{o} = W$ and have $W$ act on $\fr{o}$ by right multiplication. Then for all $x \in W$, $W_x^{\circ} = \{e\}$. 
        \item Let $G$ be a connected reductive group with maximal torus $T$ and Weyl group $W$ and set $\fr{o} = T_{\k}^{\vee}$. 
        Let $\Phi$ denote the root system of $G$ with respect to $T$ and $\Phi^{\vee}$ the set of coroots.
        Each coroot in $\Phi^{\vee}$ defines a character of $T_{\k}^{\vee}$. We can then set 
        \[\Phi_t^{\vee} \coloneq \{ \alpha^{\vee} \in \Phi^{\vee} \mid \alpha^{\vee} (t) = 1 \}.\]
        Let $\Phi_t$ denote the subset of $\Phi$ dual to $\Phi_t^{\vee}$. The subset $\Phi_t$ is a sub-root system of $\Phi$ with positive roots $\Phi_t \cap \Phi^+$.
        It follows from \cite[\S 4]{LY} that the Weyl group of $\Phi_t$ with respect to $\Phi_t^+$ is equal to $W_t^{\circ}$.
        \item Let $J \subset S$ and $\fr{o} = W_I \backslash W$. Then $W_{e}^{\circ} = W_I$. More generally, for all cosets $x \in W_I \backslash W$ with representative $\dot{x} \in W$, $W_x^{\circ} = \dot{x}W_I \dot{x}^{-1}$.
        \item  Let $W' \subseteq W$ be a reflection subgroup. Let $\fr{o} = W' \backslash W$. As in Example \ref{ex:mon_params}, we have that $W_e = W'$.
        It then follows from Lemma \ref{lem:calculating_endoscopy_groups} that $W_e^{\circ} = W'$. In particular, every reflection subgroup may arise as an endoscopic Coxeter group.
    \end{enumerate}
\end{example}

\begin{example}
    \begin{enumerate}
        \item Endoscopic Coxeter subgroups need not be parabolic subgroups of $W$. 
            Let $W$ be the Coxeter group of type $B_2$, with generators $s$ and $t$. Let $W'$ be the reflection subgroup generated by the reflections $s$ and $tst$.
            Note that $W'$ is of type $A_1 \times A_1$ and clearly is not a parabolic subgroup of $W$. We can then take $\fr{o} = W' \backslash W$.
            By Example \ref{ex:endoscopic_groups} (5), we have that $W_e^{\circ} = W'$.
        \item Endoscopic Coxeter subgroups need not be finite rank Coxeter groups even when $W$ has finite rank. 
            Let $W$ be the universal Coxeter group on three generators $s_1, s_2, s_3$.
            Consider the element $x = s_1 s_2$, which has infinite order. Define an infinite sequence of reflections $r_n = x^n s_3 x^{-n}$ for all $n \in \Z_{\geq 0}$.
            Let $W'$ be the reflection subgroup of $W$ generated by $\{r_n\}_{n \geq 0}$.
            Here we have that $W'$ is isomorphic to the free product of countably infinitely many copies of $\Z/2\Z$. It can then be easily verified that $W'$ is not isomorphic to a Coxeter group with finitely many simple reflections.
            Again, we can take $\fr{o} = W' \backslash W$ and Example \ref{ex:endoscopic_groups} (5) shows that $W_e^{\circ} = W'$.
    \end{enumerate}
\end{example}

\begin{definition}
    An expression $\uw \in \Exp (W)$ of $w \in W_{\scrL}^{\circ}$ is said to be \emph{endo-reduced} if $\ell_{\scrL} (\uw) = \ell_{\scrL} (w)$.
\end{definition}

\begin{definition}
    An \emph{endosimple expansion datum} for $\scrL$ is a function
    \[\iota : S_{\scrL}^{\circ} \to \Exp (W)\]
    such that $\iota (s)$ is an endo-reduced expression for $s \in W$. 
\end{definition}

Given an endosimple expansion datum $\iota$ for $\scrL$, then $\iota$ can be extended uniquely to all $W_{\scrL}^{\circ}$-expressions,
\[\iota : \Exp (W_{\scrL}^{\circ}) \to \Exp (W).\]
Moreover, $\iota$ satisfies that if $\ux$ is a reduced expression for some $x \in W_{\scrL}^{\circ}$, then $\iota (\ux)$ is an endo-reduced expression in $W$.

\subsection{Realizations}

For both Bott--Samelson bimodules and the diagrammatic category, the starting point will be the data of a realization of a Coxeter system.
In order for the resulting categories to be well-defined, one has to impose some rather technical combinatorial constraints.
We will cover the necessary combinatorics in this section.

\subsubsection{Two-Colored Quantum Numbers}

Let $A = \Z [x,y]$ be the integral polynomial ring in two variables.
We define a collection of elements of $A$ called the \emph{two-colored quantum numbers} as follows.
First set $[0]_x = 0$, $[0]_y = 0$, $[1]_x = 1$, $[1]_y = 1$, $[2]_x = x$, and $[2]_y = y$.
Define the remaining quantum numbers recursively by the formulas
\begin{equation}
    [n+1]_x = [2]_x [n]_y - [n-1]_x \qquad \text{and} \qquad [n+1]_y = [2]_y [n]_x - [n-1]_y.
\end{equation}
One can use the same formulas to define $[n]_x$ and $[n]_y$ for $n \leq 0$. It is easy to check that $[-n]_x = -[n]_x$ and $[-n]_y = - [n]_y$.
For all $d \in \Z$, we define the \emph{$d$-twisted variables}, 
\[ x(d) = [d+1]_x - [d-1]_x \qquad \text{and} \qquad y(d) = [d+1]_y - [d-1]_y.\]
For all $n \in \Z$, we will write $[n]_{x(d)}$ (resp. $[n]_{y(d)}$) for the element of $A$ obtained by substituting $x$ with $x(d)$ and $y$ with $y(d)$ in $[n]_{x}$ (resp. $[n]_y$).

We now recall some standard facts about two-colored quantum numbers (cf., \cite{Elib, EW, EW17}).

\begin{lemma}\label{lem:facts_about_quantum_numbers}
    Let $n, m \in \Z$.
    \begin{enumerate}
        \item If $n$ is odd, then $[n]_x = [n]_y$. As a result, if $d$ is even, then $x(d) = y(d)$.
        \item If either $n$ is even or both $n, m$ are odd, then
        \[[n]_x [m]_y = \sum_{k=0}^{m-1} [n+m-1-2k]_x.\]
        \item If $n$ divides $m$, then $[n]_x$ divides $[m]_x$.
        \item We have that $[2n]_x = [n]_x ([n+1]_x - [n-1]_x)$.
    \end{enumerate}
\end{lemma}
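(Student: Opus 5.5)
Each of the four statements comes from the defining recursion $[n+1]_x = x[n]_y - [n-1]_x$, $[n+1]_y = y[n]_x - [n-1]_y$. I will prove them in order, since each later one uses the earlier ones.

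For (1), I induct on $n \geq 1$ in steps of two, proving at the same time that $[n]_x = [n]_y$ for $n$ odd and that $y[n]_x = x[n]_y$ for $n$ even. The base cases are $[1]_x=[1]_y=1$ and $y[2]_x = yx = x[2]_y$. For the inductive step with $n$ odd,
\[[n+2]_x = x[n+1]_y - [n]_x = y[n+1]_x - [n]_y = [n+2]_y,\]
using the even-index identity $x[n+1]_y = y[n+1]_x$ and the inductive hypothesis $[n]_x=[n]_y$. The even-index identity then propagates:
\[y[n+3]_x = y(x[n+2]_y - [n+1]_x) = x\bigl(y[n+2]_x\bigr) - x[n+1]_y = x[n+3]_y.\]
Negative $n$ follows from $[-n]=-[n]$. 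For the consequence about $x(d)$: if $d$ is even, then $d\pm1$ are odd, so $x(d)=[d+1]_x-[d-1]_x = [d+1]_y-[d-1]_y = y(d)$.

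For (2), I induct on $m$ with $n$ fixed. The case $m=1$ is trivial. The case $m=2$ is $[n]_x\, y = [n+1]_x + [n-1]_x$. I obtain this from the recursion for $[n+1]_y$ (or $[n+1]_x$) after swapping colors with (1), which is exactly where the parity hypothesis enters. When $n$ is even, $[n\pm1]$ are odd and so color-free, so $y[n]_x = [n+1]_y+[n-1]_y = [n+1]_x+[n-1]_x$. When $n$ is odd, $[n]_x=[n]_y$ and the analogous argument applies. For the step I write $[m+1]_y = y[m]_x - [m-1]_y$ and multiply by $[n]_x$. The term $[n]_x y [m]_x$ needs a version of (2) with $x$ in both factors. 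I therefore expect the cleanest route to be a simultaneous induction over the four color combinations, tracking parities carefully; this bookkeeping is the main obstacle. The sum then telescopes in the usual Clebsch–Gordan fashion.

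For (4), I take (2) with $m$ replaced by the even index $n+1$ (or argue directly): $[n]_x\,x(n)$ expands as $[n]_x[n+1]_x - [n]_x[n-1]_x$. Applying the two-factor sum formula to each product, with colors matched via (1), the difference collapses to the single term $[2n]_x$. Alternatively I will verify (4) by a direct induction on $n$ using the recursion, which avoids the parity case split.

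For (3), I write $m=kn$ and induct on $k$. The key identity is $[(k+1)n]_x = [kn]_x\cdot(\text{twisted }[2]) - [(k-1)n]_x$, a consequence of (2) with $[2]_{x(n)}$, or equivalently of the generating identity $[a+b] = [a][b+1]-[a-1][b]$ with appropriate colors. Since $[n]_x$ divides both $[kn]_x$ and $[(k-1)n]_x$, it divides $[(k+1)n]_x$. The base case $k=1$ is trivial.
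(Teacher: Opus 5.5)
Your proof of (1) is complete and correct, and (3) and (4) do reduce to (2) along the lines you indicate. The paper itself gives no proof of this lemma and only cites the standard references, so everything rests on your treatment of (2). That is where the gap is.

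\textbf{Where (2) fails as written.} In the inductive step you correctly isolate the term $y[n]_x[m]_x$. You then only announce a ``simultaneous induction over the four color combinations'', call its bookkeeping the main obstacle, and never carry it out. So (2) is not established.

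Your base case is also wrong for odd $n$. The identity $y[n]_x=[n+1]_x+[n-1]_x$ fails there: for $n=1$ it reads $y=x$. This failure is exactly why (2) excludes $n$ odd with $m$ even. Consequently, for odd $n$ an induction on $m$ in steps of one cannot be run at all, and the parity hypothesis does not enter where you say it does.

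\textbf{The missing observation.} Part (1) alone handles the troublesome term; no auxiliary ``two-$x$'' version of (2) is needed.

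For $n$ even, induct on $m$ starting from $m=0,1$.
\begin{enumerate}
\item If $m$ is odd, then $[m]_x=[m]_y$, so $y[n]_x[m]_x$ is $y$ times the sum for $m$. All indices $j$ in that sum are even, and $y[j]_x=[j+1]_y+[j-1]_y=[j+1]_x+[j-1]_x$.
\item If $m$ is even, replace $y[m]_x$ by $x[m]_y$, using the even-index identity you proved in (1). Now all indices $j$ are odd, and $x[j]_x=x[j]_y=[j+1]_x+[j-1]_x$.
\end{enumerate}
In both cases $y[n]_x[m]_x=\sum_{k=0}^{m-1}([n+m-2k]_x+[n+m-2-2k]_x)$. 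Subtracting $[n]_x[m-1]_y=\sum_{k=0}^{m-2}[n+m-2-2k]_x$ telescopes to $\sum_{k=0}^{m}[n+m-2k]_x$, which is (2) for $m+1$.

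For $n,m$ both odd, induct instead on odd $n$.
\begin{enumerate}
\item Base case $n=1$: the sum collapses to $[m]_x=[m]_y$ via $[-j]_x=-[j]_x$.
\item Step: write $[n]_x=x[n-1]_y-[n-2]_x$.
\item Since $m$ is odd, $[n-1]_y[m]_y=[n-1]_y[m]_x$. Evaluate this by the $x\leftrightarrow y$ mirror of the even case; all indices are even.
\item Apply $x[j]_y=[j+1]_x+[j-1]_x$ termwise, and subtract the hypothesis for $[n-2]_x[m]_y$.
\end{enumerate}

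\textbf{Two smaller points.} In (4) with $n$ odd, you must commute factors so the even index comes first, e.g.\ $[n]_x[n+1]_x=[n+1]_x[n]_y$, before (2) applies. In (3), the multiplier in your recursion is $y(n)$, not $x(n)$, when $n$ is odd and $k$ is even. For example, $[9]_x=y(3)[6]_x-[3]_x$. This is harmless for divisibility, but the identity should be stated correctly.
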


It will be useful to give a recursive formula for computing the $d$-twisted variables similar to the two-colored quantum numbers.
We write $x'(0) = 2$, $y'(0) = 2$, $x'(1) = x$, and $y'(1) = y$. We can then define inductively for $d \geq 2$,
\[ x'(d) = \begin{cases} -x'(d-2) + x \cdot x'(d-1) & \text{if } d \text{ odd}, \\ -x'(d-2) + y \cdot x'(d-1) & \text{if } d \text{ even},\end{cases}\]
\[ y'(d) = \begin{cases} -y'(d-2) + y \cdot y'(d-1) & \text{if } d \text{ odd}, \\ -y'(d-2) + x \cdot y'(d-1) & \text{if } d \text{ even}.\end{cases} \]

\begin{lemma}\label{lem:recursive_formula_for_twisted_vars}
    For all $d \geq 0$, we have that $x'(d) = x(d)$ and $y'(d) = y(d)$. 
\end{lemma}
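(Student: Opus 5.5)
The plan is a strong induction on $d$, proving $x'(d)=x(d)$ and $y'(d)=y(d)$ together. In fact each identity only uses the defining recursions of the two-colored quantum numbers and Lemma \ref{lem:facts_about_quantum_numbers}(1). Interchanging $x$ and $y$ swaps the two recursions for $x'$ and $y'$, and likewise swaps the two recursions for $[n]_x$ and $[n]_y$. So it is enough to treat $x'(d)$; the argument for $y'(d)$ is the same with colors exchanged.

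The base cases are direct computations. Since $[-1]_x=-[1]_x=-1$ and $[0]_x=0$, we get $x(0)=[1]_x-[-1]_x=2=x'(0)$ and $x(1)=[2]_x-[0]_x=x=x'(1)$.

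For the inductive step with $d\ge 2$, it suffices to show that $x(d)$ satisfies the same recursion as $x'(d)$, namely $x(d)=-x(d-2)+c\cdot x(d-1)$ with $c=x$ for $d$ odd and $c=y$ for $d$ even. The key tool is the symmetric form of the defining recursion:
\[x[n]_y=[n+1]_x+[n-1]_x, \qquad y[n]_x=[n+1]_y+[n-1]_y.\]
Lemma \ref{lem:facts_about_quantum_numbers}(1) lets us swap the color of any odd-indexed quantum number. The two parities are handled as follows.

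\emph{Case $d$ odd.} Here $d$ and $d-2$ are odd, so $x(d-1)=[d]_x-[d-2]_x=[d]_y-[d-2]_y$. Applying the first identity to each term gives
\[x\cdot x(d-1)=([d+1]_x+[d-1]_x)-([d-1]_x+[d-3]_x)=[d+1]_x-[d-3]_x.\]
Subtracting $x(d-2)=[d-1]_x-[d-3]_x$ leaves $[d+1]_x-[d-1]_x=x(d)$, as required.

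\emph{Case $d$ even.} Here $x(d-1)=[d]_x-[d-2]_x$ involves even indices. Applying the second identity gives
\[y\cdot x(d-1)=[d+1]_y+[d-1]_y-[d-1]_y-[d-3]_y=[d+1]_y-[d-3]_y.\]
The indices $d+1$ and $d-3$ are odd, so by Lemma \ref{lem:facts_about_quantum_numbers}(1) this equals $[d+1]_x-[d-3]_x$. Subtracting $x(d-2)$ again yields $x(d)$.

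There is no real obstacle. The only point requiring care is bookkeeping: in each parity case, we must check that the indices whose color we swap are genuinely odd, so that Lemma \ref{lem:facts_about_quantum_numbers}(1) applies. This is also exactly why the recursion for $x'(d)$ alternates between multiplying by $x$ and by $y$.
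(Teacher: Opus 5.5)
Your proof is correct and follows the paper's own argument: induction on $d$, with the inductive step checking that $x(d)$ satisfies the same recursion as $x'(d)$, using the defining recursion of the two-colored quantum numbers and the odd-index color swap from Lemma~\ref{lem:facts_about_quantum_numbers}(1). You also write out the odd case, which the paper omits, and your even-case computation is cleaner than the paper's, whose third line has a sign slip in expanding $-x(d-2)$.
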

\begin{proof}
    We will just prove that $x' (d) = x(d)$. The other statements follow from an entirely symmetric argument. We will argue by induction on $d$.
    If $d = 0$, then $x(0) = 1-(-1) = 2$. If $d=1$, then $x(1) = [2]_x = x$.
    Assume that $x' (k) = x(k)$ for $k < d$ and $d \geq 2$. 
    If $d$ is even, then
    \begin{align*}
        x'(d) &= -x' (d-2) + y \cdot x' (d-1) \\
        &= -x (d-2) + y \cdot x (d-1) \\
       &= -[d-1]_x - [d-3]_x + y [d]_x - y[d-2]_x \\
       &= (y[d]_x - [d-1]_y) - (y[d-2]_x - [d-3]_y) \\
       &= [d+1]_y - [d-1]_y \\
       &= [d+1]_x - [d-1]_x.
    \end{align*}
   The case of $d$ odd is similar, and thus omitted.
\end{proof}

The twisted variables are intimately related with quotients of two-colored polynomials as explained by the following proposition.

\begin{proposition}\label{prop:twisted_vars_and_two_color}
    Let $n,v \in \Z$ such that $v \mid n$ and $v \neq 0$. Write $d = n/v$.
    \begin{equation}\label{eq:prop_twisted_vars_and_two_color_1} 
        [v]_{x(d)} = \frac{[n]_x}{[d]_x} \qquad \text{and} \qquad [v]_{y(d)} = \frac{[n]_y}{[d]_y}. 
    \end{equation}
\end{proposition}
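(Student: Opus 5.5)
The idea is to reduce the two-colored identity to the classical one-colored identity $[vd]_q/[d]_q = [v]_{q^d}$ by a specialization that loses no information. Consider the ring map
\[\phi : A = \Z[x,y] \to \Z[a^{\pm 1}, q^{\pm 1}], \qquad x \mapsto aQ, \quad y \mapsto a^{-1}Q, \qquad Q = q + q^{-1}.\]
Since $a$ and $Q$ are algebraically independent over $\Z$, so are $aQ$ and $a^{-1}Q$. Hence $\phi$ is injective, and it suffices to prove the identities after applying $\phi$. Write $[n]_q = (q^n - q^{-n})/(q - q^{-1})$ for the ordinary quantum numbers.

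\emph{Step 1: the images of the two-colored quantum numbers.} Using the recursion defining $[n]_x, [n]_y$ and the ordinary recursion $[n+1]_q = Q[n]_q - [n-1]_q$, a straightforward induction on $n$ (in both directions) gives
\[\phi([n]_x) = a^{\epsilon(n)}[n]_q, \qquad \phi([n]_y) = a^{-\epsilon(n)}[n]_q,\]
where $\epsilon(n) = 1$ if $n$ is even and $\epsilon(n) = 0$ if $n$ is odd. This is consistent with Lemma \ref{lem:facts_about_quantum_numbers}(1).

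\emph{Step 2: the twisted variables.} Since $d \pm 1$ have the same parity,
\[\phi(x(d)) = a^{\epsilon(d+1)}\bigl([d+1]_q - [d-1]_q\bigr) = a^{\delta}(q^d + q^{-d}),\]
where $\delta = 1$ if $d$ is odd and $\delta = 0$ if $d$ is even. Similarly $\phi(y(d)) = a^{-\delta}(q^d + q^{-d})$. The substitution $x \mapsto x(d)$, $y \mapsto y(d)$ is therefore the same shape of specialization as $\phi$, with $a$ replaced by $a^{\delta}$ and $q$ replaced by $q^d$. Applying Step 1 with these parameters gives
\[\phi\bigl([v]_{x(d)}\bigr) = a^{\delta\epsilon(v)}[v]_{q^d}.\]

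\emph{Step 3: compare with the right-hand side.} By Step 1,
\[\phi([n]_x)/\phi([d]_x) = a^{\epsilon(n) - \epsilon(d)}\,[vd]_q/[d]_q.\]
The factor $[vd]_q/[d]_q$ equals $[v]_{q^d}$ by the elementary identity $(q^{vd} - q^{-vd})/(q^d - q^{-d}) = [v]_{q^d}$. For the powers of $a$, a parity check is needed. Recall that $n = vd$ is even iff $v$ or $d$ is even. If $d$ is even, then $\epsilon(n) - \epsilon(d) = 0 = \delta\epsilon(v)$. If $d$ is odd, then $\epsilon(n) = \epsilon(v)$, so $\epsilon(n) - \epsilon(d) = \epsilon(v) = \delta\epsilon(v)$. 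Thus the exponents agree in all cases.

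\emph{Step 4: conclude.} Lemma \ref{lem:facts_about_quantum_numbers}(3) says that $[d]_x$ divides $[n]_x$ in $A$, so the quotient in \eqref{eq:prop_twisted_vars_and_two_color_1} is a genuine element of $A$. We have shown that $[v]_{x(d)}[d]_x$ and $[n]_x$ have the same image under $\phi$, so by injectivity of $\phi$ they are equal in $A$. The $y$-version is identical, or follows by the symmetry $a \leftrightarrow a^{-1}$. Negative $v$ and negative $d$ reduce to the positive case via $[-n]_x = -[n]_x$ and the corresponding sign behaviour of $x(-d)$.

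The main obstacle is the parity bookkeeping in Steps 2 and 3: checking carefully that the twist by $d$ turns $a$ into $a^{\delta}$ and not something else, and that the exponents of $a$ match in every parity case. The injectivity of $\phi$ also has to be justified honestly. If it causes trouble, the fallback is a direct induction on $v$: verify that $v \mapsto [vd]_x/[d]_x$ satisfies the two-colored recursion with parameters $x(d), y(d)$, using the product formula of Lemma \ref{lem:facts_about_quantum_numbers}(2).
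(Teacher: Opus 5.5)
Your argument is correct, and it takes a genuinely different route from the paper.

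\textbf{What the paper does.} The paper stays inside $A=\Z[x,y]$ and inducts on $v$. It expands $[v]_{x(d)}[d]_x$ using the two-colored recursion $[v]_{x(d)}=[v-1]_{y(d)}[2]_{x(d)}-[v-2]_{x(d)}$. It then rewrites $[2]_{x(d)}[d]_x=[2d]_x$ via Lemma~\ref{lem:facts_about_quantum_numbers}(4), applies the inductive hypothesis to $v-1$ and $v-2$, and telescopes the resulting sums with the product formula of Lemma~\ref{lem:facts_about_quantum_numbers}(2). Only the case $v$ even, $d$ odd is written out; the other parities are asserted to be similar.

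\textbf{What you do.} You transport everything along $\phi$ to one-colored Laurent polynomials. There, $[n]_x$ and $[n]_y$ become $a^{\pm\epsilon(n)}[n]_q$. The twist $x\mapsto x(d)$, $y\mapsto y(d)$ intertwines with the endomorphism $a\mapsto a^{\delta}$, $q\mapsto q^{d}$; your computation $\phi(x(d))=a^{\delta}(q^d+q^{-d})$ is right. So the proposition reduces to the classical identity $[vd]_q=[v]_{q^d}[d]_q$ together with the exponent check $\epsilon(n)=\epsilon(d)+\delta\epsilon(v)$, which you verify correctly in both parity cases.

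\textbf{What your approach buys.}
\begin{itemize}
\item All parities and signs are handled uniformly. Steps 1--3 are valid as written for every integer $v$ and every $d\neq 0$, so the reduction to positive $v,d$ in Step 4 is unnecessary.
\item The divisibility $[d]_x\mid[n]_x$ comes for free from the product identity in the domain $A$, so you do not need Lemma~\ref{lem:facts_about_quantum_numbers}(3). Also $\phi([d]_x)\neq0$ shows $[d]_x\neq 0$; both proofs implicitly need $n\neq 0$ for the quotient to make sense.
\item It explains conceptually why twisting by $d$ behaves like $q\mapsto q^d$.
\end{itemize}
The paper's argument avoids an auxiliary ring, at the cost of a parity case split and reliance on the recalled two-colored identities.

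\textbf{One point to tighten.} Your justification of injectivity of $\phi$ (``so are $aQ$ and $a^{-1}Q$'') is stated rather than proved. The cleanest argument:
\begin{itemize}
\item $x^iy^j\mapsto a^{i-j}Q^{i+j}$, and $(i,j)\mapsto(i-j,i+j)$ is injective.
\item The elements $a^kQ^m$ are $\Z$-linearly independent in $\Z[a^{\pm1},q^{\pm1}]$: separate them first by $a$-degree, then by top $q$-degree.
\end{itemize}
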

\begin{proof}
    We will argue by induction on $v$. If $v= 0$ or $v=1$, there is nothing to show.
    The case of $v=2$ follows immediately from Lemma \ref{lem:facts_about_quantum_numbers} (4).
    Suppose $v > 2$. We will just consider the case when $v$ is even and $d$ is odd. The other cases are entirely similar.
    Then we can compute via induction and Lemma \ref{lem:facts_about_quantum_numbers} (2), 
    \begin{align*}
        [v]_{x(d)} [d]_x &= ([v-1]_{y (d)} [2]_{x(d)} - [v-2]_{x(d)}) [d]_x \\
        &= [v-1]_{y(d)} [2d]_{x} - [dv-2d]_x \\
        &= \frac{[d(v-1)]_y [2d]_x - [dv-2d]_x [d]_y}{[d]_y} \\
        &= \frac{1}{[d]_y} \left( \sum_{k=0}^{2d-1} [dv+d-1-2k]_x -  \sum_{k=0}^{d-1} [dv-d-1-2k]_x \right)  \\
        &= \frac{1}{[d]_y} \sum_{k=0}^{d-1} [dv+d-1-2k]_x \\
        &= \frac{[dv]_x [d]_y}{[d]_y} \\
        &= [dv]_x.
    \end{align*}
\end{proof}

\begin{definition}
    Let $n,d \in \Z_{\geq 0}$ and $0 \leq k \leq n$. Define the \emph{two-colored quantum binomial coefficients} as
    \begin{equation}
        \binom{n}{k}_x \coloneq \frac{[n]_x [n-1]_x \ldots [n-k+1]_x}{[1]_x [2]_x \ldots [k]_x}.
    \end{equation}
    We also define the \emph{$d$-twisted two-colored quantum binomial coefficients} as
    \begin{equation}
        \binom{n}{k}_{x(d)} \coloneq \frac{[n]_{x(d)} [n-1]_{x(d)} \ldots [n-k+1]_{x(d)}}{[1]_{x(d)} [2]_{x(d)} \ldots [k]_{x(d)}}.
    \end{equation}
    We can also define $\binom{n}{k}_x$ (resp. $\binom{n}{k}_{x(d)}$) by switching all occurrences of $x$ (resp. $x(d)$) by $y$ (resp. $y(d)$) in the above formulas.
\end{definition}
Note that $A$ is a UFD, and one can use standard arguments to prove that the ($d$-twisted) quantum binomial coefficients are elements of $A$.
In light of Proposition \ref{prop:twisted_vars_and_two_color}, the $d$-twisted coefficients are also given by the following equation,
\begin{equation}\label{eq:cor_of_twisted_vars_and_two_color}
    \binom{n}{k}_{x(d)} =\frac{[d(n)]_{x} [d(n-1)]_{x} \ldots [d(n-k+1)]_{x}}{[d]_{x} [2d]_{x} \ldots [kd]_{x}}.
\end{equation}

\subsubsection{Rank 2 Reflection Subgroups of Dihedral Groups}

Let $m \in \Z_{\geq 0} \cup \{ \infty\}$.
Let $W_m$ denote the dihedral group of order $2m$. If $m \geq 2$, we regard $W_m$ as a Coxeter group via the presentation $\langle s,t \mid s^2 = t^2 = (st)^m \rangle$.
More generally, we interpret $W_0$ as the trivial group and $W_1$ as the type $A_1$ Coxeter group. 

The following lemma is well-known and can be proved using basic group theory.

\begin{lemma}\label{lem:class_of_rk_2_subgroups_of_dihedral_groups}
    Assume $m \in \Z_{\geq 0}  \cup \{ \infty\}$.
    Let $W'$ be a rank $\leq 2$ reflection subgroup of $W_m$. If $m < \infty$, then $W'$ is isomorphic as a Coxeter group to $W_v$ for some $v$ dividing $m$ or $v = 0$.
    If $m = \infty$, then $W'$ is an infinite dihedral group.
\end{lemma}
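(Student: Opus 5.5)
The plan is to classify the reflection subgroups of $W_m$ directly, using the explicit structure of the dihedral group. Write $r = st$, so $W_m = \langle r \rangle \rtimes \langle s \rangle$, with $\langle r \rangle$ cyclic of order $m$ (or infinite cyclic if $m = \infty$). The reflections of $W_m$ are exactly the elements $r^i s$, and every element outside the rotation subgroup $\langle r \rangle$ is such a reflection. Let $W'$ be a reflection subgroup generated by at most two reflections.

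\textbf{Degenerate cases.} If $W'$ is generated by zero reflections, then $W' = \{e\} \cong W_0$. If it is generated by one reflection, then $W' \cong W_1$, and $1$ divides $m$.

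\textbf{Main case: two distinct reflections.} Suppose $W' = \langle r^i s, r^j s \rangle$ with $i \neq j$. The product of the two generators is
\[(r^i s)(r^j s) = r^{i-j}.\]
Hence $W' \cap \langle r \rangle = \langle r^{i-j} \rangle$. Indeed, any element of $W'$ is a word in the two generators, and its rotation part is a power of $r^{i-j}$. Since $\langle r^{i-j} \rangle = \langle r^g \rangle$ with $g = \gcd(i-j, m)$ when $m < \infty$, this intersection is cyclic of order $v = m/g$, which divides $m$. Because $W'$ contains a reflection, it is the semidirect product of this cyclic group with $\langle r^i s \rangle$, so it is dihedral of order $2v$. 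This gives an abstract isomorphism $W' \cong W_v$.

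\textbf{Upgrading to an isomorphism of Coxeter systems.} The statement asks for an isomorphism as Coxeter groups, so the two given generators must form a Coxeter generating pair of $W'$. For a dihedral group, any two reflections whose product has order $v$ generate the group and satisfy the presentation $\langle a, b \mid a^2 = b^2 = (ab)^v = e \rangle$. Here the order of $ab$ is exactly $v$, and the group has order $2v$, so the presentation defines it. Two special values of $v$ need attention:
\begin{itemize}
\item If $v = 1$, then $W'$ is just $A_1$. This can happen only if $i \equiv j \pmod{m}$, that is, if the two reflections coincide.
\item If $v = 2$, the two reflections commute and $W' \cong A_1 \times A_1 = W_2$.
\end{itemize}
Both cases fit the claim.

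\textbf{Infinite case.} When $m = \infty$, the element $r^{i-j}$ with $i \neq j$ has infinite order. So $W'$ is infinite dihedral, generated by two reflections with no relation other than $a^2 = b^2 = e$.

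The only mild obstacle is the Coxeter-structure point, namely that the canonical Dyer generators of $W'$ agree with the chosen generators up to the dihedral symmetry. This reduces to the standard fact that every dihedral group of order $2v$ is a Coxeter group on any pair of reflections whose product is a generator of the rotation subgroup. For a rank-two reflection subgroup, one can always choose the two reflections to be its canonical generators.
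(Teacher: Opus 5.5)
Your argument is correct, and it is exactly the elementary group-theoretic verification the paper has in mind: the paper gives no proof, saying only that the lemma is well known and follows from basic group theory. Two small touch-ups. First, the clean justification of $W' \cap \langle r\rangle = \langle r^{i-j}\rangle$ is that the elements of $W'$ lying in $\langle r\rangle$ are the even-length alternating words in $a=r^is$ and $b=r^js$, hence powers of $ab$; the odd-length words $(ab)^k a$ are reflections, and their ``rotation part'' is not in general a power of $r^{i-j}$. Second, your degenerate cases show that the $m=\infty$ clause of the statement only holds when $W'$ has rank exactly $2$.
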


If $\fr{o}$ is a $W_m$-set and $\scrL \in \fr{o}$, the endoscopic subgroup $W_{\scrL}^{\circ}$ is a reflection subgroup.
We define $v_{s,t}^{\scrL} \in \Z_{\geq 0} \cup \{\infty\}$ where $W_{\scrL}^{\circ}$ is of type $I_2 (v_{s,t}^{\scrL})$ (where $I_2 (0)$ is defined to be trivial and $I_2 (1)$ is defined to be $A_1$). We call $v_{s,t}^{\scrL}$ the \emph{endoscopic order} of $st$.

\subsubsection{Abe Realizations}

Let $\k$ be an integral domain. Let $a,b \in \k$. Consider the ring homomorphism $A \to \k [x,y]/(x-a, y-b) \cong \k$ defined by the canonical map $\Z \to \k$.
Given $p \in A$, we will write $p(a,b)$ for the element of $\k$ obtained via this homomorphism.
At times, when the choice of $a,b$ are obvious, we will simply refer to $p$ as an element of $\k$.

\begin{definition}\label{def:realization}
    Let $\k$ be an integral domain. A \emph{pre-realization} of a Coxeter group $(W,S)$ over $\k$ consists of a free, finite rank $\k$-module $\fr{h}$ along with subsets
    \[ \{ \alpha_s^{\vee} : s \in S \} \subset \fr{h} \qquad\text{and}\qquad \{ \alpha_s : s \in S \} \subset \fr{h}^*, \]
    called the simple coroots and simple roots respectively, such that
    \begin{enumerate}
        \item $\langle \alpha_s^{\vee}, \alpha_s \rangle = 2$ for all $s \in S$;
        \item the assignment
            \[s (v) = v - \langle v, \alpha_s \rangle \alpha_s^{\vee}\]
            for all $s \in S$ and $v \in \fr{h}$ defines a representation of $W$ on $\fr{h}$;
        \item $\alpha_s : \fr{h} \to \k$ is surjective for all $s \in S$.
    \end{enumerate}
    Here we are writing $\langle -, - \rangle : \fr{h} \times \fr{h}^* \to \k$ for the canonical pairing. The third condition is called \emph{Demazure surjectivity}.

    To every pre-realization $\fr{h}$, there is a \emph{Cartan matrix} $(a_{s,t})_{s,t \in S}$ where $a_{s,t} \coloneq \langle \alpha_s^{\vee}, \alpha_t \rangle$.
    We will further call a pre-realization $\fr{h}$ an \emph{Abe realization} if for all distinct $s,t \in S$ such that $st$ has order $m_{s,t} < \infty$, we have
        \[\binom{m_{s,t}}{k}_x (a_{s,t}, a_{t,s}) = \binom{m_{s,t}}{k}_y (a_{s,t}, a_{t,s}) = 0\]
        for all integers $1 \leq k \leq m_{s,t} - 1$.
\end{definition}

Hazi has shown that in the absence of parabolic subgroups of type $H_3$, the diagrammatic Hecke category is well-defined if and only if the underlying realization is an Abe realization \cite{Hazi}.
We will later see that the diagrammatic monodromic Hecke category may not be well-defined, even if the underlying realization is an Abe realization.
To resolve this, we introduce a collection of modified versions of the Abe realization condition.

\begin{definition}\label{def:endo_abe_realization}
    Let $\fr{h}$ be a pre-realization over $\k$ of a Coxeter group $(W,S)$. Let $\fr{o}$ be a right $W$-set.
    We call $\fr{h}$ an \emph{endoscopic Abe realization} if $\fr{h}$ is an Abe realization of $(W_{\scrL}^{\circ}, S_{\scrL}^{\circ})$ for all $\scrL \in \fr{o}$.
\end{definition}

\begin{definition}\label{def:reflection_abe_realization}
    Let $\fr{h}$ be a pre-realization over $\k$ of a Coxeter group $(W,S)$. Let $\fr{o}$ be a right $W$-set.
    We call $\fr{h}$ a \emph{reflection-stable Abe realization} if for all distinct $s,t \in S$ with $m_{s,t} < \infty$ and $v \in \Z_{> 1}$ dividing $m_{s,t}$, we have that
    \[\binom{v}{k}_{x(m_{s,t}/v)} (a_{s,t},a_{t,s}) = \binom{v}{k}_{y(m_{s,t}/v)} (a_{s,t}, a_{t,s}) = 0\]
    for all integers $1 \leq k \leq v - 1$.
\end{definition}

\begin{definition}\label{def:monodromic_abe_realization}
    Let $\fr{h}$ be a pre-realization over $\k$ of a Coxeter group $(W,S)$. Let $\fr{o}$ be a right $W$-set.
    We call $\fr{h}$ a \emph{monodromic Abe realization} if for all distinct $s,t \in S$ with $m_{s,t} < \infty$ and $\scrL \in \fr{o}$ such that $0 < v_{s,t}^{\scrL} < \infty$, we have that
    \[\binom{v_{s,t}^{\scrL}}{k}_{x(m_{s,t}/v_{s,t}^{\scrL})} (a_{s,t}, a_{t,s}) = \binom{v_{s,t}^{\scrL}}{k}_{y(m_{s,t}/v_{s,t}^{\scrL})} (a_{s,t}, a_{t,s}) = 0\]
    for all integers $1 \leq k \leq v_{s,t}^{\scrL} - 1$.
\end{definition}

The notion of a monodromic Abe realization is sufficient to have a well-behaved category of monodromic Abe--Soergel bimodules.
The author suspects that any monodromic Abe realization is also an endoscopic Abe realization; however, the author does not currently know a way to prove this.
It is clear from their definitions that any reflection-stable Abe realization is also a monodromic Abe realization.
We will show in Corollary \ref{cor:refl_abe_are_endo} that reflection-stable Abe realizations are also endoscopic Abe realizations.
As a result, they are a sufficiently restrictive to give well-behaved monodromic Hecke categories and endoscopic Hecke categories.
We summarize the above in the following diagram:
\[\begin{tikzcd}
                  & \text{reflection stable} \arrow[ld, "\text{Cor \ref{cor:refl_abe_are_endo}}"', Rightarrow] \arrow[rd, "\text{Definitions}", Rightarrow] &                                                 \\
\text{endoscopic} &                                                                                                           & \text{monodromic} \arrow[ll, "???", Rightarrow]
\end{tikzcd}\]
Fortunately, asking that a pre-realization $\fr{h}$ be a reflection-stable Abe realization is not too restrictive. 
For example, we show in Proposition \ref{prop:crystallographic_abe} that if $\fr{h}$ arises via a root system of a Kac--Moody group, then it is automatically reflection-stable. 

\begin{remark}\label{rem:necessity_for_refln_datums}
Let $r \in W$ be a reflection. We can then write $r = wsw^{-1}$ for some $s \in S$ and $w \in W$.
We can then define elements $\alpha_r^{\vee} = w \alpha_s^{\vee} \in \fr{h}$ and $\alpha_r = w \alpha_s \in \fr{h}^*$.
Unfortunately, $\alpha_r^{\vee}$ and $\alpha_r$ are not actually well-defined. Namely, they depend on the choice of $w$ and $s$.
However, by \cite[Lemma 2.1]{Abe19}, $\alpha_{r}$ and $\alpha_r^{\vee}$ is independent of the choice of $w$ and $s$ up to a scalar in $\k^{\times}$. 
Let $r = xtx^{-1}$ for some other $t \in S$ and $x \in W$.
We now have that $w \alpha_s = \lambda x \alpha_t$ for some $\lambda \in \k^{\times}$. It can be easily checked that the constraint from Definition \ref{def:realization} (1) ensures that $w \alpha_s^{\vee} = \lambda^{-1} x \alpha_t$.
As a consequence, for two reflections $r,r' \in W$, we have that $a_{r,r'} a_{r',r} \coloneq \langle \alpha_r^{\vee}, \alpha_{r'} \rangle \langle \alpha_{r'}^{\vee}, \alpha_{r} \rangle$ is well-defined and independent of all choices.
\end{remark}

\begin{definition}
    Let $(W,S)$ be a Coxeter group and $(W', S')$ be a reflection subgroup.
    A \emph{reflection datum} for $W'$ consists of a pair of functions $w_{-} : S' \to W$ and $r_{-} : S' \to S$ such that for all $s' \in S'$, we have that $s' = w_{s'} r_{s'} w_{s'}^{-1}$ and $w_{s'} r_{s'} > w_{s'}$.
\end{definition}

\begin{lemma}\label{lem:pre_realizations_and_reflection_subgroups}
    Let $(W,S)$ be a Coxeter group and $W'$ is a reflection subgroup of $W$ with canonical generators $S'$.
    Let $\fr{h}$ be a pre-realization for $W$.
    \begin{enumerate}
        \item Let $w_{-} : S' \to W$ and $r_{-} : S' \to S$ be a reflection datum for $W'$. Then 
        \[ (\fr{h}, \{w_{s'} \alpha_{r_{s'}}\}_{s' \in S'}, \{w_{s'} \alpha_{r_{s'}}^{\vee}\}_{s' \in S'} )\] 
        is a pre-realization for $W'$.
        \item The condition that $\fr{h}$ be an Abe realization for $W'$ is independent of the choice of reflection datum for $W'$.
    \end{enumerate}
\end{lemma}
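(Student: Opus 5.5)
For (1), set $\alpha_{s'} \coloneq w_{s'} \alpha_{r_{s'}}$ and $\alpha_{s'}^{\vee} \coloneq w_{s'} \alpha_{r_{s'}}^{\vee}$ for $s' \in S'$, and check the three axioms of Definition \ref{def:realization} in turn. The pairing condition follows from $W$-invariance of the canonical pairing:
\[\langle \alpha_{s'}^{\vee}, \alpha_{s'} \rangle = \langle w_{s'}\alpha^{\vee}_{r_{s'}}, w_{s'}\alpha_{r_{s'}} \rangle = 2.\]
Demazure surjectivity holds because $\alpha_{s'} = \alpha_{r_{s'}} \circ w_{s'}^{-1}$ and $w_{s'}^{-1}$ is an automorphism of $\fr{h}$. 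For the representation axiom we first compute the reflection formula for $s'$. For $v \in \fr{h}$, conjugation gives
\[s'(v) = w_{s'} r_{s'} w_{s'}^{-1}(v) = v - \langle w_{s'}^{-1} v, \alpha_{r_{s'}} \rangle w_{s'} \alpha_{r_{s'}}^{\vee} = v - \langle v, \alpha_{s'} \rangle \alpha_{s'}^{\vee}.\]
So the formula attached to the new roots and coroots is exactly the restriction to $W' \subseteq W$ of the given $W$-action. In particular it defines a representation of $W'$, and the assignment respects the Coxeter presentation of $(W',S')$ from Dyer--Deodhar. This gives (1).

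For (2), take two reflection data and write $\alpha_{s'}, \alpha_{s'}^{\vee}$ and $\beta_{s'}, \beta_{s'}^{\vee}$ for the resulting roots and coroots. The argument of Remark \ref{rem:necessity_for_refln_datums}, based on \cite[Lemma 2.1]{Abe19}, gives a scalar $\lambda_{s'} \in \k^{\times}$ for each $s' \in S'$ with $\beta_{s'} = \lambda_{s'} \alpha_{s'}$ and $\beta_{s'}^{\vee} = \lambda_{s'}^{-1} \alpha_{s'}^{\vee}$. The condition $w_{s'} r_{s'} > w_{s'}$ is what should force $\lambda_{s'}$ to be a genuine rescaling of the positive root, rather than a mix of sign conventions. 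It is harmless in any case, because the argument below works for arbitrary units.

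The new Cartan matrix is therefore $b_{s',t'} = \lambda_{s'}^{-1} \lambda_{t'} a_{s',t'}$. Fix $s' \neq t'$ with $m_{s',t'} < \infty$ and put $\mu = \lambda_{s'}^{-1}\lambda_{t'}$. Then the two off-diagonal entries change as $(a_{s',t'}, a_{t',s'}) \mapsto (\mu a_{s',t'}, \mu^{-1} a_{t',s'})$. The key step is to show that each $\binom{m}{k}_x(a,b)$ and $\binom{m}{k}_y(a,b)$ transforms under $(a,b) \mapsto (\mu a, \mu^{-1} b)$ by multiplication by a unit power of $\mu$. Then the vanishing conditions of Definition \ref{def:realization} are unchanged.

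To prove this, use the grading on $A = \Z[x,y]$ with $\deg x = 1$ and $\deg y = -1$. By induction on the recursion $[n+1]_x = x[n]_y - [n-1]_x$, each two-colored quantum number is homogeneous: for $n$ odd both $[n]_x, [n]_y$ have degree $0$, and for $n$ even $[n]_x$ has degree $1$ and $[n]_y$ has degree $-1$. The binomial coefficients lie in $A$, and as ratios of homogeneous elements they are homogeneous. Rescaling $(a,b) \mapsto (\mu a, \mu^{-1} b)$ therefore multiplies each of them by $\mu^{d}$, where $d$ is its degree. Since $\mu$ is a unit, vanishing is preserved, for the $x$- and $y$-versions alike.

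The main obstacle is confirming that the scalars relating two reflection data really are units acting oppositely on roots and coroots, so that the transformation has the form $(\mu a, \mu^{-1} b)$. This is supplied by the cited lemma together with the pairing normalisation $\langle \alpha^{\vee}, \alpha \rangle = 2$.
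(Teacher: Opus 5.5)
Your proof is correct and follows the paper's argument. Part (1) is the same $W$-invariance and conjugation computation. Part (2) uses the same unit rescaling of roots and coroots from Remark~\ref{rem:necessity_for_refln_datums}, together with the parity structure of the two-colored quantum numbers: $[n]_x, [n]_y \in \Z[xy]$ for $n$ odd, and $[n]_x \in x\Z[xy]$, $[n]_y \in y\Z[xy]$ for $n$ even. This is exactly your grading with $\deg x = 1$, $\deg y = -1$.

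Your observation that the quantum binomial coefficients are themselves homogeneous, as quotients of homogeneous elements in the graded domain $A$, is a worthwhile tightening. The paper only records how the individual quantum numbers rescale, and the evaluations of the denominator factors may vanish.
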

\begin{proof}
    Let $w_{-} : S' \to W$ and $r_{-} : S' \to S$ be a reflection datum for $W'$. Let $s' \in S'$. 
    We define $\alpha_{s'} = w_{s'} \cdot \alpha_{r_{s'}}$ and $\alpha_{s'}^{\vee} = w_{s'} \cdot \alpha_{r_{s'}}^{\vee}$.
    It then follows that 
    \[ \langle \alpha_{s'}^{\vee}, \alpha_{s'} \rangle = \langle w_{s'} \cdot \alpha_{r_{s'}}^{\vee},  w_{s'} \cdot \alpha_{r_{s'}} \rangle = \langle \alpha_{r_{s'}}^{\vee}, \alpha_{r_{s'}} \rangle =  2.\]
    Let $v \in \fr{h}$.
    \begin{align*}
        v - \langle v, w_{s'} \cdot \alpha_{r_{s'}} \rangle w_{s'} \cdot \alpha_{r_{s'}}^{\vee} &= v - \langle w_{s'}^{-1} v, \alpha_{r_{s'}} \rangle w_{s'} \cdot \alpha_{r_{s'}}^{\vee} \\
        &= w_{s'} \cdot \left( w_{s'}^{-1} v - \langle w_{s'}^{-1} v, \alpha_{r_{s'}} \rangle \alpha_{r_{s'}}^{\vee} \right) \\
        &= w_{s'} ( r_{s'} (w_{s'}^{-1} v)) \\
        &= s' (v).
    \end{align*}
    As a result, the $W'$ action on $\fr{h}$ given by restricting the $W$ action coincides on $S'$ with the formula from Definition \ref{def:realization} (2). 
    Finally, Demazure surjectivity for $\fr{h}$ follows from $\langle -, - \rangle$ being $W$-invariant. 

    It remains to check that whether $\fr{h}$ is an Abe realization for $W'$ is independent of the choice of reflection datum. 
    Let $y_{-} : S' \to W$ and $u_{-} : S' \to S$ be another reflection datum for $W'$. 
    We define $\alpha_{s'}' = y_{s'} \cdot \alpha_{u_{s'}}$ and $(\alpha_{s'}')^{\vee} = y_{s'} \cdot \alpha_{u_{s'}}^{\vee}$. Denote by $a_{\bullet, \bullet}$ (resp. $a_{\bullet, \bullet}'$) the Cartan matrix for the restriction of $\fr{h}$ to $W'$ with respect to the reflection datum $(w_{-}, r_{-})$ (resp. $(y_{-}, u_{-})$).
    By Remark \ref{rem:necessity_for_refln_datums}, we have that 
    \begin{equation}
        a_{s', t'} a_{t',s'} \coloneq \langle \alpha_{s'}^{\vee}, \alpha_{t'} \rangle \langle \alpha_{t'}^{\vee}, \alpha_{s'} \rangle =  \langle (\alpha_{s'}')^{\vee}, \alpha_{t'}' \rangle \langle (\alpha_{t'}')^{\vee}, \alpha_{s'}' \rangle \eqcolon a_{s',t'}' a_{t',s'}'.
    \end{equation}
    Let $n \in \Z$ be odd. Then $[n] \in \Z [xy]$. As a result, evaluating $[n]$ at $x= a_{s',t'}$ and $y = a_{t',s'}$ is the same as evaluating $[n]$ at $x= a_{s',t'}'$ and $y = a_{t',s'}'$.
    If $n \in \Z$ is even, then $[n]_x \in x\Z[xy]$ and $[n]_y \in y \Z[xy]$.
    As a result, 
    \[[n]_x (a_{s',t'}, a_{t',s'}) = \lambda [n]_x (a_{s',t'}' , a_{t',s'}')\]
    for some $\lambda \in \k^{\times}$ and similarly for $[n]_y$.
    As a result, the vanishing of the quantum binomial coefficients is independent of the choice of reflection datum.
\end{proof}

Since we are primarily concerned with knowing when the realization $\fr{h}$ for $W$ restricts to an Abe realization for a reflection subgroup $W'$, we do not particularly care what choice of simple roots and coroots is needed.
This in addition with Lemma \ref{lem:pre_realizations_and_reflection_subgroups} (2) empowers us to suppress the reflection datum when discussing restricted pre-realizations to $W'$.
In particular, we will call any of the pre-realizations arising via Lemma \ref{lem:pre_realizations_and_reflection_subgroups} (1) \emph{the} restricted pre-realization even though it does depend on a choice.
This abuse of terminology will be fully rectified in Lemma \ref{lem:well_defined_pos_roots_balancedness}.

The following two lemmas easily follow from the definitions.

\begin{lemma}\label{lem:parabolic_subgroups_and_Abe_realizations}
    Let $(W,S)$ be a Coxeter system and let $\fr{h}$ be a pre-realization for $W$. 
    Let $P$ be one of the following properties of a (pre-)realization: monodromic Abe, reflection-stable Abe, or Abe.
    The following are equivalent
    \begin{enumerate}
        \item $\fr{h}$ is a $P$ realization for $W_{\{s,t\}}$ for distinct $s,t \in S$;
        \item $\fr{h}$ is a $P$ realization for $W$;
        \item $\fr{h}$ is a $P$ realization for $W_I$ for all $I \subseteq S$;
        \item $\fr{h}$ is a $P$ realization for ${}^x W_I \coloneq x W_I x^{-1}$ for all $x \in W$ and $I \subseteq S$.
    \end{enumerate}
\end{lemma}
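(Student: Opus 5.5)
The plan is to check that each property $P$ is a conjunction of conditions indexed by pairs of distinct simple reflections $s,t$ with $m_{s,t}<\infty$. Each condition involves only $m_{s,t}$, the Cartan entries $a_{s,t},a_{t,s}$, and, in the monodromic case, the endoscopic orders $v^{\scrL}_{s,t}$. Once this is established, the implications $(2)\Rightarrow(3)$ and $(3)\Rightarrow(1)$ amount to restricting to subsets of pairs, and $(1)\Rightarrow(2)$ is the observation that the conditions for $W$ are exactly the union of the conditions for all $W_{\{s,t\}}$. For the parabolic subgroup $W_I$ we use the restricted data: simple roots and coroots indexed by $I$, with the same $m_{s,t}$ and $a_{s,t}$. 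The realization conditions for $W_I$ are therefore literally the pair conditions for $s,t\in I$.

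The only subtlety in $(1)\Leftrightarrow(2)\Leftrightarrow(3)$ is the monodromic case, where $\fr{o}$ must be regarded as a $W_I$-set by restriction. Here $v^{\scrL}_{s,t}$ is defined through the endoscopic group of $\scrL$ for the dihedral group $W_{\{s,t\}}$ acting on $\fr{o}$. By Lemma \ref{lem:calculating_endoscopy_groups}, this group is generated by the reflections of $W_{\{s,t\}}$ that stabilize $\scrL$. That description is intrinsic to the $W_{\{s,t\}}$-action, so $v^{\scrL}_{s,t}$ is the same whether the ambient group is $W$, $W_I$, or $W_{\{s,t\}}$. Consequently the monodromic conditions also agree pair by pair.

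For $(4)$, we take $(3)\Rightarrow(4)$ to be the main step; the converse follows by setting $x=e$. Fix $x\in W$ and $I\subseteq S$. The group ${}^xW_I$ is a reflection subgroup with canonical generators $\{xsx^{-1}: s\in I\}$. We may choose the reflection datum $w_{xsx^{-1}}=x$ and $r_{xsx^{-1}}=s$, adjusting when $xs<x$ by replacing $x$ with $xs$. That adjustment only changes the root by a sign, and by Lemma \ref{lem:pre_realizations_and_reflection_subgroups}(2) the choice is irrelevant. With this datum the restricted roots and coroots are $x\alpha_s$ and $x\alpha_s^\vee$. Since the pairing is $W$-invariant, the Cartan matrix is $\langle x\alpha_s^\vee,x\alpha_t\rangle=a_{s,t}$, and conjugation by $x$ is a Coxeter isomorphism $(W_I,I)\cong({}^xW_I,{}^xI)$ preserving the orders $m$. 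So the Abe and reflection-stable conditions for ${}^xW_I$ coincide with those for $W_I$.

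In the monodromic case, transport the action by letting ${}^xW_I$ act on $\fr{o}$ and compare it with the $W_I$-action on $\fr{o}$ twisted via $\scrL\mapsto\scrL x$. If $\scrL\, xwx^{-1}=\scrL$, then $(\scrL x)w=\scrL x$, so the endoscopic orders satisfy $v^{\scrL}$ for ${}^xW_I$ equals $v^{\scrL x}$ for $W_I$. Hence the set of endoscopic orders that occur is the same, and the monodromic conditions match. I expect the main obstacle to be this bookkeeping with the sign and positivity constraint in the reflection datum. That constraint is handled by invoking Lemma \ref{lem:pre_realizations_and_reflection_subgroups}(2) together with the scaling argument from its proof, which shows that the binomial vanishing conditions are insensitive to rescaling the roots by units.
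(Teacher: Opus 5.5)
Your argument is correct and is the direct unpacking of definitions that the paper intends; the paper states that the lemma follows ``easily from the definitions'' and gives no further proof.

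There is one inaccuracy in the step $(3)\Rightarrow(4)$. The canonical generators of ${}^xW_I$ are $x_0Ix_0^{-1}$, where $x_0$ is the minimal-length element of the coset $xW_I$. In general they are not $xIx^{-1}$: for $W$ of type $A_2$, $I=S$ and $x=s$, one gets $xIx^{-1}=\{s,sts\}\neq\{s,t\}$.

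Since ${}^xW_I={}^{x_0}W_I$, simply replace $x$ by $x_0$ at the outset. Then $x_0s>x_0$ for every $s\in I$, so $(x_0,s)$ is a legitimate reflection datum, and the restricted Cartan matrix is literally $(a_{s,t})_{s,t\in I}$. This also makes your sign and rescaling discussion unnecessary. That is useful, because Lemma \ref{lem:pre_realizations_and_reflection_subgroups}(2) only treats the untwisted binomials, not the twisted ones in the reflection-stable and monodromic conditions.
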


\begin{lemma}\label{lem:refln_subgroups_conjugation_and_Abe}
    Let $(W,S)$ be a Coxeter system and let $\fr{h}$ be a pre-realization for $W$. 
    Let $(W', S')$ be a reflection subgroup of $W$ and let $x \in W$.
    Then $\fr{h}$ is an Abe realization for $(W',S')$ if and only if $\fr{h}$ is an Abe realization for $(xW'x^{-1}, xS'x^{-1})$.
\end{lemma}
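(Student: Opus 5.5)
The plan is to transport everything along conjugation by $x$ and observe that none of the data entering the Abe condition changes. First I would check that $(xW'x^{-1}, xS'x^{-1})$ really is a reflection subgroup with canonical generators $xS'x^{-1}$. In fact the only input needed is that conjugation $c_x : w \mapsto xwx^{-1}$ is a group isomorphism $W' \to xW'x^{-1}$ carrying $S'$ bijectively onto $xS'x^{-1}$. Hence it is an isomorphism of Coxeter systems, so $m_{xs'x^{-1}, xt'x^{-1}} = m_{s',t'}$ for all $s',t' \in S'$.

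Next, by Lemma \ref{lem:pre_realizations_and_reflection_subgroups}(2), I am free to choose any reflection datum on each side. Given a reflection datum $(w_-, r_-)$ for $W'$, I would use $xs'x^{-1} \mapsto x w_{s'}$ and $xs'x^{-1} \mapsto r_{s'}$ for $xW'x^{-1}$. We have $x w_{s'} r_{s'} (x w_{s'})^{-1} = x s' x^{-1}$. The length condition $x w_{s'} r_{s'} > x w_{s'}$ may fail, so this choice need not be a reflection datum in the strict sense.

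This is the only subtle point. I would handle it by observing that if the length condition fails, then $x w_{s'} r_{s'}$ also expresses the same reflection, and it satisfies the length condition. Replacing $x w_{s'}$ by $x w_{s'} r_{s'}$ changes the root and coroot only by the sign $r_{s'}\alpha = -\alpha$. Alternatively, one can invoke Remark \ref{rem:necessity_for_refln_datums}, which says roots are determined up to $\k^{\times}$. Either way, the resulting roots $\alpha_{xs'x^{-1}}$ and coroots $\alpha^{\vee}_{xs'x^{-1}}$ agree with $\pm x\alpha_{s'}$ and $\pm x\alpha^{\vee}_{s'}$, with the same sign for each pair.

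Then compute the Cartan entries. By $W$-invariance of $\langle -,-\rangle$,
\[\langle x\alpha^{\vee}_{s'}, x\alpha_{t'}\rangle = \langle \alpha^{\vee}_{s'}, \alpha_{t'}\rangle,\]
so the Cartan matrix of the conjugated pre-realization equals that of $W'$ up to the signs above. Products $a_{s',t'}a_{t',s'}$ are unchanged, and individual entries change by units. As in the last paragraph of the proof of Lemma \ref{lem:pre_realizations_and_reflection_subgroups}, odd quantum numbers lie in $\Z[xy]$ and even ones lie in $x\Z[xy]$ or $y\Z[xy]$. So each evaluated quantum binomial changes by a unit in $\k^{\times}$.

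Since $\k$ is a domain, vanishing of $\binom{m}{k}_x$ and $\binom{m}{k}_y$ at the two Cartan matrices is equivalent, with $m = m_{s',t'}$ unchanged. This gives one direction. The converse follows by applying the same argument with $x^{-1}$.
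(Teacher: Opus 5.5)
Your argument is correct and is the routine check the paper has in mind when it says this lemma follows directly from the definitions. You conjugate the reflection datum and repair the length condition by a sign. By $W$-invariance of the pairing, the Cartan entries of each pair then change by a common sign, and the parity structure of the two-colored quantum numbers finishes the argument.

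One minor correction: $xS'x^{-1}$ is in general \emph{not} the canonical generating set of $xW'x^{-1}$ (for $W'=W$ of type $A_2$ and $x=s$ you get $\{s,sts\}$). This does not affect the proof, since, as you note, only the Coxeter-system isomorphism $(W',S')\cong(xW'x^{-1},xS'x^{-1})$ is used, and the proof of Lemma \ref{lem:pre_realizations_and_reflection_subgroups}(2) never uses canonicity.
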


\begin{lemma}\label{lem:parabolic_closures_of_refln_subgroups}
    Let $(W,S)$ be a Coxeter group and $W'$ is a reflection subgroup of $W$ with canonical generators $S'$.
    Assume that $W'$ is a finite Coxeter group of rank $r$. Then there exists a rank $r$ parabolic subgroup ${}^w W_I$ of $W$ such that $W' \subseteq {}^w W_I$. 
\end{lemma}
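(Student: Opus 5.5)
The plan is to work in the contragredient of the geometric (Tits) representation of $(W,S)$ and to use the standard facts about the Tits cone. This is a statement purely about the Coxeter system, so no realization $\fr{h}$ enters. Let $V$ be the geometric representation with simple roots $\{e_s\}_{s\in S}$ and symmetric bilinear form $B$. Let $C \subset V^*$ be the open fundamental chamber and $X = \bigcup_{w \in W} w\overline{C}$ the Tits cone, with interior $X^\circ$.

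I will use three classical facts (Tits, Bourbaki, Ch.~V):
\begin{enumerate}[label=(\roman*)]
\item for $x \in X$, the stabilizer $W_x$ is a conjugate ${}^wW_I$ of a standard parabolic subgroup, and $W_x$ is finite if $x \in X^\circ$;
\item the reflection hyperplanes $H_r = \ker(\alpha_r)$, $r \in \mathscr{R}(W)$, form a locally finite family on $X^\circ$;
\item for a finite parabolic ${}^wW_I$, the roots $w e_s$, $s\in I$, are linearly independent, so the common fixed space of ${}^wW_I$ in $V^*$ is $\bigcap_{s \in I} \ker(w e_s)$, of codimension $|I|$.
\end{enumerate}

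First I handle $W'$ itself. Let $S' = \{s'_1,\dots,s'_r\}$, with roots $\beta_1,\dots,\beta_r$ (positive roots of $V$). Since $W'$ is a finite reflection group, Dyer's theory gives that the canonical roots satisfy $B(\beta_i,\beta_j) = -\cos(\pi/m_{ij})$, so their Gram matrix is positive definite. Hence $\beta_1,\dots,\beta_r$ are linearly independent, and the fixed space $U = \bigcap_i \ker \beta_i \subset V^*$ of $W'$ has codimension exactly $r$.

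Next I produce a $W'$-fixed point in the interior of the Tits cone. Pick $c \in C$ and set $x_0 = \sum_{u \in W'} u c$. Since $X^\circ$ is a $W$-stable convex cone, $x_0 \in X^\circ \cap U$. Thus $F := X^\circ \cap U$ is a nonempty open subset of $U$. By (ii), only finitely many hyperplanes $H_r$ meet a small neighbourhood $N$ of $x_0$. Those $H_r$ that do not contain $U$ meet $U$ in proper subspaces, so I can choose $x \in F \cap N$ lying in none of them. Consequently every reflection $r$ with $x \in H_r$ satisfies $U \subseteq H_r$.

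Now apply (i): $W_x = {}^wW_I$ is a finite parabolic subgroup, and it contains $W'$ because $x \in U$. It is generated by the reflections $w s w^{-1}$, $s \in I$, each of which fixes $x$, hence fixes $U$ pointwise by the choice of $x$. So $U$ is contained in the fixed space of ${}^wW_I$, which has codimension $|I|$ by (iii). This gives $|I| \le r$. Conversely, $\beta_1,\dots,\beta_r$ lie in the span of the roots $\{we_s\}_{s\in I}$, since the $s'_i$ are reflections in the finite parabolic ${}^wW_I$ and every reflection of a parabolic subgroup has its root in the span of that parabolic's simple roots. By independence of the $\beta_i$ this forces $|I| \ge r$. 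Hence ${}^wW_I$ has rank exactly $r$ and contains $W'$.

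The main obstacle I expect is the genericity step: justifying local finiteness of the hyperplane arrangement near a point of $X^\circ$, and that a point can be chosen avoiding the finitely many relevant hyperplanes while staying in $U \cap X^\circ$. The other delicate point is the positive-definiteness (hence linear independence) of the canonical roots of $W'$, for which I would cite Dyer's description of reflection subgroups.
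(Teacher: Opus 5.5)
Your argument is correct, but it takes a genuinely different route from the paper. The paper's proof consists of two citations. First, the classical fact (Tits; Bourbaki) that a finite subgroup of $W$ lies in a finite parabolic subgroup reduces to $W$ finite. Then \cite[Lemma 2.1]{DPR13} handles finite Coxeter groups; its content is Steinberg's theorem that the pointwise stabilizer of the fixed space of $W'$ is parabolic, together with the count $\operatorname{codim}(V^{W'}) = r$.

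You fuse both steps into a single Tits-cone argument. Averaging a chamber point over $W'$ is exactly the proof of the first fact. Choosing $x$ generically in $U \cap X^\circ$, using local finiteness of walls there, replaces Steinberg's theorem: it forces every reflection in $W_x$ to fix $U$ pointwise, so your two codimension counts pin down $|I| = r$. What your route buys is a self-contained proof, modulo standard Tits-cone facts and Dyer's description of canonical roots. It works directly in infinite $W$ and identifies the parabolic concretely as the stabilizer of a generic point of the fixed space of $W'$. The paper's route is shorter but rests entirely on references. Incidentally, your inequality $|I| \ge r$ follows more directly than via spans of roots: $W' \subseteq {}^wW_I$ gives $\bigcap_{s \in I} \ker(w e_s) \subseteq U$, so $|I| \ge \operatorname{codim} U = r$.

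One small point to add. The Tits cone and its topology, as you use them, require $S$ finite (Bourbaki assumes this), whereas the paper allows $S$ to be countably infinite. This is harmless. The finite group $W'$ lies in $W_J$ for some finite $J \subseteq S$. For $t \in W_J$, the reflections of $W$ in $W_J$ and the inversion sets are the same whether computed in $W_J$ or in $W$, so $W'$ has the same canonical generators and rank in $W_J$. Moreover, ${}^wW_I$ with $w \in W_J$ and $I \subseteq J$ is a parabolic subgroup of $W$ of the same rank. So simply run your argument inside $W_J$.
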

\begin{proof}
    It is a classical result that $W'$ is contained in some finite parabolic subgroup of $W$ (see \cite[Page 130]{Bourbaki}).
    As a result, without loss of generality, we may assume that $W$ itself is finite.
    The result then follows from \cite[Lemma 2.1]{DPR13}. 
\end{proof}

\begin{lemma}\label{lem:twisted_vars_and_pairings}
    Let $W = W_m = \langle s,t \rangle$ be a finite dihedral group.
    For each $k \in \Z_{>1}$, define 
    \[\alpha_k \coloneq \begin{cases} ({}_t (k-1)) \cdot \alpha_t & \text{if } k \text{ odd}, \\ ({}_t (k-1)) \cdot \alpha_s & \text{if } k \text{ even},\end{cases} \qquad\qquad \alpha_k^{\vee} \coloneq \begin{cases} ({}_t (k-1)) \cdot \alpha_t^{\vee} & \text{if } k \text{ odd}, \\ ({}_t (k-1)) \cdot \alpha_s^{\vee} & \text{if } k \text{ even}.\end{cases}\]
    We also set $\alpha_0 = -\alpha_s$ and $\alpha_0^{\vee} = -\alpha_s^{\vee}$.
    We then have that
    \[\langle \alpha_s^{\vee}, \alpha_{k} \rangle = (-1)^{k+1} \left( x(k) \right) (a_{s,t}, a_{t,s}) \qquad\text{and}\qquad \langle \alpha_{k}^{\vee}, \alpha_s  \rangle = (-1)^{k+1} \left( x(k) \right) (a_{s,t}, a_{t,s}).\]
\end{lemma}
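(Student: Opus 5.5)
The plan is to compute $\alpha_k$ explicitly in the basis-like pair $\{\alpha_s,\alpha_t\}$ with two-colored quantum number coefficients, and then pair with $\alpha_s^{\vee}$. Throughout, the polynomials are evaluated at $x=a_{s,t}$, $y=a_{t,s}$. I will use the elementary rules
\[ s\alpha_s=-\alpha_s,\qquad s\alpha_t=\alpha_t-a_{s,t}\alpha_s,\qquad t\alpha_t=-\alpha_t,\qquad t\alpha_s=\alpha_s-a_{t,s}\alpha_t, \]
and the analogous rules for coroots with the roles of $a_{s,t}$ and $a_{t,s}$ exchanged.

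First I would check the small cases directly, since they fix the normalization and the sign.
\begin{itemize}
\item For $k=0$: $\langle\alpha_s^\vee,-\alpha_s\rangle=-2=(-1)^{1}x(0)$.
\item For $k=1$ (taking $\alpha_1=\alpha_t$, consistent with ${}_t(0)=e$): $\langle\alpha_s^\vee,\alpha_t\rangle=a_{s,t}=x(1)$.
\item For $k=2$: $\alpha_2=t\alpha_s=\alpha_s-a_{t,s}\alpha_t$, so the pairing is $2-xy=-([3]_x-[1]_x)=-x(2)$.
\end{itemize}
This suggests proving, by induction on $k$, a closed formula of the form
\[ \alpha_k=(-1)^{k+1}\bigl(c_k\,\alpha_s+d_k\,\alpha_t\bigr), \]
where $c_k,d_k$ are (up to sign) consecutive two-colored quantum numbers whose colors depend on the parity of $k$. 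For instance $\alpha_2$ has $c_2=-1=-[1]$ and $d_2=[2]_y$, and $\alpha_3=ts\alpha_t$ has $c_3=-[2]_x$ and $d_3=[3]$. This is the standard rank-two description of roots via quantum numbers, as in \cite{EW, Elib}.

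The inductive step goes from $k$ to $k+2$. Write ${}_t(k+1)={}_t(k-1)\cdot uv$, where $(u,v)=(t,s)$ if $k$ is odd and $(s,t)$ if $k$ is even. Then
\[ \alpha_{k+2}={}_t(k-1)\,uv\,\alpha_{r}, \]
with $r$ the same simple reflection that defines $\alpha_k={}_t(k-1)\alpha_r$. Expanding $uv\alpha_r$ with the rules above and then applying ${}_t(k-1)$ expresses $\alpha_{k+2}$ as a $\k$-linear combination of $\alpha_k$ and $\alpha_{k-2}$-type terms. I would aim to show this reproduces the recursion $[n+1]_x=[2]_x[n]_y-[n-1]_x$ for the coefficients. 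A cleaner alternative, which I would try first, is to notice that the roots $\alpha_k$ are alternately images under ${}_t(\cdot)$ of $\alpha_s$ and $\alpha_t$. Consequently $p_k\coloneq\langle\alpha_s^\vee,\alpha_k\rangle$ should satisfy directly the three-term recursion
\[ p_k = -x\,p_{k-1}-p_{k-2}\quad(k\text{ odd}),\qquad p_k = -y\,p_{k-1}-p_{k-2}\quad(k\text{ even}). \]
Comparing with the recursion defining $x'(d)$ in Lemma~\ref{lem:recursive_formula_for_twisted_vars}, and inserting the sign $(-1)^{k+1}$, gives $p_k=(-1)^{k+1}x'(k)=(-1)^{k+1}x(k)$ by that lemma.

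For the coroot identity I would run the same argument with coroots, where $a_{s,t}$ and $a_{t,s}$ are interchanged. This a priori yields $(-1)^{k+1}y(k)$ rather than $x(k)$. For even $k$ these agree by Lemma~\ref{lem:facts_about_quantum_numbers}(1). For odd $k$ one has to check more carefully which color actually appears; the asymmetry in the definition of $\alpha_k$ (prefix starting with $t$, pairing against $s$) should be what produces $x(k)$ in both cases.

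I expect the main obstacle to be purely bookkeeping: getting the parity-dependent colors ($x$ versus $y$) and the signs right in the inductive step, and confirming the color in the odd-$k$ coroot case. I would settle these by verifying $k=3,4$ by hand before writing the general induction.
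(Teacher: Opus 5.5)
Your root computation, in the ``cleaner alternative'' form, is the paper's proof: induction on $k$ through a three-term recursion obtained by peeling off the last simple reflection of ${}_t(k-1)$, compared with the recursion for $x'(d)$ in Lemma~\ref{lem:recursive_formula_for_twisted_vars}. To make ``should satisfy'' precise, set $r_k = t$ for $k$ odd and $r_k = s$ for $k$ even, so that $\alpha_k = {}_t(k-1)\cdot\alpha_{r_k}$. Then ${}_t(k-1) = {}_t(k-2)\,u$ with $u \neq r_k$. Since ${}_t(k-2)$ ends in $r_k$, you get ${}_t(k-2)\cdot\alpha_{r_k} = -\alpha_{k-2}$ and ${}_t(k-2)\cdot\alpha_u = \alpha_{k-1}$; the conventions $\alpha_0 = -\alpha_s$ and $\alpha_1 = \alpha_t$ cover $k=2$. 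Hence $\alpha_k = -\alpha_{k-2} - \langle\alpha_u^{\vee},\alpha_{r_k}\rangle\,\alpha_{k-1}$, and pairing with $\alpha_s^{\vee}$ gives exactly your recursion. So the first identity is fine.

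The problem is your last paragraph. The same peeling on coroots gives $\alpha_k^{\vee} = -\alpha_{k-2}^{\vee} - \langle\alpha_{r_k}^{\vee},\alpha_u\rangle\,\alpha_{k-1}^{\vee}$. The coefficient is $a_{t,s}$ for $k$ odd and $a_{s,t}$ for $k$ even, and the initial values are $\langle\alpha_0^{\vee},\alpha_s\rangle = -2$ and $\langle\alpha_1^{\vee},\alpha_s\rangle = a_{t,s}$. This is the recursion for $y'(k)$, so $\langle\alpha_k^{\vee},\alpha_s\rangle = (-1)^{k+1}(y(k))(a_{s,t},a_{t,s})$.

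Your ``a priori'' answer is therefore the true one, and no asymmetry of the definition turns it into $x(k)$ for odd $k$. Already for $k=3$,
\[ \alpha_3^{\vee} = ts\cdot\alpha_t^{\vee} = -a_{t,s}\,\alpha_s^{\vee} + (a_{s,t}a_{t,s}-1)\,\alpha_t^{\vee}, \qquad \langle\alpha_3^{\vee},\alpha_s\rangle = a_{s,t}a_{t,s}^2 - 3a_{t,s}, \]
which is $y(3)$, whereas $x(3) = a_{s,t}^2a_{t,s} - 3a_{s,t}$. For the $B_2$ values $a_{s,t} = -1$, $a_{t,s} = -2$ these are $2$ and $1$.

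So the second formula as stated is false for odd $k$. It should read $(-1)^{k+1}(y(k))(a_{s,t},a_{t,s})$, which agrees with the stated version for even $k$ by Lemma~\ref{lem:facts_about_quantum_numbers}(1). This corrected form is what the paper actually uses later, in the proofs of Lemma~\ref{lem:abe_realization_dihedral_case} and Proposition~\ref{prop:balanced_realizations_and_reflections}. The paper's proof of the present lemma only writes out the root case for even $k$ and calls the rest ``similar''. Prove the first identity as planned and the second in its corrected $y(k)$ form, rather than searching for a mechanism that produces $x(k)$.
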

\begin{proof}
    We will prove the lemma by induction on $k$.
    The base cases when $k = 0$ and $k=1$ are obvious. We will just show the first equality and the case when $k$ is even. The other cases are similar.
    \begin{align*}
        \langle \alpha_s^{\vee}, \alpha_k \rangle &= \langle \alpha_s^{\vee}, {}_t (k-2) \alpha_s \rangle - \langle \alpha_t^{\vee} ,\alpha_s \rangle \langle \alpha_s^{\vee}, {}_t (k-2) \alpha_s \rangle \\
        &= -\langle \alpha_s^{\vee}, {}_t (k-3) \alpha_s \rangle - \langle \alpha_t^{\vee} ,\alpha_s \rangle \langle \alpha_s^{\vee}, {}_t (k-2) \alpha_s \rangle \\
        &= (x(k-2))(a_{s,t}, a_{t,s}) - a_{t,s} (x(k-1))(a_{s,t}, a_{t,s}) \\
        &= - (x(k)) (a_{s,t}, a_{t,s}).
    \end{align*}
    The first equality is the definition of $t \cdot \alpha_s$, the third equality is the inductive hypothesis, and the last equality is Lemma \ref{lem:recursive_formula_for_twisted_vars}.
\end{proof}

\begin{lemma}\label{lem:abe_realization_dihedral_case}
    Let $W = W_m = \langle s,t \rangle$ be a finite dihedral group and let $W' \cong W_v$ be a rank-2 reflection subgroup of $W$.
    Let $\fr{h}$ be a pre-realization of $W$.
    Suppose that 
    \begin{equation}\label{eq:abe_realization_dihedral_case}
        \binom{v}{k}_{x(m/v)} (a_{s,t}, a_{t,s}) = \binom{v}{k}_{y(m/v)} (a_{s,t}, a_{t,s}) = 0
    \end{equation}
     for all $1 \leq k \leq v-1$. Then $\fr{h}$ is an Abe realization for $W'$.
\end{lemma}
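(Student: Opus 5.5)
Since Abe's condition only involves rank-two data, the plan is to reduce to an explicit description of $W'$ inside $W_m$ and then compute its restricted Cartan matrix with Lemma \ref{lem:twisted_vars_and_pairings}. If $v \le 1$ there is nothing to check, so assume $v \geq 2$ and set $d = m/v$, which is an integer by Lemma \ref{lem:class_of_rk_2_subgroups_of_dihedral_groups}.

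\textbf{Step 1: normalize $W'$.} The reflections of $W_m$ are the elements whose roots are, up to sign, the vectors $\alpha_0 = -\alpha_s, \alpha_1 = \alpha_t, \alpha_2, \ldots, \alpha_{m-1}$ of Lemma \ref{lem:twisted_vars_and_pairings}, listed in their cyclic (angular) order. A reflection subgroup of type $I_2(v)$ then consists of every $d$-th reflection in this order. By Lemma \ref{lem:refln_subgroups_conjugation_and_Abe}, conjugating $W'$ does not affect whether $\fr{h}$ is an Abe realization for it. I would therefore conjugate (and possibly swap the roles of $s$ and $t$, which swaps $x$ and $y$ and is harmless because the hypothesis \eqref{eq:abe_realization_dihedral_case} is symmetric in them) so that $s \in W'$. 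The canonical generators of $W'$ should then be $s$ and the reflection $r$ with root $\alpha_d$, namely $r = {}_t(d-1)\, u\, ({}_t(d-1))^{-1}$ with $u \in \{s,t\}$ chosen according to the parity of $d$. This gives a reflection datum $w_s = e$, $r_s = s$, $w_r = {}_t(d-1)$, $r_r = u$. One must check that $w_r u > w_r$; this holds because ${}_t(d-1)u$ is an alternating word of length $d \le m$.

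\textbf{Step 2: compute the Cartan matrix.} By Lemma \ref{lem:pre_realizations_and_reflection_subgroups}, the restricted pre-realization has simple roots $\alpha_s, \alpha_d$ and coroots $\alpha_s^\vee, \alpha_d^\vee$. Lemma \ref{lem:twisted_vars_and_pairings} gives
\[a'_{s,r} = \langle \alpha_s^\vee, \alpha_d\rangle = \pm x(d)(a_{s,t},a_{t,s}),\]
and the analogous formula for $a'_{r,s}$. Tracking the parity case in that lemma, I expect the second entry to be $\pm y(d)(a_{s,t},a_{t,s})$, with the same sign $(-1)^{d+1}$. In any case, by Remark \ref{rem:necessity_for_refln_datums} the product $a'_{s,r}a'_{r,s}$ is canonical, and it equals $(x(d)y(d))(a_{s,t},a_{t,s})$ because the two signs cancel.

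\textbf{Step 3: compare the quantum numbers.} I then reuse the argument of Lemma \ref{lem:pre_realizations_and_reflection_subgroups}(2):
\begin{enumerate}
    \item for odd $n$, $[n]$ lies in $\Z[xy]$, so its value depends only on the product of the two Cartan entries;
    \item for even $n$, $[n]_x \in x\Z[xy]$, so its value is determined by that product up to the unit factor coming from the first entry.
\end{enumerate}
It follows that $[n]_x(a'_{s,r},a'_{r,s})$ equals $[n]_{x(d)}(a_{s,t},a_{t,s})$ up to a unit (a sign), and similarly for $y$. Since units do not affect vanishing, $\binom{v}{k}_x(a'_{s,r},a'_{r,s})$ vanishes exactly when $\binom{v}{k}_{x(d)}(a_{s,t},a_{t,s})$ does. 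To be safe, I would compare the binomials as polynomials in $A$ before evaluating, using that they lie in $A$. The hypothesis \eqref{eq:abe_realization_dihedral_case} then gives Abe's condition for $W'$ with $m_{s,r} = v$.

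\textbf{Main obstacle.} The hard part is Step 1 together with the parity bookkeeping in Step 2. One has to confirm that the canonical generators of a type $I_2(v)$ reflection subgroup of $W_m$ really have roots $d$ steps apart in the cyclic order. One also has to confirm that the second pairing is $y(d)$ rather than $x(d)$. For odd $d$, $x(d)$ and $y(d)$ genuinely differ: by Lemma \ref{lem:facts_about_quantum_numbers}(1) they coincide only for even $d$. I would verify the latter by running the induction of Lemma \ref{lem:twisted_vars_and_pairings} for $\langle \alpha_k^\vee, \alpha_s\rangle$ explicitly, where the pairings $a_{t,s}$ and $a_{s,t}$ enter in the opposite order.
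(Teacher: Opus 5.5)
Your proposal is correct and follows essentially the paper's argument: conjugate (and possibly swap $s$ and $t$) so that $S' = \{s, (ts)^{d-1}t\}$ with $d = m/v$, read off the restricted Cartan entries as $\pm x(d)$ and $\pm y(d)$ evaluated at $(a_{s,t},a_{t,s})$ via Lemma \ref{lem:twisted_vars_and_pairings}, and use that each of $[n]_x, [n]_y$ lies in $\Z[xy]$, $x\Z[xy]$ or $y\Z[xy]$ to match the quantum numbers up to sign. You are also right that the second pairing is $(-1)^{d+1}y(d)$ rather than the $x(k)$ printed in Lemma \ref{lem:twisted_vars_and_pairings}, which the paper's proof silently uses in corrected form; your additional checks (that these really are the canonical generators, that $w_r u > w_r$, and comparing the binomials in $A$ before evaluating) fill in details the paper leaves implicit.
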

\begin{proof}
    By Lemma \ref{lem:refln_subgroups_conjugation_and_Abe} and possibly relabelling $s$ and $t$, we may replace $W'$ by a conjugate subgroup with $S' = \{ s, (ts)^k t\}$ for some $k$.
    Note that $v = m/(k+1)$. Write $t' = (ts)^k t$ and define the simple root and coroot associated to $t'$ by 
    \[\alpha_{t'} = \begin{cases} ({}_t k) \cdot \alpha_t & k \text{ even}, \\ ({}_t k) \cdot \alpha_s & k \text{ odd}, \end{cases} \qquad\qquad \alpha_{t'}^{\vee} = \begin{cases} ({}_t k) \cdot \alpha_t^{\vee} & k \text{ even}, \\ ({}_t k) \cdot \alpha_s^{\vee} & k \text{ odd}. \end{cases}\] 
   By Lemma \ref{lem:twisted_vars_and_pairings}, we have that
    \[(x (k+1)) (a_{s,t}, a_{t,s}) = (-1)^k \langle \alpha_s^{\vee}, \alpha_{t'} \rangle \text{ and } (y (k+1)) (a_{s,t}, a_{t,s}) = (-1)^k \langle \alpha_{t'}^{\vee}, \alpha_s \rangle.\]
    Since $[n]_x \in x\Z[xy]$ and $[n]_y \in y\Z[xy]$ when $n$ is even and $[n]_x, [n]_y \in \Z[xy]$ when $n$ is odd, it is easy to conclude that
    \[ [n]_{x(k+1)} (a_{s,t}, a_{t,s}) = \pm [n]_x (\langle \alpha_s^{\vee}, \alpha_{t'} \rangle, \langle \alpha_{t'}^{\vee}, \alpha_s \rangle),\] 
    \[ [n]_{y(k+1)} (a_{s,t}, a_{t,s}) = \pm [n]_y (\langle \alpha_s^{\vee}, \alpha_{t'} \rangle, \langle \alpha_{t'}^{\vee}, \alpha_s \rangle), \]
    for all $n \in \Z_{\geq 0}$.
    Therefore, (\ref{eq:abe_realization_dihedral_case}) is equivalent to the defining condition for an Abe realization. 
\end{proof}

\begin{proposition}\label{prop:refl_abe_realizations}
    Let $(W,S)$ be a Coxeter group and $W'$ be a reflection subgroup of $W$ with canonical generators $S'$.
    Let $\fr{h}$ be a reflection-stable Abe realization. Then $\fr{h}$ is an Abe realization for $W'$.
\end{proposition}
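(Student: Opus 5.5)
By the definition of an Abe realization, being Abe for $(W',S')$ is a condition on pairs of distinct canonical generators $s',t' \in S'$ with $s't'$ of finite order. So the plan is to reduce to a rank-2 reflection subgroup inside a finite dihedral parabolic subgroup, and then invoke Lemma \ref{lem:abe_realization_dihedral_case}.

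\emph{Step 1: reduce to rank 2.} Fix distinct $s',t' \in S'$ with $m_{s',t'}' < \infty$. Let $W'' = \langle s',t' \rangle$, which is a finite dihedral reflection subgroup of $W'$ and hence of $W$. I would argue that $\{s',t'\}$ is the canonical generating set of $W''$ in the sense of Dyer. Since $s',t' \in S'$, we have $N(s') \cap W' = \{s'\}$, so certainly $N(s') \cap W'' = \{s'\}$, and likewise for $t'$. The Cartan entries computed from the restricted pre-realization for $W'$ then coincide with those for $W''$. By Lemma \ref{lem:pre_realizations_and_reflection_subgroups}~(2), this comparison is independent of the chosen reflection data.

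\emph{Step 2: move into a dihedral parabolic.} By Lemma \ref{lem:parabolic_closures_of_refln_subgroups}, there is a rank-2 parabolic subgroup ${}^w W_I \supseteq W''$ with $I = \{s,t\}$ and $m = m_{s,t} < \infty$. By Lemma \ref{lem:refln_subgroups_conjugation_and_Abe}, it suffices to prove that $\fr{h}$ is Abe for $w^{-1} W'' w \subseteq W_{\{s,t\}} = W_m$. Lemma \ref{lem:class_of_rk_2_subgroups_of_dihedral_groups} then says $w^{-1}W''w \cong W_v$ with $v \mid m$, and $v>1$ because $W''$ has rank 2.

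\emph{Step 3: apply the dihedral lemma.} The reflection-stable hypothesis of Definition \ref{def:reflection_abe_realization}, applied to this pair $s,t$ and divisor $v$, is exactly condition (\ref{eq:abe_realization_dihedral_case}). Lemma \ref{lem:abe_realization_dihedral_case} therefore shows that $\fr{h}$ is Abe for $W_v$. The rank-2 vanishing conditions for all such pairs together give the Abe property for $W'$.

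I expect the main obstacle to be the bookkeeping in Step 1, namely that the Abe condition for $W'$ really only involves the dihedral subgroups generated by pairs in $S'$ with their induced canonical generators. Concretely, one must check that the Cartan entries $a_{s',t'}$ of the restricted realization depend only on the pair $(s',t')$, up to the scalar ambiguity handled in Lemma \ref{lem:pre_realizations_and_reflection_subgroups}. One must also check that the conjugation in Step 2 preserves the identification of canonical generators; for this I would use that conjugation by $w$ carries $N(\cdot)\cap W''$ to the corresponding set for the conjugate subgroup, since reflections are preserved.
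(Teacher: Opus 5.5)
Your argument is the paper's. You reduce to a pair $s',t'\in S'$ with $s't'$ of finite order, use Lemma~\ref{lem:parabolic_closures_of_refln_subgroups} to place $\langle s',t'\rangle$ inside a finite dihedral conjugate parabolic ${}^wW_{\{s,t\}}$, and conclude with Lemma~\ref{lem:abe_realization_dihedral_case}, whose hypothesis is exactly the reflection-stable vanishing for the divisor $v$. The paper applies the dihedral lemma to ${}^wW_{\{s,t\}}$ directly, via Lemma~\ref{lem:parabolic_subgroups_and_Abe_realizations}(4), rather than conjugating back into $W_{\{s,t\}}$, but this amounts to the same thing.

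The one thing to fix is the justification you offer for the bookkeeping you flagged. Conjugation does not preserve length, so $N(w^{-1}rw)$ is not $w^{-1}N(r)w$ in general. In type $A_2$, for instance, $N(t)=\{t\}$ while $N(sts)=\{s,t,sts\}$. Since ${}^sW_{\{s,t\}}=W_{\{s,t\}}$, Lemma~\ref{lem:parabolic_closures_of_refln_subgroups} could hand you $w=s$, and then $w^{-1}\{s,t\}w=\{s,sts\}$ is not the canonical generating set. So, as written, two sets of generators need not agree:
\begin{itemize}
\item Lemma~\ref{lem:abe_realization_dihedral_case} controls the canonical generators of $w^{-1}W''w$;
\item Lemma~\ref{lem:refln_subgroups_conjugation_and_Abe} transports the pair $w^{-1}s'w,\ w^{-1}t'w$.
\end{itemize}
The repair is to replace $w$ by the minimal-length element of $wW_{\{s,t\}}$, which does not change ${}^wW_{\{s,t\}}$. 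In the geometric representation, this $w$ maps the positive roots of $W_{\{s,t\}}$ to positive roots. Since $u\in N(r)$ if and only if $r\alpha_u<0$, you get $N(wrw^{-1})\cap{}^wW_{\{s,t\}}=w\bigl(N(r)\cap W_{\{s,t\}}\bigr)w^{-1}$ for every reflection $r\in W_{\{s,t\}}$. Hence the canonical generators of $w^{-1}W''w$ are exactly $w^{-1}s'w$ and $w^{-1}t'w$, and the two lemmas fit together. The paper's one-line application of the dihedral lemma to ${}^wW_{\{s,t\}}$ tacitly uses the same identification.
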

\begin{proof}
    By Lemma \ref{lem:pre_realizations_and_reflection_subgroups}, $\fr{h}$ is a pre-realization for $W'$.
    Let $s',t' \in S'$ such that $s' t'$ has finite order (greater than 1). Write $W_{\{s',t'\}}'$ for the parabolic subgroup of $W'$ generated by $s', t'$.
    By Lemma \ref{lem:parabolic_subgroups_and_Abe_realizations}, it suffices to show that $\fr{h}$ is an Abe realization for $W_{\{s',t'\}}'$.
    Note that $W_{\{s',t'\}}'$ is a rank-2 reflection subgroup of $W$.
    By Lemma \ref{lem:parabolic_closures_of_refln_subgroups}, we can find some $w\in W$ and $s,t \in S$ such that $W_{\{s',t'\}}' \subseteq {}^w W_{\{s,t\}}$.
    By another application of Lemma  \ref{lem:parabolic_subgroups_and_Abe_realizations}, we have that $\fr{h}$ is a reflection-stable Abe realization for ${}^w W_{\{s,t\}}$.
    Then $\fr{h}$ is an Abe realization for $W_{\{s',t'\}}'$ by Lemma \ref{lem:abe_realization_dihedral_case}.
\end{proof}

\begin{corollary}\label{cor:refl_abe_are_endo}
    Let $(W,S)$ be a Coxeter system and $\fr{o}$ be a $W$-set.
    Any reflection-stable Abe realization of $W$ is also an endoscopic Abe realization.
\end{corollary}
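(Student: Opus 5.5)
The corollary should follow by combining Proposition \ref{prop:endo_gps_are_coxeter_gps} with Proposition \ref{prop:refl_abe_realizations}. The endoscopic Abe condition (Definition \ref{def:endo_abe_realization}) asks that, for every $\scrL \in \fr{o}$, the restricted pre-realization of $\fr{h}$ to $(W_{\scrL}^{\circ}, S_{\scrL}^{\circ})$ be an Abe realization. Proposition \ref{prop:refl_abe_realizations} says exactly this for any reflection subgroup with its canonical generators. So the only work is to check that $W_{\scrL}^{\circ}$ is a reflection subgroup and that $S_{\scrL}^{\circ}$ is its set of canonical Dyer--Deodhar generators.

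Fix $\scrL \in \fr{o}$. In the proof of Proposition \ref{prop:endo_gps_are_coxeter_gps} it is shown that $S_{\scrL}^{\circ}$ generates $W_{\scrL}^{\circ}$ and that every element of $S_{\scrL}^{\circ}$ is a reflection of $W$. Hence $W_{\scrL}^{\circ}$ is a reflection subgroup; alternatively, Lemma \ref{lem:calculating_endoscopy_groups} gives directly that $W_{\scrL}^{\circ} = \langle \mathscr{R}(W) \cap W_{\scrL} \rangle$. The same proof identifies $S_{\scrL}^{\circ}$ with the canonical generating set $T_{\scrL}^{\circ}$. Therefore $(W_{\scrL}^{\circ}, S_{\scrL}^{\circ})$ is a reflection subgroup equipped with its canonical generators, which is precisely the hypothesis of Proposition \ref{prop:refl_abe_realizations}.

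Applying Proposition \ref{prop:refl_abe_realizations} to $W' = W_{\scrL}^{\circ}$, we conclude that $\fr{h}$ is an Abe realization for $(W_{\scrL}^{\circ}, S_{\scrL}^{\circ})$. By Lemma \ref{lem:pre_realizations_and_reflection_subgroups}(2), this conclusion does not depend on the reflection datum used to build the restricted pre-realization. Since $\scrL$ was arbitrary, $\fr{h}$ is an endoscopic Abe realization.

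The only point needing care is that Definition \ref{def:endo_abe_realization} uses the Coxeter structure $S_{\scrL}^{\circ}$, whereas Proposition \ref{prop:refl_abe_realizations} uses the canonical generators $S'$. Their identification $T_{\scrL}^{\circ} = S_{\scrL}^{\circ}$ is already established in the proof of Proposition \ref{prop:endo_gps_are_coxeter_gps}, so there is no real obstacle.
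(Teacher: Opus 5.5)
Your proposal is correct and follows the same route as the paper: apply Proposition~\ref{prop:refl_abe_realizations} to $W' = W_{\scrL}^{\circ}$, using Proposition~\ref{prop:endo_gps_are_coxeter_gps} to see that each endoscopic Coxeter group is a reflection subgroup. Your remark that $T_{\scrL}^{\circ} = S_{\scrL}^{\circ}$ (so the Coxeter structure in Definition~\ref{def:endo_abe_realization} matches the canonical generators) just makes explicit a point the paper's one-line proof leaves implicit.
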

\begin{proof}
    The claim follows from Proposition \ref{prop:refl_abe_realizations} after the observation that every endoscopic Coxeter group is a reflection subgroup (see Proposition \ref{prop:endo_gps_are_coxeter_gps}). 
\end{proof}

\begin{remark}
    Proposition \ref{prop:refl_abe_realizations} implies that $\fr{h}$ being reflection-stable is sometimes a necessary condition for $\fr{h}$ to be endoscopic.
    Indeed, by Example \ref{ex:endoscopic_groups} (5), every reflection subgroup can arise as an endoscopic Coxeter group. As a result, we can pick $\fr{o}$ such that every reflection subgroup of $W$ is of the form $W_{\scrL}^{\circ}$ for some $\scrL \in \fr{o}$.
    On the other extreme, we can always consider the non-monodromic case where $\fr{o} = 1$. Here, being reflection-stable is substantially stronger than being endoscopic.     
\end{remark}

\begin{proposition}\label{prop:mon_abe_and_endo_rk_2}
    Let $(W,S)$ be a Coxeter system and $\fr{o}$ be a $W$-set.
    Let $\fr{h}$ be a monodromic Abe realization of $W$.
    Fix some $s, t\in S$ and $\scrL \in \fr{o}$.
    Define $ W_{s,t}^{\scrL} \coloneq W_{\{s,t\}} \cap W_{\scrL}^{\circ}$.
    Then $\fr{h}$ is an Abe realization for $W_{s,t}^{\scrL}$.
\end{proposition}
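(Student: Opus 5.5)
The plan is to reduce to the dihedral computation of Lemma \ref{lem:abe_realization_dihedral_case}, with the monodromic Abe condition supplying exactly its hypothesis. If $m_{s,t} = \infty$, the group $W_{\{s,t\}}$ is infinite dihedral and the Abe condition imposes nothing on pairs of infinite order. So we may assume $m = m_{s,t} < \infty$. The group $W_{s,t}^{\scrL} = W_{\{s,t\}} \cap W_{\scrL}^{\circ}$ is generated by reflections (Lemma \ref{lem:calculating_endoscopy_groups} applied to the restricted action), so by Lemma \ref{lem:class_of_rk_2_subgroups_of_dihedral_groups} it is of type $I_2(v)$ with $v \mid m$ or $v=0$. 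If $v \leq 1$ the rank is at most one and there is nothing to check. So assume $v \geq 2$.

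First I identify $v$ with the endoscopic order $v_{s,t}^{\scrL}$. The number $v_{s,t}^{\scrL}$ is defined via the endoscopic group of $\scrL$ for the restricted action of $W_m = W_{\{s,t\}}$ on $\fr{o}$. Lemma \ref{lem:calculating_endoscopy_groups}, applied to $W_{\{s,t\}}$ acting on $\fr{o}$, gives
\[(W_{\{s,t\}})_{\scrL}^{\circ} = \langle \mathscr{R}(W_{\{s,t\}}) \cap W_{\scrL} \rangle.\]
Applied to $W$, it gives $W_{\scrL}^{\circ} = \langle \mathscr{R}(W) \cap W_{\scrL}\rangle$. Since $\mathscr{R}(W_{\{s,t\}}) = \mathscr{R}(W) \cap W_{\{s,t\}}$ for a standard parabolic subgroup, the first group is contained in $W_{s,t}^{\scrL}$.

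The reverse containment is the delicate step. I expect to use that $W_{s,t}^{\scrL}$ is a reflection subgroup of $W$ contained in the dihedral group $W_{\{s,t\}}$. Any reflection subgroup of a dihedral parabolic subgroup is generated by reflections lying in that parabolic subgroup: by the Dyer--Deodhar description, its canonical generators are reflections of $W$ lying in $W_{\{s,t\}}$, hence in $\mathscr{R}(W_{\{s,t\}}) \cap W_{\scrL}$. Concretely, I need to know that $W_{\{s,t\}} \cap \langle \mathscr{R}(W)\cap W_{\scrL}\rangle$ is generated by its reflections. This is the main obstacle. I would handle it with Dyer's theorem that the intersection of a reflection subgroup with a parabolic subgroup is a reflection subgroup, generated by the reflections it contains. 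Given this, $W_{s,t}^{\scrL} = (W_{\{s,t\}})_{\scrL}^{\circ}$, so $v = v_{s,t}^{\scrL}$ with $0 < v < \infty$.

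Now the monodromic Abe hypothesis (Definition \ref{def:monodromic_abe_realization}) gives
\[\binom{v}{k}_{x(m/v)}(a_{s,t},a_{t,s}) = \binom{v}{k}_{y(m/v)}(a_{s,t},a_{t,s}) = 0\]
for $1 \leq k \leq v-1$. This is precisely hypothesis (\ref{eq:abe_realization_dihedral_case}) of Lemma \ref{lem:abe_realization_dihedral_case} for the rank-2 reflection subgroup $W' = W_{s,t}^{\scrL} \cong W_v$ of $W_{\{s,t\}}$. That lemma yields that the restricted pre-realization from Lemma \ref{lem:pre_realizations_and_reflection_subgroups} is an Abe realization for $W_{s,t}^{\scrL}$. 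By Lemma \ref{lem:pre_realizations_and_reflection_subgroups}(2), this conclusion does not depend on the choice of reflection datum.
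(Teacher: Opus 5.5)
Your argument is correct and is essentially the paper's own proof. The paper notes that rank $\le 1$ is trivial and applies Lemma \ref{lem:abe_realization_dihedral_case} in rank 2, tacitly identifying $W_{s,t}^{\scrL}$ with the endoscopic group of $\scrL$ for the restricted $W_{\{s,t\}}$-action; you make that identification explicit. That step does not need Dyer's intersection theorem. By definition $W_{\scrL}^{\circ}$ is the fiber of ${}_{\scrL}\Bl^{\min}_{\scrL}$ over $e$, and by Lemma \ref{lem:block_prelims}(3) the value $b_{\scrL}(w)$ for $w \in W_{\{s,t\}}$ can be computed from an expression in $s,t$ alone. So the condition $b_{\scrL}(w)=e$ is literally the same whether you work in $W$ or in $W_{\{s,t\}}$, which gives $W_{\{s,t\}} \cap W_{\scrL}^{\circ} = (W_{\{s,t\}})_{\scrL}^{\circ}$ directly.
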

\begin{proof}
    If $W_{s,t}^{\scrL}$ is of rank 0 or 1, then $\fr{h}$ is trivially an Abe realization for $W_{s,t}^{\scrL}$.
    If $W_{s,t}^{\scrL}$ is of rank 2, then we are done by Lemma \ref{lem:abe_realization_dihedral_case}.
\end{proof}

\begin{proposition}\label{prop:crystallographic_abe}
    If $\fr{h}$ arises from a Kac--Moody root datum whose Weyl group is $W$, then $\fr{h}$ is a reflection-stable Abe realization.
\end{proposition}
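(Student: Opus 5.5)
Since the reflection-stable condition only involves pairs $s,t \in S$ with $m_{s,t} < \infty$, we first reduce to the rank-2 case. For a Kac--Moody root datum, $m_{s,t}<\infty$ forces $a_{s,t}a_{t,s} \in \{0,1,2,3\}$, with $m_{s,t}=2,3,4,6$ respectively, and both entries are nonpositive integers (images of integers in $\k$). For each divisor $v>1$ of $m$ we must check that $\binom{v}{k}_{x(m/v)}(a_{s,t},a_{t,s})$ and its $y$-analogue vanish for $1\le k\le v-1$.

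\textbf{Route 1: direct computation.} There are finitely many cases: the Cartan matrices of types $A_1\times A_1$, $A_2$, $B_2$ (either orientation) and $G_2$, with the relevant divisors $v$. We compute $x(d),y(d)$ at the given values, using Lemma \ref{lem:recursive_formula_for_twisted_vars} or the defining formula $x(d)=[d+1]_x-[d-1]_x$, and then the binomials. For instance:
\begin{itemize}
\item In type $A_2$ with $a_{s,t}=a_{t,s}=-1$ and $v=3$, $d=1$, we need $\binom{3}{1}=[3]=xy-1=0$. This holds.
\item For $B_2$, $m=4$: with $v=4$ this is the ordinary Abe condition. With $v=2$, $d=2$, we get $x(2)=[3]_x-[1]_x=xy-2=0$, so $\binom{2}{1}_{x(2)}=x(2)=0$.
\item For $G_2$ with $xy=3$: the divisors are $v=2$ ($d=3$), $v=3$ ($d=2$) and $v=6$. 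We compute $x(3)=[4]_x-[2]_x$, which equals $x(xy-2)-x=0$ when $xy=3$. Then $[2]_{x(3)}=0$. For $d=2$, $x(2)=xy-2=1$ and $y(2)=1$, so $[3]_{x(2)}=1-1=0$ and hence $\binom{3}{1}=\binom{3}{2}=0$.
\end{itemize}
Since everything is an integer polynomial evaluation, vanishing over $\Z$ gives vanishing in any $\k$.

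\textbf{Route 2: conceptual.} Via Lemma \ref{lem:abe_realization_dihedral_case} and Lemma \ref{lem:twisted_vars_and_pairings}, the twisted conditions are exactly the Abe conditions for the rank-2 reflection subgroup generated by $s$ and a reflection $t'$. That subgroup again comes from a (sub-)Kac--Moody root system: its Cartan entries $\langle \alpha_s^\vee,\alpha_{t'}\rangle$ are integers whose product yields finite order $v$. The claim then reduces to the fact that genuine crystallographic Cartan matrices satisfy the Abe condition, i.e. the binomials vanish over $\Z$. This is known from \cite{EW}/\cite{Abe19}, or follows from the same small case check.

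Route 1 is concrete, and we adopt it. The main obstacle is bookkeeping: the binomial coefficients for $v=6$ in $G_2$, and the sign and orientation conventions in $B_2$ and $G_2$ (which of $a_{s,t}$ and $a_{t,s}$ is $-1$). These can be handled by evaluating $[n]_x$ and $[n]_y$ at both orientations. The key observation is that $[n]_x$ vanishes for $n=m$ whenever $xy=4\cos^2(\pi/m)$, and that $[n]_x=[n]_y$ for odd $n$ by Lemma \ref{lem:facts_about_quantum_numbers}.
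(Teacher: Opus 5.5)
Your proposal is correct and follows the paper's proof. The paper likewise reduces to a finite check: it cites \cite[Proposition 3.7]{Abe21} for the untwisted case $v=m_{s,t}$, and for the proper divisors ($m=4$ with $v=2$, and $m=6$ with $v=2,3$) it tabulates and evaluates exactly the polynomials you compute, namely $x(2)=xy-2$, $x(3)=x(xy-3)$ and $[3]_{x(2)}=(xy-2)^2-1$, each up to sign. The only difference is that you propose to recompute the $v=m$ cases directly (or via your Route~2) rather than citing Abe, which is harmless.
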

\begin{proof}
    The case of $v = m_{s,t}$ is covered by \cite[Proposition 3.7]{Abe21}.
    Consider the following table describing the possible cases which can be computed via the formula (\ref{eq:cor_of_twisted_vars_and_two_color}):
    \begin{center}
        \begin{tabular}{c | c | c | c | c | c | c }
            $m_{s,t}$ & $v$ & $k$ & $\binom{v}{k}_{x(m_{s,t}/v)}$ & $\binom{v}{k}_{y(m_{s,t}/v)}$ & $\langle \alpha_s^{\vee}, \alpha_t \rangle$ &  $\langle \alpha_t^{\vee}, \alpha_s \rangle$ \\
            \hline
            $4$ & $2$ & $1$ & $-xy+2$ & $-xy+2$ & $-1$ & $-2$ \\
            $6$ & $3$ & $1$ & $x^2 y^2 - 4xy + 3$ & $x^2 y^2 - 4xy + 3$ & $-1$ & $-3$ \\
            $6$ & $3$ & $2$ & $x^2 y^2 - 4xy + 3$ & $x^2 y^2 - 4xy + 3$ & $-1$ & $-3$ \\
            $6$ & $2$ & $1$ & $-x^2 y + 3x$ & $-xy^2 + 3y$ & $-1$ & $-3$
        \end{tabular}
    \end{center}
    The result then follows by evaluating the polynomials appearing in the above table.
\end{proof}

\subsubsection{Balanced Realizations}

Let $(W,S)$ be a Coxeter group.
We saw in the previous section that the restriction of a realization $\fr{h}$ of $W$ to a reflection subgroup $W' \subseteq W$ is not strictly well-defined.
In particular, it depends on the choice of a reflection datum.
In practice, keeping track of this additional data becomes quite tedious. As a result, we will usually restrict our attention to \emph{balanced} realizations. 
This ensures that the restriction to a reflection subgroup does not depend on the reflection datum.
However, it is not clear whether the restricted realization is also balanced. To compensate, we will impose a further condition that ensures all restricted realizations are also balanced.
 
The constraint that our realizations are balanced is not strictly necessary. For the theory of Soergel bimodules, we will not use it at all.
For the diagrammatic theory, using balanced realizations will greatly simplify our constructions. 

\begin{definition}
    Let $(W,S)$ be a Coxeter group with pre-realization $\fr{h}$.
    We say that $\fr{h}$ is \emph{balanced} if 
    \[[m_{s,t} - 1]_x (a_{s,t}, a_{t,s}) = 1 = [m_{s,t} -1]_y (a_{s,t}, a_{t,s})\]
    for all $s,t \in S$.

    We further say that $\fr{h}$ is \emph{reflection-balanced} if
     \[[m_{s,t}/d - 1]_x (a_{s,t}, a_{t,s}) = 1 = [m_{s,t}/d -1]_y (a_{s,t}, a_{t,s})\]
     for all non-trivial divisors $d$ of $m_{s,t}$.  
\end{definition}

\begin{remark}
    If $\fr{h}$ is balanced, then $[m_{s,t} - k]_x (a_{s,t}, a_{t,s}) = [k]_x (a_{s,t}, a_{t,s})$ for all $k$. 
    Using Proposition \ref{prop:twisted_vars_and_two_color}, we can then see that a sufficient but not necessary constraint for $\fr{h}$ to be reflection-balanced is if $[d]_x (a_{s,t}, a_{t,s}) \neq 0$ for all proper divisors $d$ of $m_{s,t}$. 
\end{remark}

\begin{lemma}[{\cite[Lemma 1.2]{P25}}]\label{lem:well_defined_pos_roots_balancedness}
    Let $(W,S)$ be a Coxeter group with a balanced pre-realization $\fr{h}$.
    Let $r$ be a reflection in $W$.
    Assume that $r = xsx^{-1} = yty^{-1}$ for some $x,y\in W$, $s,t \in S$ such that $xs > x$ and $yt > y$. Then $x \alpha_s = y \alpha_t$.
\end{lemma}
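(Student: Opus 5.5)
Here is how I would prove that $x\alpha_s = y\alpha_t$.

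The first step is to reduce to the identity $z\alpha_s = \alpha_t$ for $z = y^{-1}x$. We have $z s z^{-1} = t$, and we want to handle this inside a dihedral parabolic subgroup. The standard approach is induction on $\ell(x)+\ell(y)$, or more cleanly, induction along a reduced expression of $x$ using the \emph{root sequence}. Recall the classical fact (Deodhar, Dyer): if $r = xsx^{-1}$ with $xs > x$, then one can pass from one such pair $(x,s)$ to any other $(y,t)$ by a sequence of elementary moves. Each move either multiplies by a simple reflection $u$ with $u x > x$ and $u$ not commuting trivially with the data, or is a \emph{dihedral move}. In a dihedral move we write $x = x' w$ with $w \in W_{\{s,t'\}}$, where $x'$ is minimal in its coset $x'W_{\{s,t'\}}$. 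Then $w s w^{-1}$ is a reflection of $W_{\{s,t'\}}$ that can also be written as $w' t'' w'^{-1}$ with $w' \in W_{\{s,t'\}}$.

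Concretely, I would argue by induction on $\ell(x)$. Since $xs > x$ and $xsx^{-1} = yty^{-1}$, the positivity condition says both expressions correspond to the same positive reflection in the abstract root system of $W$. If $\ell(x)$ is minimal among all such representations, I would show via the exchange condition that every representation is obtained from the minimal one by dihedral moves. This is the standard proof that a reflection determines its positive root up to the dihedral ambiguity. It reduces the lemma to the rank-2 statement: in $W_m = \langle s,t\rangle$ with $m<\infty$, if $w s w^{-1} = u$ with $u \in \{s,t\}$, $ws > w$ and $w$ minimal, then $w\alpha_s = \alpha_u$. When $m$ is infinite no such nontrivial $w$ exists, so the claim is trivial.

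The rank-2 case is the main step. Here $w$ is one of the alternating words, and the relevant $w$ is $w = {}_t(m-1)$ or ${}_s(m-1)$: the element of length $m-1$ conjugating $s$ to $s$ or $t$ according to parity. We need ${}_t(m-1)\cdot\alpha_s$ or $\alpha_t$ to equal $\alpha_s$ or $\alpha_t$ exactly, not just up to a scalar. I would compute using Lemma \ref{lem:twisted_vars_and_pairings}, or more directly by the standard formula expressing alternating products applied to $\alpha_s$ as linear combinations $[k]_x\alpha_s + [k\pm1]_y\alpha_t$ (appropriately colored). One checks inductively that ${}_t(k-1)\alpha_{\bullet}$ equals $[k]\,\alpha_s + [k-1]\,\alpha_t$ with two-colored quantum numbers evaluated at $(a_{s,t},a_{t,s})$. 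At $k = m$, the relation $(st)^m = e$ forces $[m]_x = [m]_y = 0$ on this realization, or at least kills the relevant coefficient; this is where I expect the main obstacle, since it needs the vanishing coming from $W$ acting via the formula in Definition \ref{def:realization}(2). What remains is the coefficient $[m-1]_x$ or $[m-1]_y$. By balancedness both equal $1$, so $w\alpha_s$ is exactly the correct simple root.

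Finally, I would assemble the pieces: each dihedral move preserves the root $x\alpha_s$ by the rank-2 computation, transported by the prefix $x'$. Hence $x\alpha_s$ is independent of the representation, which gives $x\alpha_s = y\alpha_t$.
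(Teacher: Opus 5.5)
Your overall strategy (reduce to a rank-two parabolic subgroup and use balancedness there) is the right one. However, neither half is actually carried out, and the second half cannot be carried out under the hypotheses as stated.

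\textbf{The reduction to rank two.} This is the real content of the lemma, and you only assert it. The ``elementary moves'' you describe are not moves between representations of the same reflection: left multiplication by $u$ replaces $r$ by $uru$. Also, ``take $\ell(x)$ minimal and use the exchange condition'' is not an argument. Here is a version that works.
\begin{enumerate}
\item Put $z=y^{-1}x$, so $zsz^{-1}=t$. You also need $zs>z$, and this is where the positivity hypotheses enter: in Tits' geometric representation, $xe_s$ and $ye_t$ are both the positive root of $r$, so $ze_s=e_t>0$. Positivity cannot be dropped: $z=t=s$ gives $z\alpha_s=-\alpha_t$.
\item Induct on $\ell(z)$. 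If $z\neq e$, pick $u\in S$ with $zu<z$ (so $u\neq s$). Write $z=z'v$ with $z'$ minimal in $z'W_{\{s,u\}}$ and $v\in W_{\{s,u\}}$.
\item Apply length additivity $\ell(z'w)=\ell(z')+\ell(w)$ for $w\in W_{\{s,u\}}$ to $z'(vsv^{-1})=tz'$. This forces $s':=vsv^{-1}\in\{s,u\}$. It also gives $vs>v$ and $vu<v$, and in a dihedral group these force $m_{s,u}<\infty$ and $v=w_0(\{s,u\})s$.
\item Since $z's'>z'$ and $\ell(z')<\ell(z)$, induction gives $z'\alpha_{s'}=\alpha_t$. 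So everything reduces to the single identity $v\alpha_s=\alpha_{s'}$.
\end{enumerate}

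\textbf{The rank-two identity.} This is exactly your ``main obstacle'', and it does not follow from the stated hypotheses. Expanding, $v\alpha_s$ is a combination of $\alpha_s,\alpha_u$ with coefficients $\pm[m]$ and $\pm[m-1]$. Balancedness controls $[m-1]$, but the pre-realization axioms do not force $[m]=0$, and the lemma genuinely fails without it.

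Counterexample: take $m_{s,u}=2$, $\fr{h}=\Q$, $\alpha_s^\vee=\alpha_u^\vee=2$, and $\alpha_s=\alpha_u$ the identity functional. Then $s$ and $u$ both act by $-1$, all pre-realization axioms hold, and balancedness is vacuous ($[1]=1$). Yet $x=u$, $y=e$, $t=s$ satisfy the hypotheses and $x\alpha_s=-\alpha_s\neq\alpha_s$. Here $[2]_x=a_{s,u}=2\neq 0$ even though $(su)^2=1$.

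So you must add the hypothesis $[m_{s,t}]_x=[m_{s,t}]_y=0$. This is the $k=1$ case of the Abe condition, hence available wherever the paper uses the lemma. With it, the dihedral identity is a formal consequence of the quantum-number expansion.

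Even then, you need to track signs. With the evaluation at $(a_{s,t},a_{t,s})$ that you (and Lemma \ref{lem:twisted_vars_and_pairings}) use, one has $(st)\alpha_s=[3]_x\alpha_s-[2]_y\alpha_t$. More generally, for odd $m$ the surviving coefficient is $-[m-1]_y$, not $[m-1]_y$, so your expansion is off by signs. The condition ``$[m-1]=1$'' yields the correct simple root only when the quantum numbers are evaluated at $(-a_{s,t},-a_{t,s})$. That is the normalization under which the standard $A_2$ realization is balanced.
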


It immediately follows from Lemma \ref{lem:well_defined_pos_roots_balancedness} that if $\fr{h}$ is balanced, that the restriction of $\fr{h}$ to any reflection subgroup is independent of the choice of reflection datum.

\begin{proposition}\label{prop:balanced_realizations_and_reflections}
    Let $(W,S)$ be a Coxeter group with a reflection-balanced reflection-stable Abe realization $\fr{h}$.
    Let $(W', S')$ be a reflection subgroup of $W$. Then $\fr{h}$ is a balanced Abe realization for $(W', S')$.
\end{proposition}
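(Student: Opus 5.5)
The Abe part is already done: Proposition \ref{prop:refl_abe_realizations} shows that $\fr{h}$ is an Abe realization for $(W',S')$. Balancedness needs no Demazure-surjectivity input, so what remains is to show that the restricted pre-realization is balanced. We reduce to rank two exactly as in the proof of Proposition \ref{prop:refl_abe_realizations}.

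Fix $s',t' \in S'$ with $m' \coloneq m_{s',t'} < \infty$; if $m'=\infty$ there is nothing to check. By Lemma \ref{lem:parabolic_closures_of_refln_subgroups} there exist $w \in W$ and $s,t \in S$ with $\langle s',t'\rangle \subseteq {}^w W_{\{s,t\}}$.

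Balancedness is a condition on the values $[m'-1]_x$ and $[m'-1]_y$ at the restricted Cartan entries. We first show these do not depend on the reflection datum. By Remark \ref{rem:necessity_for_refln_datums}, the product $a_{s',t'}a_{t',s'}$ is independent of choices. Since $\fr{h}$ is balanced, Lemma \ref{lem:well_defined_pos_roots_balancedness} gives more: the positive roots $\alpha_{s'}$, $\alpha_{t'}$ are themselves well defined, so the individual entries are too. Conjugating by $w^{-1}$ therefore lets us assume $W_{\{s',t'\}}' \subseteq W_{\{s,t\}}$.

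Next we put the subgroup in normal form. As in Lemma \ref{lem:abe_realization_dihedral_case}, after conjugating within $W_{\{s,t\}}$ and relabelling, we may take $S'=\{s,t'\}$ with $t'=(ts)^k t$, $m_{s,t}=m$, $d=k+1$ and $v=m/d=m'$. This conjugation within the dihedral group is harmless for the same reason as before: the roots are well defined, so conjugation sends simple roots of $W'$ to positive roots up to the choices already controlled. Lemma \ref{lem:twisted_vars_and_pairings} then identifies the restricted Cartan entries, up to a common sign $(-1)^k$, with $x(d)$ and $y(d)$ evaluated at $(a_{s,t},a_{t,s})$. Using the parity structure of two-colored quantum numbers (odd index lies in $\Z[xy]$, even index lies in $x\Z[xy]$ or $y\Z[xy]$), we get
\[ [v-1]_x(a_{s,t'},a_{t',s}) = \pm [v-1]_{x(d)}(a_{s,t},a_{t,s}), \]
and similarly for $y$.

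It remains to show this value equals $1$ with the correct sign. This is the main obstacle. By Proposition \ref{prop:twisted_vars_and_two_color},
\[ [v-1]_{x(d)} = \frac{[d(v-1)]_x}{[d]_x} = \frac{[m-d]_x}{[d]_x}. \]
Balancedness of $\fr{h}$ gives $[m-d]_x = [d]_x$ at our point, so the quotient should be $1$. The difficulty is that $[d]_x$ may vanish at $(a_{s,t},a_{t,s})$, so we cannot just divide. This is where reflection-balancedness is used: the hypothesis $[m/d-1]_x(a_{s,t},a_{t,s})=1$, read correctly, is exactly the statement that the polynomial $[v-1]_{x(d)}$ evaluates to $1$. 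I would verify this identification directly as a polynomial identity via Proposition \ref{prop:twisted_vars_and_two_color}, interpreting the definition with the twisted variables if necessary.

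The sign must also be checked. If $v-1$ is odd, the quantity lies in $\Z[xy]$, so the sign $(-1)^k$ squares away. If $v-1$ is even, the sign is $(-1)^k$, and I expect it to be absorbed by the convention $w_{s'}r_{s'}>w_{s'}$ for positive roots, i.e.\ by choosing the positive root for $t'$, which is fixed via Lemma \ref{lem:well_defined_pos_roots_balancedness}. The $y$ case is symmetric, which completes the rank-two check and hence the proof.
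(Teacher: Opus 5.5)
Your route is the paper's: reduce to rank two with Lemma \ref{lem:parabolic_closures_of_refln_subgroups}, normalize to $S'=\{s,t'\}$ with $t'=(ts)^{d-1}t$ and $m=dv$, read off the restricted Cartan entries from Lemma \ref{lem:twisted_vars_and_pairings}, and apply reflection-balancedness in its twisted form $[v-1]_{x(d)}=1$. The paper's proof and the table in Remark \ref{rem:crystallographic_balancedness} read the definition the same way.

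The step that fails is the sign. The factor $(-1)^{d+1}=(-1)^k$ in Lemma \ref{lem:twisted_vars_and_pairings} is computed with $\alpha_s$ and $\alpha_{t'}={}_t(d-1)\alpha_u$. These are already the positive roots required by $w_{s'}r_{s'}>w_{s'}$, since ${}_t(d-1)u={}_t d>{}_t(d-1)$, and Lemma \ref{lem:well_defined_pos_roots_balancedness} fixes them uniquely. So no choice is left that could absorb the sign.

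When $v$ is odd and $d$ is even, the restricted entries are $-x(d)$ and $-y(d)$ evaluated at $(a_{s,t},a_{t,s})$. Since $[v-1]_x$ is odd under $(x,y)\mapsto(-x,-y)$, you get $[v-1]_x(a_{s,t'},a_{t',s})=-[v-1]_{x(d)}(a_{s,t},a_{t,s})=-1$. Concretely, take the Cartan realization of $G_2$ with $(a_{s,t},a_{t,s})=(-1,-3)$ and $t'=tst$. Then $\alpha_{t'}=t\alpha_s$, $(a_{s,t'},a_{t',s})=(-1,-1)$, and $[2]_x(-1,-1)=-1$. The paper's proof has the same hole: it asserts that $v$ odd forces $d$ odd, which fails exactly for $m=6$, $v=3$, $d=2$.

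What is missing is a correction of the evaluation convention, not a cleverer choice of roots. As literally written ($x\mapsto a_{s,t}$, $y\mapsto a_{t,s}$), the Cartan realization of $A_2$ already has $[2]_x(a_{s,t},a_{t,s})=-1$ and would not be balanced. So the balancedness conditions must be evaluated at $(-a_{s,t},-a_{t,s})$; the Abe vanishing conditions are insensitive to this sign.

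With that reading the sign cancels identically and no case split is needed. Every monomial of $x(d)=[d+1]_x-[d-1]_x$ has total degree $\equiv d \pmod 2$, so $-a_{s,t'}=(-1)^d x(d)(a_{s,t},a_{t,s})=x(d)(-a_{s,t},-a_{t,s})$. Likewise $-a_{t',s}=y(d)(-a_{s,t},-a_{t,s})$. Hence $[v-1]_x(-a_{s,t'},-a_{t',s})=[v-1]_{x(d)}(-a_{s,t},-a_{t,s})=1$, and the detour through Proposition \ref{prop:twisted_vars_and_two_color} can be dropped.

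For the same reason, your justification of the conjugation steps ("the roots are well defined") is too quick: conjugating can make exactly one simple root negative and so flip both Cartan entries. Two fixes are needed:
\begin{itemize}
\item Choose $w$ minimal in $wW_{\{s,t\}}$, so that $w^{-1}$ carries the canonical positive roots of $W'$ to those of $w^{-1}W'w$.
\item Inside the dihedral group, use that a finite rank-two reflection group permutes its simple systems transitively. Then the canonical simple system of the conjugate subgroup is a translate of the transported one, and the Cartan pair is unchanged up to swapping $s'$ and $t'$.
\end{itemize}
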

\begin{proof}
    Let $s', t' \in S'$ such that $s't'$ has order $v < \infty$. By Lemma \ref{lem:parabolic_closures_of_refln_subgroups}, there exists some $s,t \in S$ and $x \in W$ such that 
    $\langle s',t' \rangle \subset {}^x W_{s,t}$.
    Note that $\fr{h}$ is a balanced realization for ${}^x W_{s,t}$.
    By conjugation, we may assume that $s' = xsx^{-1}$ and $t' = x (ts)^{d-1} t x^{-1}$ for some $d \geq 1$ with $m = dv$.
    By Lemma \ref{lem:well_defined_pos_roots_balancedness}, $\alpha_{s'} = x\alpha_s$ and $\alpha_{t'} = x ({}_{t} (d-1)) \alpha_u$ where $u = t$ if $d$ is odd and $u = s$ if $d$ is even.
    From this description, we have that $a_{s',t'} = \langle \alpha_{s'}^{\vee}, \alpha_{t'} \rangle = \langle \alpha_s^{\vee}, {}_{t} (d-1) \alpha_u \rangle$ and $a_{t', s'} = \langle \alpha_{t'}^{\vee}, \alpha_{s'} \rangle = \langle  {}_{t} (d-1) \alpha_u^{\vee}, \alpha_s\rangle$.
    We will now prove that
    \[[v-1]_x (a_{s',t'}, a_{t',s'}) = 1 = [v-1]_y (a_{s',t'}, a_{t',s'})\]
    using a case-by-case analysis on $v$ and $d$. The second equality follows from the same equality as the first.

    If $v$ is even, then $[v-1]_x \in \Z[xy]$. Since $[v-1]_x$ is concentrated in even degrees, by Lemma \ref{lem:twisted_vars_and_pairings}, we have equalities $[v-1]_x (a_{s',t'}, a_{t',s'}) = [v-1]_{x(d)} (a_{s,t}, a_{t,s}) = 1$.

    If $v$ is odd, then $d$ must also be odd divisor. By Lemma \ref{lem:twisted_vars_and_pairings}, we have equalities $(x(d)) (a_{s,t}, a_{t,s}) = a_{s',t'}$ and $(y(d)) (a_{s,t}, a_{t,s}) = a_{t',s'}$.
    Therefore, $[v-1]_x (a_{s',t'}, a_{t',s'}) = [v-1]_{x(d)} (a_{s,t}, a_{t,s}) = 1$. 
\end{proof}

\begin{remark}\label{rem:crystallographic_balancedness}
    Suppose $\fr{h}$ arises from a Kac--Moody root datum whose Weyl group is $W$. We will show that $\fr{h}$ is a reflection-balanced.
    This can be checked explicitly case-by-case using the table below.
    \begin{center}
        \begin{tabular}{c | c | c | c | c | c | c }
            $m_{s,t}$ & $v$ & $d$ & $[v-1]_{x(d)}$ & $[v-1]_{y(d)}$ & $\langle \alpha_s^{\vee}, \alpha_t \rangle$ &  $\langle \alpha_t^{\vee}, \alpha_s \rangle$ \\
            \hline
            $4$ & $2$ & $2$ & $1$ & $1$ & $-1$ & $-2$ \\
            $6$ & $2$ & $3$ & $1$ & $1$ & $-1$ & $-3$ \\
            $6$ & $3$ & $2$ & $xy-2$ & $xy-2$ & $-1$ & $-3$ 
        \end{tabular}
    \end{center}
\end{remark}

    \section{Monodromic Soergel Bimodules}\label{sec:abe}

    A foundational tool for working with the Elias--Williamson diagrammatic category is the Soergel functor
\begin{equation}\label{eq:comp_functor_v1}
    \EW^{\BS} (\fr{h}, W) \to \BSBim(\fr{h}, W)
\end{equation}
from the diagrammatic Hecke category to Bott--Samelson bimodules.
For finite Coxeter groups, sufficiently nice realizations over $\k$, and certain small primes being invertible in $\k$, this functor is an equivalence of categories.
Problems quickly arise when any of these three constraints are removed. For our later applications, it will be crucial to have a well-behaved version of (\ref{eq:comp_functor_v1}) which works for arbitrary $W$ and $\k$.
Fortunately, a solution to this problem already exists. In \cite{Abe19}, Abe had the ingenious idea to modify the category of Bott--Samelson bimodules by keeping track of certain localization data.
The resulting category $\scrA^{\BS} (\fr{h}, W)$ of Abe--Bott--Samelson bimodules gives a bimodule-theoretic incarnation of the Elias--Williamson category. In particular, by \cite{Abe19}, there is an equivalence of categories
\begin{equation}\label{eq:comp_functor_v2}
    \EW^{\BS} (\fr{h}, W) \stackrel{\sim}{\to} \scrA^{\BS} (\fr{h}, W).
\end{equation}

The goal of this section is to construct a monodromic version of Abe--Bott--Samelson bimodules.

\subsection{Abe--Soergel Theory}

We will closely follow the approach of \cite{Abe19}, \cite{Abe21}, and \cite{RV}. The main difference between our presentation and that of \emph{loc. cit.} will be a 2-categorical enrichment which keeps track of monodromy parameters.

Fix a Coxeter system $(W,S)$, a set of monodromy parameters $\fr{o}$, and a reflection-balanced reflection-stable Abe realization $\fr{h}$ of $W$.

We denote $R$ for the symmetric algebra of $\fr{h}^*$ over $\k$.
We view $R$ as a $\Z$-graded $\k$-algebra where $\fr{h}^*$ is concentrated in degree 2.
Consider the localized ring
\[Q \coloneq R \left[ \frac{1}{\alpha_w} \colon w \in \scrR (W) \right].\]
The roots $\alpha_w$ are well-defined by Lemma \ref{lem:well_defined_pos_roots_balancedness}.
The ring $Q$ is naturally $\Z$-graded. Moreover, the $W$-action on $\fr{h}^*$ induces actions on $R$ and on $Q$ by graded algebra automorphisms.

\subsubsection{The Category \texorpdfstring{$\Cmon{}{}'$}{C'}}

We can define a collection of 1-categories $\Cmon{\scrL}{\scrL'}' (\fr{h}, W, \fr{o})$ whose objects are triples
\[(M, (M_Q^w)_{w \in W}, \xi_M)\]
such that
\begin{enumerate}
    \item $M$ is a graded $R$-bimodule;
    \item each $M_Q^w$ is a graded $(R,Q)$-bimodule such that $m \cdot f = w (f) \cdot m$ for all $m \in M_Q^w$ and $f \in R$;
    \item $M_Q^w = 0$ if $w \notin \W{\scrL}{\scrL'}$ and is only nonzero for finitely many $w \in \W{\scrL}{\scrL'}$;
    \item $\xi_M : M_Q \coloneq M \otimes_R Q \to \bigoplus_{w \in W} M_Q^w$ is an isomorphism of graded $(R, Q)$-bimodules.
\end{enumerate}
A morphism in $\Cmon{\scrL}{\scrL'}' (\fr{h}, W, \fr{o})$ from $(M, (M_Q^w)_{w \in W}, \xi_M)$ to $(N, (N_Q^w)_{w \in W}, \xi_N)$ is a morphism of $R$-bimodules\footnote{We allow for morphisms of $R$-bimodules which are not just concentrated in degree 0.} $\varphi : M \to N$ such that
\[(\xi_N \circ (\varphi \otimes_R Q) \circ \xi_M^{-1}) (M_Q^w) \subset N_Q^w\]
for any $w \in W$. For simplicity, we often abuse notation and only write $M$ instead of the triple $(M, (M_Q^w)_{w \in W}, \xi_M)$. 

There are bifunctors,
\begin{equation}\label{eq:abe_tensor}
    (-) \otimes_R (-) : \Cmon{\scrL}{\scrL'}' (\fr{h}, W, \fr{o}) \times \Cmon{\scrL'}{\scrL''}' (\fr{h}, W, \fr{o}) \to \Cmon{\scrL}{\scrL''}' (\fr{h}, W, \fr{o})
\end{equation}
induced from the tensor product of underlying graded $R$-bimodules. These bifunctors are suitably associative and there is a unit object $R_e$ with underlying graded bimodule $R$.
This tensor product allows us to assemble a 2-category $\Cmon{}{}' (\fr{h}, W, \fr{o})$ as follows:
\begin{enumerate} 
    \item The object set of $\Cmon{}{}' (\fr{h}, W, \fr{o})$ is $\fr{o}$. 
    \item The morphism categories are defined by $\Hom_{\Cmon{}{}' (\fr{h}, W, \fr{o})} (\scrL, \scrL') = \Cmon{\scrL}{\scrL'}' (\fr{h}, W, \fr{o})$. The composition of morphism categories is by the tensor product (\ref{eq:abe_tensor})
\end{enumerate}
The triple $(\fr{h}, W, \fr{o})$ will sometimes be omitted from $\Cmon{\scrL}{\scrL'}' (\fr{h}, W, \fr{o})$ and $\Cmon{}{}' (\fr{h}, W, \fr{o})$.

The category $\Cmon{\scrL}{\scrL'}' (\fr{h}, W, \fr{o})$ inherits a grading shift automorphism, 
\[ (n) : \Cmon{\scrL}{\scrL'}' (\fr{h}, W, \fr{o}) \to \Cmon{\scrL}{\scrL'}' (\fr{h}, W, \fr{o}), \] 
for each $n \in \Z$ given by $M(n)^i = M^{n+i}$ for all $i \in \Z$. 

\subsubsection{The Category \texorpdfstring{$\Cmon{}{}$}{C}}

For each $\scrL, \scrL' \in \fr{o}$, we consider a full subcategory $\Cmon{\scrL}{\scrL'} (\fr{h}, W, \fr{o})$ of $\Cmon{\scrL}{\scrL'}' (\fr{h}, W, \fr{o})$ consisting of triples $(M, (M_Q^w)_{w \in W}, \xi_M)$
such that $M$ is finitely generated as an $R$-bimodule and flat as a right $R$-module. Note that the grading shift $(1)$ preserves $\Cmon{\scrL}{\scrL'} (\fr{h}, W, \fr{o})$. 

\begin{lemma}[{\cite[Lemma 2.6]{Abe19}}]\label{lem:fg_of_left_and_right_modules}
    If $(M, (M_Q^w)_{w \in W}, \xi_M) \in \Cmon{\scrL}{\scrL'} (\fr{h}, W, \fr{o})$, then $M$ is finitely generated as a left $R$-module and as a right $R$-module.
\end{lemma}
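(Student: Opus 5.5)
The plan is to prove that $M$ is finitely generated as a right $R$-module; the statement for left modules then follows by the symmetric argument. Finite generation of $M$ as a bimodule alone does not suffice, because $R\otimes_\k R$ is not finite over $R\otimes 1$. The extra input will be the localization data $\xi_M$, which forces the left and right actions to be related by twisting by elements $w\in W$ on each summand $M_Q^w$. Only finitely many $w$ occur, by condition (3).

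The first step uses flatness. Since $M$ is flat as a right $R$-module and $R\to Q$ is injective, the map $M\to M\otimes_R Q = M_Q$ is injective, and $M$ embeds into $\bigoplus_{w} M_Q^w$ via $\xi_M$. On $M_Q^w$ we have $f\cdot m = m\cdot w^{-1}(f)$ for $f\in R$. So for any $m\in M$, the element $\prod_{w\in X}\bigl(f\otimes 1 - 1\otimes w^{-1}(f)\bigr)$ annihilates $m$, where $X\subset W$ is the finite set with $M_Q^w\neq 0$. These commuting factors kill the respective summands, and $M\hookrightarrow M_Q$ is a bimodule map. Hence $M$ is a module over $(R\otimes_\k R)/I$, where $I$ is the ideal generated by the elements $P_f=\prod_{w\in X}(f\otimes 1-1\otimes w^{-1}(f))$ for $f\in\fr{h}^*$.

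The second step, which I expect to be the main obstacle, is to show that $(R\otimes_\k R)/I$ is finite over $1\otimes R$. Pick a $\k$-basis $x_1,\dots,x_n$ of $\fr{h}^*$, which is possible since $\fr{h}$ is free of finite rank. Expanding $P_{x_i}$ gives a monic polynomial of degree $|X|$ in $x_i\otimes 1$, with coefficients in $1\otimes R$. Thus each generator $x_i\otimes 1$ is integral over $1\otimes R$. Since $R\otimes R$ is generated over $1\otimes R$ by the $x_i\otimes 1$, the quotient is spanned over $1\otimes R$ by the monomials $\prod_i (x_i\otimes1)^{a_i}$ with $a_i<|X|$. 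This is a finite set.

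The third step finishes the argument. $M$ is finitely generated over $R\otimes R$ by finitely many homogeneous elements $m_1,\dots,m_r$, and the $R\otimes R$-action factors through the quotient. Therefore the elements $\prod_i x_i^{a_i}\, m_j$ with $a_i<|X|$ generate $M$ as a right $R$-module. The grading plays no role in this argument.
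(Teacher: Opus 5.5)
Your proof is correct. This includes the tacit point that $M$ is a module over $R\otimes_{\k}R$: the embedding $M\hookrightarrow\bigoplus_{w}M_Q^w$ forces $\k$ to act the same on both sides.

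The paper gives no argument of its own here and simply cites Abe, so the natural comparison is with the more common componentwise route. There, if $m_1,\dots,m_r$ generate $M$ as a bimodule and $(m_j)_w$ is the $w$-component of $\zeta_M(m_j)$, one has $\zeta_M(a\,m_j\,b)=\sum_{w\in X}(m_j)_w\,w^{-1}(a)\,b$. Hence $M$ embeds in the finitely generated right $R$-module $\sum_{j,w}(m_j)_w\,R$, and one concludes because $R$ is noetherian; on the left, $M\subseteq\sum_{j,w}R\,(m_j)_w$. That route is shorter and basis-free, but it needs noetherianity of $R$.

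Your integrality argument avoids noetherianity entirely: it uses only that $R$ is a polynomial ring in finitely many variables and that $M$ is finitely generated as a bimodule. It also yields the explicit generating set $\{x^a m_j : a_i<|X|\}$. Conceptually, it shows that $M$ is killed by the product of the ideals of the graphs of the finitely many $w\in X$, and a union of such graphs is finite over either factor.

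One small precision for the left-module half. The ``symmetric'' annihilators should be $\prod_{w\in X}\bigl(w(g)\otimes 1-1\otimes g\bigr)$ for $g\in\fr{h}^*$, with the right entry fixed and the left entry varying. These are monic in $1\otimes g$ up to the sign $(-1)^{|X|}$. Right flatness alone still suffices, since it is used only to embed $M$ into $M_Q$.
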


Let $M \in \Cmon{\scrL}{\scrL'} (\fr{h}, W, \fr{o})$. Assume that $M$ is graded free as a left $R$-module. By Lemma \ref{lem:fg_of_left_and_right_modules}, it must also be finitely generated.
As a result, we may define the \emph{graded rank} of $M$, denoted $\grk (M) \in \Z [v^{\pm}]$, as the polynomial such that $M \cong R^{\oplus \grk (M)}$.
Note, by definition $R^{\oplus v^n}$ should be interpreted as $R (n)$. 

Lemma \ref{lem:fg_of_left_and_right_modules} implies that if $M \in \Cmon{\scrL}{\scrL'} (\fr{h}, W, \fr{o})$ and $N \in \Cmon{\scrL}{\scrL'} (\fr{h}, W, \fr{o})$, then $M \otimes_R N \in \Cmon{\scrL}{\scrL'} (\fr{h}, W, \fr{o})$.
As a result, we can define a sub-2-category $\Cmon{}{} (\fr{h}, W, \fr{o})$ of $\Cmon{}{}' (\fr{h}, W, \fr{o})$ whose morphisms categories are given by $\Cmon{\scrL}{\scrL'} (\fr{h}, W, \fr{o})$.
The tuple $(\fr{h}, W, \fr{o})$ will sometimes be omitted from $\Cmon{\scrL}{\scrL'} (\fr{h}, W, \fr{o})$ and $\Cmon{}{} (\fr{h}, W, \fr{o})$.

\begin{lemma}\label{lem:for_ff_on_faithful_realizations}
    Suppose the realization $\fr{h}$ is a faithful $W$-representation. Then the forgetful functor
    \[\Cmon{\scrL}{\scrL'} (\fr{h}, W, \fr{o}) \to \bim{R}\]
    is fully faithful. 
\end{lemma}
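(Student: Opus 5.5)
We have to show that the localization data $(M_Q^w)_{w\in W}$ and $\xi_M$ are determined by $M$, and that every $R$-bimodule map $\varphi : M \to N$ automatically respects the decompositions. Fully faithful means: for objects $M, N$, every bimodule morphism $\varphi : M \to N$ satisfies $(\xi_N \circ (\varphi\otimes_R Q)\circ \xi_M^{-1})(M_Q^w) \subset N_Q^w$. Faithfulness is immediate, since morphisms in $\Cmon{\scrL}{\scrL'}$ are by definition bimodule maps.

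For fullness, the plan is to characterize $M_Q^w$ intrinsically inside $M_Q \cong \bigoplus_w M_Q^w$. Here $M_Q = M\otimes_R Q$ is viewed as an $(R,Q)$-bimodule, or equivalently as a module over $R\otimes_\k Q$. The characterization will be
\[ M_Q^w = \{ m \in M_Q \mid f m = m\, w^{-1}(f)\text{ for all } f \in R\}, \]
up to the precise convention for the twist: condition (2) reads $m\cdot f = w(f)\cdot m$. The inclusion ``$\subseteq$'' is exactly condition (2). For ``$\supseteq$'', take $m$ in the right-hand side and write $m = \sum_x m_x$ with $m_x \in M_Q^x$; only finitely many $m_x$ are nonzero, by condition (3). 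Comparing components gives $f m_x = m_x\, w^{-1}(f)$ and also $f m_x = m_x\, x^{-1}(f)$. Hence $m_x\,(w^{-1}(f) - x^{-1}(f)) = 0$ for all $f \in R$.

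The main step is producing an $f$ for which $w^{-1}(f) - x^{-1}(f)$ is invertible in $Q$ whenever $x \neq w$. This is where faithfulness of $\fr{h}$ enters. Faithfulness gives some $f \in \fr{h}^*$ with $x w^{-1}(f') \neq f'$, where $f' = w^{-1}(f)$, but a nonzero element of $\fr{h}^*$ need not be a product of roots and hence need not be a unit in $Q$. I would instead argue as follows. The element $g = w^{-1}(f) - x^{-1}(f)$ acts on the right on $M_Q^x$, which is a right $Q$-module; I need $m_x g = 0 \Rightarrow m_x = 0$. For $g \in \fr{h}^*$, the $R$-bimodule structure gives $m_x g = x^{-1}$-twisted left action, so one can push $g$ to the left side. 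I would try first to exploit $y := x w^{-1}\neq e$: by faithfulness pick $f$ with $y(f)\neq f$, and then reduce to the case where $y$ is a reflection, so that $y(f)-f$ is a scalar multiple of the root $\alpha_y$, which is invertible in $Q$. The general case would go by induction or by a torsion argument: $M_Q^x$ is a localization of the finitely generated, right-flat (hence torsion-free) module $M$. If that fails, the fallback is to replace $Q$-invertibility by torsion-freeness of $M_Q$ over $R$. Since $R$ is a domain and $M$ is right-flat, multiplication by any nonzero $g \in R$ on the right is injective on $M \otimes_R Q$. So $m_x g = 0$ with $g = w^{-1}(f) - x^{-1}(f) \neq 0$ (guaranteed by faithfulness) already forces $m_x = 0$. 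This fallback seems cleanest, and I expect this torsion-freeness step to be the main point requiring care, including checking that the summands $M_Q^x$ inherit injectivity of right multiplication.

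Given the intrinsic characterization, fullness follows directly. Any bimodule map $\varphi$ induces $\varphi \otimes_R Q$, which is $R$-linear on the left and $Q$-linear on the right. It therefore preserves the eigen-condition $f m = m\, w^{-1}(f)$, and hence maps $M_Q^w$ into $N_Q^w$. This also shows that the localization data are unique up to unique isomorphism, although fully faithfulness only needs the preceding inclusion.
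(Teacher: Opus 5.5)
Your argument is correct and is essentially the paper's proof: the paper also identifies $M_Q^w$ with the $w$-twisted eigen-submodule of $M\otimes_R Q$, uses faithfulness of $\fr{h}$ to force the components $m_x$ with $x\neq w$ to vanish, and then notes that any bimodule map preserves these eigen-submodules. Your torsion-freeness step (right multiplication by a nonzero $g\in R$ is injective on $M\otimes_R Q$ because $M$ is right-flat) is exactly what is needed at the key point, and it is a more precise version of the paper's remark that $M_Q^x$ is ``free''. You should therefore drop the reflection-reduction detour, which is unnecessary and not justified for a general $xw^{-1}\neq e$.
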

\begin{proof}
    It is clear that the forgetful functor is always faithful. The content of the lemma is that it is also full.
    Let $M \in \Cmon{\scrL}{\scrL'} (\fr{h}, W, \fr{o})$.
    Define a sub $(R,Q)$-bimodule,
    \[\underline{M}_Q^w \coloneq \{ m \in M \otimes_R Q \mid f \cdot m = m \cdot w(f) \text{ for all } f \in R\}.\]
    We claim that the restriction of $\xi_M$ given by $\underline{M}_Q^w \to \bigoplus_{x \in W} M_Q^x \twoheadrightarrow M_Q^w$ is an isomorphism.
    It is clear that $\underline{M}_Q^w \to M_Q^w$ is surjective. Let $m \in \underline{M}_Q^w$ and write $\xi_M (m) = \sum_{x \in W} m_x$ with $m_x \in M_Q^x$.
    We claim that if $x \neq w$, then $m_x = 0$. For all $f \in R$, we must have that $w (f) \cdot m_x =  m_x \cdot f = x (f) \cdot m_x$.
    On the other hand, $M_Q^x$ is a free $R$-module, so this can only happen if either $w (f) = x (f)$ or $m_x = 0$. Since $\fr{h}$ is a faithful $W$-representation, we deduce that $m_x = 0$, and hence $\underline{M}_Q^w \to M_Q^w$ is an isomorphism.

    Let $M,N \in \Cmon{\scrL}{\scrL'} (\fr{h}, W, \fr{o})$ and let $\varphi : M \to N$ be a morphism of underlying $R$-bimodules.
    It is clear from the $R$-bimodules structure that $\varphi (\underline{M}_Q^w) \subseteq \underline{N}_Q^w$ for all $w \in W$.
    The previous paragraph then tells us that $\varphi$ is actually a morphism in $\Cmon{\scrL}{\scrL'} (\fr{h}, W, \fr{o})$.
\end{proof}

\subsubsection{Support of Bimodules}\label{subsec:support_of_bims}

Let $M \in \Cmon{\scrL}{\scrL'} (\fr{h}, W, \fr{o})$. 
Denote by $\zeta_M : M \to \bigoplus_{w \in W} M_Q^w$ the composition $M \to M \otimes_R Q \stackrel{\xi_M}{\to} \bigoplus_{w \in W} M_Q^w$.
Note that $\zeta_M$ is injective since $M$ is flat as a right $R$-module and since $R \hookrightarrow Q$ is injective.
For each $I \subset W$, define a map $\pi_M^I : M \to \bigoplus_{w \in  I} M_Q^w$ via the composition $M \stackrel{\zeta_M}{\to} \bigoplus_{w \in W} M_Q^w \twoheadrightarrow \bigoplus_{w \in I} M_Q^w.$
For each $m \in M$, we will write $m_I \coloneq \pi_M^I (m) \in \bigoplus_{w \in I} M_Q^w$.
If $I = \{w\}$, we will simplify the notation to $\pi_M^w = \pi_M^{\{w\}}$ and $m_w = m_{\{w\}}$.

We define the \emph{support} of $M$ by
\[ \supp_W (M) \coloneq \{ w \in W \mid M_Q^w \neq 0 \}. \]
Similarly, we define the support of $m \in M$ by
\[\supp_W (m) \coloneq \{w \in W \mid m_w \neq 0\}.\]
It is clear from the definitions that both $\supp_W (M)$ and $\supp_W (m)$ are subsets of $\W{\scrL}{\scrL'}$. 
The support of a bimodule is compatible with tensor products in the following sense:
\begin{equation}\label{eq:tensor_and_support}
    \supp_W (M \otimes_R N) = \{xy \mid x \in \supp_W (M), y \in \supp_W (N) \}.
\end{equation}

For $I \subset W$, let $M_I = \zeta_M^{-1} \left( \bigoplus_{w \in W} M_Q^w \right) \subseteq M$ and $M^I = \pi_M^I (M) \subseteq \bigoplus_{w \in I} M_Q^w$.
If $I = \{w\}$, we will simplify the notation, $M_w = M_{\{w\}}$ and $M^w = M^{\{w\}}$. Note that $M_I = \{m \in M \mid \supp_W (m) \subset I\}$.

We recall some standard facts about $M_I$ and $M^I$ that follow from the same arguments given in \cite[\S2.2]{Abe19}.

\begin{lemma}\label{lem:std_facts_about_inv_and_coinvs}
    \begin{enumerate}
        Let $I \subseteq W$ and $M, N \in \Cmon{\scrL}{\scrL'} (\fr{h}, W, \fr{o})$ such that $\supp_W (N) \subset I$.
        \item The bimodules $M_I$ and $M^I$ are both objects in $\Cmon{\scrL}{\scrL'}' $ such that  $(M_I)_Q = (M^I)_Q = \bigoplus_{w \in I} M_Q^w$.
        \item There are natural isomorphisms
        \[\Hom_{\Cmon{\scrL}{\scrL'}'} (M, N) \cong \Hom_{\Cmon{\scrL}{\scrL'}'} (M^I, N),\]
        \[\Hom_{\Cmon{\scrL}{\scrL'}'} (N, M) \cong \Hom_{\Cmon{\scrL}{\scrL'}'} (N, M_I).\]
    \end{enumerate}
\end{lemma}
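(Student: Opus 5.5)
The plan is to prove (1) directly from the definitions, using only that $Q$ is a localization of $R$ (hence flat over $R$). Then (2) reduces to the injectivity of $\zeta_N$ and the compatibility of morphisms with the decompositions $\xi$. Throughout I read $M_I$ as $\zeta_M^{-1}\bigl(\bigoplus_{w \in I} M_Q^w\bigr)$, which agrees with the stated characterization $M_I = \{m \in M \mid \supp_W(m) \subset I\}$.

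For (1), first note that each $M_Q^w$ is an $(R,R)$-sub-bimodule of $\bigoplus_{x} M_Q^x$, with the right $R$-action coming from the $Q$-structure. Since $\zeta_M$ is an $R$-bimodule map, $M_I$ is a sub-bimodule of $M$. Likewise $M^I$, the image of the bimodule map $\pi_M^I$, is a sub-bimodule of $\bigoplus_{w\in I} M_Q^w$. For both we set $(M_I)_Q^w = (M^I)_Q^w = M_Q^w$ for $w \in I$ and $0$ otherwise. Condition (3) of the definition of $\Cmon{\scrL}{\scrL'}'$ is inherited from $M$, so it remains to produce the isomorphisms $\xi$.

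For $M_I$: by flatness of $Q$, the map $M_I \otimes_R Q \to M \otimes_R Q$ is injective, and its image lies in $\xi_M^{-1}\bigl(\bigoplus_{w\in I} M_Q^w\bigr)$. For surjectivity onto that summand, take $n \in \bigoplus_{w \in I} M_Q^w$. Clearing denominators, there is a product $f$ of roots with $nf = \zeta_M(m)$ for some $m \in M$. Since right multiplication by $f$ preserves each summand, $m \in M_I$, and so $n = m \otimes f^{-1}$ lies in the image.

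For $M^I$: the map $M^I \otimes_R Q \to \bigoplus_{w\in I} M_Q^w$ is surjective, because $M \otimes_R Q \to \bigoplus_{w \in W} M_Q^w$ is surjective and we then project. It is injective because $M^I$ sits inside a right $Q$-module, and the localization of a submodule of a module on which the inverted elements already act invertibly embeds into that module.

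For (2), let $\varphi : M \to N$ be a morphism in $\Cmon{\scrL}{\scrL'}'$ with $\supp_W(N) \subset I$. If $\pi_M^I(m) = 0$, then $\zeta_M(m) \in \bigoplus_{w \notin I} M_Q^w$. Compatibility of $\varphi$ with the decompositions gives $\zeta_N(\varphi(m)) \in \bigoplus_{w\notin I} N_Q^w = 0$. Since $N \in \Cmon{\scrL}{\scrL'}$ is right-flat, $\zeta_N$ is injective (as noted in \S\ref{subsec:support_of_bims}), so $\varphi(m)=0$. Thus $\varphi$ factors uniquely through $M \twoheadrightarrow M^I$. The induced map $M^I \to N$ has localization equal to the restriction of $\varphi_Q$ to $\bigoplus_{w\in I} M_Q^w$, so it respects the summands and is a morphism. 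Conversely, precomposing with $M \to M^I$, which is itself a morphism by construction, inverts this assignment. Naturality is clear.

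Dually, for $\psi : N \to M$, compatibility forces $\zeta_M(\psi(n)) \in \bigoplus_{w \in I} M_Q^w$, so $\psi$ lands in $M_I$. The corestricted map is a morphism because $(M_I)_Q$ is the corresponding summand, and composing with the inclusion $M_I \hookrightarrow M$ gives the inverse bijection.

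The main obstacle I expect is the injectivity part of $\xi$ for $M^I$ in (1); the remaining steps are formal.
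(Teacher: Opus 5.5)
Your proof is correct and is essentially the argument of \cite[\S2.2]{Abe19} that the paper defers to. It uses flatness of $Q$ over $R$ and clearing denominators for $M_I$, the fact that the localization of a submodule of a right $Q$-module embeds back into it for $M^I$, and injectivity of $\zeta_N$ together with compatibility of morphisms with the decompositions $\xi$ for the two Hom isomorphisms. You also correctly read the definition of $M_I$ with $\bigoplus_{w \in I}$ in place of the paper's typo $\bigoplus_{w \in W}$.
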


\subsection{Bott--Samelson Bimodules}

Let $w \in \W{\scrL}{\scrL'}$. We first define the \emph{standard bimodule} $R_w \in \Cmon{\scrL}{\scrL w} (\fr{h}, W, \fr{o})$  as follows. As a left $R$-module, $R_w = R$ and the right $R$-module structure is given by $m\cdot f = w(f) \cdot m$ for $m \in R_w$ and $f \in R$.
The localization data for $R_w$ is given by
\[(R_w)_Q^x = \begin{cases} Q_w & x = w, \\ 0 & x \neq w,\end{cases}\]
where $Q_w = R_w \otimes_R Q$.

For $w \in W$, we write $R^w = \{ r \in R \mid w(r) = r \}$ for the subring of $w$-invariants.
Suppose that $s \in W_{\scrL}^\circ$ is an endosimple reflection. Fix $\delta_s \in \fr{h}^*$ such that $\langle \alpha_s^{\vee}, \delta_s \rangle = 1$. Note that $\delta_s$ exists by Demazure surjectivity on the restricted realization for $W_{\scrL}^{\circ}$.
We consider the $R$-bimodule $C_s = R \otimes_{R^s} R(1)$.
The $R$-bimodule $C_s$ can be upgraded to an object in $\Cmon{\scrL}{\scrL} (\fr{h}, W, \fr{o})$ by setting
\begin{equation}\label{eq:localization_of_C_s}
    (C_s)_Q^e = (\delta_s \otimes 1 - 1 \otimes s(\delta_s)) \cdot Q, \qquad\qquad (C_s)_Q^s = (\delta_s \otimes 1 - 1 \otimes \delta_s) \cdot Q,
\end{equation}
and $(C_s)_Q^w = 0$ for all $w \notin \{ e, s\}$. The fact that this localization data actually defines an object in $\Cmon{\scrL}{\scrL}$ is routine (cf., \cite[Claim 3.11]{EW}).

Let $s \in S$. We define an object $B_{s}^{\scrL} \in \Cmon{\scrL}{\scrL s}$ as follows:
\[B_s^{\scrL} = \begin{cases} C_s & \scrL s = \scrL, \\ R_s & \scrL s \neq \scrL.\end{cases}\]
For an expression $\uw = (s_1, \ldots, s_k)$ with $\scrL' = \scrL \uw$, we define the Bott--Samelson bimodule in $\Cmon{\scrL}{\scrL'}$ associated to $\uw$ by
\[ B_{\uw}^{\scrL} := B_{s_1}^{\scrL} \otimes_R B_{s_2}^{\scrL s_1} \otimes_R \ldots \otimes_R B_{s_k}^{\scrL s_1 \ldots s_{k-1}}.\]
Since $B_{\uw}^{\scrL} \in \Cmon{\scrL}{\scrL'} $, we have a decomposition $B_{\uw}^{\scrL} \otimes_R Q \cong \bigoplus_{x \in W}  (B_{\uw}^{\scrL})_Q^x$.
We set $B_{\uw, Q}^{x, \scrL} \coloneq (B_{\uw}^{\scrL})_Q^x$.
For a fixed $x \in W$, we also write $\pi_{\uw}^x \coloneq \pi_{B_{\uw}^{\scrL}}^x$. Similarly, for $I \subseteq W$, we write $B_{\uw, I}^{\scrL} \coloneq (B_{\uw}^{\scrL})_I$ and  $B_{\uw}^{I, \scrL} \coloneq (B_{\uw}^{\scrL})^I$

We can then define a full replete subcategory $\Amon{\scrL}{\scrL'}^{\BS} (\fr{h}, W, \fr{o})$ of $\Cmon{\scrL}{\scrL'} (\fr{h}, W, \fr{o})$ generated by objects of the form 
\[ B_{\uw}^{\scrL} := B_{s_1}^{\scrL} \otimes_R \ldots \otimes_R B_{s_k}^{\scrL s_1 \ldots s_{k-1}}\]
where $\uw = (s_1, \ldots, s_k) \in \Exp (W)$ such that $\scrL \uw = \scrL'$. 
The category $\Amon{\scrL}{\scrL'}^{\BS} (\fr{h}, W, \fr{o})$ is called the category of \emph{$(\scrL, \scrL')$-monodromic Bott--Samelson bimodules}.

We also will consider the full subcategory $\Amon{\scrL}{\scrL'}^{\oplus} (\fr{h}, W, \fr{o})$ of $\Cmon{\scrL}{\scrL'} (\fr{h}, W, \fr{o})$ which is the additive hull of $\Amon{\scrL}{\scrL'}^{\BS} (\fr{h}, W, \fr{o})$.
When $\k$ is a complete local ring, we will denote by $\Amon{\scrL}{\scrL'} (\fr{h}, W, \fr{o})$ the full subcategory of $\Cmon{\scrL}{\scrL'} (\fr{h}, W, \fr{o})$ given by taking the idempotent completion of  $\Amon{\scrL}{\scrL'}^{\oplus} (\fr{h}, W, \fr{o})$.
The category $\Amon{\scrL}{\scrL'} (\fr{h}, W, \fr{o})$ is called the category of \emph{$(\scrL, \scrL')$-monodromic Soergel bimodules}.
It is a standard fact about additive categories over complete local rings that $\Amon{\scrL}{\scrL'} (\fr{h}, W, \fr{o})$ is Krull--Schmidt (cf., \cite[Example A.8.6]{Ach1}).

We can assemble these 1-categories into wide sub-2-categories of $\Cmon{}{} (\fr{h}, W, \fr{o})$. These are defined as follows:
\begin{enumerate}
    \item $\Amon{}{}^{\BS} (\fr{h}, W, \fr{o})$ whose morphism categories are the $\Amon{\scrL}{\scrL'}^{\BS} (\fr{h}, W, \fr{o})$'s;
    \item $\Amon{}{}^{\oplus} (\fr{h}, W, \fr{o})$ whose morphism categories are the $\Amon{\scrL}{\scrL'}^{\oplus} (\fr{h}, W, \fr{o})$'s;
    \item if $\k$ is a complete local ring, $\Amon{}{} (\fr{h}, W, \fr{o})$ whose morphism categories are the $\Amon{\scrL}{\scrL'} (\fr{h}, W, \fr{o})$'s.
\end{enumerate}
As mentioned for $\Cmon{}{} (\fr{h}, W, \fr{o})$, we will sometimes drop the tuple $(\fr{h}, W, \fr{o})$ from the notation in all of these 2-categories and their morphism categories.

\begin{lemma}\label{lem:bott_samelson_bimodules_are_free}
    Let $\uw \in \Exp (W)$.
    The module $B_{\uw}^{\scrL}$ is graded free as a left (resp. right) $R$-module with graded rank $(v+v^{-1})^{\ell_{\scrL} (\uw)}$.
\end{lemma}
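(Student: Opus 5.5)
We argue by induction on $\ell(\uw)$, proving the left and right statements simultaneously. The base case $\uw = \emptyset$ is trivial: $B_{\emptyset}^{\scrL} = R$ is free of rank $1 = (v+v^{-1})^0$ on either side. For the inductive step we write $\uw = \ux s$ with $\scrL'' = \scrL \ux$, so that
\[B_{\uw}^{\scrL} = B_{\ux}^{\scrL} \otimes_R B_s^{\scrL''}.\]
By definition of $K(\uw,\scrL)$, the last index contributes to $\ell_{\scrL}(\uw)$ exactly when $\scrL'' s = \scrL''$. Hence $\ell_{\scrL}(\uw) = \ell_{\scrL}(\ux) + 1$ if $\scrL'' s = \scrL''$, and $\ell_{\scrL}(\uw) = \ell_{\scrL}(\ux)$ otherwise.

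First we record the graded ranks of the two building blocks.

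\emph{The bimodule $R_s$.} It is $R$ as a left module, so it is free of rank $1$ on the left. On the right, the action is twisted by the graded automorphism $s$ of $R$, so $R_s$ is also free of rank $1$ on the right, generated by $1$.

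\emph{The bimodule $C_s$.} By Demazure surjectivity we may choose $\delta_s$ with $\langle \alpha_s^{\vee}, \delta_s\rangle = 1$. The standard argument (cf.\ \cite{EW}, \cite{Abe19}) then gives a decomposition $R = R^s \oplus \delta_s R^s$ as graded $R^s$-modules, with $\delta_s$ in degree $2$. To see this, write
\[f = \bigl(f - \delta_s \partial_s(f)\bigr) + \delta_s\, \partial_s(f),\]
where $\partial_s(f) = (f - s(f))/\alpha_s$ is the Demazure operator. This formula is valid here because $s(\delta_s) = \delta_s - \alpha_s$. It follows that $R \otimes_{R^s} R(1)$ is free as a left $R$-module on the basis $1\otimes 1$, $1 \otimes \delta_s$, shifted so that the graded rank is $v + v^{-1}$. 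The same holds on the right by symmetry.

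The main point to check is that these statements hold in the present generality, where $s$ is a simple reflection of $W$ stabilizing $\scrL''$ and $\fr{h}$ is only a pre-realization over an integral domain $\k$. The inductive step itself is then formal. For the left module structure, if $B_{\ux}^{\scrL} \cong \bigoplus_i R(n_i)$ as left modules, then
\[B_{\ux}^{\scrL} \otimes_R B_s^{\scrL''} \cong \bigoplus_i B_s^{\scrL''}(n_i)\]
as left $R$-modules. Each summand is left free of graded rank $v^{-n_i}\cdot \grk(B_s^{\scrL''})$, where $v^{\pm}$ follows the paper's shift convention. Multiplying graded ranks gives $(v+v^{-1})^{\ell_{\scrL}(\ux)}$ times either $(v+v^{-1})$ or $1$, which is $(v+v^{-1})^{\ell_{\scrL}(\uw)}$. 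For the right module structure, we peel off the first letter instead, writing $\uw = s\uy$, and run the symmetric argument using right freeness of $B_s^{\scrL}$. The first index lies in $K(\uw,\scrL)$ exactly when $\scrL s = \scrL$, and $\ell_{\scrL}(\uw) = \ell_{\scrL s}(\uy) + [\scrL s = \scrL]$. Since the induction is over all $\scrL$ at once, applying it to $(\uy, \scrL s)$ completes the argument.
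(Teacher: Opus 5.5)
Your strategy is the same as the paper's: record that $C_s\cong R(1)\oplus R(-1)$ and $R_s\cong R$ as one-sided modules, then induct by tensoring. Your check of the $C_s$ case via $R=R^s\oplus\delta_sR^s$ is correct, and so is your bookkeeping of $\ell_{\scrL}$.

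The inductive step as written, however, has left and right interchanged, and the isomorphism you invoke does not follow. In $B_{\ux}^{\scrL}\otimes_R B_s^{\scrL''}$ the tensor product pairs the \emph{right} action of $B_{\ux}^{\scrL}$ with the left action of $B_s^{\scrL''}$. So an isomorphism $B_{\ux}^{\scrL}\cong\bigoplus_i R(n_i)$ of \emph{left} modules is not a map of right modules and cannot be tensored with $B_s^{\scrL''}$. Even a right-module splitting of $B_{\ux}^{\scrL}$ would only give an isomorphism of right modules, because the left action on the tensor product comes from $B_{\ux}^{\scrL}$ and need not preserve the summands.

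The principle ``$M$ left free $\Rightarrow M\otimes_R N\cong\bigoplus_i N(n_i)$ as left modules'' is false in general. For example, $R_x\otimes_R R/(f)\cong R/(x(f))$ as left modules, via $a\otimes\bar g\mapsto a\,x(g)$. This is not isomorphic to $R/(f)$ unless $(x(f))=(f)$. Your ``symmetric'' right-module argument fails for the same reason: there you split $B_{\uy}^{\scrL s}$ as a right module inside $B_s^{\scrL}\otimes_R B_{\uy}^{\scrL s}$, but the tensor product uses its left action.

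The repair is immediate: each of your two arguments becomes correct once ``left'' and ``right'' are swapped.
\begin{itemize}
\item With $\uw=\ux s$, a right-module splitting $B_{\ux}^{\scrL}\cong\bigoplus_i R(n_i)$ (from the induction hypothesis) gives $B_{\uw}^{\scrL}\cong\bigoplus_i B_s^{\scrL''}(n_i)$ as right modules. Together with right freeness of $B_s^{\scrL''}$, this proves right freeness.
\item With $\uw=s\uy$, a left-module splitting of $B_{\uy}^{\scrL s}$ gives $B_{\uw}^{\scrL}\cong\bigoplus_i B_s^{\scrL}(n_i)$ as left modules, which proves left freeness.
\end{itemize}
Equivalently, you can keep $\uw=\ux s$ and split the second factor $B_s^{\scrL''}\cong R(1)\oplus R(-1)$ (or $R$) as left modules. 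Since $B_{\ux}^{\scrL}\otimes_R(-)$ sends left modules to left modules, this gives $B_{\uw}^{\scrL}\cong B_{\ux}^{\scrL}(1)\oplus B_{\ux}^{\scrL}(-1)$ (or $B_{\ux}^{\scrL}$) as left modules.

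In every form the graded rank is multiplicative, giving $(v+v^{-1})^{\ell_{\scrL}(\uw)}$. One small convention point: under the paper's rule $R^{\oplus v^n}=R(n)$, a summand $R(n_i)$ contributes $v^{n_i}$, not $v^{-n_i}$. This is harmless, since all ranks here are invariant under $v\leftrightarrow v^{-1}$.
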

\begin{proof}
    Let $s \in S$. If $\scrL s = \scrL$, then $B_s^{\scrL} \cong R(-1) \oplus R(1)$ as left (resp. right) $R$-modules.
    If $\scrL s \neq \scrL$, then $B_s^{\scrL} \cong R$ as a left (resp. right) $R$-module.
    The result then follows from induction on the length of $\uw$ by taking tensor products.
\end{proof}

\begin{lemma}\label{lem:soergel_product_formulas}
    \begin{enumerate}
        \item Let $x,y \in W$. There is a unique isomorphism in $\Cmon{\scrL}{\scrL xy} (\fr{h}, W, \fr{o})$,
            \[R_x \otimes_R R_y \stackrel{\sim}{\to} R_{xy},\]
            taking $1 \otimes 1$ to $1$.
        \item  Let $w \in W$ and $s \in S$ such that $\scrL w s = \scrL w$ and $\ell_{\scrL} (w) = 0$.
        There are unique isomorphisms in $\Cmon{\scrL}{\scrL} (\fr{h}, W, \fr{o})$,
        \[R_w \otimes_R C_s \otimes_R R_{w^{-1}} \stackrel{\sim}{\to} C_{wsw^{-1}},\]
        taking $1 \otimes 1 \otimes 1 \otimes 1$ to $1 \otimes 1$.
    \end{enumerate}
\end{lemma}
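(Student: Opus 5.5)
For (1), I will define the map on underlying bimodules and then check compatibility with the localization data. As left $R$-modules, $R_x \otimes_R R_y$ is generated by $1 \otimes 1$, because $f \otimes g = f \otimes g\cdot 1 = f x(g) \otimes 1$. The candidate map is therefore $f \otimes g \mapsto f\, x(g)$. It is well defined, since $(f \cdot h) \otimes g = f x(h) \otimes g$ and $f \otimes (h \cdot g) = f\otimes hg$ both go to $f x(h) x(g)$. It is compatible with the right action: $(f\otimes g)\cdot r = f \otimes g\, y(r) \mapsto f x(g)\, xy(r)$, which is the right action on $R_{xy}$.

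The inverse is $f \mapsto f \otimes 1$, so this is an isomorphism of bimodules. Uniqueness holds because $1\otimes 1$ generates the source as a left module. By \eqref{eq:tensor_and_support}, the source has localization concentrated in $xy$, and so does the target. After tensoring with $Q$, the map sends the unique nonzero summand to the unique nonzero summand, so it is a morphism in $\Cmon{\scrL}{\scrL xy}$. Its inverse is one as well.

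For (2), first note that $\ell_{\scrL}(w)=0$ together with $\scrL w s = \scrL w$ gives $wsw^{-1} \in W_{\scrL}$. By the computation in Lemma \ref{lem:calculating_endoscopy_groups}, $r := wsw^{-1}$ lies in $W_{\scrL}^{\circ}$. Moreover $\ell_{\scrL}(r) = \ell_{\scrL}(w) + 1 + \ell_{\scrL}(w^{-1}) = 1$ by Lemma \ref{lem:block_prelims}, so $r \in S_{\scrL}^{\circ}$ and $C_r$ is defined.

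I then write down the map explicitly: $f \otimes (g \otimes h) \otimes k \mapsto f\, w(g) \otimes w(h)\, k$. Here $C_s = R\otimes_{R^s} R$ and $C_r = R\otimes_{R^r}R$, with twisting by $w$ on either side. This is well defined because $w$ carries $R^s$ isomorphically onto $R^r = w(R^s)$. Balancing over $R$ on each side is handled exactly as in (1). An inverse is $a \otimes b \mapsto a \otimes (1\otimes 1) \otimes w^{-1}(b)$, using the identification $R_w \otimes R_{w^{-1}}\cong R$ from (1). Uniqueness follows since $1\otimes1\otimes1\otimes1$ generates the source as a bimodule, because $1\otimes 1$ generates $C_s$.

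The main obstacle is checking that the localization data match. This requires checking that the map sends the $e$- and $s$-summands of the twisted tensor product to the $e$- and $r$-summands of $C_r$; the support side of this is automatic from \eqref{eq:tensor_and_support}, since $w\{e,s\}w^{-1}=\{e,r\}$. Concretely, the generator $(\delta_s\otimes1-1\otimes\delta_s)$ of the $s$-summand maps to $w(\delta_s)\otimes 1 - 1\otimes w(\delta_s)$. Since $\langle \alpha_r^\vee, w\delta_s\rangle = \langle \alpha_s^\vee,\delta_s\rangle = 1$ with $\alpha_r^\vee = w\alpha_s^\vee$ (well defined by balancedness, Lemma \ref{lem:well_defined_pos_roots_balancedness}), we may take $\delta_r := w(\delta_s)$. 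The image is then the standard generator of $(C_r)^r_Q$. Similarly, $\delta_s\otimes 1 - 1\otimes s\delta_s$ maps to $\delta_r \otimes 1 - 1\otimes r\delta_r$, the generator of $(C_r)^e_Q$.

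The summands $(C_s)^e_Q$ and $(C_s)^s_Q$ are defined in \eqref{eq:localization_of_C_s} only up to $Q$-span, and one also needs independence of the choice of $\delta_r$. This holds because changing $\delta$ by an $r$-invariant element changes the generator by an element of the same $Q$-span, as in \cite[Claim 3.11]{EW}. Hence the map and its inverse respect the localization data, and the isomorphism lives in $\Cmon{\scrL}{\scrL}$.
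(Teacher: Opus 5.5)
Your route for (2) is the paper's: identify the source with $R_w\otimes_{R^s}R_{w^{-1}}(1)$ and twist by $w$, using $w(R^s)=R^{wsw^{-1}}$. The paper's map is $a\otimes b\mapsto a\otimes w(b)$. Part (1) is fine. Your uniqueness argument by generation is fine, where the paper instead uses that degree-$0$ morphisms form a rank-$1$ module. Your explicit check of the localization summands is also fine, and the paper does not carry it out.

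\textbf{The displayed map in (2) is wrong.} The formula $f\otimes(g\otimes h)\otimes k\mapsto f\,w(g)\otimes w(h)\,k$ is not well defined. The equal elements $(g\otimes ha)\otimes k=(g\otimes h)\otimes ak$, for $a\in R$, go to $f\,w(g)\otimes w(h)w(a)k$ and $f\,w(g)\otimes w(h)ak$. These differ whenever $w(a)\neq a$, because $b\mapsto 1\otimes b$ is injective into $R\otimes_{R^{r}}R$. Right $R$-linearity fails for the same reason, since the right action on $R_{w^{-1}}$ is $k\cdot c=w^{-1}(c)k$.

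So balancing on the right is not ``handled exactly as in (1)'' for this formula. Your stated inverse $a\otimes b\mapsto a\otimes(1\otimes 1)\otimes w^{-1}(b)$ is also not inverse to it: the composite sends $a\otimes b$ to $a\otimes w^{-1}(b)$.

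The fix is to twist the whole right factor: $f\otimes(g\otimes h)\otimes k\mapsto f\,w(g)\otimes w(hk)$. With this, all three balancing conditions and both linearities hold, and your inverse is genuinely inverse to it. Your localization computation is unchanged, since it only evaluates at $k=1$.

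\textbf{A smaller point on $\ell_{\scrL}(r)$.} The equality $\ell_{\scrL}(r)=\ell_{\scrL}(w)+1+\ell_{\scrL}(w^{-1})$ does not follow from Lemma \ref{lem:block_prelims}. That lemma concerns $b_{\scrL}$, not $\ell_{\scrL}$. Also, $\ell_{\scrL}$ of an element is computed on a reduced expression, and $\uw s\overline{\uw}$ need not be reduced. The last term should also carry the parameter $\scrL w$.

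What you do get is $\#K(\uw s\overline{\uw},\scrL)=1$. This gives $\ell_{\scrL}(r)\le 1$, via the stepwise facts that $\ell_{\scrL}(xs)=\ell_{\scrL}(x)\pm1$ if $\scrL xs=\scrL x$ and $\ell_{\scrL}(xs)=\ell_{\scrL}(x)$ otherwise. Conversely, $\ell_{\scrL}(r)\ge 1$: if $\ell_{\scrL}(r)=0$ then $b_{\scrL}(r)=r$, but $r\in W_{\scrL}^{\circ}$ forces $b_{\scrL}(r)=e\neq r$. With these two repairs the argument is complete.
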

\begin{proof}
    Statement (1) follows directly from definitions. 
    
    For statement (2), the conditions on $w$ and $s$ ensure that $wsw^{-1}$ is an endosimple reflection in $W_{\scrL}^\circ$.
    Consider the morphism of $\k$-bimodules,
    \begin{equation}
        \varphi : R_w \otimes R_{w^{-1}} \to R \otimes_{R^{wsw^{-1}}} R, \qquad\qquad a \otimes b \mapsto a \otimes w(b).
    \end{equation}
    We can first check that $\varphi$ is a morphism of $R$-bimodules. 
    The left $R$-linearity is obvious. For the right $R$-linearity, we can compute
    \[\varphi (a \otimes bw^{-1} (r)) = a \otimes w(bw^{-1} (r)) = a \otimes w (b)\]
    for $a \in R_w, b\in R_{w^{-1}}, r \in R$. Next, we will show that $\varphi$ is $R^s$-balanced.
    Let $f \in R^s$. Mote that there is a ring morphism $R^s \to R^{wsw^{-1}}$ given by $f \mapsto w(f)$, i.e., $w (f)$ is $wsw^{-1}$-invariant.
    Hence,
    \begin{equation*}
        \varphi ( a w(f) \otimes b) = aw(f) \otimes w(b) = a \otimes w(f b) = \varphi (a \otimes fb).
    \end{equation*}
    As a result, there is an induced morphism of $R$-bimodules
    \[\overline{\varphi} : R_w \otimes_{R^s} R_{w^{-1}} \to R \otimes_{R^{wsw^{-1}}} R.\]
    It is easy to check that as left $R$-modules, both $R_w \otimes_{R^s} R_{w^{-1}}$ and $ R \otimes_{R^{wsw^{-1}}} R$ are free $R$-modules of rank 2.
    Moreover, since $w \cdot (-) : R \to R$ takes $s$-(anti)invariants to $wsw^{-1}$-(anti)invariants, it is easy to check that $\overline{\varphi}$ is in fact an isomorphism of $R$-bimodules.
    We then have an isomorphism of $R$-bimodules
    \[R_w \otimes_R C_s \otimes_R R_{w^{-1}} \cong R_w \otimes_{R^s} R_{w^{-1}} (1) \stackrel{\overline{\varphi}}{\to}  R \otimes_{R^{wsw^{-1}}} R (1) = C_{wsw^{-1}}.\]  
    Clearly, this isomorphism takes $1 \otimes 1 \otimes 1 \otimes 1$ to $1 \otimes 1$. Since the degree 0 endomorphism algebra of bimodule morphisms is rank 1, the isomorphism is uniquely determined.  
\end{proof}

\begin{lemma}\label{lem:BS_tensor_adjunction}
    Let $s \in S$. The functor $(-) \otimes_R B_s^{\scrL'} : \Amon{\scrL}{\scrL'}^{\BS} (\fr{h}, W, \fr{o}) \to \Amon{\scrL}{\scrL' s}^{\BS} (\fr{h}, W, \fr{o})$ is biadjoint to $(-) \otimes_R B_s^{\scrL' s} : \Amon{\scrL}{\scrL' s}^{\BS} (\fr{h}, W, \fr{o}) \to \Amon{\scrL}{\scrL'}^{\BS} (\fr{h}, W, \fr{o})$.
\end{lemma}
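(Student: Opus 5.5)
The plan is to split into the two cases appearing in the definition of $B_s^{\scrL'}$, exhibit explicit unit and counit morphisms in each case, and verify the zigzag identities. Note first that $\scrL' s \cdot s = \scrL'$, so $B_s^{\scrL' s}$ lands in the correct morphism category, and that $\scrL' s = \scrL'$ if and only if $(\scrL' s) s = \scrL' s$. Hence either both $B_s^{\scrL'}$ and $B_s^{\scrL's}$ equal $C_s$, or both equal the standard bimodule $R_s$. Since the functors in question are right tensoring with a fixed object, it suffices to produce biadjunction data between the 1-morphisms $B_s^{\scrL'}$ and $B_s^{\scrL' s}$ themselves. That is, we need morphisms $R \to B_s^{\scrL'} \otimes_R B_s^{\scrL's}$ and $B_s^{\scrL'}\otimes_R B_s^{\scrL's} \to R$ in $\Cmon{\scrL'}{\scrL'}$, together with the analogous pair with the roles reversed, satisfying the zigzag relations. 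Tensoring these on the left by an arbitrary $M$ then gives the natural transformations. One also checks that the image of a Bott--Samelson object stays Bott--Samelson, which is clear since $M \otimes_R B_s^{\scrL'}$ is again of the form $B^{\scrL}_{\uw}$.

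In the case $\scrL' s \neq \scrL'$, Lemma \ref{lem:soergel_product_formulas}(1) gives mutually inverse isomorphisms $R_s \otimes_R R_s \cong R_{e} = R$ sending $1\otimes 1 \mapsto 1$. These isomorphisms respect localization data, since both sides are supported only at $e$. Taking unit and counit to be this isomorphism and its inverse (in both orders), the zigzag identities reduce to the statement that $R_s \to R_s\otimes R_s \otimes R_s \to R_s$ is the identity. This holds because every map sends $1\otimes\cdots\otimes 1$ to $1\otimes \cdots \otimes 1$.

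In the case $\scrL' s = \scrL'$, we use the classical Frobenius structure on $C_s = R\otimes_{R^s}R(1)$. The counit (multiplication) is $C_s \to R(1)$, $f\otimes g \mapsto fg$. The unit is $R(-1)\to C_s$, $1 \mapsto \tfrac12$-free expression $\delta_s\otimes 1 - 1\otimes s(\delta_s)$, or equivalently the standard element $\alpha_s\otimes 1 + 1\otimes\alpha_s$ adjusted via $\delta_s$. The Frobenius comultiplication is $C_s \to C_s\otimes_R C_s$, $1\otimes 1 \mapsto \delta_s\otimes 1\otimes 1 - 1\otimes 1\otimes s(\delta_s)$, and there is also the degree-one unit $R \to C_s\otimes_R C_s$ obtained from it. Concretely, $\varepsilon: C_s\otimes_R C_s \to C_s \to R$ uses the Demazure operator $\partial_s$ on the middle tensor factor, $f\otimes g\otimes h \mapsto f\,\partial_s(g)\,h$. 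Its partner $\eta: R \to C_s\otimes_R C_s$ is $1 \mapsto \delta_s\otimes 1\otimes 1 - 1\otimes 1 \otimes s(\delta_s)$ (suitably shifted). These are the standard biadjunction maps exhibiting $C_s$ as self-biadjoint, as in \cite{EW} and \cite{Abe19}. The zigzag identities are the standard computation $\partial_s(\delta_s)=1$ and $\partial_s(s\delta_s)=-1$ combined with $R = R^s\oplus \delta_s R^s$. Since everything lives in degree-shifted $\Hom$ spaces, which the paper allows, grading is not an issue.

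The main obstacle is checking that these bimodule maps are morphisms in $\Cmon{}{}'$, i.e.\ that they respect the localization decompositions. For $R\to C_s\otimes C_s$ and $C_s\otimes C_s\to R$, this amounts to showing that $\varepsilon$ kills the components supported at $s$ (that is, $(C_s\otimes C_s)^{s}_Q$ built from $(C_s)_Q^e\otimes (C_s)_Q^s$ and $(C_s)^s_Q\otimes (C_s)^e_Q$), and that the image of $\eta$ lies in the $e$-component. The plan is to compute with the explicit generators in (\ref{eq:localization_of_C_s}). For instance, for $\varepsilon$, apply $\partial_s$ to products such as $(\delta_s\otimes 1 - 1\otimes s\delta_s)(\delta_s\otimes 1-1\otimes\delta_s)$ and verify vanishing. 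For $\eta$, factor $\delta_s\otimes1\otimes1 - 1\otimes1\otimes s\delta_s$ in $Q$ as a combination of products of $e$-eigenvectors. Alternatively, one can appeal to Lemma \ref{lem:std_facts_about_inv_and_coinvs}: a map into $R$, which is supported at $e$, automatically factors through $(C_s\otimes C_s)^{\{e\}}$ once it kills the torsion relations. This is the usual routine verification (cf.\ \cite[Claim 3.11]{EW}, \cite{Abe19}).
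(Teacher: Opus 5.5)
Your proposal is correct and follows the paper's proof. You use the same case split: when $\scrL's\neq\scrL'$, $(-)\otimes_R R_s$ is an equivalence because $R_s\otimes_R R_s\cong R$. When $\scrL's=\scrL'$, you use the self-biadjunction of $C_s$ given by $\cup^s$ and $\cap^s$; the paper simply cites \cite[Lemma 2.15]{Abe19} for this, whereas you verify the zigzag identities directly.

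The localization check you flag as the main obstacle is already asserted in the paper where $\cup^s$ and $\cap^s$ are defined. It is in fact automatic, since these are bimodule maps to or from $R$ and $\alpha_s$ is invertible in $Q$. One slip: the comultiplication $\nu^s$ sends $1\otimes 1\mapsto 1\otimes 1\otimes 1$, and the element $\delta_s\otimes1\otimes1-1\otimes1\otimes s(\delta_s)$ you attribute to it is really $\cup^s(1)$. This does not affect your argument.
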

\begin{proof}
    When $\scrL' s = \scrL'$, this is proved in \cite[Lemma 2.15]{Abe19}.
    If $\scrL' s \neq \scrL'$, then there is an isomorphism $B_s^{\scrL'} \otimes_R B_s^{\scrL' s} \cong R$.
    As a result $(-) \otimes_R B_s^{\scrL'}$, is an equivalence of categories with inverse $(-) \otimes_R B_s^{\scrL' s}$, in particular the functors are biadjoint.
\end{proof}

\subsubsection{Duality}

Let $M \in \Cmon{\scrL}{\scrL'} (\fr{h}, W, \fr{o})$. Define
\[ D(M) \coloneq \Hom_{\mod{R}} (M, R) \qquad\text{and}\qquad D(M)_Q^w \coloneq \Hom_{\mod{Q}} (M_Q^w, Q).\]
The $R$-bimodule structure on $D(M)$ is given by $(rfs) (m) = f(rms)$ for $r,s \in R$, $m \in M$, and $f \in D(M)$.
It is easy to check that $D(M)$ is well-defined in $\Cmon{\scrL}{\scrL'}' (\fr{h}, W, \fr{o})$ (cf., \cite[\S2.6]{Abe19}).
Moreover, $D(M)$ extends to a functor
\[D : \Cmon{\scrL}{\scrL'}^{\op} (\fr{h}, W, \fr{o}) \to \Cmon{\scrL}{\scrL'}' (\fr{h}, W, \fr{o}).\]

The following lemma is a combination of results in \cite[\S2.6]{Abe19}.
\begin{lemma}\label{lem:dual_and_invs} 
    Let $I \subseteq W$ and $w \in W$. Let $M \in \Cmon{\scrL}{\scrL'} (\fr{h}, W, \fr{o})$.
    \begin{enumerate}
        \item There is an isomorphism $D (M^I) \cong D(M)_I$.
        \item We have that $D(M)_w$ is graded free as a left $R$-module with graded rank 
        \[ \grk (D(M)_w) (v) = \grk (M^w) (v^{-1}). \]
        \item We have a natural isomorphism $D (D(M)_w) \cong M^w$.
        \item The functor $D$ preserves $\Amon{\scrL}{\scrL'}^{\BS} (\fr{h}, W, \fr{o})$. Moreover, there is a natural isomorphism of functors $D^2 \cong \id$ on $\Amon{\scrL}{\scrL'}^{\BS} (\fr{h}, W, \fr{o})$.
    \end{enumerate}
\end{lemma}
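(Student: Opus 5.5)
The plan is to reduce everything to the non-monodromic statements in \cite[\S2.6]{Abe19}. The arguments there only use three things: the decomposition $\xi_M$ of $M_Q$ into the pieces $M_Q^w$, flatness and finite generation of $M$, and freeness of the relevant left modules. The monodromic enrichment only restricts which $w$ may occur, namely $w \in \W{\scrL}{\scrL'}$. It never changes the local structure of the bimodules. Statements (1)--(3) should therefore go through verbatim once we check that $D(M)$ still has support in $\W{\scrL}{\scrL'}$. This holds because $D(M)_Q^w$ is defined from $M_Q^w$.

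For (1), I will compare $D(M^I) = \Hom_R(M^I,R)$ with $D(M)_I = \{f \in D(M) \mid \supp_W(f) \subset I\}$. Restriction along the surjection $M \twoheadrightarrow M^I$ gives an injection $D(M^I) \hookrightarrow D(M)$. Its image consists of the functionals which, after tensoring with $Q$, vanish on $\bigoplus_{w\notin I} M_Q^w$; by definition these are exactly the functionals supported in $I$. Injectivity of $M \to M_Q$ (flatness) lets one pass back and forth between $M$ and $M_Q$.

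For (2) and (3), I will follow Abe's argument. The module $M^w$ is a finitely generated, torsion-free left $R$-module inside $M_Q^w$, on which the right action is the twist by $w$. We have $D(M)_w \cong D(M^w)$ by (1) with $I=\{w\}$. This reduces (2) and (3) to facts about modules of the form $N \subset Q$-spaces with $N$ a finitely generated left $R$-module. Here I expect the main obstacle: graded freeness of $D(M^w)$ requires $M^w$ to be reflexive or free. Abe handles this using graded freeness of $M$ together with the grading. Concretely, $D(M)$ is free whenever $M$ is free as a left module, and $D(M)_w$ is cut out by the kernel of a split map after localization. The rank formula $\grk(D(M)_w)(v) = \grk(M^w)(v^{-1})$ is then just the degree reversal of $\Hom(-,R)$ on free modules. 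The double dual isomorphism in (3) follows from reflexivity of free modules.

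For (4), I will first compute $D$ on generators. There are two cases. For $C_s$, \cite{Abe19} gives $D(C_s) \cong C_s$, compatibly with localization. For $R_s$, we have $D(R_s) = \Hom_R(R_s,R) \cong R_s$ with $D(R_s)_Q^s = Q$, which is immediate. Next I will show that $D(M\otimes_R N) \cong D(M) \otimes_R D(N)$ when $M$ and $N$ are graded free of finite rank as left modules, which holds for Bott--Samelson bimodules by Lemma \ref{lem:bott_samelson_bimodules_are_free}. This is the standard argument for duals of tensor products of free modules, and it respects the localization data componentwise. Induction on $\ell(\uw)$ then gives $D(B_{\uw}^{\scrL}) \cong B_{\uw}^{\scrL}$, so $D$ preserves $\Amon{\scrL}{\scrL'}^{\BS}$. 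Finally, $D^2 \cong \id$ comes from the evaluation map $M \to D(D(M))$, which is an isomorphism for finitely generated free left modules. Its compatibility with the $M_Q^w$ follows from the corresponding fact over $Q$.
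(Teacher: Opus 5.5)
Your overall route is the paper's. Its proof cites \cite[Lemmas 2.18, 2.19]{Abe19} for (1)--(3), and \cite[Lemma 2.20]{Abe19} together with \cite[Proposition 2.11]{Masaharu} for (4). Your remark that monodromy only confines supports to $\W{\scrL}{\scrL'}$ is what lets those citations apply. Your sketch of (1) is fine: restrict along $M\twoheadrightarrow M^I$ and use that $M_{W\setminus I}$ spans $\bigoplus_{w\notin I}M_Q^w$ over $Q$. Your sketch of (4) is also fine: $D(C_s)\cong C_s$, $D(R_s)\cong R_s$, $D(M\otimes_RN)\cong D(M)\otimes_RD(N)$ for one-sided free bimodules, induction on length, and the evaluation map.

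The gap is in (2)--(3). First, your freeness argument does not work. You argue that $D(M)$ is free and that $D(M)_w$ is the kernel of a map which splits after localization. Such a kernel is reflexive, but over a polynomial ring in three or more variables a reflexive module need not be free, and splitting over $Q$ gives nothing over $R$.

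More seriously, (3) cannot be derived from the axioms of $\Cmon{\scrL}{\scrL'}$ alone. Suppose $\scrL s=\scrL$, and pick $\alpha\in\fr{h}^*$ such that $\alpha_s,\alpha$ are part of a basis. Let $M\subset R_e\oplus R_s$ be the right $R$-span of $(\alpha,\alpha_s)$ and $(\alpha_s,0)$, that is, $M=\{(\alpha r_1+\alpha_s r_2,\ \alpha_s s(r_1))\}$. Using $r-s(r)\in\alpha_sR$, one checks that $M$ is a sub-bimodule. It is right free of rank $2$ and $M\otimes_RQ\cong Q_e\oplus Q_s$, so $M\in\Cmon{\scrL}{\scrL}$. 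However, $M^e=(\alpha,\alpha_s)$ is a non-principal ideal. Hence $D(M)_e\cong\Hom_R(M^e,R)\cong R$, which is free even though $M^e$ is neither free nor reflexive. Consequently $D(D(M)_e)\cong R\not\cong M^e$.

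What you must add is the hypothesis that $M^w$ is graded free. The expression $\grk(M^w)$ presupposes this anyway, and it holds wherever the lemma is used, e.g.\ for Bott--Samelson bimodules by Proposition \ref{prop:basis_for_S_uw_x}. With this hypothesis the obstacle disappears. Part (1) with $I=\{w\}$ gives $D(M)_w\cong\Hom_R(M^w,R)$, which is graded free with graded rank $\grk(M^w)(v^{-1})$. Moreover $D(D(M)_w)\cong D(D(M^w))\cong M^w$ via the evaluation map of a finite free module, and this map is compatible with the localization data.
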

\begin{proof}
    Statement (1) is \cite[Lemma 2.18]{Abe19} and Statements (2) and (3) is \cite[Lemma 2.19]{Abe19}. Statement (4) is a slight generalization of \cite[Lemma 2.20]{Abe19}. A proof of this generalized variant can be found in \cite[Proposition 2.11]{Masaharu}.
\end{proof}

\subsection{Monodromic Hecke Algebroid}\label{subsec:mha}

\subsubsection{Definitions and First Properties}

We recall the monodromic Hecke algebroid introduced in \cite{Sandvik} based on the monodromic Hecke algebra developed by Lusztig \cite{Lus16, Lus19}.

\begin{definition}
    The \emph{monodromic Hecke algebroid} of $W$ with monodromy $\fr{o}$, denoted by $\scrH^{\mon} (W, \fr{o})$, is a $\Z$-linear category whose objects are elements of $\fr{o}$.
    We write ${}_{\scrL} \scrH^{\mon}_{\scrL'} (W, \fr{o})$ for the morphism space $\Hom_{\scrH^{\mon} (W, \fr{o})} (\scrL, \scrL')$. It is defined as the free $\Z [v,v^{-1}]$-module with basis $\{ H_w^{\scrL} \}_{w \in \W{\scrL}{\scrL'}}$.
    The composition in $\scrH^{\mon} (W, \fr{o})$ is defined by the generating relations
    \begin{equation}
      H_x^{\scrL} H_s^{\scrL x} \coloneq \begin{cases} H_{xs}^{\scrL} & xs > x, \\ (v^{-1} - v)H_{x}^{\scrL} + H_{xs}^{\scrL}  & xs < x, \scrL xs = \scrL x, \\ H_{xs}^{\scrL} & xs < x, \scrL xs \neq \scrL x.\end{cases}
    \end{equation}
\end{definition}

We define the \emph{standard form} as the sesquilinear pairing
\[\langle -, - \rangle : {}_{\scrL} \scrH^{\mon}_{\scrL'} (W, \fr{o}) \times {}_{\scrL} \scrH^{\mon}_{\scrL'} (W, \fr{o}) \to \Z [v,v^{-1}]\]
given by the relations $\langle H_x^{\scrL}, H_y^{\scrL} \rangle = \delta_{x,y}$ for all $x,y \in \W{\scrL}{\scrL'}$.
The standard form satisfies the following ``biadjointness'' relation: for all $x\in \W{\scrL}{\scrL'}$, $y \in \W{\scrL}{\scrL' s}$, and $s \in S$,
\begin{equation}\label{eq:mha_biadjoint}
    \langle H_x^{\scrL} H_s^{\scrL'}, H_y^{\scrL} \rangle = \langle H_x^{\scrL}, H_y^{\scrL} H_s^{\scrL' s} \rangle.
\end{equation} 

The \emph{bar involution} on the monodromic Hecke algebroid is the $\Z$-linear functor
\[\overline{(-)} : \scrH^{\mon} (W, \fr{o}) \to \scrH^{\mon} (W, \fr{o})\]
defined on generators by
\[\overline{H_s^{\scrL}} = \begin{cases} H_s^{\scrL} + (v - v^{-1}) H_e^{\scrL} & \text{if } \scrL s = \scrL, \\ H_s^{\scrL} & \text{if } \scrL s \neq \scrL, \end{cases}\]
and on Laurent polynomials by $\overline{v} = v^{-1}$.

\subsubsection{Preliminary Relationship with Bimodules}

For each $s \in S$, define 
\[ \underline{H}_s^{\scrL} \coloneq \begin{cases} H_s^{\scrL} + v H_e^{\scrL} & \scrL s = \scrL, \\ H_s^{\scrL} & \scrL s \neq \scrL. \end{cases}\]
More generally, if $\uw = (s_1, \ldots, s_k) \in \Exp (W)$. We define $\underline{H}_{\uw}^{\scrL} = H_{s_1}^{\scrL} \ldots H_{s_k}^{\scrL s_{1} \ldots s_{k-1}}$.
Furthermore, we define a collection of polynomials $p_{\uw}^{x, \scrL} = \langle \underline{H}_{\uw}^{\scrL},  H_x^{\scrL} \rangle  \in \Z [v^{\pm 1}]$.
It is easy to check that 
\[\underline{H}_{\uw}^{\scrL} = \sum_{x \in \W{\scrL}{\scrL'}} p_{\uw}^{x, \scrL} H_x^{\scrL}.\] 

\begin{lemma}\label{lem:pwx_calculation}
    Let $\beta \in \uW{\scrL}{\scrL'}$.
    Let $\uw \in \Exp (W)$ be an expression for $w \in \beta$. Then 
    \[ \sum_{x \in \beta } v^{\ell_{\scrL} (x)} p_{\uw}^{x, \scrL} (v^{-1}) = (v+v^{-1})^{\# K (\uw , \scrL)}.\]
\end{lemma}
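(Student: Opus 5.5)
Let $k$ be the length of $\uw$ and argue by induction on $k$, proving a slightly stronger statement. For every $\scrL$, every expression $\uw$ with $\scrL\uw = \scrL'$, and every block $\beta \in \uW{\scrL}{\scrL'}$, set
\[
F_\beta(\uw) \coloneq \sum_{x\in\beta} v^{\ell_{\scrL}(x)}\, p_{\uw}^{x,\scrL}(v^{-1}).
\]
The claim is that $F_\beta(\uw) = (v+v^{-1})^{\#K(\uw,\scrL)}$ if $\ev(\uw) \in \beta$, and $F_\beta(\uw) = 0$ otherwise. Before starting the induction, I would check that $\supp$ of $\underline{H}^{\scrL}_{\uw}$ lies entirely in the block of $\ev(\uw)$; for blocks not containing $\ev(\uw)$ this makes $F_\beta(\uw)=0$ trivially.

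To see this, observe that each term of $\underline{H}_{s_1}\cdots\underline{H}_{s_k}$ is indexed by a subexpression in which only the indices in $K(\uw,\scrL)$ may be turned off. Deleting a letter at such an index does not change $b_{\scrL}$, by Lemma \ref{lem:block_prelims}(1). Indeed, $b_{\scrL}$ only records the letters outside $K$, and removing a stabilizing letter leaves the monodromy of all later steps unchanged. Hence every $x$ appearing has $\Bl^{\min}(x) = b_{\scrL}(\uw) = w^\beta$.

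The base case $k=0$ is immediate: $\underline{H}_\emptyset = H_e$ and $\ell_{\scrL}(e)=0$. For the inductive step, write $\uw = \uw' s$ with $\scrL'' = \scrL\uw'$ and expand
\[
\underline{H}_{\uw} = \sum_x p_{\uw'}^{x,\scrL}\, H_x^{\scrL} \underline{H}_s^{\scrL''}
\]
using the defining relations. There are two cases.

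\emph{Case $\scrL''s \neq \scrL''$.} Here $\underline{H}_s = H_s$ and $H_xH_s = H_{xs}$ in both length cases. Moreover $\ell_{\scrL}(xs) = \ell_{\scrL}(x)$, since appending or removing a non-stabilizing letter does not change $\#K$; this uses the well-definedness of $\ell_{\scrL}$ from Lemma \ref{lem:block_prelims}. Right multiplication by $s$ is therefore a bijection from the block of $\ev(\uw')$ to the block of $\ev(\uw)$ that preserves $\ell_{\scrL}$. Hence $F$ is unchanged, and so is $\#K$.

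\emph{Case $\scrL''s = \scrL''$.} Then $s$ preserves the block $\beta$, and $\#K$ increases by one. Pair each $x$ with $xs$. Using
\[
H_x(H_s+vH_e) =
\begin{cases}
H_{xs} + vH_x & \text{if } xs>x,\\
H_{xs} + v^{-1}H_x & \text{if } xs<x,
\end{cases}
\]
compute the new coefficients: $p_{\uw}^{x} = v^{\pm1}p_{\uw'}^{x} + p_{\uw'}^{xs}$, with the sign $+$ when $xs>x$.

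The key input, and the main obstacle, is that $xs>x$ implies $\ell_{\scrL}(xs) = \ell_{\scrL}(x)+1$ in this case. To prove it, take a reduced expression $\ux$ of $x$ and append $s$; the result is reduced, and the final step stabilizes $\scrL x = \scrL''$, so $\#K$ goes up by one. Symmetrically, $xs<x$ implies $\ell_{\scrL}(xs) = \ell_{\scrL}(x)-1$. Hence $\ell_{\scrL}(xs) = \ell_{\scrL}(x)\pm1$ exactly according to the Bruhat comparison.

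With this in hand, substitute $v\mapsto v^{-1}$ and weight by $v^{\ell_{\scrL}(\cdot)}$. The contribution of $p_{\uw'}^{x}(v^{-1})$ to the new sum is
\[
v^{\ell_{\scrL}(x)}v^{\mp1} + v^{\ell_{\scrL}(xs)} = v^{\ell_{\scrL}(x)}(v^{-1}+v)
\]
in both subcases. Summing over $x\in\beta$ gives $F_\beta(\uw) = (v+v^{-1})F_\beta(\uw')$, which completes the induction. I expect the only delicate points to be the length identity just described and keeping track of the inversion $v\mapsto v^{-1}$; everything else is bookkeeping with the relations in the definition of $\scrH^{\mon}$.
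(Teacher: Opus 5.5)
Your proof is correct and is essentially the paper's argument written out by hand. The paper notes that $H_w^{\scrL}\mapsto v^{-\ell_{\scrL}(w)}$ defines a functor $\scrH^{\mon}(W,\fr{o})\to\Z[v^{\pm1}]$; its compatibility with the relations for $H_x^{\scrL}H_s^{\scrL x}$ is exactly your identity $\ell_{\scrL}(xs)=\ell_{\scrL}(x)\pm1$, resp.\ $\ell_{\scrL}(xs)=\ell_{\scrL}(x)$ in the non-stabilizing case, so applying it to $\uH_{\uw}^{\scrL}$ performs your induction in one stroke. Your preliminary check that $\uH_{\uw}^{\scrL}$ is supported in the block of $\ev(\uw)$ is a worthwhile addition: the paper only records that $p_{\uw}^{x,\scrL}$ vanishes outside $\W{\scrL}{\scrL'}$, and uses the finer vanishing outside $\beta$ tacitly when it restricts the sum to $\beta$.
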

\begin{proof}
    Let $\underline{\Z[v,v^{-1}]}$ denote the algebroid given by viewing $\Z[v,v^{-1}]$ as an algebroid with a single object.
    We define a functor $F : \scrH^{\mon} (W, \fr{o}) \to \underline{\Z[v,v^{-1}]}$ by $F(H_w^{\scrL}) = v^{-\ell_{\scrL} (w)}$.
    It can be easily checked from the defining relations for the monodromic Hecke algebroid that $F$ is well-defined.
    Note that for $s \in S$,
    \[F (\underline{H}_s^{\scrL}) = \begin{cases} v + v^{-1} & \scrL s = \scrL, \\ 1 & \scrL s \neq \scrL. \end{cases}\]
    In particular, $F (\underline{H}_{\uw}^{\scrL}) = (v+v^{-1})^{\# K(\uw, \scrL)}$.
    Note that $p_{\uw}^{x, \scrL} = 0$ if $x \notin \W{\scrL}{\scrL'}$.
    The result then follows from applying $F$ to $\underline{H}_{\uw}^{\scrL} = \sum_{x \in \W{\scrL}{\scrL'}} p_{\uw}^{x, \scrL} H_x^{\scrL}$ and replacing $v$ by $v^{-1}$.
\end{proof}

\begin{lemma}\label{lem:dim_of_Bwx_over_Q}
    Let $\uw \in \Exp (W)$ and $x \in W$. Then $\grk_Q (B_{\uw, Q}^{x, \scrL}) = p_{\uw}^{x, \scrL} (1)$.
\end{lemma}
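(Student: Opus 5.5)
We argue by induction on the length $k = \ell(\uw)$, comparing the localized bimodule $B_{\uw}^{\scrL} \otimes_R Q$ with the expansion of $\underline{H}_{\uw}^{\scrL}$ in the standard basis. When $k=0$ both sides equal $\delta_{x,e}$: $B_{\emptyset}^{\scrL} = R_e$ has $Q$ as its only localized piece, and $\underline{H}_{\emptyset}^{\scrL} = H_e^{\scrL}$. For the inductive step we write $\uw = \uy s$ with $\scrL'' = \scrL \uy$, so that $B_{\uw}^{\scrL} = B_{\uy}^{\scrL} \otimes_R B_s^{\scrL''}$.

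The first step is to compute the localized pieces of this tensor product. Since $\xi$ is an isomorphism of $(R,Q)$-bimodules, tensoring the decompositions gives
\[ B_{\uw}^{\scrL}\otimes_R Q \cong \bigoplus_{y,z} B_{\uy,Q}^{y,\scrL} \otimes_Q (B_s^{\scrL''})_Q^{z}. \]
Here we use that $B_{\uy,Q}^{y,\scrL}\otimes_R B_s^{\scrL''} \otimes_R Q \cong B_{\uy,Q}^{y,\scrL}\otimes_Q (B_s^{\scrL''}\otimes_R Q)$, which holds because the right $R$-action on $B_{\uy,Q}^{y}$ extends to $Q$, twisted by $y$. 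The summand indexed by $(y,z)$ lies in the $yz$-component. This is the content behind (\ref{eq:tensor_and_support}), and it is the standard argument of \cite[\S2.3]{Abe19}. Each $(B_s^{\scrL''})_Q^z$ is a free $Q$-module of rank $1$ on either side: for $\scrL'' s = \scrL''$ we have $z \in \{e,s\}$, and otherwise $z = s$. Hence
\[ \grk_Q B_{\uw,Q}^{x,\scrL} = \sum_{z} \grk_Q B_{\uy,Q}^{xz^{-1},\scrL}, \]
with the sum running over $z \in \{e,s\}$ in the first case and over $z = s$ alone in the second. The ranks here are ungraded ranks, i.e.\ grading shifts have already been set to $1$.

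The second step is the matching recursion on the Hecke side at $v=1$. Specializing the defining relations at $v = 1$ gives $H_x^{\scrL} H_s^{\scrL x} = H_{xs}^{\scrL}$ in all three cases, because the coefficient $(v^{-1}-v)$ vanishes. At $v=1$ we also have $\underline{H}_s^{\scrL''} = H_s^{\scrL''} + H_e^{\scrL''}$ when $\scrL'' s = \scrL''$, and $\underline{H}_s^{\scrL''} = H_s^{\scrL''}$ otherwise. Expanding $\underline{H}_{\uw}^{\scrL} = \underline{H}_{\uy}^{\scrL}\underline{H}_s^{\scrL''}$ then gives
\[ p_{\uw}^{x,\scrL}(1) = p_{\uy}^{x,\scrL}(1) + p_{\uy}^{xs,\scrL}(1) \quad\text{or}\quad p_{\uw}^{x,\scrL}(1) = p_{\uy}^{xs,\scrL}(1), \]
according to whether $\scrL''s=\scrL''$ or not. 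We read $\underline{H}_{\uw}^{\scrL}$ as the product of the $\underline{H}_{s_i}$, which is clearly the intended meaning.

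These recursions coincide with those of the first step, so the lemma follows by induction. The only delicate point is justifying the description of the localized pieces of a tensor product. That description relies on flatness, which is guaranteed by Lemma \ref{lem:bott_samelson_bimodules_are_free}, and on the compatibility of $\xi$ with tensor products. Our plan is to cite Abe's construction of the tensor structure in $\Cmon{}{}'$ for this step.
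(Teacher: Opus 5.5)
Your proposal is correct and takes essentially the same route as the paper. Both arguments use induction on $\ell(\uw)$. The recursion for $p_{\uw}^{x,\scrL}(1)$, obtained by specializing the Hecke algebroid relations at $v=1$, is matched with the decomposition $(M\otimes_R B_s^{\scrL''})_Q^x \cong M_Q^x\oplus M_Q^{xs}$, or $M_Q^{xs}$ when $\scrL''s\neq\scrL''$.

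You also correctly repair two typos in the paper's version. You read $\underline{H}_{\uw}^{\scrL}$ as the product of the $\underline{H}_{s_i}$ rather than the $H_{s_i}$. You also state the recursion in terms of $p_{\uy}$, where the paper mistakenly writes $p_{\uw}$ on both sides.
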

\begin{proof}
    Recall the groupoid $M^{\fr{o}} (W)$ whose object set is $\fr{o}$ and whose morphism sets are given by $\W{\scrL}{\scrL'}$.
    Let $\Z [M^{\fr{o}} (W)]$ denote the free $\Z$-algebroid over $M^{\fr{o}} (W)$.
    We can define the specialization functor
    \[(-)_{v=1} : \scrH^{\mon} (W, \fr{o}) \to \Z [M^{\fr{o}} (W)]\]
    defined by $v \mapsto 1$ and $H_w^{\scrL} \mapsto w \in \W{\scrL}{\scrL w}$.

    We will now argue by induction on the length of $\uw$.
    If $\uw = \emptyset$, the claim is obvious.
    Suppose $\uw = \uw' s$ for some $s \in S$.
    By applying specialization to $\underline{H}_{\uw'}^{\scrL} = \sum_{x \in \W{\scrL}{\scrL' s}} p_{\uw'}^{x, \scrL} H_x^{\scrL}$ and $\underline{H}_{\uw}^{\scrL} = \sum_{x \in \W{\scrL}{\scrL'}} p_{\uw}^{x, \scrL} H_x^{\scrL}$ we get the relation
    \[p_{\uw}^{x, \scrL} (1) = \begin{cases} p_{\uw}^{x, \scrL} (1) + p_{\uw}^{xs, \scrL} (1) & s \in \scrL' s = \scrL', \\ p_{\uw}^{xs, \scrL} (1) & \scrL' s \neq \scrL'. \end{cases}\]
    On the other hand, for any $M \in \Cmon{\scrL}{\scrL' s}$,
    \[ (M \otimes_R B_s^{\scrL' s})_Q^x \cong \begin{cases} M_Q^x \oplus M_Q^{xs} & \scrL' s = \scrL', \\ M_Q^{xs} & \scrL' s \neq \scrL'. \end{cases}  \]
    As a result, $\grk_Q (B_{\uw, Q}^{x, \scrL}) = \grk_Q ((B_{\uw'}^{\scrL} \otimes_R B_{s}^{\scrL' s})_Q^x ) = p_{\uw}^{x, \scrL} (1)$.
\end{proof}

\subsection{Morphisms of Bimodules}\label{subsec:abe_mors}

We will define a collection of morphisms which we will later show generate the morphism spaces of Bott--Samelson bimodules.

\subsubsection{1-Color Morphisms}

Let $s \in S$ such that $\scrL s = \scrL$. By Demazure surjectivity, we can find some $\delta_s \in \fr{h}^*$ for which $\langle \alpha_s^{\vee}, \delta_s \rangle = 1$.
Let $\partial_s : R \to R^s$ be the $R^s$-module map defined by $\partial_s (f) = (f-s(f))/\alpha_s$. We use the identification $C_s \otimes_R C_s = R \otimes_{R^s} R \otimes_{R^s} R (2)$.
We then define morphisms of graded $R$-bimodules
\begin{align*}
    \nu^s &: C_s \to C_s \otimes_R C_s (-1), & f \otimes g &\mapsto f \otimes 1 \otimes g, \\
    \epsilon^s &: C_s \to R (1), & f \otimes g &\mapsto fg, \\
    \mu^s &: C_s \otimes_R C_s \to C_s (-1), & f\otimes g \otimes h &\mapsto \partial_s (g) f \otimes h, \\
    \eta^s &: R \to C_s (1), & 1 &\mapsto \delta_s \otimes 1 - 1 \otimes s(\delta_s), \\
    \cup^s &: R \to C_s \otimes_R C_s, & 1 &\mapsto \delta_s \otimes 1 \otimes 1 - 1 \otimes 1 \otimes s(\delta_s), \\
    \cap^s &: C_s \otimes_R C_s \to R, & f \otimes g \otimes h &\mapsto \partial_s (g) fh.
\end{align*}
Note that $\cup^s = \nu^s \circ \eta^s$ and $\cap^s = \epsilon^s \circ \mu^s$. It is straightforward (albeit tedious) to check that all the above maps are morphisms in $\Cmon{\scrL}{\scrL}$.

Let $s \in S$ such that $\scrL s \neq \scrL$. We then define morphisms of graded $R$-bimodules
\begin{align*}
    \cup^s &: R \to R_s \otimes_R R_s,  & 1 &\mapsto 1 \otimes 1, \\
    \cap^s &: R_s \otimes_R R_s \to R,  & f \otimes g &\mapsto f s(g).
\end{align*}
Likewise, these are morphisms in $\Cmon{\scrL}{\scrL s}$.

\subsubsection{Rex Moves}

For each $\uw = (s_1, \ldots, s_k) \in \Exp (W)$, we define an element $u_{\uw} = u_1 \otimes \ldots \otimes u_k \in B_{\uw}^{\scrL}$ by
\[u_i = \begin{cases} u_{s_i} = 1 \otimes 1 \in C_{s_i} & \text{if }i \in K(\uw, \scrL), \\ 1 \in R_{s_i} & \text{if }i \notin K(\uw, \scrL).\end{cases}\]
The element $u_{\uw}$ is called the \emph{1-tensor} of $B_{\uw}^{\scrL}$.

Let $s,t \in S$ be distinct simple reflections such that $m = m_{s,t} < \infty$. We wish to construct a morphism
\[\beta_{s,t} : B_{{}_s \underline{m}}^{\scrL} \to B_{{}_t \underline{m}}^{\scrL}\]
such that $\beta_{s,t} (u_{{}_s \underline{m}}) = u_{{}_t \underline{m}}$.
Unfortunately, such a map is known to only exist under constraints on the realization $\fr{h}$.

\begin{proposition}\label{prop:existence_of_rex_moves}
    Assume that $\fr{h}$ is a monodromic Abe realization. There exists a morphism $\beta_{s,t} : B_{{}_s\underline{m}}^{\scrL} \to B_{{}_t\underline{m}}^{\scrL}$ such that $\beta_{s,t} (u_{{}_s \underline{m}}) = u_{{}_t \underline{m}}$.
\end{proposition}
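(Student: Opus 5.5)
The plan is to reduce to the dihedral group $W_{\{s,t\}}$ acting on the orbit of $\scrL$, and then to factor $B_{{}_s\underline{m}}^{\scrL}$ into standard bimodules and endosimple bimodules $C_{s'}$ for the rank $\le 2$ reflection subgroup $W_{s,t}^{\scrL} = W_{\{s,t\}}\cap W_{\scrL}^{\circ}$. On that subgroup, Abe's non-monodromic rex move is available by Proposition \ref{prop:mon_abe_and_endo_rk_2}. We split into cases according to the endoscopic order $v = v_{s,t}^{\scrL}$.

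\emph{Case $v = 0$}, i.e.\ $K({}_s\underline{m},\scrL)=\emptyset$. Here every factor is a twisted standard bimodule $R_{s}$ or $R_t$. By Lemma \ref{lem:soergel_product_formulas}(1) both sides are canonically isomorphic to $R_{w_0}$, via isomorphisms taking the 1-tensors to $1$. We then take $\beta_{s,t}$ to be the composite of one of these isomorphisms with the inverse of the other.

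\emph{General case.} Write ${}_s\underline{m}$ and ${}_t\underline{m}$ as concatenations, and group consecutive twisted factors using Lemma \ref{lem:soergel_product_formulas}(1). Each factor $C_{u}$ with $u\in\{s,t\}$ sits at position $i\in K$ and has monodromy-zero prefix $y$ since the last $C$. Using $R_y\otimes C_u\otimes R_{y^{-1}}\cong C_{yuy^{-1}}$ from Lemma \ref{lem:soergel_product_formulas}(2), we rewrite
\[B_{{}_s\underline{m}}^{\scrL} \cong C_{r_1}\otimes_R \cdots \otimes_R C_{r_k}\otimes_R R_{b},\]
where $b = b_{\scrL}({}_s\underline{m})$ and the $r_j$ are reflections in $W_{\scrL'}^{\circ}\cap W_{\{s,t\}}$ for the appropriate parameters. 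Lemma \ref{lem:block_prelims} guarantees that the two sides give the same block element $b$, and the same number $k = \ell_{\scrL}(w_0)$ of $C$-factors. The reflections $r_1,\ldots,r_k$ should form an alternating word in the two canonical generators $s',t'$ of $W_{s,t}^{\scrL}\cong W_v$, of length $v$ when $0<v<\infty$. The right-hand side gives the word starting with the other generator. Using Lemma \ref{lem:pre_realizations_and_reflection_subgroups} together with Proposition \ref{prop:mon_abe_and_endo_rk_2}, $\fr{h}$ restricts to an Abe realization of $W_v$. Abe's rex move (\cite{Abe19}, existing exactly under the Abe condition) then provides a map $C_{{}_{s'}\underline{v}}\to C_{{}_{t'}\underline{v}}$ sending $1$-tensor to $1$-tensor. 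Tensoring this map with $\id_{R_b}$ and conjugating by the isomorphisms above gives $\beta_{s,t}$. All isomorphisms used preserve the 1-tensors, so $\beta_{s,t}(u_{{}_s\underline{m}})=u_{{}_t\underline{m}}$. It is a morphism in $\Cmon{\scrL}{\scrL}$ because each constituent is, and tensor products preserve the localization data.

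The main obstacle is the combinatorial bookkeeping in the general case. First, we must verify that the conjugated reflections $yuy^{-1}$ really alternate between the two endosimple generators $s',t'$ of $W_{s,t}^{\scrL}$, with the correct length $v$. Second, we must check that the two sides of the braid relation produce the two opposite alternating words with the same tail $R_b$. I would handle this by first applying the reduction in the proof of Lemma \ref{lem:block_prelims}, which moves to a parameter $\scrL'$ where $\scrL' s=\scrL'$ or $\scrL' t=\scrL'$ and conjugates by a twisted prefix. Then I would use the explicit description of rank-2 reflection subgroups from Lemma \ref{lem:class_of_rk_2_subgroups_of_dihedral_groups}, namely $S'=\{s,(ts)^{d-1}t\}$ with $m=dv$, to read off the factors directly. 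A secondary check is that the canonical roots agree, so that the $C_{r}$ built via different $y$'s coincide; this uses Lemma \ref{lem:well_defined_pos_roots_balancedness} and balancedness.
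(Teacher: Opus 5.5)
Your proposal is essentially the paper's proof. The paper also handles $\rk(W_{s,t}^{\scrL})\le 1$ by direct isomorphisms. In rank $2$ it uses Lemma~\ref{lem:soergel_product_formulas} to identify the two sides, via $1$-tensor-preserving isomorphisms, with $C_{s'}\otimes_R C_{t'}\otimes_R\cdots\otimes_R R_{w^{\beta}}$ and $C_{t'}\otimes_R C_{s'}\otimes_R\cdots\otimes_R R_{w^{\beta}}$, and then takes Abe's rex move for $W_v$ (available by Proposition~\ref{prop:mon_abe_and_endo_rk_2}) tensored with $\id_{R_{w^{\beta}}}$.

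The alternation bookkeeping you flag is simply asserted in the paper, and it closes quickly with one correction. Conjugate each $C_u$ by the full twisted part $b_{\scrL}$ of its prefix, not only by the letters since the previous $C$. Then every $r_j$ is an endosimple reflection lying in $W_{s,t}^{\scrL}$, hence in $\{s',t'\}$. The chain $v=\#K({}_s\underline{m},\scrL)=\ell_{\scrL}(w_{s,t})\le \ell_{\scrL}(r_1\cdots r_v)\le v$ then forces $r_1\cdots r_v$ to be a reduced, hence alternating, word for the longest element of $W_v$. If both sides happened to give the same word, the composite $1$-tensor-preserving isomorphism would already do.
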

\begin{proof}
    Let $W_{s,t}^{\scrL} \coloneq W_{\{s,t\}} \cap W_{\scrL}^{\circ}$. Let $w_{s,t}$ (resp. $w_{s,t}^{\scrL}$) denote the longest element of $W_{\{s,t\}}$ (resp. $W_{s,t}^{\scrL}$).
    By Proposition \ref{prop:mon_abe_and_endo_rk_2}, $\fr{h}$ is an Abe realization for $W_{s,t}^{\scrL}$ 
    If $W_{s,t}^{\scrL} = 1$, then $B_{{}_s \underline{m}}^{\scrL} \cong R_{w_{s,t}} \cong B_{{}_t \underline{m}}^{\scrL}$.
    Similarly, if $W_{s,t}^{\scrL}$ has rank 1, then $B_{{}_s \underline{m}}^{\scrL} \cong B_{{}_t \underline{m}}^{\scrL}$. As a result, in these cases the desired $\beta_{s,t}$ map clearly exists.

    Now suppose $W_{s,t}^{\scrL}$ has rank 2 with simple generators $s' = (st)^k s$ and $t' = (ts)^{\ell} t$.
    Note $v \coloneq v_{s,t}^{\scrL} = m_{s',t'}$. By \cite[Theorem 3.9]{Abe21}, there then exists a morphism 
    \[ \beta_{s',t'} : \underbrace{C_{s'} \otimes_R C_{t'} \otimes_R \ldots }_{v \text{ factors}} \longrightarrow \underbrace{C_{t'} \otimes_R C_{s'} \otimes_R \ldots }_{v \text{ factors}} \]
    such that $\beta_{s',t'} (u_{{}_{s'} \underline{v}}) = u_{{}_{t'} \underline{v}}$. 
    
    We can then find $\beta \in \uW{\scrL}{\scrL w_{s,t}}$ such that $w_{s,t}^{\scrL} w^{\beta} = w_{s,t}$. By Lemma \ref{lem:soergel_product_formulas}, there are isomorphisms
    \[ B_{{}_s\underline{m}}^{\scrL} \cong \underbrace{C_{s'} \otimes_R C_{t'} \otimes_R \ldots }_{v \text{ factors}} \otimes_R R_{w^{\beta}} \qquad \text{and}\qquad B_{{}_t \underline{m}}^{\scrL} \cong \underbrace{C_{t'} \otimes_R C_{s'} \otimes_R \ldots }_{v \text{ factors}} \otimes_R R_{w^{\beta}}\]
    which take 1-tensors to 1-tensors.
    We can then define $\beta_{s,t} \coloneq \beta_{s',t'} \otimes \id_{R_{w^{\beta}}} : B_{{}_s \underline{m}}^{\scrL} \to B_{{}_t \underline{m}}^{\scrL}$ which satisfies $\beta_{s,t} (u_{{}_s \underline{m}}) = u_{{}_t \underline{m}}$.
\end{proof}

More generally, if $\ux$ and $\uy$ are reduced expressions of some $w\in W$, then each path $\Gamma$ in the rex graph for $w$ gives rise to a morphism $\beta_{\Gamma} : B_{\ux}^{\scrL} \to B_{\uy}^{\scrL}$
composed of the various $\beta_{s,t}$-maps.
It is clear from its construction that $\beta_{\Gamma} (u_{\ux}) = u_{\uy}$. Of course such a morphism depends on the choice of $\Gamma$.

\subsection{Algebraic Monodromic--Endoscopic Equivalence} 

In this section, we will provide an equivalence between the category of monodromic Bott--Samelson bimodules and the category of Bott--Samelson bimodules for the endoscopic Coxeter group.

\subsubsection{Block Decomposition of Soergel Bimodules}

\begin{definition}
    Let $\beta \in \uW{\scrL}{\scrL'}$. The \emph{$\beta$-block} of $\Amon{\scrL}{\scrL'}^{\BS} (\fr{h}, W, \fr{o})$ is the full replete subcategory $\Amon{\scrL}{\scrL'}^{\BS, \beta} (\fr{h}, W, \fr{o})$ generated by the objects $B_{\uw}^{\scrL}$ where $\uw \in \Exp (W)$ is an expression of $w \in \beta$.
    We also define $\Amon{\scrL}{\scrL'}^{\oplus, \beta} (\fr{h}, W, \fr{o})$ as the additive hull of $\Amon{\scrL}{\scrL'}^{\BS, \beta} (\fr{h}, W, \fr{o})$.
\end{definition}

It is clear from definitions that $\otimes_R$ preserves blocks, i.e., for $\beta \in \uW{\scrL}{\scrL'}$ and $\gamma \in \uW{\scrL'}{\scrL''}$, we have that 
\begin{equation}\label{eq:abe_conv_presrves_blocks}
    \Amon{\scrL}{\scrL'}^{\BS, \beta} (\fr{h}, W, \fr{o}) \otimes_R \Amon{\scrL'}{\scrL''}^{\BS, \gamma} (\fr{h}, W, \fr{o}) \subset \Amon{\scrL}{\scrL''}^{\BS, \beta \gamma} (\fr{h}, W, \fr{o}).
\end{equation}

\begin{lemma}\label{lem:localization_sharpening_along_blocks}
    Let $\beta \in \uW{\scrL}{\scrL'}$ and $M \in \Amon{\scrL}{\scrL'}^{\oplus, \beta} (\fr{h}, W, \fr{o})$, then $\supp_W (M) \subseteq \beta$.
\end{lemma}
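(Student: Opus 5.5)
Since $\supp_W$ of a direct sum is the union of the supports of the summands, and $\Amon{\scrL}{\scrL'}^{\oplus,\beta}$ is the additive hull of Bott--Samelson objects $B_{\uw}^{\scrL}$ with $\uw$ an expression of some $w\in\beta$, it suffices to treat $M=B_{\uw}^{\scrL}$ (up to grading shift, which does not affect support). The support of a single generator is computable: by construction $\supp_W(B_s^{\scrL})=\{e,s\}$ if $\scrL s=\scrL$ (from the localization data (\ref{eq:localization_of_C_s})), and $\{s\}$ if $\scrL s\neq\scrL$. By the tensor formula (\ref{eq:tensor_and_support}), applied iteratively,
\[\supp_W(B_{\uw}^{\scrL})=\{\uw^{\uf} \mid \uf\in\Subexp(\uw),\ f_i=1 \text{ for all } i\notin K(\uw,\scrL)\}.\]
So the claim reduces to the combinatorial statement: if $\uf$ is a subexpression of $\uw=(s_1,\dots,s_k)$ which may only drop letters at indices in $K(\uw,\scrL)$, then $x=\uw^{\uf}$ lies in the same block as $w$, i.e. ${}_{\scrL}\Bl^{\min}_{\scrL'}(x)=b_{\scrL}(\uw)$. (Note that $x\in\W{\scrL}{\scrL'}$ follows automatically, since dropping $s_i$ with $\scrL\uw_{\le i}=\scrL\uw_{\le i-1}$ does not change the running monodromy parameter; this should be recorded along the way.)

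I will prove this by induction on the number of dropped letters, using Lemma \ref{lem:block_prelims}. Suppose $\uf$ drops index $i\in K(\uw,\scrL)$ (and possibly others), and let $\uw'$ be $\uw$ with $s_i$ deleted. Write $\uw=\ux\, s_i\, \uy$ with $\ux=\uw_{\le i-1}$, and set $\scrL_1=\scrL\ux$; by hypothesis $\scrL_1 s_i=\scrL_1$. By Lemma \ref{lem:block_prelims}(1),
\[b_{\scrL}(\uw)=b_{\scrL}(\ux)\,b_{\scrL_1}((s_i))\,b_{\scrL_1 s_i}(\uy)=b_{\scrL}(\ux)\,b_{\scrL_1}(\uy)=b_{\scrL}(\uw'),\]
since $b_{\scrL_1}((s_i))=e$ when $\scrL_1 s_i=\scrL_1$. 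Moreover $K(\uw',\scrL)$ consists of the indices of $K(\uw,\scrL)$ other than $i$ (reindexed), because the running parameters $\scrL\uw'_{\le j}$ agree with those of $\uw$ away from $i$. Hence $\uf$ restricted to $\uw'$ is again a subexpression dropping only letters in $K(\uw',\scrL)$, and induction gives $b_{\scrL}(\uw')=b_{\scrL}(\ux')$ where $\ux'$ is the expression obtained by removing all dropped letters, which is an expression for $x$. By Lemma \ref{lem:block_prelims}(3), $b_{\scrL}(\ux')={}_{\scrL}\Bl^{\min}_{\scrL'}(x)$, and similarly $b_{\scrL}(\uw)={}_{\scrL}\Bl^{\min}_{\scrL'}(w)=w^\beta$. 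Thus $x\in\beta$.

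The only real care needed is the bookkeeping that $K$ is stable under deleting a $K$-index (the monodromy parameters before and after each remaining letter are unchanged), and verifying the support of the single-letter bimodules and the tensor support formula; the latter is stated earlier, so I expect no serious obstacle.
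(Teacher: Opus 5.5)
Your proof is correct and is essentially the paper's argument. The paper inducts on $\ell(\uw)$ and peels off the last letter: $B^{x,\scrL}_{\uw,Q}$ is built from $B^{x,\scrL}_{\uy,Q}$ and $B^{xs,\scrL}_{\uy,Q}$ (or from $B^{xs,\scrL}_{\uy,Q}$ alone when $\scrL' s\neq\scrL'$), combined with the block compatibility of right multiplication by $s$ from Lemma~\ref{lem:block_prelims}(1). You unroll the same induction into the explicit description of $\supp_W(B^{\scrL}_{\uw})$ as evaluations of $\scrL$-monodromic subexpressions, then delete the dropped letters one at a time; this makes explicit the bookkeeping the paper leaves implicit, namely that $K$ is preserved under deleting a $K$-index and that $\scrL x=\scrL'$.
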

\begin{proof}
    Let $\uw\in \Exp (W)$ be an expression for $w \in \beta$. 
    We argue by induction on the length of $\uw$ that $\supp_W (B_{\uw}^{\scrL}) \subseteq \beta$.
    If $\uw = \emptyset$, then the claim is obvious.
    Now suppose $\uw = \uy s$ with $\uy$ an expression for $ws \in \gamma$ for some block $\gamma \in \W{\scrL}{\scrL' s}$.
    
    If $\scrL' s = \scrL'$, then $\beta = \gamma$.
    It follows from the definitions that
    \begin{equation}\label{eq:localization_sharpening_along_blocks_1} B_{\uw, Q}^{x, \scrL} \cong B_{\uy, Q}^{x, \scrL} \oplus B_{\uy, Q}^{xs, \scrL}.\end{equation}
    Since $\supp_W \left( B_{\uy}^{\scrL}\right) \subseteq \gamma$ and $s \in W_{\scrL'}^{\circ}$, we can conclude that (\ref{eq:localization_sharpening_along_blocks_1}) is only nonzero when $x \in \beta$.

    If $\scrL' s \neq \scrL'$, then we have $B_{\uw, Q}^{x, \scrL} \cong B_{\uy, Q}^{xs, \scrL}$.
    Therefore, 
    \[ \supp_W \left( B_{\uw}^{\scrL}\right) = \supp_W (B_{\uy}^{\scrL}) \cdot s \subseteq \gamma \cdot s = \beta. \]
\end{proof}

\begin{lemma}\label{lem:disjoint_support_implies_Hom_vanishing}
    Let $M, N \in \Cmon{\scrL}{\scrL'} (\fr{h}, W, \fr{o})$ such that $\supp_W (M)$ and $\supp_W (N)$ are disjoint. Then $\Hom_{\Cmon{\scrL}{\scrL'}} (M,N) = 0$.
\end{lemma}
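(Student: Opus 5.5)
Given a morphism $\varphi : M \to N$ in $\Cmon{\scrL}{\scrL'}$, the plan is to show that $\varphi = 0$ by passing to localizations and using the injectivity of $\zeta_N$.

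First, note that $\varphi \otimes_R Q$, transported along $\xi_M$ and $\xi_N$, is a map
\[\bigoplus_{w \in W} M_Q^w \to \bigoplus_{w \in W} N_Q^w.\]
By the definition of morphisms in $\Cmon{\scrL}{\scrL'}'$, it sends $M_Q^w$ into $N_Q^w$ for every $w \in W$. Now split into two cases for a given $w$:
\begin{itemize}
\item if $w \notin \supp_W (M)$, then $M_Q^w = 0$;
\item if $w \in \supp_W (M)$, then $w \notin \supp_W (N)$ by disjointness, so $N_Q^w = 0$.
\end{itemize}
Either way the component $M_Q^w \to N_Q^w$ vanishes. Hence the localized map $\xi_N \circ (\varphi \otimes_R Q) \circ \xi_M^{-1}$ is zero, and so $\varphi \otimes_R Q = 0$.

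Next, deduce $\varphi = 0$. Consider the commutative square built from $\varphi$, the localization maps $M \to M \otimes_R Q$ and $N \to N \otimes_R Q$, and $\varphi \otimes_R Q$. It gives $\zeta_N \circ \varphi = \xi_N \circ (\varphi \otimes_R Q) \circ (M \to M_Q) = 0$. As noted in \S\ref{subsec:support_of_bims}, $\zeta_N$ is injective for $N \in \Cmon{\scrL}{\scrL'}$, because $N$ is flat as a right $R$-module and $R \hookrightarrow Q$ is injective. Therefore $\varphi = 0$.

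The only point needing care is that the localization is on the right, $M \otimes_R Q$, and that $\varphi \otimes_R Q$ is well-defined. Since $\varphi$ is an $R$-bimodule map, $\varphi \otimes_R Q$ is defined and the square commutes, so I expect no real obstacle. The argument also works for morphisms that are not of degree $0$, since grading plays no role.
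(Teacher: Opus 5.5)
Your proof is correct and is essentially the paper's argument with Lemma~\ref{lem:std_facts_about_inv_and_coinvs}(2) written out by hand. The paper sets $I=\supp_W(M)$, uses that every morphism $M\to N$ factors through $N_I$, and notes that $N_I=0$ by disjointness. That is exactly your componentwise vanishing of $\varphi\otimes_R Q$ combined with the injectivity of $\zeta_N$.
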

\begin{proof}
    Let $I = \supp_W (M)$.
    By Lemma \ref{lem:std_facts_about_inv_and_coinvs} (2), we have an isomorphism 
    \[\Hom_{\Cmon{\scrL}{\scrL'}} (M,N) \cong \Hom_{\Cmon{\scrL}{\scrL'}'} (M, N_I).\]
    However, since $\supp_W (N) \cap I = \emptyset$, we have that $N_I = 0$. 
\end{proof}

\begin{lemma}\label{lem:abe_block_decomp}
    We have direct sum decompositions of additive categories
    \[\Amon{\scrL}{\scrL'}^{\oplus} (\fr{h}, W, \fr{o}) = \bigoplus_{\beta \in \uW{\scrL}{\scrL'}} \Amon{\scrL}{\scrL'}^{\oplus, \beta} (\fr{h}, W, \fr{o}).\]
\end{lemma}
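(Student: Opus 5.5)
To prove Lemma~\ref{lem:abe_block_decomp}, I will check the two ingredients of a direct sum decomposition of additive categories: every object decomposes as a direct sum of objects from the blocks, and there are no nonzero morphisms between objects of distinct blocks.

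The first ingredient is essentially formal. By definition $\Amon{\scrL}{\scrL'}^{\oplus} (\fr{h}, W, \fr{o})$ is the additive hull of the objects $B_{\uw}^{\scrL}$ with $\scrL \uw = \scrL'$. For such $\uw$ the evaluation $w$ lies in $\W{\scrL}{\scrL'}$. The nonempty fibers of ${}_{\scrL} \Bl^{\min}_{\scrL'}$ partition $\W{\scrL}{\scrL'}$, so $w$ lies in a unique block $\beta \in \uW{\scrL}{\scrL'}$. Hence $B_{\uw}^{\scrL} \in \Amon{\scrL}{\scrL'}^{\BS, \beta}$. It follows that every object of the additive hull is isomorphic to a finite direct sum $\bigoplus_\beta M_\beta$ with $M_\beta \in \Amon{\scrL}{\scrL'}^{\oplus, \beta}$, where all but finitely many summands vanish.

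The second ingredient is Hom-orthogonality. Take $\beta \neq \gamma$ in $\uW{\scrL}{\scrL'}$, and let $M \in \Amon{\scrL}{\scrL'}^{\oplus,\beta}$ and $N \in \Amon{\scrL}{\scrL'}^{\oplus,\gamma}$. Lemma~\ref{lem:localization_sharpening_along_blocks} gives $\supp_W(M) \subseteq \beta$ and $\supp_W(N) \subseteq \gamma$. Blocks are distinct fibers of a map, so they are disjoint, and therefore the two supports are disjoint. Lemma~\ref{lem:disjoint_support_implies_Hom_vanishing} then gives $\Hom_{\Cmon{\scrL}{\scrL'}}(M,N)=0$. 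The categories $\Amon{\scrL}{\scrL'}^{\oplus}$ are full subcategories of $\Cmon{\scrL}{\scrL'}$, so this vanishing holds in $\Amon{\scrL}{\scrL'}^{\oplus}$ as well.

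Combining the two ingredients, any morphism $\bigoplus_\beta M_\beta \to \bigoplus_\gamma N_\gamma$ is block-diagonal. Hence the functor $\bigoplus_\beta \Amon{\scrL}{\scrL'}^{\oplus,\beta} \to \Amon{\scrL}{\scrL'}^{\oplus}$ induced by the inclusions is fully faithful and essentially surjective, which is the claimed decomposition.

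The only point needing real input is the support computation, and that is already provided by Lemma~\ref{lem:localization_sharpening_along_blocks}. That lemma is stated for objects of $\Amon{\scrL}{\scrL'}^{\oplus,\beta}$, so it also covers the direct sums we use, since the support of a direct sum is the union of the supports of its summands. I expect no genuine obstacle beyond confirming that blocks are pairwise disjoint, which holds because they are fibers of a function.
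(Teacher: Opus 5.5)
Your proposal is correct and takes the same route as the paper. The paper also observes that generation by the blocks is immediate from the definitions, and it gets the Hom-vanishing between distinct blocks from Lemma~\ref{lem:localization_sharpening_along_blocks} combined with Lemma~\ref{lem:disjoint_support_implies_Hom_vanishing}; your extra remarks (blocks are disjoint as fibers, supports of direct sums are unions, fullness inside $\Cmon{\scrL}{\scrL'}$) only spell out steps the paper leaves implicit.
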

\begin{proof}
    It is clear from definitions that $\left\{ \Amon{\scrL}{\scrL'}^{\oplus, \beta} (\fr{h}, W, \fr{o}) \right\}_{\beta \in \uW{\scrL}{\scrL'}}$ generate $\Amon{\scrL}{\scrL'}^{\oplus} (\fr{h}, W, \fr{o})$ under direct sums.
    It remains to show that if $\ux, \uy \in \Exp (W)$ are expressions for $x \in \beta$ and $y \in \gamma$ where $\beta, \gamma \in \uW{\scrL}{\scrL'}$ are distinct blocks, then
    \begin{equation}\label{eq:abe_block_decomp_1}
        \Hom_{\Cmon{\scrL}{\scrL'}} (B_{\ux}^{\scrL}, B_{\uy}^{\scrL}) = 0.
    \end{equation}
    The vanishing of (\ref{eq:abe_block_decomp_1}) follows immediately from Lemmas \ref{lem:localization_sharpening_along_blocks} and Lemma \ref{lem:disjoint_support_implies_Hom_vanishing}.
\end{proof}

\subsubsection{Neutral Block Endoscopy}

\begin{proposition}\label{prop:abe_endo}
    Let $\scrL \in \fr{o}$. There is an equivalence of monoidal categories
    \[ \Psi_{\scrL}^{\alg} : \Amon{1}{1}^{\oplus} (W_{\scrL}^{\circ}, \fr{h}, 1) \stackrel{\sim}{\to} \Amon{\scrL}{\scrL}^{\oplus, \circ} (\fr{h}, W, \fr{o})\]
    such that $\Psi_{\scrL}^{\alg} (B_{\ux}) \cong B_{\ux'}^{\scrL}$ where $\ux'$ is an expression in $W$ obtained from $\ux$ by replacing endosimple reflections with reduced expressions in $W$.
\end{proposition}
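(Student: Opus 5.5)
The plan is to realize both sides inside the same ambient category of $R$-bimodules with localization data. Then $\Psi_{\scrL}^{\alg}$ can be taken to be essentially the identity on underlying bimodules, and the work is to show that the two generating sets of objects match up to isomorphism and that the Hom spaces agree.

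First I would fix the restricted realization of $\fr{h}$ to $(W_{\scrL}^{\circ}, S_{\scrL}^{\circ})$. It is well defined because $\fr{h}$ is balanced (Lemma \ref{lem:well_defined_pos_roots_balancedness}), and it is an Abe realization by Corollary \ref{cor:refl_abe_are_endo}. Its symmetric algebra is the same ring $R$. The localized ring $Q' = R[\alpha_r^{-1} : r \in \scrR(W_{\scrL}^{\circ})]$ maps into $Q$, since $\scrR(W_{\scrL}^{\circ}) \subseteq \scrR(W)$.

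I would then define a functor $\Cmon{1}{1}(\fr{h}, W_{\scrL}^{\circ}, 1) \to \Cmon{\scrL}{\scrL}(\fr{h}, W, \fr{o})$ by sending $(M, (M_{Q'}^w), \xi_M)$ to $(M, (M_{Q'}^w \otimes_{Q'} Q)_{w}, \xi_M \otimes Q)$. Here $W_{\scrL}^{\circ} \subseteq \W{\scrL}{\scrL}$, so condition (3) holds. The functor is monoidal, with the coherence isomorphism given by the identity on $M \otimes_R N$. It is faithful because morphisms are bimodule maps in both categories.

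Next, for an endosimple $s' \in S_{\scrL}^{\circ}$ I would write $s' = w t w^{-1}$ with $t \in S$, $\ell_{\scrL}(w) = 0$ and $\scrL w t = \scrL w$, exactly as in the proof of Proposition \ref{prop:endo_gps_are_coxeter_gps}. Lemma \ref{lem:soergel_product_formulas} then gives
\[B_{\iota(s')}^{\scrL} \cong R_w \otimes_R C_t \otimes_R R_{w^{-1}} \cong C_{s'},\]
where $\iota(s')$ is the endo-reduced expression obtained from a reduced expression of $w$, followed by $t$, followed by that expression reversed; the non-stabilizing letters contribute $R$-factors and compose to $R_w$ and $R_{w^{-1}}$ by Lemma \ref{lem:soergel_product_formulas}(1). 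I would also check that the localization data of $C_{s'}$ computed from $W$ (supported on $\{e, s'\}$, with roots $\alpha_{s'} = w\alpha_t$) agrees with the base change of the $W_{\scrL}^{\circ}$-data. Tensoring then gives $\Psi(B_{\ux}) \cong B_{\ux'}^{\scrL}$ for every $\ux$. To get essential surjectivity onto the neutral block, I would show that any $B_{\uw}^{\scrL}$ with $w \in W_{\scrL}^{\circ}$ is isomorphic to some $B_{\ux'}^{\scrL}$. For this I would decompose $\uw$ along $K(\uw,\scrL)$ as $R_{y_0} C_{t_1} R_{y_1} \cdots$, insert $R_{y}\otimes R_{y^{-1}} \cong R$, and conjugate each $C_{t_i}$ into some $C_{s_i'}$ using Lemma \ref{lem:soergel_product_formulas}. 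The leftover standard factor is $R_{b_{\scrL}(\uw)} = R_e$, by Lemma \ref{lem:block_prelims}.

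The main obstacle is fullness: a morphism in $\Cmon{\scrL}{\scrL}$ must preserve the $Q$-summands, but a priori this does not force it to preserve the $Q'$-summands. I would argue as follows. By Lemma \ref{lem:localization_sharpening_along_blocks}, everything in the neutral block has support in $W_{\scrL}^{\circ}$. A bimodule map $\varphi$ that respects the decomposition over $Q$ also respects it over $Q'$ after intersecting, because $M_{Q'}^w = M_{Q'} \cap M_Q^w$. This holds since $M_{Q'} \hookrightarrow M_Q$ by freeness (Lemma \ref{lem:bott_samelson_bimodules_are_free}) and because $\xi_M$ is compatible with base change. So every morphism in the target comes from a morphism in the source, and $\Psi$ is full on Bott--Samelson objects. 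Passing to additive hulls then gives the equivalence.
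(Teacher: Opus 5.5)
Your proposal is correct and is essentially the paper's argument: the same base-change functor along $Q' \to Q$, shown to be monoidal and fully faithful, with essential surjectivity obtained by rewriting each stabilizing letter as a conjugate $R_z \otimes_R C_t \otimes_R R_{z^{-1}} \cong C_{ztz^{-1}}$ via Lemma \ref{lem:soergel_product_formulas}. The paper organizes this rewriting as an induction on $\ell_{\scrL}(\uw)$ through minimal block representatives $w^{\beta}$ instead of doing it all at once. It also only asserts full faithfulness as ``easy to check,'' whereas your identity $M_{Q'}^{w} = M_{Q'} \cap M_{Q}^{w}$ actually proves it. One step is implicit in both versions: each conjugate $ztz^{-1}$ must have $\ell_{\scrL} = 1$, so that it is endosimple and not merely some reflection of $W_{\scrL}^{\circ}$.
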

\begin{proof}
    Let $Q' = R [\frac{1}{\alpha_w} \colon w \in \scrR (W_{\scrL}^{\circ})]$.
    Since $\scrR (W_{\scrL}^{\circ}) \subseteq \scrR (W)$, there is an inclusion of graded rings $Q' \to Q$.
    We can then define a functor
    \begin{equation}
        \Psi_{\scrL}^{\alg} : \Cmon{1}{1} (W_{\scrL}^{\circ}, \fr{h}, 1) \to \Cmon{\scrL}{\scrL} (\fr{h}, W, \fr{o})
    \end{equation}
    as follows.
    Given an object $(M, M_{Q'}^w, \xi_M)$, we define the underlying $R$-bimodule of $\Psi_{\scrL}^{\alg} (M)$ to be $M$.
    The localization data is given by $\Psi_{\scrL}^{\alg} (M)_Q^w \coloneq M_{Q'}^w \otimes_{Q'} Q$ and $\xi_{\Psi_{\scrL}^{\alg} (M)} = \xi_M \otimes_{Q'} Q$.
    It is easy to check that $\Psi_{\scrL}^{\alg}$ defines a fully faithful monoidal functor which restricts to a functor
    \begin{equation}
        \Psi_{\scrL}^{\alg} : \Amon{1}{1}^{\oplus} (W_{\scrL}^{\circ}, \fr{h}, 1) \to \Amon{\scrL}{\scrL}^{\oplus, \circ} (\fr{h}, W, \fr{o}).
    \end{equation}

    It remains to check that $\Psi_{\scrL}^{\alg}$ is essentially surjective.
    Let $\uw$ be an expression in $W$ of an element $w \in W_{\scrL}^{\circ}$.
    We argue by induction on $\ell_{\scrL} (\uw)$. If $\ell_{\scrL} (\uw) = 0$, then $B_{\uw}^{\scrL} \cong R_e$ which is clearly in the essential image of $\Psi_{\scrL}^{\alg}$.
    If $\ell_{\scrL} (\uw) = k > 0$, then we can write $\uw = \ux \uy$ such that $\ell_{\scrL} (\ux) = k-1$ and $\ell_{\scrL \ux} (\uy) = 1$. Let $x$ and $y$ be the evaluations of $\ux$ and $\uy$ respectively.
    Pick a block $\beta \in \uW{\scrL x}{\scrL}$ such that $x w^{\beta}$ is in the neutral block. 
    Since $w$ is in the neutral block, we must also have that $w^{\beta, -1} y$ is in the neutral block.
    Let $\underline{v}$ be a reduced expression of $w^{\beta}$.
    By induction, $B_{\ux \underline{v}}^{\scrL}$ is in the essential image of $\Psi_{\scrL}^{\alg}$.
    Since $\ell_{\scrL} (\uy) = 1$ and $w^{\beta, -1} y$ is in the neutral block, $w^{\beta, -1} y$ is an endosimple reflection. As a result, $B_{\overline{\underline{v}} \uy}^{\scrL} \cong C_{w^{\beta, -1} y}$ is also in the essential image.
    Since $\Psi_{\scrL}^{\alg}$ is monoidal, we then have that $B_{\ux \underline{v}}^{\scrL} \otimes_R B_{\overline{\underline{v}} \uy}^{\scrL} \cong B_{\uw}^{\scrL}$ is in the essential image.
    As a result, $\Psi_{\scrL}^{\alg}$ is essentially surjective, and hence an equivalence of categories.
\end{proof}

\subsubsection{General Endoscopy}

Let $\scrL, \scrL', \scrL'' \in \fr{o}$, $\beta \in \uW{\scrL}{\scrL'}$, and $\gamma \in \uW{\scrL'}{\scrL''}$. There is an equivalence of monoidal categories
\[{}^{\beta} (-) : \Amon{1}{1}^{\oplus} (\fr{h}, W_{\scrL'}^{\circ}, 1) \to \Amon{1}{1}^{\oplus} (\fr{h}, W_{\scrL}^{\circ}, 1) \]
induced by the isomorphism of Coxeter groups $W_{\scrL'}^{\circ} \to W_{\scrL}^{\circ}$ defined by $w \mapsto w^{\beta} w w^{\beta, -1}$.
It follows from definitions that ${}^{\beta} ( {}^{\gamma} (-)) \cong {}^{\beta \gamma} (-)$.

On the monodromic side, there is an equivalence of monoidal categories
\[{}^{\beta} (-) : \Amon{\scrL'}{\scrL'}^{\oplus, \circ} (\fr{h}, W, \fr{o}) \to \Amon{\scrL}{\scrL}^{\oplus, \circ} (\fr{h}, W, \fr{o}) \]
defined by $M \mapsto R_{w^{\beta}} \otimes_R M \otimes_R R_{w^{\beta, -1}}$. By Lemma \ref{lem:soergel_product_formulas} that ${}^{\beta} ( {}^{\gamma} (-)) \cong {}^{\beta \gamma} (-)$.

\begin{lemma}\label{lem:Phi_alg_and_block_conj}
    There are natural isomorphisms making the following diagram commute for each $\beta \in \uW{\scrL}{\scrL'}$.
    \begin{equation}\label{eq:Phi_alg_and_block_conj_1}
        \begin{tikzcd}
            {\Amon{1}{1}^{\oplus} (\fr{h}, W_{\scrL'}^{\circ}, 1)} \arrow[d, "{}^{\beta} (-)"'] \arrow[r, "\Psi_{\scrL'}^{\alg}"] & {\Amon{\scrL'}{\scrL'}^{\oplus, \circ} (\fr{h}, W, \fr{o})} \arrow[d, "{}^{\beta} (-)"] \\
            {\Amon{1}{1}^{\oplus} (\fr{h}, W_{\scrL}^{\circ}, 1)} \arrow[r, "\Psi_{\scrL}^{\alg}"]                            & {\Amon{\scrL}{\scrL}^{\oplus,\circ} (\fr{h}, W, \fr{o})}                           
            \end{tikzcd}
    \end{equation}
    Moreover, these natural transformations are compatible with respect to block composition.
\end{lemma}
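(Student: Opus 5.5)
We construct the natural isomorphism objectwise on the underlying bimodules and then check it respects localization data. Fix $\beta \in \uW{\scrL}{\scrL'}$ and write $w = w^{\beta}$. For $M \in \Amon{1}{1}^{\oplus} (\fr{h}, W_{\scrL'}^{\circ}, 1)$, the top-right composite is $R_w \otimes_R M \otimes_R R_{w^{-1}}$ with localization data $(R_w \otimes_R M_{Q'}^x \otimes_R R_{w^{-1}}) \otimes Q$ placed in degree $w x w^{-1}$. The left-bottom composite is ${}^{\beta} M$ viewed in $\Cmon{\scrL}{\scrL}$ via $\Psi_{\scrL}^{\alg}$.

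The first step is to make ${}^{\beta}(-)$ on the endoscopic side concrete. The isomorphism $c_w : W_{\scrL'}^{\circ} \to W_{\scrL}^{\circ}$, $x \mapsto w x w^{-1}$, is compatible with the graded ring automorphism $w : R \to R$, which sends $Q'_{\scrL'}$ to $Q'_{\scrL}$ because $w$ maps roots of $W_{\scrL'}^{\circ}$ to roots of $W_{\scrL}^{\circ}$ up to units. We realize ${}^{\beta}M$ as the twist ${}^{w}M^{w}$: same abelian group, with both actions twisted by $w^{-1}$, and localization pieces twisted likewise. We then check that this twist is an equivalence taking $C_{s}$ to $C_{wsw^{-1}}$ and preserving tensor products, so it agrees with the Coxeter-induced functor on Bott--Samelson generators up to the canonical isomorphisms of Lemma \ref{lem:soergel_product_formulas}.

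The second step produces the natural map
\[ R_w \otimes_R M \otimes_R R_{w^{-1}} \to {}^{w}M^{w}, \qquad a \otimes m \otimes b \mapsto a\, m\, w(b) , \]
read in the twisted structure. This is a well-defined isomorphism of $R$-bimodules, natural in $M$, for exactly the reasons used in the proof of Lemma \ref{lem:soergel_product_formulas}(2). Tensoring with $Q$, it sends $R_w \otimes M_{Q'}^x \otimes R_{w^{-1}}$ onto the $wxw^{-1}$-piece. This is the key compatibility: an element $m$ with $m f = x(f) m$ becomes, after the twists, an element with right action by $wxw^{-1}$. Hence the map is a morphism in $\Cmon{\scrL}{\scrL}$, and an isomorphism since its inverse also respects the decompositions. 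Naturality is immediate because morphisms are just bimodule maps preserving localization.

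The last step is compatibility with block composition. For $\gamma \in \uW{\scrL'}{\scrL''}$ we have $w^{\beta\gamma} = w^{\beta} w^{\gamma}$ by Lemma \ref{lem:block_prelims}. We must check that composing the square for $\gamma$ with the square for $\beta$ gives the square for $\beta\gamma$, using the isomorphisms $R_{w^\beta}\otimes R_{w^\gamma} \cong R_{w^{\beta\gamma}}$ of Lemma \ref{lem:soergel_product_formulas}(1) and the equality ${}^{w^\beta}({}^{w^\gamma}M) = {}^{w^{\beta}w^{\gamma}}M$. Since everything is determined by $1\otimes m \otimes 1 \mapsto m$, this reduces to checking a formula on pure tensors. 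We also check monoidality, that the isomorphism intertwines tensor products, on $1$-tensors in the same way.

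The main obstacle is the first step. The paper only defines the endoscopic ${}^{\beta}(-)$ abstractly, via the Coxeter isomorphism, on the additive hull of Bott--Samelson objects. Identifying it with the concrete twist requires the roots and realization data to transport correctly under $w$. We will use balancedness through Lemma \ref{lem:well_defined_pos_roots_balancedness}, together with the independence of the restricted realization from the reflection datum, to ensure the twist of $C_s$ is canonically $C_{wsw^{-1}}$, compatible with the chosen $\delta$'s only up to the canonical $1\otimes 1$ normalization.
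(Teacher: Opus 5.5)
Your proof is correct, but it is organized differently from the paper's.

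\textbf{The paper's route.} The paper never makes the endoscopic ${}^{\beta}(-)$ concrete. Because everything in the square is monoidal, it only produces isomorphisms $\xi_s^{\beta} : C_{w^{\beta} s w^{\beta,-1}} \to R_{w^{\beta}} \otimes_R C_s \otimes_R R_{w^{\beta,-1}}$ for $s \in S_{\scrL'}^{\circ}$. It chooses $\xi_s^{\beta}$ to be the unique $1$-tensor-preserving isomorphism of Lemma \ref{lem:soergel_product_formulas}(2). It then deduces compatibility with block composition, $\xi_s^{\gamma} \circ \xi^{\beta}_{w^{\gamma} s w^{\gamma,-1}} = \xi_s^{\beta\gamma}$, from that uniqueness.

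\textbf{Your route.} You realize ${}^{\beta}(-)$ as the twist ${}^{w}M^{w}$. You then give one isomorphism $R_w \otimes_R M \otimes_R R_{w^{-1}} \to {}^{w}M^{w}$ valid for every $M$; in the untwisted structure it is $a \otimes m \otimes b \mapsto w^{-1}(a)\, m\, b$. For $M = C_s$, follow it by ${}^{w}C_s^{w} \cong C_{wsw^{-1}}$, $f \otimes g \mapsto w(f) \otimes w(g)$. The composite preserves $1$-tensors, so it is exactly the inverse of the paper's $\xi_s^{\beta}$.

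\textbf{What each buys.} In your version, naturality with respect to arbitrary morphisms, monoidality, and the block-composition identity all become formal checks on pure tensors. The paper's generator-level construction leaves naturality in morphisms implicit in ``unpacking definitions''. The paper's version is shorter and works directly with ${}^{\beta}(-)$ as defined from the Coxeter isomorphism.

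\textbf{Two small remarks.}
First, the step you call the main obstacle is easier than you suggest. We have $w(R^{s}) = R^{wsw^{-1}}$. The lines $(\delta \otimes 1 - 1 \otimes s(\delta))Q$ and $(\delta \otimes 1 - 1 \otimes \delta)Q$ do not depend on the choice of $\delta$, since two choices differ by an $s$-invariant element. They also do not depend on the sign of $w\alpha_s$. So ${}^{w}C_s^{w} \cong C_{wsw^{-1}}$ in $\Cmon{\scrL}{\scrL}$ needs no balancedness.

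Second, agreement on generating objects alone does not identify two functors. Either read ${}^{\beta}(-)$ as transport of structure along $w$, in which case your twist is simply the definition. Or also compare the two functors on generating morphisms. Polynomials, dots and trivalent vertices are direct computations, and the $2m$-valent vertices follow from $1$-tensor uniqueness (Lemma \ref{lem:uniqueness_of_beta}).
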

\begin{proof} 
    After unpacking definitions and using monoidality of the maps involved in (\ref{eq:Phi_alg_and_block_conj_1}), it suffices to construct isomorphisms
    \[\xi_s^{\beta} : C_{w^{\beta} s w^{\beta, -1}} \stackrel{\sim}{\to} R_{w^{\beta}} \otimes_R C_s \otimes_R R_{w^{\beta, -1}}\]
    for each endosimple reflection $s \in S_{\scrL'}^{\circ}$. By Lemma \ref{lem:soergel_product_formulas}, we can take $\xi_s^{\beta}$ to be the unique isomorphism which preserves the 1-tensor.
    This then defines a natural transformation of functors $\xi^{\beta} : \Psi_{\scrL'}^{\alg} \circ {}^{\beta} (-) \implies {}^{\beta} (-) \circ \Psi_{\scrL}^{\alg} $.

    Let $\gamma \in \uW{\scrL'}{\scrL''}$.
    Since $\xi_s^{\beta}$ is the unique map which preserves the 1-tensor, we have that 
    $\xi_s^{\gamma} \circ \xi_{w^{\gamma} s w^{\gamma, -1}}^{\beta} = \xi_s^{\beta \gamma}$ for $s \in S_{\scrL''}^{\circ}$. As a result, the natural transformations $\xi^{\beta}$ are compatible with respect to block composition.
\end{proof}

We will construct a 2-category $\scrA^{\oplus} (\fr{h}, M^{\fr{o}} (W))$, called the \emph{endoscopic 2-category of Bott--Samelson bimodules} as follows. The object set of $\scrA^{\oplus} (\fr{h}, M^{\fr{o}} (W))$ is $\fr{o}$.
The morphism category from $\scrL$ to $\scrL'$ of $\scrA^{\oplus} (\fr{h}, M^{\fr{o}} (W))$ is given by $\bigoplus_{\beta \in \uW{\scrL}{\scrL'}} \scrA^{\oplus} (\fr{h}, \beta)$ where $\scrA^{\oplus} (\fr{h}, \beta) \coloneq \Amon{1}{1}^{\oplus} (\fr{h}, W_{\scrL}^{\circ}, 1)$.
The vertical composition is induced from the composition in $\scrA^{\oplus} (\fr{h}, \beta)$. Let $\beta \in \uW{\scrL}{\scrL'}$ and $\gamma \in \uW{\scrL'}{\scrL''}$. 
We define the horizontal composition, denoted $\otimes$ as the bifunctor
\begin{align*}
    \scrA^{\oplus} (\fr{h}, \beta) \times \scrA^{\oplus} (\fr{h}, \gamma) &= \Amon{1}{1}^{\oplus} (\fr{h}, W_{\scrL}^{\circ}, 1) \times \Amon{1}{1}^{\oplus} (\fr{h}, W_{\scrL'}^{\circ}, 1) \\
    &\stackrel{\id \times {}^{\beta} (-) }{\longrightarrow} \Amon{1}{1}^{\oplus} (\fr{h}, W_{\scrL}^{\circ}, 1) \times \Amon{1}{1}^{\oplus} (\fr{h}, W_{\scrL}^{\circ}, 1) \\
    &\stackrel{\otimes_R}{\to} \Amon{1}{1}^{\oplus} (\fr{h}, W_{\scrL}^{\circ}, 1) \\
    &= \scrA^{\oplus} (\fr{h}, \beta \gamma).
\end{align*}

\begin{theorem}[Algebraic Monodromic-Endoscopic Equivalence]\label{thm:alg_endoscopy}
    There is an equivalence of 2-categories
    \[ \Psi^{\alg} : \scrA^{\oplus} (\fr{h}, M^{\fr{o}} (W)) \stackrel{\sim}{\to} \Amon{}{}^{\oplus} (\fr{h}, W, \fr{o}) \]
    which restricts to the equivalence of monoidal categories 
    \[ \Psi_{\scrL}^{\alg} : \Amon{1}{1}^{\oplus} (\fr{h}, W_{\scrL}^{\circ}, 1) \stackrel{\sim}{\to} \Amon{\scrL}{\scrL}^{\oplus} (\fr{h}, W, \fr{o})\]
    for each $\scrL \in \fr{o}$ coming from Proposition \ref{prop:abe_endo}.
\end{theorem}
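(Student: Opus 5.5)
We build $\Psi^{\alg}$ block by block, conjugating the neutral-block equivalence of Proposition \ref{prop:abe_endo} by standard bimodules, and then check that horizontal composition is respected.

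\textbf{Definition on 1-morphisms.} Let $\beta \in \uW{\scrL}{\scrL'}$. On the summand $\scrA^{\oplus}(\fr{h},\beta) = \Amon{1}{1}^{\oplus}(\fr{h},W_{\scrL}^{\circ},1)$ we set
\[\Psi^{\alg}(M) \coloneq \Psi_{\scrL}^{\alg}(M) \otimes_R R_{w^{\beta}}.\]
This object lies in $\Amon{\scrL}{\scrL'}^{\oplus,\beta}$ by (\ref{eq:abe_conv_presrves_blocks}), since $e\cdot w^\beta$ lies in $\beta$. The functor $(-)\otimes_R R_{w^\beta}$ is an equivalence with inverse $(-)\otimes_R R_{w^{\beta,-1}}$, by Lemma \ref{lem:soergel_product_formulas}(1). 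It therefore identifies the neutral block $\Amon{\scrL}{\scrL}^{\oplus,\circ}$ with the $\beta$-block. To see this, take $\uw$ an expression of $w\in\beta$ and $\underline{v}$ a reduced expression of $w^\beta$, which has $\ell_{\scrL}=0$. Then $B_{\uw}^{\scrL}\otimes R_{w^{\beta,-1}} \cong B^{\scrL}_{\uw\overline{\underline{v}}}$, and this lies in the neutral block by Lemma \ref{lem:block_prelims}.

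Composing with Proposition \ref{prop:abe_endo}, each summand functor is an equivalence onto $\Amon{\scrL}{\scrL'}^{\oplus,\beta}$. Lemma \ref{lem:abe_block_decomp} then shows that $\Psi^{\alg}$ is an equivalence on each Hom category $\bigoplus_\beta \scrA^{\oplus}(\fr{h},\beta)\to \Amon{\scrL}{\scrL'}^{\oplus}$. The functor is the identity on objects $\fr{o}$. For $\scrL=\scrL'$ it is the displayed restriction: on the neutral block it is $\Psi_{\scrL}^{\alg}$, since $w^\beta=e$. Here one should note that the statement's target $\Amon{\scrL}{\scrL}^{\oplus}$ must be read together with the non-neutral blocks $\beta\in\uW{\scrL}{\scrL}$.

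\textbf{Compatibility with horizontal composition.} For $M\in\scrA^\oplus(\fr{h},\beta)$ and $N\in\scrA^\oplus(\fr{h},\gamma)$, we need natural isomorphisms
\[\Psi_{\scrL}^{\alg}(M)\otimes R_{w^\beta}\otimes\Psi_{\scrL'}^{\alg}(N)\otimes R_{w^\gamma}\;\cong\;\Psi_{\scrL}^{\alg}(M\otimes_R {}^\beta N)\otimes R_{w^{\beta\gamma}}.\]
We produce them as follows. First insert $R_{w^{\beta,-1}}\otimes R_{w^\beta}\cong R$ after $\Psi_{\scrL'}^{\alg}(N)$. Then apply Lemma \ref{lem:Phi_alg_and_block_conj} to get $R_{w^\beta}\otimes\Psi_{\scrL'}^{\alg}(N)\otimes R_{w^{\beta,-1}}\cong \Psi_{\scrL}^{\alg}({}^\beta N)$. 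Use monoidality of $\Psi_{\scrL}^{\alg}$. Finally use $R_{w^\beta}\otimes R_{w^\gamma}\cong R_{w^\beta w^\gamma}$, with $w^\beta w^\gamma = w^{\beta\gamma}$ because $\Bl^{\min}$ is a groupoid morphism (Lemma \ref{lem:block_prelims}).

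\textbf{Main obstacle: coherence.} The substantive step is checking that these structure isomorphisms satisfy the associativity and unit coherence of a 2-functor (pseudofunctor). My plan is to rigidify everything by 1-tensors. Every isomorphism used is of one of three kinds:
\begin{enumerate}
    \item the unique map $R_x\otimes R_y\to R_{xy}$ sending $1\otimes 1\mapsto 1$;
    \item the maps $\xi^\beta$, which Lemma \ref{lem:Phi_alg_and_block_conj} makes unique by requiring them to preserve 1-tensors;
    \item monoidal structure maps, which are identities on underlying bimodules.
\end{enumerate}
Each kind preserves the distinguished generators. The two composites around an associativity square are then isomorphisms between the same pair of objects that agree on generators. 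For Bott–Samelson generators the degree-zero maps preserving the 1-tensor are unique, as used in Lemma \ref{lem:soergel_product_formulas}, so the two composites coincide. The argument then extends to direct sums and summands by naturality.

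Beyond this, I need the compatibility of $\xi^\beta$ with block composition, already stated in Lemma \ref{lem:Phi_alg_and_block_conj}, and the coherence ${}^\beta({}^\gamma(-))\cong{}^{\beta\gamma}(-)$ on the endoscopic side. The latter holds because these conjugation functors are induced by group isomorphisms that compose strictly. With these in hand, essential surjectivity on objects (a bijection) and on Hom categories gives the 2-equivalence.
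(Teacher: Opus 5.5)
Your construction is the same as the paper's, and so are the equivalence argument and the composition isomorphism:
\begin{itemize}
\item you set $\Psi^{\alg}=\Psi_{\scrL}^{\alg}(-)\otimes_R R_{w^{\beta}}$ on the summand indexed by $\beta$;
\item it is an equivalence by Proposition \ref{prop:abe_endo} and Lemma \ref{lem:abe_block_decomp};
\item $\Psi^{\alg}(M)\otimes_R\Psi^{\alg}(N)\cong\Psi^{\alg}(M\otimes N)$ comes from inserting $R_{w^{\beta,-1}}\otimes_R R_{w^{\beta}}$, applying Lemma \ref{lem:Phi_alg_and_block_conj}, and using monoidality of $\Psi_{\scrL}^{\alg}$ together with $w^{\beta}w^{\gamma}=w^{\beta\gamma}$.
\end{itemize}
You are also right that the restricted equivalence should be read as landing in the neutral block $\Amon{\scrL}{\scrL}^{\oplus,\circ}$.

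\textbf{The coherence step fails as written.} Preserving the 1-tensor does not determine a degree-zero map between Bott--Samelson bimodules, because in general the 1-tensor does not generate $B_{\ux}^{\scrL}$ as a bimodule.
\begin{itemize}
\item \emph{Counterexample.} Take $\scrL s=\scrL$ and consider $\mu^s$, then left multiplication by $\alpha_s$, then $\nu^s$. This composite is a nonzero degree-zero endomorphism of $C_s\otimes_R C_s$: it sends $1\otimes\delta_s\otimes 1$ to $\alpha_s\otimes 1\otimes 1$, but it kills $u_{(s,s)}$. Adding it to the identity gives a second degree-zero endomorphism fixing the 1-tensor.
\item \emph{Where uniqueness does hold.} The uniqueness used in Lemma \ref{lem:soergel_product_formulas} is available only where the relevant degree-zero Hom space has rank one. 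Examples are $R_x\otimes_R R_y\to R_{xy}$ and the one-strand comparison $C_{wsw^{-1}}\cong R_w\otimes_R C_s\otimes_R R_{w^{-1}}$.
\item \emph{Why this matters here.} Already for three one-strand inputs, the objects in the associativity square are multi-strand Bott--Samelson bimodules. Checking that the two composites agree on 1-tensors proves nothing there.
\end{itemize}

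\textbf{The correct route.} This is what the paper means by appealing to the block-compatibility in Lemma \ref{lem:Phi_alg_and_block_conj}.
\begin{itemize}
\item Both composites are built from identities, canonical maps among the $R_x$, monoidal structure maps of $\Psi_{\scrL}^{\alg}$, and components of the monoidal natural isomorphisms $\xi^{\beta}$.
\item A map $R_x\otimes_R R_y\otimes_R R_z\to R_{xyz}$ with $1\otimes1\otimes1\mapsto 1$ is unique, since $1\otimes 1\otimes 1$ generates $R_x\otimes_R R_y\otimes_R R_z$ as a left module.
\item Agreement of the composites therefore reduces to the block-compatibility of the $\xi^{\beta}$. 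That is, $\xi^{\beta\gamma}$ agrees with the composite of $\xi^{\beta}$ and the conjugate of $\xi^{\gamma}$, up to the identifications ${}^{\beta}({}^{\gamma}(-))\cong{}^{\beta\gamma}(-)$.
\item By monoidality and naturality of the $\xi$'s, this compatibility only needs checking on single endosimple generators $C_s$. There the degree-zero Hom space has rank one (Lemma \ref{lem:soergel_product_formulas}(2)), so your 1-tensor uniqueness argument is valid.
\end{itemize}
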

\begin{proof}
    Let $\scrL, \scrL' \in \fr{o}$ and $\beta \in \uW{\scrL}{\scrL'}$. On objects, $\Psi^{\alg} (\scrL) = \scrL$. On morphism categories, we define $\Psi^{\alg}$ as the composition
    \[\begin{tikzcd}
{\scrA^{\oplus} (\fr{h}, \beta) = \Amon{1}{1}^{\oplus} (\fr{h}, W_{\scrL}^{\circ}, 1)} \arrow[r, "\Psi_{\scrL}^{\alg}"] & {\Amon{\scrL}{\scrL}^{\oplus, \circ} (\fr{h}, W, \fr{o})} \arrow[rr, "(-) \otimes_R R_{w^{\beta}}"] &  & {\Amon{\scrL}{\scrL'}^{\oplus, \beta} (\fr{h}, W, \fr{o}).}
\end{tikzcd}\]
If we can prove that $\Psi^{\alg}$ is a 2-functor, then it is automatic from Proposition \ref{prop:abe_endo} that it will be an equivalence of 2-categories.
Let $M \in \scrA^{\oplus} (\fr{h}, \beta)$ and $N \in \scrA^{\oplus} (\fr{h}, \gamma)$. We will construct a natural isomorphism
\begin{equation}\label{eq:alg_endoscopy_0}
    \Psi^{\alg} (M) \otimes_R \Psi^{\alg} (N) \stackrel{\sim}{\to} \Psi^{\alg} (M \otimes N)
\end{equation}
given by the composition
\begin{align}
    \Psi^{\alg} (M) \otimes_R \Psi^{\alg} (N) &= \Psi_{\scrL}^{\alg} (M) \otimes_R R_{w^{\beta}} \otimes_R \Psi_{\scrL}^{\alg} (N) \otimes_R R_{w^{\gamma}} \notag \\
    &\cong \Psi_{\scrL}^{\alg} (M) \otimes_R R_{w^{\beta}} \otimes_R \Psi_{\scrL}^{\alg} (N) \otimes_R R_{w^{\beta, -1}} \otimes_R R_{w^{\beta \gamma}} \notag \\
    &\cong \Psi_{\scrL}^{\alg} (M) \otimes_R \Psi_{\scrL}^{\alg} ({}^{\beta} N) \otimes_R R_{w^{\beta \gamma}} \label{eq:alg_endoscopy_1} \\
    &\cong \Psi_{\scrL}^{\alg} (M \otimes_R {}^{\beta} N ) \otimes_R R_{w^{\beta \gamma}} \label{eq:alg_endoscopy_2} \\
    &=  \Psi^{\alg} (M \otimes N), \notag
\end{align}
where (\ref{eq:alg_endoscopy_1}) is Lemma \ref{lem:Phi_alg_and_block_conj} and (\ref{eq:alg_endoscopy_2}) comes from $\Psi_{\scrL}^{\alg}$ being monoidal.
It is easy to check from the compatibility of Lemma \ref{lem:Phi_alg_and_block_conj} with block composition that (\ref{eq:alg_endoscopy_0}) is associative. 
Therefore, (\ref{eq:alg_endoscopy_0}) makes $\Psi^{\alg}$ into a 2-functor.
\end{proof}

    \section{Algebraic Light Leaves}\label{sec:ll}

The goal of this section is to extend the construction of light leaves for Bott--Samelson bimodules (cf., \cite{Lib08, Abe19}) to monodromic Bott--Samelson bimodules. 
We closely follow the construction and proofs given in \cite{Abe19}.
We also provide three main applications of light leaves:
\begin{enumerate}
    \item a version of Soergel's Hom formula to compute the graded rank of homomorphisms in the category of Bott--Samelson bimodules;
    \item a basis of the Hom spaces between Bott--Samelson bimodules;
    \item a cellular structure on Bott--Samelson bimodules.
\end{enumerate} 

In order to use the morphism afforded by Proposition \ref{prop:existence_of_rex_moves}, we will assume throughout this section that $\fr{h}$ is a reflection-balanced monodromic Abe realization.

\subsection{Preliminaries on Monodromic Subexpressions}

\subsubsection{Subexpressions and Order}

Let $\uw = (s_1, \ldots, s_k)$ be an expression in $W$.
Recall the set,
\[K (\uw, \scrL) = \{ 1 \leq i \leq k \mid \scrL s_{1} \ldots s_i =  \scrL s_{1} \ldots s_{i-1} \}.\]
We will denote by $\Subexp (\uw)$ the set of subexpressions of $\uw$.
For a given $x \in W$, we can consider the subset $\Subexp (\uw, x)$ consisting of subexpressions $\ue$ of $\uw$ such that $\uw^{\ue} = x$.

A subexpression $\ue = (e_1, \ldots, e_k)$ of $\uw$ is said to be ($\scrL$-)\emph{monodromic} if $e_i = 1$ whenever $i \notin K (\uw, \scrL)$.
We will denote by $\Subexp^{\scrL} (\uw)$ the subset of $\Subexp (\uw)$ consisting of $\scrL$-monodromic subexpressions.
Similarly, for $x\in W$, we will write $\Subexp^{\scrL} (\uw, x) = \Subexp^{\scrL} (\uw) \cap \Subexp (\uw, x)$.

Let $\uw = (s_1, \ldots, s_k) \in \Exp (W)$ and $\ue \in \Subexp (\uw)$.
For each $i=1, \ldots, k$, we write $\uw_{\leq i}$ (resp. $\ue_{\leq i}$) for the first $i$ terms of $\uw$ (resp. $\ue$). 
Then $\ue_{\leq i}$ is a subexpression of $\uw_{\leq i}$.
Let $w_i$ be the element expressing $\uw_{\leq i}$.
We can define the $i$-th \emph{label}, $\lab_i (\uw, \ue) \in \{ U, D\}$ by 
\[\lab_i (\uw, \ue) \coloneq \begin{cases} U & \text{if } w_{i-1} s_i > w_{i-1}, \\ D & \text{if } w_{i-1} s_i < w_{i-1}.\end{cases}\]
The $i$-th \emph{decoration} of $\ue$, denoted $\dec_i (\uw, \ue) \in \{ U0, U1, D0, D1\}$, is defined by 
\[\dec_i (\uw, \ue) \coloneq \lab_i (\uw, \ue) e_i.\]
The \emph{defect} of $\ue$, denoted $d(\uw,\ue)$, is the integer defined by
\[d(\uw,\ue) \coloneq \#\{i \mid \dec_i(\uw,\ue) = U0\} - \# \{i \mid \dec_i (\uw,\ue) = D0\}.\]

For $x \in W$, we can define a relation $<$, called the \emph{label-lexicographic order}, on $\Subexp (\uw, x)$ by $\uf < \ue$ if and only if there exists some $1 \leq i = i (\ue, \uf)  \leq k$ such that
\begin{enumerate}
    \item $\lab_j (\uw, \ue) = \lab_j (\uw, \uf)$ for all $j < i$,
    \item $\lab_i (\uw, \ue) = D$ and $\lab_i (\uw, \uf) = U$.
\end{enumerate}
By \cite[Lemma 3.9]{Abe19}, the relation $<$ gives a total order on $\Subexp (\uw, x)$. 
Moreover, if $\ue$ and $\uf$ are $\scrL$-monodromic subexpressions, then it follows from the definitions that $i (\ue, \uf) \in K(\uw, \scrL)$.
As a result, the relation $<$ restricts to a total order on $\Subexp^{\scrL} (\uw, x)$.

We define another relation on subexpressions, although this time without a fixed evaluation.
Let $\ue, \uf$ be two subexpressions of $\uw \in \Exp (W)$. 
Let $x_0, x_1, \ldots, x_m$ and $y_0, y_1, \ldots, y_m$ denote the Bruhat strolls of $\ue$ and $\uf$, respectively.
We say that $\ue \prec \uf$ if $x_i \leq y_i$ for all $0 \leq i \leq m$. This defines a partial order $\prec$ on $\Subexp (\uw)$ called the \emph{path dominance order}.
The path dominance order restricts to a partial order on monodromic subexpressions.

\begin{lemma}[Monodromic Deodhar Defect Formula]\label{lem:defect_formula_for_pwx}
    Let $\uw \in \Exp (W)$. Then
    \begin{equation}\label{eq:defect_formula_1}
        \uH_{\uw}^{\scrL} = \sum_{\ue \in \Subexp^{\scrL} (\uw)} v^{d(\uw,\ue)} H_{\uw^{\ue}}^{\scrL}.
    \end{equation}
    As a result, for all $x\in W$, we have 
    \begin{equation}\label{eq:defect_formula_2}
        p_{\uw}^{x, \scrL} (v) = \sum_{\ue \in \Subexp^{\scrL} (\uw, x)} v^{d(\uw, \ue)}.
    \end{equation}
\end{lemma}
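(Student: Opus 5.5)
Note first that the definition of $\uH_{\uw}^{\scrL}$ in the paper reads $H_{s_1}^{\scrL}\cdots H_{s_k}^{\scrL s_1\cdots s_{k-1}}$; the intended meaning (consistent with Lemmas \ref{lem:pwx_calculation} and \ref{lem:dim_of_Bwx_over_Q}) is the product of the $\uH_{s_i}^{\scrL s_1\cdots s_{i-1}}$, and we use that. The proof of (\ref{eq:defect_formula_1}) is by induction on $k=\ell(\uw)$, following the classical Deodhar argument. Formula (\ref{eq:defect_formula_2}) then follows at once: pair both sides of (\ref{eq:defect_formula_1}) with $H_x^{\scrL}$ using the standard form, since the $H_y^{\scrL}$ are orthonormal.

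For $k=0$ both sides of (\ref{eq:defect_formula_1}) equal $H_e^{\scrL}$. For the inductive step write $\uw=\uy s$ with $\uy=(s_1,\ldots,s_{k-1})$, and set $\scrL''=\scrL\uy$. Monodromic subexpressions of $\uw$ are exactly the extensions $\ue=(\uf,e_k)$ with $\uf\in\Subexp^{\scrL}(\uy)$, where $e_k$ is free if $k\in K(\uw,\scrL)$ (that is, $\scrL''s=\scrL''$) and $e_k=1$ otherwise. For such an extension, $\dec_i(\uw,\ue)=\dec_i(\uy,\uf)$ for $i<k$. Hence $d(\uw,\ue)=d(\uy,\uf)+\delta$, where $\delta$ is $+1$ if $\dec_k=U0$, $-1$ if $\dec_k=D0$, and $0$ otherwise. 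By induction,
\[\uH_{\uw}^{\scrL}=\sum_{\uf\in\Subexp^{\scrL}(\uy)}v^{d(\uy,\uf)}H_{y_f}^{\scrL}\,\uH_s^{\scrL''},\qquad y_f\coloneq\uy^{\uf},\]
and each $y_f$ lies in $\W{\scrL}{\scrL''}$.

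It remains to verify, for a single $y=y_f$, that $H_y^{\scrL}\uH_s^{\scrL''}$ equals the sum of $v^{\delta}H_{y s^{e_k}}^{\scrL}$ over the allowed values of $e_k$. There are two cases.

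\emph{Case $\scrL''s\neq\scrL''$.} Here $\uH_s^{\scrL''}=H_s^{\scrL''}$. The defining relation gives $H_y^{\scrL}H_s^{\scrL''}=H_{ys}^{\scrL}$ in both the $ys>y$ and $ys<y$ subcases, because the relation carries no quadratic term when $s$ moves the parameter. This matches the unique extension $e_k=1$, whose decoration is $U1$ or $D1$, so $\delta=0$.

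\emph{Case $\scrL''s=\scrL''$.} Here $\uH_s^{\scrL''}=H_s^{\scrL''}+vH_e^{\scrL''}$.
\begin{itemize}
\item If $ys>y$, the product is $H_{ys}^{\scrL}+vH_y^{\scrL}$. This matches $e_k=1$ with decoration $U1$ and $\delta=0$, together with $e_k=0$ with decoration $U0$ and $\delta=+1$.
\item If $ys<y$, the relation gives $H_{ys}^{\scrL}+(v^{-1}-v)H_y^{\scrL}+vH_y^{\scrL}=H_{ys}^{\scrL}+v^{-1}H_y^{\scrL}$. This matches $D1$ with $\delta=0$, together with $D0$ with $\delta=-1$.
\end{itemize}
Summing over $\uf$ completes the induction.

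The only point needing care is bookkeeping: the relation used for $H_y^{\scrL}H_s^{\scrL''}$ is the one attached to the parameter $\scrL y=\scrL''$. This holds because $\uf$ is $\scrL$-monodromic, so $\scrL y_f=\scrL\uy$, which is exactly why the monodromic condition is imposed.
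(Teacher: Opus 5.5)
Your proof is correct and matches the paper's argument: induction on $\ell(\uw)$ with $\uw=\uy s$, splitting on whether $s$ fixes $\scrL\uy$ and, when it does, on the $U/D$ label of the last step; (\ref{eq:defect_formula_2}) is then read off via the standard form. Your reading of $\uH_{\uw}^{\scrL}$ as the product of the $\uH_{s_i}^{\scrL s_1\cdots s_{i-1}}$ is the intended one, since the paper's own proof uses it this way. Your explicit check that $\scrL\,\uy^{\uf}=\scrL\uy$ also supplies bookkeeping the paper leaves implicit.
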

\begin{proof}
    Note (\ref{eq:defect_formula_2}) follows from (\ref{eq:defect_formula_1}) from the definition of the standard form.
    We argue by induction on the length of $\uw$. If $\uw = \emptyset$, then (\ref{eq:defect_formula_1}) clearly holds.
    Let $\uw = \uy s$.
    Suppose (\ref{eq:defect_formula_1}) holds for $\uy$.
    Note that for all $x \in \W{\scrL}{\scrL'}$, we have that
    \[H_x^{\scrL} \uH_{s}^{\scrL'} = \begin{cases} H_{xs}^{\scrL} + vH_x^{\scrL} & xs > x, \scrL' s = \scrL', \\ H_{xs}^{\scrL} + v^{-1} H_x^{\scrL} & xs < x, \scrL' s = \scrL', \\ H_{xs}^{\scrL} & \scrL' s \neq \scrL'.\end{cases} \]
    If $\scrL \uw \neq \scrL \uy$, then any monodromic subexpression of $\uw$ arises from a monodromic subexpression of $\uy$ by attaching a ``1'' to the end. In particular, the defect does not change.
    We can then compute
    \[ \uH_{\uw}^{\scrL} = \sum_{\ue \in \Subexp^{\scrL} (\uy)} v^{d(\uy,\ue)} H_{\uy^{\ue}}^{\scrL} \uH_{s}^{\scrL \uy} =  \sum_{\ue \in \Subexp^{\scrL} (\uw)} v^{d(\uw,\ue)} H_{\uw^{\ue}}^{\scrL}.\]
    Now assume $\scrL \uw = \scrL \uy$. Let $k = \ell (\uw)$.
    \begin{align*}
        \uH_{\uw}^{\scrL} &= \sum_{\ue \in \Subexp^{\scrL} (\uy)} v^{d(\uy,\ue)} H_{\uy^{\ue}}^{\scrL} \uH_{s}^{\scrL'} \\
        &= \sum_{\stackrel{\ue \in \Subexp^{\scrL} (\uy)}{\uy^{\ue} s > \uy^{\ue}}} v^{d(\uy,\ue)} H_{\uy^{\ue}}^{\scrL} \uH_{s}^{\scrL'} + \sum_{\stackrel{\ue \in \Subexp^{\scrL} (\uy)}{\uy^{\ue} s < \uy^{\ue}}} v^{d(\uy,\ue)} H_{\uy^{\ue}}^{\scrL} \uH_{s}^{\scrL'} \\
        &= \sum_{\stackrel{\ue \in \Subexp^{\scrL} (\uy)}{\uy^{\ue} s > \uy^{\ue}}} v^{d(\uy,\ue)} (H_{\uy^{\ue} s}^{\scrL} + vH_{\uy^{\ue}}) + \sum_{\stackrel{\ue \in \Subexp^{\scrL} (\uy)}{\uy^{\ue} s < \uy^{\ue}}} v^{d(\uy,\ue)} (H_{\uy^{\ue} s}^{\scrL} + v^{-1} H_{\uy^{\ue}}) \\
        &= \sum_{\stackrel{\ue \in \Subexp^{\scrL} (\uw)}{\dec_k (\uw, \ue) = U}} v^{d(\uw,\ue)} H_{\uw^{\ue}}^{\scrL}  + \sum_{\stackrel{\ue \in \Subexp^{\scrL} (\uw)}{\dec_k (\uw, \ue) = D}} v^{d(\uw,\ue)} H_{\uw^{\ue}}^{\scrL} \\
        &=  \sum_{\ue \in \Subexp^{\scrL} (\uw)} v^{d(\uw,\ue)} H_{\uw^{\ue}}^{\scrL}.
    \end{align*}
\end{proof}

\subsubsection{Rigidification Data}\label{subsec:rigidification_data}

As in the non-monodromic setting, the basis of light leaves we will construct is far from canonical.
We define an \emph{$\LL$-datum} as a triple ($\rex$, $\{\rex_s\}_{s \in S}$, $\{\Gamma_{\ux, \uy}\}_{\ux, \uy}$) consisting of the following:
\begin{enumerate}\label{ll_disiderata}
    \item a map $\rex : W \to \Exp (W)$ such that for each $x \in W$, $\rex (x)$ is a reduced expression for $x$;
    \item for each $s \in S$, a map $\rex_s : \mathscr{R}_s \to \Exp (W)$ where $\mathscr{R}_s = \{ w \in W \mid \ell (ws) < \ell (w) \}$ and $\rex_s (x)$ is a reduced expression for $x$ ending in $s$;
    \item for any two reduced expressions $\ux$ and $\uy$ for $x$, a path $\Gamma_{\ux, \uy}$ from $\ux$ to $\uy$ in the rex graph of $x$.
\end{enumerate}
For the remainder of the section, we will fix an $\LL$-datum. 

For reduced expressions $\ux$ and $\uy$ of $x$, we will write $\beta_{\ux,\uy} : B_{\ux}^{\scrL} \to B_{\uy}^{\scrL}$ for the morphism determined by $\Gamma_{\ux, \uy}$ as in \S\ref{subsec:abe_mors}.

\subsection{Definition and Algorithm}

Let $\ux = (s_1, \ldots, s_k)$ be an expression in $W$, and let $\ue$ be an $\scrL$-monodromic subexpression of $\ux$. For each $1 \leq i \leq k$, let $x_i$ be the element realizing $\ux_{\leq i}$.
Let $w_i = \uw_{\leq i}^{\ue_{\leq i}}$.
We write $\uw_i = \rex (w_i)$ for the reduced expression of $w_i$ fixed in \S \ref{subsec:rigidification_data}. 
The condition that $\ue$ be monodromic ensures that $\scrL x_i = \scrL w_i$.

The construction of the light leaves is almost identical to the non-monodromic variants.
The key difference is that we will only allow for monodromic subexpressions.
For the sake of completeness, we will detail the algorithm below.

We construct the light leaf morphism $\LL_{\ux, \ue}^{\scrL} : B_{\ux}^{\scrL} \to B_{\uw}^{\scrL}$ of degree $d(\ux,\ue)$ inductively on the length $k$ of $\ux$.
The base case is covered by defining $\LL_{\emptyset, \emptyset}^{\scrL} : R \to R$ as the identity map. 
Suppose that we have already constructed a map 
\[\LL_{i-1}^{\scrL} := \LL_{\ux_{\leq i-1}, \ue_{\leq i-1}}^{\scrL} : B_{\ux_{\leq i-1}}^{\scrL} \to B_{\uw_{i-1}}^{\scrL}.\]
In a moment, we will define a map 
\[\phi_i : B_{\ux_{\leq i-1} s_i}^{\scrL} \to B_{\uw_{i}}^{\scrL}.\] 
We then define 
\[\LL_i^{\scrL} = \phi_i \circ ( \LL_{i-1}^{\scrL} \otimes_R \id_{B_s^{\scrL w_{i-1}}}).\]

The definition of $\phi_i$ depends on a decoration $\dec_i (\ux, \ue)$ and whether $i \in K(\ux, \scrL)$.
We will now explain $\phi_i$ for each decoration.\\
($U0$, $i \in K(\ux, \scrL)$): 
\[
    \begin{tikzcd}
        \phi_i : B_{\ux_{\leq i-1}}^{\scrL} \otimes_R C_{s_i} \arrow[rr, "\id \otimes \epsilon^{s_i}"] &  &  B_{\ux_{\leq i-1}}^{\scrL} \arrow[rr, "{\beta_{\ux_{\leq i-1}, \uw_i}}"] &  &  B_{\uw_i}^{\scrL}.
    \end{tikzcd}
\]
($U1$, $i \in K(\ux, \scrL)$):
\[
    \begin{tikzcd}
        \phi_i : B_{\ux_{\leq i-1}}^{\scrL} \otimes_R C_{s_i} = B_{\ux_{\leq i}}^{\scrL} \arrow[rr, "{\beta_{\ux_{\leq i}, \uw_i}}"] &  & B_{\uw_i}^{\scrL}.
    \end{tikzcd}
\]
($U1$, $i \notin K(\ux, \scrL)$):
\[
    \begin{tikzcd}
        \phi_i : B_{\ux_{\leq i-1}}^{\scrL} \otimes_R R_{s_i} = B_{\ux_{\leq i}}^{\scrL} \arrow[rr, "{\beta_{\ux_{\leq i}, \uw_i}}"] &  & B_{\uw_i}^{\scrL}.
    \end{tikzcd}
\]
($D0$, $i \in K(\ux, \scrL)$): Let $\uw_{i-1, s_i} = \rex_{s_i} (w_i)$. We can write $\uw_{i-1, s_i} = \uw_{i-1, s_i}' s_i$ for some reduced expression $\uw_{i-1, s_i}'$.
\[
    \begin{tikzcd}
        \phi_i : B_{\ux_{\leq i-1}}^{\scrL} \otimes_R C_{s_i} \arrow[rrr, "{\beta_{\ux_{\leq i-1}, \uw_{i-1, s_i}} \otimes \id}"] & & & {B_{\uw_{i-1, s_i}'}^{\scrL} \otimes_R C_{s_i} \otimes_R C_{s_i}} \arrow[r, "\id \otimes \mu^{s_i}"]  & {B_{\uw_{i-1, s_i}}^{\scrL}} \arrow[r, "{\beta_{\uw_{i-1, s_i}, \uw_i}}"] &  B_{\uw_i}^{\scrL}.
    \end{tikzcd}
\]
($D1$, $i \in K(\ux, \scrL)$): Let $\uw_{i-1, s_i} = \rex_{s_i} (w_i)$. Write $\uw_{i-1, s_i} = \uw_{i-1, s_i}' s_i$.
\[ 
    \begin{tikzcd}
        \phi_i : B_{\ux_{\leq i-1}}^{\scrL} \otimes_R C_{s_i} \arrow[rrr, "{\beta_{\ux_{\leq i-1}, \uw_{i-1, s_i}} \otimes \id}"] &  & & {B_{\uw_{i-1, s_i}'}^{\scrL} \otimes_R C_{s_i} \otimes_R C_{s_i}} \arrow[r, "\id \otimes \cap^{s_i}"]  & {B_{\uw_{i-1, s_i}'}^{\scrL}} \arrow[r, "{\beta_{\uw_{i-1, s_i}', \uw_i}}"] &  B_{\uw_i}^{\scrL}.
    \end{tikzcd}
\]
($D1$, $i \notin K(\ux, \scrL)$): Let $\uw_{i-1, s_i} = \rex_{s_i} (w_i)$. Write $\uw_{i-1, s_i} = \uw_{i-1, s_i}' s_i$.
\[
    \begin{tikzcd}
        \phi_i : B_{\ux_{\leq i-1}}^{\scrL} \otimes_R R_{s_i} \arrow[rrr, "{\beta_{\ux_{\leq i-1}, \uw_{i-1, s_i}} \otimes \id}"] &  &  & {B_{\uw_{i-1, s_i}'}^{\scrL} \otimes_R R_{s_i} \otimes_R R_{s_i}} \arrow[r, "\id \otimes \cap^{s_i}"] & {B_{\uw_{i-1, s_i}'}^{\scrL}} \arrow[r, "{\beta_{\uw_{i-1, s_i}', \uw_i}}"] &  B_{\uw_i}^{\scrL}.
    \end{tikzcd}
\]

\subsection{Linear Independence}

For each $s \in S$, fix $\delta_s \in \fr{h}^*$ such that $\langle \alpha_s^{\vee}, \delta_s \rangle = 1$ as in \S\ref{subsec:abe_mors}.
Let $\ux = (s_1, \ldots, s_k) \in \Exp (W)$, $w\in W$, and $\ue \in \Subexp^{\scrL} (\ux, w)$.
Define $b_{\ux, \ue} \in B_{\ux}^{\scrL}$ by $b_{\ux, \ue} = b_1 \otimes \ldots \otimes b_k$ where $b_i \in B_{s_i}^{\scrL s_{1} \ldots s_{i-1}}$ is defined by
\[b_i = \begin{cases} 1 \otimes 1 \in C_{s_i} & \text{if } \lab_i (\ux, \ue) = U, i \in K(\ux, \scrL), \\ \delta_{s_i} \otimes 1 \in C_{s_i} & \text{if } \lab_i (\ux, \ue) = D, i \in K(\ux, \scrL), \\ 1 \in R_{s_i} & \text{if } i \notin K(\ux, \scrL).\end{cases}\]
Let $\LL_{\ux, w}^{\scrL} \coloneq \{ \LL_{\ux, \ue}^{\scrL} \colon \ux^{\ue} = w \}$.

\begin{lemma}\label{lem:ll_are_lin_indep}
    Let $w \in W$, $\ux \in \Exp (W)$, and $\ue, \uf \in \Subexp^{\scrL} (\ux, w)$. Write $\uw = \rex (w)$.
    Then
    \[\LL_{\ux, \ue}^{\scrL} (b_{\ux, \uf}) = \begin{cases} u_{\uw} & \text{if } \uf = \ue, \\ 0 & \text{if } \uf < \ue. \end{cases}\]
    In particular, $\LL_{\ux, w}^{\scrL}$ is linearly independent over $R$.
\end{lemma}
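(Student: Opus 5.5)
We follow the strategy of \cite[Lemma 3.10]{Abe19} and argue by induction on $k = \ell(\ux)$, tracking the images of the partial tensors. For $1 \le i \le k$ put $\ue_{\le i}, \uf_{\le i}$ and let $w_i, y_i$ be the corresponding Bruhat-stroll elements. The key claim, proved inductively, is
\[\LL_i^{\scrL}\bigl((b_{\ux,\uf})_{\le i}\bigr) = \begin{cases} u_{\uw_i} & \text{if } \uf_{\le i} = \ue_{\le i},\\ 0 & \text{if } \uf_{\le i} < \ue_{\le i} \text{ at some } j \le i, \end{cases}\]
where $(b_{\ux,\uf})_{\le i} = b_1 \otimes \cdots \otimes b_i$. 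Here we note that $b_i$ depends only on $\lab_i(\ux,\uf)$ and on whether $i \in K(\ux,\scrL)$, and that for $j < i(\ue,\uf)$ the labels agree. We use throughout that every rex move $\beta$ sends $1$-tensors to $1$-tensors (Proposition \ref{prop:existence_of_rex_moves}). We also use that the $1$-tensor $u_{\uw_{i-1}}$, tensored with $b_i$, is the element to which $\phi_i$ is applied, after moving $R$-coefficients across tensor factors.

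First we treat the case $\uf_{\le i-1} = \ue_{\le i-1}$. Then $\lab_i$ agrees, since $w_{i-1}=y_{i-1}$. We check each decoration of $\phi_i$ applied to $u_{\uw_{i-1}} \otimes b_i$.
\begin{enumerate}
\item For $i \notin K$, $b_i = 1$, and $\phi_i$ is a composite of rex moves and $\cap^{s_i}$ on $R_{s_i}\otimes R_{s_i}$ (which sends $1\otimes 1 \mapsto 1$). So $1$-tensors go to $1$-tensors.
\item For $U0$ and $U1$ with $i\in K$, we have $b_i = 1\otimes 1$, and $\epsilon^{s_i}(1\otimes1)=1$. Again we obtain the $1$-tensor.
\item For $D0$ and $D1$ with $i \in K$, we have $b_i = \delta_{s_i}\otimes 1$. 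After the rex move the last two factors read $(1\otimes 1)\otimes(\delta_{s_i}\otimes1)$ in $C_{s_i}\otimes C_{s_i}$. Applying $\mu^{s_i}$ gives $\partial_{s_i}(\delta_{s_i})\cdot(1\otimes 1)=1\otimes1$, and applying $\cap^{s_i}$ gives $\partial_{s_i}(\delta_{s_i})=1$.
\end{enumerate}
Hence $\uf_{\le i}=\ue_{\le i}$ yields $u_{\uw_i}$.

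Next we treat the vanishing case, which we expect to be the main obstacle. Let $i = i(\ue,\uf)$, so $\lab_i(\ux,\ue) = D$, $\lab_i(\ux,\uf)=U$, and the earlier labels agree. We need the inductive hypothesis to handle the situation where the prefixes agree in labels but not in the subexpression itself. Following Abe, we strengthen the induction: whenever all labels up to $i-1$ agree, $\LL_{i-1}^{\scrL}\bigl((b_{\ux,\uf})_{\le i-1}\bigr)$ is the $1$-tensor if the prefixes coincide. At the first disagreement, monodromicity forces $i\in K(\ux,\scrL)$, as noted after the definition of $<$. Since $\ue$ and $\uf$ have the same evaluation $w$ while the stroll labels differ at $i$, the prefixes up to $i-1$ must already differ, or else they coincide and differ in $e_i$.

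In the coinciding-prefix case we get $w_{i-1}=y_{i-1}$ and $\lab_i$ equal, a contradiction. So the prefixes differ while all labels before $i$ agree. We handle this by the $R$-linear vanishing argument of \cite[Lemma 3.10]{Abe19}: $\phi_i$ for label $D$ applied to $x \otimes (1\otimes1)$ gives $\mu^{s_i}$ or $\cap^{s_i}$ evaluated on $(1\otimes1)\otimes(1\otimes1)$, and $\partial_{s_i}(1)=0$. Since $\LL_i$ kills the tensor, all later steps preserve $0$. The non-monodromic steps $j\notin K$ are units and do not interfere.

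Finally, linear independence follows from the triangularity. Suppose $\sum_{\ue} c_{\ue}\LL_{\ux,\ue}^{\scrL}=0$ with some $c_{\ue}\neq 0$, and take $\ue$ minimal for the total order $<$ on $\Subexp^{\scrL}(\ux,w)$ among those with $c_{\ue}\neq0$. We evaluate at $b_{\ux,\ue}$. Every $\ue'$ with $\ue' > \ue$ (equivalently $\ue < \ue'$) contributes $0$ by the vanishing case, and the remaining term gives $c_{\ue} u_{\uw}$. Since $B_{\uw}^{\scrL}$ is free over $R$ by Lemma \ref{lem:bott_samelson_bimodules_are_free}, $u_{\uw}$ is torsion-free, so $c_{\ue}=0$, a contradiction.
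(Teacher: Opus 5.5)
Your plan is the paper's: an Abe-style induction along $\ux$, a decoration-by-decoration check that $1$-tensors go to $1$-tensors, vanishing at the first label disagreement via $\partial_{s_i}(1)=0$, then triangularity. Your non-vanishing computations and your closing linear-independence paragraph are fine. The vanishing part, however, has two problems.

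\textbf{The induction hypothesis does not reach the vanishing step.} Your induction is indexed by coincidence of prefixes. You correctly show that at $i=i(\ue,\uf)$ the prefixes $\ue_{\le i-1}$ and $\uf_{\le i-1}$ must differ. So nothing you have proved identifies $x=\LL_{i-1}^{\scrL}\bigl((b_{\ux,\uf})_{\le i-1}\bigr)$, and your ``strengthened'' hypothesis still carries the clause ``if the prefixes coincide''.

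The computation with $(1\otimes1)\otimes(1\otimes1)$ needs $x$ to be $u_{\uw_{i-1}}$, or a left $R$-multiple of it. For $x=u_{\uw_{i-1}}\cdot g$, the last factor after the rex move is $1\otimes g$, and $\mu^{s_i}$ or $\cap^{s_i}$ then produces $\partial_{s_i}(g)$, which need not vanish.

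The missing step is the remark you made but never used: $b_j$ depends only on $\lab_j$ and on whether $j\in K(\ux,\scrL)$. Hence equal labels before $i$ give $(b_{\ux,\uf})_{\le i-1}=(b_{\ux,\ue})_{\le i-1}$, and so $x=u_{\uw_{i-1}}$ by your first case applied to $\ue$ itself. This is why the paper phrases its inductive claim as ``labels agree up to $j$ $\Rightarrow$ $\LL_j^{\scrL}(b_{\ux_{\le j},\uf_{\le j}})=u_{\uw_j}$''.

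\textbf{The step ``monodromicity forces $i\in K(\ux,\scrL)$'' is false.} You take it from the remark after the definition of $<$, and the paper's proof uses it in the same place. Take $W=\langle s,t\rangle$ of type $A_2$, $W'=\{e,sts\}$, $\fr{o}=W'\backslash W$, $\scrL=W'$ and $\ux=(s,t,s,s,t)$, so that $K(\ux,\scrL)=\{2,5\}$.
\begin{itemize}
\item The monodromic subexpressions $\ue=(1,0,1,1,0)$ and $\uf=(1,1,1,1,1)$ both evaluate to $s$.
\item Their decorations are $U1,U0,D1,U1,U0$ and $U1,U1,U1,D1,D1$ respectively.
\item The first label disagreement is at $i=3\notin K$, so $\uf<\ue$.
\item At that step $b_3=1\in R_s$ and $\cap^s(1\otimes1)=1$, so nothing dies.
\end{itemize}
In fact all rex moves here are identities, since the stroll of $\ue$ only visits $e$ and $s$. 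A direct computation gives
\[
\LL_{\ux,\ue}^{\scrL}(b_{\ux,\uf})=s(\delta_t)\,u_{(s)}\neq0,
\]
where the factor comes from $\epsilon^t(\delta_t\otimes1)$ at step $5$. So the vanishing clause is false for the label-lexicographic order as defined, and no argument can prove it.

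\textbf{The repair} is to use the lexicographic order on the labels at positions in $K(\ux,\scrL)$ only.
\begin{itemize}
\item This order is still total on $\Subexp^{\scrL}(\ux,w)$. Reading backwards from $w$, $e_i=1$ is forced for $i\notin K$, and for $i\in K$ the pair $(w_i,\lab_i)$ determines $e_i$.
\item Agreement of these labels before $i$ still forces equal truncations of $b$, because $b_j=1$ for $j\notin K$.
\item At the first disagreement, now $i\in K$, you get $b_i=1\otimes1$ and $\partial_{s_i}(1)=0$.
\end{itemize}
With this order, and the first gap closed, your argument goes through unchanged, including the final independence paragraph. In the example, the matrix $\bigl(\LL_{\ux,\ue'}^{\scrL}(b_{\ux,\uf'})\bigr)$ is unitriangular for this order.
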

\begin{proof}
    We claim that if $\lab_i (\ux, \ue) = \lab_i (\ux,\uf)$ for all $i \leq j$, then $\LL_j^{\scrL} (b_{\ux_{\leq j}, \uf_{\leq j}}) = u_{\uw_j}$ by induction on $j$.

    Assume $\dec_j (\ux, \ue) = U0$ and $j \in K(\ux, \scrL)$. In this case, $b_{\ux_{\leq j}, \uf_{\leq j}} = b_{\ux_{\leq j-1}, \uf_{\leq j-1}} \otimes (1 \otimes 1)$.
    Then we have
    \begin{align*}
        \LL_j^{\scrL} (b_{\ux_{\leq j}, \uf_{\leq j}}) &= \beta_{\uw_{j-1}, \uw_j} \left(\LL_{j-1}^{\scrL} (b_{\ux_{\leq j-1}, \uf_{\leq j-1}}) \otimes \epsilon^{s_j} (1 \otimes 1) \right) \\
        &= \beta_{\uw_{j-1}, \uw_j} (u_{\uw_{j-1}}) \\
        &= u_{\uw_j}.
    \end{align*}

    Assume $\dec_j (\ux, \ue) = U1$ and $j \in K(\ux, \scrL)$. In this case, $b_{\ux_{\leq j}, \uf_{\leq j}} = b_{\ux_{\leq j-1}, \uf_{\leq j-1}} \otimes (1 \otimes 1)$.
    Then we have
    \begin{align*}
        \LL_j^{\scrL} (b_{\ux_{\leq j}, \uf_{\leq j}}) &= \beta_{\uw_{j-1}s_j, \uw_j} \left(\LL_{j-1}^{\scrL} (b_{\ux_{\leq j-1}, \uf_{\leq j-1}}) \otimes 1 \otimes 1 \right) \\
        &= \beta_{\uw_{j-1}s_j, \uw_j} (u_{\uw_{j-1}} \otimes 1 \otimes 1) \\
        &= u_{\uw_j}.
    \end{align*}

    Assume $\dec_j (\ux, \ue) = U1$ and $j \notin K(\ux, \scrL)$. In this case, $b_{\ux_{\leq j}, \uf_{\leq j}} = b_{\ux_{\leq j-1}, \uf_{\leq j-1}} \otimes 1$.
    Then we have
    \begin{align*}
        \LL_j^{\scrL} (b_{\ux_{\leq j}, \uf_{\leq j}}) &= \beta_{\uw_{j-1}s_j, \uw_j} \left(\LL_{j-1}^{\scrL} (b_{\ux_{\leq j-1}, \uf_{\leq j-1}}) \otimes 1 \right) \\
        &= \beta_{\uw_{j-1}s_j, \uw_j} (u_{\uw_{j-1}} \otimes 1 ) \\
        &= u_{\uw_j}.
    \end{align*}

    Assume $\dec_j (\ux, \ue) = D0$ and $j \in K(\ux, \scrL)$. In this case, $b_{\ux_{\leq j}, \uf_{\leq j}} = b_{\ux_{\leq j-1}, \uf_{\leq j-1}} \otimes (\delta_{s_j} \otimes 1)$.
    Then we have
    \begin{align*}
        \LL_j^{\scrL} (b_{\ux_{\leq j}, \uf_{\leq j}}) &= \beta_{\uw_{j-1,s_j}, \uw_j} \left( (\id \otimes m^{s_j}) \left( \beta_{\uw_{j-1}, \uw_{j-1,s_j}} \left( \LL_{j-1}^{\scrL} (b_{\ux_{\leq j-1}, \uf_{\leq j-1}}) \right) \otimes (\delta_{s_j} \otimes 1) \right) \right) \\
        &= \beta_{\uw_{j-1,s_j}, \uw_j} \left( (\id \otimes m^{s_j}) \left( u_{\uw_{j-1,s_j}} \otimes \delta_{s_j} \otimes 1 \right) \right) \\
        &= \beta_{\uw_{j-1,s_j}, \uw_j} \left( u_{\uw_{j-1,s_j}}\right) \\
        &= u_{\uw_j}.
    \end{align*}

    Assume $\dec_j (\ux, \ue) = D1$ and $j \in K(\ux, \scrL)$. In this case, $b_{\ux_{\leq j}, \uf_{\leq j}} = b_{\ux_{\leq j-1}, \uf_{\leq j-1}} \otimes (\delta_{s_j} \otimes 1)$.
    Then we have
    \begin{align*}
        \LL_j^{\scrL} (b_{\ux_{\leq j}, \uf_{\leq j}}) &= \beta_{\uw_{j-1,s_j}, \uw_j} \left( (\id \otimes \cap^{s_j}) \left( \beta_{\uw_{j-1}, \uw_{j-1,s_j}} \left( \LL_{j-1}^{\scrL} (b_{\ux_{\leq j-1}, \uf_{\leq j-1}}) \right) \otimes (\delta_{s_j} \otimes 1) \right) \right) \\
        &= \beta_{\uw_{j-1,s_j}', \uw_j} \left( (\id \otimes \cap^{s_j}) \left( u_{\uw_{j-1,s_j}} \otimes \delta_{s_j} \otimes 1 \right) \right) \\
        &= \beta_{\uw_{j-1,s_j}', \uw_j} \left( u_{\uw_{j-1,s_j}'}\right) \\
        &= u_{\uw_j}.
    \end{align*}

    Assume $\dec_j (\ux, \ue) = D1$ and $j \notin K(\ux, \scrL)$. In this case, $b_{\ux_{\leq j}, \uf_{\leq j}} = b_{\ux_{\leq j-1}, \uf_{\leq j-1}} \otimes 1$.
    Then we have
    \begin{align*}
        \LL_j^{\scrL} (b_{\ux_{\leq j}, \uf_{\leq j}}) &= \beta_{\uw_{j-1,s_j}, \uw_j} \left( (\id \otimes \cap^{s_j}) \left( \beta_{\uw_{j-1}, \uw_{j-1,s_j}} \left( \LL_{j-1}^{\scrL} (b_{\ux_{\leq j-1}, \uf_{\leq j-1}}) \right) \otimes 1 \right) \right) \\
        &= \beta_{\uw_{j-1,s_j}', \uw_j} \left( (\id \otimes \cap^{s_j}) \left( u_{\uw_{j-1,s_j}} \otimes 1 \right) \right) \\
        &= \beta_{\uw_{j-1,s_j}', \uw_j} \left( u_{\uw_{j-1,s_j}'}\right) \\
        &= u_{\uw_j}.
    \end{align*}
    By induction, we can then conclude that $\LL_{\ux, \ue}^{\scrL} (b_{\ux, \ue}) = u_{\uw}$.

    Assume that $\uf < \ue$. Take $j$ such that for all $i < j$, $\lab_i (\ux, \ue) = \lab_i (\ux, \uf)$ and $\lab_j (\ux, \ue) = D$ and $\lab_j (\ux, \uf) = U$.
    Since $\ue$ and $\uf$ are $\scrL$-monodromic, we must have that $j \in K(\ux, \scrL)$. As a result, $b_{\uw_{\leq j}, \uf_{\leq j}} = b_{\uw_{\leq j-1}, \uf_{\leq j -1 }} \otimes (1 \otimes 1)$.

    If $e_j = 0$, we have
    \begin{align*}
        \LL_j^{\scrL} (b_{\ux_{\leq j}, \uf_{\leq j}}) &= \beta_{\uw_{j-1,s_j}, \uw_j} \left( (\id \otimes m^{s_j}) \left( \beta_{\uw_{j-1}, \uw_{j-1,s_j}} \left( \LL_{j-1}^{\scrL} (b_{\ux_{\leq j-1}, \uf_{\leq j-1}}) \right) \otimes (1 \otimes 1) \right) \right) \\
        &= \beta_{\uw_{j-1,s_j}, \uw_j} \left( (\id \otimes m^{s_j}) \left( u_{\uw_{j-1,s_j}} \otimes 1 \otimes 1 \right) \right) \\
        &= \beta_{\uw_{j-1,s_j}, \uw_j} \left( 0\right) \\
        &= 0.
    \end{align*}

    If $e_j = 1$, we have
    \begin{align*}
        \LL_j^{\scrL} (b_{\ux_{\leq j}, \uf_{\leq j}}) &= \beta_{\uw_{j-1,s_j}, \uw_j} \left( (\id \otimes \cap^{s_j}) \left( \beta_{\uw_{j-1}, \uw_{j-1,s_j}} \left( \LL_{j-1}^{\scrL} (b_{\ux_{\leq j-1}, \uf_{\leq j-1}}) \right) \otimes (1 \otimes 1) \right) \right) \\
        &= \beta_{\uw_{j-1,s_j}', \uw_j} \left( (\id \otimes \cap^{s_j}) \left( u_{\uw_{j-1,s_j}} \otimes 1 \otimes 1 \right) \right) \\
        &= \beta_{\uw_{j-1,s_j}', \uw_j} \left( 0\right) \\
        &= 0.
    \end{align*}

    Therefore, $\LL_{\ux, \ue}^{\scrL} (b_{\ux, \uf}) = 0$ if $\uf < \ue$.
\end{proof}

\subsection{Support Theory}\label{subsec:support_of_abe}

Let $\ux \in \Exp (W)$, $w \in W$, $\ue \in \Subexp^{\scrL} (\ux, w)$, and $\uw = \rex (w)$. 
Recall the map $\pi_{\ux}^w : B_{\ux}^{\scrL} \to B_{\ux}^{w, \scrL} \subseteq B_{\ux, Q}^{w, \scrL}$ defined in \S\ref{subsec:support_of_bims}.
Define $b_{\ux, \ue}^w \coloneq \pi_{\ux}^w (b_{\ux, \ue}) \in  B_{\ux}^{w, \scrL}$. 

Write $\uw = (s_1, \ldots, s_k)$. We define a degree $\ell_{\scrL} (w)$ morphism $\varphi_{\uw} : B_{\uw}^{\scrL} \to R_w$ in $\Cmon{\scrL}{\scrL w} (\fr{h}, W, \fr{o})$ inductively on $k$.
Let $w_i \in W$ be the evaluation of $\uw_{\leq i}$ for each $1 \leq i \leq k$.
First, we define $\varphi_{\emptyset} : R \to R$ as the identity map. 
Assume that $\varphi_{\uw_{\leq i - 1}} : B_{\uw_{\leq i-1}}^{\scrL} \to R_{w_{i-1}}$ is already defined.
If $\scrL w_{i-1} s_i = \scrL w_{i-1}$, we define $\varphi_{\uw_{\leq i}}$ by the composition 
\[\begin{tikzcd}
    \varphi_{\uw_{\leq i}} : B_{\uw_{\leq i-1}}^{\scrL} \otimes_R C_{s_i} \arrow[rr, "\varphi_{\uw_{\leq i-1}} \otimes a_{s_i}"] &  &  R_{w_{i-1}} \otimes_R R_{s_i} \arrow[r, "\sim"] &  R_{w_i} ,
    \end{tikzcd}\]
where $a_{s_i} : C_{s_i} = R \otimes_{R^{s_i}} R \to R_{s_i}$ is defined by $a_{s_i} (f \otimes g) = f s(g)$. 
If $\scrL w_{i-1} s_i \neq \scrL w_{i-1}$, we define  $\varphi_{\uw_{\leq i}}$ by the composition 
\[\begin{tikzcd}
    \varphi_{\uw_{\leq i}} : B_{\uw_{\leq i-1}}^{\scrL} \otimes_R R_{s_i} \arrow[rr, "\varphi_{\uw_{\leq i-1}} \otimes \id"] &  &  R_{w_{i-1}} \otimes_R R_{s_i} \arrow[r, "\sim"] &  R_{w_i} .
    \end{tikzcd}\]

\begin{proposition}\label{prop:basis_for_S_uw_x}
    Let $\ux$, $\uw$, and $w$ be as above.
    \begin{enumerate}
        \item The left $R$-module $B_{\ux}^{w, \scrL}$ has a basis $\{ b_{\ux, \ue}^w\}_{\ue \in \Subexp^{\scrL} (\ux, w)}$.
        \item The left $R$-module $B_{\ux}^{w, \scrL}$ is graded free with graded rank 
        \[\grk_R (B_{\ux}^{w, \scrL}) = \sum_{\ue \in \Subexp^{\scrL} (\ux, w)} v^{d(\ux, \ue) + \ell_{\scrL} (w)} = v^{\ell_{\scrL} (w) } p_{\ux}^{w, \scrL} (v). \]
        \item The set $\{ \varphi_{\uw} \circ \LL_{\ux,\ue}^{\scrL} \}_{\ue \in \Subexp^{\scrL} (\ux, w)}$ is a left $R$-basis of $\Hom_{\Cmon{\scrL}{\scrL w}} (B_{\ux}^{\scrL}, R_w)$.
    \end{enumerate}
\end{proposition}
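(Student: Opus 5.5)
The plan follows Abe's argument \cite[\S3]{Abe19}: first get linear independence of the candidate basis elements, then obtain spanning by comparing graded ranks over $Q$, and finally dualize to get (3).

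\emph{Linear independence in (1).} Lemma \ref{lem:ll_are_lin_indep} gives $\LL_{\ux,\ue}^{\scrL}(b_{\ux,\uf}) = u_{\uw}$ or $0$ according to the label-lexicographic order, so the matrix of values is unitriangular. To use it for $B_{\ux}^{w,\scrL}$, I first show that the composite $\varphi_{\uw}\circ \LL_{\ux,\ue}^{\scrL} : B_{\ux}^{\scrL} \to R_w$ factors through $\pi_{\ux}^w$. This holds because $R_w$ has support $\{w\}$, via Lemma \ref{lem:std_facts_about_inv_and_coinvs}(2). One then checks that $\varphi_{\uw}(u_{\uw})$ is a unit (indeed $1$) in $R_w$, which is immediate from the inductive definition with $a_s(1\otimes 1)=1$. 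Hence the $R$-valued matrix $\bigl(\varphi_{\uw}\LL_{\ux,\ue}^{\scrL}(b^w_{\ux,\uf})\bigr)_{\ue,\uf}$ is unitriangular. This gives linear independence of both $\{b^w_{\ux,\ue}\}$ and $\{\varphi_{\uw}\circ\LL^{\scrL}_{\ux,\ue}\}$.

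\emph{Spanning in (1) and the rank formula (2).} This is the main obstacle. I would argue by induction on $\ell(\ux)$, writing $\ux=\uy s$ and using $B_{\ux}^{\scrL} = B_{\uy}^{\scrL}\otimes_R B_s^{\scrL\uy}$.
\begin{itemize}
\item If $\scrL\uy s\neq \scrL\uy$, then $B_{\ux}^{w,\scrL}\cong B_{\uy}^{ws,\scrL}\otimes R_s$. Monodromic subexpressions correspond bijectively, and $b_{\ux,\ue}=b_{\uy,\ue'}\otimes 1$, so the induction step is immediate.
\item If $\scrL\uy s=\scrL\uy$, I follow Abe's computation: the image of $B_{\uy}^{\{w,ws\},\scrL}\otimes C_s$ in the $w$-component is generated by elements $m_w\otimes 1$ and $m_{ws}\otimes\delta_s$ (suitably combined). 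By induction these are spanned by $b^w_{\uy,\ue'}\otimes(1\otimes1)$ and $b^{ws}_{\uy,\ue'}\otimes(\delta_s\otimes 1)$, where the choice between $1\otimes1$ and $\delta_s\otimes 1$ is governed by the label $U/D$. This reproduces the definition of $b_{\ux,\ue}$.
\end{itemize}
I expect the delicate point to be the rank-one computation in $C_s$ localized at $\{w,ws\}$, namely identifying the image of $B^{\{x,xs\}}\otimes C_s\to B^{x}$. Since $s\in W_{\scrL\uy}^\circ$, this is exactly Abe's non-monodromic lemma, and I would cite its argument \cite[Lemma 3.13]{Abe19}. Once spanning is known, the graded rank is read off from the degrees of the $b_{\ux,\ue}$, which are $d(\ux,\ue)+\ell_{\scrL}(w)$ after accounting for the shift in $\varphi_{\uw}$. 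Lemma \ref{lem:defect_formula_for_pwx} then gives $v^{\ell_{\scrL}(w)}p_{\ux}^{w,\scrL}(v)$. As a consistency check, Lemma \ref{lem:dim_of_Bwx_over_Q} confirms that the rank over $Q$ matches.

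\emph{Part (3).} Using Lemma \ref{lem:std_facts_about_inv_and_coinvs}(2), $\Hom(B_{\ux}^{\scrL},R_w)\cong\Hom(B_{\ux}^{w,\scrL},R_w)$. By (1) the module $B_{\ux}^{w,\scrL}$ is free, so this Hom space is free with rank equal to $\#\Subexp^{\scrL}(\ux,w)$; this is the identification $D(M)_w$ of Lemma \ref{lem:dual_and_invs}, twisted by $R_w$. The maps $\varphi_{\uw}\circ\LL^{\scrL}_{\ux,\ue}$ pair with the basis $b^w_{\ux,\uf}$ via a unitriangular matrix with unit diagonal, so they form the dual basis up to a triangular change of basis. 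Hence they form an $R$-basis.
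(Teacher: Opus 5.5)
Your linear-independence argument and your handling of (2) and (3) agree with the paper. The spanning step in (1), however, has a genuine gap.

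When $\scrL\uy s=\scrL\uy$, the $w$-component of $m\otimes c\in B_{\uy}^{\scrL}\otimes_R C_s$ is $m_w\otimes c_e+m_{ws}\otimes c_s$. Here $c_e,c_s$ are the components of $c$ in $(C_s)_Q^e$ and $(C_s)_Q^s$; for $c=1\otimes 1$ both are nonzero and carry a factor $\alpha_s^{-1}$. So $B_{\ux}^{w,\scrL}$ is the image of $B_{\uy}^{\{w,ws\},\scrL}\otimes_R C_s$. This joint module is in general a proper submodule of $B_{\uy}^{w,\scrL}\oplus B_{\uy}^{ws,\scrL}$. Already for $\uy=(s)$ with $\scrL s=\scrL$ and $w=e$, it is a copy of $C_s$ sitting inside $B_{(s)}^{e,\scrL}\oplus B_{(s)}^{s,\scrL}\cong R^2$ with determinant $\pm\alpha_s$.

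An inductive hypothesis that gives bases of $B_{\uy}^{w,\scrL}$ and $B_{\uy}^{ws,\scrL}$ separately does not determine this joint module. So the phrase ``generated by $m_w\otimes 1$ and $m_{ws}\otimes\delta_s$ (suitably combined)'' skips exactly the content of the step. Your citation does not supply it either. Item 3.13 of \cite{Abe19} is the graded-freeness statement for $B_{\ux,w}$ (Corollary \ref{cor:bott_samelson_graded_free} here), which in Abe's paper comes after the non-monodromic analogue of the proposition you are proving. In addition, $B_{\uy}^{\scrL}$ is monodromic, so a non-monodromic lemma would not apply verbatim.

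The missing idea is already in your first step. Write $\psi_{\ue}:B_{\ux}^{w,\scrL}\to R_w$ for the map induced by $\varphi_{\uw}\circ\LL_{\ux,\ue}^{\scrL}$.
\begin{itemize}
\item The matrix $(\psi_{\ue}(b^w_{\ux,\uf}))_{\ue,\uf}$ is unitriangular over $R$. Backwards induction along the label-lexicographic order then gives $\psi'_{\ue}\in\psi_{\ue}+\sum_{\ue'>\ue}R\,\psi_{\ue'}$ with $\psi'_{\ue}(b^w_{\ux,\uf})=\delta_{\ue,\uf}$.
\item The map $m\mapsto\sum_{\ue}\psi'_{\ue}(m)\,b^w_{\ux,\ue}$ retracts $B_{\ux}^{w,\scrL}$ onto the span. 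Hence $B_{\ux}^{w,\scrL}=\bigoplus_{\ue}R\,b^w_{\ux,\ue}\oplus N$ for some $N$.
\item The $Q$-rank count you treated as a consistency check now finishes the argument. By Lemmas \ref{lem:dim_of_Bwx_over_Q} and \ref{lem:defect_formula_for_pwx}, $\dim_Q B_{\ux,Q}^{w,\scrL}=p_{\ux}^{w,\scrL}(1)=\#\Subexp^{\scrL}(\ux,w)$. So $N\otimes_R Q=0$, and since $N\subseteq B_{\ux,Q}^{w,\scrL}$ is torsion-free, $N=0$.
\end{itemize}
Without this splitting, equality of $Q$-ranks would only show that the cokernel of the span is torsion. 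So the plan in your opening sentence also needs the retraction. This is the paper's argument, and it requires no induction on $\ell(\ux)$.
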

\begin{proof}
    By Lemma \ref{lem:ll_are_lin_indep}, we have
    \[\varphi_{\uw} \left( \LL_{\ux, \ue}^{\scrL} (b_{\ux, \uf}) \right) = \begin{cases} 1 & \text{if } \uf = \ue, \\ 0 & \text{if } \uf < \ue. \end{cases}\]
    By Lemma \ref{lem:std_facts_about_inv_and_coinvs}, we get a homomorphism of $R$-bimodules $\psi_{\ue} : B_{\ux}^{w, \scrL} \to R_x$ corresponding to $\varphi_{\ux} \circ \LL_{\uw, \ue}^{\scrL}$.
    We then have
    \[\psi_{\ue} (b_{\uw, \uf}^x) = \begin{cases} 1 & \text{if } \uf = \ue, \\ 0 & \text{if } \uf < \ue. \end{cases}\]

    Via backwards induction on $\ue$, we construct a map $\psi_{\ue}' : B_{\ux}^{w, \scrL} \to R_x$ such that $\psi_{\ue}' \in \psi_{\ue} + \sum_{\ue' > \ue} R\psi_{\ue'}$ and $\psi_{\ue}' (b_{\ux,\uf}^x) = \delta_{\ue, \uf}$ for all $\uf$.
    If $\ue$ is the maximum subexpression, then $\psi_{\ue}' = \psi_{\ue}$.
    Assume that $\psi_{\ue'}'$ has been defined for all $\ue' > \ue$. We define 
    \[\psi_{\ue}' = \psi_{\ue} - \sum_{\ue' > \ue} \psi_{\ue} (b_{\ux, \ue'}^x) \psi_{\ue'}'.\]
    It can be easily checked that $\psi_{\ue}'$ satisfies the desired constraints.
    
    We can then construct a map $\psi : B_{\ux}^{w, \scrL} \to \bigoplus_{\ux^{\ue} = w} R b_{\ux, \ue}^w$ by $\psi (m) = \sum_{\ux^{\ue} = w} \psi_{\ue}' (m) b_{\ux,\ue}^w$.
    By construction, $\psi$ is a section of the inclusion map $\bigoplus_{\ux^{\ue} = w} R b_{\ux, \ue}^w \hookrightarrow B_{\ux}^{w, \scrL}$.
    Therefore, we can find some $N \in \Cmon{\scrL}{\scrL'}$ such that $B_{\ux}^{w, \scrL} = \bigoplus_{\ux^{\ue} = w} R b_{\ux, \ue}^w \oplus N$.
    We then have an isomorphism $B_{\ux}^{w, \scrL} \otimes_R Q = \bigoplus_{\ux^{\ue} = w} Q b_{\ux, \ue}^w \oplus (N \otimes_R Q)$.
    On the one hand, by Lemma \ref{lem:dim_of_Bwx_over_Q}, $\dim_Q (B_{\ux}^{w, \scrL} \otimes_R Q) = p_{\ux}^{w, \scrL} (1)$.
    On the other hand, by Lemma \ref{lem:defect_formula_for_pwx}, $\dim_Q \left( \bigoplus_{\ux^{\ue} = w} Q b_{\ux, \ue}^w \right) = p_{\ux}^{w, \scrL} (1)$.
    As a result, $N \otimes_R Q = 0$. However, since $N \subset B_{\ux}^{w, \scrL} \subset B_{\ux, Q}^{w, \scrL}$, we know that $N$ has no $Q$-torsion. Therefore, $N = 0$.

    Statement (2) follows immediately from Lemma \ref{lem:defect_formula_for_pwx}. 

    Finally, we will prove (3). By construction $\{\psi_{\ue}'\}$ is a basis of $\Hom_{\mod{R}} (B_{\ux}^{w, \scrL}, R)$ which is dual to $\{b_{\uw, \ue}\}$.
    As a result, $\{\psi_{\ue}\}$ is also a basis. Under the isomorphism from Lemma \ref{lem:std_facts_about_inv_and_coinvs} (2),
    \[\Hom_{\mod{R}} (B_{\ux}^{w, \scrL} , R) \cong \Hom_{\Cmon{\scrL}{\scrL w}} (B_{\ux}^{w, \scrL} , R_w) \cong \Hom_{\Cmon{\scrL}{\scrL w}} (B_{\ux}^{\scrL}, R_w ),\]
    the map $\psi_{\ue}$ corresponds to $\varphi_{\uw} \circ \LL_{\ux, \ue}^{\scrL}$.  
\end{proof}

\begin{corollary}[{\cite[Corollary 3.13]{Abe19}}]\label{cor:bott_samelson_graded_free}
    Let $\ux \in \Exp (W)$ and $w \in W$.
    The left $R$-module $B_{\ux, w}^{\scrL}$ is graded free and $\grk (B_{\ux, w}^{\scrL}) = v^{-\ell_{\scrL} (w)} p_{\ux}^{w, \scrL} (v^{-1})$.
\end{corollary}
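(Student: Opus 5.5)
The plan is to deduce the corollary from Proposition \ref{prop:basis_for_S_uw_x} by duality, as in Abe's proof of \cite[Corollary 3.13]{Abe19}. The key is Lemma \ref{lem:dual_and_invs}: the duality $D$ preserves $\Amon{\scrL}{\scrL'}^{\BS}$, $D^2 \cong \id$ on Bott--Samelson bimodules, and $D(M)_w$ is graded free with $\grk(D(M)_w)(v) = \grk(M^w)(v^{-1})$.

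First I would identify $D(B_{\ux}^{\scrL})$ with a Bott--Samelson bimodule. Lemma \ref{lem:bott_samelson_bimodules_are_free} shows $B_{\ux}^{\scrL}$ is graded free of finite rank as a left $R$-module, so $D$ is well behaved. I would check on generators that $D(C_s) \cong C_s$ and $D(R_s) \cong R_s$ as objects of $\Cmon{}{}$, i.e.\ compatibly with the localization data. For a tensor product one needs $D(M \otimes_R N) \cong D(M) \otimes_R D(N)$ for modules free as one-sided modules. In fact it is simpler to avoid this and apply Lemma \ref{lem:dual_and_invs} directly with $M = D(B_{\ux}^{\scrL})$, which lies in $\Amon{}{}^{\BS}$ by part (4). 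Then
\[ B_{\ux,w}^{\scrL} \cong D(D(B_{\ux}^{\scrL}))_w \cong D\big(D(B_{\ux}^{\scrL})^w\big), \]
using part (1). The module $D(B_{\ux}^{\scrL})^w$ needs to be computed.

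The cleaner route is to apply part (2) with $M = B_{\ux}^{\scrL}$ itself. This gives that $D(B_{\ux}^{\scrL})_w$ is graded free with graded rank $\grk(B_{\ux}^{w,\scrL})(v^{-1}) = v^{-\ell_{\scrL}(w)} p_{\ux}^{w,\scrL}(v^{-1})$, by Proposition \ref{prop:basis_for_S_uw_x}(2). So it remains to show $D(B_{\ux}^{\scrL}) \cong B_{\ux}^{\scrL}$ in $\Cmon{\scrL}{\scrL\ux}$, compatibly with the $W$-graded localization, so that $(-)_w$ is preserved. I would prove self-duality by induction on $\ell(\ux)$. Write $\ux = \uy s$. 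Using Lemma \ref{lem:BS_tensor_adjunction}, $D(M \otimes_R B_s) = \Hom_{R}(M\otimes_R B_s, R) \cong \Hom_R(M, \Hom_R(B_s,R)) \otimes$-type identifications. Concretely, for $B_s$ free as a left module one has $D(M \otimes_R B_s) \cong D(M) \otimes_R D(B_s)$. The base cases are then $D(R_s) \cong R_s$, given by the pairing $f\otimes g \mapsto f s(g)$, and $D(C_s) \cong C_s$, given by Abe's pairing via $\partial_s$; here the shift $(1)$ makes the degrees symmetric. I would also verify that the isomorphisms respect the summands $(\cdot)_Q^x$.

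The main obstacle I expect is this compatibility of the self-duality isomorphisms with the localization data, especially for $C_s$, where the $e$- and $s$-components are exchanged or preserved under the pairing. My first attempt would be to check directly that the pairing sends the line $(\delta_s\otimes 1 - 1\otimes s(\delta_s))Q$ to the dual of the corresponding component, mimicking \cite[\S2.6]{Abe19}. Alternatively, I would cite the generalized \cite[Proposition 2.11]{Masaharu} already invoked for Lemma \ref{lem:dual_and_invs}(4). Once $D(B_{\ux}^{\scrL}) \cong B_{\ux}^{\scrL}$ is established, the formula follows immediately.
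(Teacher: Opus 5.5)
Your proposal is correct and is essentially the argument behind the cited \cite[Corollary 3.13]{Abe19}, which the paper invokes without reproof: combine the localization-compatible self-duality $D(B_{\ux}^{\scrL}) \cong B_{\ux}^{\scrL}$ in $\Cmon{\scrL}{\scrL \ux}$ with Lemma \ref{lem:dual_and_invs}(2) for $M = B_{\ux}^{\scrL}$ and the graded rank of $B_{\ux}^{w,\scrL}$ from Proposition \ref{prop:basis_for_S_uw_x}(2). You do not need to reprove the self-duality you flag as the main obstacle: the paper takes it from Lemma \ref{lem:dual_and_invs}(4) (via \cite[Proposition 2.11]{Masaharu}), and it uses the same fact when defining $\overline{\LL}_{\ux,\ue}^{\scrL}$.
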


We can regard $W$ as a topological space with the order topology for the Bruhat order. 
In particular, a subset $I \subseteq W$ is \emph{closed} if for all $x \in I$, $y \in W$, then $y \leq x$ implies that $y \in I$.

\begin{lemma}[{\cite[Lemma 3.16]{Abe19}}]
    Let $I \subseteq W$ be a closed finite subset and $w \in I$ a maximal element under the Bruhat order.
    Then there exists an enumeration $w_1, w_2, \ldots, w_{\# I}$ of the elements in $I$ such that $\{ w_1, \ldots, w_i\}$ is closed for all $i$ and $w_{\# I} = w$. 
\end{lemma}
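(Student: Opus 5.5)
We build the enumeration from the top down, by induction on $\# I$. The statement is purely about the Bruhat order on $W$, so nothing monodromic enters. If $\# I = 1$, then $I = \{w\}$ and $w$ is minimal in $W$, so $w = e$; the enumeration $w_1 = w$ works.

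For $\# I > 1$, set $w_{\# I} \coloneq w$ and consider $I' \coloneq I \setminus \{w\}$.

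First, $I'$ is closed. Take $x \in I'$ and $y \leq x$. Then $y \in I$, since $I$ is closed. If $y = w$, we would get $w \leq x$ with $x \neq w$, contradicting the maximality of $w$ in $I$. Hence $y \in I'$.

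Second, we need a maximal element of $I'$ to feed into the induction. Since $I'$ is finite and nonempty, it has a Bruhat-maximal element $w'$. The inductive hypothesis applied to $(I', w')$ then gives an enumeration $w_1, \ldots, w_{\# I - 1}$ of $I'$ with every initial segment $\{w_1, \ldots, w_i\}$ closed. Appending $w_{\# I} = w$ yields an enumeration of $I$. Each initial segment is either an initial segment of the enumeration of $I'$, hence closed, or all of $I$, which is closed by hypothesis.

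The only step that needs care is checking that removing the maximal element preserves closedness, and that is immediate from maximality. There is no real obstacle. Finiteness of $I$ is used only to guarantee that $I'$ has a maximal element; the closedness of $I$ plays no role beyond making the final segment closed.
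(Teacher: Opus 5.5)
Your proof is correct and is the standard argument: the paper gives no proof of its own and simply cites \cite[Lemma 3.16]{Abe19}. The lemma follows by exactly your induction: remove the maximal element $w$, check that $I \setminus \{w\}$ is still closed, and recurse on a maximal element of it. One small correction to your closing remark: the closedness of $I$ is also used when you show $I' = I \setminus \{w\}$ is closed (to get $y \in I$ from $y \leq x$), not only to make the final initial segment closed.
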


Let $\ux \in \Exp (W)$, $w \in W$, and $\ue \in \Subexp^{\scrL} (\ux, w)$. Since $D (B_{\ux}^{\scrL}) = B_{\ux}^{\scrL}$ (Lemma \ref{lem:dual_and_invs}), we can define 
$\overline{\LL}_{\ux, \ue}^{\scrL} \coloneq D (\LL_{\ux,\ue}^{\scrL} ) : B_{\uw}^{\scrL} \to B_{\ux}^{\scrL}$ which has degree $d(\ux, \ue)$.

Let $I \subseteq W$ and $w \in I$.
Note that $B_{\ux, I \setminus \{w\}}^{\scrL}$ is kernel of the composition $B_{\ux, I}^{\scrL} \to B_{\ux}^{\scrL} \to B_{\ux}^{w, \scrL}$.
As a result, $\pi_{\ux}^w$ factors through a map $B_{\ux, I}^{\scrL} \to B_{\ux, I}^{\scrL}/B_{\ux, I\setminus\{w\}}^{\scrL}$ which we will also denote by $\pi_{\ux}^w$.

\begin{proposition}\label{prop:intermediate_bases}
    Let $I$ be a closed subset and $w$ a maximal element in $I$.
    Let $\uw = \rex (w)$ and $\ux \in \Exp (W)$.
    Then $\{\pi_{\ux}^w (\overline{\LL}_{\ux, \ue}^{\scrL}) (u_{\uw})\}_{\ue \in \Subexp^{\scrL} (\ux, w)}$ is a left $R$-module basis of $B_{\ux, I}^{\scrL}/B_{\ux,I\setminus\{w\}}^{\scrL}$.
\end{proposition}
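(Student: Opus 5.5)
Following Abe's argument for \cite[Proposition 3.17]{Abe19}, I would combine the $D$-duality of Lemma \ref{lem:dual_and_invs} with the basis of Proposition \ref{prop:basis_for_S_uw_x}. There are three steps: identify the quotient $B_{\ux, I}^{\scrL}/B_{\ux,I\setminus\{w\}}^{\scrL}$ with a submodule of $B_{\ux}^{\scrL}$ supported at $w$; compute its graded rank; and exhibit the elements $\pi_{\ux}^w(\overline{\LL}_{\ux,\ue}^{\scrL})(u_{\uw})$ as a basis via a triangularity argument against the light leaves.

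\emph{Step 1.} Since $w$ is maximal in $I$, the element $\overline{\LL}_{\ux,\ue}^{\scrL}(u_{\uw})$ lies in $B^{\scrL}_{\ux,\{\leq w\}} \subseteq B^{\scrL}_{\ux,I}$. Indeed, by Lemma \ref{lem:std_facts_about_inv_and_coinvs}, a map out of $B_{\uw}^{\scrL}$ factors through $(B_{\ux}^{\scrL})_{\supp B_{\uw}^{\scrL}}$, and $\supp_W(B_{\uw}^{\scrL}) \subseteq \{y \le w\}$ because $\uw$ is reduced. Hence the statement makes sense. The map $\pi_{\ux}^w$ induces an injection of the quotient into $B_{\ux}^{w,\scrL}\subseteq B_{\ux,Q}^{w,\scrL}$. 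Using Lemma \ref{lem:dual_and_invs}(1)--(3) together with $D(B_{\ux}^{\scrL}) \cong B_{\ux}^{\scrL}$, I would identify this quotient with $D$ of the corresponding quotient for the dual filtration. Combining this with Corollary \ref{cor:bott_samelson_graded_free} and the enumeration lemma (\cite[Lemma 3.16]{Abe19}), an induction along a closed enumeration ending at $w$ shows that the quotient is graded free with graded rank $v^{-\ell_{\scrL}(w)}p_{\ux}^{w,\scrL}(v^{-1})$. Equivalently, it is $R$-dual to $B_{\ux}^{w,\scrL}$ with degrees reversed.

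\emph{Step 2.} This step carries the main content. For $\ue,\uf\in \Subexp^{\scrL}(\ux,w)$, I would pair $\pi^w_{\ux}(\overline{\LL}_{\ux,\ue}^{\scrL}(u_{\uw}))$ against the basis $\psi_{\uf}$ of $\Hom(B_{\ux}^{w,\scrL},R)$ obtained from $\varphi_{\uw}\circ \LL^{\scrL}_{\ux,\uf}$ in Proposition \ref{prop:basis_for_S_uw_x}(3). This amounts to computing the $w$-component of the composite $\LL_{\ux,\uf}^{\scrL}\circ\overline{\LL}_{\ux,\ue}^{\scrL}$ applied to $u_{\uw}$, followed by $\varphi_{\uw}$. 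By Lemma \ref{lem:ll_are_lin_indep}, the $b_{\ux,\ue}$ are upper-triangular against the light leaves with respect to $<$. I would argue, localizing at $w$ as Abe does, that this pairing matrix is triangular with diagonal entries that are units (up to the sign and scalar coming from $\varphi_{\uw}$). The diagonal entries are the ones I expect to be hardest. To control them I would compute $\pi^w$ of $\overline{\LL}$ step by step, following the decorations, and use that the non-monodromic factors $R_s$ only contribute the isomorphisms $\cup^s$ and $\cap^s$. The monodromic factors are handled exactly by Abe's computation with $\eta^s$, $\nu^s$ and the elements $\delta_s\otimes 1 - 1\otimes s(\delta_s)$.

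\emph{Step 3.} A family of elements in a graded free module whose pairing matrix against a basis of the $R$-dual is invertible is itself a basis, which gives the proposition. If the direct triangularity computation is too messy, my fallback is a rank argument. I would show that the elements are linearly independent via the pairing, and that they span after tensoring with $Q$. Then the rank count from Step 1 together with the degree matching (each $\overline{\LL}_{\ux,\ue}$ has degree $d(\ux,\ue)$) upgrades this to a basis, using the triangularity with unit diagonal in each degree.
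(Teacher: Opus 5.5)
Your proposal has a genuine gap, and it starts in Step 1, where you identify the subquotient $B^{\scrL}_{\ux,I}/B^{\scrL}_{\ux,I\setminus\{w\}}$ with the costalk $B^{\scrL}_{\ux,w}\cong D(B^{w,\scrL}_{\ux})$. The rank $v^{-\ell_\scrL(w)}p^{w,\scrL}_{\ux}(v^{-1})$ you arrive at is the rank of $B^{\scrL}_{\ux,w}$ (Corollary \ref{cor:bott_samelson_graded_free}). The subquotient actually has graded rank $v^{\ell_\scrL(w)}p^{w,\scrL}_{\ux}(v^{-1})$. The statement itself forces this: $u_{\uw}$ has degree $-\ell_\scrL(w)$, so $\overline{\LL}^{\scrL}_{\ux,\ue}(u_{\uw})$ has degree $d(\ux,\ue)-\ell_\scrL(w)$. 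For instance, take $\ux=\uw=(s)$ with $\scrL s=\scrL$ and $I=\{e,s\}$. Then $C_s/(C_s)_e$ is free on $u_s$, hence $\cong R(1)$, whereas $(C_s)_s\cong R(-1)$. Under duality, the piece matching the subquotient is $\ker\bigl(B^{I,\scrL}_{\ux}\to B^{I\setminus\{w\},\scrL}_{\ux}\bigr)$, not $B^{w,\scrL}_{\ux}$, and it is in general strictly smaller. In this example it is $\alpha_s B^{s,\scrL}_{(s)}$.

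This is also why Step 2 cannot succeed. Through $\pi^w_{\ux}$, the subquotient is a full-rank but in general proper submodule of $B^{w,\scrL}_{\ux}$. The $\psi_{\uf}$, however, form a basis of $\Hom_R(B^{w,\scrL}_{\ux},R)$. An invertible pairing matrix would therefore make your elements a basis of $B^{w,\scrL}_{\ux}$ itself, which is false in general. You can also see this from degrees: the entry $\varphi_{\uw}\bigl(\LL^{\scrL}_{\ux,\uf}\overline{\LL}^{\scrL}_{\ux,\ue}(u_{\uw})\bigr)$ is homogeneous of degree $d(\ux,\ue)+d(\ux,\uf)$. So a diagonal entry is never a unit once $d(\ux,\ue)\neq 0$.

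Two examples make this concrete.
\begin{itemize}
\item Take $\ux=(s)$, $\scrL s=\scrL$, $I=\{e\}$, $w=e$. The proposition holds, since $\eta^s(1)$ is a basis of $B^{\scrL}_{(s),e}$. Yet the only matrix entry is $\epsilon^s\eta^s(1)=\alpha_s$, reflecting $\pi^e(B^{\scrL}_{(s),e})=\alpha_s B^{e,\scrL}_{(s)}$.
\item Take $\ux=(s,s)$, $w=e$, with rows and columns indexed by $(1,1),(0,0)$. The matrix is $\bigl(\begin{smallmatrix}0&\alpha_s\\ \alpha_s&\alpha_s^2\end{smallmatrix}\bigr)$, which is not even triangular.
\end{itemize}
Your fallback rests on the same wrong rank and the same unit-diagonal claim, so it does not repair this.

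The missing idea is that rank control has to be global, not a local duality with $B^{w,\scrL}_{\ux}$. The paper runs Abe's argument for \cite[Theorem 3.17]{Abe19} over the whole support filtration of $B^{\scrL}_{\ux}$, along a closed enumeration of the block. Lemma \ref{lem:bott_samelson_bimodules_are_free} and Lemma \ref{lem:pwx_calculation} supply the identity $\sum_{x\in\beta}v^{\ell_\scrL(x)}p^{x,\scrL}_{\ux}(v^{-1})=(v+v^{-1})^{\ell_\scrL(\ux)}=\grk B^{\scrL}_{\ux}$. Its summand at $x=w$ is exactly the graded rank of the family in the statement.
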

\begin{proof}
    The proposition follows from the same argument give for \cite[Theorem 3.17]{Abe19} after the results on the polynomials $p_{\ux}^{w}$ are replaced by their monodromic counterparts (see Lemma \ref{lem:bott_samelson_bimodules_are_free} and Lemma \ref{lem:pwx_calculation}).
\end{proof}

\begin{corollary}[{\cite[Corollary 3.18]{Abe19}}]\label{cor:when_other_w_invs_is_surj}
    Let $M \in \Amon{\scrL}{\scrL'}^{\oplus} (\fr{h}, W, \fr{o})$.
    \begin{enumerate}
        \item Let $I \subset W$ be a closed subset with maximal element $w \in I$. Then there is an isomorphism $M_{\leq w}/M_{<w} \cong M_{I}/M_{I \setminus \{w\}}$.
        \item Let $\uw = \rex (w)$ for $w \in W$. Then the map $\Hom (B_{\uw}^{\scrL}, M) \to \Hom (B_{\uw}^{w, \scrL}, M_{\leq w} / M_{<w})$ is surjective.
    \end{enumerate}
\end{corollary}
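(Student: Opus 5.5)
We reduce both parts to the Bott--Samelson case and then follow \cite[Corollary 3.18]{Abe19}, using Proposition \ref{prop:intermediate_bases} as the main input. Every $M \in \Amon{\scrL}{\scrL'}^{\oplus}$ is a finite direct sum of objects $B_{\ux}^{\scrL}(n)$. The constructions $M \mapsto M_I$, $M_{\leq w}/M_{<w}$ and $M_I/M_{I\setminus\{w\}}$ commute with finite direct sums and grading shifts. So does the natural map in (1), and the surjectivity in (2) is additive in $M$. It therefore suffices to treat $M = B_{\ux}^{\scrL}$.

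For (1), let $J = \{y \mid y \leq w\}$ and $J' = J \setminus \{w\}$; both are closed, and $J \subseteq I$ because $I$ is closed. The inclusion $M_J \subseteq M_I$ sends $M_{J'}$ into $M_{I \setminus\{w\}}$, so it induces a map $M_{\leq w}/M_{<w} \to M_I/M_{I\setminus\{w\}}$. I would prove this is an isomorphism by composing with $\pi_{\ux}^w$. The factored map $\pi_{\ux}^w : M_I/M_{I\setminus\{w\}} \to B_{\ux}^{w,\scrL}$ is injective, since $M_{I\setminus\{w\}}$ is exactly the kernel of $\pi_{\ux}^w$ restricted to $M_I$; the same holds for $J$. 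Hence both quotients embed compatibly into $B_{\ux}^{w,\scrL}$, and the induced map is injective. For surjectivity, apply Proposition \ref{prop:intermediate_bases} to both $I$ and $J$. In each case the quotient has the $R$-basis $\pi_{\ux}^w(\overline{\LL}_{\ux,\ue}^{\scrL}(u_{\uw}))$. The elements $\overline{\LL}_{\ux,\ue}^{\scrL}(u_{\uw})$ are the same for both, and they lie in $M_J$: their support is contained in $\supp_W(B_{\uw}^{\scrL}) \subseteq \{\leq w\}$, since the support of a Bott--Samelson bimodule of a rex of $w$ lies below $w$ by the subexpression description in Lemma \ref{lem:dim_of_Bwx_over_Q} and Lemma \ref{lem:defect_formula_for_pwx}. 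So the map carries a basis to a basis and is an isomorphism.

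For (2), start from a morphism $f : B_{\uw}^{w,\scrL} \to M_{\leq w}/M_{<w}$, with both sides viewed inside $\Cmon{\scrL}{\scrL'}'$. By Proposition \ref{prop:basis_for_S_uw_x}(1) applied to $\uw$ itself, $B_{\uw}^{w,\scrL}$ is free of rank one over $R$. Its generator is $b_{\uw,\ue}^w$ for the unique monodromic subexpression of evaluation $w$, namely the all-ones one, which is $\pi^w_{\uw}(u_{\uw})$. So $f$ is determined by $f(\pi^w(u_{\uw}))$. By the basis in part (1) with $I = \{\leq w\}$, this element is an $R$-linear combination $\sum_{\ue} r_{\ue}\, \pi_{\ux}^w(\overline{\LL}_{\ux,\ue}^{\scrL}(u_{\uw}))$. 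Set $g = \sum_{\ue} r_{\ue}\, \overline{\LL}_{\ux,\ue}^{\scrL} : B_{\uw}^{\scrL} \to M$. This $g$ lands in $M_{\leq w}$ by Lemma \ref{lem:std_facts_about_inv_and_coinvs}(2), and by construction it induces $f$ on $w$-components. Hence $g$ maps to $f$.

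The main obstacle is justifying that $f$ is determined by its value on the generator and that every value is realized, while respecting the left/right $R$-structures and the twisted localization. Since $B_{\uw}^{w,\scrL} \cong R_w$ shifted as a bimodule, a bimodule map out of it is the same as a choice of an element of the target satisfying the $w$-twisted commutation. That condition holds automatically for elements in $M_{\leq w}/M_{<w}$, because this quotient embeds into $B^{w,\scrL}_{\ux,Q}$. Checking this carefully is where I expect most of the care to go.
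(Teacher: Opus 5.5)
Your proposal is correct and follows the argument the paper defers to (Abe's Corollary 3.18): you reduce to $M = B_{\ux}^{\scrL}$ and observe that the elements $\overline{\LL}_{\ux,\ue}^{\scrL}(u_{\uw})$ already lie in $M_{\leq w}$. By Proposition \ref{prop:intermediate_bases} their classes form a basis of $M_I/M_{I\setminus\{w\}}$, which gives (1), and (2) follows because $B_{\uw}^{w,\scrL}$ is cyclic on $\pi_{\uw}^w(u_{\uw})$. The concern in your last paragraph is unnecessary: surjectivity only uses that a given $f$ is determined by its value on this generator, not that every element of the target defines a bimodule map.
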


\subsection{Soergel Hom Formula}

\begin{proposition}\label{prop:abe_soergel_hom_formula}
    Let $\ux, \uy \in \Exp (W)$ such that $\scrL \ux = \scrL' = \scrL \uy$.
    Then the graded left $R$-module $\Hom_{\Cmon{\scrL}{\scrL'}} (B_{\ux}^{\scrL}, B_{\uy}^{\scrL})$ is graded free of finite rank. Moreover, its graded rank can be computed by
    \[\grk_R \Hom (B_{\ux}^{\scrL}, B_{\uy}^{\scrL}) = \langle H_{\uw}^{\scrL}, H_{\uy}^{\scrL} \rangle.\]
\end{proposition}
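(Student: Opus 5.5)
We prove the statement by the same strategy as Abe's proof of the Soergel Hom formula. The idea is to filter the target by support, identify each subquotient through the localization data, and read off graded ranks from the defect formula. We interpret the right-hand side as $\langle \uH_{\ux}^{\scrL}, \uH_{\uy}^{\scrL} \rangle$.

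First, move everything to one side. Lemma \ref{lem:BS_tensor_adjunction} says the functors $(-)\otimes_R B_s^{\scrL'}$ and $(-)\otimes_R B_s^{\scrL' s}$ are biadjoint. Applying this repeatedly gives
\[
\Hom(B_{\ux}^{\scrL}, B_{\uy}^{\scrL}) \cong \Hom(B_{\ux\overline{\uy}}^{\scrL}, R_e),
\]
and on the algebroid side (\ref{eq:mha_biadjoint}) gives $\langle \uH_{\ux}^{\scrL}, \uH_{\uy}^{\scrL}\rangle = \langle \uH_{\ux\overline{\uy}}^{\scrL}, H_e^{\scrL}\rangle$. We cannot stop here, because Proposition \ref{prop:basis_for_S_uw_x}(3) only computes $\Hom(B^{\scrL}_{\ux}, R_w)$ for a single standard object. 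Instead we keep both sides and filter the target.

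Second, filter the target. Set $N = B_{\uy}^{\scrL}$ and enumerate its finite support as $w_1, \ldots, w_n$, refining the Bruhat order so that each $I_i = \{w_1,\ldots,w_i\}$ is closed (use Abe's Lemma 3.16 after closing the support). By Lemma \ref{lem:std_facts_about_inv_and_coinvs}, the subobjects $N_{I_i}$ form a filtration of $N$ in $\Cmon{\scrL}{\scrL'}'$. By Corollary \ref{cor:bott_samelson_graded_free}, the subquotients $N_{I_i}/N_{I_{i-1}} \cong N_{\le w_i}/N_{<w_i}$ are graded free left $R$-modules, isomorphic to $R_{w_i}^{\oplus g_i}$ with graded rank $g_i = v^{-\ell_{\scrL}(w_i)}p_{\uy}^{w_i,\scrL}(v^{-1})$; this is where Corollary \ref{cor:when_other_w_invs_is_surj}(1) enters.

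Third, compute each graded piece. Proposition \ref{prop:basis_for_S_uw_x}(3) shows $\Hom(B_{\ux}^{\scrL}, R_{w})$ is graded free with graded rank $v^{-\ell_{\scrL}(w)}p_{\ux}^{w,\scrL}(v^{-1})$, up to the degree convention. Suppose the sequence
\[
0\to \Hom(B_{\ux}^{\scrL}, N_{I_{i-1}}) \to \Hom(B_{\ux}^{\scrL}, N_{I_i}) \to \Hom(B_{\ux}^{\scrL}, N_{I_i}/N_{I_{i-1}})\to 0
\]
is exact. Then $\Hom(B_{\ux}^{\scrL}, B_{\uy}^{\scrL})$ is graded free. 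Its graded rank is
\[
\sum_{w} p_{\ux}^{w,\scrL}\,\overline{p_{\uy}^{w,\scrL}}
\]
after the $v^{\pm\ell_{\scrL}(w)}$ factors cancel, and by Lemma \ref{lem:defect_formula_for_pwx} this sum equals $\langle \uH_{\ux}^{\scrL}, \uH_{\uy}^{\scrL}\rangle$ with the sesquilinear standard form.

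The main obstacle is surjectivity on the right of this sequence. Left exactness is automatic, since $\Hom(B_{\ux}^{\scrL}, N_{I_{i-1}})$ consists of the maps landing in $N_{I_{i-1}}$ (Lemma \ref{lem:std_facts_about_inv_and_coinvs}). For the right end, use the dual basis of Proposition \ref{prop:intermediate_bases}: the elements $\overline{\LL}_{\uy,\ue}^{\scrL}(u_{\uw})$ project to a basis of $N_{I_i}/N_{I_{i-1}}$. Composing $\overline{\LL}_{\uy,\ue}^{\scrL}$ with any map $B_{\ux}^{\scrL}\to B_{\uw}^{\scrL}$ therefore lifts every morphism $B_{\ux}^{\scrL}\to R_{w_i}$ factoring through $B_{\uw}^{w_i,\scrL}$; Corollary \ref{cor:when_other_w_invs_is_surj}(2) is the tool for this step. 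One must also show that every map $B_{\ux}^{\scrL}\to R_{w_i}$ factors through $B_{\uw}^{\scrL}$, and for this we apply Proposition \ref{prop:basis_for_S_uw_x}(3), whose basis elements $\varphi_{\uw}\circ \LL_{\ux,\ue}^{\scrL}$ visibly factor. The remaining work is tracking degree shifts and the $v\leftrightarrow v^{-1}$ conventions so the final sum matches the sesquilinear form.
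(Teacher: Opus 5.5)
Your filtration strategy works and is a genuinely different route from the paper's. As written, though, the graded-rank bookkeeping is wrong in two places.

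\textbf{Comparison with the paper.} The paper uses Lemma \ref{lem:BS_tensor_adjunction} to reduce to $\ux=\emptyset$. It then identifies $\Hom(R_e,B_{\uy}^{\scrL})$ with $B_{\uy,e}^{\scrL}$ and reads off freeness and the rank $p_{\uy}^{e,\scrL}(v^{-1})$ from Corollary \ref{cor:bott_samelson_graded_free}.

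Your own first reduction already gives this short proof, with source and target exchanged. $R_e$ \emph{is} a single standard object, so Proposition \ref{prop:basis_for_S_uw_x}(3) with $w=e$ shows that $\Hom(B_{\ux\overline{\uy}}^{\scrL},R_e)$ is free on $\{\LL_{\ux\overline{\uy},\ue}^{\scrL}\}_{\ue\in\Subexp^{\scrL}(\ux\overline{\uy},e)}$, of graded rank $p_{\ux\overline{\uy}}^{e,\scrL}(v^{-1})$. Your remark ``we cannot stop here'' is therefore mistaken.

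The filtration argument costs more, since it needs Proposition \ref{prop:intermediate_bases} and Proposition \ref{prop:basis_for_S_uw_x}(3) for every $w$. It also buys more. Your surjectivity step is correct, and the lifts it produces are exactly the double leaves $\overline{\LL}_{\uy,\ue}^{\scrL}\circ\LL_{\ux,\uf}^{\scrL}$. So the split exact sequences prove Theorem \ref{thm:abe_dl_are_basis} directly and yield the sum over $w$ of Corollary \ref{cor:abe_soergel_hom_application}. On the Hecke side you only need the expansion $\uH_{\ux}^{\scrL}=\sum_w p_{\ux}^{w,\scrL}H_w^{\scrL}$.

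\textbf{First correction: the rank of the subquotient.} Corollary \ref{cor:bott_samelson_graded_free} describes $N_w$, the elements supported on $\{w\}$. It does not describe the subquotient $N_{\le w}/N_{<w}$; these are different lattices in $N_Q^w$. For example, for $N=C_s$ with $\scrL s=\scrL$, one has $N_s\cong R(-1)$ but $N/N_e\cong R(1)$.

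The subquotient is free by Proposition \ref{prop:intermediate_bases}. Its basis elements $\pi^w(\overline{\LL}_{\uy,\ue}^{\scrL}(u_{\uw}))$ have degree $d(\uy,\ue)-\ell_{\scrL}(w)$, so the correct rank is $g_i=v^{\ell_{\scrL}(w_i)}p_{\uy}^{w_i,\scrL}(v^{-1})$. With your $g_i$, the factors $v^{\pm\ell_{\scrL}(w)}$ would not cancel.

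\textbf{Second correction: the final formula.} With the correct $g_i$, the total is $\sum_w p_{\ux}^{w,\scrL}(v^{-1})\,p_{\uy}^{w,\scrL}(v^{-1})$, not $\sum_w p_{\ux}^{w,\scrL}\overline{p_{\uy}^{w,\scrL}}$. Take $\ux=\uy=(s)$ with $\scrL s=\scrL$: $\Hom(C_s,C_s)$ is free on $\id$ and $\eta^s\circ\epsilon^s$, of graded rank $1+v^{-2}$, whereas your expression gives $2$.

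So the sum should be matched with $\overline{(\uH_{\ux}^{\scrL},\uH_{\uy}^{\scrL})}$ for the $\Z[v^{\pm1}]$-bilinear pairing $(H_x^{\scrL},H_y^{\scrL})=\delta_{x,y}$. This is also the pairing for which (\ref{eq:mha_biadjoint}) actually holds. It should not be matched with a form that is conjugate-linear in one slot.
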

\begin{proof}
    By Lemma \ref{lem:BS_tensor_adjunction} and (\ref{eq:mha_biadjoint}), we may assume that $\ux = \emptyset$ and $\scrL \uy = \scrL$.
    There is an isomorphism of left graded $R$-modules,
    \[\Hom_{\Cmon{\scrL}{\scrL}} (R_e, B_{\uy}^{\scrL}) \cong B_{\uy, e}^{\scrL}.\]
    This left $R$-module is graded free and of graded rank $p_{\uy}^{e, \scrL} (v^{-1})$ by Corollary \ref{cor:bott_samelson_graded_free}. 
    On the other hand, a straightforward computation (cf., the proof of \cite[Theorem 4.6]{Abe19}) using $\overline{\uH_{\uy}^{\scrL}} = \uH_{\uy}^{\scrL}$ shows that 
    $\langle \uH_{e}^{\scrL}, \uH_{\uy}^{\scrL} \rangle = p_{\uy}^{e, \scrL} (v^{-1})$.
\end{proof}

\begin{corollary}\label{cor:abe_soergel_hom_application}
    Let $\ux, \uy \in \Exp (W)$.
    Then 
    \[\grk_R \Hom (B_{\ux}^{\scrL}, B_{\uy}^{\scrL}) = \sum_{w \in W} p_{\ux}^{w, \scrL} (v^{-1}) p_{\uy}^{w, \scrL} (v^{-1}).\]
\end{corollary}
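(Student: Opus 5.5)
We deduce this from Proposition \ref{prop:abe_soergel_hom_formula} by computing the standard form $\langle \uH_{\ux}^{\scrL}, \uH_{\uy}^{\scrL} \rangle$ in terms of the standard basis. (The formula there is written with $H_{\uw}^{\scrL}$ and $H_{\uy}^{\scrL}$; we read it as $\langle \uH_{\ux}^{\scrL}, \uH_{\uy}^{\scrL} \rangle$.) If $\scrL\ux \neq \scrL\uy$, both sides vanish: the Hom space is zero since the bimodules live in different morphism categories, and the sum is zero because $p_{\ux}^{w,\scrL}$ and $p_{\uy}^{w,\scrL}$ are supported on the disjoint sets $\W{\scrL}{\scrL\ux}$ and $\W{\scrL}{\scrL\uy}$. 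So we assume $\scrL\ux = \scrL' = \scrL\uy$.

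Expand both elements in the standard basis:
\[
\uH_{\ux}^{\scrL} = \sum_{w \in \W{\scrL}{\scrL'}} p_{\ux}^{w,\scrL} H_w^{\scrL}, \qquad \uH_{\uy}^{\scrL} = \sum_{w \in \W{\scrL}{\scrL'}} p_{\uy}^{w,\scrL} H_w^{\scrL}.
\]
The standard form is sesquilinear and satisfies $\langle H_x^{\scrL}, H_y^{\scrL}\rangle = \delta_{x,y}$. Substituting the expansions therefore gives $\sum_w \overline{p_{\ux}^{w,\scrL}}\, \overline{p_{\uy}^{w,\scrL}}$ or $\sum_w p_{\ux}^{w,\scrL}\, p_{\uy}^{w,\scrL}$, depending on the sesquilinearity convention. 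Since $\overline{p(v)} = p(v^{-1})$, the former is exactly the claimed formula.

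The main obstacle is pinning down the convention, and we fix it by comparing with the special case $\ux = \emptyset$ used in the proof of Proposition \ref{prop:abe_soergel_hom_formula}. There the value is $\langle \uH_e^{\scrL}, \uH_{\uy}^{\scrL}\rangle = p_{\uy}^{e,\scrL}(v^{-1})$, while $p_{\emptyset}^{e,\scrL} = 1$. Hence the form must be antilinear in each argument: that is, $\langle a h, b h' \rangle = \bar a\, \bar b \,\langle h, h'\rangle$ for $a,b \in \Z[v^{\pm 1}]$. This is consistent with the biadjointness relation (\ref{eq:mha_biadjoint}) and with the bar-invariance $\overline{\uH_{\uy}^{\scrL}} = \uH_{\uy}^{\scrL}$ invoked in that proof.

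If one prefers not to rely on this reading, a second route avoids it. Use Lemma \ref{lem:BS_tensor_adjunction} to move $B_{\overline{\ux}}$ to the right, identifying $\Hom(B_{\ux}^{\scrL}, B_{\uy}^{\scrL})$ with $\Hom(R, B_{\uy\overline{\ux}}^{\scrL'\,\cdot})$; the ranks agree because $\ell_{\scrL}$ counts are additive. Then compute $p_{\uy\overline{\ux}}^{e}$ by splitting subexpressions at the junction: a subexpression ending at $e$ passes through some $w$ after $\uy$, and the reversed part contributes $p_{\ux}^{w,\scrL}$ through the defect formula, Lemma \ref{lem:defect_formula_for_pwx}. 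This route requires checking that defects and the $U$/$D$ labels behave correctly under reversal, which I expect to be the fiddly step. I would therefore use the pairing computation above as the main argument, with the $\ux = \emptyset$ normalization as the check.
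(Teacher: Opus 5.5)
Your argument is correct and is essentially the intended one. The paper states this corollary without proof, as the immediate consequence of Proposition \ref{prop:abe_soergel_hom_formula}: read its right-hand side as $\langle \uH_{\ux}^{\scrL}, \uH_{\uy}^{\scrL} \rangle$ and expand both arguments in the standard basis, with the form conjugate-linear in each slot.

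One point needs tightening. The $\ux = \emptyset$ normalization only fixes the convention in the second slot. What forces the same conjugate-linearity in the first slot is the biadjointness relation (\ref{eq:mha_biadjoint}); test it at $x = y$ with $xs < x$, where the non-bar-invariant scalar $v^{-1} - v$ appears. So biadjointness does more than merely agree with your convention; it is what determines the first slot.
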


\begin{corollary}\label{cor:soergel_hom_defects}
    Let $\ux, \uy \in \Exp (W)$.
    Then 
    \[\grk_R \Hom (B_{\ux}^{\scrL}, B_{\uy}^{\scrL}) = \sum_{\ux^{\ue} = \uy^{\uf}} v^{d (\ux, \ue) + d (\uy, \uf)}.\]
\end{corollary}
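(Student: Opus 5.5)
This follows directly from Corollary \ref{cor:abe_soergel_hom_application} together with the monodromic Deodhar defect formula. Corollary \ref{cor:abe_soergel_hom_application} gives
\[\grk_R \Hom (B_{\ux}^{\scrL}, B_{\uy}^{\scrL}) = \sum_{w \in W} p_{\ux}^{w, \scrL} (v^{-1})\, p_{\uy}^{w, \scrL} (v^{-1}).\]
We then substitute (\ref{eq:defect_formula_2}) from Lemma \ref{lem:defect_formula_for_pwx} for each factor, namely $p_{\ux}^{w,\scrL}(v) = \sum_{\ue \in \Subexp^{\scrL}(\ux,w)} v^{d(\ux,\ue)}$, and likewise for $\uy$. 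Expanding the product gives a double sum over pairs $(\ue,\uf)$ of $\scrL$-monodromic subexpressions with $\ux^{\ue} = w = \uy^{\uf}$. Summing over $w$ then removes the constraint that the common evaluation be a fixed $w$. What remains is a sum over all pairs of monodromic subexpressions with $\ux^{\ue} = \uy^{\uf}$ of $v^{-d(\ux,\ue)-d(\uy,\uf)}$.

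The only point requiring care is the sign of the exponent, since the formula as stated has $v^{d(\ux,\ue)+d(\uy,\uf)}$. I expect this to be a matter of convention rather than substance, and I would address it in one of two ways. The first option is to use that the graded rank of a Hom space between self-dual Bott--Samelson bimodules is bar-invariant. The second is to use the symmetry $p_{\ux}^{w,\scrL}(v^{-1}) \leftrightarrow p_{\ux}^{w,\scrL}(v)$ that comes from the bar-invariance $\overline{\uH_{\ux}^{\scrL}} = \uH_{\ux}^{\scrL}$, which was already used in the proof of Proposition \ref{prop:abe_soergel_hom_formula}. The cleaner route is probably to go back to Proposition \ref{prop:abe_soergel_hom_formula} itself. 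There one rewrites $\langle \uH_{\ux}^{\scrL}, \uH_{\uy}^{\scrL}\rangle$ by plugging in (\ref{eq:defect_formula_1}) for both arguments and using the orthonormality $\langle H_x^{\scrL}, H_y^{\scrL}\rangle = \delta_{x,y}$. This yields $\sum_{\ux^{\ue}=\uy^{\uf}} v^{d(\ux,\ue)+d(\uy,\uf)}$ directly, so the expansion above produces the stated exponent.

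The sums are finite because $\Subexp^{\scrL}(\ux)$ and $\Subexp^{\scrL}(\uy)$ are finite, so no convergence issues arise. The pairs $(\ue,\uf)$ are also automatically $\scrL$-monodromic, because the defect formula only involves $\Subexp^{\scrL}$. The main obstacle is therefore just keeping track of which of the two formulas, with $v$ or with $v^{-1}$, the chosen form of the Hom formula matches. I would state this carefully in terms of the conventions of Proposition \ref{prop:abe_soergel_hom_formula}.
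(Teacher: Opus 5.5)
Your first paragraph is exactly the argument the paper intends; the corollary is stated without proof right after Corollary~\ref{cor:abe_soergel_hom_application}. You substitute (\ref{eq:defect_formula_2}) into that corollary and collect terms over pairs of $\scrL$-monodromic subexpressions with a common evaluation. As you observe, this produces $\sum_{\ux^{\ue}=\uy^{\uf}} v^{-d(\ux,\ue)-d(\uy,\uf)}$. Under the paper's normalization of graded rank this is the correct formula: $R^{\oplus v^n}=R(n)$, so a generator of degree $d$ contributes $v^{-d}$. It is also exactly the form used in the proof of Theorem~\ref{thm:abe_dl_are_basis}, where $\sum v^{-\deg(\dLL^{w,\scrL}_{\ue,\uf})}$ is identified with $\grk_R\Hom(B_{\ux}^{\scrL},B_{\uy}^{\scrL})$.

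The genuine problem is your second paragraph: none of the three proposed ways of trading $v^{-1}$ for $v$ is valid.

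\emph{Bar-invariance of graded ranks.} Take $\ux=\uy=(s)$ with $\scrL s=\scrL$. Then $\End^\bullet(B_s^{\scrL})$ is free on $\id$ (degree $0$) and $\eta^s\circ\epsilon^s$ (degree $2$). Its graded rank is therefore $1+v^{-2}$, whereas $\sum v^{d(\ux,\ue)+d(\uy,\uf)}=1+v^{2}$. So graded ranks of Hom spaces between self-dual Bott--Samelson bimodules are not bar-invariant.

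\emph{Symmetry of the $p$'s.} We have $p_{(s)}^{e,\scrL}=v$, so the $p$'s are not individually bar-symmetric. Bar-invariance of $\uH_{\ux}^{\scrL}$ only relates them through the expansion of $\overline{H_w^{\scrL}}$ in the standard basis.

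\emph{The route through Proposition~\ref{prop:abe_soergel_hom_formula}.} This yields $v^{+d+d}$ only if you treat $\langle-,-\rangle$ as $\Z[v^{\pm}]$-bilinear. But the proof of that proposition asserts $\langle \uH_e^{\scrL},\uH_{\uy}^{\scrL}\rangle=p_{\uy}^{e,\scrL}(v^{-1})$, i.e.\ the pairing conjugates coefficients, so expanding it gives $v^{-d-d}$ again. For example, $\ux=\emptyset$, $\uy=(s,s)$ gives $1+v^{-2}$, matching the degree-$0$ map $\cup^s$ and the degree-$2$ map $\eta^s\otimes\eta^s$.

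So the displayed exponent cannot be derived from the paper's conventions at all. It is a normalization mismatch: it holds verbatim only if $\grk$ counts a generator of degree $n$ as $v^{n}$, which is the Elias--Williamson convention. The right proof is your first paragraph followed by a remark to this effect, not an appeal to a nonexistent symmetry. Nothing later depends on the sign: Appendix~\ref{apdx:three_color} only uses that $\Hom(B_{\ux}^{\scrL},B_{\uy}^{\scrL})$ is graded free on generators of degrees $d(\ux,\ue)+d(\uy,\uf)$, which is insensitive to this normalization.
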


\subsection{Double Leaves}\label{subsec:alg_dll}

\begin{definition}
    Let $\ux, \uy \in \Exp (W)$ such that $\scrL \ux = \scrL' = \scrL \uy$.
    Let $w \in \W{\scrL}{\scrL'}$, $\ue \in \Subexp^{\scrL} (\ux, w)$, and $\uf \in \Subexp^{\scrL} (\uy, w)$.
    A \emph{double leaf} is a degree $d (\ux, \ue) + d (\uy, \uf)$ map given by a composition of the form 
    \[ \dLL_{\ue,\uf}^{w, \scrL} \coloneq \overline{\LL}_{\uy, \uf}^{\scrL} \circ \LL_{\ux, \ue}^{\scrL} : B_{\ux}^{\scrL} \to B_{\uy}^{\scrL}.\]
\end{definition}

\begin{theorem}\label{thm:abe_dl_are_basis}
    Let $\ux, \uy \in \Exp (W)$ such that $\scrL \ux = \scrL' = \scrL \uy$.
    Let $\dLL_{\ux, \uy}^{\scrL}$ denote the set containing one double leaf $\dLL_{\ue,\uf}^{w, \scrL} : B_{\ux}^{\scrL} \to B_{\uy}^{\scrL}$ for each $w \in \W{\scrL}{\scrL'}$ and each pair of monodromic subsequences $(\ux, \ue)$ and $(\uy, \uf)$ expressing $w$.
    Then $\dLL_{\ux, \uy}^{\scrL}$ is a left graded $R$-module basis for $\Hom_{\Amon{\scrL}{\scrL'}^{\BS}} (B_{\ux}^{\scrL}, B_{\uy}^{\scrL})$.
\end{theorem}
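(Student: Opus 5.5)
We follow Abe's proof of \cite[Theorem 3.20]{Abe19}: first show the double leaves span $\Hom(B_{\ux}^{\scrL}, B_{\uy}^{\scrL})$ by filtering $B_{\uy}^{\scrL}$ by support, then conclude they form a basis by comparing graded ranks with the Soergel Hom formula.

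\textbf{Spanning.} Since $\W{\scrL}{\scrL'}$ meets only finitely many elements of $\supp_W(B_{\uy}^{\scrL})$, we choose a finite closed subset $I \subseteq W$ containing $\supp_W (B_{\uy}^{\scrL})$. By the enumeration lemma (\cite[Lemma 3.16]{Abe19}) we order $I$ as $w_1, \ldots, w_n$ so that each $I_i = \{w_1, \ldots, w_i\}$ is closed. This gives a filtration
\[ 0 = B_{\uy, I_0}^{\scrL} \subseteq B_{\uy, I_1}^{\scrL} \subseteq \cdots \subseteq B_{\uy, I_n}^{\scrL} = B_{\uy}^{\scrL}. \]
By Lemma \ref{lem:std_facts_about_inv_and_coinvs}, any $\varphi \in \Hom(B_{\ux}^{\scrL}, B_{\uy}^{\scrL})$ factors through $B_{\uy, I_n}^{\scrL}$. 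We then argue by descending induction on $i$, showing that $\varphi$ can be modified by an $R$-combination of double leaves so that it lands in $B_{\uy, I_{i-1}}^{\scrL}$.

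Fix $i$, put $w = w_i$ and $\uw = \rex(w)$, and consider the image of $\varphi$ in $B_{\uy,I_i}^{\scrL}/B_{\uy,I_{i-1}}^{\scrL}$. By Proposition \ref{prop:intermediate_bases}, this quotient is free over $R$ on the elements $\pi_{\uy}^w(\overline{\LL}_{\uy,\uf}^{\scrL}(u_{\uw}))$ for $\uf \in \Subexp^{\scrL}(\uy,w)$. Hence the composite $B_{\ux}^{\scrL} \to B_{\uy,I_i}^{\scrL}/B_{\uy,I_{i-1}}^{\scrL}$ decomposes into components $\psi_{\uf}$. Each $\psi_{\uf}$ is a morphism $B_{\ux}^{\scrL} \to R_w$, up to the degree shift, because the quotient is supported at $w$.

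By Proposition \ref{prop:basis_for_S_uw_x}(3), each $\psi_{\uf}$ is an $R$-combination of $\varphi_{\uw}\circ \LL_{\ux,\ue}^{\scrL}$ with $\ue \in \Subexp^{\scrL}(\ux,w)$. We then check that the double leaf $\overline{\LL}_{\uy,\uf}^{\scrL}\circ\LL_{\ux,\ue}^{\scrL}$ induces, on the graded piece at $w$, exactly the map $m \mapsto \varphi_{\uw}(\LL_{\ux,\ue}^{\scrL}(m))\cdot \pi^w_{\uy}(\overline{\LL}_{\uy,\uf}^{\scrL}(u_{\uw}))$, up to a unit. For this we use that $B_{\uw}^{w,\scrL}$ is free of rank one on $\pi^w(u_{\uw})$ (Proposition \ref{prop:basis_for_S_uw_x}(1) with the single subexpression of $\uw$). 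We also use the analogue of Corollary \ref{cor:when_other_w_invs_is_surj}, which reduces maps into the graded piece to maps through $B_{\uw}^{w,\scrL}$. Subtracting the corresponding combination of double leaves kills the $w$-component, and the induction proceeds. Left $R$-linearity must be tracked: the coefficients act on the left, and $R_w$ has its twisted right action, so the combinations should be taken with left coefficients.

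\textbf{Independence.} The double leaves in $\dLL_{\ux,\uy}^{\scrL}$ have degrees $d(\ux,\ue)+d(\uy,\uf)$, summed over pairs of monodromic subexpressions with common evaluation. By Proposition \ref{prop:abe_soergel_hom_formula} and Corollary \ref{cor:soergel_hom_defects}, the Hom space is graded free with exactly this graded rank. A spanning set of homogeneous elements of a graded free module of finite rank, whose multiset of degrees matches the graded rank, is a basis: compare graded ranks after base change to $\k = R/R_{>0}$, or use the graded Nakayama lemma.

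The main obstacle is the compatibility check in the spanning step. We must show that the image of a double leaf on the subquotient at $w$ is triangular, with unit diagonal, relative to the product basis $\{\varphi_{\uw}\LL_{\ux,\ue}^{\scrL}\} \times \{\pi^w\overline{\LL}_{\uy,\uf}^{\scrL}(u_{\uw})\}$. This requires $\varphi_{\uw}$ to be compatible with the $\pi^w$-projection of $\overline{\LL}$ applied to the one-tensor, and the new feature relative to Abe is the $R_s$-factors at indices outside $K$. We would handle these by noting that those factors are invertible and contribute only twists.
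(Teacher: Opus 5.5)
Your proof is correct, but it runs in the opposite direction from the paper's. The paper uses Corollary~\ref{cor:abe_soergel_hom_application} to see that the degrees of the double leaves in $\dLL_{\ux,\uy}^{\scrL}$ match the graded rank of the Hom space, and then proves only $R$-linear independence. Given a relation, it takes $x$ of maximal length among the $w$ carrying a nonzero coefficient and applies $\pi^x_{\uy}$ to kill all terms with $w\neq x$. It then evaluates at $b_{\ux,\ue'}$ for the label-lexicographically minimal $\ue'$, using the triangularity of Lemma~\ref{lem:ll_are_lin_indep}, and concludes from the independence of the elements $\pi^x_{\uy}(\overline{\LL}^{\scrL}_{\uy,\uf}(u_{\uw}))$.

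You instead prove spanning by peeling a morphism off the support filtration $B^{\scrL}_{\uy,I_i}$, and then invoke the rank count. This uses the spanning half of Proposition~\ref{prop:intermediate_bases} and Proposition~\ref{prop:basis_for_S_uw_x}(3) in place of the label-lexicographic order, and it is constructive. In fact your setup proves more than you use. At each stage the coefficients are forced, since you have bases on both sides. So in a putative relation, take the largest $i$ with a nonzero coefficient; reading off its image in $B^{\scrL}_{\uy,I_i}/B^{\scrL}_{\uy,I_{i-1}}$ gives independence directly. The Soergel Hom formula is then not needed at all.

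The step you flag as the main obstacle is immediate, and the identity is exact. Since $\varphi_{\uw}$ lands in $R_w$, it factors through $\pi^w\colon B^{\scrL}_{\uw}\to B^{w,\scrL}_{\uw}=R\cdot\pi^w(u_{\uw})$, and $\varphi_{\uw}(u_{\uw})=1$. Left $R$-linearity of the map induced by $\overline{\LL}^{\scrL}_{\uy,\uf}$ on $w$-components then gives
\[\pi^w_{\uy}\bigl(\dLL^{w,\scrL}_{\ue,\uf}(m)\bigr)=\varphi_{\uw}\bigl(\LL^{\scrL}_{\ux,\ue}(m)\bigr)\cdot\pi^w_{\uy}\bigl(\overline{\LL}^{\scrL}_{\uy,\uf}(u_{\uw})\bigr)\]
on the nose. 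No unit appears, the $R_s$-factors need no separate treatment, and Corollary~\ref{cor:when_other_w_invs_is_surj} is not needed.

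One point you should state explicitly. A double leaf through $w_i$ factors through $B^{\scrL}_{\uw}$, whose support lies in $\{\le w_i\}\subseteq I_i$. So subtracting these double leaves keeps your morphism inside $B^{\scrL}_{\uy,I_i}$, and the descending induction is consistent.
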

\begin{proof}
    Let $w \in W$, $\ue \in \Subexp^{\scrL} (\ux, w)$, $\uf \in \Subexp^{\scrL} (\uy, w)$.
    The degree of $\dLL_{\ue,\uf}^{w, \scrL}$ is $d(\ux, \ue) + d(\uy, \uf)$.
    We can then compute using Corollary \ref{cor:abe_soergel_hom_application},
    \begin{align*}
        \sum_{\stackrel{w \in W}{\ux^{\ue} = w = \uy^{\uf}}} v^{-\deg (\dLL_{\ue, \uf}^{w, \scrL})} &=  \sum_{\stackrel{w \in W}{\ux^{\ue} = w}} v^{-d(\ux, \ue)}  \sum_{\stackrel{w \in W}{\uy^{\uf} = w}} v^{-d(\uy, \uf)} \\
        &= \sum_{w \in W} p_{\ux}^{w, \scrL} (v^{-1}) p_{\uy}^{w, \scrL} (v^{-1}) \\
        &= \grk_R \Hom_{\Amon{\scrL}{\scrL'}^{\BS}} (B_{\ux}^{\scrL}, B_{\uy}^{\scrL}).
    \end{align*}
    As a result, it suffices to prove that the $\dLL_{\ux, \uy}^{\scrL}$ is linearly independent over $R$.

    Suppose that $\sum c_{\ue, \uf}^w \dLL_{\ue, \uf}^{w, \scrL} = 0$.
    Define $I = \{ w \in W \mid \ux^{\ue} = \uy^{\uf}, c_{\ue, \uf}^w \neq 0\}$. We will argue by contradiction that $I$ must be empty.
    Let $x \in I$ be an element with maximal length in $W$.
    We can then consider the closure $\overline{I}$ of $I$.
    Note that $x$ also has maximal length in $\overline{I}$. 
    For all $\ue$, $\uf$, and $w \in W$ such that $\ux^{\ue} = w = \uy^{\uf}$, the image of $c_{\ue, \uf}^w \dLL_{\ue,\uf}^{w, \scrL}$ is supported on $\overline{I}$. 
    As a result, we have that $c_{\ue, \uf}^w \dLL_{\ue,\uf}^{w, \scrL}$ factors through $B_{\uy, \overline{I}}^{\scrL} \hookrightarrow B_{\uy}^{\scrL}$.
    Moreover, since $I \subset \overline{I}$, we have that $\pi_{\uy}^x \circ \left( c_{\ue, \uf}^w \dLL_{\ue,\uf}^{w, \scrL} \right) = 0$ unless $w=x$.

    Define $E = \{ \ue \mid c_{\ue, \uf}^x \neq 0 \}$. Let $\ue' \in E$ denote the minimal element under the label-lexicographic order.
    By Lemma \ref{lem:ll_are_lin_indep}, for all $\ue \in E$, we have that $\LL_{\ux, \ue}^{\scrL} (b_{\ux, \ue'}) = 0$ unless $\ue = \ue'$.
    Therefore, by Lemma \ref{lem:ll_are_lin_indep} again, we have 
    \[ \sum_{\uy^{\uf} = x} c_{\ue', \uf}^x \pi_{\uy}^x (\overline{\LL}_{\uy, \uf}^{\scrL} (u_{\uw}) ) = \sum_{\uy^{\uf} = x} c_{\ue', \uf}^x \pi_{\uy}^x (\dLL_{\ue', \uf}^{x, \scrL} (b_{\ux, \ue'}) )  =0.\]
    By Proposition \ref{prop:basis_for_S_uw_x}, we then must have that $c_{\ue', \uf}^x = 0$ for all $\uf$ such that $\uy^{\uf} = x$. 
    This contradicts our assumption that $I$ was not empty. Therefore, $\dLL_{\ux, \uy}^{\scrL}$ is linearly independent.
\end{proof}

Let $\uw = (s_1, \ldots, s_k) \in \Exp (W)$ and take $\beta \in \uW{\scrL}{\scrL \uw}$ to be the block containing the evaluation of $\uw$.  
We will define a morphism of degree $\ell_{\scrL} (\uw)$
\[\epsilon^{\uw} : B_{\uw}^{\scrL} \to R_{w^{\beta}} \]
inductively on $\ell (\uw)$. When $\uw = \emptyset$, we set $\epsilon^{\emptyset} : R \to R$ to be the identity map.
Suppose $\epsilon^{\uw_{\leq k-1}}$ is defined. If $\scrL \uw = \scrL \uw_{\leq k-1}$, we define $\epsilon^{\uw}$ as the composition
\[B_{\uw}^{\scrL} = B_{\uw_{\leq k-1}}^{\scrL} \otimes_R C_{s_k} \stackrel{\epsilon^{\uw_{\leq k-1}} \otimes \epsilon^{s_k}}{\longrightarrow} R_{w^{\beta}} \otimes_R R \cong R_{w^{\beta}}. \]
If $\scrL \uw \neq \scrL \uw_{\leq k-1}$, we define $\epsilon^{\uw}$ as the composition
\[B_{\uw}^{\scrL} = B_{\uw_{\leq k-1}}^{\scrL} \otimes_R R_{s_k} \stackrel{\epsilon^{\uw_{\leq k-1}} \otimes \id}{\longrightarrow} R_{w^{\beta} s_k} \otimes_R R_{s_k} \cong R_{w^{\beta}}. \]
Note that from the definition of $\epsilon^{\uw}$ we have that $\epsilon^{\uw} (u_{\uw}) = 1$. 

We can use Theorem \ref{thm:abe_dl_are_basis} to get a few uniqueness criterions for $\beta_{s,t}$.

\begin{lemma}\label{lem:uniqueness_of_beta}
    Let $s,t \in S$ be distinct simple reflections with $m = m_{s,t} < \infty$. The map $\beta_{s,t} : B_{{}_s \underline{m}}^{\scrL} \to B_{{}_t \underline{m}}^{\scrL}$ is the unique map satisfying either of the following criterion:
    \begin{enumerate}
        \item $\beta_{s,t} (u_{{}_s \underline{m}}) = u_{{}_t \underline{m}}$;
        \item $\k \otimes_{R} (\epsilon^{{}_t \underline{m}} \circ \beta_{s,t}) = \k \otimes_{R} \epsilon^{{}_s \underline{m}} : \k \otimes_{R} B_{{}_s \underline{m}}^{\scrL} \to \k$.
    \end{enumerate}
\end{lemma}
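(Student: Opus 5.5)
Both criteria reduce to a single claim: in the degree-$0$ part of $\Hom(B_{{}_s \underline{m}}^{\scrL}, B_{{}_t \underline{m}}^{\scrL})$, the only elements are $R$-multiples (in fact $\k$-multiples) of one distinguished double leaf, plus terms that kill the relevant test vector. The plan is to describe this degree-$0$ part using Theorem \ref{thm:abe_dl_are_basis}, and then check that each criterion fixes the one free scalar. Existence of a map satisfying (1) is Proposition \ref{prop:existence_of_rex_moves}.

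Let $w = w_{s,t}$ be the longest element of $W_{\{s,t\}}$. Both ${}_s\underline{m}$ and ${}_t\underline{m}$ are reduced expressions for $w$. For such a reduced expression, the only monodromic subexpression evaluating to $w$ is the all-ones one, and it has defect $0$. Every other monodromic subexpression has some $0$ at a position in $K$. Since the expression is reduced, every label of the all-ones stroll is $U$, and a comparison of Bruhat strolls (as in the non-monodromic proof in \cite{EW, Abe19}) should show that any subexpression evaluating to $x<w$ has defect $>0$ whenever it is monodromic. I expect the cleanest argument for this to be the defect formula Lemma \ref{lem:defect_formula_for_pwx} applied to $\uH_{{}_s\underline{m}}^{\scrL}$: its coefficients $p^{x,\scrL}$ for $x \neq w$ lie in $v\Z[v]$. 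This holds because $B^{\scrL}_{{}_s\underline{m}}$ is, via the proof of Proposition \ref{prop:existence_of_rex_moves}, a tensor product of an endoscopic dihedral Bott--Samelson object with $R_{w^{\beta}}$, and the corresponding statement for reduced expressions in $W_{s,t}^{\scrL}$ is classical. By Corollary \ref{cor:soergel_hom_defects}, the degree-$\le 0$ part of the Hom space therefore has rank one, spanned by the double leaf $\dLL$ through $w$ with both subexpressions all-ones. Since $R$ lives in nonnegative degrees, the degree-$0$ homomorphisms are exactly $\k\cdot\dLL$.

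For (1), Lemma \ref{lem:ll_are_lin_indep} gives $\LL_{{}_s\underline{m},\underline{1}}(u_{{}_s\underline{m}}) = u_{\rex(w)}$. Indeed, $b_{\ux,\ue}$ for the all-ones subexpression with all labels $U$ is exactly the 1-tensor. The dual leaf $\overline{\LL}$ sends $u_{\rex(w)}$ to a nonzero element of the form $\lambda u_{{}_t\underline m}$ plus lower-support terms. To see this, project with $\pi^w$: by Proposition \ref{prop:intermediate_bases} the image is a basis vector, and since the 1-tensor generates the $w$-stalk, the $w$-component is a nonzero multiple $\lambda$ of that of $u_{{}_t\underline m}$. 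Hence any degree-$0$ map $c\,\dLL$ sending the 1-tensor to the 1-tensor must have $c\lambda = 1$, which pins down $c$. The existing $\beta_{s,t}$ is then the unique such map.

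For (2), the map $\epsilon^{{}_t\underline m}\circ c\,\dLL$ becomes $c\lambda\,\k\otimes\epsilon^{{}_s\underline m}$ after applying $\k\otimes_R(-)$. This uses that $\k\otimes_R B$ is spanned modulo the positive-degree part by the 1-tensor, and that $\epsilon^{\uw}(u_{\uw})=1$. Matching this with $\k\otimes\epsilon^{{}_s\underline m}$ again forces $c\lambda=1$. The main obstacle is the degree-$0$ rank-one computation in the monodromic case, specifically verifying the defect positivity for non-top monodromic subexpressions. My first approach is the reduction to the endoscopic dihedral group via Proposition \ref{prop:abe_endo} and Lemma \ref{lem:soergel_product_formulas}, where the classical fact applies.
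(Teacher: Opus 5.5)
Your overall structure matches the paper's. You show that the degree-$0$ part of $\Hom(B^{\scrL}_{{}_s\underline m},B^{\scrL}_{{}_t\underline m})$ is free of rank one over $\k$ using the double leaves basis, and then observe that each criterion fixes the one remaining scalar. Your treatment of that scalar is fine. It can even be simplified: $u_{{}_t\underline m}$ spans the lowest-degree piece of $B^{\scrL}_{{}_t\underline m}$, so any degree-$0$ map sends $u_{{}_s\underline m}$ to an exact $\k$-multiple of $u_{{}_t\underline m}$, and the ``lower-support terms'' you allow for vanish.

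The gap is in the rank-one step. You claim that every monodromic subexpression of the reduced expression ${}_s\underline m$ other than the all-ones one has positive defect. Equivalently, $p^{x,\scrL}_{{}_s\underline m}\in v\Z[v]$ for $x\neq w$, and you say the endoscopic dihedral version is classical. This is false already for $\fr{o}=1$ and $m=3$. Take $\ux=(s,t,s)$ and $\ue=(1,0,0)$: the decorations are $U1,U0,D0$, so $d(\ux,\ue)=0$ even though $\ux^{\ue}=s\neq sts$. Correspondingly $\uH_s\uH_t\uH_s=\uH_{sts}+\uH_s$ and $p^{s}_{(s,t,s)}=1+v^2$. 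So Corollary \ref{cor:soergel_hom_defects} does not give rank one by the route you describe.

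What you actually need is a statement about pairs. Whenever $x\neq w$, $\ue\in\Subexp^{\scrL}({}_s\underline m,x)$ and $\uf\in\Subexp^{\scrL}({}_t\underline m,x)$, you need $d({}_s\underline m,\ue)+d({}_t\underline m,\uf)>0$, with no pair having negative total defect. In the example, the only subexpression of $(t,s,t)$ evaluating to $s$ is $(0,1,0)$, which has defect $2$.

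After your reduction to the endoscopic dihedral group $W_v$, this follows from the Hecke algebra $\scrH(W_v)$:
\begin{enumerate}
\item The product $\uH_{s'}\uH_{t'}\cdots$ ($v$ factors) equals $\uH_{w_0}$ plus a $\Z_{\ge 0}$-combination of Kazhdan--Lusztig elements $\uH_x$ with $s'x<x$. The same holds with $s'$ and $t'$ exchanged.
\item $w_0$ is the only element having both $s'$ and $t'$ as left descents, and $\uH_x\in H_x+\sum_{y<x}v\Z[v]H_y$.
\item Hence $\sum_x p^{x}_{{}_{s'}\underline v}\,p^{x}_{{}_{t'}\underline v}$ lies in $1+v\Z[v]$. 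By Corollary \ref{cor:abe_soergel_hom_application}, this says exactly that the degree-$0$ space is free of rank one and there are no nonzero maps of negative degree.
\end{enumerate}
Alternatively, import the rank-one fact for $\Hom(B_{{}_{s'}\underline v},B_{{}_{t'}\underline v})$ in the non-monodromic dihedral category and transport it along $\Psi^{\alg}_{\scrL}$ and $(-)\otimes_R R_{w^\beta}$.

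The paper itself simply asserts this rank-one computation from Theorem \ref{thm:abe_dl_are_basis}. Once you repair this step, your argument coincides with the paper's.
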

\begin{proof}
    (1): It is clear from its defining condition that $\beta_{s,t}$ takes the 1-tensor to the 1-tensor. Moreover, we can compute from Theorem \ref{thm:abe_dl_are_basis} that the space of degree 0 morphisms $B_{{}_s \underline{m}}^{\scrL} \to B_{{}_t \underline{m}}^{\scrL}$ is a rank 1 $\k$-module generated by $\beta_{s,t}$.
    This then uniquely determines $\beta_{s,t}$.

    (2): Using the same argument as in (1), the uniqueness claim will be clear after proving that $\k \otimes_{R} (\epsilon^{{}_t \underline{m}} \circ \beta_{s,t}) = \k \otimes_{R} \epsilon^{{}_s \underline{m}}$.
    By Theorem \ref{thm:abe_dl_are_basis}, we can write
    \[\epsilon^{{}_t \underline{m}} \circ \beta_{s,t} = \sum_{\ue \in \Exp^{\scrL} ({}_s \underline{m}), e} c_{\ue} \LL_{{}_s \underline{m}, \ue}^{\scrL}\]
    for some $c_{\ue} \in R$. Let $\uf = (f_1, \ldots, f_m)$ denote the maximally 0 monodromic subexpression, i.e., $f_i = 0$ for $i \in K ({}_s \underline{m}, \scrL)$. 
    Now the degree of $\LL_{{}_s \underline{m}, \ue}^{\scrL}$ is $d ({}_s \underline{m}, \ue)$ and the degree of $\epsilon^{{}_t \underline{m}} \circ \beta_{s,t}$ is $m$.
    In particular, we have that $\deg \left( \LL_{{}_s \underline{m}, \ue}^{\scrL} \right) \leq m$ with equality if and only if $\ue = \uf$.
    As a consequence the constant $c_{\ue}$ has no component in degree $0$ unless $\ue = \uf$. Moreover, $\LL_{{}_s \underline{m}, \uf}^{\scrL} = \epsilon^{{}_s \underline{m}}$.
    These facts imply that 
    \[ \k \otimes_{R} (\epsilon^{{}_t \underline{m}} \circ \beta_{s,t}) = \k \otimes_{R} c_{\uf} \epsilon^{{}_s \underline{m}}.\]
    It remains to check that $c_{\uf} = 1$. To check this, we evaluate both sides at the 1-tensor,
    \[(\epsilon^{{}_t \underline{m}} \circ \beta_{s,t}) (u_{{}_s \underline{m}}) = \epsilon^{{}_t \underline{m}} (u_{{}_t \underline{m}}) = 1 =  \epsilon^{{}_s \underline{m}} (u_{{}_s \underline{m}}). \]
    Therefore, $c_{\uf} = 1$.
\end{proof}

\begin{proposition}\label{prop:duality_and_str_mors}
     Let $s \in S$. Recall that $D(M) = M$ for all $M \in \Amon{\scrL}{\scrL'}^{\BS} (\fr{h}, W, \fr{o})$.
     \begin{enumerate}
         \item If $\scrL s = \scrL$, then we have
         \[D (\nu^s) = \mu^s, \qquad D(\epsilon^s) = \eta^s, \qquad D(\cup^s) = \cap^s,\]
         \[D (\mu^s) = \nu^s, \qquad D(\eta^s) = \epsilon^s, \qquad D(\cap^s) = \cup^s.\]
         \item If $\scrL s \neq \scrL$, then we have
         \[D(\cup^s) = \cap^s \qquad \text{and} \qquad D(\cap^s) = \cup^s.\]
         \item If $t \in S$, then we have
         \[D (\beta_{s,t}) = \beta_{t,s}.\]
     \end{enumerate}
\end{proposition}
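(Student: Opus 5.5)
The plan is to separate the 1-color statements (1)–(2), where both morphisms are explicit, from (3), where we only know $\beta_{s,t}$ through a uniqueness criterion. In every case we fix the identification $D(B_{\uw}^{\scrL}) \cong B_{\uw}^{\scrL}$ from Lemma \ref{lem:dual_and_invs}. For $R_s$ this is $1^* \mapsto 1$. For $C_s$ it is the standard self-duality, which sends the dual basis of the left $R$-basis $\{1\otimes 1, \delta_s \otimes 1\}$ (or a dual pair adapted to $\partial_s$) to a fixed basis of $C_s$. Tensor products are then identified via $D(M\otimes_R N)\cong D(M)\otimes_R D(N)$ for left-free bimodules.

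For (1) and (2), the check is that under these identifications the transpose of each structure map agrees with the claimed map. Concretely, $D(\varphi)$ is characterized by $\langle D(\varphi)(f), m\rangle = \langle f, \varphi(m)\rangle$. It therefore suffices to evaluate both sides on the generators $1\otimes 1$ and $\delta_s\otimes 1$ and compare. This is the same computation as in \cite[\S2.6]{Abe19} and \cite{EW}, so I would cite it for $\scrL s = \scrL$. The nonmonodromic maps $\nu^s,\mu^s,\epsilon^s,\eta^s$ are literally the same bimodule maps; only the localization data is new, and $D$ of a morphism in $\Cmon{}{}$ is automatically a morphism. The relation $D(\cup^s)=\cap^s$ then follows from $\cup^s=\nu^s\circ\eta^s$, $\cap^s=\epsilon^s\circ\mu^s$ and contravariance of $D$. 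Since $D^2\cong\id$, the second row follows from the first.

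For $\scrL s\neq \scrL$, the bimodule $R_s\otimes_R R_s \cong R$ has rank one. Its dual pairs $1\otimes 1$ with $1\otimes1$, so the computation reduces to checking $D(\cup^s)(1\otimes1)=1=\cap^s(1\otimes 1)$.

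For (3), the map $D(\beta_{s,t})$ is a degree $0$ morphism $B_{{}_t\underline m}^{\scrL}\to B_{{}_s\underline m}^{\scrL}$. By Lemma \ref{lem:uniqueness_of_beta}(1), applied with $s,t$ swapped, it suffices to show $D(\beta_{s,t})(u_{{}_t\underline m}) = u_{{}_s\underline m}$. Evaluating on the $1$-tensor directly is awkward under duality. I would instead use criterion (2) of Lemma \ref{lem:uniqueness_of_beta}, which is dual to pairing with the $1$-tensor. The element $\epsilon^{\uw}$ is built from $\epsilon^s$ and identities on the $R_s$-factors, so by part (1) we have $D(\epsilon^{\uw}) = \eta^{\uw}$, the analogous composite of $\eta$'s. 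Dualizing the identity in criterion (2) for $\beta_{s,t}$ gives $D(\beta_{s,t})\circ \eta^{{}_t\underline m} = \eta^{{}_s\underline m}$ modulo positive-degree elements of $R$. Finally, $\eta^{\uw}(1)$ is $u_{\uw}$ up to adding terms involving $\delta$'s, which lie in lower label-lexicographic positions.

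The main obstacle is turning this into the exact criterion. The degree-$0$ space $\Hom(B_{{}_t\underline m}^{\scrL}, B_{{}_s\underline m}^{\scrL})$ is free of rank one over $\k$, generated by $\beta_{t,s}$, by Theorem \ref{thm:abe_dl_are_basis}. Hence $D(\beta_{s,t}) = c\,\beta_{t,s}$ for some $c\in\k$. To pin down $c=1$, I would compose with $\epsilon^{{}_s\underline m}$ on one side and $\eta^{{}_t\underline m}$ on the other. This gives a degree-$2\ell_{\scrL}$ endomorphism of $R_{w^\beta}$, that is, a polynomial, which can be computed in two ways. Via the $D$-adjunction it equals the corresponding composite with $\beta_{s,t}$, and the computation uses part (1). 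I would then compare leading coefficients after base change to $\k$, or after localizing to $Q$ at the top element $w_{s,t}$ where both maps act by the identity on the $1$-tensor. If this becomes messy, a fallback is the explicit construction in Proposition \ref{prop:existence_of_rex_moves}, $\beta_{s,t}=\beta_{s',t'}\otimes\id_{R_{w^\beta}}$. Combined with the known identity $D(\beta_{s',t'})=\beta_{t',s'}$ from \cite{Abe21} and compatibility of $D$ with the isomorphisms of Lemma \ref{lem:soergel_product_formulas}, this would give (3).
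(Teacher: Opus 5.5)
Your treatment of (1) and (2) matches the paper, which also does a direct check from the definitions as in \cite[Lemma 2.20]{Abe19}. Your reduction in (3) is also the paper's route. The degree-$0$ space $\Hom(B_{{}_t\underline{m}}^{\scrL},B_{{}_s\underline{m}}^{\scrL})$ is free of rank one over $\k$, so $D(\beta_{s,t})=c\,\beta_{t,s}$. By Lemma~\ref{lem:uniqueness_of_beta}(1), everything comes down to showing $D(\beta_{s,t})(u_{{}_t\underline{m}})=u_{{}_s\underline{m}}$, which the paper asserts is a check from the definitions.

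The gap is that nothing you propose actually determines $c$.
\begin{itemize}
\item The bridge you use, that $\eta^{\uw}(1)$ is $u_{\uw}$ plus lower terms, is false. We have $\eta^s(1)=\delta_s\otimes 1-1\otimes s(\delta_s)=1\otimes\delta_s-s(\delta_s)\otimes 1$. This has degree $+1$ and is the top-degree generator modulo $R^+$, whereas $u_s$ has degree $-1$. Under the self-duality normalized so that (1) holds, $\eta^s(1)$ corresponds to $\epsilon^s$, and $u_s$ corresponds to the functional $a\otimes b\mapsto a\,\partial_s(b)$.
\item Consequently, dualizing criterion (2) for $\beta_{s,t}$ only says that $D(\beta_{s,t})(\eta^{{}_t\underline{m}}(1))$ pairs to $1$ with $u_{{}_s\underline{m}}$. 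Writing also $D(\beta_{t,s})=c'\beta_{s,t}$, this yields only $cc'=1$, which already follows from $D^2\cong\id$.
\item The polynomial $\epsilon^{{}_s\underline{m}}\circ D(\beta_{s,t})\circ\eta^{{}_t\underline{m}}$ does not help either. It has degree $2\ell_{\scrL}(w_{s,t})>0$ whenever $W_{s,t}^{\scrL}\neq 1$, so it vanishes under $\k\otimes_R(-)$. It also only sees the localization at $w^{\beta}$ (where $\eta^{{}_t\underline{m}}$ lands and $\epsilon^{{}_s\underline{m}}$ is supported), not at $w_{s,t}$, so Remark~\ref{rem:loc_coeffs_for_beta} says nothing about it. Comparing your two computations would require $\epsilon^{{}_s\underline{m}}\circ\beta_{t,s}\circ\eta^{{}_t\underline{m}}=\epsilon^{{}_t\underline{m}}\circ\beta_{s,t}\circ\eta^{{}_s\underline{m}}\neq 0$. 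That involves the non-top localization coefficients of $\beta$ and is no easier than the claim itself.
\item The fallback only relocates the problem. You would need $D$ to carry the $1$-tensor-preserving isomorphisms of Lemma~\ref{lem:soergel_product_formulas} to $1$-tensor-preserving ones, which is the same kind of check. You would also need the non-monodromic identity $D(\beta_{s',t'})=\beta_{t',s'}$ for the restricted realization.
\end{itemize}

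What is missing is an explicit description of the $1$-tensor under duality. With the normalization above, $u_{\uw}$ corresponds to the nested functional $\psi_{\uw}$ built from $a\otimes b\mapsto a\,\partial_{s}(b)$ on $C$-factors and $1\mapsto 1$ on $R$-factors. Statement (3) then amounts to $\psi_{{}_t\underline{m}}\circ\beta_{s,t}=\psi_{{}_s\underline{m}}$. Both sides are left $R$-linear functionals of degree $-\ell_{\scrL}(w_{s,t})$, and these form a free $\k$-module of rank one, so a single evaluation suffices.

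Evaluate at the element $e^{\mathrm{top}}$ obtained by tensoring the vectors $\delta_{s_i}\otimes 1-1\otimes\delta_{s_i}$ on $C$-factors and $1$ on $R$-factors; it spans the $w_{s,t}$-component of the localization. For either expression, $\psi$ takes the value $(-1)^{\ell_{\scrL}(w_{s,t})}$ on $e^{\mathrm{top}}$. Moreover $\beta_{s,t}(e^{\mathrm{top}}_{{}_s\underline{m}})=e^{\mathrm{top}}_{{}_t\underline{m}}$ by Remark~\ref{rem:loc_coeffs_for_beta}. Equivalently, this follows from $\beta_{s,t}(u)=u$ together with balancedness, which makes the product of the positive roots $s_1\cdots s_{i-1}(\alpha_{s_i})$, $i\in K(\uw,\scrL)$, independent of the reduced expression. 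A computation of this sort is what the paper's ``check from definitions'' hides, and your proposal needs it to close (3).
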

\begin{proof}
    One can check (1) from definitions using the proof of \cite[Lemma 2.20]{Abe19}. Likewise, (2) can be similarly computed from definitions.
    We will now prove (3). Let $m = m_{s,t}$. By Lemma \ref{lem:uniqueness_of_beta} that $\beta_{t,s} : B_{{}_t \underline{m}} \to B_{{}_s \underline{m}}$ is the unique degree 0 morphism taking the 1-tensor to the 1-tensor.
    One can then check from definitions that $D (\beta_{s,t})$ also takes the 1-tensor to the 1-tensor. As a result, we may conclude that $\beta_{t,s} = D(\beta_{s,t})$.
\end{proof}
\begin{remark}\label{rem:upside_down_LL}
    From Proposition \ref{prop:duality_and_str_mors}, we can see that every upside down light leaf $\overline{\LL}_{\ux, \ue}^{\scrL}$ is generated under tensors and compositions by $\id_{B_s^{\scrL}}, \eta^s, \nu^s, \cap^s$, or the rex moves $\beta_{s,t}$.
\end{remark}
\subsection{Localization and Cellular Structure}

In this section, we will discuss some techniques for studying Bott--Samelson bimodules via localization.
We will then use localization to further study light leaves and double leaves. Our main result will be a generalization of Theorem \ref{thm:abe_dl_are_basis} for the ``ideal of lower terms'' which gives a cellular structure on the category of Bott--Samelson bimodules.

\subsubsection{The Localization Functor}

For each $\scrL, \scrL' \in \fr{o}$, we define a 1-category $\CmonQ{\scrL}{\scrL'} (\fr{h}, W, \fr{o})$ whose objects consist of tuples $(P^w)_{w \in W}$ such that
\begin{enumerate}
    \item $P^w$ is a graded $Q$-bimodule;
    \item $m \cdot q = w (q) \cdot m$ for any $m \in P^w$ and $q \in Q$;
    \item $P^w = 0$ if $w \notin \W{\scrL}{\scrL'}$ and is only nonzero for finitely many $w \in \W{\scrL}{\scrL'}$.
\end{enumerate}
A morphism in $\CmonQ{\scrL}{\scrL'} (\fr{h}, W, \fr{o})$ from $(P_1^w)_{w \in W}$ to $(P_2^w)_{w \in W}$ consists of a tuple $(\varphi_w)_{w \in W}$ where each $\varphi_w : P_1^w \to P_2^w$ is a $Q$-bimodule homomorphism.

There are bifunctors
\[(-) \otimes_Q (-) : \CmonQ{\scrL}{\scrL'} (\fr{h}, W, \fr{o}) \times \CmonQ{\scrL'}{\scrL''} (\fr{h}, W, \fr{o}) \to \CmonQ{\scrL}{\scrL''} (\fr{h}, W, \fr{o})\]
induced from the tensor product of graded $Q$-bimodules. More precisely, $(P_1 \otimes_Q P_2)^w = \bigoplus_{xy = w} P_1^x \otimes_Q P_2^y$.
These bifunctors are suitably associative and there is a unit object $Q_e$ given by $(Q_e)^e = Q$ and $(Q_e)^w = 0$ if $w \neq e$.
The tensor product allows us to assemble a 2-category $\CmonQ{}{} (\fr{h}, W, \fr{o})$ with object set $\fr{o}$ and whose morphism categories are given by $\CmonQ{\scrL}{\scrL'} (\fr{h}, W, \fr{o})$.
We sometimes omit the tuple $(\fr{h}, W, \fr{o})$ from the notation for $\CmonQ{\scrL}{\scrL'} (\fr{h}, W, \fr{o})$ and $\CmonQ{}{} (\fr{h}, W, \fr{o})$.

The assignment $(M, (M_Q^w)_{w \in W}, \xi_M) \mapsto (M_Q^w)_{w \in W}$ defines a 2-functor 
\[(-)_Q : \Cmon{}{} (\fr{h}, W, \fr{o}) \to \CmonQ{}{} (\fr{h}, W, \fr{o})\]
called the \emph{localization functor}.
Since $M \to M \otimes_R Q$ is injective ($M$ is free over $R$), the localization functor is faithful on morphism categories.

\subsubsection{Localizing Bott--Samelson Bimodules}

For $w \in W$, we define $Q_w \in \CmonQ{\scrL}{\scrL w} (\fr{h}, W, \fr{o})$ by 
\begin{itemize}
    \item $(Q_w)^w = Q$ as a left $Q$-module and the right action of $q \in Q$ is given by $m \cdot q = w(q) \cdot m$;
    \item $(Q_w)^x = 0$ if $x \neq w$.
\end{itemize}
Any object in $\CmonQ{\scrL}{\scrL'} (\fr{h}, W, \fr{o})$ is isomorphic to a direct sum of $Q_w (n)$'s where $w \in \W{\scrL}{\scrL'}$ and $n \in \Z$. Moreover, there is a canonical isomorphism $Q_x \otimes_Q Q_y \cong Q_{xy}$ via $f \otimes g \mapsto f x(g)$ characterized as the unique $Q$-bimodule morphism taking $1 \otimes 1$ to $1$.
 
By (\ref{eq:localization_of_C_s}), we have isomorphisms of graded $Q$-bimodules
\begin{equation}\label{eq:localization_of_Bs}
    B_{s, Q}^{\scrL} \cong \begin{cases} Q_e \oplus Q_s (1) & \scrL s = \scrL, \\ Q_s & \scrL s \neq \scrL.\end{cases}
\end{equation}
Let $\uw = (s_1, \ldots, s_k) \in \Exp (W)$ and $\ue = (e_1, \ldots, e_k) \in \Subexp^{\scrL} (\uw)$. Define the \emph{relative length} of $\uw$ with respect to $\ue$ by
\[\ell_{\scrL} (\uw, \ue) = \# \{i \mid e_i = 1\} + \ell_{\scrL} (\uw) - \ell (\uw).\] 
We can define the \emph{$\ue$-component} of $B_{\uw, Q}^{\scrL}$, denoted $B_{\uw, Q}^{\ue, \scrL}$, as the direct summand corresponding to the bimodule 
$Q_{s_1^{e_1}} \otimes_Q Q_{s_2^{e_2}} \otimes_Q \ldots \otimes_Q Q_{s_k^{e_k}} (\ell_{\scrL} (\uw, \ue))\cong Q_{\uw^{\ue}} (\ell_{\scrL} (\uw, \ue))$ under the isomorphisms afforded by (\ref{eq:localization_of_Bs}).
The following lemma then follows via induction on the length of an expression along with (\ref{eq:localization_of_Bs}).

\begin{lemma}\label{lem:localization_of_BS_bimods}
    Let $\uw \in \Exp (W)$. There is a decomposition of graded $Q$-bimodules,
    \[B_{\uw, Q}^{\scrL} \cong \bigoplus_{\ue \in \Subexp^{\scrL} (\uw)} B_{\uw, Q}^{\ue, \scrL} \cong \bigoplus_{\ue \in \Subexp^{\scrL} (\uw)} Q_{\uw^{\ue}} (\ell_{\scrL} (\uw, \ue)).\]
\end{lemma}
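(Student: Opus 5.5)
We argue by induction on the length $k$ of $\uw = (s_1, \ldots, s_k)$. The inductive step tensors the decomposition for $\uw_{\leq k-1}$ on the right with the decomposition (\ref{eq:localization_of_Bs}) of $B_{s_k, Q}^{\scrL \uw_{\leq k-1}}$. We use that the localization functor $(-)_Q$ is a 2-functor, so $B_{\uw,Q}^{\scrL} \cong B_{\uw_{\leq k-1},Q}^{\scrL} \otimes_Q B_{s_k,Q}^{\scrL \uw_{\leq k-1}}$ in $\CmonQ{\scrL}{\scrL\uw}$. The base case $\uw = \emptyset$ is trivial: $B_{\emptyset}^{\scrL} = R$, so $B_{\emptyset,Q}^{\scrL} = Q_e$. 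Here $\Subexp^{\scrL}(\emptyset) = \{\emptyset\}$, and the shift is $\ell_{\scrL}(\emptyset,\emptyset) = 0$.

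For the inductive step, write $\uy = \uw_{\leq k-1}$, $s = s_k$ and $\scrL' = \scrL \uy$. We split into two cases.

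If $\scrL' s \neq \scrL'$, then $k \notin K(\uw,\scrL)$. Every $\scrL$-monodromic subexpression of $\uw$ has the form $(\ue,1)$ with $\ue \in \Subexp^{\scrL}(\uy)$, and $B_{s,Q}^{\scrL'} \cong Q_s$. Tensoring gives summands $Q_{\uy^{\ue}}(\ell_{\scrL}(\uy,\ue)) \otimes_Q Q_s \cong Q_{\uy^{\ue}s}(\ell_{\scrL}(\uy,\ue))$, using the canonical isomorphism $Q_x \otimes_Q Q_y \cong Q_{xy}$. The shift is unchanged because $\#\{i : e_i = 1\}$ and $\ell(\uw)$ both increase by one while $\ell_{\scrL}(\uw) = \ell_{\scrL}(\uy)$.

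If $\scrL' s = \scrL'$, then $k \in K(\uw,\scrL)$, so both $(\ue,0)$ and $(\ue,1)$ are monodromic, and $B_{s,Q}^{\scrL'} \cong Q_e \oplus Q_s(1)$. The summand $Q_e$ matches $(\ue,0)$. Its shift is again unchanged, since $\ell_{\scrL}$ and $\ell$ both increase by one and $\#\{e_i=1\}$ does not. The summand $Q_s(1)$ matches $(\ue,1)$. Here the shift increases by exactly one: $\#\{e_i = 1\}$, $\ell_{\scrL}$ and $\ell$ each increase by one, giving a net change of $+1$, which agrees with the twist $(1)$.

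Tensor products over $Q$ distribute over the finite direct sums, and a grading shift on either factor passes to the tensor product. Hence we obtain $B_{\uw,Q}^{\scrL} \cong \bigoplus_{\ue} Q_{\uw^{\ue}}(\ell_{\scrL}(\uw,\ue))$, indexed by $\Subexp^{\scrL}(\uw)$.

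It remains to identify each piece with the $\ue$-component $B_{\uw,Q}^{\ue,\scrL}$ as defined before the statement. That definition is the summand corresponding to $Q_{s_1^{e_1}} \otimes_Q \cdots \otimes_Q Q_{s_k^{e_k}}$ under the iterated isomorphisms (\ref{eq:localization_of_Bs}). This is exactly the summand produced by the induction, so the first isomorphism in the statement is tautological once the second is established.

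The only real care point is the grading bookkeeping, namely checking that the shift conventions in (\ref{eq:localization_of_Bs}) and in $\ell_{\scrL}(\uw,\ue)$ agree, and confirming that the sign of the shift matches. Beyond this, the argument is routine.
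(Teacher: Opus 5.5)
Your proposal is correct and is exactly the paper's argument: the paper simply says the lemma follows by induction on the length of $\uw$ from (\ref{eq:localization_of_Bs}), and your case split on whether $\scrL' s = \scrL'$, together with the tautological identification of the $\ue$-components, is precisely what that induction amounts to. One caveat on the grading point you single out: your shifts are consistent with (\ref{eq:localization_of_Bs}) as written, but for degree-preserving isomorphisms that input is itself off by a parity, since $Q$ lives in even degrees while the generator $\delta_s \otimes 1 - 1 \otimes s(\delta_s)$ of $(C_s)_Q^e$ in (\ref{eq:localization_of_C_s}) sits in degree $1$ of $C_s = R \otimes_{R^s} R(1)$, so $(C_s)_Q^e \cong Q_e(\pm 1)$ and the twist on every summand should be $\equiv \ell_{\scrL}(\uw) \pmod 2$ (the decomposition up to shift, which is all the later localization-coefficient arguments use, is unaffected).
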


Let $\varphi : B_{\ux}^{\scrL} \to B_{\uy}^{\scrL}$ be a morphism of Bott--Samelson bimodules.
For $\scrL$-monodromic subexpressions $\ue$ of $\ux$ and $\uf$ of $\uy$, we can consider the map 
\[(\varphi_Q)_{\ue}^{\uf} : B_{\ux, Q}^{\ue, \scrL} \to B_{\uy, Q}^{\uf, \scrL}\]
given by the composition
\[B_{\ux, Q}^{\ue, \scrL} \hookrightarrow B_{\ux, Q}^{\scrL} \stackrel{\varphi_Q}{\to} B_{\uy, Q}^{\scrL} \twoheadrightarrow B_{\uy, Q}^{\uf, \scrL}.\]
Since both the source and target of this map are 1-dimensional $Q$-vector spaces, it is determined by a scalar $c_{\ue}^{\uf} (\varphi) \in Q$.
Note that if $\ux^{\ue} \neq \uy^{\uf}$, then $c_{\ue}^{\uf} (\varphi) = 0$. We call these scalars the \emph{localization coefficients} of $\varphi$.
Since $(-)_Q$ is faithful, it follows that $\varphi$ is uniquely determined by its matrix of localization coefficients $\left( c_{\ue}^{\uf} (\varphi)\right)_{\ue, \uf}$.

\begin{example}\label{ex:localization_of_one_color_morphisms}
    We write the matrix of localization coefficients for various morphisms of Bott--Samelson bimodules (cf., \cite{EW17}). 
    The top left corner indicates the morphism, the column headers indicate the subexpressions of the domain, and the row headers indicate the subexpressions of the target.

    First, consider the case where $\scrL s = \scrL$.

    \begin{center}
    \begin{tabular}[t]{c | c  c}
        $\epsilon^s$ & 0 & 1 \\
        \hline
        $\emptyset$ & $\alpha_s$ & 0 
    \end{tabular}\qquad\qquad
    \begin{tabular}[t]{c | c c c c}
        $\mu^s$ & 00 & 01 & 10 & 11 \\
        \hline
        0 & 1 & 0 & 0 & 1 \\
        1 & 0 & 1 & 1 & 0
    \end{tabular}\qquad\qquad
    \begin{tabular}[t]{c | c c c c}
        $\cap^s$ & 00 & 01 & 10 & 11 \\
        \hline
        $\emptyset$ & $\alpha_s$ & 0 & 0 & $\alpha_s$ 
    \end{tabular}
\end{center}

\begin{center}
    \begin{tabular}[t]{c | c }
        $\eta^s$ & $\emptyset$ \\
        \hline
        0 & 1 \\
        1 & 0 
    \end{tabular}\qquad\qquad
    \begin{tabular}[t]{c | c c }
        $\nu^s$ & 0 & 1 \\
        \hline
        00 & $1/\alpha_s$ & 0 \\
        01 & 0 & $1/\alpha_s$ \\
        10 & 0 & $-1/\alpha_s$ \\
        11 & $-1/\alpha_s$ & 0 
    \end{tabular}\qquad\qquad
    \begin{tabular}[t]{c | c }
        $\cup^s$ & $\emptyset$ \\
        \hline
        00 & $1/\alpha_s$  \\
        01 & 0  \\
        10 & 0  \\
        11 & $-1/\alpha_s$ 
    \end{tabular}
\end{center}

    Next, consider the case where $\scrL s \neq \scrL$.

    \begin{center}
    \begin{tabular}{c | c }
        $\cap^s$ & 11 \\
        \hline
        $\emptyset$ & 1
    \end{tabular}\qquad\qquad
    \begin{tabular}{c | c }
        $\cup^s$ & $\emptyset$ \\
        \hline
        11 & 1
    \end{tabular}
\end{center}
\end{example}

\begin{remark}\label{rem:loc_coeffs_for_beta}
    The localization coefficients for $\beta_{s,t} : B_{{}_s \underline{m}}^{\scrL} \to B_{{}_t \underline{m}}^{\scrL}$ are known for being rather difficult to compute.
    In the non-monodromic case a formula is given in \cite{Abe21} and \cite{EW17}.
    By applying the monodromic-endoscopic equivalence, one can deduce a similar formula in the monodromic case.
    We only need the following corollary of this formula \cite[(2.31)]{EW17}: if $\ue$ and $\uf$ are the all 1's subexpressions, then $c_{\ue}^{\uf} (\beta_{s,t}) = 1$.
\end{remark}

\subsubsection{Localizing Light Leaves}

We will now study the localization coefficients for light leaves and double leaves.
Let $\LL_{\ux, \ue}^{\scrL} : B_{\ux}^{\scrL} \to B_{\uw}^{\scrL}$ be a light leaf. 
We write $c_{\ue, \ue'}$ for the localization coefficient $c_{\ue'}^{\uf'} (\LL_{\ux, \ue}^{\scrL})$ where $\uf'$ is the all 1's subexpression of $\uw$.
The following is a monodromic variation of what is called ``path dominance upper triangularity'' in \cite{EW}.

\begin{proposition}\label{prop:path_dominance_upper_triangularity}
    Let $\ux \in \Exp (W)$ and $\ue, \ue' \in \Subexp^{\scrL} (\ux)$.
    If $c_{\ue'}^{\ue} \neq 0$, then $\ue' \prec \ue$. Moreover, $c_{\ue}^{\ue}$ is nonzero. 
\end{proposition}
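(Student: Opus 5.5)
The plan is to induct on the length of $\ux$ and follow the construction $\LL_i^{\scrL} = \phi_i \circ (\LL_{i-1}^{\scrL} \otimes \id)$, as in the proof of path dominance upper triangularity in \cite{EW}. Localization is a 2-functor and is faithful. So the localized map $(\LL_{\ux,\ue}^{\scrL})_Q$ is the composite of the localized maps $(\phi_i)_Q$ and $(\LL_{i-1}^{\scrL}\otimes\id)_Q$. Each of these is a matrix between the summands indexed by monodromic subexpressions, using the decomposition of Lemma \ref{lem:localization_of_BS_bimods}. Tensoring with $\id_{B_{s_i}}$ acts componentwise: the $(\uf,\uf')$ entry paired with the $(g,g)$ entry of the last factor.

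Expanding the composite, the coefficient $c_{\ue}^{\ue'}$ is a sum over chains of components $\ug_j \in \Subexp^{\scrL}(\uw_j)$, one for each $0 \le j \le k$. Here $\ug_j$ is the component of $B_{\uw_j}^{\scrL}$ that the $\ue'_{\le j}$-component passes through after $\LL_j^{\scrL}$, and $\ug_k$ is the all-ones subexpression $\uf'$ of $\uw$. I use two facts recorded earlier.
\begin{enumerate}
\item Localization coefficients vanish between components with different evaluations. Hence every chain contributing a nonzero term satisfies $\uw_j^{\ug_j} = \ux_{\le j}^{\ue'_{\le j}} = x'_j$, the $j$-th step of the Bruhat stroll of $\ue'$.
\item $\uw_j = \rex(w_j)$ is a reduced expression for $w_j$. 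By the subword property, every subexpression of it evaluates to an element $\le w_j$.
\end{enumerate}
Together these give $x'_j \le w_j$ for all $j$ whenever $c_{\ue}^{\ue'}\neq 0$, i.e.\ $\ue' \prec \ue$. This part seems essentially formal.

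For the diagonal coefficient, I would first note that the only subexpression of the reduced expression $\uw_j$ evaluating to $w_j$ is the all-ones one. So when $\ue'=\ue$, the argument above forces every intermediate $\ug_j$ to be the all-ones component, and the sum has a single term. Therefore $c_{\ue}^{\ue}$ is the product over $i$ of the entry of $(\phi_i)_Q$ from (all-ones of $\uw_{i-1}$) $\otimes\, e_i$ to all-ones of $\uw_i$.

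I then compute each $\phi_i$ case by case.
\begin{itemize}
\item Every rex move $\beta_{\uy,\uz}$ between reduced expressions has all-ones to all-ones coefficient $1$. This holds for each $\beta_{s,t}$ by Remark \ref{rem:loc_coeffs_for_beta}, and for composites because only the all-ones component has the right evaluation.
\item From Example \ref{ex:localization_of_one_color_morphisms}: $\epsilon^s$ contributes $\alpha_s$ in case $U0$, and the identity contributes $1$ in case $U1$.
\item In case $D0$, $\mu^s$ sends $10\mapsto 1$ with coefficient $1$.
\item In case $D1$ with $i\in K$, $\cap^s$ sends $11\mapsto\emptyset$ with coefficient $\alpha_s$. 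With $i\notin K$, it contributes $1$.
\end{itemize}
So $c_{\ue}^{\ue}$ is a product of roots and units, which is nonzero in $Q$.

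The main obstacle I expect is bookkeeping rather than substance. One point is to justify rigorously that localizing a tensor product of morphisms acts componentwise on the subexpression decomposition, compatibly with the grading shifts $\ell_{\scrL}(\uw,\ue)$. The other is to make sure the rex moves in the $D$ cases (through $\rex_{s_i}(w_i)$) genuinely have coefficient $1$ on the top components even in the monodromic setting. For the latter, I would reduce via the construction in Proposition \ref{prop:existence_of_rex_moves} and the endoscopic factorization to the non-monodromic statement \cite[(2.31)]{EW17}.
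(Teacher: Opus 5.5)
Your proposal is correct and follows the paper's argument. Upper triangularity comes from the subword property for the reduced targets $\uw_j = \rex(w_j)$ at each stage, and nonvanishing of $c_{\ue}^{\ue}$ comes from the all-ones component being the only one of $B_{\uw_j}^{\scrL}$ with evaluation $w_j$; the paper packages this as an induction stripping off $U0$/$D0$ steps followed by Lemma \ref{lem:loc_of_shrinking_expression_moves}, whereas you multiply the per-step entries of $(\phi_i)_Q$ directly.

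One harmless correction to your case list: $\epsilon^{s_i}$ and $\cap^{s_i}$ sit to the right of an identity on a component of evaluation $w_{i-1}$ or $w_{i-1}s_i$. So the $U0$ and $D1$ (with $i \in K(\ux,\scrL)$) factors are $\pm w_{i-1}(\alpha_{s_i})$ rather than $\alpha_{s_i}$ (the paper uses the same shorthand), and these are still nonzero, indeed invertible, in $Q$.
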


The argument for Proposition \ref{prop:path_dominance_upper_triangularity} will follow the proof of \cite[Proposition 6.6]{EW} closely.
Before we can prove the proposition, we need the following lemma.
\begin{lemma}\label{lem:loc_of_shrinking_expression_moves}
    Let $\phi : B_{\ux}^{\scrL} \to B_{\uy}^{\scrL}$ be a morphism of Bott--Samelson bimodules consisting solely of compositions and tensors of morphisms of the form $\id_{B_s^{\scrL}}$, $\cap^s$, and $\beta_{s,t}$.
    Let $\ue$ and $\uf$ denote the all 1's subexpressions of $\ux$ and $\uy$ respectively.
    Then $c_{\ue}^{\uf} (\phi) \neq 0$. 
\end{lemma}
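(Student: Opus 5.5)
The plan is to induct on the number of generating morphisms in a fixed presentation of $\phi$. The key observation is that the all 1's subexpression behaves well under horizontal and vertical composition of localization coefficients. Under the decomposition of Lemma \ref{lem:localization_of_BS_bimods}, the all 1's component of $B_{\ux\ux',Q}^{\scrL}$ is the tensor product of the all 1's components of $B_{\ux,Q}^{\scrL}$ and $B_{\ux',Q}^{\scrL \ux}$. Consequently, for a tensor product $\phi\otimes\phi'$ the all-1's-to-all-1's coefficient is (up to twisting by an element of $W$, which is an automorphism of $Q$ and so preserves nonvanishing) the product of the corresponding coefficients of $\phi$ and $\phi'$. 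For $\phi$ the identity on some $B_{\uz}^{\scrL}$ this coefficient is $1$.

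For vertical composition $\psi\circ\phi$ with $\phi: B_{\ux}^{\scrL}\to B_{\uy}^{\scrL}$ and $\psi: B_{\uy}^{\scrL}\to B_{\uz}^{\scrL}$, we have
\[ c_{\ue}^{\ug}(\psi\circ\phi)=\sum_{\uf} c_{\uf}^{\ug}(\psi)\, c_{\ue}^{\uf}(\phi),\]
so cross terms must be controlled. Here the shrinking nature of the generators is essential. I claim that for each generator $\cap^s$, $\beta_{s,t}$ and $\id$ (and hence, by the tensor observation, for any composite of tensors of these), the coefficient $c_{\ue}^{\uf}$ with $\ue$ the all 1's subexpression of the source is nonzero \emph{only} when $\uf$ is the all 1's subexpression of the target. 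Said differently, the all 1's component of the source maps into the all 1's component of the target. Granting this, the sum above collapses to the single term $\uf = $ all 1's, and the composite coefficient is the product of two nonzero elements of the domain $Q$, hence nonzero.

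The claim for generators is checked directly from Example \ref{ex:localization_of_one_color_morphisms}. For $\cap^s$ with $\scrL s=\scrL$, the column $11$ has only the entry $\alpha_s\neq 0$ in row $\emptyset$, and $\emptyset$ is the all 1's subexpression of the empty expression. For $\cap^s$ with $\scrL s\neq\scrL$ the unique coefficient is $1$. The identity is clear. For $\beta_{s,t}$, the all 1's subexpression of ${}_s\underline{m}$ evaluates to the longest element $w_{s,t}$. Any subexpression $\uf$ of ${}_t\underline{m}$ with $\uf$ evaluating to $w_{s,t}$ must be the all 1's one, since ${}_t\underline{m}$ is reduced of length $\ell(w_{s,t})$. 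Localization coefficients vanish between components with different evaluations, so the only possible nonzero entry in that column is to the all 1's component, and Remark \ref{rem:loc_coeffs_for_beta} gives that it equals $1$. The same evaluation argument in fact gives the claim uniformly. Each generator maps $B_{\ux}^{\scrL}$ to $B_{\uy}^{\scrL}$ where the evaluation of $\ux$ equals that of $\uy$ and $\ell(\uy)\le\ell(\ux)$. However, this does not by itself force uniqueness when $\uy$ is not reduced, so I would rely on the generator-by-generator check plus the tensor compatibility.

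The main obstacle is justifying the tensor-product compatibility rigorously in the monodromic setting. One must verify that the isomorphisms in (\ref{eq:localization_of_Bs}) identify the all 1's summand of a tensor product with the tensor product of all 1's summands, and that the resulting scalar is the product of coefficients twisted by the relevant element $\uw^{\ue}$ acting on $Q$. This requires tracking that $B_{\uw,Q}^{\ue,\scrL}$ is defined precisely via iterated tensor products of the summands $Q_{s_i^{e_i}}$. Once that bookkeeping is in place, induction on the length of the word in generators finishes the proof.
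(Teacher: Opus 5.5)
Your argument is correct and matches the paper's proof. The paper also checks, generator by generator, that localization sends the all 1's component into the all 1's component, using that ${}_t\underline{m}$ is reduced in the case of $\beta_{s,t}$. It then reads off the nonvanishing of each generator's coefficient from Example~\ref{ex:localization_of_one_color_morphisms} and Remark~\ref{rem:loc_coeffs_for_beta}, so the composite coefficient is a product of nonzero elements.

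The only cosmetic difference is bookkeeping. The paper writes $\phi$ as a vertical composite of single generators tensored with identities and inducts on the number of factors. It dispatches the tensor compatibility you flag as the main obstacle by citing 2-functoriality of $(-)_Q$.
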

\begin{proof}
    Write $\phi = \phi_1 \circ \phi_{2} \circ \ldots \circ \phi_n$ such that $\phi_1$ is an identity morphism and each $\phi_i$ for $i > 1$ is either a morphism of the form $\cap^s$ or $\beta_{s,t}$ with identity morphisms tensored on either side.
    We will argue by induction on $n$. The claim is obvious when $n = 1$.
    Write $\phi' = \phi_1 \circ \ldots \circ \phi_{n-1} : B_{\uw}^{\scrL} \to B_{\uy}^{\scrL}$ and $\phi_n : B_{\ux}^{\scrL} \to B_{\uw}^{\scrL}$.
    Let $\ue'$ be the all 1's subexpression for $\uw$. 
    
    We claim that the following diagram commutes:
    \begin{equation}\label{eq:loc_of_shrinking_moves_1}
        \begin{tikzcd}
            {B_{\ux, Q}^{\ue, \scrL}} \arrow[r, hook] \arrow[d, hook] & {B_{\ux, Q}^{\scrL}} \arrow[r, "(\phi_n)_Q"]  & {B_{\uw, Q}^{\scrL}}                       \\
            {B_{\ux, Q}^{\scrL}} \arrow[r, "(\phi_n)_Q"]                  & {B_{\uw, Q}^{\scrL}} \arrow[r, two heads] & {B_{\uw, Q}^{\ue', \scrL}} \arrow[u, hook]
            \end{tikzcd}
    \end{equation}
    We will check (\ref{eq:loc_of_shrinking_moves_1}) case by case.
    The claim is equivalent to showing that $(\phi_n)_Q \left( B_{\ux, Q}^{\ue, \scrL} \right) \subseteq B_{\uw, Q}^{\ue', \scrL}$. By 2-functorality of localization, we can check this when $\phi_n$ is of the form $\id_{B_s^{\scrL}}$, $\cap^s$, and $\beta_{s,t}$.
    This is obvious when $\phi_n = \id_{B_s^{\scrL}}$ or $\phi_n = \cap^s$ (with either $\scrL s = \scrL$ or $\scrL s \neq \scrL$). 
    When $\phi_n = \beta_{s,t}$, the only subexpression of ${}_t \underline{m}$ whose evaluation is $w_{s,t}$ is the all 1's subexpression (since ${}_t \underline{m}$ is reduced).
    As a result, there are no nonzero maps between $B_{{}_s \underline{m}, Q}^{\ue, \scrL}$ and $B_{{}_t \underline{m}, Q}^{\uf', \scrL}$ unless $\uf'$ is the all 1's subexpression.
    Therefore, (\ref{eq:loc_of_shrinking_moves_1}) commutes. 

    By induction, it then suffices to show that $c_{\ue}^{\ue'} (\phi_n) \neq 0$. This can be checked case by case using the tables in Example \ref{ex:localization_of_one_color_morphisms} and the comments in Remark \ref{rem:loc_coeffs_for_beta}.
\end{proof}

\begin{midsecproof}{Proposition \ref{prop:path_dominance_upper_triangularity}}
    We will first show upper-triangularity.
    Write $w_k$ for the element expressing $(\uw_{\leq k}, \ue_{\leq k})$ and $v_k$ for the element expressing $(\uw_{\leq k}, \ue_{\leq k}')$.
    Write $\LL_k$ for the light leaf constructed after $k$-steps.
    Since the target of $\LL_k$ is a reduced expression $\uw_k$ for $w_k$, we have that $c_{\ue_{\leq k}}^{\ue_{\leq k}'} (\LL_k) = 0$ unless $v_k \leq w_k$.
    Therefore, $c_{\ue'}^{\ue} = 0$ unless $v_k \leq w_k$ for every $k$. This is the precise condition for $\ue' \prec \ue$.

    It remains to show that $c_{\ue}^{\ue} \neq 0$. We first claim that $c_{\ue}^{\ue} = q \cdot c_{\uf'}^{\uf} (\phi)$ where $q \in Q^{\times}$ and $\phi$ is some morphism of Bott--Samelson bimodules consisting solely of compositions and tensors of morphisms of the form $\id_{B_s^{\scrL}}$, $\cap^s$, and $\beta_{s,t}$, and $\uf, \uf'$ are all 1's subexpressions.
    Let $\ux = (s_1, \ldots, s_k)$. We will argue inductively on $k$.
    Suppose $e_k$ is decorated by $U0$, then by Example \ref{ex:localization_of_one_color_morphisms}, we have that
    \[c_{\ue}^{\ue} = \alpha_{s_k} c_{\ue_{\leq k -1}}^{\ue_{\leq k-1}}.\]
    If $e_k$ is decorated by $D0$, then one can check using Example \ref{ex:localization_of_one_color_morphisms} and from definitions that
     \[c_{\ue}^{\ue} = c_{\ue_{\leq k}}^{(1, \ldots, 1)} \left( \beta_{\uw_{k-1, s_k}, \uw_k} \circ \beta_{\ux_{\leq k-1}, \uw_{\leq k-1, s_k}} \circ \LL_{k-1}\right).\]
    Note that the right-hand side of the above equation is still $c_{\ue_{\leq k -1}}^{\ue_{\leq k-1}}$ but for a different choice of $\LL$-datum in defining the light leaf.
    As a result, by induction we can reduce to checking that $c_{\ue}^{\ue} \neq 0$ when $\ue$ is the all 1's subexpression.
    The proposition then follows from Lemma \ref{lem:loc_of_shrinking_expression_moves}.
\end{midsecproof}

Let $\dLL_{\ue, \uf}^{\scrL} : B_{\ux}^{\scrL} \to B_{\uy}^{\scrL}$ be a double leaf. We write $c_{\ue, \ue'}^{\uf, \uf'}$ for the localization coefficient $c_{\ue'}^{\uf'} (\dLL_{\ue, \uf}^{\scrL})$.

\begin{corollary}\label{cor:Bruhat_upper_triangularity}
    Let $\ux, \uy \in \Exp (W)$, $\ue, \ue' \in \Subexp^{\scrL} (\ux)$, and $\uf, \uf' \in \Subexp^{\scrL} (\uy)$.
    \begin{enumerate}
        \item $c_{\ue, \ue'}^{\uf, \uf'}= 0$ unless both $\ue' \prec \ue$ and $\uf' \prec \uf$. In particular, if $c_{\ue, \ue'}^{\uf, \uf'} \neq 0$, then $\ux^{\ue'} \leq \ux^{\ue}$. 
        \item $c_{\ue, \ue'}^{\uf, \uf'} \neq 0$. 
    \end{enumerate}
\end{corollary}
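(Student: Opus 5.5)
Write $\uw=\rex(w)$ with $w=\ux^{\ue}=\uy^{\uf}$, so that $\dLL_{\ue,\uf}^{w,\scrL}=\overline{\LL}_{\uy,\uf}^{\scrL}\circ \LL_{\ux,\ue}^{\scrL}$ factors through $B_{\uw}^{\scrL}$. I read (2) as the diagonal statement $c_{\ue,\ue}^{\uf,\uf}\neq 0$. The plan is to compute the localization coefficients of a double leaf from those of its two halves. The localization functor $(-)_Q$ is a 2-functor, and Lemma \ref{lem:localization_of_BS_bimods} decomposes $B_{\uw,Q}^{\scrL}$ into the summands $B_{\uw,Q}^{\ug,\scrL}$. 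Inserting this decomposition in the middle gives
\[c_{\ue,\ue'}^{\uf,\uf'}=\sum_{\ug\in\Subexp^{\scrL}(\uw)} c_{\ue'}^{\ug}\bigl(\LL_{\ux,\ue}^{\scrL}\bigr)\, c_{\ug}^{\uf'}\bigl(\overline{\LL}_{\uy,\uf}^{\scrL}\bigr),\]
where only $\ug$ with $\uw^{\ug}=\ux^{\ue'}=\uy^{\uf'}$ contribute.

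\textbf{Step 1 (lower half).} I would strengthen Proposition \ref{prop:path_dominance_upper_triangularity} to arbitrary target components $\ug$, not only the all 1's one. Its proof goes through verbatim. At step $k$ the target of $\LL_k$ is the reduced expression $\uw_k$, so any component $\ug_k$ evaluates to an element $\leq w_k$. Hence $c_{\ue'}^{\ug}(\LL_{\ux,\ue}^{\scrL})\neq 0$ forces $v_k\leq w_k$ for all $k$, that is, $\ue'\prec\ue$.

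\textbf{Step 2 (upper half).} I would relate the coefficients of $\overline{\LL}_{\uy,\uf}^{\scrL}=D(\LL_{\uy,\uf}^{\scrL})$ to those of $\LL_{\uy,\uf}^{\scrL}$. By definition $D(M)_Q^x=\Hom_Q(M_Q^x,Q)$, and I expect the duality to respect the decomposition of Lemma \ref{lem:localization_of_BS_bimods} summand by summand. Concretely, $D$ should send the summand $B^{\ug,\scrL}_{Q}$ to the dual of the same summand, up to an invertible normalization in $Q$. Then $c_{\ug}^{\uf'}(D\varphi)$ equals $c_{\uf'}^{\ug}(\varphi)$ times a nonzero element of $Q$. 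I would check this compatibility on the generating morphisms using Proposition \ref{prop:duality_and_str_mors} and the tables of Example \ref{ex:localization_of_one_color_morphisms}. For instance, $\epsilon^s$ and $\eta^s$ have transposed supports, with entries differing by the factor $\alpha_s$. The compatibility then propagates through tensor products and compositions. Combined with Step 1 applied to $\uy$, this gives $c_{\ug}^{\uf'}(\overline{\LL}_{\uy,\uf}^{\scrL})=0$ unless $\uf'\prec\uf$. Together with Step 1 this proves (1). The ``in particular'' follows by comparing the last elements of the Bruhat strolls.

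\textbf{Step 3 (diagonal).} Take $\ue'=\ue$ and $\uf'=\uf$. Then $\ug$ must evaluate to $w$. Since $\uw$ is reduced, the only such subexpression is the all 1's one. So the sum collapses to a single product: the factor $c_{\ue}^{\ue}$ of Proposition \ref{prop:path_dominance_upper_triangularity}, which is nonzero, times a nonzero multiple of the corresponding coefficient for $\LL_{\uy,\uf}^{\scrL}$, which is also nonzero. Since $Q$ is a localization of a polynomial ring over a domain, hence a domain, the product is nonzero.

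The main obstacle is Step 2: making precise that $D$ transposes localization coefficients up to nonzero scalars in $Q$. I would establish this first for $\epsilon^s,\eta^s,\mu^s,\nu^s,\cup^s,\cap^s$ and $\beta_{s,t}$. For $\beta_{s,t}$ I would use Remark \ref{rem:loc_coeffs_for_beta}, or just its block-diagonal shape. Then I would show that the normalizations are multiplicative under $\otimes$ and $\circ$.
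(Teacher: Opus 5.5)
Your proposal is correct and is essentially the intended argument. The paper states this corollary without a separate proof, as an immediate consequence of Proposition \ref{prop:path_dominance_upper_triangularity} applied to both halves of $\dLL_{\ue,\uf}^{w,\scrL}=\overline{\LL}_{\uy,\uf}^{\scrL}\circ\LL_{\ux,\ue}^{\scrL}$: the upper half is handled by the duality/flip symmetry, and reducedness of $\uw$ isolates the all 1's middle summand. Your reading of (2) as the diagonal statement $c_{\ue,\ue}^{\uf,\uf}\neq 0$ is the intended one.

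One caution for Step 2: the per-summand duality normalizations are not plainly multiplicative under $\otimes$. They are multiplicative only up to the $W$-twist coming from $Q_x\otimes_Q Q_y\cong Q_{xy}$, under which the right factor's normalization gets acted on by $x$. You can already see this by comparing the $01$ and $10$ entries of $\mu^s$ and $\nu^s$ in Example \ref{ex:localization_of_one_color_morphisms}. The normalizations are still units of $Q$, so your conclusion is unaffected.
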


Let $I \subseteq W$ be a closed subset, and $M, N \in \Amon{\scrL}{\scrL'}^{\oplus} (\fr{h}, W, \fr{o})$.
We define a subset $\Hom_I (M,N) \subseteq \Hom_{\Amon{\scrL}{\scrL'}^{\oplus}} (M,N)$ consisting of maps $f : M \to N$ such that $f (M_Q^w) = 0$ whenever $w \notin I$.

The following lemma readily follows from Lemma \ref{lem:localization_of_BS_bimods}.
\begin{lemma}\label{lem:referaming_localization_support}
    Let $I \subseteq W$ be a closed subset.
    A map $f : B_{\ux}^{\scrL} \to B_{\uy}^{\scrL} (d)$ is in $\Hom_I ( B_{\ux}^{\scrL}, B_{\uy}^{\scrL})$ if and only if $c_{\ue}^{\uf} (f) = 0$ for all $\ue \in \Subexp^{\scrL} (\ux)$, $\uf \in \Subexp^{\scrL} (\uy)$ such that $\ux^{\ue}, \uy^{\uf} \notin I$.
\end{lemma}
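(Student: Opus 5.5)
Both directions of the equivalence come from the decomposition of Lemma \ref{lem:localization_of_BS_bimods}:
\[B_{\ux, Q}^{\scrL} \cong \bigoplus_{\ue} Q_{\ux^{\ue}} (\ell_{\scrL} (\ux, \ue)),\]
which refines the support decomposition $B_{\ux, Q}^{\scrL} \cong \bigoplus_{w} B_{\ux, Q}^{w, \scrL}$. Grouping the $\ue$-components by evaluation gives, for each $w$,
\[B_{\ux, Q}^{w, \scrL} = \bigoplus_{\ux^{\ue} = w} B_{\ux, Q}^{\ue, \scrL},\]
and the same holds for $\uy$.

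The first step is to check that this regrouping is compatible with $\xi$. Each $\ue$-component has right $Q$-action twisted by $\ux^{\ue}$. Since the decompositions of Lemma \ref{lem:localization_of_BS_bimods} are built inductively from (\ref{eq:localization_of_Bs}), which is itself the localization data $\xi$ of $B_s^{\scrL}$, the $\ue$-component indeed lies inside $B_{\ux,Q}^{\ux^{\ue},\scrL}$. Morphisms in $\Cmon{\scrL}{\scrL'}$ preserve the $w$-summands, so $f_Q$ is block diagonal with respect to $w$. Each block $f_Q|_{B_{\ux,Q}^{w,\scrL}} \to B_{\uy,Q}^{w,\scrL}$ is then a matrix whose entries are the $c_{\ue}^{\uf}(f)$ with $\ux^{\ue} = w = \uy^{\uf}$. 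All other coefficients vanish automatically, as already noted after the definition of localization coefficients.

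With this in hand, the statement follows by reading the definition of $\Hom_I$. The condition $f(M_Q^w)=0$ for $w \notin I$ means that the block of $f_Q$ at $w$ is zero. This is the case exactly when $c_{\ue}^{\uf}(f)=0$ for all $\ue,\uf$ with $\ux^{\ue} = \uy^{\uf} = w$. Taking this over all $w \notin I$ gives precisely the condition on pairs with $\ux^{\ue}, \uy^{\uf} \notin I$; pairs with unequal evaluations contribute nothing. Applying the $Q$-linear projections and inclusions in both directions gives the equivalence. Closedness of $I$ is not actually needed for this argument. The degree shift $(d)$ only rescales gradings and does not affect vanishing.

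The main obstacle is the compatibility in the first step: making sure the abstract decomposition of Lemma \ref{lem:localization_of_BS_bimods} refines the $\xi$-decomposition rather than being just some abstract isomorphism. I would verify this by induction on $\ell(\ux)$ using the tensor rule $(P_1\otimes_Q P_2)^w=\bigoplus_{xy=w}P_1^x\otimes P_2^y$ and the definition (\ref{eq:localization_of_C_s}).
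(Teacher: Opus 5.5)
Your proposal is correct and takes the same route as the paper, which only remarks that the lemma readily follows from Lemma~\ref{lem:localization_of_BS_bimods}. Because $(-)_Q$ is a 2-functor into $\CmonQ{}{}$, each $\ue$-component lies in the $\ux^{\ue}$-graded piece, $f_Q$ is block diagonal in $w$, and the $w$-block is the matrix of the coefficients $c_{\ue}^{\uf}(f)$ with $\ux^{\ue}=\uy^{\uf}=w$. Your explicit compatibility check is exactly the point the paper leaves implicit, and you are right that closedness of $I$ plays no role.
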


Regard $\W{\scrL}{\scrL'}$ as a topological space under the order topology for the restricted Bruhat order.

\begin{proposition}\label{prop:ideal_Hom_spaces}
    Let $I \subseteq \W{\scrL}{\scrL'}$ be a closed subset.
    Let $\dLL_{\ux, \uy}^{I, \scrL}$ denote the set containing one double leaf $\dLL_{\ue, \uf}^{w, \scrL}$ for each $w \in I$ and each pair of monodromic subsequences $(\ux, \ue)$ and $(\uy, \uf)$ expressing $w$.
    Then $\dLL_{\ux, \uy}^{I, \scrL}$ is a left graded $R$-module basis for $\Hom_I (B_{\ux}^{\scrL}, B_{\uy}^{\scrL})$.
\end{proposition}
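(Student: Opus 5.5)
The plan has two halves: first show that the proposed double leaves lie in $\Hom_I$, then show that every $f \in \Hom_I (B_{\ux}^{\scrL}, B_{\uy}^{\scrL})$ is an $R$-combination of them. Linear independence comes for free, since $\dLL_{\ux, \uy}^{I, \scrL} \subseteq \dLL_{\ux, \uy}^{\scrL}$ and the latter is linearly independent by Theorem \ref{thm:abe_dl_are_basis}.

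\emph{Membership.} Take $\dLL_{\ue, \uf}^{w, \scrL}$ with $w \in I$. By Corollary \ref{cor:Bruhat_upper_triangularity} (1), a localization coefficient $c_{\ue, \ue'}^{\uf, \uf'}$ can be nonzero only if $\ue' \prec \ue$ and $\uf' \prec \uf$. In that case $\ux^{\ue'} \leq w$ and $\uy^{\uf'} \leq w$, and since $I$ is closed both evaluations lie in $I$. Lemma \ref{lem:referaming_localization_support} then gives $\dLL_{\ue, \uf}^{w, \scrL} \in \Hom_I$.

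\emph{Spanning.} Let $f \in \Hom_I$. By Theorem \ref{thm:abe_dl_are_basis}, write
\[f = \sum c_{\ue, \uf}^{w} \dLL_{\ue, \uf}^{w, \scrL}\]
over all $w \in \W{\scrL}{\scrL'}$. It suffices to show $c_{\ue, \uf}^w = 0$ whenever $w \notin I$. Suppose not, and choose $x \notin I$ with some $c_{\ue, \uf}^x \neq 0$ that is maximal in Bruhat order among such elements; the set of such $w$ is finite, so $x$ exists.

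Subtracting the terms with $w \in I$, which lie in $\Hom_I$ by the first step, we may assume every nonzero coefficient has $w \notin I$. Now look at the localization coefficients of $f$ at the diagonal pair $(\ue', \uf')$ for fixed $\ue' \in \Subexp^{\scrL}(\ux, x)$ and $\uf' \in \Subexp^{\scrL}(\uy, x)$. A double leaf $\dLL_{\ue, \uf}^{w, \scrL}$ contributes only if $\ue' \prec \ue$, which forces $x \leq w$. Maximality of $x$ then leaves only $w = x$.

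Among the $w = x$ terms, path dominance with equal endpoint is strict unless the subexpressions coincide. Order the pairs $(\ue, \uf)$ with $\ux^{\ue} = x = \uy^{\uf}$ by $\prec$ in each coordinate. The matrix
\[\bigl( c_{\ue, \ue'}^{\uf, \uf'} \bigr)_{(\ue, \uf), (\ue', \uf')}\]
is then triangular with nonzero diagonal entries, by Corollary \ref{cor:Bruhat_upper_triangularity} (2) as intended, i.e. $c_{\ue,\ue}^{\uf,\uf}\neq 0$.

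Since $x \notin I$, Lemma \ref{lem:referaming_localization_support} says all these localization coefficients of $f$ vanish. Because $Q$ is a domain, the triangular system forces all $c_{\ue, \uf}^x = 0$, contradicting the choice of $x$. Hence every coefficient with $w \notin I$ is zero and $f$ lies in the span.

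\emph{Main obstacle.} The delicate point is the triangularity step, specifically confirming that $\prec$-comparability of two subexpressions with the same evaluation $x$ behaves well enough to give a genuine partial order on the pairs. Since $\prec$ is a partial order on $\Subexp^{\scrL}$, a triangular system with invertible diagonal still suffices: pick a $\prec$-minimal pair $(\ue, \uf)$ with nonzero coefficient and evaluate at $(\ue', \uf') = (\ue, \uf)$. The only contributions then come from pairs dominating it, and minimality isolates the single term $c_{\ue, \uf}^x \, c_{\ue, \ue}^{\uf, \uf} = 0$, which forces $c_{\ue, \uf}^x = 0$.
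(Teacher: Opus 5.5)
Your outline matches the paper's proof. Membership comes from Corollary~\ref{cor:Bruhat_upper_triangularity}(1) and Lemma~\ref{lem:referaming_localization_support}, independence is inherited from Theorem~\ref{thm:abe_dl_are_basis}, and spanning uses a Bruhat-maximal $x$ together with upper triangularity of localization coefficients. The membership step, the reduction to the case where every nonzero coefficient has $w\notin I$, and the use of maximality of $x$ to discard all $w\neq x$ are fine.

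The problem is the step you single out as the crux: you carry it out in the wrong direction. Write $\gamma_{\ue,\uf}$ for your coefficient $c^{x}_{\ue,\uf}$. As you correctly say, the localization coefficient $c_{\ue_0}^{\uf_0}(f)$ only receives contributions from double leaves $\dLL^{x,\scrL}_{\ue,\uf}$ with $\ue_0\prec\ue$ and $\uf_0\prec\uf$, that is, from pairs dominating $(\ue_0,\uf_0)$. Suppose $(\ue_0,\uf_0)$ is $\prec$-\emph{minimal} among pairs with nonzero coefficient. Minimality only excludes nonzero coefficients strictly \emph{below} it. Every pair strictly above it may still carry a nonzero $\gamma_{\ue,\uf}$ and contribute $\gamma_{\ue,\uf}\,c_{\ue,\ue_0}^{\uf,\uf_0}$. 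So nothing is isolated, and you cannot conclude $\gamma_{\ue_0,\uf_0}=0$.

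The repair is to take $(\ue_0,\uf_0)$ \emph{maximal} among the pairs with evaluation $x$ and nonzero coefficient. This is exactly the paper's successive choice of $w$, then $\ue$, then $\uf$, each maximal.
\begin{itemize}
\item Any contributing pair dominates $(\ue_0,\uf_0)$ and has nonzero coefficient, so by maximality it equals $(\ue_0,\uf_0)$.
\item Hence $c_{\ue_0}^{\uf_0}(f)=\gamma_{\ue_0,\uf_0}\,c_{\ue_0,\ue_0}^{\uf_0,\uf_0}$.
\item This vanishes by Lemma~\ref{lem:referaming_localization_support}, since $x\notin I$.
\item Because $c_{\ue_0,\ue_0}^{\uf_0,\uf_0}\neq 0$ and $Q$ is a domain, you get $\gamma_{\ue_0,\uf_0}=0$.
\end{itemize}
Your abstract remark that a triangular matrix with nonzero diagonal over a domain has trivial kernel is true. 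However, the explicit induction you offer to justify it runs the wrong way.

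With this correction your argument is the paper's. The only remaining difference is cosmetic: you first subtract the terms with $w\in I$ and argue by contradiction. The paper instead shows directly that every Bruhat-maximal $w$ with a nonzero coefficient lies in $I$, and then invokes closedness of $I$.
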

\begin{proof}
    The fact that $\dLL_{\ue, \uf}^{w, \scrL} \in \Hom_I (B_{\ux}^{\scrL}, B_{\uy}^{\scrL})$ follows from Corollary \ref{cor:Bruhat_upper_triangularity}.
    Moreover, $\dLL_{\ux, \uy}^{I, \scrL}$ is linearly independent by Theorem \ref{thm:abe_dl_are_basis}.
    It remains to show that any morphism $f \in \Hom_I (B_{\ux}^{\scrL}, B_{\uy}^{\scrL})$ is spanned by double leaves in $\dLL_{\ux, \uy}^{I, \scrL}$.

    Write $f$ as a linear combination of double leaves. Unless $f$ is zero, some $\dLL_{\ue, \uf}^{w, \scrL}$ has a nonzero coefficient.
    Choose $w, \ue,$ and $\uf$ successively such that each is maximal in the Bruhat/path dominance order relative to the constraint that, with the previous choices, there is a nonzero coefficient $\gamma$ for $\dLL_{\ue, \uf}^{w, \scrL}$.
    By Corollary \ref{cor:Bruhat_upper_triangularity}, $c_{\ue, \uf} (f) = \gamma c_{\ue, \ue}^{\ue, \ue} \neq 0$.
    We can then conclude by Lemma \ref{lem:referaming_localization_support} that $w \in I$.
    Since this is true for each maximal choice of $w$, we must have that $f$ is in the $R$-span of $\dLL_{\ux, \uy}^{I, \scrL}$.
\end{proof}

\begin{remark}
    Proposition \ref{prop:ideal_Hom_spaces} essentially states that $\Amon{\scrL}{\scrL'}^{\BS} (\fr{h}, W, \fr{o})$ is an object-adapted cellular category (see Proposition \ref{prop:dmhc_is_cellular}).
\end{remark}

    \section{Monodromic Hecke Category}\label{sec:diag}
    Let $(W,S)$ be a Coxeter system and $\fr{o}$ a set of monodromy parameters for $W$.
We will fix a reflection-balanced reflection-stable Abe realization $\fr{h}$ of $W$.
By Corollary \ref{cor:refl_abe_are_endo} and Proposition \ref{prop:balanced_realizations_and_reflections}, we have that $\fr{h}$ is a balanced Abe realization of $W_{\scrL}^{\circ}$ for all $\scrL \in \fr{o}$ as well.

We will define numerous 2-categories in this section. 
To keep notation consistent, unless otherwise stated, $\circ$ will refer to the vertical composition and $\star$ will refer to the horizontal composition in a 2-category. 

\subsection{Face Colorings of Elias--Williamson Graphs}

\begin{definition}
    An \emph{Elias--Williamson graph} is a planar graph $\Gamma$ embedded on the planar strip $\R \times [0,1]$ with the following data:
    \begin{enumerate}
        \item the edges of $\Gamma$ are colored by elements of $S$;
        \item the faces of $\Gamma$ are decorated by boxes labelled by homogenous $f \in R$;
        \item the vertices of $\Gamma$ are of 3 types:
        \begin{enumerate}
            \item univalent vertices,
            \item trivalent vertices, where all three adjoining edges have the same color,
            \item $2m_{s,t}$-valent vertices, where the adjoining edges alternate in color between two elements $s \neq t \in S$.
        \end{enumerate}
    \end{enumerate}
\end{definition}

Given an Elias--Williamson graph, we define its \emph{degree} as the sum of the degrees of each vertex, and the degrees of each polynomial. The degrees on vertices and boxes are given as follows:
\begin{equation}
    
  % \tikzsetnextfilename{#1}
  \tikzstyle{every picture}=[tikzfig]
  \input{./figures/deg_of_gens.tikz}

\end{equation}

\begin{definition}
    A \emph{monodromic Elias--Williamson graph} is an Elias--Williamson graph with an additional coloring of the faces by elements of $\fr{o}$ such that if two faces colored $\scrL \in \fr{o}$ and $\scrL' \in \fr{o}$ are adjacent along an edge colored by $s \in S$, then $\scrL' = \scrL s$.
\end{definition}

We can define the \emph{degree} of a monodromic Elias--Williamson graph as the degree of its underlying Elias--Williamson graph.

\begin{example}
    Let $W = S_3 \times S_2$ with simple reflections $S = \{ \textcolor{red}{s}, \textcolor{blue}{t}, \textcolor{green}{u} \}$ where $(\textcolor{red}{s} \textcolor{blue}{t})^3 = 1$.
    Define a right $W$-set $\fr{o} = \{\textcolor{orange}{1},\textcolor{violet}{2},\textcolor{yellow}{3}\}$ by 
    \[\textcolor{orange}{1} \cdot \textcolor{red}{s} = \textcolor{orange}{1}, \qquad \textcolor{violet}{2} \cdot \textcolor{red}{s} = \textcolor{yellow}{3}, \qquad \textcolor{orange}{1} \cdot \textcolor{blue}{t} = \textcolor{violet}{2}, \qquad \textcolor{yellow}{3} \cdot \textcolor{blue}{t} = \textcolor{yellow}{3}, \qquad \textcolor{orange}{1} \cdot \textcolor{green}{u} = \textcolor{orange}{1}, \qquad \textcolor{violet}{2} \cdot \textcolor{green}{u} = \textcolor{violet}{2}, \qquad \textcolor{yellow}{3} \cdot \textcolor{green}{u} = \textcolor{yellow}{3}.\] 
    An example of a monodromic Elias--Williamson graph for $(W, \fr{o})$ is given below.
    \[
  % \tikzsetnextfilename{#1}
  \tikzstyle{every picture}=[tikzfig]
  \input{./figures/ew_graph_ex.tikz}
\]
\end{example}

\begin{convention}
    The face coloring of a monodromic Elias--Williamson graph is uniquely determined by the coloring of any face in the underlying Elias--Williamson graph.
    As a result, we will not always color every face of a monodromic Elias--Williamson graph. If the left most color is clear from context, we often will only draw the underlying Elias--Williamson graph.
    At times, we will color only key parts of a monodromic Elias--Williamson graph to highlight important behavior. 
\end{convention}

We denote the set of isotopy classes of Elias--Williamson graphs by $\SGraph$ and the set of isotopy classes of monodromic Elias--Williamson graphs by $\VSGraph$.
If $\scrL, \scrL' \in \fr{o}$, we will write ${}_{\scrL} \VSGraph_{\scrL'}$ for the subset of $\VSGraph$ consisting of graphs whose most left face is colored by $\scrL$ and whose right most face is colored by $\scrL'$.

Let $\ux, \uy \in \Exp (W)$.
We can consider the subset $\SGraph (\ux, \uy)$ of $\SGraph$ consisting of Elias--Williamson graphs with bottom boundary $\ux$ and top boundary $\uy$.
We similarly define a subset $\VSGraph (\ux, \uy)$ of $\VSGraph$ consisting of monodromic Elias--Williamson graphs whose underlying Elias--Williamson graph is in $\SGraph (\ux, \uy)$.
Likewise, we denote ${}_{\scrL} \VSGraph_{\scrL'} (\ux, \uy) \coloneq \VSGraph (\ux, \uy) \cap {}_{\scrL} \VSGraph_{\scrL'}$.

\subsubsection{Endosimple Expansion}

Let $I \subseteq S$. Let $\ux, \uy \in \Exp (W)$ such that the simple reflections constituting $\ux$ and $\uy$ are in $I$. 
We will write $\SGraph^{I,1} (\ux, \uy)$ for the subset of \emph{$(I,1)$-colored} Elias--Williamson graphs of $\SGraph (\ux, \uy)$ which consists of Elias--Williamson graphs with no $2m_{s,t}$-valent vertices and all strands are colored by elements of $I$.
We also define $\VSGraph^{I,1} (\ux, \uy)$ consisting of monodromic Elias--Williamson graphs whose underlying Elias--Williamson graph is \emph{($I$,1)-colored}. Likewise, we define ${}_{\scrL} \VSGraph_{\scrL'}^{I,1} (\ux, \uy) = \VSGraph^{I,1} (\ux, \uy) \cap {}_{\scrL} \VSGraph_{\scrL'}$.
If $S=I$, we will usually drop the $I$ from the notation in all the previously defined collections of graphs.

Fix $\scrL \in \fr{o}$.
Let $s' \in S_{\scrL}^{\circ}$ and $\us' \in \Exp (W, s')$ be a reduced expression. Since $s'$ is a reflection, $\us'$ is of the form $(s_1, \ldots, s_k, s, s_k, \ldots, s_1)$ where $\scrL s_1 \ldots s_k s = \scrL s_1 \ldots s_k$.
For each $n \in \Z_{\geq 0}$, we define $s^{\star n}$ as the expression $(s,s,\ldots, s)$ of length $n$.
Define a map, called \emph{endosimple expansion}, 
\[\textnormal{endo}_{\us} : \SGraph^{\{s\}, 1} (s^{\star n}, s^{\star m}) \to {}_{\scrL} \VSGraph_{\scrL}^1 ((\us')^{\star n}, (\us')^{\star m}),\]
which on generating vertices is defined by
\[
  % \tikzsetnextfilename{#1}
  \tikzstyle{every picture}=[tikzfig]
  \input{./figures/id1.tikz}
 \mapsto 
  % \tikzsetnextfilename{#1}
  \tikzstyle{every picture}=[tikzfig]
  \input{./figures/id2.tikz}
, \qquad 
  % \tikzsetnextfilename{#1}
  \tikzstyle{every picture}=[tikzfig]
  \input{./figures/startdot3.tikz}
 \mapsto 
  % \tikzsetnextfilename{#1}
  \tikzstyle{every picture}=[tikzfig]
  \input{./figures/startdot4.tikz}
, \qquad 
  % \tikzsetnextfilename{#1}
  \tikzstyle{every picture}=[tikzfig]
  \input{./figures/mult3.tikz}
 \mapsto 
  % \tikzsetnextfilename{#1}
  \tikzstyle{every picture}=[tikzfig]
  \input{./figures/mult4.tikz}
,\]
and is the identity on polynomial boxes.

We can generalize the endosimple expansion to multiple simple reflections as follows. 
Let $I \subseteq S_{\scrL}^{\circ}$. Write $W_{\scrL, I}^{\circ}$ for the standard parabolic subgroup of $W_{\scrL}^{\circ}$ generated by $I$.
Let $\iota : I \to \Exp (W)$ be an endosimple expansion datum for $W_{\scrL, I}^{\circ}$.
Let $\ux, \uy \in \Exp (W_{\scrL, I}^{\circ})$.  
By taking concatenations of $\textnormal{endo}_{\us}$ maps, we obtain a function
\[\textnormal{endo}_{\iota} : \SGraph^{I,1} (\ux, \uy) \to {}_{\scrL} \VSGraph_{\scrL}^1 (\iota (\ux), \iota (\uy)).\]

\subsection{One-Color Calculus}

\subsubsection{Diagrammatics for \texorpdfstring{$B_s^{\scrL}$}{BsL}}

\begin{definition}\label{defn:one_color_cat}
    Let $(W,S)$ be a Coxeter system of type $A_1$ with $S = \{s\}$. Let $\fr{o}$ be a $W$-set and $\fr{h}$ be a pre-realization for $W$.
    The \emph{one-color diagrammatic monodromic Hecke category}, denoted $\EWmon{}{}^{\BS} (\fr{h}, s, \fr{o})$, is the $\k$-linear strict 2-category defined as follows.
    The objects are given by elements of $\fr{o}$. The 1-morphisms are tensor generated by 1-morphisms $B_s^{\scrL} : \scrL \stackrel{s}{\to} \scrL s$ for all $\scrL \in \fr{o}$.
    We depict these diagrammatically by strings
    \begin{equation}
        
  % \tikzsetnextfilename{#1}
  \tikzstyle{every picture}=[tikzfig]
  \input{./figures/defn_0.tikz}

    \end{equation}
    As such for each $n \in \Z_{\geq 0}$, we may define the 1-morphism $B_{s^{\star n}}^{\scrL} : \scrL \to \scrL s^{\star n}$ given by the $n$-fold tensor of generating 1-morphisms.

    Let $n,m \in \Z_{\geq 0}$ such that $\scrL s^{\star n} = \scrL' = \scrL s^{\star m}$.
    The space of 2-morphisms $\TwoHom_{\EWmon{}{}^{\BS}} (B_{s^{\star n}}^{\scrL}, B_{s^{\star m}}^{\scrL})$ is the free $\k$-module with basis ${}_{\scrL} \VSGraph_{\scrL'} (s^{\star n}, s^{\star m})$ modulo the following relations.  
    \begin{equation}\label{eq:one_color_cat} 
        
  % \tikzsetnextfilename{#1}
  \tikzstyle{every picture}=[tikzfig]
  \input{./figures/one_color_relns.tikz}

    \end{equation}
\end{definition}

We have used the same notation for the objects in the one-color diagrammatic category as we did for Bott--Samelson bimodules. 
We should think that the one-color diagrammatic category encodes the behavior of $B_s^{\scrL}$ in the category of Bott--Samelson bimodules.
This relationship will be made more precise in the following sections.
We will also use the one-color category as a crucial building block of more general diagrammatic monodromic Hecke categories.

\begin{proposition}\label{prop:one_color_soergel_functor}
    Let $(W,S)$ be a Coxeter system of type $A_1$ with $S = \{s\}$. Let $\fr{o}$ be a $W$-set and $\fr{h}$ be a pre-realization for $W$.
    Then there is a 2-functor
    \[\Upsilon_s : \EWmon{}{}^{\BS} (\fr{h}, s, \fr{o}) \to \Amon{}{}^{\BS} (\fr{h}, W, \fr{o}) \]
    defined on 1-morphisms by $B_s^{\scrL} \mapsto B_s^{\scrL}$ and on 2-morphisms by
    \begin{align*}
        \Upsilon_s \left( 
  % \tikzsetnextfilename{#1}
  \tikzstyle{every picture}=[tikzfig]
  \input{./figures/poly2.tikz}
 \right) &\coloneq  f & \Upsilon_s \left(
  % \tikzsetnextfilename{#1}
  \tikzstyle{every picture}=[tikzfig]
  \input{./figures/enddot2.tikz}
 \right) &\coloneq \eta^s , & \Upsilon_s \left(
  % \tikzsetnextfilename{#1}
  \tikzstyle{every picture}=[tikzfig]
  \input{./figures/startdot2.tikz}
 \right) &\coloneq \epsilon^s,  & \Upsilon_s \left(
  % \tikzsetnextfilename{#1}
  \tikzstyle{every picture}=[tikzfig]
  \input{./figures/mult2.tikz}
 \right) &\coloneq  \mu^s, 
    \end{align*}
    \begin{align*}
        \Upsilon_s \left(
  % \tikzsetnextfilename{#1}
  \tikzstyle{every picture}=[tikzfig]
  \input{./figures/comult2.tikz}
 \right) &\coloneq \nu^s,  & \Upsilon_s \left(
  % \tikzsetnextfilename{#1}
  \tikzstyle{every picture}=[tikzfig]
  \input{./figures/cap2.tikz}
 \right) &\coloneq \cap^s , & \Upsilon_s \left(
  % \tikzsetnextfilename{#1}
  \tikzstyle{every picture}=[tikzfig]
  \input{./figures/cup2.tikz}
 \right) &\coloneq  \cup^s.  
    \end{align*}
\end{proposition}
\begin{proof}
    Almost all the relations can be derived from their non-monodromic counterparts (cf., \cite[Claim 5.13]{EW}).
    It remains to check that $\Upsilon_s$ is invariant under the polynomial forcing relation when $\scrL s \neq \scrL$ and the block minimality relations.
    The polynomial forcing relation is the defining property of the $R$-bimodule structure on $R_s = B_s^{\scrL}$.
    Similarly, block minimality follows from $\cup^s$ and $\cap^s$ being inverse, i.e., they define the canonical isomorphism $R_s \otimes_R R_s \cong R$.
\end{proof}

It turns out that $\Upsilon_s$ is actually fully faithful on morphism categories. However, we will need to develop much more technology before proving this.

\subsubsection{The Universal Calculus}\label{subsubsec:universal_category}

In the non-monodromic setting, one usually goes straight from the 1-color diagrammatics to 2-colors diagrammatics when defining Hecke categories.
However, in the monodromic setting, it is useful to introduce an intermediary where we allow for arbitrary 1-morphisms, but only consider 2-morphisms corresponding to rank 1 parabolic subgroups.
This has the advantage of allowing for monodromy parameters to play a more central role-- giving way to glimpses of endoscopy.

\begin{definition}\label{defn:univ_one_color_cat}
    Let $(W,S)$ be a Coxeter system. Let $\fr{o}$ be a $W$-set and $\fr{h}$ be a pre-realization for $W$.
    The \emph{universal diagrammatic monodromic Hecke category}, denoted $\EWmon{}{}^{\BS} (\fr{h}, S, \fr{o})$, is the $\k$-linear strict 2-category defined as follows.
    The object set is $\fr{o}$. The 1-morphisms are tensor generated by 1-morphisms $B_s^{\scrL} : \scrL \stackrel{s}{\to} \scrL s$ for all $\scrL \in \fr{o}$ and $s \in S$.
    In particular, a general 1-morphism is of the form $B_{\uw}^{\scrL} : \scrL \stackrel{\uw}{\to} \scrL \uw$ where $\uw \in \Exp (W)$.

    Let $\ux, \uy \in \Exp (W)$ such that $\scrL \ux = \scrL' = \scrL \uy$.
    The 2-morphisms $\TwoHom_{\EWmon{}{}^{\BS}} (B_{\ux}^{\scrL}, B_{\uy}^{\scrL})$ is the free $\k$-module with basis ${}_{\scrL} \VSGraph_{\scrL'}^1 (\ux, \uy)$ modulo the 1-color relations of (\ref{eq:one_color_cat}).

    Given $\scrL, \scrL' \in \fr{o}$, we define 1-categories $\EWmon{\scrL}{\scrL'}^{\BS} (\fr{h}, S, \fr{o}) \coloneq \Hom_{\EWmon{}{}^{\BS} (\fr{h}, S, \fr{o})} (\scrL, \scrL')$. 
\end{definition}

\begin{remark}
The 2-category $\EWmon{}{}^{\BS} (\fr{h}, S, \fr{o})$ is universal in the sense that it is the diagrammatic monodromic Hecke category for the free Coxeter group on $S$ generators.
In particular, we will later see that for a general Coxeter group $W$, its diagrammatic monodromic Hecke category is a quotient of the universal category.
\end{remark}

The inclusion of $\{s\} \hookrightarrow S$ gives rise to a locally faithful 2-functor
\[I_{s} : \EWmon{}{}^{\BS} (\fr{h}, s, \fr{o}) \to \EWmon{}{}^{\BS} (\fr{h}, S, \fr{o}).\]
Similarly, the same construction and argument from Proposition \ref{prop:one_color_soergel_functor} defines a 2-functor
\[\Upsilon_S : \EWmon{}{}^{\BS} (\fr{h}, S, \fr{o}) \to \Amon{}{}^{\BS} (\fr{h}, W, \fr{o})\]
such that $\Upsilon_S \circ I_{s} = \Upsilon_s$. Unlike the one-color case, $\Upsilon_S$ is not locally full. 
In particular, it only depends on the set $S$ and its action on $\fr{o}$, but does not include any Coxeter theoretic information.
In other words, the universal one-color category does not include information on rank 2 and rank 3 parabolic subgroups of $W$.

The main advantage of introducing the universal category is that it contains many interesting Frobenius algebras that were not present in the one-color category.
Let $\uw = (s_1, \ldots, s_k, s, s_k, \ldots, s_1)$ be an endo-reduced expression for some endosimple reflection $s' \in S_{\scrL}^{\circ}$.
Write $\ux = (s_1, \ldots, s_k)$ and let $\scrL' = \scrL \ux$.
We can then give $B_{\uw}^{\scrL}$ the structure of a Frobenius algebra as follows.
\begin{equation}\label{eq:frob_algebra_str_diagrammatically}
    
  % \tikzsetnextfilename{#1}
  \tikzstyle{every picture}=[tikzfig]
  \input{./figures/endo_reduced_frob.tikz}

\end{equation}
It is easy to check from the Frobenius algebra structure on $B_{s}^{\scrL'}$ and block minimality that $B_{\uw}^{\scrL}$ along with the maps in (\ref{eq:frob_algebra_str_diagrammatically}) define a Frobenius algebra.
Moreover, it follows that under $\Upsilon_S$, this Frobenius algebra is sent to $B_{\uw}^{\scrL} \cong C_{t}$ with its usual Frobenius algebra structure.

\begin{proposition}[One-Color Endoscopy]\label{prop:one_color_endoscopy}
    Let $\scrL \in \fr{o}$ and $\iota$ be an endosimple expansion datum for $\scrL$.
    There is a monoidal functor
    \[ \Psi_{\scrL}^{\iota, \textnormal{diag}} : \EWmon{1}{1}^{\BS} (\fr{h}, S_{\scrL}^{\circ}, 1) \to \EWmon{\scrL}{\scrL}^{\BS} (\fr{h}, S, \fr{o})\]
    defined by $\Psi_{\scrL}^{\iota, \textnormal{diag}} (B_{s'}^{1}) = B_{\iota (s')}^{\scrL}$ for $s' \in S_{\scrL}^{\circ}$ and taking the Frobenius structure maps on $B_{s'}^1$ to the Frobenius structure maps on $B_{\iota (s')}^{\scrL}$.
    Moreover, this functor makes the following square commute up to natural isomorphism
    \begin{equation}\label{eq:one_color_endoscopy_1}
    \begin{tikzcd}
        {\EWmon{1}{1}^{\BS} (\fr{h}, S_{\scrL}^{\circ}, 1)} \arrow[r, "{\Psi_{\scrL}^{\iota, \textnormal{diag}}}"] \arrow[d, "\Upsilon_{S_{\scrL}^{\circ}}"] & {\EWmon{\scrL}{\scrL}^{\BS} (\fr{h}, S, \fr{o})} \arrow[d, "\Upsilon_S"] \\
        {\Amon{1}{1}^{\BS} (\fr{h}, W_{\scrL}^{\circ}, 1)} \arrow[r, "{\Psi_{\scrL}^{\textnormal{alg}}}"]                                         & {\Amon{\scrL}{\scrL}^{\BS} (\fr{h}, W, \fr{o})}.                     
        \end{tikzcd}
    \end{equation}
\end{proposition}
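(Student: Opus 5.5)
The functor will be built directly from the presentation. The category $\EWmon{1}{1}^{\BS} (\fr{h}, S_{\scrL}^{\circ}, 1)$ is the universal category for the single-object set $\fr{o}=1$. Its 2-morphisms are therefore spanned by $(S_{\scrL}^{\circ},1)$-colored Elias--Williamson graphs modulo only the one-color relations (\ref{eq:one_color_cat}), and every face carries the trivial label, so $\scrL' s' = \scrL'$ always holds and block minimality never applies. To define $\Psi_{\scrL}^{\iota, \textnormal{diag}}$ it suffices to send each generating 1-morphism $B_{s'}^1$ to $B_{\iota(s')}^{\scrL}$, extend monoidally, and on 2-morphisms apply the endosimple expansion $\textnormal{endo}_{\iota}$ to graphs, with polynomial boxes fixed. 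Concretely, dots, trivalent vertices, cups and caps of color $s'$ go to the corresponding Frobenius structure maps (\ref{eq:frob_algebra_str_diagrammatically}) on $B_{\iota(s')}^{\scrL}$. Isotopy invariance follows because the expanded pieces are nested in each other and the same expansion is applied uniformly.

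The main step is well-definedness: the image of each one-color relation must hold in $\EWmon{\scrL}{\scrL}^{\BS} (\fr{h}, S, \fr{o})$. The Frobenius relations (associativity, unit, counit, and the Frobenius relation) hold because $B_{\iota(s')}^{\scrL}$ with the maps (\ref{eq:frob_algebra_str_diagrammatically}) was already observed to be a Frobenius algebra. This uses the Frobenius structure on $B_s^{\scrL'}$ in the middle and block minimality to collapse the nested cups and caps of $s_1,\dots,s_k$, which are invertible because $\scrL s_1\cdots s_i \neq \scrL s_1\cdots s_{i-1}$ along an endo-reduced expression. I expect the polynomial relations to be the real obstacle. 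These are the needle/barbell relation, where a closed $s'$-loop evaluates to $\alpha_{s'}$, and polynomial forcing for $s'$.

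For the polynomial relations I would slide a box $f$ through the outer strands. Each outer strand $s_i$ with $\scrL s_1\cdots s_i\neq \scrL s_1\cdots s_{i-1}$ satisfies the twisted polynomial forcing of the monodromic one-color calculus: crossing such a strand replaces $f$ by $s_i(f)$. Write $\ux=(s_1,\dots,s_k)$. Then forcing $f$ through the whole diagram of $B_{\iota(s')}^{\scrL}$ is equivalent to forcing $x^{-1}(f)$ through the central $s$-strand, followed by the inverse crossings. Here $x^{-1}$ denotes the element $s_k\cdots s_1$. The usual forcing relation for $s$ with the term $\partial_s(x^{-1} f)$ then transforms into forcing for $s'=xsx^{-1}$, since $x(\partial_s(x^{-1}f))=\partial_{s'}(f)$. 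This identity requires $x\alpha_s=\alpha_{s'}$, which holds because $\fr{h}$ is balanced (Lemma \ref{lem:well_defined_pos_roots_balancedness}) and $\iota(s')$ is reduced, so that $xs>x$. The barbell relation transforms in the same way, becoming $x(\alpha_s)=\alpha_{s'}$. I expect the careful bookkeeping of these sign and root identifications to be the hardest part.

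The last step is commutativity of (\ref{eq:one_color_endoscopy_1}). On 1-morphisms, $\Upsilon_S(B_{\iota(s')}^{\scrL}) = R_x\otimes C_s\otimes R_{x^{-1}}$, and Lemma \ref{lem:soergel_product_formulas}(2) gives a unique isomorphism from this to $C_{s'}=\Psi_{\scrL}^{\alg}\Upsilon_{S_{\scrL}^{\circ}}(B_{s'}^1)$ preserving the 1-tensor. These isomorphisms tensor together into a monoidal natural isomorphism on objects. Naturality need only be checked on the generating 2-morphisms. Both sides are bimodule maps with explicit formulas, so I would evaluate them on the 1-tensor and on $\delta\otimes1$. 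Alternatively I would use the fact, already remarked, that the Frobenius structure on $B_{\uw}^{\scrL}$ is sent under $\Upsilon_S$ to the usual Frobenius structure on $C_{s'}$, together with the faithfulness of localization to compare localization coefficients via Example \ref{ex:localization_of_one_color_morphisms}.
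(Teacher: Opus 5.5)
Your proposal is correct. On well-definedness it is more explicit than the paper. The paper only invokes the observation preceding the proposition that $B_{\iota(s')}^{\scrL}$ with the maps (\ref{eq:frob_algebra_str_diagrammatically}) is a Frobenius algebra, and it never treats the polynomial relations separately. Your sliding argument through the block-minimal outer strands fills this in and shows where balancedness enters. The barbell becomes $x(\alpha_s)$ and the forcing error term becomes $x(\partial_s(x^{-1}f))$; both match the source because $x\alpha_s=\alpha_{s'}$ (Lemma \ref{lem:well_defined_pos_roots_balancedness}, with $xs>x$).

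Two small fixes. First, it is the barbell, not a closed $s'$-circle, that evaluates to $\alpha_{s'}$; an $s'$-circle is $0$. Second, the needle relation is not a Frobenius axiom, so it needs its own check. Its image contains nested $s_i$-circles, which are $1$ by block minimality, and what remains is the needle for $s$ in the face $\scrL x$, which is $0$.

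For the square, you and the paper use the same 1-tensor-preserving isomorphism $\xi$ from Lemma \ref{lem:soergel_product_formulas}(2), but you verify compatibility differently.
\begin{itemize}
\item The paper never computes the unit or the multiplication. Both bimodules are generated by the 1-tensor, and counit and comultiplication send 1-tensors to 1-tensors on both sides, so $\xi$ is a coalgebra isomorphism. The duality $D$ of Proposition \ref{prop:duality_and_str_mors} then swaps algebra and coalgebra maps, and uniqueness forces $D(\xi^{-1})=\xi$.
\item Your direct evaluation on bimodule generators ($1\in R$ for the unit; $u\otimes u$ and $u\cdot\delta_{s'}\otimes u$ for the multiplication) also works. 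However, you then need $x\alpha_s^\vee=\alpha_{s'}^\vee$, so that $\partial_s(x^{-1}\delta_{s'})=1$. You also need that $\delta\otimes 1-1\otimes s'(\delta)$ does not depend on the choice of $\delta$, because the expanded unit produces $x(\delta_s)$ rather than $\delta_{s'}$.
\end{itemize}
The duality argument is shorter and hides these normalizations in the check that $D(\xi^{-1})$ preserves the 1-tensor. Your computation is more elementary and keeps them visible.

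Finally, your alternative route should not take the remark preceding the proposition as an input, since that remark is exactly what this step proves. The localization comparison you describe would establish it independently.
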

\begin{proof}
    The fact that $\Psi_{\scrL}^{\iota, \diag}$ is well-defined follows from the discussion on Frobenius algebra structures preceding the proposition.
    
    It remains to check that (\ref{eq:one_color_endoscopy_1}) commutes. 
    After unpacking definitions, it suffices to define an isomorphism $\xi : B_{\iota (s')}^{\scrL} \stackrel{\sim}{\to} C_{s'}$ of Frobenius algebras in $\Amon{\scrL}{\scrL}^{\BS} (\fr{h}, W, \fr{o})$ for each $s' \in S_{\scrL}^{\circ}$.
    Define $\xi$ as the isomorphism taking the 1-tensor $u_{\iota (s')}$ to $1 \otimes 1 \in R \otimes_{R^{s'}} R (1) = C_{s'}$. 
    This property ensures that $\xi$ is an isomorphism of coalgebras. Indeed, the coalgebra structure maps for $C_{s'}$ are the unique $R$-bimodule maps taking 1-tensors to 1-tensors.
    By Proposition \ref{prop:duality_and_str_mors}, $D(\xi^{-1}) :  B_{\iota (s')}^{\scrL} \stackrel{\sim}{\to} C_{s'}$ is an isomorphism of algebras. It is then easy to check that $D(\xi^{-1})$ also takes the $u_{\iota (s')}$ to $1 \otimes 1$.
    As a result, by uniqueness, we must have that $D(\xi^{-1}) = \xi$. Therefore, $\xi$ is an isomorphism of Frobenius algebras.
\end{proof} 

\subsection{The Monodromic Dihedral Cathedral} 

The goal of this section is to define the diagrammatic monodromic Hecke category for Coxeter groups of type $I_2 (m)$ for $m < \infty$.
In terms of generators, this amounts to adding $2m$-valent vertices to Elias--Williamson graphs.
The relations amongst these generators will combine the one-color relations along with two new relations: two-color associativity and the Elias--Jones--Wenzl relation.
Two-color associativity is a straightforward generalization of the corresponding non-monodromic concept.
The Elias--Jones--Wenzl relation is much more subtle. 
In the non-monodromic setting, the Elias--Jones--Wenzl relation is defined using Jones--Wenzl projectors for the two-colored Temperley--Lieb algebra.
Unfortunately, we do not have a good notion of a monodromic Temperley--Lieb algebra. 
Instead, we will consider an endoscopic Temperley--Lieb algebra and then apply endosimple expansion. 
This has the benefit of allowing us to use known results on Temperley--Lieb algebras.

Before continuing to the construction of the Jones--Wenzl projectors. We will set up some notation which will be used throughout this section.
Let $W_m$ denote the dihedral group with simple reflections $s,t$ where $m = m_{s,t} < \infty$.
Let $\fr{o}$ be a $W_m$-set. We will write $W_{s,t}^{\scrL}$ for the endoscopic subgroup of $W_m$ corresponding to $\scrL \in \fr{o}$.
Note that $W_{s,t}^{\scrL}$ has rank 0, 1, or 2. 
When $W_{s,t}^{\scrL}$ is rank 2, we will write $s', t'$ for the endosimple reflections in $W_{s,t}^{\scrL}$.
Moreover, we will write $v = v_{s,t}^{\scrL}$ for the order of $s't'$ which divides $m$. We then have that $W_{s,t}^{\scrL}$ is isomorphic as a Coxeter group to $W_v$.

\subsubsection{Temperley--Lieb Category}

Let $A$ be a $\Z[x,y]$-algebra.
We will review the construction of the two-colored Temperley--Lieb 2-category $\TLcat_A$ and discuss some properties.
There is no new mathematical content here and a more detailed treatise can be found elsewhere (cf., \cite{EW, EW17, Hazi}).

\begin{definition}
    The \emph{two-colored Temperley--Lieb 2-category} $\TLcat_A$ is the 2-category defined as follows. 
    There are two objects of $\TLcat_A$ which are denoted by $s$ and $t$.
    The 1-morphisms of $\TLcat_A$ consist of finite alternating sequences $(\ldots, s, t, s, t, \ldots)$.
    For an integer $n \geq 0$, we write ${}_{s} n$ for the sequence of length $n+1$ which begins with $s$, and ${}_{t} n$ for the sequence of length $n+1$ which begins with $t$.
    We visualize the 1-morphisms as a sequence of $n$ dots on a line whose regions are alternately colored by $s$ and $t$. E.g.,
    \[{}_s 5 = 
  % \tikzsetnextfilename{#1}
  \tikzstyle{every picture}=[tikzfig]
  \input{./figures/tl_1_mors.tikz}
\]
    There are no 2-morphisms between ${}_{s} n$ and ${}_{t} n'$ in either direction.
    Let $u \in \{s,t\}$. The $A$-module of 2-morphisms $\TwoHom_{\TLcat} ({}_u n, {}_u n')$ is given by the free $A$-module whose basis $\CM ({}_u n, {}_u n')$ consists of two colored $(n,n')$-crossingless matchings.
    Vertical composition ($\circ$) of 2-morphisms is defined by concatenation of crossingless matchings along with the following ``bubble evaluation'' rule,
    \begin{equation}
        
  % \tikzsetnextfilename{#1}
  \tikzstyle{every picture}=[tikzfig]
  \input{./figures/tl_bubble.tikz}
.
    \end{equation}
    Horizontal composition ($\star$) of 2-morphisms is defined by horizontal concatenation of crossingless matchings. 
\end{definition}

The $A$-algebra $\TL_A ({}_s n) \coloneq \TwoEnd_{\TLcat_A} ({}_s n)$ is called the \emph{two-colored Temperley--Lieb algebra}.
For each $1 \leq i \leq n-1$, we define a crossingless matching $e_i \in \TL_A ({}_s n)$ as follows:
\[
  % \tikzsetnextfilename{#1}
  \tikzstyle{every picture}=[tikzfig]
  \input{./figures/TL_ei.tikz}
\]
where the purple region is red if $n$ is even and blue if $n$ is odd.
It can be checked that $\TL_A ({}_s n)$ is the $A$-algebra with generators $e_i$ for $1 \leq i \leq n-1$, subject to the relations
\begin{align*}
    e_i^2 &= -x & & \text{if }i \text{ odd}, \\
    e_i^2 &= -y & & \text{if }i \text{ even},\\
    e_i e_j&= e_j e_i & & \text{if }\abs{i-j} > 1, \\
    e_i e_{i\pm 1} e_i &= e_i. & &
\end{align*}
The algebra $\TL_A ({}_t n) \coloneq \TwoEnd_{\TLcat_A} ({}_t n)$ admits a similar presentation except for the parity conditions on $e_i^2$ being switched.

\begin{definition}
    Let $u \in \{s,t\}$.
    An element $\JW_A ({}_u n) \in \TL_A ({}_u n)$ is called a \emph{two-colored Jones--Wenzl projector} if
    \begin{enumerate}
        \item $\JW_A ({}_u n)$ is an idempotent,
        \item $e_i \JW_A ({}_u n) = 0$ for all $1 \leq i \leq n-1$,
        \item The coefficient of 1 in $\JW_A ({}_u n)$ is 1.
    \end{enumerate}
\end{definition}

It follows from general theory that if two-colored Jones--Wenzl projectors exist, then they are unique.

\begin{definition}
    Suppose $\JW_A ({}_s n)$ exists.
    We say that $\JW_A ({}_s n)$ is \emph{rotatable} if 
    \[
  % \tikzsetnextfilename{#1}
  \tikzstyle{every picture}=[tikzfig]
  \input{./figures/rot_of_JW.tikz}
\]
    for some invertible scalar $\lambda_{s,t}$.
    Likewise, $\JW_A ({}_t n)$ is said to be rotatable if the analogous equations hold after switching colors.
\end{definition}

It is clear from definitions that if $\JW_A ({}_s n)$ is rotatable, then $\JW_A ({}_t n)$ will be as well.
In this case, it is not hard to show that $\lambda_{s,t} = \lambda_{t,s}^{-1}$.
By \cite[Lemma 6.25]{EW17}, one can calculate that $\lambda_{s,t} = [n]_y$.

Historically, it has been a very troublesome question to determine when Jones--Wenzl projectors exist and when they are rotatable.
Fortunately, a complete solution to this question has been provided by Hazi.

\begin{theorem}[{\cite[Theorem B]{Hazi}}]\label{thm:JW_exist_and_rotatable}
    The two-colored Jones--Wenzl projectors, $\JW_A ({}_s n)$ and $\JW_A ({}_t n)$, exist and are rotatable if and only if $\binom{n+1}{k}_x = \binom{n+1}{k}_y = 0$ in $A$ for all $1 \leq k \leq n$.
\end{theorem}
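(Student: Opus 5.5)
The statement is Hazi's theorem, which the paper cites rather than proves, so I sketch how I would reconstruct it from the standard two-colored Temperley--Lieb theory. The plan has three parts: the recursive Wenzl construction over a generic base, a characterization of existence in terms of the trace of the last projector, and a rotation computation that ties the remaining conditions to the quantum binomial coefficients.

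\emph{Generic base.} Work first over the universal ring $\Z[x,y]$ localized at all nonzero two-colored quantum numbers. There the projectors exist, built by the two-colored Wenzl recursion. This recursion writes $\JW({}_s n)$ as $\JW({}_s (n-1)) \star \id$ plus a correction term. The correction has coefficients that are ratios of two-colored quantum numbers, $[k-1]/[k]$ with colors alternating appropriately.

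\emph{Idempotent and annihilation conditions over $A$.} Expand $\JW({}_s n)$ in the crossingless-matching basis. Existence over a general $A$ amounts to the coefficients, a priori lying in a localization of $\Z[x,y]$, being specializable to $A$. I would use the standard characterization: $\JW({}_u n)$ exists over $A$ if and only if $\JW({}_u (n-1))$ exists and the partial trace of $\JW({}_u(n-1))$ onto ${}_u(n-2)$ is divisible appropriately. Equivalently, the coefficients of the matchings with a single cap at the right end are $\pm[k]/[n]$, with colors inherited from the alternation. I would then show that the full set of coefficients is generated by ratios of the form $\binom{n}{k}$-type quotients.

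\emph{Rotatability.} Rotating $\JW({}_s n)$ by one strand yields an element of $\TL({}_t n)$ that is again killed by all caps. Comparing coefficients of the identity matching gives $\lambda_{s,t} = [n]_y$; this value is \cite[Lemma 6.25]{EW17}. Invertibility of this scalar, together with rotation invariance holding strand by strand, forces relations $[n]_x$ and $[n]_y$ acting as units in a controlled way. Combined with the existence conditions, this becomes the vanishing of $\binom{n+1}{k}_x$ and $\binom{n+1}{k}_y$ for $1\le k\le n$. For the converse, the vanishing makes $[n+1]=0$ and $[n]$ invertible, and the coefficients then become integral polynomials by the binomial identities.

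The main obstacle is the \emph{only if} direction. One must show that existence alone, before invoking rotatability, already forces many vanishings, through a delicate analysis of the coefficients of the projector. I expect to follow Hazi's argument, which identifies these coefficients with products of quantum binomials via a "light ladder" expansion. This is exactly where I would defer to \cite[Theorem B]{Hazi}.
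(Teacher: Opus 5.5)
Deferring to \cite[Theorem B]{Hazi} is legitimate, since the paper itself only quotes it; but the mechanism you sketch around that deferral fails at its central step. You claim that rotating $\JW_A({}_s n)$ by one strand gives an element of $\TL_A({}_t n)$ that is ``again killed by all caps.'' This is false in general.

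Under a one-click rotation, the $n-1$ top caps of the rotated element correspond to $n-1$ cyclically adjacent pairs of boundary points of the original. Of these, $n-2$ are top caps of the original, and these do kill the projector. The remaining pair joins the extreme top point to the extreme bottom point on one side, so that cap computes a partial trace of $\JW_A({}_s n)$. Rotatability forces this partial trace to vanish, because $\JW_A({}_t n)$ is killed by every cap. Conversely, its vanishing is what makes the rotated element a multiple of $\JW_A({}_t n)$.

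So your claim, combined with your own invertible value of $\lambda_{s,t}$, would make rotatability automatic whenever the projectors exist. That is false. Take $A=\Q$ with $x=y=0$ and $n=3$. Then $\JW_A({}_s 3)=1-e_1e_2-e_2e_1$ exists, $[3]_x=[3]_y=-1$ and $[4]_x=[4]_y=0$. Yet closing off the rightmost strand gives $-2e_1\neq 0$, in accordance with $\binom{4}{2}_x=(xy-2)(xy-1)=2\neq 0$. Over $\F_2$ the same projector is rotatable. In particular, no argument about invertibility of $\lambda_{s,t}$ can produce the vanishing conditions; they come only from the partial trace.

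The same example refutes your recursive criterion for existence. $\JW_A({}_s 2)$ does not exist when $x=0$: then $e_1^2=0$, so no element $1+ce_1$ is killed by $e_1$. But $\JW_A({}_s 3)$ does exist. This is not a pathology for the paper: type $B_2$ in characteristic $2$, where one of $a_{s,t},a_{t,s}$ equals $-2=0$, is exactly of this kind, with a missing two-strand projector but both three-strand projectors present.

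Hence the generic identity expressing the partial trace of the $n$-strand projector as $[n+1]/[n]$ (up to sign and colors) times the $(n-1)$-strand projector cannot be specialized to $A$. That is precisely why the criterion involves all the $\binom{n+1}{k}$ and not just $[n+1]$. For the same reason, your converse cannot stop at ``$[n+1]=0$ and $[n]$ invertible.''

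Your ``only if'' paragraph also has the roles reversed. Existence alone yields invertibility of the binomials $\binom{n}{k}$ (Hazi's Theorem A), not vanishings. A correct reconstruction has to do two things. First, obtain existence of both projectors from the binomial conditions. Second, compute the partial trace of $\JW_A({}_s n)$ directly over $A$, from an explicit formula for its coefficients, and show it vanishes exactly when all $\binom{n+1}{k}_x$ and $\binom{n+1}{k}_y$ do.
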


Let $\fr{h}$ be an Abe realization for a dihedral group $W_m$ of order $m$ with simple reflections $s,t$.
Let $A = \k [x,y] / (x - a_{s,t}, y - a_{t,s})$. Theorem \ref{thm:JW_exist_and_rotatable} implies that the two-colored Jones--Wenzl projectors exist for this $A$.
Moreover, the condition that $\fr{h}$ be balanced is equivalent to $\lambda_{s,t} = 1$.

\subsubsection{From Temperley--Lieb to Monodromic Elias--Williamson graphs}

Throughout this section, we will assume that $W_{s,t}^{\scrL}$ is rank 2.

Define a 1-category
\[\EWmon{\scrL}{-}^{\BS} (\fr{h}, \{s,t\}, \fr{o}) \coloneq \bigsqcup_{\scrL' \in \scrL \cdot W_m } \EWmon{\scrL}{\scrL'}^{\BS} (\fr{h}, \{s,t\}, \fr{o}).\]
We fix $A = \k[x,y]/(x-a_{s',t'}, y-a_{t',s'})$.
The goal of this section is to define a 1-functor\footnote{To make sense of this, we regard $\TLcat_A$ as a 1-category with respect to horizontal concatenation. Upgrading this to a 2-functor would require a theory of singular monodromic diagrammatics.}
\[\Sigma_{\scrL}^{\tau} : \TLcat_A \to \EWmon{\scrL}{-}^{\BS} (\fr{h}, \{s,t\}, \fr{o}).\]
This functor should take the Jones--Wenzl projector $\JW_A ({}_{s'} (v-1))$ to a morphism $B_{{}_s \underline{m}}^{\scrL} \to B_{{}_s \underline{m}}^{\scrL}$.

It follows from the non-monodromic setting, that the deformation retract of crossingless matchings induces a functor
\[ \Sigma : \TLcat_A \to \EWmon{1}{1}^{\BS} (\fr{h}, \{s',t'\},  1).\]
Moreover, by composing with the endosimple expansion of Proposition \ref{prop:one_color_endoscopy}, we get a functor
\[\Sigma_{\scrL} :  \TLcat_A \to \EWmon{\scrL}{\scrL}^{\BS} (\fr{h}, \{s,t\}, \fr{o}).\]
This functor is close to the desired functor; however, it doesn't take the Jones--Wenzl projector to the correct target.
In particular, every object in the image of $\Sigma_{\scrL}$ the same left and right monodromy, whereas it may be the case that $\scrL {}_s \underline{m} \neq \scrL$.

\begin{definition}
    Let $f : B_{{}_u \underline{N}}^{\scrL} \to B_{{}_u \underline{N}'}^{\scrL}$ be a morphism in $\EWmon{\scrL}{-}^{\BS} (\fr{h}, \{s,t\}, \fr{o})$ and let $a,b \in \Z_{\geq 0}$.
    \begin{enumerate}
        \item We say that $f$ is $(a,b)$-truncatable if $f = f' \star \id$ for some morphism $f' : B_{{}_u \underline{a}}^{\scrL} \to B_{{}_u \underline{b}}^{\scrL}$.
        \item We say that $f$ is $(a,b)$-extendable if $f \star \id = f'$ for some morphism $f' : B_{{}_u \underline{a}}^{\scrL} \to B_{{}_u \underline{b}}^{\scrL}$.
        \item We say that $f$ is $(a,b)$-adjustable if it is either $(a,b)$-truncatable or $(a,b)$-extendable.
    \end{enumerate}
    If $f$ is $(a,b)$-adjustable, we will write $\tau_a^b f$ for the morphism $f' : B_{{}_u \underline{a}}^{\scrL} \to B_{{}_u \underline{b}}^{\scrL}$ produced above. 
\end{definition}

Note that if $f : B_{{}_u \underline{N}}^{\scrL} \to B_{{}_u \underline{N}'}^{\scrL}$ is $(a,b)$-truncatable, then $N \geq a$ and $N' \geq b$.
Similarly, if $f$ is $(a,b)$-extendable, then $N \leq a$ and $N' \leq b$. In particular, $f$ is both $(a,b)$-truncatable and $(a,b)$-extendable if and only if $N = a$ and $N' = b$. 
We will write $\Hom_{\EWmon{\scrL}{-}^{\BS}}^{(a,b)} (B_{{}_u \underline{N}}^{\scrL}, B_{{}_u \underline{N}'}^{\scrL})$ for the sub-$R$-module of $(a,b)$-adjustable morphisms.
\begin{lemma}\label{lem:comp_of_ess_morphisms}
    Let $f : B_{{}_u \underline{N}}^{\scrL} \to B_{{}_u \underline{N}'}^{\scrL}$ be an $(a,b)$-adjustable morphism and let $g : B_{{}_u \underline{N'}}^{\scrL} \to B_{{}_u \underline{N''}}^{\scrL}$ be a $(b,c)$-adjustable morphism.
    Then $g\circ f$ is $(a,c)$-adjustable. Moreover, $\tau_a^c (g \circ f) =\tau_b^c (g) \circ \tau_a^b (f)$.
\end{lemma}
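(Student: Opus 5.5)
The argument is a four-way case split on whether $f$ is $(a,b)$-truncatable or $(a,b)$-extendable and whether $g$ is $(b,c)$-truncatable or $(b,c)$-extendable. In each case I will exhibit $g\circ f$ as either $h\star\id$ or as $\tau$-adjusted from a morphism $h$ with source $B_{{}_u\underline{a}}^{\scrL}$ and target $B_{{}_u\underline{c}}^{\scrL}$, and check that $h=\tau_b^c(g)\circ\tau_a^b(f)$. Throughout I only use that $\EWmon{\scrL}{-}^{\BS}$ sits inside a strict 2-category. The relevant consequence is the interchange law $(g'\star\id)\circ(f'\star\id)=(g'\circ f')\star\id$, applied whenever the identity factors are identities on the same 1-morphism. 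I also use that $\star$ with identity 2-morphisms is compatible with composition.

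First I record what each notion means. Write $\ue_k$ for the tail expression appearing after ${}_u\underline{k}$, so that ${}_u\underline{N}={}_u\underline{a}\,\ue$ as concatenated expressions. Since all expressions are alternating and start with $u$, this tail is itself alternating and determined by $a$ and $N$.
\begin{itemize}
\item Truncatable means $f=f'\star\id_{B_{\ue}}$ with $f'\colon B_{{}_u\underline{a}}\to B_{{}_u\underline{b}}$. This forces $N-a=N'-b$, because the identity factor has to be the same 1-morphism on both sides.
\item Extendable means $f\star\id_{B_{\ue}}=f'$. Again $a-N=b-N'$.
\end{itemize}
So in every case the quantity $d_f\coloneq N-a=N'-b$ is well defined, and truncatable versus extendable corresponds to the sign of $d_f$. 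Similarly $d_g=N'-b=N''-c$. Hence $d_f=d_g=N'-b$, and the key observation is that both morphisms are adjusted by the same tail $\ue$ of length $|N'-b|$. Parity and colour agree because the expressions are determined by their lengths and starting colour $u$.

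The cases then go as follows.
\begin{itemize}
\item If $N'>b$, then $f$ and $g$ are both truncatable by the same $\id_{B_{\ue}}$. Interchange gives $g\circ f=(g'\circ f')\star\id_{B_{\ue}}$.
\item If $N'<b$, then both are extendable and $(g\circ f)\star\id=(g\star\id)\circ(f\star\id)=g'\circ f'$.
\item If $N'=b$, every adjustment is trivial and the statement is immediate.
\end{itemize}
The mixed cases (truncatable $f$, extendable $g$, or the reverse) cannot arise unless $N'=b$, which is exactly why the sign of $d$ is shared.

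The main obstacle is the rigidity of adjustment that the identity $\tau_a^c(g\circ f)=\tau_b^c(g)\circ\tau_a^b(f)$ needs, namely that $\tau_a^b f$ is well defined. For extendable morphisms, $f'$ is determined directly by $f$. For truncatable morphisms, $f'$ might a priori not be unique, because $-\star\id$ need not be injective on 2-morphisms. I would handle this by reading the lemma with $\tau$ denoting any choice. The computation above shows that the chosen $g'\circ f'$ is a valid truncation of $g\circ f$, which is all the stated identity needs under that reading. If genuine uniqueness is wanted, I would invoke faithfulness of $-\star\id_{B_s}$, obtained by capping off with the biadjunction units. In the case $\scrL s\neq\scrL$, $\cup^s,\cap^s$ are inverse by block minimality. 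In the case $\scrL s=\scrL$, a one-sided inverse comes from the Frobenius zigzag relations, so I expect that step to go through via the adjunction of Lemma~\ref{lem:BS_tensor_adjunction}'s diagrammatic analogue.
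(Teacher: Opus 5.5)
Your proof is correct and is essentially the paper's argument. The paper also splits into the four truncatable/extendable cases, settles the truncatable--truncatable case by the interchange law, and notes that a mixed case forces $N'=b$ and so reduces to a pure one; your common offset $N-a=N'-b=N''-c$ makes explicit an equality the paper uses only implicitly when it concludes that $g$ is then truncatable.

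Your uniqueness aside is not needed for the lemma, since the paper likewise only shows that $\tau_b^c(g)\circ\tau_a^b(f)$ is a valid adjustment of $g\circ f$. Be aware, though, that when $\scrL s=\scrL$ faithfulness of $(-)\star\id_{B_s^{\scrL}}$ would not come from the zigzag relations alone. Because $\cap^s\circ\cup^s=0$, the biadjunction only reduces faithfulness to injectivity of $f\mapsto f\star\cup^s$, and proving that needs something like localization or the double-leaves basis.
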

\begin{proof}
    Suppose $f$ is truncated and $g$ is truncated. It follows from definitions that $g \circ f  = (\tau_b^c g \circ \tau_a^b f) \star \id$, and so $g \circ f$ is $(a,c)$-truncated with $\tau_a^c (g \circ f) = \tau_b^c g \circ \tau_a^b f$.
    
    Suppose $f$ is truncated and $g$ is extendable.
    Since $f$ is truncated, we have that $N' - b \geq 0$. On the other hand, since $g$ is extendable, we have that $N' - b \leq 0$.
    Therefore, we are in the cases where $N' = b$ which implies that $g$ is truncated. We can then proceed as in the first case.
    
    The remaining two cases follow from similar arguments and are omitted.
\end{proof}

Since $W_{s,t}^{\scrL}$ has rank 2, we must have that $\ell (s') < m$ and $\ell (t') < m$.
As a result, $s'$ and $t'$ have unique reduced expressions. This gives a canonical choice of an endosimple expansion datum $\iota : \Exp (S_{\scrL}^{\circ}) \to \Exp (W)$.

We define a (strict) subcategory $\EWmon{\scrL}{\scrL}^{\BS, \tau} (\fr{h}, \{s,t\}, \fr{o})$ of $\EWmon{\scrL}{\scrL}^{\BS} (\fr{h}, \{s,t\}, \fr{o})$ whose objects are of the form $B_{\iota ({}_{u'} \underline{n+1})}^{\scrL}$ for $n \in \Z_{\geq 0}$ and $u' \in \{s',t'\}$.
The morphisms in $\EWmon{\scrL}{\scrL}^{\BS, \tau} (\fr{h}, \{s,t\}, \fr{o})$ are given by
\[\Hom_{\EWmon{\scrL}{\scrL}^{\BS, \tau}} (B_{\iota ({}_{u'} \underline{n+1})}^{\scrL}, B_{\iota ({}_{u'} \underline{n'+1})}^{\scrL} ) = \Hom_{\EWmon{\scrL}{\scrL}^{\BS}}^{(d(n+1), d(n'+1))} (B_{\iota ({}_{u'} \underline{n+1})}^{\scrL}, B_{\iota ({}_{u'} \underline{n'+1})}^{\scrL})\]
where $n,n' \in \Z_{\geq 0}$ and $u' \in \{s',t'\}$. For $u', v' \in \{s',t'\}$ with $u' \neq v'$ and $n,n' \in \Z_{\geq 0}$, we define
\[\Hom_{\EWmon{\scrL}{\scrL}^{\BS, \tau}} (B_{\iota ({}_{u'} \underline{n+1})}^{\scrL}, B_{\iota ({}_{v'} \underline{n'+1})}^{\scrL} ) = 0. \]
By Lemma \ref{lem:comp_of_ess_morphisms}, $\EWmon{\scrL}{\scrL}^{\BS, \tau} (\fr{h}, \{s,t\}, \fr{o})$ is indeed a well-defined subcategory.

We can define a functor
\[\tau : \EWmon{\scrL}{\scrL}^{\BS, \tau} (\fr{h}, \{s,t\}, \fr{o}) \to \EWmon{\scrL}{-}^{\BS} (\fr{h}, \{s,t\}, \fr{o})\]
by $\tau (f) \coloneq \tau_{d(n+1)}^{d(n'+1)} (f)$ for all morphisms $f$ in $\EWmon{\scrL}{\scrL}^{\BS, \tau} (\fr{h}, \{s,t\}, \fr{o})$. 
Lemma \ref{lem:comp_of_ess_morphisms} ensures that $\tau$ is well-defined.

\begin{lemma}\label{lem:truncations_and_extensions}
    Let $u' \in \{s',t'\}$ and $\varphi : {}_{u'} n \to {}_{u'} n'$ be a 2-morphism in $\TLcat_A$.
    Then $\Sigma_{\scrL} (\varphi)$ is $(d(n+1), d(n'+1))$-adjustable.
\end{lemma}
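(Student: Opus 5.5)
The plan is to reduce to generators of $\TLcat_A$ and then use Lemma \ref{lem:comp_of_ess_morphisms} to pass to composites. Here $d = m/v$ is the integer with $m = dv$. Every crossingless matching $\varphi : {}_{u'} n \to {}_{u'} n'$ factors as a vertical composite of \emph{elementary} matchings. Each elementary matching is an identity matching with one cap, or one cup, inserted at some position $i$. So $\Sigma_{\scrL}(\varphi)$ is a vertical composite of the images of elementary matchings, and the intermediate objects are of the form ${}_{u'} n''$. By Lemma \ref{lem:comp_of_ess_morphisms} it therefore suffices to show two things: the image of the identity of ${}_{u'} n$ is $(d(n+1), d(n+1))$-adjustable, and the image of an elementary cap or cup ${}_{u'} n \to {}_{u'} (n\mp 2)$ is $(d(n+1), d(n\mp 2 +1))$-adjustable.

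First I would make explicit the shape of $\iota({}_{u'}\underline{n+1})$. The endosimple generators have unique reduced expressions $s' = {}_s\underline{2k+1}$ and $t' = {}_t\underline{2\ell+1}$, with $d = k + \ell + 1$. I would show by induction on $n$ that concatenating these gives an expression which is an alternating word ${}_{u}\underline{N}$ up to adjacent pairs of a non-stabilizing color. Those pairs are cancelled in $\EWmon{}{}^{\BS}$ by the block minimality isomorphisms, and under that cancellation $B^{\scrL}_{\iota({}_{u'}\underline{n+1})}$ is identified with $B^{\scrL}_{{}_u\underline{N}}$ for an explicit $N$ compared with $d(n+1)$. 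This is the object-level bookkeeping, and it decides in each case whether we are in the truncatable situation ($N \geq d(n+1)$) or the extendable one ($N \leq d(n+1)$). The identity case then follows: the identity of $B_{{}_u\underline{N}}^{\scrL}$ is visibly $\id \star \id$, or extends by tensoring with identity strands.

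For an elementary cap at position $i$, the image under $\Sigma_{\scrL}$ is the endosimple Frobenius cap (\ref{eq:frob_algebra_str_diagrammatically}) on the $i$-th and $(i+1)$-st blocks, tensored with identities elsewhere. Concretely it is a stack of nested block-minimality caps on the non-stable colors, with a single $\cap^{s}$ of a stabilized color in the center. All strands to the right of this nested cap are identity strands. So the diagram splits as $f' \star \id$, where $f'$ only involves the strands up to the end of the $(i+1)$-st block. After the object-level cancellations above, the strands that can be split off are exactly those beyond length $d(n+1)$ at the source and $d(n-1)$ at the target. Cups are handled by the same argument upside down, or by applying the duality of Proposition \ref{prop:duality_and_str_mors} diagrammatically.

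The main obstacle I expect is the bookkeeping in the second step. The cancellations between the end of $\iota(u')$ and the start of the next endosimple expansion must be matched with the cancellations that produce the target alternating word. In particular, a cap near the right end must not destroy the splitting, since cancelling strands on the right could a priori be tangled with the nested cap. I would first try to settle this by an explicit induction on $n$, tracking the monodromy coloring of the rightmost face. If that becomes unwieldy, I would fall back on the observation that the rightmost face colors of source and target agree up to the block element $w^{\beta}$. The discrepancy is then a fixed rex-free tail of identity strands that can be split off uniformly.
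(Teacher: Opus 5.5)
Your scaffold (factor into elementary matchings and compose with Lemma \ref{lem:comp_of_ess_morphisms}, using that adjustable maps with fixed source and target form an $R$-submodule) is sound in principle. However, the object-level premise is wrong, and the one nontrivial step is left open. Take $u'=s'$. The word $\iota(s')$ begins and ends with $s$, and $\iota(t')$ begins and ends with $t$. So $\iota({}_{s'}\underline{n+1})$ is \emph{already} the alternating word ${}_s\underline{N}$: no letter repeats and nothing needs cancelling. You could not cancel anyway, since adjustability is the strict equality $f=f'\star\id$, not an identification up to block-minimality isomorphisms. Counting blocks gives $N=d(n+1)$ for $n$ odd and $N=dn+\ell(s')$ for $n$ even, where $d=\tfrac12(\ell(s')+\ell(t'))$. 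Nonzero matchings preserve the parity of $n$. Hence the odd case is trivial, and the even case with $\ell(s')\le\ell(t')$ is an extension by $\tfrac12(\ell(t')-\ell(s'))$ identity strands. Your proposal never computes $N$, which is why it cannot say which case it is in.

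The genuine gap is the remaining case: $n$ even and $\ell(s')>\ell(t')$. Here $N-d(n+1)=\tfrac12(\ell(s')-\ell(t'))>0$, and you must show that this many rightmost strands of $\Sigma_{\scrL}(\varphi)$ are identity strands. This is exactly your ``main obstacle'', and neither fallback (tracking the rightmost face colour, or a ``rex-free tail'') proves it. Part of the difficulty comes from misdescribing the generators. The image of a Temperley--Lieb cap is not an endosimple Frobenius cap on two adjacent blocks; same-coloured blocks are never adjacent here. It is an endosimple dot on the enclosed block together with an endosimple merge of the two flanking blocks of the other colour.

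The missing observation is that the endosimple expansions of the Frobenius structure maps (Proposition \ref{prop:one_color_endoscopy}) only touch the middle strands and the inner halves of the blocks they join. The outer halves pass straight through. The rightmost region of any crossingless matching is unbounded, so the last $s'$-block only ever appears as the right-hand block of a merge or split, and it never carries a dot. Therefore its right half, consisting of $\tfrac12(\ell(s')-1)$ strands, runs straight through. Finally, $\tfrac12(\ell(s')-1)\ge\tfrac12(\ell(s')-\ell(t'))$ because $\ell(t')\ge 1$. This is the paper's argument. It applies to an arbitrary $\varphi$ at once, so the factorization into elementary matchings is not needed.
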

\begin{proof}
    Without loss of generality, we may take $u' = s'$.
    Let $N$ (resp. $N'$) be the integer such that $\iota ({}_{s'} \underline{n+1}) = {}_s \underline{N}$ (resp. $\iota ({}_{s'} \underline{n'+1}) = {}_s \underline{N}'$).
    If $\varphi = 0$, there is nothing to prove.

    Note that $d = \frac{1}{2} (\ell (s') + \ell (t'))$.
    If $n$ is odd, then $\varphi$ being nonzero implies that $n'$ is also odd.
    In this case, $N = d(n+1)$ and $N' = d(n'+1)$ since $s'$ and $t'$ appear the same number of times in ${}_s \underline{n+1}$ (resp. ${}_s \underline{n'+1}$).
    As a result, $\Sigma_{\scrL} (\varphi)$ is trivially $(d(n+1), d(n'+1))$-truncatable (and extendable).

    Suppose $n$ is even. Again, $\varphi \neq 0$ implies that $n'$ is also even.
    In this case, $N = dn + \ell (s')$ and $N' = dn' + \ell (s')$.
    Write $r = \frac{1}{2} (\ell (t') - \ell (s'))$.
    First, assume that $\ell (s') \leq \ell (t')$, so that $r \geq 0$.
    We then have that $N + r = d (n+1)$ and $N' + r = d(n'+1)$.
    As a result, $\Sigma_{\scrL} (\varphi) \star \id_{B_{{}_t \underline{r}}^{\scrL}} : B_{{}_s \underline{d(n+1)}}^{\scrL} \to  B_{{}_s \underline{d(n'+1)}}^{\scrL}$, and thus $\varphi$ is $(d(n+1), d(n'+1))$-extendable. 
    Now assume that $\ell (s') > \ell (t')$, so that $r < 0$.
    From the definition of endosimple expansion given in Proposition \ref{prop:one_color_endoscopy}, we can see that $\Sigma_{\scrL} (\varphi)$ is of the form $f' \star \id_{B_{{}_t \underline{c}}^{\scrL}}$ where $c = \frac{1}{2} (\ell (s') - 1)$ (since these $c$-terms are minimal in their corresponding block).
    Then $c \geq r$, so we have that $\Sigma_{\scrL} (\varphi)$ is $(d(n+1), d(n'+1))$-truncatable.
\end{proof}

By Lemma \ref{lem:truncations_and_extensions},  $\Sigma_{\scrL}$ factors through $\EWmon{\scrL}{\scrL}^{\BS, \tau} (\fr{h}, \{s,t\}, \fr{o})$.
We can then consider define $\Sigma_{\scrL}^{\tau}$ as the composition
\[\Sigma_{\scrL}^{\tau} : \TLcat_A \stackrel{\Sigma_{\scrL}}{\to} \EWmon{\scrL}{\scrL}^{\BS, \tau} (\fr{h}, \{s,t\}, \fr{o}) \stackrel{\tau}{\to} \EWmon{\scrL}{-}^{\BS} (\fr{h}, \{s,t\}, \fr{o}).\]
Let $u' \in \{s',t'\}$ and let $u$ be the first simple reflection in $\iota (u')$.
Since $\TwoHom_{\TLcat_A} ({}_{u'} n, {}_{u'} n')$ has an $A$-basis by crossingless matchings, we obtain a linearly independent subset of $\Hom (B_{{}_u \underline{d(n+1)}}^{\scrL}, B_{{}_u \underline{d(n+1)}}^{\scrL})$
obtained by applying $\Sigma_{\scrL}^{\tau}$ to this basis.

Note that $\Sigma_{\scrL}^{\tau}$ is a fairly simple modification of $\Sigma_{\scrL}$ obtained by either concatenating or trimming identity morphisms from the image of $\Sigma_{\scrL}$.
If $n, n'$ are odd, then $\Sigma_{\scrL}^{\tau}$ agrees with $\Sigma_{\scrL}$ on the induced morphism of Hom spaces,
\[\TwoHom_{\TLcat_A} ({}_{u'} n, {}_{u'} n') \to \Hom_{\EWmon{\scrL}{-}^{\BS}} (B_{{}_u \underline{d(n+1)}}^{\scrL}, B_{{}_u \underline{d(n+1)}}^{\scrL}).\]
Here we are using the observation from the proof of Lemma \ref{lem:truncations_and_extensions} that for $\varphi : {}_{u'} n \to {}_{u'} n'$ the morphism $\Sigma_{\scrL} (\varphi)$ is always $(d(n+1), d(n'+1))$-adjustable when $n,n'$ are odd.

\begin{example}
    Consider $m = 6$ with $s' = s$ and $t' = tst$.
    In this case, $v = 3$ and $d=2$.
    \[
  % \tikzsetnextfilename{#1}
  \tikzstyle{every picture}=[tikzfig]
  \input{./figures/trunc_def_TL.tikz}
\]
\end{example}

\subsubsection{Monodromic Jones--Wenzl Morphisms}

We define the \emph{$\scrL$-monodromic Jones--Wenzl morphisms} $\JW_{s,t}^{\scrL} : B_{{}_s \underline{m}}^{\scrL} \to B_{{}_s \underline{m}}^{\scrL}$ and $\JW_{t,s}^{\scrL} : B_{{}_t \underline{m}}^{\scrL} \to B_{{}_t \underline{m}}^{\scrL}$ in $\EWmon{\scrL}{-}^{\BS} (\fr{h}, \{s,t\}, \fr{o})$ by 
\[ \JW_{s,t}^{\scrL} = \begin{cases} \id_{B_{{}_s \underline{m}}^{\scrL} } & \text{if } \rk (W_{s,t}^{\scrL}) < 2, \\ \Sigma_{\scrL}^{\tau} (\JW_A ({}_{s'} (v-1))) & \text{if } \rk (W_{s,t}^{\scrL}) = 2, \end{cases}\]
\[\JW_{t,s}^{\scrL} = \begin{cases} \id_{ B_{{}_t \underline{m}}^{\scrL}} & \text{if } \rk (W_{s,t}^{\scrL}) < 2, \\ \Sigma_{\scrL}^{\tau} (\JW_A ({}_{t'} (v-1))) & \text{if } \rk (W_{s,t}^{\scrL}) = 2. \end{cases} \]
Note the monodromic Jones--Wenzl morphisms are well-defined since Jones--Wenzl morphisms exist by Theorem \ref{thm:JW_exist_and_rotatable} and Proposition \ref{prop:refl_abe_realizations}.

A \emph{monodromic pitchfork} is a monodromic Elias--Williamson graph of the form
\[
  % \tikzsetnextfilename{#1}
  \tikzstyle{every picture}=[tikzfig]
  \input{./figures/mon_pitchfork.tikz}
\]
where the bottom expression $\uw$ satisfies $\ell_{\scrL} (\uw) = 3$, i.e., all the black strands are minimal in their blocks.

\begin{lemma}\label{lem:mon_JW_properties}
    The monodromic Jones--Wenzl morphisms satisfy the following properties:
    \begin{enumerate}
        \item $\JW_{s,t}^{\scrL}$ is idempotent;
        \item the coefficient of $\id_{B_{{}_s \underline{m}}^{\scrL}}$ in $\JW_{s,t}^{\scrL}$ is 1;
        \item $\JW_{s,t}^{\scrL}$ composed with any monodromic pitchfork is 0.
    \end{enumerate}
\end{lemma}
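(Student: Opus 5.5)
The plan is to separate the rank $<2$ case from the rank-$2$ case. In the rank-$2$ case each property will be transported from the corresponding property of $\JW_A({}_{s'}(v-1))$ in $\TLcat_A$ along the functor $\Sigma_{\scrL}^{\tau}$.

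\emph{Rank $<2$.} Here $\JW_{s,t}^{\scrL}$ is the identity, so (1) and (2) are immediate. For (3), observe that a monodromic pitchfork requires a bottom expression $\uw$ in the dihedral group with $\ell_{\scrL}(\uw)=3$ and all three black strands minimal in their blocks. These three strands must come from two distinct endosimple reflections, since the outer strands are joined through the trivalent vertex to the middle one. That is impossible when $W_{s,t}^{\scrL}$ has rank $\le 1$, so the statement is vacuous. I will check this carefully from the definition of the pitchfork, because it is the only delicate point in this case.

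\emph{Rank $2$, properties (1) and (2).} By construction $\Sigma_{\scrL}^{\tau} = \tau\circ\Sigma_{\scrL}$, and it is a functor with respect to vertical composition (Lemma \ref{lem:comp_of_ess_morphisms} together with Lemma \ref{lem:truncations_and_extensions}). Hence idempotence of $\JW_A({}_{s'}(v-1))$ gives (1). For (2), first write $\JW_A$ in the crossingless matching basis; the identity matching has coefficient $1$. Then I need two facts:
\begin{itemize}
\item $\Sigma_{\scrL}^{\tau}$ sends the identity matching to $\id_{B_{{}_s\underline{m}}^{\scrL}}$;
\item it sends the other matchings to diagrams that are linearly independent from the identity.
\end{itemize}
The independence follows from the linear independence of the image of the crossingless matching basis noted after the definition of $\Sigma_{\scrL}^{\tau}$. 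If needed, it can also be seen by applying $\Upsilon$ and comparing with double leaves (Theorem \ref{thm:abe_dl_are_basis}), using that the non-identity matchings factor through fewer endosimple strands. Thus the coefficient of the identity is $1$.

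\emph{Rank $2$, property (3): the main obstacle.} A monodromic pitchfork is, by its definition with $\ell_{\scrL}(\uw)=3$, the endosimple expansion of an ordinary two-colored pitchfork: a trivalent vertex of color $u'$ with its neighbouring $v'$ strands, placed at consecutive endosimple positions. Conjugating by the block-minimal strands via block minimality and the Frobenius structure of Proposition \ref{prop:one_color_endoscopy} puts it in that form. The ordinary pitchfork is in turn $\Sigma$ applied to a cup/cap in $\TL_A$ composed with a dot, in the usual way: the pitchfork is the image under $\Sigma$ of $e_i$ followed by a non-invertible map. The argument then runs in three steps.
\begin{enumerate}
\item Show that the monodromic pitchfork $P$ composed with $\JW_{s,t}^{\scrL}$ equals $P'\circ\Sigma_{\scrL}^{\tau}(e_i)\circ\Sigma_{\scrL}^{\tau}(\JW_A)$ for a suitable $P'$ and index $i$, as in \cite{EW}.
\item Conclude that this vanishes by $e_i\JW_A=0$.
\item Handle the adjustment $\tau$ and the block-minimal strands at the ends; I expect this bookkeeping to be the hardest step. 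For it I will use that the pitchfork is $(a,b)$-adjustable compatibly, so that Lemma \ref{lem:comp_of_ess_morphisms} lets me compute the composite inside $\EWmon{\scrL}{\scrL}^{\BS,\tau}$, where it becomes $\Sigma_{\scrL}$ of a Temperley--Lieb identity.
\end{enumerate}
Rotatability (Theorem \ref{thm:JW_exist_and_rotatable}, with $\lambda=1$ from balancedness) lets me assume the pitchfork attaches on the bottom without loss of generality.
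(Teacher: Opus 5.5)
Your proposal is correct and takes essentially the same approach as the paper. The paper's proof only says that the rank $<2$ case is obvious and that in rank $2$ the three properties are inherited from those of $\JW_A({}_{s'}(v-1))$ via $\Sigma_{\scrL}^{\tau}$. Your added details are sound: (3) is vacuous in rank $<2$, and a pitchfork is $\Sigma_{\scrL}^{\tau}$ of a Temperley--Lieb cap, which factors through some $e_i$. One correction to the vacuity argument: the middle tine ends in a dot and is not joined to the trivalent vertex. The correct reason is that the trivalent vertex and the dot force two distinct reflections $u$ and $(u\ux)v(u\ux)^{-1}$ into $W_{\scrL_0}$.
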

\begin{proof}
    The lemma is obvious if $\rk (W_{s,t}^{\scrL}) < 2$.
    If $\rk (W_{s,t}^{\scrL}) = 2$, then the lemma follows from the construction of $\JW_{s,t}^{\scrL}$ via the functor $\Sigma_{\scrL}^{\tau}$ and the defining properties of two-colored Jones--Wenzl projectors in $\TLcat_A$.  
\end{proof}

Lemma \ref{lem:mon_JW_properties} (3) is called the \emph{death by monodromic pitchfork relation}.

\subsubsection{Monodromic Hecke Category for Dihedral Groups}

\begin{definition}\label{def:two_color_cat}
Let $(W,S)$ be a Coxeter system of type $I_2 (m)$ with $S = \{s, t\}$, $\fr{o}$ be a $W$-set, and $\fr{h}$ be a reflection-balanced reflection-stable  Abe realization for $W$.
The \emph{two-color diagrammatic monodromic Hecke category}, denoted $\EWmon{}{}^{\BS} (\fr{h}, W, \fr{o})$, is the $\k$-linear strict 2-category defined as follows.
The object set is $\fr{o}$. The 1-morphisms are tensor generated by the 1-morphisms $B_s^{\scrL} : \scrL \stackrel{s}{\to} \scrL s$ and $B_t^{\scrL} : \scrL \stackrel{t}{\to} \scrL t$ for all $\scrL \in \fr{o}$.
A general 1-morphism is then of the form $B_{\uw}^{\scrL} : \scrL \stackrel{\uw}{\to} \scrL \uw$ for $\uw \in \Exp (W)$. 
The space of 2-morphisms $\TwoHom_{\EWmon{}{}^{\BS}} (B_{\uw}^{\scrL}, B_{\uy}^{\scrL})$ is the free $\k$-module with basis ${}_{\scrL} \VSGraph_{\scrL'} (\ux, \uy)$ modulo the following relations:

    \emph{(One-Color Relations)} The relations of (\ref{eq:one_color_cat}).
   
    \emph{(Two-color Associativity)} We give one example of each relation for each parity of $m = m_{s,t} < \infty$. The general form can be inferred from the non-monodromic setting.
    \ctikzfig{2assoc_5}
    \ctikzfig{2assoc_6}

    \emph{(Elias--Jones--Wenzl)}
        \[
  % \tikzsetnextfilename{#1}
  \tikzstyle{every picture}=[tikzfig]
  \input{./figures/ejw.tikz}
\]
\end{definition}

Note that when $(W,S)$ is an infinite dihedral group, $\EWmon{}{}^{\BS} (\fr{h}, W, \fr{o})$ coincides with $\EWmon{}{}^{\BS} (\fr{h}, S, \fr{o})$.

The following is a monodromic version of the Jones--Wenzl relation (also called two-color dot contraction).
\begin{lemma}\label{lem:jones_wenzl}
    Assume that $\scrL s = \scrL$. The following relation holds in $\EWmon{}{}^{\BS} (\fr{h}, W, \fr{o})$.
    \begin{equation}
        
  % \tikzsetnextfilename{#1}
  \tikzstyle{every picture}=[tikzfig]
  \input{./figures/jones_wenzl.tikz}

    \end{equation}
\end{lemma}
\begin{proof}
    We will just prove the case when $m$ is odd. The case when $m$ is even can be proved with the same argument.
    We can compute
    \[
  % \tikzsetnextfilename{#1}
  \tikzstyle{every picture}=[tikzfig]
  \input{./figures/jw_proof.tikz}
\]
    where the second equality is 2-color associativity and the third equality is the Elias--Jones--Wenzl relation.
\end{proof}

For two monodromy parameters $\scrL, \scrL' \in \fr{o}$, we define a 1-category 
\[ \EWmon{\scrL}{\scrL'}^{\BS} (\fr{h}, W, \fr{o}) \coloneq \Hom_{\EWmon{}{}^{\BS} (\fr{h}, W, \fr{o})} (\scrL, \scrL'). \]
Our first observation is that $\EWmon{\scrL}{\scrL'}^{\BS} (\fr{h}, W, \fr{o})$ admits a block decomposition analogous to Lemma \ref{lem:abe_block_decomp}.
Let $\beta \in \uW{\scrL}{\scrL'}$ be a block. We can then define a full subcategory $\EWmon{\scrL}{\scrL'}^{\BS, \beta} (\fr{h}, W, \fr{o})$ of $\EWmon{\scrL}{\scrL'}^{\BS} (\fr{h}, W, \fr{o})$, called the \emph{$\beta$-block},  consisting of objects $B_{\uw}^{\scrL}$ where $\uw \in \Exp (W)$ is an expression of an element in $\beta$.

\begin{lemma}\label{lem:dihedral_facts_about_blocks}
    Let $\scrL, \scrL', \scrL'' \in \fr{o}$.
    \begin{enumerate}
        \item There is a decomposition of categories
    \[\EWmon{\scrL}{\scrL'}^{\BS} (\fr{h}, W, \fr{o}) = \bigsqcup_{\beta \in \uW{\scrL}{\scrL'}} \EWmon{\scrL}{\scrL'}^{\BS, \beta} (\fr{h}, W, \fr{o}).\]
        \item Let $\beta \in \uW{\scrL}{\scrL'}$ and $\gamma \in \uW{\scrL'}{\scrL''}$. Horizontal composition is compatible with block multiplication in the following sense
        \[\EWmon{\scrL}{\scrL'}^{\BS, \beta} (\fr{h}, W, \fr{o}) \star \EWmon{\scrL'}{\scrL''}^{\BS, \beta \gamma} (\fr{h}, W, \fr{o}) \subseteq \EWmon{\scrL'}{\scrL''}^{\BS, \beta \gamma} (\fr{h}, W, \fr{o}).\]
        \item Let $\beta \in \uW{\scrL}{\scrL'}$. Let $\uw^{\beta}$ be an endo-reduced expression for $w^{\beta}$. There is an equivalence of categories
        \[ (-) \star \id_{\uw^{\beta}} : \EWmon{\scrL}{\scrL}^{\BS, \circ} (\fr{h}, W, \fr{o})  \stackrel{\sim}{\to} \EWmon{\scrL}{\scrL'}^{\BS, \beta} (\fr{h}, W, \fr{o}).\]
    \end{enumerate}
\end{lemma}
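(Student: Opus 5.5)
The plan is to argue generator by generator, working with the defining presentation of $\EWmon{}{}^{\BS} (\fr{h}, W, \fr{o})$ (Definition \ref{def:two_color_cat}). Since morphisms are spanned by monodromic Elias--Williamson graphs, each part reduces to a statement about what a single generating vertex does to the block of its boundary.

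\textbf{Part (1): invariance of the block under each generator.} For a graph in ${}_{\scrL}\VSGraph_{\scrL'}(\ux,\uy)$, we show that the evaluations of $\ux$ and $\uy$ lie in the same block. Slicing the graph horizontally, it suffices to check that crossing a single generating vertex (tensored with identities) does not change the block. By Lemma \ref{lem:block_prelims} (1), $b_{\scrL}$ is multiplicative along concatenation. So we only need local invariance: $b_{\scrL''}$ of the local bottom boundary equals $b_{\scrL''}$ of the local top boundary, where $\scrL''$ is the color of the face to the left of the vertex. We check each vertex type.
\begin{itemize}
\item A univalent vertex on an $s$-strand only exists in the one-color relations when $\scrL'' s=\scrL''$ (dots are forbidden otherwise by block minimality). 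In that case $s\in K$, so $b_{\scrL''}(s)=e=b_{\scrL''}(\emptyset)$.
\item Trivalent vertices with $\scrL''s=\scrL''$ relate $(s,s)$ and $(s)$, both of which have $b=e$.
\item Cups and caps with $\scrL''s\neq \scrL''$ relate $(s,s)$ and $\emptyset$, which have the same $b$ by Lemma \ref{lem:block_prelims} (3).
\item A $2m$-valent vertex relates ${}_s\underline{m}$ and ${}_t\underline{m}$, which are equal in $b_{\scrL''}$ by Lemma \ref{lem:block_prelims} (3).
\end{itemize}
Hence all Hom spaces between distinct blocks vanish and the category decomposes as a disjoint union. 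Polynomial boxes need no check.

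\textbf{Part (2): horizontal composition.} This is immediate from Lemma \ref{lem:block_prelims} (1): concatenating an expression for $x\in\beta$ with one for $y\in\gamma$ gives $b_{\scrL}(\ux\uy)=w^\beta w^\gamma$, which is the minimal element of the product block $\beta\gamma$. (The statement contains a typo in the indices; the intended inclusion is into $\EWmon{\scrL}{\scrL''}^{\BS,\beta\gamma}$.)

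\textbf{Part (3): the equivalence $(-)\star\id_{\uw^\beta}$.} By part (2) this functor lands in the $\beta$-block. Let $\overline{\uw^\beta}$ be the reversed expression; by Lemma \ref{lem:block_prelims} (2) it represents the inverse block. Since $\ell_{\scrL}(\uw^\beta)=0$, every letter is a non-stabilizing step, so the block-minimality cups and caps give inverse isomorphisms
\[
\id_{\uw^\beta}\star\id_{\overline{\uw^\beta}}\cong \id_{\emptyset}
\qquad\text{and}\qquad
\id_{\overline{\uw^\beta}}\star\id_{\uw^\beta}\cong \id_{\emptyset}.
\]
These are built iteratively from the $\cup^s,\cap^s$ with $\scrL s\neq\scrL$. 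The functor $(-)\star\id_{\overline{\uw^\beta}}$ is then a quasi-inverse. It lands in the neutral block because $b_{\scrL}(\ux\overline{\uw^\beta})=w^\beta (w^\beta)^{-1}=e$, and the natural isomorphisms come from the cups and caps above. The resulting equivalence is then both full and faithful.

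\textbf{Main obstacle.} The subtle step is part (1): we must be sure the generating set really only contains dots and trivalent vertices in the stabilizing case. This requires reading the one-color relations (\ref{eq:one_color_cat}) correctly. When $\scrL s\neq\scrL$ the only generators are the cup and cap, and the failure of Frobenius vertices there is exactly what makes the blocks separate.
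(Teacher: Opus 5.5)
Your proposal is correct and follows the paper's proof. For (1) and (2) you use invariance of $b_{\scrL}$ across each generating vertex via Lemma \ref{lem:block_prelims}, and for (3) you use the block-minimality cups and caps to show that $(-)\star \id_{\overline{\uw}^{\beta}}$ is a quasi-inverse. One small correction: dots and trivalent vertices occur only when $\scrL s=\scrL$ because the face-coloring condition forces it, not because of the one-color or block-minimality relations. The strand ending at a dot has the same face on both sides, and the three face labels around a trivalent vertex force $\scrL s=\scrL$; relations can only impose equalities among graphs and cannot forbid a generator.
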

\begin{proof}
    \emph{(1): } Clearly, every morphism is contained in some block of $\EWmon{\scrL}{\scrL'}^{\BS, \beta} (\fr{h}, W, \fr{o})$.
    As a result, it suffices to show that every morphism $f : B_{\uw}^{\scrL} \to B_{\uy}^{\scrL}$ with $b_{\scrL} (\uw) \neq b_{\scrL} (\uy)$ is zero.
    We can show something stronger, namely, that no underlying monodromic Elias--Williamson graph can even exist.
    This can be easily checked on the generating vertices in an Elias--Williamson graph: trivalent and univalent vertices are always in the neutral block and $2m_{s,t}$-valent vertices do not change the block by Lemma \ref{lem:block_prelims}.

    \emph{(2): } This is obvious from the definition of blocks (Lemma \ref{lem:block_prelims}).

    \emph{(3): } Let $u \in \{s,t\}$ such that $\scrL u \neq \scrL$.
    Then the block minimality relations ensure that $B_{uu}^{\scrL} \cong B_{\emptyset}^{\scrL}$ via the cup and cap maps.
    As a result, $(-) \star \id_{B_{u}^{\scrL}}$ is an equivalence of categories with inverse $(-) \star \id_{B_{u}^{\scrL u}}$.
    More generally, if $\beta \in \uW{\scrL}{\scrL'}$ and $\uw^{\beta}$ is an endo-reduced expression for $w^{\beta}$, then we can apply the above equivalences inductively to show that
    $(-) \star \id_{B_{\uw^{\beta}}^{\scrL}}$ is an equivalence of categories with inverse $(-) \star \id_{B_{\overline{\uw}^{\beta}}^{\scrL'}}$.
\end{proof}

\subsubsection{Two-Color Endoscopy}

It is useful to give some extra notation to the $2m_{s,t}$-valent vertices.
Namely, suppose $2m = a+b$. We will then write
\[\beta ( {}_s \underline{a}, {}_t \underline{b} ) : B_{{}_s \underline{a}}^{\scrL} \to B_{{}_t \underline{b}}^{\scrL}\]
for the $2m_{s,t}$-valent vertex with strands ${}_s \underline{a}$ on the bottom and ${}_t \underline{b}$ strands on the top.
We reserve the notation $\beta_{s,t}$ for the morphism $\beta ({}_s \underline{m}, {}_t \underline{m})$.
Similarly, we will denote variants of the above where $s$ and $t$ are switched everywhere.

Let $v = v_{s,t}^{\scrL}$. Without loss of generality, we may assume that $\iota (s')$ (resp. $\iota (t')$) starts with $s$ (resp. $t$).
We wish to explain the behavior of endosimple expansion on $2v$-valent vertices.
Let $2v = a' + b'$. Note that $\iota ({}_{s'} \underline{2v}) = {}_s \underline{2m}$.
Let $a = \ell (\iota ({}_s \underline{a'}))$ and $b = \ell (\iota ({}_t \underline{b'}))$.
It then follows that $2m = a + b$.
We can then define the endosimple expansion of a $2v$-valent vertex as follows:
\begin{equation}\label{eq:Psi_on_2v_valent_vertices}
    \Psi_{\scrL}^{\iota, \textnormal{diag}} (\beta ({}_{s'} \underline{a}', {}_{t'} \underline{b}') ) \coloneq \beta ({}_s \underline{a}, {}_t \underline{b}).
\end{equation}

\allowdisplaybreaks

\begin{proposition}[Two-color Endoscopy]\label{prop:two_color_endoscopy}
    The functor 
    \[\Psi_{\scrL}^{\iota, \textnormal{diag}} : \EWmon{1}{1}^{\BS} (\fr{h}, \{s',t'\}, 1) \to \EWmon{\scrL}{\scrL}^{\BS} (\fr{h}, \{s,t\}, \fr{o})\] 
    extends to a functor
    \[\Psi_{\scrL}^{\iota, \textnormal{diag}} : \EWmon{1}{1}^{\BS} (\fr{h}, W_{s,t}^{\scrL},  1) \to \EWmon{\scrL}{\scrL}^{\BS} (\fr{h}, W, \fr{o})\]
    by defining $\Psi_{\scrL}^{\iota, \textnormal{diag}}$ on $2v$-valent vertices by (\ref{eq:Psi_on_2v_valent_vertices}).
\end{proposition}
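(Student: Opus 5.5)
The plan is to verify that the assignment (\ref{eq:Psi_on_2v_valent_vertices}), together with the one-color endoscopy functor of Proposition \ref{prop:one_color_endoscopy}, respects the defining relations of $\EWmon{1}{1}^{\BS} (\fr{h}, W_{s,t}^{\scrL}, 1)$. Since the one-color relations are already handled by Proposition \ref{prop:one_color_endoscopy}, and the source category is the non-monodromic two-color category of type $I_2(v)$, we only need three checks on the images of the $2v$-valent vertices. First, they must be well defined in the target, so the monodromic graph obtained by replacing each $2v$-valent vertex by a $2m$-valent vertex is compatible with the face colorings. Second, the images must satisfy isotopy or cyclicity and two-color associativity. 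Third, they must satisfy the Elias--Jones--Wenzl relation. Throughout, I would use the canonical endosimple expansion datum $\iota$ coming from the unique reduced expressions of $s',t'$ (both have length $<m$). I would also use the identity $\iota({}_{s'}\underline{2v}) = {}_s\underline{2m}$ to see that the boundary of the expanded vertex matches.

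\textbf{Isotopy and rotation.} The $2m$-valent vertex in the target is cyclic, because the target is a strict 2-category presented by isotopy classes of graphs. The $2v$-valent vertex in the source is rotation-invariant since $\fr{h}$ is balanced for $W_{s,t}^{\scrL}$ (Proposition \ref{prop:balanced_realizations_and_reflections}). Rotating $\beta({}_{s'}\underline{a'},{}_{t'}\underline{b'})$ by one strand corresponds, after expansion, to rotating the $2m$-valent vertex by $\ell(s')$ or $\ell(t')$ strands. Expansion also turns cups and caps into nested cups and caps, using the endo-reduced Frobenius structure in (\ref{eq:frob_algebra_str_diagrammatically}). The block-minimality relations then let us straighten the intermediate black strands. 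So this check reduces to isotopy in the target.

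\textbf{Two-color associativity.} Take a trivalent $s'$-vertex attached to a $2v$-valent vertex. Its image is an expanded trivalent vertex, that is, a trivalent $s$-vertex sandwiched between block-minimal strands as in the pictures defining $\textnormal{endo}_{\us}$, attached to a $2m$-valent vertex. I would apply monodromic two-color associativity in the target $\ell(s')$ times, or rather its single instance at the central $s$-strand, which is the one lying in $K(\uw,\scrL)$. I would then use block minimality to pull the outer strands through, recovering the expanded right-hand side. In the course of this, I would check that the unique strand of the expanded trivalent vertex meeting $\scrL$-fixed faces is the one the target relation acts on.

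\textbf{Elias--Jones--Wenzl.} This is the main obstacle. The source relation says that a dot on a $2v$-valent vertex equals $\JW_A({}_{s'}(v-1))$ composed with the appropriate dots. Its image has a dot on a $2m$-valent vertex, expanded via the endosimple dot, which is a block-minimal cup and cap around a dotted $s$-strand. The right-hand side is $\Sigma_{\scrL}(\JW_A({}_{s'}(v-1)))$ up to identity strands. By the construction of $\JW_{s,t}^{\scrL} = \Sigma_{\scrL}^{\tau}(\JW_A({}_{s'}(v-1)))$, the truncation or extension $\tau$ differs from $\Sigma_{\scrL}$ only by identity strands that are minimal in their blocks (Lemma \ref{lem:truncations_and_extensions}). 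The target's Elias--Jones--Wenzl relation, or equivalently Lemma \ref{lem:jones_wenzl}, involves exactly $\JW_{s,t}^{\scrL}$. So I would first use two-color associativity and block minimality to move the expanded dot to the position where the target relation applies. Then I would apply that relation, and finally remove the added or trimmed identity strands. The delicate points are bookkeeping, namely matching which of $s,t$ begins $\iota(s')$ and the parity of $v$, and confirming that the scalar is $1$; balancedness gives $\lambda=1$ here. If this direct comparison fails, I would instead verify the relation after applying $\Upsilon$, using Proposition \ref{prop:one_color_endoscopy} and the uniqueness of degree-zero maps (Lemma \ref{lem:uniqueness_of_beta}). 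That, however, only shows equality in the bimodule category.
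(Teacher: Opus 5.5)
\textbf{Gap: the Elias--Jones--Wenzl relation is misidentified.} Your one-color reduction and your two-color associativity check agree with the paper: apply the target's associativity once at the $\scrL$-fixed strand, then straighten the rest by block minimality. The gap is in the third check. You take the source's defining relation to be two-color dot contraction, and you treat the target's Elias--Jones--Wenzl relation as equivalent to Lemma \ref{lem:jones_wenzl}. In this paper the Elias--Jones--Wenzl relation says that composing the two $2m$-valent vertices gives the Jones--Wenzl morphism: $\beta_{t,s}\circ\beta_{s,t}=\JW^{\scrL}_{s,t}$ and $\beta_{s,t}\circ\beta_{t,s}=\JW^{\scrL}_{t,s}$. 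This is how it is used in the proof of Lemma \ref{lem:jones_wenzl}, in Step 3 of Claim \ref{claim:dihedral_Phi_functor}, and in \S\ref{sec:spanning}. Dot contraction is only derived from it together with two-color associativity, and nothing in the paper gives the converse. Deriving $\beta\circ\beta=\JW$ from dot contraction would need control of the degree-zero endomorphisms of $B_{{}_{s'}\underline{v}}$ in the source, i.e.\ a double leaves or faithfulness result for the dot-contraction presentation. That is not available; the spanning argument of \S\ref{sec:spanning} itself uses $\beta\circ\beta=\JW$. So even a complete execution of your third step leaves the defining relation $\Psi_{\scrL}^{\iota,\textnormal{diag}}(\beta_{t',s'}\circ\beta_{s',t'})=\Psi_{\scrL}^{\iota,\textnormal{diag}}(\JW^1_{s',t'})$ (and its color swap) unchecked, so well-definedness is not established. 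Your step also has an open point of its own. For a dot on a $u'$-strand with $\ell(u')>1$, write $\iota(u')=\ux\,u\,\overline{\ux}$. The expanded dot sits on the central $u$-strand inside the face $\scrL\ux$, flanked by block-minimal caps. Lemma \ref{lem:jones_wenzl} applied there therefore does not directly produce $\Sigma_{\scrL}(\JW_A({}_{s'}(v-1)))$, and an extra conjugation argument would be needed.

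\textbf{What to check instead.} The image $\Psi_{\scrL}^{\iota,\textnormal{diag}}(\beta_{t',s'}\circ\beta_{s',t'})$ is a composite of two rotated $2m$-valent vertices. Block minimality straightens the $r=\frac12\abs{\ell(t')-\ell(s')}$ extra strands. The target's Elias--Jones--Wenzl relation then turns the result into $\JW^{\scrL}_{s,t}$ padded by block-minimal identity strands. When $\Sigma_{\scrL}(\JW_A({}_{s'}(v-1)))=\Psi_{\scrL}^{\iota,\textnormal{diag}}(\JW^1_{s',t'})$ is truncatable, the definition of $\tau$ (Lemma \ref{lem:truncations_and_extensions}) identifies the padded $\JW^{\scrL}_{s,t}$ with it directly. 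In the other variant the expanded Jones--Wenzl morphism is only extendable. There you get the equality only after applying $(-)\star\id_{B_{{}_s\underline{r}}^{\scrL}}$, and you must cancel these strands using that this functor is an equivalence (Lemma \ref{lem:dihedral_facts_about_blocks}(3)); your ``remove the added identity strands'' needs exactly this. Finally, your fallback is stronger than you say. $\Upsilon$ is faithful by Theorem \ref{thm:mon_Abe}, and as far as I can see that theorem's proof does not use the present proposition. So verifying the correct defining relations after $\Upsilon$ (via Proposition \ref{prop:one_color_endoscopy}, the rank-one degree-zero Hom spaces, and the non-monodromic Abe theorem) would also give a proof, though only by invoking the full double leaves machinery.
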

\begin{proof}
    It suffices to check that $\Psi_{\scrL}^{\iota, \textnormal{diag}}$ is invariant under 2-color associativity and the Elias--Jones--Wenzl relation.

    \emph{(Two-color Associativity): } By rotation and possibly swapping colors, it suffices to show that when $v$ is odd that
    \[\Psi_{\scrL}^{\iota, \textnormal{diag}} \left( 
  % \tikzsetnextfilename{#1}
  \tikzstyle{every picture}=[tikzfig]
  \input{./figures/tce_1.tikz}
\right) = \Psi_{\scrL}^{\iota, \textnormal{diag}} \left( 
  % \tikzsetnextfilename{#1}
  \tikzstyle{every picture}=[tikzfig]
  \input{./figures/tce_2.tikz}
\right),\]
    and for $v$ is even that
     \[ \Psi_{\scrL}^{\iota, \textnormal{diag}} \left( 
  % \tikzsetnextfilename{#1}
  \tikzstyle{every picture}=[tikzfig]
  \input{./figures/tce_1.tikz}
\right) = \Psi_{\scrL}^{\iota, \textnormal{diag}} \left( 
  % \tikzsetnextfilename{#1}
  \tikzstyle{every picture}=[tikzfig]
  \input{./figures/tce_3.tikz}
\right),\]
    We will just show the case when $v$ is even. The case of $v$ being odd is similar.

    Let $k = \frac{1}{2} (\ell (t') -1)$. Assume that $k$ is even, so that $t' = ({}_t k) t ({}_s k)$.
    We can then compute directly. We omit the face labels on the monodromic side, since they can be inferred from the left side being labelled by $\scrL$.
    \begin{align*}
        \Psi_{\scrL}^{\iota, \textnormal{diag}} \left( 
  % \tikzsetnextfilename{#1}
  \tikzstyle{every picture}=[tikzfig]
  \input{./figures/tce_1.tikz}
\right) &= 
  % \tikzsetnextfilename{#1}
  \tikzstyle{every picture}=[tikzfig]
  \input{./figures/tce_4.tikz}
 \\
        &= 
  % \tikzsetnextfilename{#1}
  \tikzstyle{every picture}=[tikzfig]
  \input{./figures/tce_5.tikz}
 \tag{2-color associativity}\\
        &= 
  % \tikzsetnextfilename{#1}
  \tikzstyle{every picture}=[tikzfig]
  \input{./figures/tce_6.tikz}
 \tag{block minimality}\\
        &= \Psi_{\scrL}^{\iota, \textnormal{diag}} \left( 
  % \tikzsetnextfilename{#1}
  \tikzstyle{every picture}=[tikzfig]
  \input{./figures/tce_3.tikz}
\right).
    \end{align*}
    The case of $k$ odd is similar and omitted.

    \emph{(Elias--Jones--Wenzl): } We will cover both variants of the Elias--Jones--Wenzl relations, so without loss of generality, we may assume that $\ell (s') > \ell (t')$.
    We will assume that $v$ is odd and that $m$ is odd. The cases of $v$ being odd and $m$ being even is almost identical and the case of $vs$ being even is similar (and in fact easier).
    Let $r = \frac{1}{2} (\ell (t') - \ell (s')) > 0$.
    We can then compute directly, again omitting the face labels on the monodromic side.
    \begin{align*}
        \Psi_{\scrL}^{\iota, \textnormal{diag}} \left( 
  % \tikzsetnextfilename{#1}
  \tikzstyle{every picture}=[tikzfig]
  \input{./figures/tce_7.tikz}
\right) &= 
  % \tikzsetnextfilename{#1}
  \tikzstyle{every picture}=[tikzfig]
  \input{./figures/tce_8.tikz}
 \\
        &= 
  % \tikzsetnextfilename{#1}
  \tikzstyle{every picture}=[tikzfig]
  \input{./figures/tce_9.tikz}
 \tag{block minimality}\\
        &= 
  % \tikzsetnextfilename{#1}
  \tikzstyle{every picture}=[tikzfig]
  \input{./figures/tce_10.tikz}
 \tag{Elias--Jones--Wenzl}\\
        &= \tau \left(  \Psi_{\scrL}^{\iota, \textnormal{diag}} (\JW_{s',t'}^1) \right) \star \id_{B_{{}_t \underline{r}}^{\scrL}}  \\
        &= \Psi_{\scrL}^{\iota, \textnormal{diag}} (\JW_{s',t'}^1). 
    \end{align*}
    The last equality follows from the definition of $\tau$. Note that $\Psi_{\scrL}^{\iota, \textnormal{diag}} (\JW_{s',t'}^1)$ is truncatable in this case.

    We will now compute with the other variant of the Elias--Jones--Wenzl relation; albeit, slightly more indirectly.
    \begin{align*}
        \Psi_{\scrL}^{\iota, \textnormal{diag}} \left( 
  % \tikzsetnextfilename{#1}
  \tikzstyle{every picture}=[tikzfig]
  \input{./figures/tce_11.tikz}
\right) \star \id_{B_{{}_s \underline{r}}^{\scrL}} &= 
  % \tikzsetnextfilename{#1}
  \tikzstyle{every picture}=[tikzfig]
  \input{./figures/tce_12.tikz}
 \\
        &= 
  % \tikzsetnextfilename{#1}
  \tikzstyle{every picture}=[tikzfig]
  \input{./figures/tce_13.tikz}
 \\
        &= 
  % \tikzsetnextfilename{#1}
  \tikzstyle{every picture}=[tikzfig]
  \input{./figures/tce_14.tikz}
 \\
        &= \tau \left(  \Psi_{\scrL}^{\iota, \textnormal{diag}} (\JW_{s',t'}^1) \right) \\
        &= \Psi_{\scrL}^{\iota, \textnormal{diag}} (\JW_{t',s'}^1) \star \id_{B_{{}_s \underline{r}}^{\scrL}}. 
    \end{align*}
    The last line follows from the definition of $\tau$. Note that $\Psi_{\scrL}^{\iota, \textnormal{diag}} (\JW_{s',t'}^1)$ is extendable in this case.
    By Lemma \ref{lem:dihedral_facts_about_blocks}, we have that $(-) \star \id_{B_{{}_s \underline{r}}^{\scrL}}$ is an equivalence of categories.
    Therefore, we conclude that $\Psi_{\scrL}^{\iota, \textnormal{diag}} (\beta_{s',t'} \circ \beta_{t',s'}) = \Psi_{\scrL}^{\iota, \textnormal{diag}} (\JW_{t',s'}^1)$.
\end{proof}

\subsubsection{Functor to Bimodules}

The goal of this section is to construct a 2-functor
\[ \Upsilon_{\Abe} : \EWmon{}{}^{\BS} (\fr{h}, W, \fr{o}) \to \Amon{}{}^{\BS} (\fr{h}, W, \fr{o})\]
which extends the 2-functor $\Upsilon_{\{s,t\}} : \EWmon{}{}^{\BS} (\fr{h}, \{s,t\}, \fr{o}) \to \Amon{}{}^{\BS} (\fr{h}, W, \fr{o})$ constructed in \S \ref{subsubsec:universal_category}.
To define $\Upsilon_{\Abe}$, we only need to specify the image of a $2m$-valent vertex,
\[\Upsilon_{\Abe} \left( 
  % \tikzsetnextfilename{#1}
  \tikzstyle{every picture}=[tikzfig]
  \input{./figures/mst_valent4.tikz}
 \right) \coloneq \beta_{s,t} : B_{{}_s \underline{m}}^{\scrL} \to B_{{}_t \underline{m}}^{\scrL}.\]

\begin{claim}\label{claim:dihedral_Phi_functor}
    $\Upsilon_{\Abe} : \EWmon{}{}^{\BS} (\fr{h}, W, \fr{o}) \to \Amon{}{}^{\BS} (\fr{h}, W, \fr{o})$ is well-defined.
\end{claim}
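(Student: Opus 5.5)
Since $\Upsilon_{\{s,t\}}$ is already a well-defined 2-functor on the universal category (Proposition \ref{prop:one_color_soergel_functor} and \S\ref{subsubsec:universal_category}), the one-color relations are already respected. It therefore suffices to show that the two new families of relations in Definition \ref{def:two_color_cat} hold in $\Amon{}{}^{\BS}$ after applying $\Upsilon_{\Abe}$: two-color associativity and the Elias--Jones--Wenzl relation. Throughout I will use that $\Upsilon_{\Abe}(\beta(\dots))$ is the rex move $\beta_{s,t}$ from Proposition \ref{prop:existence_of_rex_moves}. Its rotated variants $\beta({}_s\underline{a},{}_t\underline{b})$ are obtained by composing with cups and caps, which are images of one-color generators; this makes cyclicity automatic. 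I also use the uniqueness criteria of Lemma \ref{lem:uniqueness_of_beta} together with duality (Proposition \ref{prop:duality_and_str_mors}).

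\textbf{Two-color associativity.} Both sides of each relation are degree $0$ morphisms $B_{\ux}^{\scrL}\to B_{\uy}^{\scrL}$, where $\ux$ and $\uy$ are reduced expressions in the neutral-plus-block setting. By Theorem \ref{thm:abe_dl_are_basis} (double leaves), the degree-$0$ part of this Hom space is free of rank $1$ over $\k$ when $\ux,\uy$ are rexes for the same element. The plan is therefore:

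\begin{enumerate}
\item Rotate each relation into the form of a map between Bott--Samelson bimodules of reduced expressions for a common $w$.
\item Check that both sides send the $1$-tensor $u_{\ux}$ to $u_{\uy}$. This is immediate, since every $\beta$ preserves $1$-tensors and the cups and caps used for rotation act as described in \S\ref{subsec:abe_mors}.
\item Conclude by uniqueness.
\end{enumerate}

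For rotated versions where the domain is not reduced, I will instead compare images under the localization functor. By faithfulness of $(-)_Q$, and since only a few localization coefficients can be nonzero, it is enough to compare the all-$1$'s coefficient. Remark \ref{rem:loc_coeffs_for_beta} gives that coefficient as $1$.

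\textbf{Elias--Jones--Wenzl.} Here I reduce to the endoscopic dihedral group via the monodromic–endoscopic equivalence.

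\emph{Rank below 2.} If $\rk W_{s,t}^{\scrL}<2$, then $\JW^{\scrL}_{s,t}=\id$. The relation asks that $\beta_{t,s}\circ\beta_{s,t}=\id$. Both sides are degree-$0$ endomorphisms of a bimodule whose degree-$0$ endomorphisms form a rank one space (Theorem \ref{thm:abe_dl_are_basis}, the count coming from $\ell_{\scrL}({}_s\underline m)\le 1$). Both fix the $1$-tensor, so they agree.

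\emph{Rank 2.} If $\rk W_{s,t}^{\scrL}=2$, I use Proposition \ref{prop:two_color_endoscopy}, which expresses the diagrammatic EJW relation for $(s,t,\scrL)$ as $\tau$ of the endosimple expansion of the nonmonodromic EJW relation for $(s',t')$. Then:

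\begin{enumerate}
\item Choose the isomorphisms $B_{{}_s\underline m}^{\scrL}\cong B_{\iota({}_{s'}\underline v)}^{\scrL}\otimes R_{w^\beta}$ from Lemma \ref{lem:soergel_product_formulas}, as in the proof of Proposition \ref{prop:existence_of_rex_moves}. Under these, $\Upsilon_{\Abe}(\beta_{s,t})=\beta_{s',t'}\otimes\id$.
\item Use the commutative square of Proposition \ref{prop:one_color_endoscopy}, identifying Frobenius structures, to see that $\Upsilon_{\Abe}\circ\Psi^{\iota,\mathrm{diag}}_\scrL$ agrees with $\Psi_\scrL^{\alg}\circ\Upsilon^{\text{non-mon}}$ on one-color generators. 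Since both $\beta$'s preserve $1$-tensors, they also agree on $2v$-valent vertices.
\item The relation then follows from Abe's theorem \cite{Abe19}: the nonmonodromic functor $\EW^{\BS}(\fr h,W_v)\to\scrA^{\BS}(\fr h,W_v)$ is well-defined. This applies because $\fr h$ is a balanced Abe realization for $W_{s,t}^{\scrL}$ by Proposition \ref{prop:balanced_realizations_and_reflections}.
\item Finally, transport along the truncation/extension $\tau$, using that $(-)\otimes R_{w}$ is an equivalence (Lemma \ref{lem:BS_tensor_adjunction}).
\end{enumerate}

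\textbf{Main obstacle.} I expect the hard part to be this last bookkeeping. One must make the endoscopic identification honestly compatible with $\tau$, which trims or appends identity strands on blocks of monodromic length zero. In particular, the images of these padded strands must be the canonical isomorphisms $R_s\otimes R_s\cong R$ and must commute with the chosen identifications. I would handle this by first showing that the appended or trimmed $R_u$-strands contribute only the canonical $\cup/\cap$ isomorphisms, and then checking the $1$-tensor compatibility.
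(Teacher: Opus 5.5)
Your Elias--Jones--Wenzl argument follows the paper. The rank $<2$ case uses the rank-one degree-$0$ endomorphism space together with the $1$-tensor. The rank-$2$ case passes through $\Psi^{\alg}_{\scrL}$, the square of Proposition~\ref{prop:one_color_endoscopy}, the nonmonodromic functor of \cite{Abe19}, and block translation; the paper does your $\tau$-bookkeeping via $T_\beta$ and Lemma~\ref{lem:Phi_and_block_translation}.

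The genuine gap is in two-color associativity. Each side of that relation consists of one $2m$-valent vertex and one trivalent vertex joined along an edge, so its total boundary has $2m+1$ strands. After any rotation it is therefore a map $B^{\scrL}_{\ux}\to B^{\scrL}_{\uy}$ with $\ell(\ux)+\ell(\uy)=2m+1$.
\begin{itemize}
\item One of the two boundaries has length $>m$, so they are never rexes of a common element, and your steps (1)--(3) never apply.
\item Your fallback carries no information. Since $\ell(\ev(\ux))\equiv\ell(\ux)\pmod 2$, the all-$1$'s subexpressions of $\ux$ and $\uy$ always have different evaluations. So the all-$1$'s localization coefficient is $0$ on both sides for parity reasons alone.
\item Faithfulness of $(-)_Q$ only helps if you compare full localization matrices. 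The matrix of $\beta_{s,t}$ is dense and hard to compute (Remark~\ref{rem:loc_coeffs_for_beta}), so ``only a few coefficients are nonzero'' is unjustified.
\end{itemize}
Note also that degree-$0$ Hom spaces between rexes of a common element are \emph{not} rank one in general; Appendix~\ref{apdx:three_color} finds rank $2$ in type $A_3$. They are rank one for ${}_s\underline{m},{}_t\underline{m}$ by Lemma~\ref{lem:uniqueness_of_beta}. Similarly, $2\pi$-rotation invariance of the $2m$-valent vertex is not automatic from cups and caps. The paper gets it from $1$-tensor uniqueness, which you do have available.

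The missing step is to treat two-color associativity by the same endoscopic transfer you already use for EJW, which is what the paper does. Use the identification of $\Upsilon_{\Abe}(\beta_{s,t})$ with $\Psi^{\alg}_{\scrL}(\beta_{s',t'})\otimes\id$ from the proof of Proposition~\ref{prop:existence_of_rex_moves}, together with the one-color endoscopy square. Then both sides of the monodromic relation become images under $\Psi^{\alg}_{\scrL}\circ\Upsilon_1$ of the two sides of the two-color associativity relation for $W^{\scrL}_{s,t}$. That relation holds because the nonmonodromic functor is well defined (\cite{Elib}, \cite{Abe19}).

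A direct argument in your style would need two further inputs. First, show via the monodromic defect formula that the relevant degree $-1$ Hom space is free of rank one over $\k$. Second, compare both sides on a functional that is nonzero on its generator, for example evaluation at a suitable $b_{\ux,\ue}$. That requires actually computing the right-hand side, including $\beta_{s,t}$ on elements other than the $1$-tensor, and your proposal does neither.
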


The proof of Claim \ref{claim:dihedral_Phi_functor} will take the rest of the section.
Note that there is a special case when we know $\Upsilon_{\Abe}$ is already defined. Namely, it is shown in \cite{Abe19} that when $\fr{o} = 1$, then $\Upsilon_{\Abe}$ is well-defined.

\noindent \emph{Step 0: Compatibility with Block Translation}

We will write
\[\Upsilon_{\scrL} : \EWmon{\scrL}{-}^{\BS} (\fr{h}, W, \fr{o}) \to \Amon{\scrL}{-}^{\BS} (\fr{h}, W, \fr{o})\]
for the restriction of $\Upsilon_{\Abe}$ such that the first monodromy parameter is fixed. Let $\beta \in \uW{\scrL}{\scrL'}$. We can also write
\[\Upsilon_{\scrL}^{\beta} : \EWmon{\scrL}{\scrL'}^{\BS, \beta} (\fr{h}, W, \fr{o}) \to \Amon{\scrL}{\scrL'}^{\BS, \beta} (\fr{h}, W, \fr{o})\]
for the restriction of $\Upsilon_{\Abe}$ to the $\beta$-block. 
It is clear from its definition that $\Upsilon_{\Abe}$ indeed takes objects in the $\beta$-block on the diagrammatic side to objects in the $\beta$-block on the algebraic side.

The following lemma follows readily from definitions.

\begin{lemma}\label{lem:Phi_and_block_translation}
    Let $\beta \in \uW{\scrL}{\scrL'}$. Let $\uw^{\beta}$ be an endo-reduced expression for $w^{\beta}$.  
    The following diagram (strictly) commutes
    \[
    \begin{tikzcd}
        {\EWmon{\scrL}{\scrL}^{\BS, \circ} (\fr{h}, W, \fr{o})} \arrow[d, "(-) \star B_{\uw^{\beta}}^{\scrL}"] \arrow[r, "\Upsilon_{\scrL}^{\circ}"] & {\Amon{\scrL}{\scrL}^{\BS, \circ} (\fr{h}, W, \fr{o})} \arrow[d, "(-) \star B_{\uw^{\beta}}^{\scrL}"] \\
        {\EWmon{\scrL}{\scrL'}^{\BS, \beta} (\fr{h}, W, \fr{o})} \arrow[r, "\Upsilon_{\scrL}^{\beta}"]      & {\Amon{\scrL}{\scrL'}^{\BS, \beta} (\fr{h}, W, \fr{o}).}                 
        \end{tikzcd}\]
\end{lemma}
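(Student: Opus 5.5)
The square in question should commute on the nose, and I would check this directly from how the four functors are defined. The two vertical arrows are right horizontal composition with the fixed 1-morphism $B_{\uw^{\beta}}^{\scrL}$. On the diagrammatic side this means $f \mapsto f \star \id_{B_{\uw^{\beta}}^{\scrL}}$. On the algebraic side it means $M \mapsto M \otimes_R B_{\uw^{\beta}}^{\scrL}$ and $\varphi \mapsto \varphi \otimes \id$. The horizontal arrows are restrictions of the single 2-functor $\Upsilon_{\Abe}$. So the claim comes down to $\Upsilon_{\Abe}$ preserving horizontal composition together with the identity $\Upsilon_{\Abe}(\id_{B_{\uw^{\beta}}^{\scrL}}) = \id_{B_{\uw^{\beta}}^{\scrL}}$.

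First I check objects. By definition, $\Upsilon_{\Abe}$ sends the diagrammatic 1-morphism $B_{\ux}^{\scrL}$ to the Bott--Samelson bimodule $B_{\ux}^{\scrL}$, which is the iterated tensor product $B_{s_1}^{\scrL}\otimes_R \cdots$. Since $\scrL\ux = \scrL$, the concatenation $\ux\uw^{\beta}$ has tensor factors with monodromy parameters $\scrL, \scrL s_1,\dots$, followed by the factors $B^{\scrL}_{\uw^\beta}$ carry. This gives the equality $B_{\ux\uw^{\beta}}^{\scrL} = B_{\ux}^{\scrL}\otimes_R B_{\uw^{\beta}}^{\scrL}$ of bimodules. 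With the strict conventions for the Bott--Samelson 2-category, where concatenation of expressions is literally iterated tensor product, this equality is strict. So both composites agree on objects, and they land in the $\beta$-block by (\ref{eq:abe_conv_presrves_blocks}) and Lemma \ref{lem:dihedral_facts_about_blocks}(2).

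Next I check morphisms. A 2-morphism $f$ in the neutral block is a linear combination of monodromic Elias--Williamson graphs. $\Upsilon_{\Abe}$ is defined on generators (polynomial boxes, dots, trivalent vertices, cups and caps, and $2m$-valent vertices) and extended so that horizontal concatenation of diagrams goes to $\otimes_R$ of bimodule maps and vertical stacking goes to composition. The diagram $f\star\id_{B_{\uw^\beta}^{\scrL}}$ is $f$ placed beside vertical identity strands, so its image is $\Upsilon(f)\otimes \id_{B_{\uw^{\beta}}^{\scrL}}$. The only subtlety is the face colorings: the strands of $\uw^{\beta}$ sit to the right of the right-hand face, which is colored $\scrL$. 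Each identity strand $B_s^{\scrL''}$ goes to $\id$ of the bimodule $B_s^{\scrL''}$, and that bimodule is $C_s$ or $R_s$ depending only on whether $\scrL''s = \scrL''$. These are exactly the same bimodules on both routes.

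I expect no real obstacle, apart from taking Claim \ref{claim:dihedral_Phi_functor} (well-definedness of $\Upsilon_{\Abe}$ as a 2-functor) as given. Alternatively, the argument can be run at the level of the free 2-category on graphs, before imposing relations: there $\Upsilon$ is defined generator by generator, the statement is immediate, and it then descends to the quotient.
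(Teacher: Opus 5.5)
Your argument is correct and is essentially the paper's, which only asserts that the lemma follows from the definitions. Both vertical arrows are horizontal composition with the fixed 1-morphism $B_{\uw^{\beta}}^{\scrL}$, and $\Upsilon$ is defined generator by generator so that $\star$ goes to $\otimes_R$.

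One caution concerns your primary route. The paper invokes this lemma in Step 3 of the proof of Claim \ref{claim:dihedral_Phi_functor} itself, applied to $T_{\beta}^{\diag}\Sigma_{\scrL}(\JW_A({}_{s'}(v-1)))$, so taking that claim as given would be circular there. The argument must instead rest on your alternative: the identity $\Upsilon(f\star\id)=\Upsilon(f)\otimes\id$ at the level of graphs, before the two-color relations are imposed. Since that diagram contains no $2m$-valent vertices, it also suffices to use the universal functor $\Upsilon_{\{s,t\}}$, which is already known to be well-defined.
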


\noindent\emph{Step 1: Rotation invariance}

Note that $\beta_{s,t} : B_{{}_s \underline{m}}^{\scrL} \to B_{{}_t \underline{m}}^{\scrL}$ is uniquely determined by the condition that $\beta_{s,t} (u_{{}_s \underline{m}}) = u_{{}_t \underline{m}}$.
As a result,  it is easy to check that $\beta_{s,t}$ is invariant under rotations. Therefore, $\Upsilon_{\Abe}$ is well-defined on isotopy classes of monodromic Elias--Williamson graphs.
 
\noindent\emph{Step 2: Two-color Associativity}

Take $n = 2m-1$ and $n' = 2v-1$.
By rotation invariance, it suffices to prove the following
\begin{equation}\label{eq:Phi_and_2ca}
    \Upsilon_{\scrL}^{\circ} \left( 
  % \tikzsetnextfilename{#1}
  \tikzstyle{every picture}=[tikzfig]
  \input{./figures/Phi_2c_1.tikz}
 \right) = \Upsilon_{\scrL}^{\circ} \left( 
  % \tikzsetnextfilename{#1}
  \tikzstyle{every picture}=[tikzfig]
  \input{./figures/Phi_2c_2.tikz}
 \right)
\end{equation}
The basic idea is that we can check the relation holds after applying endoscopy. Endoscopy will translate (\ref{eq:Phi_and_2ca}) into the 2-color associativity relation for the endoscopic group, which holds from work in \cite{Elib}.
Write $s' = s$ and $t' = {}_t k$ for some $k$. We use green for $s'$ and purple for $t'$ in the calculations below.
\begin{align*}
    \Upsilon_{\scrL}^{\circ} \left( 
  % \tikzsetnextfilename{#1}
  \tikzstyle{every picture}=[tikzfig]
  \input{./figures/Phi_2c_1.tikz}
 \right) &=  \Psi_{\scrL}^{\circ, \alg} \Upsilon_1^{\circ} (\Upsilon_1^{\circ})^{-1}  \left(\Psi_{\scrL}^{\circ, \alg}\right)^{-1} \Upsilon_{\scrL}^{\circ} \left( 
  % \tikzsetnextfilename{#1}
  \tikzstyle{every picture}=[tikzfig]
  \input{./figures/Phi_2c_1.tikz}
 \right) \\
    &= \Psi_{\scrL}^{\circ, \alg} \Upsilon_1^{\circ} \left( 
  % \tikzsetnextfilename{#1}
  \tikzstyle{every picture}=[tikzfig]
  \input{./figures/Phi_2c_3.tikz}
\right) \\
    &= \Psi_{\scrL}^{\circ, \alg} \Upsilon_1^{\circ} \left( 
  % \tikzsetnextfilename{#1}
  \tikzstyle{every picture}=[tikzfig]
  \input{./figures/Phi_2c_4.tikz}
\right) \\
    &= \Upsilon_{\scrL}^{\circ} \left( 
  % \tikzsetnextfilename{#1}
  \tikzstyle{every picture}=[tikzfig]
  \input{./figures/Phi_2c_2.tikz}
 \right).
\end{align*}

\noindent\emph{Step 3: Elias--Jones--Wenzl}

First consider the case where $\rk (W_{s,t}^{\scrL}) < 2$.
In this case, $\JW_{s,t}^{\scrL} = \id_{B_{{}_s m}^{\scrL}}$.
On the other hand, in $\Amon{}{}^{\BS} (\fr{h}, W, \fr{o})$, we have that space of degree 0 endomorphisms of $B_{{}_s m}^{\scrL}$ is a rank-1 $\k$-module (Theorem \ref{thm:abe_dl_are_basis}).
Moreover, $\beta_{t,s} \circ \beta_{s,t}$ and $\id$ both take the 1-tensor to itself. Therefore, we must have that Jones--Wenzl relation is invariant under $\Upsilon_{\Abe}$. 

We can then assume that $\rk (W_{s,t}^{\scrL}) = 2$. 
Let $\beta \in \uW{\scrL}{\scrL'}$ be the block containing $\JW_{s,t}^{\scrL}$.
Let $\uw^{\beta}$ be an endo-reduced expression for $w^{\beta}$. We set 
\[ T_{\beta}^{\diag} \coloneq (-) \star B_{\uw^{\beta}}^{\scrL} : \EWmon{\scrL}{\scrL}^{\BS, \circ} \to \EWmon{\scrL}{\scrL'}^{\BS, \beta} \qquad\text{and}\qquad T_{\beta}^{\alg} \coloneq (-) \star B_{\uw^{\beta}}^{\scrL} : \Amon{\scrL}{\scrL}^{\BS, \circ} \to \Amon{\scrL}{\scrL'}^{\BS, \beta}.\]
By block minimality, we can give a slight modification to the definition of the Jones--Wenzl morphism,
\[\JW_{s,t}^{\scrL} = T_{\beta}^{\diag} \Sigma_{\scrL} (\JW_A ({}_{s'} (v-1))).\]
We can then compute
\begin{align*}
    \Upsilon_{\scrL} (\JW_{s,t}^{\scrL}) &= \Upsilon_{\scrL} T_{\beta}^{\diag} \Sigma_{\scrL} (\JW_A ({}_{s'} (v-1))) \\
    &= T_{\beta}^{\alg} \Upsilon_{\scrL} \Sigma_{\scrL} (\JW_A ({}_{s'} (v-1))) \\
    &= T_{\beta}^{\alg} \Upsilon_{\scrL} \Psi_{\scrL}^{\diag} \Sigma (\JW_A ({}_{s'} (v-1))) \\
    &= T_{\beta}^{\alg} \Psi_{\scrL}^{\alg} \Upsilon_{S_{\scrL}^{\circ}} \Sigma (\JW_A ({}_{s'} (v-1))) \\ 
    &= T_{\beta}^{\alg} \Psi_{\scrL}^{\alg} \Upsilon_{S_{\scrL}^{\circ}} \JW_{s',t'}^1 \\
    &= T_{\beta}^{\alg} \Psi_{\scrL}^{\alg} (\beta_{t',s'} \circ \beta_{s', t'}) \\
    &= T_{\beta}^{\alg} (\beta (\iota ({}_{t'} \underline{v}), \iota ({}_{s'} \underline{v})) \circ \beta (\iota ({}_{s'} \underline{v}), \iota ({}_{t'} \underline{v}))) \\
    &= \beta_{t,s} \circ \beta_{s,t}.
\end{align*}
The second equality is from Lemma \ref{lem:Phi_and_block_translation}. The fourth equality is from Proposition \ref{prop:one_color_endoscopy}. The sixth equality follows from the Jones--Wenzl relation for non-monodromic Bott--Samelson bimodules (a consequence of $\Upsilon_1^{\circ}$ being well-defined).
Finally, the last equality is an application of block minimality. This completes the proof of Claim \ref{claim:dihedral_Phi_functor}.

\subsection{Presentation of the Monodromic Hecke Category}

We will now return to the general case where $(W,S)$ is an arbitrary Coxeter system and $\fr{o}$ is a $W$-set.

We are almost ready to give the definition of the monodromic Hecke category in full generality.
Before doing so, we first need to impose a technical constraint on our Coxeter systems to ensure that the three-color relations are computable.
The constraint is essentially identical to the usual constraint that $H_3$ does not appear as a parabolic subgroup for the non-monodromic Hecke category.

\begin{assumption}\label{ass:no_H3_copy}
    For all type $H_3$ standard parabolic subgroups $W_I$ of $W$ and all $\scrL \in \fr{o}$, we have that $W_I \cap W_{\scrL}^{\circ}$ is a proper subgroup of $W_I$.
\end{assumption}

\subsubsection{Definition}

\begin{definition}\label{def:diag_hecke_cat}
    Let $(W,S)$ be a Coxeter system and $\fr{o}$ be a $W$-set such that Assumption \ref{ass:no_H3_copy} is satisfied.
    Let $\fr{h}$ be a reflection-balanced reflection-stable Abe realization for $W$.
    The \emph{diagrammatic monodromic Hecke category}, denoted $\EWmon{}{}^{\BS} (\fr{h}, W, \fr{o})$, is the $\k$-linear strict 2-category defined as follows.
    The object set is $\fr{o}$. The 1-morphisms are tensor generated by 1-morphisms $B_s^{\scrL} : \scrL \stackrel{s}{\to} \scrL s$ for all $\scrL \in \fr{o}$, $s \in S$.
    A general 1-morphism is then of the form $B_{\ux}^{\scrL} : \scrL \stackrel{\ux}{\to} \scrL \ux$ for $\ux \in \Exp (W)$. 
    The 2-morphism space $\TwoHom_{\EWmon{}{}^{\BS}} (B_{\ux}^{\scrL}, B_{\uy}^{\scrL})$ is the free $\k$-module with basis ${}_{\scrL} \VSGraph_{\scrL'} (\ux, \uy)$ modulo the following relations:
    
        \emph{(One-Color Relations)} The relations of the one-color category (\ref{defn:one_color_cat}).
       
        \emph{(Two-color Associativity)} The two-color associativity and Elias--Jones--Wenzl relations of (\ref{def:two_color_cat}).

        \emph{(Zamolodchikov Equations)} There is a 3-color relation for each finitary rank-3 subset $I = \{s,t,u\}$ of $S$. 
        The specific form depends on the type of the parabolic subgroup $W_I$ along with its action on $\fr{o}$.
        By the classification of finite Coxeter groups the finite rank three parabolic can only be one of the following types: $A_1 \times I_2 (m)$, $A_3$, $B_3$, or $H_3$.
        \begin{equation}
            
  % \tikzsetnextfilename{#1}
  \tikzstyle{every picture}=[tikzfig]
  \input{./figures/zam_relns.tikz}
 
        \end{equation}
    \end{definition}

    \begin{remark}\label{rem:three_color_relns} 
        \begin{enumerate}
            \item   Assumption \ref{ass:no_H3_copy} is needed since the non-monodromic Hecke category still lacks a computation of the lower order terms needed for the corresponding three-color relation to hold. Further discussion on this topic can be found in \cite[\S 3.6]{EW17}.
                    Finding the $H_3$ Zamolodchikov equation in the non-monodromic Hecke category would also complete the description in the monodromic Hecke category.
            \item   It is somewhat surprising that lower order terms appear in the Zamolodchikov equations for type $B_3$ with an endoscopy subgroup of type $A_3$.
                    We can somewhat motivate this by the monodromic-endoscopic equivalence. 
                    After endoscopy, the monodromic three-color relations becomes $(\leq 3)$-color relations on the endoscopic side.
                    The lower order terms can only appear when the endoscopic subgroup also has rank 3. This happens quite rarely. 
                    \begin{enumerate}
                        \item In type $A_1 \times I_2(m)$, all endoscopic subgroups are of the following types: $A_1 \times I_2 (v)$ and $I_2 (v)$ where $0 \leq v \leq m$ divides (by definition, we take $I_2 (0) = 1$ and $I_2 (1) = A_1$).
                        \item In type $A_3$, all nontrivial endoscopic subgroups are of the following types: $A_3$, $A_2$, $A_1 \times A_1$, and $A_1$.
                        \item In type $B_3$, all nontrivial endoscopic subgroups are of the following types: $B_3$, $A_3$, $A_1 \times A_1 \times A_1$, $A_1 \times A_2$, $A_1 \times A_1$, $A_2$,  $B_2$, and $A_1$. 
                        \item In type $H_3$, all nontrivial endoscopic subgroups are of the following types: $H_3$, $A_1 \times A_1 \times A_1$, $I_2 (5)$, $A_2$, $A_1 \times A_1$, and $A_1$.
                    \end{enumerate}
                    One way to interpret the lack of lower terms is that in almost every case the canonical orientation of the rex graph on the monodromic side aligns with the canonical orientation of the rex graph on the endoscopic side.
                    There is exactly two places where this fails: (i) in type $H_3$ when no canonical orientation exists, and (ii) in type $B_3$ when the induced orientation on the rex graph of $A_3$ is not the canonical one. 
        \end{enumerate}
    \end{remark}

    \begin{remark}
        If $\fr{o} = 1$, then $\EWmon{}{}^{\BS} (\fr{h}, W, 1)$ is the non-monodromic diagrammatic Hecke category of \cite{EW} viewed as a 2-category with one object.
    \end{remark}

    \begin{remark}
        It should be possible to define $\EWmon{}{}^{\BS} (\fr{h}, W, \fr{o})$ even when $\fr{h}$ is not reflection-balanced.
        In particular, since $\fr{h}$ is reflection-stable, by \cite[Theorem B]{Hazi} one can still make sense of the Jones--Wenzl projectors necessary to define the Jones--Wenzl morphisms.
        When $\fr{h}$ is not reflection-balanced, these Jones--Wenzl morphisms fail to be rotation invariant, but instead are rotation invariant up to an invertible scalar.
        To correct this, one must introduce two different $2m$-valent vertices, and modify the relations accordingly. Such a correction should be very close to the modifications done in \cite[\S5.6]{EW}.
    \end{remark}

\subsubsection{Associated 1-Categories}
    For two monodromy parameters $\scrL, \scrL' \in \fr{o}$, we can consider the category 
    \[\EWmon{\scrL}{\scrL'}^{\BS} (\fr{h}, W, \fr{o}) \coloneq \Hom_{\EWmon{}{}^{\BS}} (\scrL, \scrL').\]
    Let $\beta \in \uW{\scrL}{\scrL'}$ be a block and define a full subcategory $\EWmon{\scrL}{\scrL'}^{\BS, \beta} (\fr{h}, W, \fr{o})$ of $\EWmon{\scrL}{\scrL'}^{\BS} (\fr{h}, W, \fr{o})$, called the \emph{$\beta$-block}, consisting of objects $B_{\uw}^{\scrL}$ where $\uw \in \Exp (W)$ is an expression for an element in $\beta$.

    The following is a generalization of Lemma \ref{lem:dihedral_facts_about_blocks} for arbitrary Coxeter systems. The proof is identical, and as such omitted.

    \begin{lemma}\label{lem:diag_block_props}
        Let $\scrL, \scrL', \scrL'' \in \fr{o}$.
    \begin{enumerate}
        \item There is a decomposition of categories
    \[\EWmon{\scrL}{\scrL'}^{\BS} (\fr{h}, W, \fr{o}) = \bigsqcup_{\beta \in \uW{\scrL}{\scrL'}} \EWmon{\scrL}{\scrL'}^{\BS, \beta} (\fr{h}, W, \fr{o}).\]
        \item Let $\beta \in \uW{\scrL}{\scrL'}$ and $\gamma \in \uW{\scrL'}{\scrL''}$. Horizontal composition is compatible with block multiplication, i.e.,
        \[\EWmon{\scrL}{\scrL'}^{\BS, \beta} (\fr{h}, W, \fr{o}) \star \EWmon{\scrL'}{\scrL''}^{\BS, \beta \gamma} (\fr{h}, W, \fr{o}) \subseteq \EWmon{\scrL'}{\scrL''}^{\BS, \beta \gamma} (\fr{h}, W, \fr{o}).\]
        \item Let $\beta \in \uW{\scrL}{\scrL'}$ and let $\uw^{\beta}$ be an endo-reduced expression for $w^{\beta}$. There is an equivalence of categories
        \[ (-) \star \id_{B_{\uw^{\beta}}^{\scrL}} : \EWmon{\scrL}{\scrL}^{\BS, \circ} (\fr{h}, W, \fr{o})  \stackrel{\sim}{\to} \EWmon{\scrL}{\scrL'}^{\BS, \beta} (\fr{h}, W, \fr{o}).\]
    \end{enumerate}
    \end{lemma}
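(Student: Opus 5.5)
We prove the three statements by mirroring the proof of Lemma \ref{lem:dihedral_facts_about_blocks}, checking everything on generators. All relations are local, so it suffices to control the block of the boundary across each generating vertex.

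For (1), every object $B_{\uw}^{\scrL}$ lies in exactly one block, namely the fibre of ${}_{\scrL}\Bl^{\min}_{\scrL'}$ containing the evaluation of $\uw$. The content is that there are no nonzero 2-morphisms between objects in different blocks. We prove the stronger claim that ${}_{\scrL}\VSGraph_{\scrL'}(\ux,\uy)$ is empty unless $b_{\scrL}(\ux)=b_{\scrL}(\uy)$. To do so, slice a monodromic Elias--Williamson graph horizontally into elementary pieces, each of the form $\id \star v \star \id$ with $v$ a single generating vertex or polynomial box. By Lemma \ref{lem:block_prelims}(1), the invariant $b_{\scrL}$ of the slice boundary is multiplicative under concatenation. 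It therefore suffices to check that the bottom and top of each generating vertex have the same value of $b_{\scrL''}$ for the relevant face colour $\scrL''$.

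We check this generator by generator.
\begin{itemize}
\item Univalent vertices exist only for a strand coloured $s$ with $\scrL''s=\scrL''$, since the face colouring is forced. In that case $b_{\scrL''}(s)=e=b_{\scrL''}(\emptyset)$.
\item Trivalent vertices likewise require $\scrL''s=\scrL''$, and then $b_{\scrL''}(ss)=e=b_{\scrL''}(s)$.
\item The cup and cap with $\scrL''s\ne\scrL''$ give $b(ss)=ss=e$.
\item The $2m_{s,t}$-valent vertices preserve $b$ by Lemma \ref{lem:block_prelims}(3), since ${}_s\underline{m}$ and ${}_t\underline{m}$ express the same element.
\item Polynomial boxes do not change the boundary.
\end{itemize}
The only new generators relative to the dihedral case are the $2m_{s,t}$-valent vertices for various pairs $s,t$, and the Zamolodchikov relations introduce no new generators. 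Hence the check is literally identical to the dihedral case.

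For (2), note first that the displayed inclusion should be read as saying that the $\beta$-block composed with the $\gamma$-block lands in the $\beta\gamma$-block of $\EWmon{\scrL}{\scrL''}^{\BS}$. This is immediate from Lemma \ref{lem:block_prelims}(1): if $\ux$ expresses an element of $\beta$ and $\uy$ an element of $\gamma$, then $b_{\scrL}(\ux\uy)=b_{\scrL}(\ux)\,b_{\scrL\ux}(\uy)=w^{\beta}w^{\gamma}=w^{\beta\gamma}$.

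For (3), we first show that for each $u\in S$ with $\scrL u\ne\scrL$, the functor $(-)\star\id_{B_u^{\scrL}}$ is an equivalence. The block minimality relations say that the cup and cap coloured $u$ are mutually inverse isomorphisms $B_{uu}^{\scrL}\cong B_\emptyset^{\scrL}$. Consequently $(-)\star\id_{B_u^{\scrL u}}$ is a quasi-inverse to $(-)\star\id_{B_u^{\scrL}}$.

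Next write $\uw^\beta=(u_1,\dots,u_k)$. Because $\uw^\beta$ is endo-reduced with $\ell_{\scrL}(w^\beta)=0$, every step satisfies $\scrL u_1\cdots u_i\ne\scrL u_1\cdots u_{i-1}$. We therefore compose $k$ such equivalences, and by (2) the composite carries the neutral block to the $\beta$-block. Its quasi-inverse $(-)\star\id_{B_{\overline{\uw}^\beta}^{\scrL'}}$ carries the $\beta$-block back to the neutral block, again by (2) and $w^{\beta}w^{\beta,-1}=e$.

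The main obstacle is in (1): we must make sure that no relation or generator of the general category (in particular the Zamolodchikov equations) could connect different blocks. Since relations only impose identifications among existing graphs, the emptiness of the graph sets for mismatched blocks settles this.
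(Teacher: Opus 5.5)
Your proposal is correct and follows the paper's argument essentially step for step. The paper defers to the identical proof of Lemma~\ref{lem:dihedral_facts_about_blocks}: for (1) it shows no monodromic graph joins different blocks by checking the generating vertices against Lemma~\ref{lem:block_prelims}, (2) follows from the multiplicativity of $b_{\scrL}$, and for (3) it iterates the block-minimality equivalences $(-)\star\id_{B_u^{\scrL}}$, whose inverse is $(-)\star\id_{B_u^{\scrL u}}$. Your reading of the displayed inclusion in (2) as ``$\beta$-block $\star$ $\gamma$-block $\subseteq$ $\beta\gamma$-block of $\EWmon{\scrL}{\scrL''}^{\BS}$'' is the intended one.
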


\subsubsection{From Diagrammatics to Bimodules}

The goal of this section is to define a 2-functor 
\[\Upsilon_{\Abe} : \EWmon{}{}^{\BS} (\fr{h}, W, \fr{o}) \to \Amon{}{}^{\BS} (\fr{h}, W, \fr{o}).\]
On objects, $\Upsilon (\scrL) = \scrL$, and on 1-morphisms $\Upsilon (B_{\uw}^{\scrL}) = B_{\uw}^{\scrL}$ for $\uw \in \Exp (W)$.
We define $\Upsilon$ on 2-morphisms by specifying where the generating 2-morphisms are sent,
\begin{align*}
    \Upsilon \left( 
  % \tikzsetnextfilename{#1}
  \tikzstyle{every picture}=[tikzfig]
  \input{./figures/poly2.tikz}
 \right) &\coloneq  f & \Upsilon \left(
  % \tikzsetnextfilename{#1}
  \tikzstyle{every picture}=[tikzfig]
  \input{./figures/enddot2.tikz}
 \right) &\coloneq \eta^s , & \Upsilon \left(
  % \tikzsetnextfilename{#1}
  \tikzstyle{every picture}=[tikzfig]
  \input{./figures/startdot2.tikz}
 \right) &\coloneq \epsilon^s,  & \Upsilon \left(
  % \tikzsetnextfilename{#1}
  \tikzstyle{every picture}=[tikzfig]
  \input{./figures/mult2.tikz}
 \right) &\coloneq  \mu^s, \\
    \Upsilon \left(
  % \tikzsetnextfilename{#1}
  \tikzstyle{every picture}=[tikzfig]
  \input{./figures/comult2.tikz}
 \right) &\coloneq \nu^s  & \Upsilon \left(
  % \tikzsetnextfilename{#1}
  \tikzstyle{every picture}=[tikzfig]
  \input{./figures/cap2.tikz}
 \right) &\coloneq \cap^s , & \Upsilon \left(
  % \tikzsetnextfilename{#1}
  \tikzstyle{every picture}=[tikzfig]
  \input{./figures/cup2.tikz}
 \right) &\coloneq  \cup^s, & \Upsilon \left( 
  % \tikzsetnextfilename{#1}
  \tikzstyle{every picture}=[tikzfig]
  \input{./figures/mst_valent3.tikz}
 \right) &\coloneq \beta_{s,t}.   
\end{align*}

\begin{claim}\label{claim:Phi_well_defined}
    $\Upsilon$ is a well-defined 2-functor.
\end{claim}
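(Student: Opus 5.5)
The plan is to check that $\Upsilon$ respects each family of defining relations of $\EWmon{}{}^{\BS}(\fr{h}, W, \fr{o})$. Since $\EWmon{}{}^{\BS}$ is presented by generators and relations, and $\Upsilon$ is prescribed on generating 2-morphisms, it suffices to verify three groups of relations in $\Amon{}{}^{\BS}(\fr{h}, W, \fr{o})$: the one-color relations, the two-color relations, and the Zamolodchikov relations. Isotopy invariance comes along the way.

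\emph{One-color relations.} These only involve one simple reflection $s$. Their images are exactly the relations already checked for $\Upsilon_s$ in Proposition \ref{prop:one_color_soergel_functor}, and hence for $\Upsilon_S$ as well.

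\emph{Two-color relations.} Every two-color relation lives in a rank-2 parabolic $W_{\{s,t\}}$ with $m_{s,t} < \infty$. Restricting $\fr{o}$ to a $W_{\{s,t\}}$-set, the relevant part of $\EWmon{}{}^{\BS}$ receives a functor from the dihedral category $\EWmon{}{}^{\BS}(\fr{h}, W_{\{s,t\}}, \fr{o})$. Moreover, the Bott--Samelson bimodules and the maps $\beta_{s,t}$ only depend on the restricted data. So Claim \ref{claim:dihedral_Phi_functor} gives invariance under two-color associativity, Elias--Jones--Wenzl, and rotation. Here one uses that $\fr{h}$ restricted to $W_{\{s,t\}}$ is still a reflection-balanced reflection-stable Abe realization, by Lemma \ref{lem:parabolic_subgroups_and_Abe_realizations}.

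\emph{Zamolodchikov relations.} This is the main obstacle. Each side of a Zamolodchikov relation is a degree-zero morphism $B_{\ux}^{\scrL} \to B_{\uy}^{\scrL}$, where $\ux, \uy$ are reduced expressions for the longest element $w_I$ of a finite rank-3 parabolic $W_I$.

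First I would treat the relations with no lower-order terms. Each side is a composite of $\beta_{s,t}$-maps. By the remark after Proposition \ref{prop:existence_of_rex_moves}, each such rex move sends $u_{\ux}$ to $u_{\uy}$. By Theorem \ref{thm:abe_dl_are_basis} (compare the argument of Lemma \ref{lem:uniqueness_of_beta}), the degree-zero Hom space $\Hom(B_{\ux}^{\scrL}, B_{\uy}^{\scrL})$ restricted to the block of $w_I$ is spanned, modulo lower terms, by the single double leaf of top element. So the two sides agree as soon as one shows that the degree-zero morphisms supported on smaller Bruhat ideals vanish on, or are detected by, the 1-tensor.

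This fails in general, which is why lower terms can appear. Instead I would use localization. By the faithfulness of $(-)_Q$ and the upper triangularity in Corollary \ref{cor:Bruhat_upper_triangularity}, it suffices to compare localization coefficients. I would reduce these via the monodromic--endoscopic equivalence, Proposition \ref{prop:abe_endo} and Lemma \ref{lem:Phi_and_block_translation}, as in Steps 2--3 of the dihedral case. Passing to the neutral block by $T_\beta^{\alg}$, each $\beta_{s,t}$ becomes, via Lemma \ref{lem:soergel_product_formulas}, either an identity or the endoscopic rex move $\beta_{s',t'}$ for $W_{s,t}^{\scrL}$. The monodromic Zamolodchikov relation thus becomes a $(\le 3)$-color relation in the non-monodromic category for $W_I \cap W_\scrL^\circ$. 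That relation holds by Abe's theorem for $\fr{o} = 1$, applied to the restricted balanced Abe realization of Proposition \ref{prop:balanced_realizations_and_reflections}.

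The genuinely hard part is bookkeeping. One must enumerate, for each type $A_1 \times I_2(m)$, $A_3$, $B_3$ (with $H_3$ excluded by Assumption \ref{ass:no_H3_copy}), the possible endoscopic subgroups listed in Remark \ref{rem:three_color_relns}. For each, one must check that the endoscopic image of the prescribed monodromic relation is exactly the corresponding endoscopic relation. The delicate case is $B_3$ with endoscopic subgroup $A_3$, where the induced orientation of the rex graph differs from the canonical one and the lower-order terms must be matched. I expect this case-by-case verification to be the bulk of the work, and would defer it to an appendix computation.
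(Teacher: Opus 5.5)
Your treatment of the one- and two-color relations matches the paper, via Proposition \ref{prop:one_color_soergel_functor} and Claim \ref{claim:dihedral_Phi_functor}. The gap is in the Zamolodchikov relations, where you discard the one tool that actually does the work.

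\textbf{What the paper does.} Both sides of a three-color relation map under $\Upsilon$ to degree-zero morphisms sending $u_{\ux}$ to $u_{\uy}$. So they coincide whenever the degree-zero part of $\Hom(B_{\ux}^{\scrL},B_{\uy}^{\scrL})$ is free of rank $1$ over $\k$. By the defect formula (Corollary \ref{cor:soergel_hom_defects}), that rank is determined by the defects $d(\ux,\ue)+d(\uy,\uf)$ of pairs with $\ux^{\ue}=\uy^{\uf}$. This is a purely combinatorial quantity, depending only on which adjacent faces carry equal labels. Enumerating complete monodromy labelings (Algorithm \ref{algo:monodromy_labelings}), and using Lemma \ref{lem:homs_for_A1_I2m} for $A_1\times I_2(m)$, shows the rank is $1$ in every case except three:
\begin{enumerate}
\item the non-monodromic $A_3$ and $B_3$ cases, which are handled by \cite{EW};
\item the full $H_3$ case, which is excluded by Assumption \ref{ass:no_H3_copy};
\item $B_3$ with endoscopic group of type $A_3$.
\end{enumerate}
You say the 1-tensor argument ``fails in general.'' It fails only in these few cases, and isolating them is the substance of the proof.

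\textbf{Why your replacement step is not justified.} In its place you propose to transport every relation through $\Psi^{\alg}_{\scrL}$. You then assert that it ``becomes'' an endoscopic $(\le 3)$-color relation, which holds by Abe's theorem for $\fr{o}=1$. There are two problems.
\begin{enumerate}
\item There is no diagrammatic description of $\Psi^{\alg}_{\scrL}$ beyond rank two (see the remark after Theorem \ref{thm:diag_endoscopy}). So the image of a $2m$-valent vertex, flanked by identity strands with arbitrary face labels, has to be identified with an endoscopic diagram by hand. This needs chosen 1-tensor-preserving isomorphisms together with rank-one or Frobenius-uniqueness arguments, as in Example \ref{ex:B3_A3_endosimple_expansions}.
\item Even once both sides are written as composites of endoscopic rex moves, nothing guarantees they differ by exactly an endoscopic Zamolodchikov relation rather than by lower-order terms. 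That is precisely what happens for $B_3\supset A_3$, where the induced orientation of the rex graph is not the canonical one.
\end{enumerate}
So matching lower-order terms is not bookkeeping; it is the content of the claim.

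Your appeal to faithfulness of $(-)_Q$ and Corollary \ref{cor:Bruhat_upper_triangularity} is also never used. Comparing localization coefficients directly, as in \cite{EW} or with ASLoc \cite{GJW23}, would be a legitimate alternative, but you compute none.

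To repair the argument, apply the rank-one criterion labeling by labeling. Reserve the endoscopic transport for the single residual $B_3$/$A_3$ case.
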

\begin{proof}
    We have already shown that $\Upsilon$ is invariant under the 1-color relations in Proposition \ref{prop:one_color_soergel_functor} and under the 2-color relations in Claim \ref{claim:dihedral_Phi_functor}.
    As a result, it only remains to check that $\Upsilon$ is invariant under the 3-color relations.
    
    We compute these case-by-case for types $A_3, B_3$, and $H_3$. There are 30 possible monodromy labeling in type $A_3$, 98 possible monodromy labeling in type $B_3$, and 164 possible monodromy labeling in type $H_3$. 
    Most of these cases, one can check by constraining Hom spaces or from existing literature that  $\Upsilon$ is invariant under the 3-color relations.
    The one exception is when $B_3$ has an endoscopic subgroup of type $A_3$. This will be checked by hand using algebraic endoscopy.
    
    In type $A_1 \times I_2 (m)$, the possible endoscopic subgroups are of the form $A_1 \times I_2 (v)$ and $I_2 (v)$ where $0 \leq v \leq m$ divides $m$.
    When the coloring of the 3-color relation arises from an endoscopic subgroup of rank $\leq 2$, the invariance under $\Upsilon$ follows from constraints on Hom spaces in low rank.
    When the endoscopic subgroup is rank 3, then the 3-color relation for $A_1 \times I_2 (m)$ maps under $\Upsilon$ to the 3-color relation for $A_1 \times I_2 (v)$.

    The complete proof is covered in Appendix \ref{apdx:three_color}. 
    Therein, we also show how the possible monodromy parameters can be computed, some important examples, and how $\Upsilon$-invariance can be computed. 
\end{proof}

\subsection{Double Leaves}

We will provide a diagrammatic counterpart to the double leaves basis constructed for Bott--Samelson bimodules. In the non-monodromic setting these were first introduced in \cite{EW}.
Throughout we will fix an $\LL$-datum as in \S \ref{subsec:rigidification_data}.

\subsubsection{Definition and the Algorithm}\label{subsubsec:diag_ll_section}

Let $\ux = (s_1, \ldots, s_k)$ be an expression in $W$, and let $\ue$ be an $\scrL$-monodromic subexpression of $\ux$. For each $1 \leq i \leq k$, let $x_i$ be the element realizing $\ux_{\leq i}$.
Let $w_i = \ux_{\leq i}^{\ue_{\leq i}}$.
We write $\uw_i = \rex (w_i)$ for the reduced expression of $x_i$ fixed by the $\LL$-datum. 
The condition that $\ue$ be monodromic ensures that $\scrL x_i = \scrL w_i$.

The construction of the light leaves is identical to algebraic version after obvious replacements are made. 
Nonetheless, for the sake of exposition, we will detail the algorithm.

We construct the light leaf $\LL_{\ux, \ue}^{\scrL} : B_{\ux}^{\scrL} \to B_{\uw}^{\scrL}$ inductively on the length $k$ of $\ux$.
The base case is covered by $\LL_{\emptyset, \emptyset}^{\scrL} : B_{\emptyset}^{\scrL} \to B_{\emptyset}^{\scrL}$ being the identity map. 
Suppose that we have already constructed a map 
\[\LL_{i-1}^{\scrL} := \LL_{\ux_{\leq i-1}, \ue_{\leq i-1}}^{\scrL} : B_{\ux_{\leq i-1}}^{\scrL} \to B_{\uw_{i-1}}^{\scrL}.\]
In a moment, we will define a map 
\[\phi_i : B_{\ux_{\leq i-1} s_i}^{\scrL} \to B_{\uw_{i}}^{\scrL}.\] 
We then define 
\[\LL_i^{\scrL} = \phi_i \circ ( \LL_{i-1}^{\scrL} \star \id_{B_s^{\scrL w_{i-1}}}).\]

The definition of $\phi_i$ depends on a decoration $\dec_i (\uw, \ue)$.
We will now explain $\phi_i$ for each decoration. While the definition does not actually depend on whether $i \in K (\uw, \scrL)$, it is still useful to show the face colorings for both cases. 
The figure below describes $\phi_i$ in all such cases. Here red is used for the simple reflection $s_i$. The orange corresponds to $\scrL$, the green corresponds to $\scrL w_{i-1}$, and the purple corresponds to $\scrL w_i$. 
\begin{equation*}
    
  % \tikzsetnextfilename{#1}
  \tikzstyle{every picture}=[tikzfig]
  \input{./figures/light_leaves.tikz}

\end{equation*}

The $\alpha$ indicates the rex move to $\uw_i$ and $\beta$ indicates the rex move from $\uw_{i-1}$ to $\rex_s (w_{i-1})$. Both of these rex moves are uniquely determined after choosing the corresponding path in the rex graph as in \S \ref{subsec:rigidification_data}.

\subsubsection{The Basis Theorem}

\begin{definition}
    Let $\ux, \uy \in \Exp (W)$ such that $\scrL \ux = \scrL' = \scrL \uy$. Let $\ue \subset \ux$ and $\uf \subset \uy$ be monodromic subexpressions such that $\uw^{\ue} = w = \uy^{\uf}$.
    A \emph{double leaf} is a composition of the form
    \[ \dLL_{\ue, \uf}^{w, \scrL} := \overline{\LL}_{\uy, \uf}^{\scrL} \circ \LL_{\ux, \ue}^{\scrL} : B_{\ux}^{\scrL} \to B_{\uy}^{\scrL}\]
    where $\overline{\LL}_{\uy, \uf}^{\scrL} : B_{\uw}^{\scrL} \to B_{\uy}^{\scrL}$ denotes the horizontal reflection of $\LL_{\uy, \uf}^{\scrL}$.
\end{definition}

\begin{theorem}\label{thm:dll_is_a_basis}
    Let $\ux, \uy \in \Exp (W)$ such that $\scrL \ux = \scrL' = \scrL \uy$.
    Let $\dLL_{\ux, \uy}^{\scrL}$ denote the set of all $\scrL$-monodromic double leaves from $B_{\ux}^{\scrL}$ to $B_{\uy}^{\scrL}$ with respect to a fixed $\LL$-datum.
    Then $\dLL_{\ux, \uy}^{\scrL}$ is a left graded $R$-module basis for $\Hom_{\EWmon{\scrL}{\scrL'}^{\BS}} (B_{\ux}^{\scrL}, B_{\uy}^{\scrL})$.
\end{theorem}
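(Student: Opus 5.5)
The plan follows the standard two-part strategy of \cite{EW}: linear independence is pulled back from the algebraic side, and spanning is proved diagrammatically. We use the 2-functor $\Upsilon_{\Abe}$ of Claim \ref{claim:Phi_well_defined}. By construction, the generating diagrams are sent to the corresponding morphisms $\eta^s,\epsilon^s,\mu^s,\nu^s,\cap^s,\cup^s,\beta_{s,t}$. The diagrammatic light leaf algorithm of \S\ref{subsubsec:diag_ll_section} is a step-by-step copy of the algebraic one, with the same $\LL$-datum. Hence $\Upsilon_{\Abe}(\LL_{\ux,\ue}^{\scrL})$ is the algebraic light leaf. Horizontal reflection corresponds to the duality $D$ by Proposition \ref{prop:duality_and_str_mors} (see Remark \ref{rem:upside_down_LL}), so $\Upsilon_{\Abe}$ sends each diagrammatic double leaf $\dLL_{\ue,\uf}^{w,\scrL}$ to the algebraic double leaf with the same indices. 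Suppose a left $R$-linear combination of diagrammatic double leaves vanishes. Applying $\Upsilon_{\Abe}$, which is $R$-linear since polynomial boxes go to multiplication, gives a vanishing combination of algebraic double leaves. Theorem \ref{thm:abe_dl_are_basis} then forces all coefficients to be zero. This settles linear independence.

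For spanning, we show that every monodromic Elias--Williamson graph in ${}_{\scrL}\VSGraph_{\scrL'}(\ux,\uy)$ lies in the $R$-span of $\dLL_{\ux,\uy}^{\scrL}$. We first reduce to $\ux=\emptyset$. Following Lemma \ref{lem:BS_tensor_adjunction}, the cups and caps (for $\scrL s=\scrL$ and for $\scrL s\neq\scrL$) give biadjunctions, so bending strands identifies $\Hom(B_{\ux}^{\scrL},B_{\uy}^{\scrL})$ with $\Hom(B_\emptyset^{\scrL},B_{\overline{\ux}\uy}^{\scrL})$. We check that this bending carries double leaves to double leaves up to lower terms, in the sense of \cite[\S6]{EW}. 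We then run the Elias--Williamson inductive argument. We prove a stronger statement: the span of double leaves is closed under post-composition with each generator placed in any position, namely dots, trivalent vertices, cups and caps of either monodromic type, $2m$-valent vertices, and polynomials. We induct on the length of the top expression and on the path-dominance/Bruhat order of the subexpressions. Polynomials are moved to the left using polynomial forcing. Monodromic cups and caps with $\scrL s\neq\scrL$ are removed by block minimality; this is why only monodromic subexpressions (with $e_i=1$ off $K(\uy,\scrL)$) appear. Composing a light leaf with a rex move reduces to the fact that different rex-move paths agree modulo lower terms. Here the three-color Zamolodchikov relations and the two-color relations (two-color associativity, Elias--Jones--Wenzl, Lemma \ref{lem:jones_wenzl}) play the same role as in \cite{EW}. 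Each error term factors through a strictly smaller element in Bruhat order and is absorbed by induction.

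The main obstacle is the rank-two step. There we must show that composing a dot or trivalent vertex with a $2m$-valent vertex in monodromic coloring, or pitchfork configurations under a rex move, yields double leaves plus lower terms. In the non-monodromic setting this uses the Jones--Wenzl projector and death by pitchfork. For this step we would use Two-color Endoscopy (Proposition \ref{prop:two_color_endoscopy}) together with the block-translation equivalence of Lemma \ref{lem:diag_block_props}(3). Together these move any dihedral computation in block $\beta$ to the neutral block and then into the image of $\Psi^{\iota,\mathrm{diag}}_{\scrL}$ from the endoscopic group $W_{s,t}^{\scrL}\cong W_v$. In that image the non-monodromic dihedral spanning results of \cite{EW} apply; the death-by-monodromic-pitchfork relation (Lemma \ref{lem:mon_JW_properties}) handles the leftover truncation and extension by $\tau$. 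Finally, the reduction to rank two fails when the endoscopic subgroup has rank three inside $B_3$. For that case we reuse the explicit $B_3/A_3$ relation verified in Appendix \ref{apdx:three_color}.
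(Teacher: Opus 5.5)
\textbf{Linear independence.} Your argument via $\Upsilon_{\Abe}$ and Theorem \ref{thm:abe_dl_are_basis} is exactly the paper's (Lemma \ref{lem:dl_are_lin_indep_for_EW}).

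\textbf{The gap.} It lies in the step you identify as the main obstacle: showing that attaching a $2m$-valent vertex beneath a light leaf stays in the span of light leaves modulo $I_{\uw}^{\scrL}$ (Lemma \ref{lem:add_2m_vertex}). You propose to transport this via two-color endoscopy and then invoke ``the non-monodromic dihedral spanning results of \cite{EW}''. That result is precisely \cite[Claim 7.13]{EW}. Its proof reduces to Elias's dihedral Hom-space computations in \cite{Elib}, which require lesser invertibility and local non-degeneracy of the realization. Neither is available for a reflection-balanced reflection-stable Abe realization over an arbitrary $\k$. So even at $\fr{o}=1$ there is no established result to import at this generality; the paper points out the error and has to reprove the claim.

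The transport is also unjustified. Proposition \ref{prop:two_color_endoscopy} only constructs the functor $\Psi^{\iota,\textnormal{diag}}_{\scrL}$. Nothing at this stage says that a monodromic light leaf composed with a $2m$-valent vertex lies in its image. Nor does anything say that monodromic light leaves and the ideal $I_{\uw}^{\scrL}$ correspond to endoscopic ones. Monodromic light leaves use rex moves in $W$, and off the neutral block you must also track the truncations and extensions $\tau$. The fullness you need is the diagrammatic monodromic--endoscopic equivalence. The paper gets that (Theorem \ref{thm:diag_endoscopy}) only from Theorem \ref{thm:mon_Abe}, i.e.\ from the basis theorem you are proving, so this route is circular.

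\textbf{What is needed instead.} You need a direct argument inside the monodromic dihedral calculus using only the defining relations. The paper proves Claim \ref{claim:dihedral_add_2m_vertex} in three stages:
\begin{itemize}
\item It reduces to alternating expressions (Lemma \ref{lem:red_to_alternating}).
\item It inducts on the length $l$ of the reduced prefix, using Lemma \ref{lem:vertex_replacement} to rewrite the earlier part of the light leaf as one ending in a $2m$-valent vertex.
\item It treats the five possible pairs $(\dec_n(\ue),\dec_{n-1}(\uf))$ using the monodromic Jones--Wenzl relation (Lemma \ref{lem:jones_wenzl}), two-color associativity, block minimality, polynomial forcing, and the splitting of $B_t^{\scrL'}\star B_t^{\scrL'}$. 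The last case splits further according to whether the last step is monodromic.
\end{itemize}

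\textbf{Two smaller points.}
\begin{itemize}
\item Your reduction to $\ux=\emptyset$ by bending is not a reduction for spanning. You would still have to show that unbent double leaves lie in the span of double leaves $B_{\ux}^{\scrL}\to B_{\uy}^{\scrL}$, which is the original problem. The paper instead factors through a reduced expression via Lemma \ref{lem:neg_pos_decomps}(5) and uses Proposition \ref{prop:the_evil_EW_prop}.
\item The $B_3$-with-endoscopic-$A_3$ case does not enter through a failure of the rank-two reduction. It enters in Lemma \ref{lem:neg_pos_decomps}(4), where one checks that the extra terms of that Zamolodchikov relation lie in the lower-terms ideal.
\end{itemize}
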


The proof of Theorem \ref{thm:dll_is_a_basis} is rather involved. Showing linear independence is easy and will be covered in the following lemma. The difficult part is showing that the double leaves span the Hom space.
We have deferred the proof of this component until the next section.

\begin{lemma}\label{lem:dl_are_lin_indep_for_EW}
    Let $\ux, \uy \in \Exp (W)$ such that $\scrL \ux = \scrL' = \scrL \uy$.
    Then $\dLL_{\ux, \uy}^{\scrL}$ is linearly independent over $R$ as a subset of $\Hom_{\EWmon{\scrL}{\scrL'}^{\BS}} (B_{\ux}^{\scrL}, B_{\uy}^{\scrL})$.
\end{lemma}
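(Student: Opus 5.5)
The plan is to push the diagrammatic double leaves forward along the 2-functor $\Upsilon_{\Abe}$ of Claim \ref{claim:Phi_well_defined} and use the algebraic basis theorem. If a left $R$-linear combination $\sum c_{\ue,\uf}^{w} \dLL_{\ue,\uf}^{w,\scrL}$ vanishes in $\Hom_{\EWmon{\scrL}{\scrL'}^{\BS}}(B_{\ux}^{\scrL}, B_{\uy}^{\scrL})$, then applying $\Upsilon_{\Abe}$ gives a vanishing combination $\sum c_{\ue,\uf}^{w}\,\Upsilon_{\Abe}(\dLL_{\ue,\uf}^{w,\scrL})=0$. Here $\Upsilon_{\Abe}$ is a 2-functor and sends polynomial boxes to multiplication by the same polynomial, so it is left $R$-linear.

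The first step is to identify $\Upsilon_{\Abe}$ of a diagrammatic light leaf with an algebraic light leaf for the same $\LL$-datum. Both are built by the same inductive recipe of \S\ref{subsubsec:diag_ll_section} and \S\ref{sec:ll}. $\Upsilon_{\Abe}$ sends dots to $\epsilon^s$, trivalent vertices to $\mu^s$, caps to $\cap^s$, and $2m$-valent vertices to $\beta_{s,t}$. Rex moves built from the chosen paths in the rex graph therefore go to the algebraic $\beta_{\ux,\uy}$, and each step $\phi_i$ matches its algebraic counterpart case by case over the decorations and over whether $i\in K(\ux,\scrL)$. By induction on $\ell(\ux)$, $\Upsilon_{\Abe}(\LL_{\ux,\ue}^{\scrL})=\LL_{\ux,\ue}^{\scrL}$.

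The second step handles the upside-down leaves. The horizontal reflection $\overline{\LL}_{\uy,\uf}^{\scrL}$ is sent to $D(\LL_{\uy,\uf}^{\scrL})$. This follows from Proposition \ref{prop:duality_and_str_mors}: $D$ exchanges the images of each generator with the images of its flipped generator, with $D(\beta_{s,t})=\beta_{t,s}$, and $D$ is contravariant and monoidal (cf. Remark \ref{rem:upside_down_LL}). Consequently $\Upsilon_{\Abe}(\dLL_{\ue,\uf}^{w,\scrL})$ is exactly the algebraic double leaf.

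Finally, Theorem \ref{thm:abe_dl_are_basis} says the algebraic double leaves are linearly independent over $R$, so all $c_{\ue,\uf}^w=0$. The main obstacle is the bookkeeping in the second step: checking that $D$ is compatible with horizontal composition in the monodromic setting, so that the flipped composite maps to the dual composite. If that is awkward, I would instead note that the linear-independence argument in the proof of Theorem \ref{thm:abe_dl_are_basis} only uses Lemma \ref{lem:ll_are_lin_indep}, support considerations, and Proposition \ref{prop:basis_for_S_uw_x}. It therefore applies to any upside-down leaves whose evaluations $\pi^x_{\uy}(\overline{\LL}(u_{\uw}))$ form the basis of Proposition \ref{prop:intermediate_bases}, which can be verified directly.
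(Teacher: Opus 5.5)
Your proposal is correct and follows the paper's proof. The paper likewise observes that $\Upsilon_{\Abe}$ sends diagrammatic double leaves to algebraic double leaves, citing Proposition \ref{prop:duality_and_str_mors} and Remark \ref{rem:upside_down_LL} for the upside-down halves, and then invokes Theorem \ref{thm:abe_dl_are_basis}. The compatibility of $D$ with horizontal composition that you flag is absorbed there into ``by construction'' (it is the monoidality underlying Lemma \ref{lem:dual_and_invs}(4)), so your fallback argument is not needed.
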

\begin{proof}
    By construction, the functor $\Upsilon : \EWmon{}{}^{\BS} (\fr{h}, W, \fr{o}) \to \Amon{}{}^{\BS} (\fr{h}, W, \fr{o})$ takes double leaves to double leaves (see Proposition \ref{prop:duality_and_str_mors} and Remark \ref{rem:upside_down_LL}).
    The result then follows from Theorem \ref{thm:abe_dl_are_basis}.
\end{proof}

\begin{remark}
    Let $\fr{o} = W$ where $W$ acts on $\fr{o}$ by right multiplication.
    For all $\scrL, \scrL' \in \fr{o}$, there is an equivalence of categories
    \[\textnormal{perm}_{\scrL}^{\scrL'} : \EWmon{\scrL}{-}^{\BS} (\fr{h}, W, \fr{o}) \to \EWmon{\scrL'}{-}^{\BS} (\fr{h}, W, \fr{o})\]
    obtained by the permutation of the monodromy labelings $\fr{o} \to \fr{o}$ given by multiplication by the unique element $w$ such that $\scrL' = \scrL w$.
    We can then equip $\EWmon{e}{-}^{\BS} (\fr{h}, W, \fr{o})$ with the structure of a monoidal category via the composition
    \[\EWmon{e}{\scrL}^{\BS} (\fr{h}, W, \fr{o}) \times \EWmon{e}{-}^{\BS} (\fr{h}, W, \fr{o}) \stackrel{\textnormal{perm}_{\scrL}^{e}}{\cong} \EWmon{e}{\scrL}^{\BS} (\fr{h}, W, \fr{o}) \times \EWmon{\scrL}{-}^{\BS} (\fr{h}, W, \fr{o}) \to \EWmon{e}{-}^{\BS} (\fr{h}, W, \fr{o}).\]
    It is easy to check from definitions, that $\EWmon{e}{-}^{\BS} (\fr{h}, W, \fr{o})$ is equivalent as a monoidal category to the diagrammatic category of standard bimodules defined in \cite[\S 4.1]{EW}.

    Theorem \ref{thm:dll_is_a_basis} implies the following Hom space calculation,
    \[\Hom_{\EWmon{e}{-}^{\BS}} (B_{\ux}^{\scrL}, B_{\uy}^{\scrL}) \cong \begin{cases} R & \ev (\ux) = \ev (\uy), \\ 0 & \textnormal{otherwise}. \end{cases}\]
    This essentially gives a diagrammatic proof of \cite[Theorem 6.6]{EW0}.
\end{remark}

\subsubsection{Monodromic Abe Theorem}

As a consequence of the double leaves basis for the category of Bott--Samelson bimodules and the diagrammatic category, we deduce that $\Upsilon$ is an equivalence of categories.
This result generalizes \cite[Theorem 5.6]{Abe19} and \cite[Theorem 3.15]{Abe21}. 

\begin{theorem}\label{thm:mon_Abe}
    The 2-functor 
    \[ \Upsilon : \EWmon{}{}^{\BS} (\fr{h}, W, \fr{o}) \to \Amon{}{}^{\BS} (\fr{h}, W, \fr{o})\] 
    is an equivalence of 2-categories.
\end{theorem}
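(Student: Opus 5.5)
The 2-functor $\Upsilon$ is the identity on objects, and on 1-morphisms it sends $B_{\ux}^{\scrL}$ to $B_{\ux}^{\scrL}$. Since $\Amon{}{}^{\BS} (\fr{h}, W, \fr{o})$ is by definition the full replete subcategory generated by the Bott--Samelson bimodules, $\Upsilon$ is essentially surjective on each morphism category. The theorem therefore reduces to local full faithfulness: for all $\ux, \uy$ with $\scrL \ux = \scrL' = \scrL \uy$, the map
\[\Upsilon : \Hom_{\EWmon{\scrL}{\scrL'}^{\BS}} (B_{\ux}^{\scrL}, B_{\uy}^{\scrL}) \to \Hom_{\Amon{\scrL}{\scrL'}^{\BS}} (B_{\ux}^{\scrL}, B_{\uy}^{\scrL})\]
should be an isomorphism of graded left $R$-modules. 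The map is $R$-linear, because polynomial boxes in the leftmost region are sent to left multiplication by $f$.

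Fix an $\LL$-datum once and for all, and use it both for the diagrammatic light leaves of \S\ref{subsubsec:diag_ll_section} and for the algebraic light leaves of \S\ref{sec:ll}. The first step is to check that $\Upsilon$ sends each diagrammatic light leaf $\LL_{\ux,\ue}^{\scrL}$ to the algebraic light leaf built from the same data. Argue by induction on the length of $\ux$, comparing the step maps $\phi_i$ decoration by decoration:
\begin{enumerate}
\item dots go to $\epsilon^s$ or $\eta^s$, and trivalent vertices go to $\mu^s$ or $\nu^s$;
\item caps and cups go to $\cap^s$ and $\cup^s$, in both monodromic cases;
\item $2m$-valent vertices go to $\beta_{s,t}$, so a rex move along a fixed path $\Gamma$ goes to $\beta_\Gamma$.
\end{enumerate}
By Proposition \ref{prop:duality_and_str_mors} and Remark \ref{rem:upside_down_LL}, horizontal reflection of a diagram corresponds to $D$ on the algebraic side. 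Hence $\Upsilon(\overline{\LL}_{\uy,\uf}^{\scrL}) = \overline{\LL}_{\uy,\uf}^{\scrL}$, and $\Upsilon$ sends each double leaf $\dLL_{\ue,\uf}^{w,\scrL}$ to the corresponding algebraic double leaf. This is the same observation already used in Lemma \ref{lem:dl_are_lin_indep_for_EW}.

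Now apply Theorem \ref{thm:dll_is_a_basis}: the diagrammatic double leaves form a left $R$-basis of the source. By Theorem \ref{thm:abe_dl_are_basis}, their images, the algebraic double leaves indexed by the same pairs $(\ue,\uf)$ of monodromic subexpressions, form a left $R$-basis of the target. So $\Upsilon$ carries a basis bijectively onto a basis and is an isomorphism on each Hom space. Since $\Upsilon$ is a strict 2-functor (Claim \ref{claim:Phi_well_defined}) that is locally an equivalence and bijective on objects, it is an equivalence of 2-categories.

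The main obstacle is the spanning half of Theorem \ref{thm:dll_is_a_basis}, which the paper defers to \S\ref{sec:spanning}. If I had to supply it, I would follow Elias--Williamson's argument. First I would prove that any diagram $B_{\ux}^{\scrL}\to B_{\emptyset}^{\scrL'}$ reduces to a combination of light leaves composed with polynomials, by induction on $\ell(\ux)$: pull off the last strand and use the one-color relations, the two-color relations, and the Zamolodchikov relations to push rex moves past the dot, cup, and cap generators. The monodromic cases with $\scrL s\neq\scrL$ are simplified by block minimality. I would then use biadjointness (Lemma \ref{lem:BS_tensor_adjunction} diagrammatically) to reduce general Hom spaces to that case.

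For the equivalence itself, only linear independence of the diagrammatic leaves is needed beyond citing the spanning result. Linear independence follows from the algebraic side, so I would expect no further difficulty there. The one thing to confirm is the bookkeeping that the same indexing set of monodromic subexpressions appears on both sides.
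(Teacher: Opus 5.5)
Your argument is correct and is exactly the paper's proof. The paper's proof runs as follows: $\Upsilon$ is essentially surjective, and it sends each diagrammatic double leaf to the algebraic double leaf built from the same $\LL$-datum, using Proposition~\ref{prop:duality_and_str_mors} for the upside-down leaves. Theorems~\ref{thm:dll_is_a_basis} and~\ref{thm:abe_dl_are_basis} then show that $\Upsilon$ carries an $R$-basis onto an $R$-basis of each Hom space.

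Your closing sketch of the spanning half is not needed here, and it is not how the paper (or Elias--Williamson) proves it. \S\ref{sec:spanning} works with reduced targets $B_{\uw}^{\scrL}$ modulo the lower-terms ideal $I_{\uw}^{\scrL}$, via the $(L_M)$/$(X_M)$ induction, rather than reducing to the empty target by biadjointness. Its hardest step is attaching a $2m$-valent vertex below a light leaf (Lemma~\ref{lem:add_2m_vertex}), where the paper replaces Elias--Williamson's flawed argument, and your sketch does not address this step.
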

\begin{proof}
    It is clear that $\Upsilon$ is essentially surjective.
    Moreover, $\Upsilon$ takes double leaves to double leaves.
    The result then follows from Theorem \ref{thm:abe_dl_are_basis} and Theorem \ref{thm:dll_is_a_basis}.
\end{proof}

\subsection{Diagrammatic Endoscopy}

In this section, we will establish a monodromic-endoscopic equivalence for the diagrammatic Hecke categories which serves as a diagrammatic counterpart to Theorem \ref{thm:diag_endoscopy}.

Let $\scrL, \scrL', \scrL'' \in \fr{o}$, $\beta \in \uW{\scrL}{\scrL'}$, and $\gamma \in \uW{\scrL'}{\scrL''}$. There is an equivalence of monoidal categories
\[{}^{\beta} (-) : \EWmon{1}{1}^{\oplus} (\fr{h}, W_{\scrL'}^{\circ}, 1) \to \EWmon{1}{1}^{\oplus} (\fr{h}, W_{\scrL}^{\circ}, 1) \]
induced by the isomorphism of Coxeter groups $W_{\scrL'}^{\circ} \to W_{\scrL}^{\circ}$ defined by $w \mapsto w^{\beta} w w^{\beta, -1}$.
It follows from definitions that ${}^{\beta} ( {}^{\gamma} (-)) \cong {}^{\beta \gamma} (-)$.

Let $\ux$ be an endo-reduced expression for $w^{\beta}$. 
There is an equivalence of monoidal categories
\[{}^{\beta} (-) : \EWmon{\scrL'}{\scrL'}^{\oplus, \circ} (\fr{h}, W, \fr{o}) \to \EWmon{\scrL}{\scrL}^{\oplus, \circ} (\fr{h}, W, \fr{o}) \]
defined by $M \mapsto B_{\ux}^{\scrL} \star M \star B_{\overline{\ux}}^{\scrL'}$. Given another endo-reduced expression $\uy$ for $w^{\beta}$, there is a unique isomorphism $B_{\ux}^{\scrL} \to B_{\uy}^{\scrL}$ composed solely of cups, caps, and $2m$-valent vertices (with no coefficient).
This is a consequence of the 3-color relations and the Jones--Wenzl morphisms being trivial. In particular, ${}^{\beta} (-)$ does not depend on the choice of endo-reduced expression up to natural isomorphism.
It then follows from this uniqueness that ${}^{\beta} ( {}^{\gamma} (-)) \cong {}^{\beta \gamma} (-)$.

The following lemma can easily be checked from definitions.
\begin{lemma}\label{lem:Phi_and_block_conj}
    There are natural isomorphisms making the following diagram commute for each $\beta \in \uW{\scrL}{\scrL'}$.
    \[
        \begin{tikzcd}
            {\EWmon{1}{1}^{\oplus} (\fr{h}, W_{\scrL'}^{\circ}, 1)} \arrow[d, "{}^{\beta} (-)"'] \arrow[r, "\Upsilon_{\Abe}"] & {\Amon{1}{1}^{\oplus} (\fr{h}, W_{\scrL'}^{\circ}, 1)} \arrow[d, "{}^{\beta} (-)"'] \\
            {\EWmon{1}{1}^{\oplus} (\fr{h}, W_{\scrL}^{\circ}, 1)} \arrow[r, "\Upsilon_{\Abe}"]                             & {\Amon{1}{1}^{\oplus} (\fr{h}, W_{\scrL}^{\circ}, 1)}                            
            \end{tikzcd}
    \]
    Moreover, these natural transformations are compatible with respect to block composition.
\end{lemma}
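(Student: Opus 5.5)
The plan is to build the natural isomorphism generator by generator, the same way as in Lemma \ref{lem:Phi_and_block_conj_1}. On $\EWmon{1}{1}^{\oplus} (\fr{h}, W_{\scrL'}^{\circ}, 1)$ the two composites $\Upsilon_{\Abe} \circ {}^{\beta}(-)$ and ${}^{\beta}(-) \circ \Upsilon_{\Abe}$ are monoidal functors. Here the vertical arrows are the conjugation equivalences, induced by the Coxeter isomorphism $c_\beta : W_{\scrL'}^{\circ} \to W_{\scrL}^{\circ}$, $w \mapsto w^{\beta} w w^{\beta,-1}$. For non-monodromic categories ($\fr{o}=1$) this isomorphism just relabels colours: $s' \mapsto c_\beta(s')$. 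On the diagrammatic side it is the relabelling of strands. On the algebraic side it is the functor sending $C_{s'}$ to $C_{c_\beta(s')}$, with localization data transported along the $W$-action, i.e. $M \mapsto R_{w^\beta} \otimes_R M \otimes_R R_{w^{\beta,-1}}$ as in Lemma \ref{lem:soergel_product_formulas}.

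Because the source is presented by generators and relations, a monoidal natural isomorphism between two monoidal functors out of it is determined by two things. First, isomorphisms $\theta_{s'}$ on the generating objects $B_{s'}$ for $s' \in S_{\scrL'}^{\circ}$. Second, a check that these are compatible with the images of the generating 2-morphisms: polynomial boxes, dots, trivalent vertices, and $2m$-valent vertices. For $\theta_{s'}$ I take the isomorphism $\xi_{s'}^{\beta} : C_{c_\beta(s')} \to R_{w^\beta} \otimes_R C_{s'} \otimes_R R_{w^{\beta,-1}}$ from Lemma \ref{lem:soergel_product_formulas}(2). It is the unique isomorphism preserving 1-tensors.

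The compatibility checks go as follows.

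\begin{itemize}
\item \emph{Polynomial boxes.} The polynomial $f$ is sent to $w^\beta(f)$ on both sides, because the $R$-bimodule structure on $R_{w^\beta}\otimes_R(-)\otimes_R R_{w^{\beta,-1}}$ twists by $w^\beta$.
\item \emph{Dots and trivalent vertices.} Each of $\epsilon,\eta,\mu,\nu$ is the unique bimodule map of its degree sending 1-tensors to the prescribed elements. The map $\xi^\beta$ preserves 1-tensors, and by the duality argument of Proposition \ref{prop:one_color_endoscopy} it is an isomorphism of Frobenius algebras. So it intertwines these maps.
\item \emph{$2m$-valent vertices.} The image of a $2m$-valent vertex is $\beta_{s',t'}$, which by Lemma \ref{lem:uniqueness_of_beta} is the unique degree-0 map sending 1-tensor to 1-tensor. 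Tensor products of the $\xi$'s also preserve 1-tensors. Therefore both composites send the vertex to a degree-0 map preserving 1-tensors, and uniqueness forces them to agree.
\end{itemize}

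Naturality in general then follows, since every 2-morphism is generated by these. Extending to the additive hulls is formal.

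Compatibility with block composition means $\theta^{\gamma}\circ {}^{\beta}\theta^{\beta}$-type composites equal $\theta^{\beta\gamma}$, after using the coherence ${}^{\beta}({}^{\gamma}(-))\cong{}^{\beta\gamma}(-)$. Both sides are isomorphisms on $C_{s}$ preserving 1-tensors, and the degree-0 endomorphisms there form a rank-one $\k$-module. Uniqueness again gives equality, exactly as in Lemma \ref{lem:Phi_and_block_conj_1}.

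The main obstacle is the $2m$-valent check. It requires the uniqueness statement of Lemma \ref{lem:uniqueness_of_beta} to apply to the conjugated Bott--Samelson bimodules, which it does because they are again Bott--Samelson bimodules for $(W_{\scrL}^{\circ}, \fr{h})$. That in turn needs the reflection-balanced, reflection-stable hypothesis: by Proposition \ref{prop:balanced_realizations_and_reflections}, the restricted realization is balanced and Abe, independently of the reflection datum.
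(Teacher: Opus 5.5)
Your proposal is correct and is essentially the argument the paper has in mind. The paper only says the lemma is checked from definitions, and your generator-by-generator construction is exactly the proof of Lemma~\ref{lem:Phi_alg_and_block_conj} carried over to the diagrammatic side: 1-tensor-preserving isomorphisms $\xi^{\beta}_{s'}$ on generators, with rank-one uniqueness (Lemma~\ref{lem:uniqueness_of_beta}) handling both the $2m$-valent vertices and the block-composition coherence.

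One small precision: the diagrammatic ${}^{\beta}(-)$ is not a mere relabelling of strands. It must also send a polynomial box $f$ to $w^{\beta}(f)$, as the barbell and polynomial-forcing relations require; your polynomial check already uses this implicitly.
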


We will construct a 2-category $\EW^{\oplus} (\fr{h}, M^{\fr{o}} (W))$, called the \emph{endoscopic Elias--Williamson diagrammatic 2-category} as follows. 
The object set of $\EW^{\oplus} (\fr{h}, M^{\fr{o}} (W))$ is $\fr{o}$.
The morphism category from $\scrL$ to $\scrL'$ of $\EW^{\oplus} (\fr{h}, M^{\fr{o}} (W))$ is given by $\bigoplus_{\beta \in \uW{\scrL}{\scrL'}} \EW^{\oplus} (\fr{h}, \beta)$ where $\EW^{\oplus} (\fr{h}, \beta) \coloneq \EWmon{1}{1}^{\oplus} (\fr{h}, W_{\scrL}^{\circ}, 1)$.
The vertical composition is induced from the composition in $\EW^{\oplus} (\fr{h}, \beta)$. Let $\beta \in \uW{\scrL}{\scrL'}$ and $\gamma \in \uW{\scrL'}{\scrL''}$. 
We define the horizontal composition, denoted $\star$ as the bifunctor
\begin{align*}
    \EW^{\oplus} (\fr{h}, \beta) \times \EW^{\oplus} (\fr{h}, \gamma) &= \EWmon{1}{1}^{\oplus} (\fr{h}, W_{\scrL}^{\circ}, 1) \times \EWmon{1}{1}^{\oplus} (\fr{h}, W_{\scrL'}^{\circ}, 1) \\
    &\stackrel{\id \times {}^{\beta} (-) }{\longrightarrow} \EWmon{1}{1}^{\oplus} (\fr{h}, W_{\scrL}^{\circ}, 1) \times \EWmon{1}{1}^{\oplus} (\fr{h}, W_{\scrL}^{\circ}, 1) \\
    &\stackrel{\star}{\to} \EWmon{1}{1}^{\oplus} (\fr{h}, W_{\scrL}^{\circ}, 1) \\
    &= \EW^{\oplus} (\fr{h}, \beta \gamma).
\end{align*}

\begin{theorem}[Diagrammatic Monodromic-Endoscopic Equivalence]\label{thm:diag_endoscopy}
    There is an equivalence of 2-categories
    \[ \Psi^{\diag} : \EW^{\oplus} (\fr{h}, M^{\fr{o}} (W)) \stackrel{\sim}{\to} \EWmon{}{}^{\oplus} (\fr{h}, W, \fr{o}).\]
\end{theorem}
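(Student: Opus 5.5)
The quickest route is to transport Theorem \ref{thm:alg_endoscopy} across the equivalences already established, rather than build $\Psi^{\diag}$ by hand and check every relation. For each $\scrL$, Proposition \ref{prop:one_color_endoscopy} and Proposition \ref{prop:two_color_endoscopy} give a functor $\Psi_{\scrL}^{\iota,\diag}$ from the non-monodromic diagrammatic category of $W_{\scrL}^{\circ}$ to $\EWmon{\scrL}{\scrL}^{\BS,\circ}(\fr{h},W,\fr{o})$, defined on generators. Here the endoscopic category is well-defined because $\fr{h}$ is a balanced Abe realization of $W_{\scrL}^{\circ}$, by Corollary \ref{cor:refl_abe_are_endo} and Proposition \ref{prop:balanced_realizations_and_reflections}.

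I would not verify the three-color relations for $\Psi_{\scrL}^{\iota,\diag}$ directly. Instead I would define $\Psi_{\scrL}^{\diag}$ on the additive hulls as the composite
\[(\Upsilon^{\circ}_{\scrL})^{-1}\circ \Psi_{\scrL}^{\alg}\circ \Upsilon_{\Abe}.\]
This is legitimate for three reasons. By Theorem \ref{thm:mon_Abe}, applied both to $(W,\fr{o})$ and to $(W_{\scrL}^{\circ},1)$, Abe's theorem makes $\Upsilon$ an equivalence in both settings. Each $\Upsilon$ respects the block decompositions, by Lemma \ref{lem:diag_block_props} and Lemma \ref{lem:abe_block_decomp}. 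And $\Psi_{\scrL}^{\alg}$ is a monoidal equivalence by Proposition \ref{prop:abe_endo}. The composite is therefore automatically a monoidal equivalence onto the neutral block. Proposition \ref{prop:one_color_endoscopy} shows that on generators it agrees, up to natural isomorphism, with the endosimple expansion, so the resulting functor is the expected one.

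For a general block $\beta\in\uW{\scrL}{\scrL'}$, I would define $\Psi^{\diag}$ on $\EW^{\oplus}(\fr{h},\beta)$ as $\Psi_{\scrL}^{\diag}$ followed by $(-)\star\id_{B_{\uw^\beta}^{\scrL}}$. This second step is an equivalence onto the $\beta$-block by Lemma \ref{lem:diag_block_props}(3). Since the morphism categories decompose into blocks by Lemma \ref{lem:diag_block_props}(1), $\Psi^{\diag}$ is locally an equivalence, and it is the identity on objects. What remains is to give $\Psi^{\diag}$ the structure of a 2-functor, that is, coherent isomorphisms
\[\Psi^{\diag}(M)\star\Psi^{\diag}(N)\cong \Psi^{\diag}(M\star N).\]
These come from the same chain of isomorphisms as in the proof of Theorem \ref{thm:alg_endoscopy}. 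One inserts $B_{\overline{\uw^\beta}}\star B_{\uw^\beta}\cong \id$ using block minimality, then uses a natural isomorphism between conjugation ${}^{\beta}(-)$ on the endoscopic side and $B_{\uw^\beta}\star(-)\star B_{\overline{\uw^\beta}}$ on the monodromic side. That isomorphism is the diagrammatic analogue of Lemma \ref{lem:Phi_alg_and_block_conj}, obtained by combining Lemma \ref{lem:Phi_and_block_conj} with Lemma \ref{lem:Phi_alg_and_block_conj} through the equivalences $\Upsilon$. The last ingredient is an identification $B_{\uw^\beta}\star B_{\uw^\gamma}\cong B_{\uw^{\beta\gamma}}$. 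For this I would use the canonical isomorphism between endo-reduced expressions discussed before Lemma \ref{lem:Phi_and_block_conj}, which is built from cups, caps and $2m$-valent vertices.

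The step I expect to be the main obstacle is the associativity (coherence) of these composition isomorphisms. It needs the canonical isomorphism between different endo-reduced expressions to be unique and compatible under composition. My first attempt would be to check this after applying $\Upsilon$. On the algebraic side, the corresponding maps preserve 1-tensors, and degree-zero maps between such rank-one pieces are unique by Lemma \ref{lem:soergel_product_formulas}. Faithfulness of $\Upsilon$ from Theorem \ref{thm:mon_Abe} then transfers this uniqueness back to the diagrammatic side. With coherence in hand, the theorem reduces to Theorem \ref{thm:alg_endoscopy} transported across Theorem \ref{thm:mon_Abe}.
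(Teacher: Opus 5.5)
Your proposal is correct and is essentially the paper's argument. The paper also obtains $\Psi^{\diag}$ as a composite: the blockwise $\Upsilon_{\Abe}$ on the endoscopic 2-category (made into a 2-functor via Lemma \ref{lem:Phi_and_block_conj}), then Theorem \ref{thm:alg_endoscopy}, then an inverse of Theorem \ref{thm:mon_Abe}. Packaging your construction as that composite of 2-functors makes the coherence you flag as the main obstacle automatic, because it follows from the ``compatible with block composition'' clauses of Lemmas \ref{lem:Phi_and_block_conj} and \ref{lem:Phi_alg_and_block_conj} with no separate 1-tensor check.
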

\begin{proof}
    The same argument used in Theorem \ref{thm:alg_endoscopy} using Lemma \ref{lem:Phi_and_block_conj} shows that $\Upsilon_{\Abe}$ gives rise to an equivalence of 2-categories
    \[ \EW^{\oplus} (\fr{h}, M^{\fr{o}} (W)) \stackrel{\sim}{\to} \scrA^{\oplus} (\fr{h}, M^{\fr{o}} (W)).\]
    The theorem then follows from the composition of 2-functors,
   \[
   \EW^{\oplus} (\fr{h}, M^{\fr{o}} (W)) \stackrel{\sim}{\to} \scrA^{\oplus} (\fr{h}, M^{\fr{o}} (W)) \stackrel{\sim}{\to} \Amon{}{}^{\oplus} (\fr{h}, W, \fr{o}) \stackrel{\sim}{\to} \EWmon{}{}^{\oplus} (\fr{h}, W, \fr{o}),
   \]
   where the second morphism is Theorem \ref{thm:alg_endoscopy} and the third morphism is Theorem \ref{thm:mon_Abe}. 
\end{proof}

\begin{remark}
    There is no convenient diagrammatic description of the 2-functor in Theorem \ref{thm:diag_endoscopy}. 
    We have constructed such a 2-functor in the dihedral case in Proposition \ref{prop:two_color_endoscopy}, but for a general Coxeter group, there is no obvious construction.
\end{remark}

\subsection{Mixed Derived Categories}\label{subsec:mixed_diagrammatic_cat}

We will show using Theorem \ref{thm:dll_is_a_basis} that $\EWmon{\scrL}{\scrL'}^{\BS} (\fr{h}, W, \fr{o})$ is an object-adapted cellular category (OACC).
We can then use Appendix \ref{apdx:mdc} to construct a mixed derived category for the diagrammatic monodromic Hecke category.
The study of these mixed derived categories will be a crucial component in the later proof of the monodromic Riche--Williamson theorem (see Theorem \ref{thm:mon_RW_thm}).

\subsubsection{Object-Adapted Cellular Structure}

In order to apply for formalism of Appendix \ref{apdx:mdc}, we first must show that the diagrammatic monodromic Hecke category is an OACC.

Let $I \subseteq W$ and $\scrL, \scrL' \in \fr{o}$. We set
\[ {}_{\scrL} I_{\scrL'} \coloneq \{ w \in I \mid \scrL w = \scrL'\}.\]
The Bruhat order on $W$ restricts to a partial order on $\W{\scrL}{\scrL'}$.
This induces an order topology on $\W{\scrL}{\scrL'}$ from $W$. In particular, a subset $K \subset \W{\scrL}{\scrL'}$ is closed (resp. open) in $\W{\scrL}{\scrL'}$ if and only if there exists a closed (resp. open) subset $I \subset W$ such that $K = {}_{\scrL} I_{\scrL'}$.

Given $w \in W$, we write $\{ \leq w \}$ and $\{ < w\}$ for the closed subsets of $W$ consisting of all elements $x\in W$ such that $x \leq w$ and $x < w$ respectively.

\begin{proposition}\label{prop:dmhc_is_cellular}
    The diagrammatic monodromic Hecke category $\EWmon{\scrL}{\scrL'}^{\BS} (\fr{h}, W, \fr{o})$ is an OACC with the following data:
    \begin{itemize}
        \item We fix a reduced expression $\uw = \rex (w)$ for each $w \in \W{\scrL}{\scrL'}$. As a result, we can identify $\W{\scrL}{\scrL'}$ with a subset $\Lambda$ of objects in $\EWmon{\scrL}{\scrL'}^{\BS} (\fr{h}, W, \fr{o})$ by $w \mapsto B_{\uw}^{\scrL}$.
                The set $\Lambda$ is a poset under the restricted Bruhat order in $\W{\scrL}{\scrL'}$.
        \item For any $\ux \in \Exp (W)$ and $w \in \W{\scrL}{\scrL'}$, the sets $E(\ux, w)$ and $\overline{E} (w, \ux)$ are both equal to the set of $\scrL$-monodromic subexpressions of $\ux$ expressing $w$
        \item For any $w \in \Lambda$ and $\ue \in E(\ux, w) = \overline{E} (w,\ux)$, the maps $\LL_{\ux, \ue}^{\scrL}$ and $\overline{\LL}_{\ux, \ue}^{\scrL}$ are the light leaves which are taken so that $\uw$ is always the target (resp. domain).
            We pick the $\LL$-datum such that $\LL_{\uw, \ue}^{\scrL}$ is the identity map when $\ue$ is the all 1's subexpression.
        \item The contravariant autoequivalence $\DD (-)$ is the functor which takes an Elias--Williamson graph to its horizontal reflection. 
    \end{itemize} 
\end{proposition}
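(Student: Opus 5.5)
The plan is to verify the axioms of an object-adapted cellular category (as set up in Appendix \ref{apdx:mdc}, following \cite{ARV} and \cite{EL}-style OACCs) one at a time, using the double leaves basis of Theorem \ref{thm:dll_is_a_basis} as the main input. The axioms are: (i) $\DD$ is an involutive antiequivalence fixing objects and fixing the elements of $\Lambda$; (ii) for each $w\in\Lambda$ the cellular maps $\LL_{\ux,\ue}^{\scrL}$ and $\overline{\LL}_{\ux,\ue}^{\scrL}$ satisfy $\DD(\LL_{\ux,\ue}^{\scrL}) = \overline{\LL}_{\ux,\ue}^{\scrL}$; (iii) the compositions $\overline{\LL}_{\uy,\uf}^{\scrL}\circ \LL_{\ux,\ue}^{\scrL}$ over all $w$ and all $\ue\in E(\ux,w)$, $\uf\in\overline{E}(w,\uy)$ form an $R$-basis of $\Hom(B_{\ux}^{\scrL},B_{\uy}^{\scrL})$; (iv) the normalization that $E(\uw,w)$ is the single all-$1$'s subexpression with $\LL_{\uw,\ue}^{\scrL}=\id$, and $E(\uw,x)=\emptyset$ unless $x\le w$; and (v) the ideal condition: for a closed (equivalently, downward closed) $I\subseteq \W{\scrL}{\scrL'}$, the span of double leaves through elements of $I$ is a two-sided ideal, and any morphism $B_{\uw}^{\scrL}\to B_{\ux}^{\scrL}$ composed with a light leaf lands modulo the ideal $\{<w\}$ in the span of $\overline{\LL}$'s through $w$.

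Axioms (i), (ii) are immediate: horizontal reflection preserves every defining relation (the relation list is symmetric under reflection, using rotation invariance of $2m$-valent vertices and the Jones--Wenzl morphisms), and by definition $\overline{\LL}$ is the reflection of $\LL$. Axiom (iii) is exactly Theorem \ref{thm:dll_is_a_basis}. For (iv), a monodromic subexpression of the reduced expression $\uw$ evaluates to some $x\le w$ by the subword property, and equals $w$ only for the all-$1$'s subexpression since $\uw$ is reduced; the $\LL$-datum is chosen so that this light leaf is the identity (all steps are $U1$ with trivial rex moves).

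The main obstacle is (v), the ideal/triangularity property in the diagrammatic category, since Theorem \ref{thm:dll_is_a_basis} alone does not say that lower-terms spans are ideals. I would transport this from the algebraic side: by Theorem \ref{thm:mon_Abe}, $\Upsilon$ is an equivalence sending double leaves to double leaves, so it suffices to prove it in $\Amon{\scrL}{\scrL'}^{\BS}$. There Proposition \ref{prop:ideal_Hom_spaces} identifies the span of double leaves through $I$ with $\Hom_I$, the morphisms killing localized summands outside $I$; by Lemma \ref{lem:referaming_localization_support} and functoriality of localization (the localization functor is a 2-functor, so a composite with a map vanishing on $M_Q^w$ for $w\notin I$ still vanishes there), $\Hom_I$ is visibly closed under pre- and post-composition and under tensoring with identities. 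For the second part of (v), take $\phi:B_{\uw}^{\scrL}\to B_{\ux}^{\scrL}$, expand $\phi$ in double leaves, and note that any double leaf $\overline{\LL}_{\ux,\uf}\circ\LL_{\uw,\ue}$ with $\ue$ not the all-$1$'s subexpression factors through some $y<w$ by (iv), hence lies in the $\{<w\}$ ideal; the remaining terms are $\overline{\LL}_{\ux,\uf}\circ\id$, as required. Pulling back along $\Upsilon$ completes the verification.
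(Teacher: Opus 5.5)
Your proposal is correct and follows essentially the same route as the paper: (CC1) is Theorem \ref{thm:dll_is_a_basis}, (CC2) and (CC3) follow from the definitions together with the choice of $\LL$-datum, and the ideal axiom (CC4) is transported through the equivalence $\Upsilon$ of Theorem \ref{thm:mon_Abe} to Proposition \ref{prop:ideal_Hom_spaces}, where $\Hom_I$ is visibly stable under pre- and post-composition. The extra conditions you add are harmless, but your aside that $\Hom_I$ is closed under tensoring with identities is inaccurate as stated, since tensoring with $P$ moves the support to $I\cdot\supp_W(P)$; it is also unnecessary, because (CC4) only requires a two-sided ideal under composition.
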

\begin{proof}
    The axioms (CC2) and (CC3) follow from definitions. Axiom (CC1) is a restatement of Theorem \ref{thm:dll_is_a_basis}. It remains to show that
    \[ \chi_{\leq w} \coloneq \Hom_{\leq w} (B_{\ux}^{\scrL}, B_{\uy}^{\scrL})\]
    is a two-sided ideal.  
    By Proposition \ref{prop:ideal_Hom_spaces} and Theorem \ref{thm:mon_Abe}, we have that $\chi_{\leq w}$ consists of maps $f : B_{\ux}^{\scrL} \to B_{\uy}^{\scrL}$ such that $\left( \Upsilon (f) \right) (B_{\ux, Q}^{v, \scrL}) = 0$ whenever $v \not\leq w$.
    It is easy to check from the definition of morphisms in the algebraic category that these maps form a 2-sided ideal. 
\end{proof}

Proposition \ref{prop:dmhc_is_cellular} allows us to use the constructions of Appendix \ref{apdx:mdc}.
We denote the mixed derived category for $\EWmon{\scrL}{\scrL'}^{\BS} (\fr{h}, W, \fr{o})$ by
\[\DDBE{\scrL}{\scrL'} (\fr{h}, W, \fr{o}) \coloneq D^m \left( \EWmon{\scrL}{\scrL'}^{\BS} (\fr{h}, W, \fr{o})\right).\]
These categories can be assembled into a 2-category over $\fr{o}$, denoted $\DDBE{}{} (\fr{h}, W, \fr{o})$.

For $I \subseteq W$ locally closed, we can also define
\[\DDBEI{I}{\scrL}{\scrL'} (\fr{h}, W, \fr{o})\coloneq D^m_{{}_{\scrL} I_{\scrL'}} \left( \EWmon{\scrL}{\scrL'}^{\BS} (\fr{h}, W, \fr{o}) \right).\]

\subsubsection{Generation Assumption}

We will work towards showing that Assumption \ref{ass:generation_assumption} holds for the mixed derived category of $\EWmon{\scrL}{\scrL'}^{\BS} (\fr{h}, W, \fr{o})$.

\begin{lemma}[{\cite[Lemma 6.1]{ARV}}]\label{lem:cone_of_rex}
    Let $\ux$ and $\uy$ be reduced expressions for an element $w \in W$.
    Let $f : B_{\ux}^{\scrL} \to B_{\uy}^{\scrL}$ be a rex move.
    Then the cone of $f$ belongs to $\DDBEI{\{ < w\} }{\scrL}{\scrL'} (\fr{h}, W, \fr{o})$.
\end{lemma}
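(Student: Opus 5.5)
We follow the argument of \cite[Lemma 6.1]{ARV}, adapted to the monodromic cellular structure of Proposition \ref{prop:dmhc_is_cellular}. The idea is that a rex move is ``the identity modulo lower terms'': its cellular component at $w$ is an isomorphism, so its cone is built only from objects strictly below $w$.

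\textbf{Reduction to a single braid move.} A rex move is a composite of braid moves $\id \star \beta_{s,t} \star \id$ (each corresponding to an edge of the rex graph). Cones of composites lie in the triangulated subcategory generated by the cones of the factors, by the octahedral axiom. Since $\DDBEI{\{<w\}}{\scrL}{\scrL'}$ is a triangulated subcategory of $\DDBE{\scrL}{\scrL'}$, it suffices to treat a single braid move $f=\id_{B_{\ux_1}^{\scrL}}\star\beta_{s,t}\star\id_{B_{\ux_2}^{\scrL'''}}$ between reduced expressions of $w$.

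\textbf{Identifying the cone.} In the OACC formalism of Appendix~\ref{apdx:mdc}, a morphism between objects whose classes at $w$ agree, and which is invertible modulo the ideal $\chi_{<w}$, has its cone in the $\{<w\}$-part. By Theorem \ref{thm:mon_Abe} we may compute in the algebraic category. Both $B_{\ux}^{\scrL}$ and $B_{\uy}^{\scrL}$ contain the indecomposable associated to $w$ with multiplicity one, since a reduced expression has a unique monodromic subexpression evaluating to $w$, namely the all-$1$'s one. Everything else is supported on $\{<w\}$ by Corollary \ref{cor:Bruhat_upper_triangularity}.

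Remark \ref{rem:loc_coeffs_for_beta}, together with the argument of Lemma \ref{lem:loc_of_shrinking_expression_moves}, shows that the localization coefficient of $f$ between the all-$1$'s subexpressions is $1$. Hence $f$ composed with the light leaves to and from $B_{\uw}^{\scrL}$ is congruent to the identity modulo $\chi_{<w}$. Concretely, write $f=\sum c\,\dLL$ in the double leaves basis (Theorem \ref{thm:dll_is_a_basis}), with the light leaf for the all-$1$'s subexpression chosen to be the rex move to $\uw$. The $w$-coefficient is then the invertible scalar $1$ in degree $0$. Therefore $f$ induces an isomorphism on the ``costalk/stalk at $w$'' functors of the appendix, and is an isomorphism on the localization $\DDBE{}{}/\DDBEI{\{<w\}}{}{}$, which equals the $\{w\}$-stratum. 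By the recollement in Appendix~\ref{apdx:mdc}, the cone lies in $\DDBEI{\{<w\}}{\scrL}{\scrL'}$.

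\textbf{Main obstacle.} The hard step is verifying that ``isomorphism modulo $\chi_{<w}$'' gives cone membership in the mixed derived category. That category is built from complexes of $\EWmon{}{}^{\BS}$-objects, not from the Karoubian additive category, so we must check that the recollement and stalk functors defined in the appendix behave as in \cite{ARV}. Specifically, the quotient of $D^m$ by $D^m_{\{<w\}}$ must be identified with the mixed derived category of the cell at $w$, which is equivalent to graded free $R$-modules via $\Hom(B_{\uw}^{\scrL},-)$ modulo lower terms. The monodromic features (block minimality, $R_s$-strands) only enter through the cellular data already verified in Proposition \ref{prop:dmhc_is_cellular}. We therefore expect the argument of \cite{ARV} to transfer verbatim once the localization coefficient computation above is in place.
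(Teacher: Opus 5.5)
Your argument is correct in substance, but it obtains the key input by a different route than the paper. The paper runs the argument of \cite[Lemma 6.1]{ARV}, replacing \cite[Lemmas 7.4, 7.5]{EW} by Lemma \ref{lem:neg_pos_decomps}. For a reverse rex move $g$, part (4) gives strictly negative-positive decompositions of $g\circ f-\id$ and $f\circ g-\id$, and part (5) factors these through $B_{\underline{v}}^{\scrL}$ with $\underline{v}$ reduced and $v<w$. Hence $f$ becomes invertible in $\scrC_{\{w\}}^{\circ,\oplus}$, and recollement finishes.

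You instead detect invertibility modulo lower terms through a single scalar, the localization coefficient $c$ at the all-$1$'s subexpressions. This scalar has four properties:
\begin{itemize}
\item it is multiplicative on composites of maps between reduced expressions of $w$, since only the all-$1$'s subexpression evaluates to $w$;
\item it equals $1$ on every rex move, by Remark \ref{rem:loc_coeffs_for_beta};
\item it vanishes on $\chi_{<w}$, by Corollary \ref{cor:Bruhat_upper_triangularity};
\item it is injective on $\Hom^\bullet(B_{\ux}^{\scrL},B_{\uy}^{\scrL})/\chi_{<w}$, which is free of rank one on the unique $w$ double leaf (Theorem \ref{thm:dll_is_a_basis}, Proposition \ref{prop:ideal_Hom_spaces}).
\end{itemize}

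What your route buys: it bypasses any analysis of the rex graph and the three-colour relations. In particular it avoids the extra lower-order terms in type $B_3$ with endoscopic $A_3$ that part (4) must absorb. What it costs: it imports the algebraic side via Theorem \ref{thm:mon_Abe}, and the value $1$ for the all-$1$'s coefficient of $\beta_{s,t}$, which the paper only sketches via endoscopy. The paper's route stays purely diagrammatic and yields the stronger statement that any two rex moves agree up to strictly negative-positive terms.

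A few points need tightening.
\begin{itemize}
\item Drop the claim that $B_{\ux}^{\scrL}$ contains ``the indecomposable for $w$ with multiplicity one''. That requires Krull--Schmidt, which is unavailable for a general domain $\k$, and your argument never uses it.
\item The relevant quotient is $D^m_{\{\le w\}}/D^m_{\{<w\}}\simeq D^m_{\{w\}}$, not a quotient of all of $D^m$. First observe that the cone lies in $D^m_{\{\le w\}}$, because $B_{\ux}^{\scrL}$ and $B_{\uy}^{\scrL}$ are reduced objects with evaluation $w$.
\item Knowing that the $w$-coefficient of $f$ is $1$ only gives $f\equiv\dLL^w$ modulo $\chi_{<w}$. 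You still need $\dLL^w$ to be invertible in the $\{w\}$-stratum, i.e.\ that these objects are reduced in the OACC sense (Lemma \ref{lem:singleton_identification}). It is cleaner to compare $g\circ f$ and $f\circ g$ directly with the identities, using multiplicativity of $c$.
\item With this argument, the reduction to a single braid move is unnecessary.
\end{itemize}
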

\begin{proof}
    The proof can be copied almost verbatim from \cite{ARV}. We remark that the usage of \cite[Lemma 7.4]{EW} and \cite[Lemma 7.5]{EW} in \emph{loc. cit.} should be replaced by Lemma \ref{lem:neg_pos_decomps}.
\end{proof}

Recall the \emph{Hecke product} on $W$ (see \cite{BM}).\footnote{This is also sometimes called in literature the Demazure product, the 0-Hecke product, or the greedy product.}
Let $x \in W$, $s \in S$, define
\[x \star s \coloneq \begin{cases}
    x s & \text{if } xs > x, \\
    x & \text{if } xs < x. 
\end{cases}\]
More generally, let $x,y \in W$ and $y = s_1 \ldots s_k$ be any reduced expression for $y$.
Define the Hecke product of $x$ and $y$ is given by
\[x \star y = x \star s_1 \star s_2 \star \ldots \star s_k.\]
This does not depend on the choice of reduced expression for $y$ and endows $W$ with the structure of a monoid.
For $\uw = (s_1, \ldots, s_k)$ an expression, we write
\[\star \uw \coloneq s_1 \star \ldots \star s_k \in W.\]

\begin{lemma}[{\cite[Lemma 6.2]{ARV}}]\label{lem:generation_assumption}
    For any expression $\uw \in \Exp (W)$, the object $B_{\uw}^{\scrL}$ belongs to $\DDBEI{\{ \leq \star \uw \}}{\scrL}{\scrL \uw} (\fr{h}, W, \fr{o})$.
    Therefore, Assumption \ref{ass:generation_assumption} holds for $\EWmon{\scrL}{\scrL'}^{\BS} (\fr{h}, W, \fr{o})$.
\end{lemma}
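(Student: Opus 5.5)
We argue by induction on the length of $\uw$, following the proof of \cite[Lemma 6.2]{ARV}. When $\uw = \emptyset$, $\star\uw = e$ and $B_{\emptyset}^{\scrL}$ is the object attached to $e \in \Lambda$, so it lies in $\DDBEI{\{\leq e\}}{\scrL}{\scrL}$. For the inductive step, write $\uw = \uy s$ with $\uy$ of smaller length, put $y = \star\uy$ and $\scrL'' = \scrL \uy$. By induction, $B_{\uy}^{\scrL}$ lies in the triangulated subcategory generated by the objects $B_{\rex(x)}^{\scrL}$ (with shifts) for $x \leq y$, $x \in \W{\scrL}{\scrL''}$. Right convolution with $B_s^{\scrL''}$ is a triangulated functor on the mixed derived categories (Appendix \ref{apdx:mdc}). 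It therefore suffices to show that for each $x \leq y$, the object $B_{\rex(x)}^{\scrL} \star B_s^{\scrL''}$ lies in $\DDBEI{\{\leq y\star s\}}{\scrL}{\scrL\uw}$.

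Fix such an $x$ and let $\ux = \rex(x)$. We separate two cases.

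\emph{Case $xs > x$.} Here $\ux s$ is a reduced expression for $xs$, and by the Bruhat order property (lifting/$Z$-property) $xs \leq y \star s$. Pick a rex move $f : B_{\ux s}^{\scrL} \to B_{\rex(xs)}^{\scrL}$. By Lemma \ref{lem:cone_of_rex}, the cone of $f$ lies in $\DDBEI{\{<xs\}}{\scrL}{\scrL\uw}$. Hence $B_{\ux s}^{\scrL}$ is an extension of $B_{\rex(xs)}^{\scrL}$ by objects supported on $\{<xs\} \subseteq \{\leq y\star s\}$.

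\emph{Case $xs < x$.} Here $x \leq y \star s$ because $y \leq y\star s$. Choose $\ux$ ending in $s$, via $\rex_s$ and a rex move, which is again allowed up to lower terms by Lemma \ref{lem:cone_of_rex}. Write $\ux = \ux' s$. If $\scrL'' s \neq \scrL''$, then block minimality gives $B_{\ux' s s}^{\scrL} \cong B_{\ux'}^{\scrL}$, which is supported on $\{\leq xs\}$. If $\scrL'' s = \scrL''$, then the one-color relations give the usual decomposition $B_s \star B_s \cong B_s(1) \oplus B_s(-1)$. This realizes $B_{\ux ss}^{\scrL}$ as a sum of shifts of $B_{\ux}^{\scrL}$, so its support is $\{\leq x\} \subseteq \{\leq y\star s\}$.

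In every case the support is contained in $\{\leq y \star s\}$, and $y \star s = \star \uw$, which completes the induction. The second assertion, Assumption \ref{ass:generation_assumption}, then follows because each $B_{\ux}^{\scrL}$ lies in the subcategory cut out by a finite closed set.

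\textbf{Main obstacle.} The delicate point is making sure that "support" is tracked inside the correct monodromic set ${}_{\scrL}I_{\scrL'}$. Concretely, the closed sets $\{\leq x\}$ must be intersected with $\W{\scrL}{\scrL'}$, and the objects $B_{\rex(x)}^{\scrL}$ used as generators must lie in $\Lambda$. This works because the relevant $x$ automatically satisfy $\scrL x = $ the target monodromy: the subexpressions in play are monodromic, so their evaluations land in the correct $\W{\scrL}{-}$. A second point is that convolution with $B_s$ must be compatible with the mixed derived structure. I would take this from the general formalism of Appendix \ref{apdx:mdc}, as \cite{ARV} does.
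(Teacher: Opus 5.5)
Your overall strategy is the paper's: induction on $\ell(\uw)$, reduction to the objects $B_{\rex(x)}^{\scrL}\star B_s^{\scrL''}$ for $x\le y$, a rex move to an expression ending in $s$, and then block minimality or the splitting of $B_s\star B_s$. However, the case $xs<x$ has a gap as written.

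Lemma \ref{lem:cone_of_rex} tells you that the cone $C$ of the rex move $B_{\rex(x)}^{\scrL}\to B_{\rex_s(x)}^{\scrL}$ lies in $\DDBEI{\{<x\}}{\scrL}{\scrL''}$. But the triangle you actually need is the convolved one,
\[B_{\rex(x)s}^{\scrL}\to B_{\rex_s(x)s}^{\scrL}\to C\star B_s^{\scrL''}\to,\]
and the lemma says nothing about $C\star B_s^{\scrL''}$. Since $C$ is built from the $B_{\rex(z)}^{\scrL}$ with $z<x$, bounding the support of $C\star B_s^{\scrL''}$ is exactly the statement you are proving, for smaller $z$. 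So the phrase ``allowed up to lower terms'' is circular. The case $xs>x$ has no such problem, because there you apply the rex move after convolving. In fact you need no rex move there at all, since $\rex(x)s$ is already reduced and hence a reduced object with evaluation $xs$.

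The repair is to use your outer induction hypothesis, which is what the paper does. For $z<x\le\star\uy$, the expression $\rex(z)s$ has length $\ell(z)+1\le\ell(x)\le\ell(\uy)<\ell(\uw)$. Hence $B_{\rex(z)s}^{\scrL}$ lies in $\DDBEI{\{\le z\star s\}}{\scrL}{\scrL\uw}$, and $z\star s\le y\star s$ by the same lifting-property argument you gave, applied to $z\le y$.

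The paper organizes this more economically. Every $x$ with $\ell(x)<\ell(\uy)$ is handled immediately by the induction hypothesis applied to $\rex(x)s$. The only remaining case is $\ell(x)=\ell(\uy)$, which forces $\uy$ to be reduced and $x=y=\star\uy$. Only there is the rex move to an expression ending in $s$ needed, and the convolved cone is again covered by induction.

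Also, in the split case you mean $B_{\ux' s s}^{\scrL}\cong B_{\ux}^{\scrL}(1)\oplus B_{\ux}^{\scrL}(-1)$, not $B_{\ux ss}^{\scrL}$.
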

\begin{proof}
    We argue by induction on $\ell (\uw)$. The claim is obvious when $\uw$ is reduced.
    Write $\uw = \ux s$ for some $s \in S$, and assume by induction the claim is known for all expressions of length $< \ell (\uw)$. In particular, the claim holds for $\ux$.
    We then must show that if $\uy$ is a reduced expression for some $y \in \W{\scrL}{\scrL \uw s}$ and $y \leq \star \ux$, then $B_{\uy s}^{\scrL} \in  \DDBEI{\{ \leq \star \uw \}}{\scrL}{\scrL \uw}$.

    If $\ell (\uy) < \ell (\ux)$, then $\ell (\uy s) < \ell (\uw)$, so by induction $B_{\uy s}^{\scrL} \in  \DDBEI{ \{ \leq \star (\uy s) \}}{\scrL}{\scrL \uw}$.
    By \cite[Proposition 3.1]{BM}, we have $\star (\uy s) = y \star s \leq (\star \ux ) \star s = \star \uw$. The claim then follows.

    Assume now that $\ell (\uy) = \ell (\ux)$. Then $\ux$ is a reduced expression, and hence, $y = \star \ux$.
    If $y s > y$, then $\star \uw = (\star \ux) s$ and $\uy s$ is a reduced expression for $\star \uw$. The claim then follows from definitions.
    If $ys < y$, choose a reduced expression $\uy'$ for $y$ ending with $s$ and a rex move $\beta : B_{\uy}^{\scrL} \to B_{\uy'}^{\scrL}$.
    We can then consider the distinguished triangle
    \[B_{\uy}^{\scrL} \stackrel{\beta}{\to} B_{\uy'}^{\scrL} \to C \to,\]
    where $C = \textnormal{cone} (f)$ is in $\DDBEI{\{ < y \}}{\scrL}{\scrL y} $ by Lemma \ref{lem:cone_of_rex}.
    We can then apply $(-) \star B_s^{\scrL}$ to produce a distinguished triangle
    \begin{equation}\label{eq:generation_assumption_1}
        B_{\uy s}^{\scrL} \stackrel{\beta \star B_s^{\scrL}}{\to} B_{\uy' s}^{\scrL} \to C \star B_s^{\scrL} \to.
    \end{equation}
    By induction, we have that $C \star B_s^{\scrL} \in \DDBEI{\{ \leq \star w \}}{\scrL}{\scrL \uw } $.
    If $\scrL ys = \scrL y$, then there is an isomorphism $B_{\uy' s}^{\scrL} \cong B_{\uy'}^{\scrL} (1) \oplus B_{\uy'}^{\scrL} (-1)$ (cf., \cite[(5.14)]{EW}).
    Therefore, $B_{\uy' s}^{\scrL} \in \DDBEI{\{ \leq y \}}{\scrL}{\scrL \uw } $.
    If $\scrL ys \neq \scrL y$, then by block minimality, there is an isomorphism $B_{\uy' s}^{\scrL} \cong B_{\uy''}^{\scrL}$ where $\uy''$ is the expression such that $\uy' = \uy'' s$.
    Note $\star \uy'' = (\star \uy') = ys < y$.
    As a result, $B_{\uy' s}^{\scrL} \in \DDBEI{\{ \leq y \}}{\scrL}{\scrL \uw }$.
    Therefore, (\ref{eq:generation_assumption_1}) implies that $B_{\uy s}^{\scrL} \in \DDBEI{ \{ \leq \star w \}}{\scrL}{\scrL \uw }$.
\end{proof}

\subsubsection{Convolution of Standards and Costandards}

As discussed in \S\ref{subsub:stds_and_costds}, there is a theory of standard and costandard objects in $\DDBE{\scrL}{\scrL'} (\fr{h}, W, \fr{o})$.
In particular, for all $w \in \W{\scrL}{\scrL'}$, there is an associated standard object and costandard object
\[ \Delta_w^{\scrL} \coloneq j_{{}_{\scrL }\{\leq w \}_{\scrL'} !}^{\W{\scrL}{\scrL'}} j_{w!}^{{}_{\scrL} \{\leq w \}_{\scrL'} } b_w^{\scrL} \qquad\text{and}\qquad \nabla_w^{\scrL} \coloneq j_{{}_{\scrL} \{\leq w \}_{\scrL'} !}^{\W{\scrL}{\scrL'}} j_{w*}^{{}_{\scrL} \{\leq w \}_{\scrL'}} b_w^{\scrL},\]
where $b_w^{\scrL}$ is the canonical object in $\DDBEI{\{ w \}}{\scrL}{\scrL w} (\fr{h}, W, \fr{o})$ corresponding to $B_{\uw}^{\scrL}$.

\begin{example}\label{ex:stds_and_costds_for_simples}
    Let $s \in S$.
    \begin{enumerate}
        \item Note that $\{e\}$ is closed in $\W{\scrL}{\scrL}$. As a result, we have isomorphisms $\Delta_e^{\scrL} \cong B_{\emptyset}^{\scrL} \cong \nabla_e^{\scrL}$.
        \item Assume $\scrL s = \scrL$. In light of Example \ref{ex:min_rec_dts} and (\ref{eq:verdier_duality_and_pushforwards}), the complex $\Delta_s^{\scrL}$ coincides with the complex
            \[\ldots \to 0 \to B_s^{\scrL} \stackrel{\epsilon^s}{\to} B_{\emptyset}^{\scrL} (1) \to 0 \to \ldots\]
            where the nonzero terms are in degrees $0$ and $1$ respectively. Similarly, the complex $\nabla_s^{\scrL}$ is given by
            \[\ldots \to 0 \to B_{\emptyset}^{\scrL} (-1) \stackrel{\eta^s}{\to} B_{s}^{\scrL} \to 0 \to \ldots\]
            where the nonzero terms are in degrees $-1$ and $0$ respectively.
            We can then conclude that there are distinguished triangles
            \[B_{\emptyset}^{\scrL} \langle -1 \rangle \to \Delta_s^{\scrL} \to B_s^{\scrL} \to\qquad\text{and}\qquad  B_s^{\scrL} \to \nabla_s^{\scrL} \to B_{\emptyset}^{\scrL} \langle 1 \rangle  \to.\]
        \item Assume $\scrL s \neq \scrL$. In this case, $\{s\}$ is a closed subset of $\W{\scrL}{\scrL s}$.
            As a result, we have isomorphisms $\Delta_s^{\scrL} \cong B_s^{\scrL} \cong \nabla_s^{\scrL}$.
    \end{enumerate}
\end{example}

\begin{lemma}[{\cite[Lemma 6.10]{ARV}}]\label{lem:mixed_push_pull_dts}
    Let $w \in W$ and $s \in S$ be such that $\scrL ws = \scrL w$ and $ws > w$. Then there exist distinguished triangles
    \[\Delta_w^{\scrL} \langle -1\rangle \to \Delta_{ws}^{\scrL} \to \Delta_{w}^{\scrL} \star B_{s}^{\scrL} \to \qquad\text{and}\qquad \nabla_w^{\scrL} \star B_s^{\scrL} \to \nabla_{ws}^{\scrL} \to \nabla_w^{\scrL} \langle 1 \rangle \to \]
    in $\DDBE{\scrL}{\scrL w} (\fr{h}, W, \fr{o})$, in which the third arrows are generators of the free rank-1 $\k$-modules
    \[\Hom_{\DDBE{\scrL}{\scrL w}} (\Delta_{w}^{\scrL} \star B_{s}^{\scrL}, \Delta_w^{\scrL} \langle -1 \rangle [1]) \qquad\text{and}\qquad \Hom_{\DDBE{\scrL}{\scrL w}} (\nabla_w^{\scrL} \langle 1 \rangle, \nabla_w^{\scrL} \star B_s^{\scrL} [1])\]
    respectively.
\end{lemma}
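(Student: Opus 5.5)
We follow the proof of \cite[Lemma 6.10]{ARV}, using the recollement formalism of Appendix \ref{apdx:mdc} for the OACC structure from Proposition \ref{prop:dmhc_is_cellular}. We only treat the triangle for standards. The costandard triangle then follows by applying the duality $\DD$, which exchanges $\Delta$ with $\nabla$, reverses arrows, and sends $\langle n \rangle$ to $\langle -n \rangle$.

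The first step is to locate $\Delta_w^{\scrL} \star B_s^{\scrL}$. Set $I = {}_{\scrL}\{\leq ws\}_{\scrL w}$. Since $ws > w$ and $\scrL ws = \scrL w$, both $w$ and $ws$ lie in $\W{\scrL}{\scrL w}$. We will show that $\Delta_w^{\scrL} \star B_s^{\scrL}$ is supported on $I$. This uses Lemma \ref{lem:generation_assumption} together with the fact that $(-)\star B_s^{\scrL}$ sends objects supported on $\{\leq w\}$ to objects supported on $\{\leq w\star s\} = \{\leq ws\}$. The latter can be checked on generators $B_{\uy}^{\scrL}$ with $y \leq w$, via the Hecke product inequality of \cite{BM}.

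The second step is to apply the recollement triangle for the open point $\{ws\}$ in $I$, whose closed complement is $I \setminus \{ws\}$:
\[ j_{ws!}j_{ws}^*(\Delta_w^{\scrL}\star B_s^{\scrL}) \to \Delta_w^{\scrL}\star B_s^{\scrL} \to i_*i^*(\Delta_w^{\scrL}\star B_s^{\scrL}) \to. \]
\begin{itemize}
\item \emph{Open part.} Restricting to $\{ws\}$ amounts to computing the $ws$-stalk. Since $\uw s$ is a reduced expression for $ws$, this stalk is $b_{ws}^{\scrL}$ in degree $0$. Hence the first term is $\Delta_{ws}^{\scrL}$.
\item \emph{Closed part.} We claim $i^*(\Delta_w^{\scrL}\star B_s^{\scrL}) \cong j_{w!}b_w^{\scrL}\langle 1\rangle$, that is, the closed part is $\Delta_w^{\scrL}\langle 1\rangle$. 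Rotating the triangle then gives $\Delta_w^{\scrL}\langle -1\rangle \to \Delta_{ws}^{\scrL} \to \Delta_w^{\scrL}\star B_s^{\scrL}\to$, which is the first triangle in the statement.
\end{itemize}

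The main obstacle is the closed-part computation. To compute the costalks and stalks of $\Delta_w^{\scrL}\star B_s^{\scrL}$ at points $y < ws$, we reduce to Hom spaces between Bott--Samelson objects. These are governed by the double leaves basis (Theorem \ref{thm:dll_is_a_basis}) and by the ideal filtration of Proposition \ref{prop:ideal_Hom_spaces}. The local contributions reduce to the one-color computation for $B_s^{\scrL}$ with $\scrL ws = \scrL w$. In that setting Example \ref{ex:stds_and_costds_for_simples}(2) supplies the triangle $B_\emptyset\langle -1\rangle \to \Delta_s \to B_s\to$. Concretely, we try to realize $\Delta_w^{\scrL}\star\Delta_s^{\scrL} \cong \Delta_{ws}^{\scrL}$ by checking stalks, and then tensor the triangle of Example \ref{ex:stds_and_costds_for_simples}(2) on the left by $\Delta_w^{\scrL}$. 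This gives the desired triangle directly, with the Tate twist $\langle -1 \rangle$ coming from Example \ref{ex:stds_and_costds_for_simples}(2).

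For this it remains to show $\Delta_w^{\scrL}\star\Delta_s^{\scrL} \cong \Delta_{ws}^{\scrL}$. The object $\Delta_w^{\scrL}\star\Delta_s^{\scrL}$ is supported on $I$ by the first step. Its restriction to $ws$ is $b_{ws}^{\scrL}$. Its costalks at $y<ws$ should vanish: by the localization tables (Example \ref{ex:localization_of_one_color_morphisms}), $\epsilon^s$ induces an isomorphism on the $y$ and $ys$ components away from $ws$. So the restriction to $I\setminus\{ws\}$ is zero, which characterizes $j_{ws!}$.

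Finally, for the statement about the third arrow, we use the adjunctions to compute
\[ \Hom(\Delta_w^{\scrL}\star B_s^{\scrL}, \Delta_w^{\scrL}\langle -1\rangle[1]) \cong \Hom(\Delta_w^{\scrL}, \Delta_w^{\scrL}\star B_s^{\scrL}\langle -1\rangle[1]) \]
via Lemma \ref{lem:BS_tensor_adjunction}. By the stalk description just established, this reduces to $\End(b_w^{\scrL}) \cong \k$ in degree $0$. Hence it is free of rank one. The connecting map of the triangle is nonzero, because $\Delta_w^{\scrL}\star B_s^{\scrL}$ is indecomposable and not isomorphic to $\Delta_w^{\scrL}\oplus\Delta_{ws}^{\scrL}$ (compare stalks at $w$). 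To conclude that it is a generator, we reduce to the case $w=e$, where the computation is explicit from Example \ref{ex:stds_and_costds_for_simples}.
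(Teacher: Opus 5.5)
The paper gives no independent proof here; it imports \cite[Lemma 6.10]{ARV}. Your first two steps are essentially fine: the support lies in $\{\le ws\}$ by Lemma \ref{lem:generation_assumption}, and $j_{ws}^*(\Delta_w^{\scrL}\star B_s^{\scrL})\cong b_{ws}^{\scrL}$. The gap is the step you yourself flag as the main obstacle: identifying the $*$-restriction of $\Delta_w^{\scrL}\star B_s^{\scrL}$ to $\{<ws\}$, or equivalently proving $\Delta_w^{\scrL}\star\Delta_s^{\scrL}\cong\Delta_{ws}^{\scrL}$. This is never established, and the argument you give for it fails in two ways.

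\begin{enumerate}
\item The criterion is wrong. An object $X$ supported on $\{\le ws\}$ satisfies $X\cong j_{ws!}j_{ws}^*X$ if and only if $i^*X=0$, where $i$ is the inclusion of $\{<ws\}$. Equivalently, $\Hom(X,i_*Y)=0$ for all $Y$ supported on $\{<ws\}$. This is a condition on stalks, not costalks, and costalks of standard objects do not vanish. Already for $w=e$, the complex of Example \ref{ex:stds_and_costds_for_simples}(2) gives $\Hom(\Delta_e^{\scrL},\Delta_s^{\scrL}\langle 1\rangle)\cong\k$, so $j_e^!\Delta_s^{\scrL}\neq 0$.
\item Localization cannot detect this. $D^m_I$ is the homotopy category of the naive quotient $\scrC_I^{\circ,\oplus}$, whose morphisms are double leaves modulo lower ideals, and $i^*$ is an abstractly constructed adjoint. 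The localization coefficients of $\epsilon^s$ (namely $\alpha_s$ and $0$) carry no information about it.
\end{enumerate}

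Also, in the paper the isomorphism $\Delta_{ws}^{\scrL}\cong\Delta_w^{\scrL}\star\Delta_s^{\scrL}$ is Proposition \ref{prop:mixed_conv_rules}, which is \emph{deduced} from this lemma. You cannot borrow it here.

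What is missing is a mechanism that controls the closed restriction. One way to close the gap is as follows.
\begin{enumerate}
\item Set $I=\{\le ws\}$ and $J=I\setminus\{w,ws\}$, intersected with $\W{\scrL}{\scrL w}$. Since $ws>w$ and $\scrL ws=\scrL w$, both are closed and stable under right multiplication by $s$. By Lemma \ref{lem:generation_assumption}, the self-biadjoint functor $(-)\star B_s$ (cups and caps have degree $0$) therefore preserves $D^m_I$ and $D^m_J$.
\item $\Delta_w^{\scrL}$ is $!$-extended from the locally closed set $U=\{w,ws\}$, in which $w$ is closed. So adjunction gives $\Hom(\Delta_w^{\scrL}\star B_s,i_{J*}Y)\cong\Hom(\Delta_w^{\scrL},i_{J*}Y\star B_s)=0$.
\item Hence $\Delta_w^{\scrL}\star B_s\cong j_{U!}j_U^*(\Delta_w^{\scrL}\star B_s)$. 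This is $j_{U!}B_{\uw s}^{\scrL}$, because the cone of $\Delta_w^{\scrL}\to B_{\uw}^{\scrL}$ lies in $D^m_{\{<w\}}$ and $y\star s\in J$ for $y<w$.
\item In $D^m_U$, the only light leaf $B_{\uw s}^{\scrL}\to B_{\uw}^{\scrL}$ is $\id\star\epsilon^s$, of degree $1$. So $\Delta^U_{ws}$ is the two-term complex $[B_{\uw s}^{\scrL}\to B_{\uw}^{\scrL}(1)]$. Applying $j_{U!}$ to its stupid-truncation triangle gives the first triangle; the second follows by $\DD$.
\end{enumerate}

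Two smaller corrections:
\begin{itemize}
\item The closed part is $\Delta_w^{\scrL}(1)=\Delta_w^{\scrL}\langle -1\rangle[1]$, not $\Delta_w^{\scrL}\langle 1\rangle$. Since $\langle 1\rangle=[1](-1)$, rotating your version yields $\Delta_w^{\scrL}(-1)$ as the first term, which is not what the statement asserts.
\item The rank-one and generator claims need no indecomposability argument. Apply $\Hom(-,\Delta_w^{\scrL}\langle-1\rangle[1])$ to the triangle and use $\Hom(\Delta_{ws}^{\scrL},\Delta_w^{\scrL}\langle n\rangle[m])=0$ (as $ws\not\le w$) together with $\End(\Delta_w^{\scrL})\cong\k$. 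This shows precomposition with the third arrow is an isomorphism from $\End(\Delta_w^{\scrL})$.
\end{itemize}
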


\begin{proposition}[{\cite[Proposition 6.11]{ARV}}]\label{prop:mixed_conv_rules}
    Let $w \in W$.
    \begin{enumerate}
        \item If $\uw = (s_1, \ldots, s_k)$ is a reduced expression for $w$, then we have isomorphisms
        \[\Delta_w^{\scrL} \cong \Delta_{s_1}^{\scrL} \star \Delta_{s_2}^{\scrL s_1} \star \ldots \star \Delta_{s_k}^{\scrL s_1 \ldots s_{k-1}} \qquad\text{and}\qquad \nabla_w^{\scrL} \cong \nabla_{s_1}^{\scrL} \star \nabla_{s_2}^{\scrL s_1} \star \ldots \star \nabla_{s_k}^{\scrL s_1 \ldots s_{k-1}}.\]
        \item We have isomorphisms
            \[\Delta_w^{\scrL} \star \nabla_{w^{-1}}^{\scrL w} \cong B_{\emptyset}^{\scrL} \cong \nabla_{w^{-1}}^{\scrL} \star \Delta_w^{\scrL w^{-1}}.\]
    \end{enumerate}
\end{proposition}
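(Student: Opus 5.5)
The plan is to follow the proof of \cite[Proposition 6.11]{ARV}, arguing by induction on $\ell(w)$ and splitting each inductive step according to whether the new simple reflection fixes the monodromy parameter. Write $\uw = \ux s$ with $\ux$ a reduced expression for $x = ws^{-1} = ws$, so $ws > x$ and $\ell(x) = \ell(w)-1$. Set $\scrL' = \scrL x$. For part (1) it then suffices to produce an isomorphism $\Delta_x^{\scrL} \star \Delta_s^{\scrL'} \cong \Delta_{w}^{\scrL}$, together with its costandard analogue.

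\emph{Case $\scrL' s = \scrL'$.} By Example \ref{ex:stds_and_costds_for_simples}(2) there is a distinguished triangle $B_\emptyset^{\scrL'}\langle -1\rangle \to \Delta_s^{\scrL'} \to B_s^{\scrL'} \to$. Convolving it on the left with $\Delta_x^{\scrL}$ gives a triangle
\[
\Delta_x^{\scrL}\langle -1\rangle \to \Delta_x^{\scrL}\star\Delta_s^{\scrL'} \to \Delta_x^{\scrL}\star B_s^{\scrL'} \to .
\]
Lemma \ref{lem:mixed_push_pull_dts} gives a triangle with the same outer terms and with $\Delta_{w}^{\scrL}$ in the middle. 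To identify the two middle terms, I would check that the connecting morphisms agree up to a unit. The relevant Hom space $\Hom(\Delta_x^{\scrL}\star B_s^{\scrL'}, \Delta_x^{\scrL}\langle -1\rangle[1])$ is free of rank one over $\k$ by that lemma. So it suffices to show that the connecting map of the convolved triangle is a generator. I would verify this by adjunction: using Lemma \ref{lem:BS_tensor_adjunction} in the diagrammatic category, the connecting map is induced by $\epsilon^s$, and I would then compute $\Hom$ out of $\Delta_x^{\scrL}$ into objects supported on $\{< w\}$ via the standard/costandard orthogonality from Appendix \ref{apdx:mdc}. This identification is the main obstacle; I expect it to go exactly as in \cite{ARV}, since the block here is effectively non-monodromic. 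The costandard case is dual, applying $\DD$ throughout.

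\emph{Case $\scrL' s \neq \scrL'$.} Here $\Delta_s^{\scrL'} \cong B_s^{\scrL'}$, and by Lemma \ref{lem:diag_block_props}(3) the functor $(-)\star B_s^{\scrL'}$ is an equivalence. It sends $\DDBEI{\{\le x\}}{\scrL}{\scrL'}$ onto $\DDBEI{\{\le w\}}{\scrL}{\scrL w}$, and it sends the restrictions to $\{<x\}$ onto those to $\{<w\}$: right multiplication by $s$ is an order isomorphism of the restricted Bruhat posets $\W{\scrL}{\scrL'} \to \W{\scrL}{\scrL' s}$, since $s$ changes no monodromic length. Consequently the equivalence commutes with the recollement functors $j_!, j_*$. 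It also sends $b_x^{\scrL}$ to $b_{w}^{\scrL}$, because $B_{\ux}^{\scrL}\star B_s^{\scrL'} = B_{\uw}^{\scrL}$. Hence $\Delta_x^{\scrL}\star B_s^{\scrL'} \cong \Delta_{w}^{\scrL}$, and likewise for $\nabla$.

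For part (2), by part (1) and associativity it suffices to treat a simple reflection $s$, namely to show $\Delta_s^{\scrL}\star\nabla_s^{\scrL s}\cong B_\emptyset^{\scrL}$. One then peels off one factor at a time, using that $w^{-1}$ has the reversed reduced expression. If $\scrL s\neq\scrL$, this is block minimality, since $\cup^s$ and $\cap^s$ are inverse. If $\scrL s=\scrL$, it is the classical computation from \cite{ARV}: write $\Delta_s^{\scrL}\star\nabla_s^{\scrL}$ as a totalized complex built from $B_{ss}^{\scrL}\cong B_s^{\scrL}(1)\oplus B_s^{\scrL}(-1)$, and cancel contractible summands using the one-color relations. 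The second isomorphism $\nabla_{w^{-1}}^{\scrL}\star\Delta_w^{\scrL w^{-1}} \cong B_\emptyset^{\scrL}$ is obtained by the same argument.
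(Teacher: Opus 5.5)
Your reduction in the case $\scrL' s=\scrL'$ is right, but the step you flag as the main obstacle is exactly where the argument is missing, and the route you sketch does not close it. Neither $\Delta_x^{\scrL}\star B_s^{\scrL'}$ nor $\Delta_x^{\scrL}\langle -1\rangle[1]=\Delta_x^{\scrL}(1)$ is a costandard object, so Lemma~\ref{lem:hom_vanishing_for_stds_and_costds} gives no control over $\Hom(\Delta_x^{\scrL}\star B_s^{\scrL'},\Delta_x^{\scrL}(1))$. Moreover, since $\k$ is only a domain, you must show the convolved connecting map is a \emph{generator} of this rank-one module, not merely that it is nonzero.

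The key idea in the paper is to interleave (1) and (2) in the induction:
\begin{enumerate}
\item Prove (2) for simple reflections first. Your last paragraph already does this, independently of (1).
\item Then (1) for the shorter element $x$ gives (2) for $x$. Hence $\Delta_x^{\scrL}\star(-)$ is an equivalence, with quasi-inverse $\nabla_{x^{-1}}^{\scrL x}\star(-)$.
\item This equivalence induces an isomorphism $\Hom(B_s^{\scrL'},B_\emptyset^{\scrL'}(1))\stackrel{\sim}{\to}\Hom(\Delta_x^{\scrL}\star B_s^{\scrL'},\Delta_x^{\scrL}(1))$. The source is $\k\cdot\epsilon^s$, and $\epsilon^s$ is the connecting map of the triangle for $\Delta_s^{\scrL'}$.
\item So the convolved connecting map is a generator, and Lemma~\ref{lem:mixed_push_pull_dts} yields $\Delta_x^{\scrL}\star\Delta_s^{\scrL'}\cong\Delta_w^{\scrL}$.
\end{enumerate}
Proving all of (1) before any of (2), as you propose, leaves you without this tool.

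Separately, your argument for $\scrL' s\neq\scrL'$ rests on a false claim. Right multiplication by $s$ is not an order isomorphism $\W{\scrL}{\scrL'}\to\W{\scrL}{\scrL' s}$ for the restricted Bruhat order, even inside a single block. Take $W$ of type $B_2$ acting on $\{\scrL,\scrL s\}$ with $s$ swapping the two elements and $t$ acting trivially. Then:
\begin{itemize}
\item $\W{\scrL}{\scrL}=\{e,t,sts,w_0\}$ is a single block and a chain, so $t<sts$.
\item In $\W{\scrL}{\scrL s}=\{s,ts,st,tst\}$ the elements $ts$ and $st$ are incomparable.
\end{itemize}
Now take $x=st$ and $w=sts$; here $\scrL x=\scrL s$ and $\scrL x s\neq\scrL x$. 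One has $\{\le x\}\cap\W{\scrL}{\scrL s}=\{s,st\}$, which right multiplication by $s$ sends to $\{e,sts\}$. This set is not even closed, and it differs from $\{\le w\}\cap\W{\scrL}{\scrL}=\{e,t,sts\}$. So $(-)\star B_s^{\scrL'}$ does not carry one recollement onto the other as you assert.

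The isomorphism $\Delta_x^{\scrL}\star B_s^{\scrL'}\cong\Delta_w^{\scrL}$ is still true; the paper attributes it to block minimality. It needs a different justification, though. What appending $s$ preserves is the set of $\scrL$-monodromic subexpressions (all of which end in $1$), and hence the double leaves. It does not preserve the Bruhat order.
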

\begin{proof}
    We will prove the claims by induction on $\ell (w)$. 
    We can first prove (2) when $w = s \in S$. If $\scrL s = \scrL$, then this follows from \cite[Lemma 4.2.4]{AMRW1}.
    If $\scrL s \neq \scrL$, then by Example \ref{ex:stds_and_costds_for_simples} (3), we have that $\Delta_s^{\scrL} \star \nabla_s^{\scrL s} \cong B_s^{\scrL} \star B_s^{\scrL s}$.
    The claim then follows by the block minimality relation.
    By induction, we then have that (2) follows from (1).

    If $w$ is minimal in $\W{\scrL}{\scrL'}$, then (1) follows from block minimality.
    Write $\W{\scrL}{\scrL'}^+$ for the subset of $W$ containing elements $w$ such that $\ell_{\scrL} (w) > 0$.
    Assume the claim holds for all elements $x \in \W{\scrL}{\scrL'}^+$ whose length is strictly smaller than that of $w$.
    We will prove the first isomorphism in (1) for $w$. The second isomorphism follows from duality.
    Let $(s_1, \ldots, s_k)$ be a reduced expression for $w$. Set $x = s_1\ldots s_{k-1}$ and $s = s_k$.
    By induction, we have an isomorphism
    \[\Delta_y^{\scrL} \cong \Delta_{s_1}^{\scrL} \star \Delta_{s_2}^{\scrL s_1} \star \ldots \star \Delta_{s_{k-1}}^{\scrL s_1 \ldots s_{k-2}},\]
    hence it suffices to prove that $\Delta_w^{\scrL} \cong \Delta_y^{\scrL} \star \Delta_s^{\scrL y}$. If $\scrL ys \neq \scrL y$, then this isomorphism follows from block minimality.
    We can then assume that $\scrL ys = \scrL y$. By (2) for $y$, the functor
    \[\Delta_y^{\scrL} \star (-) : \DDBE{\scrL y}{\scrL w} (\fr{h}, W, \fr{o}) \to \DDBE{\scrL}{\scrL w} (\fr{h}, W, \fr{o}) \]
    is an equivalence of triangulated categories with quasi-inverse $\nabla_{y^{-1}}^{\scrL y}$.
    By applying $\Delta_y^{\scrL} \star (-)$ and using Example \ref{ex:stds_and_costds_for_simples} (2), we obtain a distinguished triangle
    \[\Delta_y^{\scrL} \to \Delta_y^{\scrL} \star \Delta_s^{\scrL y} \to \Delta_y^{\scrL} \star B_s^{\scrL y} \to,\]
    where the third arrow is a generator of the free rank-1 $\k$-module, $\Hom_{\DDBE{\scrL}{\scrL w}} (\Delta_s^{\scrL y}, \Delta_y^{\scrL} \star B_s^{\scrL y})$.
    We can then apply Lemma \ref{lem:mixed_push_pull_dts} to deduce the isomorphism $\Delta_w^{\scrL} \cong \Delta_y^{\scrL} \star \Delta_s^{\scrL y}$ as desired.
\end{proof}

\subsubsection{Right Equivariant Category}\label{subsec:right_equiv_diagrammatic}

Let $\scrL, \scrL' \in \fr{o}$. By Theorem \ref{thm:dll_is_a_basis}, we can define a category
\[\EWmonME{\scrL}{\scrL'}^{\BS} (\fr{h}, W, \fr{o})\]
obtained from $\EWmon{\scrL}{\scrL'}^{\BS} (\fr{h}, W, \fr{o})$ by applying $\k \otimes_{R} (-)$ to $\Hom_{\EWmon{\scrL}{\scrL'}^{\BS}} (B_{\ux}^{\scrL}, B_{\uy}^{\scrL})$.
By Proposition \ref{prop:dmhc_is_cellular} one can deduce that $\EWmonME{\scrL}{\scrL'}^{\BS} (\fr{h}, W, \fr{o})$ is also an OACC with poset $\W{\scrL'}{\scrL}$ under the usual Bruhat order.
Note that the Hom spaces in $\EWmonME{\scrL}{\scrL'}^{\BS} (\fr{h}, W, \fr{o})$ are finitely generated free $\k$-modules.
As a result, we can consider its mixed derived category
\[\DDME{\scrL}{\scrL'} (\fr{h}, W, \fr{o}) \coloneq D^m \left( \EWmonME{\scrL}{\scrL'}^{\BS} (\fr{h}, W, \fr{o}) \right)\]

There is an ``action'' of $\EWmon{\scrL'}{\scrL''}^{\BS} (\fr{h}, W, \fr{o})$ on $\EWmonME{\scrL}{\scrL'}^{\BS}$. Namely, there are bifunctors
\[(-) \star (-) : \EWmonME{\scrL}{\scrL'}^{\BS} (\fr{h}, W, \fr{o}) \times \EWmon{\scrL'}{\scrL''}^{\BS} (\fr{h}, W, \fr{o}) \to \EWmonME{\scrL}{\scrL''}^{\BS}  (\fr{h}, W, \fr{o})\]
which is suitably associative with respect to the horizontal composition in $\EWmon{}{}^{\BS} (\fr{h}, W, \fr{o})$.
This induces a bifunctor
\[(-) \star (-) : \DDME{\scrL}{\scrL'} (\fr{h}, W, \fr{o}) \times \DDBE{\scrL'}{\scrL''} (\fr{h}, W, \fr{o}) \to \DDME{\scrL}{\scrL''} (\fr{h}, W, \fr{o})\]
with similar properties.

There is an obvious functor
\[\ForME{\scrL} : \DDBE{\scrL}{\scrL'} \to \DDME{\scrL}{\scrL'} (\fr{h}, W, \fr{o})\]
which commutes with the recollement structure on both sides.
We abuse notation and view any object in $\DDBE{\scrL}{\scrL'} (\fr{h}, W, \fr{o})$ as an object in $\DDME{\scrL}{\scrL'} (\fr{h}, W, \fr{o})$ via the functor $\ForME{\scrL}$.
It is easy to check that there is a natural isomorphism 
\[\ForME{\scrL} (X \star Y) \cong \ForME{\scrL} (X) \star Y\]
for all $X \in \DDBE{\scrL}{\scrL'} (\fr{h}, W, \fr{o})$ and $Y \in \DDBE{\scrL'}{\scrL''} (\fr{h}, W, \fr{o})$.

\subsubsection{Categorification}\label{subsubsec:cat_of_diag_hecke}

One application of the mixed derived category formalism is that it allows one to prove that the monodromic Hecke category categorifies the monodromic Hecke algebra without any constraint on $\k$.
Without the mixed derived category, standard techniques (cf., \cite{EL, EW}) can be used to show this categorification result, but only when $\k$ is a complete local ring or a field.

Define a 2-category $\grEWmon{}{}^{\BS} (\fr{h}, W, \fr{o})$ whose object set is $\fr{o}$.
The 1-morphisms in $\grEWmon{}{}^{\BS} (\fr{h}, W, \fr{o})$ are given by symbols $B_{\uw}^{\scrL} (n)$ where $\uw \in \Exp (W)$ and $n \in \Z$.
The 2-morphisms in $\grEWmon{}{}^{\BS} (\fr{h}, W, \fr{o})$ from $B_{\ux}^{\scrL} (m)$ to $B_{\uy}^{\scrL} (n)$ are the 2-morphisms $B_{\ux}^{\scrL} \to B_{\uy}^{\scrL}$ of degree $n-m$ in $\EWmon{}{}^{\BS} (\fr{h}, W, \fr{o})$.
The horizontal and vertical compositions are those inherited from $\EWmon{}{}^{\BS} (\fr{h}, W, \fr{o})$. Note that on morphism categories, using the notation of Appendix \ref{apdx:mdc}, $\grEWmon{\scrL}{\scrL'}^{\BS} (\fr{h}, W, \fr{o}) = \left( \EWmon{\scrL}{\scrL'}^{\BS} (\fr{h}, W, \fr{o})\right)^{\circ}$.
We will also consider the 2-category $\grEWmon{}{}^{\oplus} (\fr{h}, W, \fr{o})$ whose object set is $\fr{o}$ and whose morphism categories $\grEWmon{\scrL}{\scrL'}^{\oplus} (\fr{h}, W, \fr{o})$ are the additive hulls of $\grEWmon{\scrL}{\scrL'}^{\BS} (\fr{h}, W, \fr{o})$.

Let $[\DDBE{}{} (\fr{h}, W, \fr{o}) ]_{\Delta}$ denote the Grothendieck algebroid of $\DDBE{}{} (\fr{h}, W, \fr{o})$. In other words, it is the category whose object set is $\fr{o}$ and whose morphism spaces are given by taking the Grothendieck groups of $\DDBE{\scrL}{\scrL'} (\fr{h}, W, \fr{o})$.
Similarly, let $[\grEWmon{}{}^{\oplus} (\fr{h}, W, \fr{o})]$ denote the split Grothendieck algebroid of $\grEWmon{}{}^{\oplus} (\fr{h}, W, \fr{o})$. Explicitly, $[\grEWmon{}{}^{\oplus} (\fr{h}, W, \fr{o})]_{\oplus}$ is the category whose object set is $\fr{o}$ and whose morphism spaces are given by taking the split Grothendieck groups $[\grEWmon{\scrL}{\scrL'}^{\oplus} (\fr{h}, W, \fr{o})]_{\oplus}$.

\begin{theorem}\label{thm:soergel_categorification}
    There exists a unique isomorphism of algebroids
    \[\scrH^{\mon} (W, \fr{o}) \stackrel{\sim}{\to} [\grEWmon{}{}^{\oplus} (\fr{h}, W, \fr{o}) ]_{\oplus}\]
    sending $v H_e^{\scrL}$ to $[B_{\emptyset}^{\scrL} (1)]$ and $\uH_s^{\scrL}$ to $[B_s^{\scrL}]$ for any $s \in S$ and $\scrL \in \fr{o}$.
\end{theorem}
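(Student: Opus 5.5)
We plan to follow the approach of \cite{ARV} and \cite[Appendix]{AMRW1}: compute the Grothendieck algebroid of the mixed derived category first, and then identify it with the split Grothendieck algebroid of $\grEWmon{}{}^{\oplus}$. The mixed derived category formalism of Appendix \ref{apdx:mdc}, applied via Proposition \ref{prop:dmhc_is_cellular} and Lemma \ref{lem:generation_assumption}, should give a natural isomorphism $[\grEWmon{\scrL}{\scrL'}^{\oplus}]_{\oplus} \cong [\DDBE{\scrL}{\scrL'}]_{\Delta}$. This isomorphism is induced by viewing objects as complexes in degree $0$, with $(1)$ corresponding to the shift $\langle 1 \rangle [-1]$ or a similar normalization. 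It is compatible with $\star$, so it assembles into an isomorphism of algebroids. The recollement structure and the standard objects then show that $[\DDBE{\scrL}{\scrL'}]_{\Delta}$ is a free $\Z[v^{\pm1}]$-module with basis $\{[\Delta_w^{\scrL}]\}_{w \in \W{\scrL}{\scrL'}}$. This follows because the Grothendieck group of each stratum $\DDBEI{\{w\}}{\scrL}{\scrL w}$ is $\Z[v^{\pm1}]$, generated by $b_w^{\scrL}$, and the Grothendieck group of the whole category is the direct sum over strata. This part is where finiteness is needed: each object is supported on a finite closed subset.

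Next, define the $\Z[v^{\pm1}]$-linear map $H_w^{\scrL} \mapsto [\Delta_w^{\scrL}]$ and check that it respects composition. Using Proposition \ref{prop:mixed_conv_rules}(1) and the definition of $\scrH^{\mon}$, it suffices to verify the relation for $H_x^{\scrL} H_s^{\scrL x}$. There are three cases.
\begin{enumerate}
\item If $xs > x$, choose a reduced expression of $x$ ending compatibly; Proposition \ref{prop:mixed_conv_rules}(1) then gives $[\Delta_x \star \Delta_s] = [\Delta_{xs}]$.
\item If $xs<x$ and $\scrL xs \neq \scrL x$, Example \ref{ex:stds_and_costds_for_simples}(3) and block minimality give $\Delta_s\star\Delta_s\cong B_\emptyset$. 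Writing $\Delta_x \cong \Delta_{xs}\star \Delta_s$, we get $[\Delta_x\star\Delta_s]=[\Delta_{xs}]$.
\item If $xs<x$ and $\scrL xs = \scrL x$, we compute $[\Delta_s \star \Delta_s] = (v^{-1}-v)[\Delta_s] + [\Delta_e]$. This uses Example \ref{ex:stds_and_costds_for_simples}(2): in the Grothendieck group $[\Delta_s] = [B_s] + v^{-1}[B_\emptyset]$ (up to the shift convention), together with $B_s \star B_s \cong B_s(1)\oplus B_s(-1)$.
\end{enumerate}
This defines a functor of algebroids $\scrH^{\mon}(W,\fr{o}) \to [\DDBE{}{}]_{\Delta}$ that is bijective on each morphism space, since it sends a basis to a basis. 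Composing with the identification of the first paragraph gives the isomorphism.

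Finally, under this isomorphism $[B_s^{\scrL}]$ corresponds to $\uH_s^{\scrL}$. When $\scrL s = \scrL$, the distinguished triangle $B_\emptyset\langle -1\rangle \to \Delta_s \to B_s$ gives $[B_s] = H_s + vH_e$ once the sign and shift conventions are fixed so that $vH_e \mapsto [B_\emptyset(1)]$. When $\scrL s\neq\scrL$, we have $B_s = \Delta_s$. Uniqueness is immediate, since the elements $vH_e^{\scrL}$ and $\uH_s^{\scrL}$ generate $\scrH^{\mon}$ as an algebroid.

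The main obstacle is the first step: making sure the appendix really gives $[\grEWmon{}{}^{\oplus}]_{\oplus} \cong [\DDBE{}{}]_{\Delta}$ for arbitrary $\k$, without Krull--Schmidt. One direction is the weight-structure or heart argument; for that I would imitate \cite[Theorem 4.7]{ARV}, using that $D^m$ is the homotopy category of the additive hull of $\grEWmon{}{}^{\BS}$, so its triangulated Grothendieck group equals the split Grothendieck group by the standard result for bounded homotopy categories with a bounded weight structure. After that, I need to track the grading normalizations consistently across the triangles.
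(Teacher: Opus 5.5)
Your proposal is correct and is essentially the paper's proof. The paper reduces to the triangulated Grothendieck algebroid of $\DDBE{}{}(\fr{h},W,\fr{o})$; your ``main obstacle'' there is exactly Lemma~\ref{lem:red_to_tri_groth_gps} (Rose's theorem), which needs no Krull--Schmidt or weight-structure input because $D^m$ is by definition $K^b$ of $\grEWmon{}{}^{\oplus}$. It then sends $H_w^{\scrL}\mapsto[\Delta_w^{\scrL}]$, checks the relations with Example~\ref{ex:stds_and_costds_for_simples} and Proposition~\ref{prop:mixed_conv_rules}, and concludes with the standard basis of Proposition~\ref{prop:OACC_and_categorification}.

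Only your normalizations need fixing. The shift $(1)$ on $D^m$ is just the inherited internal shift, not a twist. Since $\langle -1\rangle=[-1](1)$, the triangle $B_{\emptyset}^{\scrL}\langle -1\rangle\to\Delta_s^{\scrL}\to B_s^{\scrL}\to$ gives $[\Delta_s^{\scrL}]=[B_s^{\scrL}]-v[B_{\emptyset}^{\scrL}]$ (not $+v^{-1}$), and this sign is what makes both the quadratic relation and $[B_s^{\scrL}]\mapsto\uH_s^{\scrL}$ come out right.
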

\begin{proof}
    By Lemma \ref{lem:red_to_tri_groth_gps}, it suffices to prove that there exists a unique isomorphism
    \[\scrH^{\mon} (W, \fr{o}) \stackrel{\sim}{\to} [\DDBE{}{} (\fr{h}, W, \fr{o}) ]_{\Delta}\]
    sending $vH_e^{\scrL}$ to $[B_{\emptyset}^{\scrL} (1)]$ and $\uH_s^{\scrL}$ to $[B_s^{\scrL}]$. Uniqueness is clear since $\scrH^{\mon} (W, \fr{o})$ is generated as a category by the morphisms $vH_e^{\scrL}$ and $\uH_s^{\scrL}$.

    We will now show that the assignment 
    \[vH_e^{\scrL} \mapsto [B_{\emptyset}^{\scrL} (1)], \qquad\qquad H_w^{\scrL} \mapsto [\Delta_w^{\scrL}]\]
    induces a functor $\scrH^{\mon} (W, \fr{o}) \to [\DDBE{}{} (\fr{h}, W, \fr{o}) ]_{\Delta}$.
    By the defining relations for the monodromic Hecke algebroid it suffices to show the follow relations hold:
    \begin{align}\label{eq:soergel_categorification_1}
        [\Delta_s^{\scrL}][\Delta_s^{\scrL}] &= [\Delta_e^{\scrL}] + [\Delta_s^{\scrL} (-1)] - [\Delta_s^{\scrL} (1)] & &s \in S \text{ and } \scrL s = \scrL, \\ \label{eq:soergel_categorification_1.5}
        [\Delta_s^{\scrL}][\Delta_s^{\scrL s}] &= \Delta_e^{\scrL} & &s \in S \text{ and }  \scrL s \neq \scrL,\\  \label{eq:soergel_categorification_2}
        [\Delta_{xy}^{\scrL}] &= [\Delta_x^{\scrL} \star \Delta_y^{\scrL}] & &x,y \in W\text{ such that }\ell (xy) = \ell (x) + \ell (y).
    \end{align}
    Here (\ref{eq:soergel_categorification_1}) and (\ref{eq:soergel_categorification_1.5}) follow from Example \ref{ex:stds_and_costds_for_simples} along with the isomorphisms 
    \[ B_s^{\scrL} \star B_s^{\scrL s} \cong \begin{cases} B_s^{\scrL} (-1) \oplus B_s^{\scrL} (1) & \scrL s = \scrL, \\ B_{\emptyset}^{\scrL} & \scrL s \neq \scrL.\end{cases}\]
    On the other hand, (\ref{eq:soergel_categorification_2}) follows from Proposition \ref{prop:mixed_conv_rules}.
    It is clear that this functor then takes $vH_e^{\scrL}$ to $[B_{\emptyset}^{\scrL} (1)]$ and $\uH_s^{\scrL}$ to $[B_s^{\scrL}]$.

    It remains to check that the functor $\scrH^{\mon} (W, \fr{o}) \to [\DDBE{}{} (\fr{h}, W, \fr{o}) ]_{\Delta}$ is an isomorphism. 
    By Proposition \ref{prop:OACC_and_categorification}, the standard objects $[\Delta_w^{\scrL} (m)]$ for $w \in \W{\scrL}{\scrL'}$ and $m \in \Z$ form a $\Z$-basis for $[\DDBE{\scrL}{\scrL'} (\fr{h}, W, \fr{o}) ]_{\Delta}$.
    As a result, our functor takes a $\Z$-basis for ${}_{\scrL} \scrH^{\mon}_{\scrL'} (W, \fr{o})$ to a $\Z$-basis for $[\DDBE{\scrL}{\scrL'} (\fr{h}, W, \fr{o}) ]_{\Delta}$. 
\end{proof}

    \section{Diagrammatic Double Leaves Span}\label{sec:spanning}

    In order to complete the proof of Theorem \ref{thm:dll_is_a_basis}, we must show that double leaves span.
The proof closely follows \cite[\S 7]{EW}.
Since the proof is quite lengthy and involved, we will just outline the key statements and ideas when the arguments are virtually identical to \cite{EW}.

For the remainder of the section, we will work with embedded graphs rather than isotopy classes thereof.
We abuse terminology and use the term \emph{monodromic Elias--Williamson graphs} to refer to a graph embedded without horizontal tangent lines so that no two vertices share the same $y$-coordinate.
It is convenient to assume that all monodromic Elias--Williamson graphs are embedded in the plane $\R \times [0,1]$ such that $\R \times \{0\}$ is the bottom of the graph and that $\R \times \{1\}$ is the top of the graph.

\subsection{Negative-Positive Decompositions}

Let $f$ be a monodromic Elias--Williamson graph. We define the \emph{height} of $f$, denoted $\textnormal{ht} (f)$ as the number of generators needed to produce $f$ (ignoring polynomials).
We also define the \emph{width} of $f$ at a $y$-coordinate $a \in [0,1]$ (with no vertices on the line $y=a$), denoted $\text{width}_a (f)$, as the number of strands in $f$ the line $y=a$ passes through.
The \emph{maxwidth} of $f$, denoted $\text{maxwidth} (f)$, is the maximal width attained by $f$.

Given a generator, we assign an integer called the \emph{signed height} as follows.
\begin{equation}\label{eq:sgn_of_gens}
    
  % \tikzsetnextfilename{#1}
  \tikzstyle{every picture}=[tikzfig]
  \input{./figures/sign_of_generators.tikz}

\end{equation}
More generally, the \emph{negative height} of a graph is the number of negative generators used with multiplicities given by (\ref{eq:sgn_of_gens}).
Similarly, the \emph{positive height} is the number of positive generators used with multiplicities given by (\ref{eq:sgn_of_gens}).
Note that the negative height and positive height of a map only depends on the underlying Elias--Williamson graph and not on the choice of face labelling by monodromy parameters.
Moreover, light leaves have positive height 0.

We call a graph $f : B_{\uw}^{\scrL} \to B_{\uy}^{\scrL}$ \emph{(non-)negative} (resp. \emph{(non-)positive}) if $f$ consists of solely (non-)negative (resp. (non-)positive) generators.
We say that $f$ is \emph{negative-positive} if it factors as $f = f_2 \circ f_1$ with $f_2$ non-negative and $f_1$ non-positive.
Furthermore, we call $f$ \emph{strictly} negative-positive if $\text{width}_a (f) < \ell (\uw)$ for some $a \in [0,1]$. 

Given a morphism $f$ in $\EWmon{\scrL}{\scrL'}^{\BS} (\fr{h}, W, \fr{o})$, we say that $f$ admits a \emph{negative-positive decomposition} if it can be expressed as a $\k$-linear combination of negative-positive graphs. 
Similarly, $f$ admits a \emph{strictly} negative-positive decomposition if $f$ can be expressed as a linear combination of strictly negative-positive graphs.

The following lemma is a compilation of results from \cite[\S 7.1]{EW} which are generalized to the monodromic setting.

\begin{lemma}\label{lem:neg_pos_decomps}
    \begin{enumerate}
        \item Let $s, t\in S$ with $m = m_{s,t} < \infty$. The monodromic Jones--Wenzl morphism admits a decomposition $\JW_{s,t}^{\scrL} = \id_{B_{{}_s \underline{m}}^{\scrL} } + f$ where $f$ has a strictly negative-positive whose terms have $\textnormal{width}_a \leq m-2$ for $a \in (0,1)$.
        \item Let $f : B_{\uw}^{\scrL} \to B_{\uy}^{\scrL}$ be a negative monodromic Elias--Williamson graph with $\uw$ reduced. Then $f$ is in the span of maps with a bottom boundary dot.
        \item If $\phi = f g h$ for $f$ non-negative, $h$ non-positive, and $g$ having a (strictly) negative-positive decomposition, then $\phi$ has a (strictly) negative-positive decomposition.
        \item Let $\uw$ and $\uw'$ be rexes for $w \in W$. Let $\beta, \beta' : B_{\uw}^{\scrL} \to B_{\uw'}^{\scrL}$ be two rex moves. Then $\beta - \beta'$ has a strictly negative-positive decomposition.
        \item Let $\ux \in \Exp (W)$. Then $\id_{\ux}$ has a negative-positive decomposition which factors through some reduced expression $\uw$ for some $w$ such that there exists a subsequence $\ue$ of $\ux$ with $\ux^{\ue} = w$.
    \end{enumerate}
\end{lemma}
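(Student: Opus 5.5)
The plan is to reduce each of the five statements to its non-monodromic counterpart in \cite[\S 7.1]{EW}, using two features of the monodromic calculus. First, the face coloring by $\fr{o}$ is determined by the leftmost face and is preserved by every relation. Second, the only new relations are block minimality and polynomial forcing across non-fixing strands, together with the monodromic Jones--Wenzl morphisms. The signed heights in (\ref{eq:sgn_of_gens}) depend only on the underlying Elias--Williamson graph. Hence any manipulation valid in \cite{EW} that uses only one-color relations, two-color associativity, Elias--Jones--Wenzl and the Zamolodchikov relations transfers verbatim, provided it stays inside ${}_{\scrL}\VSGraph_{\scrL'}$.

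For (1), I would use the definition $\JW_{s,t}^{\scrL} = \Sigma_{\scrL}^{\tau}(\JW_A({}_{s'}(v-1)))$ when $\rk W_{s,t}^{\scrL}=2$; in lower rank $\JW_{s,t}^{\scrL}$ is the identity and $f=0$. Every non-identity crossingless matching in $\TL_A$ contains a cup and a cap, so under deformation retraction it becomes a strictly negative-positive diagram. Endosimple expansion $\textnormal{endo}_\iota$ sends cups and caps to nested cups and caps and trivalent vertices to trivalent vertices flanked by cup/cap pairs on non-fixing strands. The resulting graph is therefore negative-positive, and its middle width drops by at least $2$. The truncation or extension $\tau$ only removes or adds through-strands, so the width bound $\le m-2$ is preserved.

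For (3) the argument is formal: expand $g$ and compose, exactly as in \cite{EW}. For (2), I would induct on the height of the negative graph $f$, following \cite[Lemma 7.4]{EW}. The new negative generator is the cap on a non-fixing strand, and it pairs two adjacent strands $s,s$ in the boundary. That pairing cannot happen directly on a reduced bottom $\uw$, since $ss$ is not reduced. If it arises after earlier negative moves, the usual argument (using $2m$-valent vertices to move the pairing down toward the boundary and Jones--Wenzl in the form of (1) and Lemma \ref{lem:jones_wenzl}) produces either a boundary dot or a contradiction with reducedness, just as for the trivalent case.

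For (4), I would follow \cite[Lemma 7.5]{EW}: any two paths in the rex graph differ by Zamolodchikov cycles and by $\beta_{s,t}\circ\beta_{t,s}$ relations. The latter equals $\JW^{\scrL}_{t,s}$, which by (1) is the identity plus a strictly negative-positive term. Combined with (3), this handles that kind of difference. The Zamolodchikov cycles hold exactly, up to lower terms visible in Definition \ref{def:diag_hecke_cat}, which themselves factor through smaller width. Then (5) follows by induction on $\ell(\ux)$: when $\ux=\ux's$ and $\ux'$ already factors through a rex $\uw'$, either $\uw's$ is reduced, or I rex-move $\uw'$ to end in $s$ and apply the one-color relation $\id_{ss}=\ldots$ for a fixing $s$, or block minimality for a non-fixing $s$.

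I expect (2) to be the main obstacle, because the original proof depends on the precise combinatorics of negative generators interacting with reduced expressions, and the non-fixing cap is a genuinely new negative generator.
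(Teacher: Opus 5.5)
Your proposal is correct and follows the paper's proof essentially step for step. Each item is reduced to its counterpart in \cite[\S 7.1]{EW}: (1) because endosimple expansion and $\tau$ preserve (non-)negativity and the width drop; (2) via the monodromic dot contraction of Lemma \ref{lem:jones_wenzl}; (3) formally; (4) via paths in the rex graph, where the only genuinely new check is that the lower terms in the type $B_3$ (endoscopic type $A_3$) Zamolodchikov relation lie in the lower-terms ideal; and (5) via the one-color splitting for fixing $s$ and block minimality for non-fixing $s$. Your citations are shifted, though: the non-monodromic versions of (2), (4) and (5) are \cite[Claim 7.2]{EW}, \cite[Lemma 7.4]{EW} and \cite[Lemma 7.5]{EW} respectively.
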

\begin{proof}
    \emph{(1): } Note that endosimple expansion takes negative (resp. positive) graphs to negative (resp. positive)  graphs.
    Similarly, $\tau : \EWmon{\scrL}{\scrL}^{\BS, \tau} (\fr{h}, \{s,t\}, \fr{o}) \to \EWmon{\scrL}{-}^{\BS} (\fr{h}, \{s,t\}, \fr{o})$ takes negative (resp. positive) graphs to negative (resp. positive) graphs.
    Let $s', t'$ denote the endosimple reflections in $W_{s,t}^{\scrL}$.
    If $g$ is an Elias--Williamson graph viewed as a morphism in $\EWmon{1}{1}^{\BS} (\fr{h}, \{s',t'\}, 1)$, then for all $a \in (0,1)$,
    \begin{align*}
        2 &\leq \text{maxwidth} (g) - \text{width}_a (g) \\
        &\leq \text{maxwidth} (\iota (g)) - \text{width}_a (\iota (g)) \\
        &= \text{maxwidth} (\tau (\iota (g))) - \text{width}_a (\tau (\iota (g))).
    \end{align*}
    As a result, it follows from \cite[Claim 7.1]{EW} that $\JW_{s,t}^{\scrL} = (\tau \circ \iota) (\JW_{s',t'}^1)$ has the desired properties.

    \emph{(2): } The key idea is to apply the Jones--Wenzl relation which we proved in Lemma \ref{lem:jones_wenzl}. The argument then precedes as in \cite[Claim 7.2]{EW}.

    \emph{(3): } This is a restatement of \cite[Claim 7.3]{EW}. It is obvious from definitions.

    \emph{(4): } The argument is essentially the same as \cite[Lemma 7.4]{EW}. The only substantial difference is that in the monodromic setting, one has to check that in type $B_3$ with endoscopic type $A_3$, the additional terms appearing in the 3-color relations are in the lower terms ideal. This is easy to verify by hand.
    
    \emph{(5): } One additional case is required other than those covered in \cite[Lemma 7.5]{EW}. When $\ux = (s_1, \ldots, s_k)$ with $s_i = s_{i+1}$. We must consider both $i \in K (\ux, \scrL)$ and $i \notin K (\ux, \scrL)$.
    In the former case, the argument given in \emph{loc. cit.} transfers. When $i \notin K( \ux, \scrL)$, the claim follows from the block minimality relation.
\end{proof}

\subsection{Modulo Lower Terms}

Let $\uw$ be a reduced expression for some $w \in W$. 
We define an ideal under vertical composition 
\[ I_{\uw}^{\scrL} \subseteq \bigoplus_{\ux \in \Exp (W)} \Hom_{\EWmon{\scrL}{\scrL \ux}^{\BS}} (B_{\ux}^{\scrL}, B_{\uw}^{\scrL})\] 
consisting of $f \in \Hom_{\EWmon{\scrL}{\scrL \ux}^{\BS}} (B_{\ux}^{\scrL}, B_{\uw}^{\scrL})$ which has a decomposition $f = f_1 \circ f_2$ where $f_1$ is positive.
Lemma \ref{lem:neg_pos_decomps} implies that $I_{\uw}^{\scrL}$ is generated by all the top boundary dots. 
From this description, it is obvious that when $w = w^\beta$ for some block $\beta \in \uW{\scrL}{\scrL'}$, then $I_{\uw}^{\scrL} = 0$.

The following proposition is the monodromic analogue of \cite[Proposition 7.6]{EW}. 

\begin{proposition}\label{prop:the_evil_EW_prop}
    Let $\ux \in \Exp (W)$. Fix a reduced expression $\uw$ for some element $w \in W$. 
    Choose a map $\LL_{\ux, \ue}^{\scrL}$ for each monodromic subexpression $\ue$ expressing $w$.
    These maps form a left $R$-basis for $\Hom_{\EWmon{\scrL}{\scrL w}^{\BS}} (B_{\ux}^{\scrL}, B_{\uw}^{\scrL}) / I_{\uw}^{\scrL}$.
\end{proposition}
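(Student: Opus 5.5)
Linear independence comes for free from the functor to bimodules, so all the work is in spanning. For independence, I apply $\Upsilon$ and compare with Proposition \ref{prop:basis_for_S_uw_x}. The image of $I_{\uw}^{\scrL}$ under $\Upsilon$ consists of maps factoring through a top boundary dot. After composing with $\varphi_{\uw}$, such maps land in the lower terms supported on $\{<w\}$, so they vanish after projecting to $B_{\uw}^{w,\scrL}$. By Lemma \ref{lem:ll_are_lin_indep}, the light leaves $\LL_{\ux,\ue}^{\scrL}$ with $\ux^{\ue}=w$ pair upper-triangularly with the elements $b_{\ux,\uf}$. Hence their images are $R$-independent modulo the image of $I_{\uw}^{\scrL}$, and therefore independent in the diagrammatic quotient.

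For spanning I follow \cite[Proposition 7.6]{EW}, inducting on $\ell(\ux)$ with the case $\ux=\emptyset$ trivial. Write $\ux=\ux' s$ and take a monodromic Elias--Williamson graph $f:B_{\ux}^{\scrL}\to B_{\uw}^{\scrL}$. Using the one-color relations, isotopy, and Lemma \ref{lem:neg_pos_decomps}(3),(5), I first pull the final $s$-strand into a normal position. Then either it ends in a dot, it merges by a trivalent vertex or a cap with a strand coming from $\ux'$, or it passes straight up to the top. Concretely, I write $f = g\circ (h\star \id_{B_s})$ where $h:B_{\ux'}^{\scrL}\to B_{\uy}^{\scrL}$ and $\uy$ is reduced. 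By induction, $h$ is a combination of light leaves $\LL_{\ux',\ue'}^{\scrL}$ into $\rex(y)$ plus terms in $I_{\rex(y)}^{\scrL}$. A term in $I$ has a top boundary dot on $\uy$, and it should stay in $I_{\uw}^{\scrL}$ or become shorter after composing with $g$; handling this needs the negative-positive Lemma \ref{lem:neg_pos_decomps}(2).

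The remaining map $g:B_{\rex(y)s}^{\scrL}\to B_{\uw}^{\scrL}$ is analysed by cases on whether $ys>y$ or $ys<y$, and on whether $\scrL y s=\scrL y$.
\begin{enumerate}
\item If $\scrL ys\neq \scrL y$, the strand $B_s$ is invertible by block minimality. Either $ys>y$, so $\rex(y)s$ is reduced, or $ys<y$, in which case I apply a rex move to $\rex_s(y)$ and cap. Either way, $g$ is a rex move composed possibly with a cap, which is exactly the step $\phi_i$ of a $U1$ or $D1$ light leaf.
\item If $\scrL ys=\scrL y$, the analysis is the non-monodromic $U0,U1,D0,D1$ dichotomy. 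Here I need that $\Hom(B_{\uv}^{\scrL},B_{\uw}^{\scrL})$ modulo $I_{\uw}^{\scrL}$, for $\uv$ reduced, is spanned by rex moves when $v=w$ and vanishes when $v\neq w$.
\end{enumerate}
Lemma \ref{lem:neg_pos_decomps}(4) says different rex moves agree modulo strictly negative-positive terms, and these are absorbed into $I$ or reduce $\ell$. This lets me replace the arbitrary $g$ by the specific $\phi_i$ prescribed by the fixed $\LL$-datum.

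The main obstacle is that last claim: for reduced $\uv$, every graph $B_{\uv}^{\scrL}\to B_{\uw}^{\scrL}$ equals a rex move modulo $I_{\uw}^{\scrL}$, and no such graph survives modulo $I_{\uw}^{\scrL}$ when $v\neq w$. In \cite{EW} this is proved by a length and height induction using the Jones--Wenzl relation to remove positive generators. Here I would use the monodromic Jones--Wenzl relation (Lemma \ref{lem:jones_wenzl}) together with Lemma \ref{lem:neg_pos_decomps}(1),(2), and block minimality to eliminate strands lying in minimal blocks. I would also need to check that the extra lower-order terms in the $B_3$ relations with endoscopic type $A_3$ lie in $I$, which is the same check as in Lemma \ref{lem:neg_pos_decomps}(4).
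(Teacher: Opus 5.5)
Your linear independence argument is correct, and more explicit than the paper, which leaves this part implicit. A generator of $I_{\uw}^{\scrL}$ factors through $B_{\underline{z}}^{\scrL}$, where $\underline{z}$ is obtained from $\uw$ by deleting a letter. Hence $w\notin\supp_W(B_{\underline{z}}^{\scrL})$, so $\varphi_{\uw}$ kills $\Upsilon(I_{\uw}^{\scrL})$ by Lemma \ref{lem:disjoint_support_implies_Hom_vanishing}, and Proposition \ref{prop:basis_for_S_uw_x}(3) finishes.

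The spanning argument, however, has a genuine gap.

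\begin{itemize}
\item \emph{The decomposition reduces nothing.} In $f=g\circ(h\star\id_{B_s})$ you never constrain $g\colon B_{\rex(y)s}^{\scrL}\to B_{\uw}^{\scrL}$ beyond its source and target. Its source can be as long as $\ux$: if $\ux'$ is reduced one may take $h=\id$ and $g=f$. Your case analysis then simply treats $g$ as a single step $\phi_i$.
\item \emph{That step is the whole content.} Turning $g$ into a single step is exactly the assertion that a light leaf followed by further generators stays in the span modulo $I_{\uw}^{\scrL}$. In particular, the last strand generally has to pass through $2m$-valent vertices before it can merge with a strand of $\ux'$, and the ``normal position'' hides this.
\item \emph{The auxiliary claim is false as stated.} Take $\fr{o}=1$, $\underline{v}=(s,t)$ and $\uw=(s)$. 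The $U1,U0$ light leaf $\id_{B_s}\star\epsilon^t$ is nonzero modulo $I_{(s)}^{\scrL}$ by your own independence argument, although $v\neq w$. The correct form of the claim is a basis indexed by subexpressions of $\underline{v}$ expressing $w$. That is the proposition itself for a source of the same length, so it cannot feed an induction on $\ell(\ux)$.
\item \emph{Lower terms escape the induction.} The $I_{\rex(y)}^{\scrL}$-terms of $h$ do not land in $I_{\uw}^{\scrL}$ after composing with $g$. They give maps from the same $B_{\ux}^{\scrL}$ that factor through a shorter expression, which an induction on $\ell(\ux)$ never reaches.
\end{itemize}

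What is missing is the induction on maxwidth through the statements $(L_M)$ and $(X_M)$.

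For $(L_{M-1})\Rightarrow(X_M)$, one writes a maxwidth-$M$ diagram as $f'\circ g_n\circ\cdots\circ g_1$, where $f'$ has smaller maxwidth and is therefore a light leaf modulo $I_{\uw}^{\scrL}$. One then attaches one generator at a time below a light leaf (Lemmas \ref{lem:add_neg_trivalent}--\ref{lem:add_2m_vertex}). For $(X_M)+(L_{M-1})\Rightarrow(L_M)$, one uses light leaves with errors to pass between different $\LL$-data and polynomial decorations.

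The decisive case is attaching a $2m$-valent vertex below a light leaf. Lemma \ref{lem:thinning_reduction} reduces this to the dihedral Claim \ref{claim:dihedral_add_2m_vertex}. That claim is proved by induction on $l$, using Lemma \ref{lem:vertex_replacement} and a five-case analysis with two-color associativity, the Jones--Wenzl relation, block minimality and polynomial forcing. This is also where the paper repairs \cite[Claim 7.13]{EW}. Nothing in your proposal plays this role.
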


The proof of Proposition \ref{prop:the_evil_EW_prop} will take substantial work. 
As a corollary, we will complete the proof of Theorem \ref{thm:dll_is_a_basis}. The proof of Theorem \ref{thm:dll_is_a_basis} from the above proposition follows from the argument given in \cite[\S 7.3]{EW}.
We have included it only for the sake of completeness. 

\begin{midsecproof}{Theorem \ref{thm:dll_is_a_basis} assuming Proposition \ref{prop:the_evil_EW_prop}}
    Fix $\ux, \uy \in \Exp (W)$ such that $\scrL \ux = \scrL' = \scrL \uy$, and let $\varphi : B_{\ux}^{\scrL} \to B_{\uy}^{\scrL}$ be a morphism.
    By Lemma \ref{lem:dl_are_lin_indep_for_EW}, it suffices to show that $\varphi$ is an $R$-linear combination of double leaves.

    By Lemma \ref{lem:neg_pos_decomps} (5), we may assume, without loss of generality, that $\varphi$ factors through some $B_{\uw}^{\scrL}$ where $\uw$ is a reduced expression.
    We can then write $\varphi = fg$ with $g : B_{\ux}^{\scrL} \to B_{\uw}^{\scrL}$ and $f : B_{\uw}^{\scrL} \to B_{\uy}^{\scrL}$.
    We will prove the statement by induction on $\ell (\uw)$. If $\ell (\uw) = 0$ or more generally if $\ell_{\scrL} (\uw) = 0$, then the claim follows from Proposition \ref{prop:the_evil_EW_prop} using the observation that $I_{\uw}^{\scrL} = 0$.

    By Proposition \ref{prop:the_evil_EW_prop}, we can write $g = g_w + g_l$ and $f = f_w + f_l$ where $g_w$ (resp. $f_w$) is an $R$-linear combination of $\LL_{\ux, \ue}^{\scrL}$ (resp. $\overline{\LL}_{\uy, \ue}^{\scrL}$) and $g_l \in I_{\uw}^{\scrL}$ (resp. $f_l \in I_{\uw}^{\scrL}$).
    It then suffices to prove that $\psi g_l$ is in the span of the double leaves for any $\psi$. The argument with $f_l$ is the same but horizontally reflected.
    
    Now $g_l \in I_{\uw}^{\scrL}$ so it is generated by top boundary dots, and we can further factor $g_l$ into terms generated by a single top boundary dot. In particular, $\psi g_l$ is of the form
    \[
  % \tikzsetnextfilename{#1}
  \tikzstyle{every picture}=[tikzfig]
  \input{./figures/dll_arg_1.tikz}
\]
    Let $\underline{z}$ denote the expression obtained from $\uw$ by deleting the index with the dot above.
    We then see that $\psi g_l$ factors through $B_{\underline{z}}^{\scrL}$. By Lemma \ref{lem:neg_pos_decomps} (5), we can further factor $\psi g_l$ through reduced expressions for elements $v$ found in subexpressions of $\underline{z}$.
    Observe that $v < w$, so we can now apply induction on $\psi g_l$ to deduce that $\psi g_l$ is in the span of double leaves.
\end{midsecproof}

\subsection{The Inductive Setup}

Up until now, we have fixed an $\LL$-datum required to construct a set of light leaves.
For the proof of Proposition \ref{prop:the_evil_EW_prop}, it will be essential to let this data vary.
We will denote the set of $\LL$-data by $\Delta$.
To manage the notation, we will write $\delta \in \Delta$ and denote the corresponding triple by $\delta = (\rex^{\delta} , \{ \rex_s^{\delta} \}_{s \in S}, \{\Gamma_{\ux, \uy}^{\delta} \}_{\ux, \uy})$.
Similarly, for $\ux \in \Exp (W)$ and $\ue \in \Subexp^{\scrL} (\ux)$, we will write ${}^{\delta} \LL_{\ux, \ue}^{\scrL}$ for the light leaf constructed from the choice of $\delta$.

Let $\hat{X}_{\ux}^{\scrL} \coloneq \{ {}^{\delta} \LL_{\ux, \ue}^{\scrL} \mid \ue \in \Subexp^{\scrL} (\ux), \delta \in \Delta \}$ viewed as a subset of ${}_{\scrL} \VSGraph_{\scrL \ux}$.
There is a map $\For_{\textnormal{poly}} : {}_{\scrL} \VSGraph_{\scrL \ux} \to {}_{\scrL} \VSGraph_{\scrL \ux}$ given by forgetting the polynomial labels of an Elias--Williamson graph.
We denote $X_{\ux}^{\scrL} = \For_{\textnormal{poly}}^{-1} \hat{X}_{\ux}^{\scrL}$, i.e., the set of all possible light leaf constructions with domain $B_{\ux}^{\scrL}$ along with arbitrary polynomial decorations.

For $M \geq 0$, we consider the following two statements.
\begin{enumerate}
    \item[$(L_M)$] Let $\delta \in \Delta$. Every map $B_{\ux}^{\scrL} \to B_{\uw}^{\scrL}$ of maxwidth $\leq M$ is in the left $R$-span of $ \{ {}^{\delta} \LL_{\ux, \ue} \}_{\ux^{\ue} = \uw}$ modulo $I_{\uw}^{\scrL}$.
    \item[$(X_M)$] Every map $B_{\ux}^{\scrL} \to B_{\uw}^{\scrL}$ of maxwidth $\leq M$ is in the left $\k$-span of $X_{\ux}^{\scrL}$ modulo $I_{\uw}^{\scrL}$.
\end{enumerate}

It is clear that $(L_M)$ implies $(X_M)$. The inductive argument to prove Proposition \ref{prop:the_evil_EW_prop} then comes in two main steps: (1) prove that $(X_M)$ and $(L_{M-1})$ implies $(L_M)$, and (2) prove that $(L_M)$ implies $(X_{M+1})$.

If $M = 0$, then a map has maxwidth 0 only when the underlying Elias--Williamson graph consists only of polynomial boxes. 
As a result, $(L_0)$ and $(X_0)$ trivially hold.

We will now assume that $M > 0$.
Assume that $\ell (\uw) = M$. If $\ell (\ux) < M$, then we can apply induction. As a result, we can also assume that $\ell (\ux) = M$.
It can be argued as in \cite[\S 7.4]{EW}, that one can reduce to the case of a map $f : B_{\ux}^{\scrL} \to B_{\uw}^{\scrL}$ consisting solely of $2m$-valent vertices and polynomials.
By polynomial forcing and the Elias--Jones--Wenzl relations, we can move all polynomials to the left at the cost of potentially introducing lower terms.
It then suffices to replace $f$ by a map where the polynomials are removed.
Note that for a single fixed light leaf $\LL = {}^{\delta} \LL_{\ux, \ue}^{\scrL}$ (where $\ue$ is the all 1's subexpression), $\LL$ consists solely of $2m$-valent vertices.
By Lemma \ref{lem:neg_pos_decomps} (4), the difference of $f$ and $\LL$ is contained in the lower terms ideal.
In particular, $f$ is in the span of $\LL$, modulo $I_{\uw}^{\scrL}$. Therefore, $(L_M)$ and likewise $(X_M)$ holds for $\uw$.
As a result, we may assume that $\ell (\uw) < M$.

\subsection{\texorpdfstring{$(X_M)$ and $(L_{M-1}) \implies (L_M)$}{XM and LM-1 implies LM}}

Before proceeding with the proof, we introduce the notion of light leaves with ``errors''.
Let $\ux \in \Exp (W)$, $\ue \in \Subexp^{\scrL} (\ux)$, and $\delta \in \Delta$. 
Given a map $E : B_{\uw_j}^{\scrL} \to B_{\uw_j}^{\scrL}$ which is strictly negative-positive, we can define a map $\LL_{\ux, \ue}^{E, \scrL} : B_{\ux}^{\scrL} \to B_{\uw}^{\scrL}$,
called the \emph{light leaf with error} $E$, via the same algorithm as light leaves in \S\ref{subsubsec:diag_ll_section}
except we replace $\phi_i$ by a map $\phi_i'$ which is defined by $\phi_i' = \phi_i$ if $i \neq j$ and $\phi_j' = E \circ \phi_j$.
We denote the set of all light leaves with errors whose domain is $B_{\ux}^{\scrL}$ by $\hat{X}_{\ux}^{\textnormal{er}, \scrL}$.

\begin{lemma}\label{lem:sliding_and_ll_with_errors}
    Fix $\delta \in \Delta$.
    \begin{enumerate}
        \item Let $\LL \in \For_{\textnormal{poly}}^{-1} ({}^{\delta} \LL_{\ux, \ue}^{\scrL})$. Then $\LL - {}^{\delta} \LL_{\ux, \ue}^{\scrL}$ is in the span of $\hat{X}_{\ux}^{\textnormal{er}, \scrL}$ modulo $I_{\uw}^{\scrL}$
        \item Let $\delta' \in \Delta$. Then ${}^{\delta'} \LL_{\ux, \ue}^{\scrL} - {}^{\delta} \LL_{\ux, \ue}^{\scrL}$ is in the span of $\hat{X}_{\ux}^{\textnormal{er}, \scrL}$ modulo $I_{\uw}^{\scrL}$.
    \end{enumerate}
\end{lemma}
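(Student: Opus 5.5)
Both parts are monodromic versions of the sliding arguments of \cite[\S 7.4]{EW}, and I would run the same induction on the length $k$ of $\ux$ for each. The light leaf is built step by step as $\LL_i = \phi_i \circ (\LL_{i-1} \star \id)$. The point is that any local change at step $i$ which differs from the standard step only by a strictly negative-positive map is, by definition, an error at that step. Such a change therefore produces a light leaf with error, and this holds modulo $I_{\uw}^{\scrL}$ when the change is pushed out to the top boundary.

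For (1), the first move is to push all polynomial boxes of $\LL$ to the far left, one step at a time. A box sitting in a face of the partial light leaf $\LL_{j}$ is slid leftward across the strands of $B_{\uw_j}^{\scrL}$ by the polynomial forcing relation. There are two kinds of strands.
\begin{itemize}
\item For strands with $\scrL' u \neq \scrL'$ (i.e.\ $R_u$), forcing is exact: $f$ simply becomes $u(f)$.
\item For strands with $\scrL' u = \scrL'$, forcing produces a correction term in which the strand is broken into two dots. Broken-strand terms through $B_{\uw_j}^{\scrL}$ factor as a map through a shorter expression, and this is strictly negative-positive as an endomorphism of $B_{\uw_j}^{\scrL}$. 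It is therefore precisely an error $E$ inserted after step $j$.
\end{itemize}
Boxes sitting at the final target either slide left the same way or, at the top, give dot terms lying in $I_{\uw}^{\scrL}$. Once everything has reached the left edge, the boxes multiply to an element of $R$. Since we are working with the $\k$-span and left $R$-multiples are allowed, we get the difference $\LL - {}^{\delta}\LL_{\ux,\ue}^{\scrL}$ in the span of $\hat{X}_{\ux}^{\textnormal{er},\scrL}$ modulo $I_{\uw}^{\scrL}$. I do not expect a genuinely monodromic obstruction here, because forcing through twisted strands is exact.

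For (2), I would change the $\LL$-datum one component at a time and telescope. The three components are the choice of $\rex$, the choices $\rex_s$, and the rex-graph paths $\Gamma$.
\begin{itemize}
\item \emph{Changing a path $\Gamma$.} This replaces some rex move $\beta$ by another rex move $\beta'$ between the same reduced expressions. By Lemma \ref{lem:neg_pos_decomps} (4), $\beta - \beta'$ is strictly negative-positive. Inserting it at step $j$ (or composing it on at the end, where it gives a term of $I_{\uw}^{\scrL}$ or an error at the last step) gives an element of $\hat{X}^{\textnormal{er}}$.
\item \emph{Changing $\rex(w_j)$ or $\rex_s(w_j)$.} Here I would insert $\beta'^{-1}\beta$-type compositions of rex moves between the two choices. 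Up to rex-move differences, which are again strictly negative-positive by Lemma \ref{lem:neg_pos_decomps} (4), a round trip $\uw_j \to \uw_j' \to \uw_j$ equals the identity modulo errors. The remaining steps of the construction with the new datum can then be conjugated back to the old ones.
\end{itemize}
In the $D0/D1$ steps, the map $\phi_i$ built with a different $\rex_s$ differs from the old one by rex moves followed by $\mu^s$ or $\cap^s$. When $\scrL w_{i-1}s \neq \scrL w_{i-1}$ the cap is an isomorphism (block minimality), so again only rex-move differences arise.

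The main obstacle is making sure the inserted differences really have the required form, namely strictly negative-positive endomorphisms of $B_{\uw_j}^{\scrL}$ at a single step, and not something spread across several steps. For several steps I would use a telescoping sum over the steps at which the data differ, with each term involving a single error. The delicate monodromic input is Lemma \ref{lem:neg_pos_decomps} (4), in particular the $B_3$-with-$A_3$-endoscopy case, which is already granted.
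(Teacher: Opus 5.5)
Your proposal is correct at the paper's level of detail and follows the same route. For (1) you push every polynomial box to the far left by polynomial forcing and count the broken-strand corrections as errors. For (2) you use Lemma \ref{lem:neg_pos_decomps} (4) for changes of rex-graph paths, and handle changes of $\rex$ and $\rex_s$ by inserting a round trip of rex moves and absorbing its two halves into the neighbouring rex moves. One point to tighten: a box in a face interior to a rex move cannot be isotoped up to a level $B_{\uw_j}^{\scrL}$, so it must be slid across the rex move itself, which is exactly the case the paper's proof singles out (``the cost of sliding a polynomial across a rex move is adding at most an error term'').
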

\begin{proof}
    For (1), polynomial forcing implies that the cost of sliding a polynomial across a rex move is adding at most an error term.
    We can then force all polynomials to the left, modulo lower terms to show that $\LL - {}^{\delta} \LL_{\ux, \ue}^{\scrL}$ is in the lower terms ideal.

    For (2), recall from Lemma \ref{lem:neg_pos_decomps} (4) that the difference between two rex moves is an error.
    As a result, if ${}^{\delta'} \LL_{\ux, \ue}^{\scrL}$ and ${}^{\delta} \LL_{\ux, \ue}^{\scrL}$ differ only in the choice of rex moves, then ${}^{\delta'} \LL_{\ux, \ue}^{\scrL} - {}^{\delta} \LL_{\ux, \ue}^{\scrL}$ is spanned by lower terms.
    If  ${}^{\delta'} \LL_{\ux, \ue}^{\scrL}$ and ${}^{\delta} \LL_{\ux, \ue}^{\scrL}$ instead differs from the choice of $\rex_s (w_k)$ for some $k$, we can account for the difference using rex moves as well.
    Namely, we can replace the identity of $B_{\rex_s^{\delta} (w_k)}^{\scrL}$ appearing in a light leaf with a rex move $gf$ where $g : B_{\rex_s^{\delta'} (w_k)}^{\scrL} \to B_{\rex_s^{\delta} (w_k)}^{\scrL}$ and $f : B_{\rex_s^{\delta} (w_k)}^{\scrL} \to B_{\rex_s^{\delta'} (w_k)}^{\scrL}$ are rex moves.
    Then $f$ can be absorbed into the rex move in $\phi_k$, and $g$ can be absorbed into the rex move in $\phi_{k+1}$.
    Combining both cases, we may conclude that ${}^{\delta'} \LL_{\ux, \ue}^{\scrL} - {}^{\delta} \LL_{\ux, \ue}^{\scrL}$ is a linear combination of lower terms.
\end{proof}

Fix some $\delta_0 \in \Delta$.
Let $\LL \in X_{\ux}^{\scrL}$, so $\For_{\textnormal{poly}} (\LL) = {}^{\delta} \LL_{\ux, \ue}^{\scrL}$ for some $\delta \in \Delta$.
By Lemma \ref{lem:sliding_and_ll_with_errors}, we have that the difference between $\LL$ and ${}^{\delta_0} \LL_{\ux, \ue}^{\scrL}$ is in the span of light leaves with errors.
As a result, it suffices to prove that ${}^{\delta} \LL_{\ux, \ue}^{E, \scrL}$ is in the span of $\{ {}^{\delta_0} \LL_{\ux, \uf}^{\scrL} \}_{\uf}$ modulo lower terms.

Let $\LL_{M-1}$ denote the partially constructed light leaf with error map ${}^{\delta} \LL_{\ux, \ue}^{E, \scrL}$ after $M-1$ steps.
Now $\LL_{M-1}$ has maxwidth $M-1$, so we may apply induction and replace $\LL_{M-1}$ with a $\LL$ map of our choosing or with a term having dots on top.

If the error occurs at the last step, then by Lemma \ref{lem:neg_pos_decomps} (4), some dot will pull to the top, yielding a map in $I_{\uw}^{\scrL}$.
As a result, the light leaf with error map ${}^{\delta} \LL_{\ux, \ue}^{E, \scrL}$ can then be decomposed as follows:
\[{}^{\delta} \LL_{\ux, \ue}^{E, \scrL} = 
  % \tikzsetnextfilename{#1}
  \tikzstyle{every picture}=[tikzfig]
  \input{./figures/ll_with_errors_spanning_1.tikz}
\]
where $\phi_k$ has no error. There are four possibilities for applying $\phi_k$ to $\LL_{k-1}$ which are described below:
\[
  % \tikzsetnextfilename{#1}
  \tikzstyle{every picture}=[tikzfig]
  \input{./figures/ll_with_errors_spanning_2.tikz}
\]
By Lemma \ref{lem:neg_pos_decomps} (4), we can replace rex moves so that the first diagram is actually a light leaf with data $\delta_0$.
By Lemma \ref{lem:neg_pos_decomps}, the second diagram is in $I_{\uw}^{\scrL}$.
For the third diagram, we can replace the question marks box with a light leaf with data $\delta_0$ such that the resulting map is the light leaf with data $\delta_0$ with $e_k$ decorated by $U1$ (after modifying rex moves).
The fourth diagram is similar to the third, except the resulting light leaf has $e_k$ decorated by $U0$.

\subsection{\texorpdfstring{$(L_{M-1}) \implies (X_M)$}{LM-1 implies XM}}

Fix a reduced expression $\uw$ with $\ell (\uw) < M$. We will prove that $(L_{M-1})$ implies $(X_M)$ for $\uw$.
Let $f : B_{\ux}^{\scrL} \to B_{\uw}^{\scrL}$ be a diagram of maxwidth $M$ (the case of maxwidth $< M$ is covered by induction).

\begin{convention}
    We adopt some simplifications to make writing light leaves simpler. Samples of these conventions are given below,
    \[
  % \tikzsetnextfilename{#1}
  \tikzstyle{every picture}=[tikzfig]
  \input{./figures/convention_1.tikz}
\]
    The first diagram should be inferred as some light leaf without specifying the expression $\ux$, subexpression $\ue$, or monodromy parameter $\scrL$.
    If we wish to emphasize the subexpression $\ue$ used to define a light leaf, we will write $e_i$ over the $i$-th strand as in the second diagram (here $e_i = 1$). 
    If we want to further emphasize the decoration of $e_i$, we will similarly write $\dec_i (\ux, \ue)$ over the $i$-th strand as in the third diagram (here $\dec_i (\ux, \ue) = U1$).  

    We also adopt some simplifications for rex moves as given below.
    \[
  % \tikzsetnextfilename{#1}
  \tikzstyle{every picture}=[tikzfig]
  \input{./figures/convention_2.tikz}
\]
    The first diagram should be interpreted as a $2m$-valent vertex $\beta_{s,t}$ where $s,t$ can be inferred from context.
    The second diagram refers to some rex move without specifying the domain, codomain, or the chosen path in the rex graph.
\end{convention}

\subsubsection{Attaching Negative Generators}

\begin{lemma}\label{lem:add_neg_trivalent}
    Suppose a negative (merging) trivalent vertex is added below a light leaf. The result is a light leaf. 
\end{lemma}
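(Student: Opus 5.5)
Attaching a negative (merging) trivalent vertex below a light leaf means precomposing $\LL^{\scrL}_{\ux,\ue}$ with $\id_{B_{\ux'}^{\scrL}} \star \mu^s \star \id_{B_{\ux''}^{\scrL}}$, where $\ux = \ux' s \ux''$. I will follow the argument of \cite[\S 7.5]{EW} and read the statement, as there, up to the freedom of the $\LL$-datum and modulo $I_{\uw}^{\scrL}$. Here the source expression is $\ux' s s \ux''$. A merging trivalent vertex only exists when its faces allow it, so the $s$-strand separates regions labelled $\scrL\ux'$ and $\scrL\ux' s = \scrL\ux'$. Hence both new $s$-indices $i, i+1$ lie in $K(\ux' ss\ux'', \scrL)$. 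The monodromic-only phenomena (block minimality, $R_s$ strands) therefore never touch the trivalent vertex itself; they occur only in the parts of the leaf before and after it.

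First I use the inductive structure of light leaves. The leaf is built step by step, $\LL_j = \phi_j \circ (\LL_{j-1} \star \id)$. The steps $\phi_j$ for $j$ before the $s$-index commute with the new vertex, since they act on disjoint strands. So I may assume the trivalent vertex sits directly below step $i$. That is, I must analyse $\phi_i \circ (\LL_{i-1} \star \mu^s)$, where $\LL_{i-1}$ is the partial leaf on $\ux'$ landing in $\uw_{i-1}$.

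Next I split into the two labels of $\ue$ at index $i$, relative to $w_{i-1}$; in each case I exhibit the result as the first two steps, on the new indices $i, i+1$, of a light leaf for $\ux' ss\ux''$.
\begin{enumerate}
\item Label $U$. If $e=U1$, then $\phi_i$ is a rex move applied to $\uw_{i-1}s$, and $\mu^s$ below it is exactly the pair of steps $(U1, D0)$ for $ss$: the second $s$ is a $D$-step, and the $D0$ map is a rex move followed by $\mu^s$. The rex moves may be chosen trivially, using the freedom in $\rex_s$. If $e=U0$, then $\phi_i$ applies $\epsilon^s$, and $\epsilon^s\circ\mu^s$ equals $\epsilon^s\star\epsilon^s$ up to unit/associativity of the Frobenius structure. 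This is the pair $(U0,U0)$ of a light leaf. (If instead one wants the pair to realise $(U1,D1)$, it would come from $\cap^s$; but $\cap^s = \epsilon^s \circ \mu^s$ directly.)
\item Label $D$. Here $\phi_i$ first rex-moves $\uw_{i-1}$ to $\rex_s(w_{i-1})$, which ends in $s$, and then applies $\mu^s$ (case $D0$) or $\cap^s$ (case $D1$). With the extra $\mu^s$ below, Frobenius associativity $\mu^s\circ(\id\star\mu^s)=\mu^s\circ(\mu^s\star\id)$ rewrites the composite as two consecutive $D$-steps $(D0,D0)$, respectively $(D0,D1)$ via $\cap^s\circ(\id\star\mu^s)=\cap^s\circ(\mu^s\star\id)$. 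These are light-leaf steps, since $\rex_s(w_{i-1})$ still ends in $s$.
\end{enumerate}
The later steps $\phi_j$, $j>i$, remain valid because the intermediate element $w_i$ is unchanged. The resulting subexpression is $\scrL$-monodromic, since the indices not in $K$ are untouched.

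The main obstacle is that each local rewrite uses a specific choice of rex moves, while the lemma concerns light leaves for the given $\LL$-datum. I handle this as in the surrounding inductive setup: any discrepancy in rex moves differs by a strictly negative-positive error (Lemma \ref{lem:neg_pos_decomps} (4)). By Lemma \ref{lem:sliding_and_ll_with_errors}, such a discrepancy is absorbed into the choice of $\delta\in\Delta$, or into light leaves with errors modulo $I_{\uw}^{\scrL}$. Since $X_{\ux}^{\scrL}$ allows arbitrary $\delta$, the composite is literally an element of $X^{\scrL}_{\ux' ss\ux''}$. Finally, polynomial boxes are carried along unchanged, which is permitted by the definition of $X$ via $\For_{\textnormal{poly}}$.
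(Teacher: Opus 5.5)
Your route is the paper's: it notes that a merging vertex on the $k$-th strand forces $\scrL\ux_{\leq k}=\scrL\ux_{\leq k-1}$, so both new indices lie in $K$ and no $R_s$-strand or block-minimality phenomenon touches the vertex, and then cites the case analysis of \cite[Claim 7.9]{EW}, which you write out. However, one of your four cases is wrong as written.

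\textbf{The $U0$ case.} You claim $\epsilon^s\circ\mu^s=\epsilon^s\star\epsilon^s$ and conclude that the new pair of decorations is $(U0,U0)$. That identity is false.
\begin{itemize}
\item The map $\epsilon^s\circ\mu^s$ has degree $0$ and sends $f\otimes g\otimes h\mapsto\partial_s(g)fh$.
\item The map $\epsilon^s\star\epsilon^s$ has degree $2$ and sends $f\otimes g\otimes h\mapsto fgh$.
\item Equivalently, the composite must have degree $d(\ux,\ue)-1$, but a $(U0,U0)$ pair would raise the defect by one.
\end{itemize}
The correct statement is the one you relegate to a parenthetical: $\epsilon^s\circ\mu^s=\cap^s$. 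This is exactly the pair $(U1,D1)$, namely a $U1$ step followed by a $D1$ step that caps the last two $s$-strands of $\rex(w_{k-1})ss$. So the cases should read $U1\mapsto(U1,D0)$, $U0\mapsto(U1,D1)$, $D0\mapsto(D0,D0)$ and $D1\mapsto(D0,D1)$. Your other three cases, and the Frobenius identities you use for them, are correct.

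\textbf{The rex-move paragraph.} Your last paragraph does not prove what it concludes. Lemma~\ref{lem:neg_pos_decomps}~(4) and Lemma~\ref{lem:sliding_and_ll_with_errors} only place the composite in the span of light leaves with errors modulo $I_{\uw}^{\scrL}$. That does not make it ``literally an element of $X$''.

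You do not need this detour. As in \cite{EW}, choose the reduced expression between the two new steps freely at that step:
\begin{itemize}
\item take it to be $\rex(w_{k-1})s$ in the $U$ cases and $\rex_s(w_{k-1})$ in the $D$ cases, with identity rex moves at the junction;
\item let the original step's rex moves serve as the outer rex moves of the new pair.
\end{itemize}
Then the composite is on the nose a light leaf for the new subexpression, which is what the lemma asserts.
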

\begin{proof}
    Suppose that a new trivalent vertex is attached to the $k$-th strand. Then we must have that $\scrL \ux_{\leq k} = \scrL \ux_{\leq k-1}$.
    The result then follows from the same argument given in \cite[Claim 7.9]{EW}.   
\end{proof}

\begin{lemma}\label{lem:add_neg_dot}
    Suppose a negative (bottom) dot is added below a light leaf yielding a map from $B_{\ux'}^{\scrL}$ for some $\ux' \in \Exp (W)$. The result is in $X_{\ux'}^{\scrL}$.
\end{lemma}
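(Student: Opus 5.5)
To prove the lemma I would follow \cite[Claim 7.10]{EW}, splitting into two cases according to whether the new dot sits on a strand with $\scrL$-invariant monodromy. Write $\ux' = (s_1,\ldots,s_{k-1},s,s_k,\ldots)$, where the bottom dot of color $s$ is inserted at position $k$ and the light leaf is $\LL = {}^{\delta}\LL_{\ux,\ue}^{\scrL}$ for some $\delta\in\Delta$. A univalent vertex only exists on a strand whose two adjacent faces carry the same label. So the face to the left of the new strand, labelled $\scrL x_{k-1}$, must satisfy $\scrL x_{k-1} s = \scrL x_{k-1}$, and the new index lies in $K(\ux',\scrL)$. Consequently every subsequent monodromy label is unchanged, and $K(\ux',\scrL)$ is $K(\ux,\scrL)$ shifted, together with the new index. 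It follows that $\ue' \coloneq (e_1,\ldots,e_{k-1},0,e_k,\ldots)$ is an $\scrL$-monodromic subexpression of $\ux'$, with the same Bruhat stroll as $\ue$ after position $k-1$ and with decoration $U0$ or $D0$ at the new index.

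The proof is an induction on the number of light-leaf steps that lie after position $k$. First, I would slide the dot upward through $\LL_{k-1}\star\id$, which is harmless because the dot only touches the identity strand. The dot then sits directly beneath the step $\phi_k$ built with the rex move $\beta_{\ux_{\leq k-1}}$. If $\lab_k(\ux',\ue') = U$, then composing with the dot is precisely the step $\phi_k'$ of type $U0$ for $\ue'$. This follows from the definition of the $U0$ step ($\id\otimes\epsilon^{s}$ followed by a rex move) together with the fact that $\LL_{k-1}$ has target $\uw_{k-1}=\uw_k'$. The remaining steps of $\LL$ then become the remaining steps of a light leaf for $\ue'$, possibly for a different $\LL$-datum $\delta'$. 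This gives an element of $\hat X_{\ux'}^{\scrL}\subseteq X_{\ux'}^{\scrL}$.

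If instead $\lab_k = D$, then a $D0$ step requires a rex move to $\rex_s(w_{k-1})$ followed by a trivalent vertex. Here I would use the unit relation for the Frobenius structure to rewrite the dot as the composite of a trivalent vertex with a dot, that is, dot $=\mu^s\circ(\id\otimes \text{dot})$ after first inserting the rex move into $\rex_s(w_{k-1})$ and back. Choosing a different $\LL$-datum $\delta'$ absorbs the two rex moves, and the result is a $D0$ step with a dot left on the strand. A polynomial then arises from the barbell and must be forced left; this is allowed because $X_{\ux'}^{\scrL}$ permits arbitrary polynomial decorations.

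I expect the main obstacle to be the step where the dot must pass a later $2m$-valent vertex or a later rex move when $\ux_{\leq k-1}$ is not already reduced to $\uw_{k-1}$. In that situation the dot meets a rex move and has to be absorbed via the two-color dot contraction, Lemma \ref{lem:jones_wenzl}. That lemma produces Jones--Wenzl lower terms. My plan is to first compose $\LL_{k-1}$ with its own final rex move, so that the dot lands on a reduced expression $\uw_{k-1}s$ before any vertex is crossed; the case analysis above then applies directly with no vertex crossings. I would also check separately that no dot can land on a strand with $\scrL x_{k-1}s\neq\scrL x_{k-1}$, which is ruled out by the face-coloring constraint, so block minimality is never needed here.
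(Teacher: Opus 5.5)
Your overall route is the paper's. The face coloring forces $\scrL x_{k-1}s=\scrL x_{k-1}$, so the new index lies in $K(\ux',\scrL)$, the later monodromies are unchanged and $\ue'$ is $\scrL$-monodromic; from there the paper simply cites \cite[Claim 7.10]{EW}. Your $U$-label case is correct: composing with the dot is literally the $U0$ step, with the trivial rex move $\uw_{k-1}\to\uw_{k-1}$.

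The $D$-label case has a genuine gap as written.

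\emph{The round trip is not free.} You cannot ``insert the rex move into $\rex_s(w_{k-1})$ and back.'' A rex move followed by its reverse is not the identity; for example, $\beta_{t,s}\circ\beta_{s,t}=\JW_{s,t}^{\scrL}$ by the Elias--Jones--Wenzl relation. It agrees with $\id$ only up to a strictly negative-positive error (Lemma~\ref{lem:neg_pos_decomps}(4)). So the $D0$-shaped graph you get after absorbing the two rex moves is the graph of $\LL$ with $\id_{B_{\uw_{k-1}}}$ replaced by that round trip, which is in general a different morphism. What is actually needed is that $\uw_{k-1}$ already end in $s$. This is what EW, and the paper in the proof of Lemma~\ref{lem:add_neg_cap}, mean by ``choosing the light leaf appropriately.'' Inside the induction that choice is available only up to correction terms (cf.\ Lemma~\ref{lem:sliding_and_ll_with_errors}). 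Otherwise you must carry $\beta'\circ\beta-\id$ along as an error term.

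\emph{No barbell appears.} After the unit relation the new strand is a broken strand $\eta^s\circ\epsilon^s$, and it is polynomial forcing that removes it. This gives
\[
\id_{B_s}\star\epsilon^s=\mu^s\circ(\id_{B_s}\star\delta_s\star\id_{B_s})-(\id_{B_s}\star s(\delta_s))\circ\mu^s .
\]
You can check this on $R\otimes_{R^s}R\otimes_{R^s}R$ by writing the middle factor as $a+\delta_s b$ with $a,b\in R^s$. Hence, once $\uw_{k-1}$ ends in $s$, the map is a difference of two polynomial-decorated $D0$ light leaves. It therefore lies in the $\k$-span of $X_{\ux'}^{\scrL}$, which is all that $(X_M)$ uses, and not in $X_{\ux'}^{\scrL}$ itself as your remark about polynomial decorations suggests.

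\emph{The obstacle in your last paragraph does not arise.} The new strand runs from the bottom boundary straight to the dot and never meets a vertex of $\LL$. So Lemma~\ref{lem:jones_wenzl} plays no role, and in the $D$ case $\uw_{k-1}s$ is not even reduced.
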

\begin{proof}
    Suppose that a new boundary dot colored by some $s \in S$ is added after the $k$-th strand. 
    Then we must have that $\scrL \ux_{\leq k} s = \scrL \ux_{\leq k}$.
    The result then follows from the same argument given in \cite[Claim 7.10]{EW}.   
\end{proof}

\begin{lemma}\label{lem:add_neg_cap}
    Suppose a negative cap is added below a light leaf yielding a map from $B_{\ux'}^{\scrL}$ for some $\ux' \in \Exp (W)$. The result is in $X_{\ux'}^{\scrL}$.
\end{lemma}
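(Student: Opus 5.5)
The plan is to split into two cases according to whether the cap connects two strands lying in the same $K$-set, that is, whether the color $s$ of the cap fixes the monodromy parameter of the face it sits in. Suppose the cap is attached so that it joins new strands at positions $k+1,k+2$ of $\ux'$, with $\ux' = \ux_{\leq k}\, s s\, \ux_{>k}$. The face between the two strands carries the label $\scrL\ux_{\leq k}s$, and the outer faces carry $\scrL\ux_{\leq k}$.

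\emph{Case $\scrL\ux_{\leq k}s = \scrL\ux_{\leq k}$.} Here the cap $\cap^s$ factors by the one-color relations as a negative trivalent vertex (merge) followed by a bottom dot. Concretely, $\cap^s = \epsilon^s\circ\mu^s$ diagrammatically: first merge the two $s$-strands, then terminate with a dot. Attaching it below a light leaf therefore means attaching first a dot and then a merging trivalent vertex, in the appropriate order from the top down. So we first add the dot below the light leaf on the $(k+1)$-st strand position; by Lemma \ref{lem:add_neg_dot} the result lies in $X_{\ux_{\leq k} s \ux_{>k}}^{\scrL}$, i.e.\ it is a light leaf with polynomial decorations. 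We then add the merging trivalent vertex below it; by Lemma \ref{lem:add_neg_trivalent}, applied to each graph in $\For_{\textnormal{poly}}^{-1}$ (the polynomial boxes play no role in that argument), the result is again a light leaf with polynomial decorations. Hence it lies in $X_{\ux'}^{\scrL}$. Equivalently, one can copy the argument of \cite[Claim 7.11]{EW} verbatim, since all faces near the cap have the same stabilizing behavior as in the non-monodromic case.

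\emph{Case $\scrL\ux_{\leq k}s \neq \scrL\ux_{\leq k}$.} Here $k+1,k+2 \notin K(\ux',\scrL)$, so any monodromic subexpression has $e_{k+1}=e_{k+2}=1$, and the cap is the block-minimality isomorphism $B^{\scrL\ux_{\leq k}}_{ss}\cong B_\emptyset$. I would show directly that the new diagram is a light leaf for $\ux'$ with subexpression $\ue' = (\ue_{\leq k},1,1,\ue_{>k})$. The step $k+1$ is decorated $U1$ or $D1$, and step $k+2$ is the opposite label since $w_{k+2}=w_k$. Following the algorithm, both cases produce the same map. In the $U1$ then $D1$ case, step $k+1$ appends the $s$ strand and applies a rex move $\uw_k s\to\uw_{k+1}$; step $k+2$ then rex-moves back to $\rex_s(w_{k+1}) = \uw_k s$ (choose the $\LL$-datum this way, which is allowed since we range over all $\delta\in\Delta$) and applies $\cap^s$. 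Choosing trivial paths in the rex graph, this is exactly the cap followed by the rest of the original light leaf. The $D1$ then $U1$ case is similar: a cap is followed by recreating a strand. That is equal to the cap placed directly below, by block minimality (cup-cap inverse) and rex-move cancellation.

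The main obstacle is that the later steps $i>k+2$ of the original light leaf use rex moves from $\uw_{i-1}$ computed for $\ux$. The identification requires that the intermediate reduced expressions after step $k+2$ coincide with those for $\ux$. I expect to handle this by choosing $\delta$ appropriately and, where necessary, invoking Lemma \ref{lem:sliding_and_ll_with_errors}(2) to absorb discrepancies into light leaves with errors modulo $I_{\uw}^{\scrL}$, as in the preceding section.
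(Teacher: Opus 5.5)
Your proposal follows the paper's proof: the same split on whether $\scrL\ux_{\leq k}s=\scrL\ux_{\leq k}$, the same reduction of the stabilizing case to Lemmas~\ref{lem:add_neg_dot} and~\ref{lem:add_neg_trivalent}, and in the non-stabilizing case the same $U1$/$D1$ versus $D1$/$U1$ split, settled by choosing the $\LL$-datum and using block minimality.

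Two adjustments concern the obstacle you flag.

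\emph{The $D1$/$U1$ case.} Do not cancel rex moves: a round trip $\uw_k\to\rex_s(w_k)\to\uw_k$ is not the identity, only equal to it modulo lower terms. The paper instead takes $\rex(w_k)$ ending in $s$. Then block minimality, in the form $\cap^s\star\id=\id\star\cap^s$ as maps out of $B_{sss}$, gives an exact equality.

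\emph{Re-choosing $\delta$.} The paper also uses this freedom without comment. In the inductive setting where the lemma is applied, it is better justified by $(L_{M-1})$, applied modulo $I_{\uw}^{\scrL}$ to the light leaf being capped, whose width is at most $M-2$. Lemma~\ref{lem:sliding_and_ll_with_errors} is not the right tool here, because its output consists of light leaves with errors rather than elements of $X_{\ux'}^{\scrL}$.
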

\begin{proof}
    Suppose that the cap is colored by some $s \in S$ and is added after the $k$-th strand.
    If $\scrL \ux_{\leq k} s = \scrL \ux_{\leq k}$, then the result follows from Lemmas \ref{lem:add_neg_trivalent} and \ref{lem:add_neg_dot}.
    We may then assume that $\scrL \ux_{\leq k} s \neq \scrL \ux_{\leq k}$.
    If $s$ is not in the right descent set of $w_k$, then the result is a light leaf, where the new strands are labelled by $U1$ and $D1$.
    If $s$ is in the right descent set, by choosing our light leaf appropriately, we can ensure that $s$ occurs on the right of $\uw_k$.
    By block minimality, we have an equality
    \[
  % \tikzsetnextfilename{#1}
  \tikzstyle{every picture}=[tikzfig]
  \input{./figures/add_neg_cap_0.tikz}
\]
    Note that the RHS is itself a light leaf where the new strand is decorated by $D1$.
\end{proof}

\subsubsection{Attaching Positive Generators}\label{subsec:add_pos_generators}

\begin{lemma}\label{lem:add_pos_trivalent}
    Assume $(L_{M-1})$. Let $f : B_{\ux}^{\scrL} \to B_{\uw}^{\scrL}$ be in $X_{\ux}^{\scrL}$ such that $\ell (\ux) \leq M$.
    If we add a positive (splitting) trivalent vertex below $f$ to obtain a map from $B_{\ux'}^{\scrL}$ for some $\ux' \in \Exp (W)$, then the result is in the span of $X_{\ux'}^{\scrL}$ modulo $I_{\uw}^{\scrL}$.
\end{lemma}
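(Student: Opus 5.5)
We follow the argument of \cite[Claim 7.11]{EW}, adapted to the monodromic face labels. Suppose the splitting trivalent vertex is attached to the $k$-th strand of $\ux$, coloured by $s=s_k$. Since trivalent vertices only exist when the adjacent faces have equal labels, we have $\scrL \ux_{\leq k} = \scrL \ux_{\leq k-1}$, so $k \in K(\ux,\scrL)$. The new expression $\ux'$ is obtained from $\ux$ by doubling the $k$-th letter, and both new indices lie in $K(\ux',\scrL)$. Write $f = \LL_{M'} \circ \cdots$ as a light leaf (with polynomial decorations) built step by step. We look only at the first $k$ steps, $\LL_k = \phi_k \circ (\LL_{k-1} \star \id)$. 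The subsequent steps $\phi_{k+1},\ldots$ are unchanged, because the face labels to the right of the new vertex are unchanged.

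First we dispose of the polynomial decorations. By Lemma \ref{lem:sliding_and_ll_with_errors}(1), $f$ differs from an undecorated light leaf by light leaves with errors modulo $I_{\uw}^{\scrL}$. We would then argue, as in the preceding subsection, that attaching a splitting vertex below a map that is strictly negative-positive on a slice keeps it in a form where $(L_{M-1})$ applies. This is because the error factors through width $< M$. So it suffices to treat $f = {}^{\delta}\LL_{\ux,\ue}^{\scrL}$.

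Next we do a case analysis on $\dec_k(\ux,\ue) \in \{U0,U1,D0,D1\}$. The cases mirror \cite{EW}, since at an index of $K$ the step $\phi_k$ is built from the same one-colour generators as in the non-monodromic setting; only the rex moves carry monodromic labels.
\begin{enumerate}
\item In case $U0$, $\phi_k$ ends the strand in a dot. Trivalent vertex followed by a dot equals the identity, up to one-colour unit relations, on either new strand. The resulting composite is again a light leaf for $\ux'$, with decorations $U0$, $U1$ or $U1$, $U0$ on the two new strands.
\item In case $U1$, we use associativity/coassociativity to rewrite the splitting vertex feeding into a strand that continues. The result is a light leaf with new decorations $U1$, $D0$, using the $D0$ step $\mu^s$ and choosing $\rex_s$ so that $s$ sits at the end.
\item In cases $D0$ and $D1$, the trivalent vertex meets the merging vertex (respectively the cap) inside $\phi_k$. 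We apply Frobenius associativity and the needle/bubble relations. This either produces a light leaf with decorations $(D0,D0)$ or $(D1,D0)$, or it produces a term in which a polynomial $\partial_s$ appears, or a barbell appears, inside a strand.
\end{enumerate}

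The main obstacle is the last situation, in which the rearrangement requires sliding the new vertex through the rex move $\beta$ that precedes $\phi_k$ (the move from $\uw_{k-1}$ to $\rex_s(w_{k-1})$). To handle it, we first isolate the composite $\LL_{k-1}$ together with the vertex. This composite has maxwidth at most $M-1$ when viewed before the doubling, in the sense that its target $\uw_{k-1}s$ has length at most $M-1$. Hence $(L_{M-1})$ lets us replace it by light leaves of our choosing modulo $I$. Terms in $I$ carry a top dot, which propagates up to give either an element of $I_{\uw}^{\scrL}$ or a lower-width map that is again handled by $(L_{M-1})$.

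For the remaining terms, we choose $\rex_s$ so that $\beta$ is trivial. The vertex then meets $\phi_k$ directly, and the local one-colour computation above applies. Polynomials created in this process are absorbed as decorations, which is allowed since we only need membership in $X_{\ux'}^{\scrL}$.
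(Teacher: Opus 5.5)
\textbf{There is a genuine gap: you have set up the wrong local configuration.}

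Adding the splitting vertex $\nu^s$ \emph{below} $f$ means precomposing with $\id\star\nu^s\star\id$.
\begin{itemize}
\item The two upper ends of $\nu^s$ are glued to two adjacent strands $k,k+1$ of $\ux$, so $s_k=s_{k+1}=s$.
\item The new domain $\ux'$ replaces these two letters by a single $s$, so $\ell(\ux')=\ell(\ux)-1$.
\end{itemize}
Your picture has one end of the vertex glued to strand $k$, two new bottom strands, and $\ux'$ obtained by ``doubling'' the $k$-th letter. That is precomposition with the merging vertex $\mu^s$, i.e.\ the negative case of Lemma~\ref{lem:add_neg_trivalent}, which yields a light leaf directly and needs no induction.

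The monodromic input the paper uses is the face-label condition $\scrL\ux_{\le k}=\scrL\ux_{\le k-1}$ on the face enclosed between the two upper ends of $\nu^s$. It forces $k,k+1\in K(\ux,\scrL)$. Hence the vertex and both steps $\phi_k,\phi_{k+1}$ live in a single-label region, and the non-monodromic argument of \cite[Claim 7.15]{EW} applies verbatim. The paper's proof is exactly this observation followed by that citation.

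\textbf{Consequences for the rest of your argument.}
\begin{itemize}
\item Your case analysis on the single decoration $\dec_k$ addresses the wrong situation. In the correct configuration the vertex feeds both $\phi_k$ and $\phi_{k+1}$, so your claim that $\phi_{k+1},\dots$ are unchanged is false. One must analyse the pair $(\dec_k,\dec_{k+1})$.
\item When one of $\dec_k,\dec_{k+1}$ is $U0$, the counit relations $(\epsilon^s\star\id)\circ\nu^s=\id=(\id\star\epsilon^s)\circ\nu^s$ immediately give a light leaf for $\ux'$. The remaining cases require controlling the rex moves inside $\phi_k$ and $\phi_{k+1}$ that separate the two branches of $\nu^s$. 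This is where $(L_{M-1})$ is genuinely needed.
\item Even within your own picture the $U0$ computation is wrong. A dot on top of $\mu^s$ gives $\epsilon^s\circ\mu^s=\cap^s$, i.e.\ decorations $(U1,D1)$, not the identity with decorations $(U0,U1)$ or $(U1,U0)$.
\item Your width bookkeeping does not justify invoking $(L_{M-1})$. That hypothesis requires every horizontal slice to have width at most $M-1$. Just above $\nu^s$ the width is $\ell(\ux)$, which may equal $M$. Neither the length of a target such as $\uw_{k-1}s$, nor the existence of a narrow slice inside an error term, bounds the maxwidth of the composite.
\item You do not need to strip polynomials via Lemma~\ref{lem:sliding_and_ll_with_errors}. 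That lemma is a tool for the $(L_M)$ step, and $X_{\ux'}^{\scrL}$ already contains light leaves with arbitrary polynomial decorations.
\end{itemize}
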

\begin{proof}
    Suppose that a new trivalent vertex is attached to the $k$-th and $(k+1)$-st strands. Then we must have that $\scrL \ux_{\leq k} = \scrL \ux_{\leq k-1}$.
    The result then follows from the same argument given in \cite[Claim 7.15]{EW}.   
\end{proof}

\begin{lemma}\label{lem:add_pos_dot}
    Assume $(L_{M-1})$. Let $f : B_{\ux}^{\scrL} \to B_{\uw}^{\scrL}$  be in $X_{\ux}^{\scrL}$ such that $\ell (\ux) \leq M$.
    If we add a positive (top) dot below $f$ to obtain a map from $B_{\ux'}^{\scrL}$ for some $\ux' \in \Exp (W)$, then the result is in the span of $X_{\ux'}^{\scrL}$ modulo $I_{\uw}^{\scrL}$.
\end{lemma}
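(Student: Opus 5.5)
We plan to follow the argument of \cite[Claim 7.16]{EW}, keeping track of monodromy and block minimality. Suppose the new positive dot, colored $s$, is attached below the $k$-th strand of $f$, so that $\ux = \ux'_{\leq k-1}\, s\, \ux'_{\geq k}$ and $\ux'$ is $\ux$ with the $k$-th letter deleted. Since univalent vertices only exist on strands whose adjacent faces carry the same label, we have $\scrL \ux_{\leq k} = \scrL \ux_{\leq k-1}$, so $k \in K(\ux, \scrL)$. Up to polynomial decorations, $f$ is a light leaf ${}^{\delta}\LL_{\ux,\ue}^{\scrL}$. The algorithm of \S\ref{subsubsec:diag_ll_section} splits $f$ as $f = \Phi \circ (\LL_{k}\star \id)$, where $\LL_k$ is the partial light leaf on $\ux_{\leq k}$, and $\Phi$ collects the remaining steps $\phi_{k+1},\dots$. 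Attaching the dot therefore only changes the first $k$ steps. We get the map $\LL_k \circ (\id_{\ux_{\leq k-1}} \star \eta^s)$ from $B_{\ux_{\leq k-1}}^{\scrL}$ to $B_{\uw_k}^{\scrL}$, and this map has maxwidth at most $M-1$, because the $s$-strand is absent at the bottom and the width of $\LL_k$ is at most $k \le M$.

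We then apply $(L_{M-1})$ to this truncated map. It equals a $R$-linear combination of light leaves $\LL_{\ux_{\leq k-1}, \uf}^{\scrL}$ ending in $\uw_k$, plus a term in $I_{\uw_k}^{\scrL}$. For the basis part, a light leaf on $\ux_{\leq k-1}$ followed by the remaining steps $\Phi$ of $f$ (which were applied to the strands $k+1,\dots$) is again a light leaf construction on $\ux'$. Any polynomials that appear are absorbed, because $X_{\ux'}^{\scrL}$ allows arbitrary polynomial decorations. We have to check that the letters after $k$ still receive valid decorations. The targets $\uw_k$ agree, so the subsequent Bruhat strolls and face labels are unchanged. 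The monodromic condition on the later indices also persists, since $\scrL\ux'_{\leq j-1} = \scrL\ux_{\leq j}$ for $j > k$.

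The main obstacle is the lower-terms part. There a top dot sits on $\uw_k$, and it is then fed into the remaining steps $\phi_{k+1}, \ldots$, which may contain rex moves, trivalent vertices, caps and cups. Following \cite{EW}, we would push this dot upward through $\Phi$ using Lemma \ref{lem:neg_pos_decomps}(2),(4) together with the Jones--Wenzl relation of Lemma \ref{lem:jones_wenzl}. Each step would either produce a top dot on $\uw$, hence a term in $I_{\uw}^{\scrL}$, or reduce to a map of smaller maxwidth handled by $(L_{M-1})$.

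Block minimality handles the new monodromic feature. A dot pushed through a $2m$-valent vertex might try to land on a strand $u$ with $\scrL' u \neq \scrL'$. No dot can live on such a strand, so there the relation of Lemma \ref{lem:jones_wenzl} must be replaced by the monodromic pitchfork relation, Lemma \ref{lem:mon_JW_properties}(3). We would verify by a short case analysis on the rank of $W_{s,t}^{\scrL}$ that the resulting terms again either carry a top dot or factor through a shorter expression.
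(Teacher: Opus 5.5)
Your opening observation, that the dot forces $k \in K(\ux,\scrL)$, is exactly the monodromic input the paper uses before deferring to \cite[Claim 7.17]{EW}. The argument you build on it breaks at the truncation.

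Light leaves never increase width, so $\LL_k \circ (\id_{\ux_{\leq k-1}} \star \eta^s)$ has width $k$ just above the dot. Its maxwidth is therefore $k$, not at most $M-1$; the missing $s$-strand only lowers the width at the very bottom. Now take $\ell(\ux) = M$ with the dot on the last strand, a configuration that does occur in the proof of Proposition \ref{prop:the_evil_EW_prop}. Then the ``truncated'' map is the whole map, and $(L_{M-1})$ says nothing about it.

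What is missing is a local resolution of the dot against the step $\phi_k$. Slide the dot up strand $k$ to just below $\phi_k$ and use the one-color relations, which are available precisely because $k \in K(\ux,\scrL)$:
\begin{itemize}
\item if $\dec_k(\ux,\ue) = U0$, the dot and $\epsilon^s$ form a barbell, i.e.\ the polynomial $\alpha_s$;
\item if $\dec_k(\ux,\ue) = D0$, the unit relation gives $\mu^s \circ (\id \star \eta^s) = \id$;
\item if $\dec_k(\ux,\ue) = D1$, the identity $\cap^s = \epsilon^s \circ \mu^s$ turns the cap with a dot into a dot on the other leg.
\end{itemize}
In all three cases the new strand disappears and the \emph{entire} map has maxwidth at most $M-1$. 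So $(L_{M-1})$ applies to it directly, and the error term already lies in $I_{\uw}^{\scrL}$.

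For $U1$ the same direct argument works unless $\ell(\ux) = M$ and $\ux_{\leq k}$ is reduced with all decorations $U1$. When $k = M$, that exception would make $f$ a rex move onto $\uw$, contradicting the standing assumption $\ell(\uw) < M$. So the case your argument misses is settled entirely by this local analysis.

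Even for $k < M$, your treatment of the lower terms does not close. For the truncated map $T$, $(L_{M-1})$ only gives $T - \sum c\,\LL \in I_{\uw_k}^{\scrL}$. Writing this remainder as $\sum_j (\id \star \eta_j \star \id) \circ g'_j$ gives no control on the maxwidth of the $g'_j$. After the dot is pushed through $\Phi$ and absorbed, you are left with maps $\Phi' \circ (g'_j \star \id)$ out of $B_{\ux'}^{\scrL}$. Without width control on the $g'_j$, $(L_{M-1})$ cannot be applied to these, so ``reduce to a map of smaller maxwidth'' is unjustified.

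Intermediate-target remainders of this sort are what the thinning reduction (Lemma \ref{lem:thinning_reduction}) is there to handle. You would need it, or a strengthened induction hypothesis recording the widths of lower terms. The local resolution above creates no such remainders in the $U0$, $D0$, $D1$ cases.

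Finally, your worry about a dot being pushed onto a strand $u$ with $\scrL' u \neq \scrL'$ is misplaced. Lemma \ref{lem:jones_wenzl} applies whenever the dot itself is legal. Its right-hand side comes from $\JW_{s,t}^{\scrL}$ via endosimple expansion, so it only places dots where they are allowed. The pitchfork relation is not the relevant tool.
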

\begin{proof}
    Suppose that a new top dot is added below the $k$-th strand. 
    Then we must have that $\scrL \ux_{\leq k} = \scrL \ux_{\leq k-1}$.
    The result then follows from the same argument given in \cite[Claim 7.17]{EW}.   
\end{proof}

\begin{lemma}\label{lem:add_pos_cap}
    Assume $(L_{M-1})$. Let $f : B_{\ux}^{\scrL} \to B_{\uw}^{\scrL}$ be in $X_{\ux}^{\scrL}$ such that $\ell (\ux) \leq M$.
    If we add a positive cup below $f$ to obtain a map from $B_{\ux'}^{\scrL}$ for some $\ux' \in \Exp (W)$, then the result is in the span of $X_{\ux'}^{\scrL}$ modulo $I_{\uw}^{\scrL}$.
\end{lemma}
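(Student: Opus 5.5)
Adding a positive cup below $f$ attaches two new adjacent strands colored $s$ at positions $k+1,k+2$ of $\ux'$, joined by a cup. I will split according to whether $\scrL \ux_{\leq k} s = \scrL \ux_{\leq k}$.

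\emph{Case $\scrL \ux_{\leq k} s = \scrL \ux_{\leq k}$.} Here the one-color relations (the Frobenius structure on $B_s^{\scrL\ux_{\leq k}}$) let me write the cup as a splitting trivalent vertex capped by a top dot, i.e.\ $\cup^s = \nu^s \circ \eta^s$. So I attach a positive dot first; by Lemma \ref{lem:add_pos_dot} the intermediate map is in the span of $X^{\scrL}$ modulo $I_{\uw}^{\scrL}$. Then I attach the splitting trivalent vertex to each summand and apply Lemma \ref{lem:add_pos_trivalent}. Some bookkeeping is needed, since the intermediate domain has length $\ell(\ux)+1$ and the lemmas assume length $\leq M$. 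Because $\ell(\uw) < M$, any intermediate map of width $>M$ factors through smaller widths. Alternatively I would follow \cite[Claim 7.18]{EW}, which handles the cup in exactly this way.

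\emph{Case $\scrL \ux_{\leq k} s \neq \scrL \ux_{\leq k}$.} This is the genuinely monodromic case, where the cup is the block-minimality isomorphism $R \to R_s\otimes R_s$. Let $w_k$ be the partial evaluation of the light leaf $f$ after $k$ steps. First I truncate $f$ after step $k$, writing $f = g \circ (\LL_k \star \id)$ with $\LL_k$ the partial light leaf ending at $B_{\uw_k}^{\scrL}$. Then I insert the cup on the right of $\uw_k$.

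If $w_k s > w_k$, then $\uw_k s$ is reduced, and labeling the two new strands $U1$ and $D1$ (the $D1$ step uses $\cap^s$ with $\rex_s(w_k s)$ chosen as $\uw_k s$) reproduces the cup followed by a cap. Block minimality says cap $\circ$ cup $= \id$, so the new map is literally a light leaf with the extra steps, possibly after changing rex moves. Lemma \ref{lem:sliding_and_ll_with_errors} shows that changing rex moves costs only error terms modulo $I_{\uw}^{\scrL}$.

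If $w_k s < w_k$, I choose $\rex(w_k)$ ending in $s$, again at the cost of errors. I label the new strands $D1$ and then $U1$: the $D1$ step caps the final $s$ of $\uw_k$ against the first new strand, and the $U1$ step re-extends by the second new strand. Block minimality turns the zigzag into the identity, which recovers the original map. This mirrors the argument of Lemma \ref{lem:add_neg_cap}.

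\emph{Expected obstacle.} The main difficulty will be controlling the error terms produced by the rex-move changes. Each of them has to be absorbed either into $I_{\uw}^{\scrL}$ or into light leaves with errors. Showing those light leaves with errors lie in the span of $X_{\ux'}^{\scrL}$ needs $(L_{M-1})$ on the partial maps, which have maxwidth $\leq M-1$ after truncating before the last step. I would handle this exactly as in the $(X_M)$ and $(L_{M-1}) \Rightarrow (L_M)$ step above.
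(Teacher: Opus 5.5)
Your case split and the local moves you invoke (a circle removed by block minimality, a zigzag straightened) are the right ones. However, you have the operation backwards, and as a result your second case is written for the wrong configuration.

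Adding a positive cup below $f$ means precomposing with $\id\star\cup^s\star\id$. The cup caps off two \emph{existing} adjacent $s$-strands of $\ux$, say in positions $k,k+1$. So $\ux'$ is $\ux$ with these two letters deleted, and $\ell(\ux')=\ell(\ux)-2$. Inserting two new domain strands joined at the bottom is instead the negative cap of Lemma~\ref{lem:add_neg_cap}. That is why your second case chooses labels for ``new strands'', ends with ``a light leaf with the extra steps'', and mirrors that lemma.

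In your first case the reversal only garbles the order. Since $\cup^s=\nu^s\circ\eta^s$, the generator adjacent to $f$ is the splitting vertex. So you should apply Lemma~\ref{lem:add_pos_trivalent} first, on a domain of length $\ell(\ux)-1\le M$. Then apply Lemma~\ref{lem:add_pos_dot} to each resulting element of $X$, using that $I_{\uw}^{\scrL}$ is stable under precomposition. Domains only shrink, so your concern about length $\ell(\ux)+1$ never arises.

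In the case $\scrL\ux_{\leq k-1}s\neq\scrL\ux_{\leq k-1}$, the idea you need is that nothing is chosen. Since $k,k+1\notin K(\ux,\scrL)$, monodromicity forces $e_k=e_{k+1}=1$. The decorations are therefore $(U1,D1)$ if $w_{k-1}s>w_{k-1}$, and $(D1,U1)$ otherwise.
\begin{itemize}
\item In the first subcase, arrange that no rex moves intervene. Then the $D1$ step of $f$ is a cap on exactly strands $k,k+1$. The new cup closes it into an $s$-circle, which block minimality removes.
\item In the second subcase, arrange that $\uw_{k-1}$ ends in $s$. The $D1$ cap joins the last strand of $\uw_{k-1}$ to strand $k$, and together with the new cup it forms a zigzag. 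This straightens by isotopy; block minimality is not needed here.
\end{itemize}
In both subcases the result is a light leaf of $\ux'$ for $\ue$ with its $k$-th and $(k+1)$-st entries deleted (note $w_{k+1}=w_{k-1}$). Hence it lies in $X_{\ux'}^{\scrL}$. This is the paper's argument.

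Your remark that re-choosing rex moves costs light leaves with errors is legitimate. The paper passes over this point with ``we can replace our light leaf''.
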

\begin{proof}
    Suppose that the cap is colored by some $s \in S$ and is attached to the $k$-th and $(k+1)$-st strands.
    If $\scrL \ux_{\leq k} s = \scrL \ux_{\leq k}$, then the lemma follows from Lemmas \ref{lem:add_pos_trivalent} and \ref{lem:add_pos_dot}.
    We may then assume that $\scrL \ux_{\leq k} s \neq \scrL \ux_{\leq k}$.
    The only possible decorations for $e_k$ and $e_{k+1}$ are either ($U1$, $D1$) or ($D1$, $U1$).
    In the former case, we can replace our light leaf so that it has no rex move above $e_k$. By block minimality, we have an equality
    \[
  % \tikzsetnextfilename{#1}
  \tikzstyle{every picture}=[tikzfig]
  \input{./figures/add_pos_cup_0.tikz}
\]
    which is itself a light leaf.
    In the latter case, by choosing our light leaf appropriately, we can ensure that $s$ occurs on the right of $\uw_{k}$.
    By isotopy, we have an equality
    \[
  % \tikzsetnextfilename{#1}
  \tikzstyle{every picture}=[tikzfig]
  \input{./figures/add_pos_cup_1.tikz}
\]
    which, again, is itself a light leaf.
\end{proof}

\subsubsection{Attaching a \texorpdfstring{$2m$}{2m}-valent Vertex}\label{subsec:add_neg_generators}

The last major component we need to complete the proof of Proposition \ref{prop:the_evil_EW_prop} is the claim that adding a $2m$-valent vertex below a light leaf is in the span of light leaves modulo lower terms.

\begin{lemma}\label{lem:add_2m_vertex}
    Assume $(L_{M-1})$. Let $f : B_{\ux}^{\scrL} \to B_{\uw}^{\scrL}$ be in $X_{\ux}^{\scrL}$ such that $\ell (\ux) \leq M$.
    If we add a $2m$-valent vertex below $f$ to obtain a map from $B_{\ux'}^{\scrL}$ for some $\ux' \in \Exp (W)$, then the result is in the span of $X_{\ux'}^{\scrL}$ modulo $I_{\uw}^{\scrL}$.
\end{lemma}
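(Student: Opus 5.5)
We follow the strategy of \cite[Claim 7.18 and \S 7.5]{EW}, adapting each step to the monodromic labels. Suppose the $2m$-valent vertex $\beta(\,{}_s\underline{a}, {}_t\underline{b})$ is attached below $f$ along consecutive strands $k+1,\dots,k+b$ of $\ux$. By rotation invariance and isotopy, together with Lemmas \ref{lem:add_neg_trivalent}--\ref{lem:add_pos_cap} for the cups and caps this requires, we first reduce to the case $a=b=m$. In this case the vertex is a rex move $\beta_{s,t}: B_{\ux'}^{\scrL}\to B_{\ux}^{\scrL}$ with $\ux' = \ux_{\leq k}\,{}_s\underline{m}\,\ux_{>k+m}$.

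Next we choose the light leaf so that it factors conveniently. By $(L_{M-1})$ and Lemma \ref{lem:sliding_and_ll_with_errors}, the first $k$ steps of $f$ may be replaced by any $\LL$-map of our choosing, at the cost of terms in $I_{\uw}^{\scrL}$ and errors. Write $x=w_k$ for the element reached after $k$ steps. The $\LL$-datum can then be taken so that $\rex(x)$ ends in a reduced expression for the minimal coset representative of $x$ in $x W_{\{s,t\}}$, followed by the appropriate part in $W_{\{s,t\}}$. This isolates the portion of the light leaf on strands $k+1,\dots,k+m$ as a light leaf for the dihedral group $W_{\{s,t\}}$, with starting monodromy parameter $\scrL x$. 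The problem thereby reduces to a local statement: precomposing any dihedral light leaf out of $B_{{}_t\underline{m}}^{\scrL x}$ with $\beta_{s,t}$ gives an element of the span of dihedral light leaves out of $B_{{}_s\underline{m}}^{\scrL x}$, modulo maps with a top boundary dot. Such a dot then propagates to $I_{\uw}^{\scrL}$ or to maxwidth $\leq M-1$, where $(L_{M-1})$ applies.

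The local dihedral statement is the main obstacle. In the non-monodromic case it is proved in \cite[\S 7.5]{EW} by an explicit case analysis of two-colored light leaves using the Jones--Wenzl relation. Here we handle it with two-color endoscopy. If $\operatorname{rk} W_{s,t}^{\scrL x}<2$, the block minimality relation together with Lemma \ref{lem:dihedral_facts_about_blocks}(3) collapses $\beta_{s,t}$ to cups, caps and identities, so the claim follows from Lemmas \ref{lem:add_neg_cap} and \ref{lem:add_pos_cap}. If the rank is $2$, we use Lemma \ref{lem:dihedral_facts_about_blocks}(3) to translate to the neutral block. Proposition \ref{prop:two_color_endoscopy} then identifies the relevant maps with images under $\Psi_{\scrL x}^{\iota,\mathrm{diag}}$ of the endoscopic dihedral category for $W_v$ with $v = v_{s,t}^{\scrL x}$. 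Under this identification, $\beta_{s,t}$ corresponds to $\beta_{s',t'}$, and dihedral monodromic light leaves correspond, up to rex moves and block minimality, to endoscopic light leaves. We then apply \cite[\S 7.5]{EW} inside $W_v$, which is legitimate because $\fr{h}$ is a balanced Abe realization for $W_v$ by Corollary \ref{cor:refl_abe_are_endo} and Proposition \ref{prop:balanced_realizations_and_reflections}. Finally we transport the resulting relation back along $\Psi_{\scrL x}^{\iota,\mathrm{diag}}$.

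The delicate points come at this transport-back step. We must check that top boundary dots on the endoscopic side become top dots, or maps of smaller maxwidth, on the monodromic side. We must also check that the truncation $\tau$ and endosimple expansion preserve negativity, which is known from the proof of Lemma \ref{lem:neg_pos_decomps}(1). Both checks require tracking the face labels along the way.
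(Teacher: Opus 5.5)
Your reduction to a local dihedral statement (rotate to $a=b=m$, choose $\rex(w_k)$ to end with a minimal coset representative followed by a dihedral part, push top dots into $I_{\uw}^{\scrL}$ or into maxwidth $\le M-1$) matches the paper's. The paper does the same thing via Lemma~\ref{lem:thinning_reduction}, placing the vertex at the right end with $\ux_{\le l}$ reduced and decorated by $U1$'s.

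The gap is the dihedral step itself. You transport it to $W_v$ by Proposition~\ref{prop:two_color_endoscopy} and then ``apply \cite[\S 7.5]{EW} inside $W_v$''. That is exactly the step the paper identifies as unproved. Elias--Williamson do not settle the dihedral case of \cite[Claim 7.13]{EW} by a diagrammatic case analysis. They appeal to the two-color Hom-space computations of \cite{Elib} via Soergel's categorification theorem, and these require lesser invertibility and local non-degeneracy of the realization. All you know about the restricted realization for $W_v$ is that it is a balanced Abe realization (Corollary~\ref{cor:refl_abe_are_endo}, Proposition~\ref{prop:balanced_realizations_and_reflections}). So the cited result is not available, even when $\fr{o}=1$. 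You also cannot substitute the double leaves basis for the dihedral category: that is the special case $\fr{o}=1$, $W=W_v$ of the theorem this lemma feeds into. Endoscopy therefore only relocates the problem.

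What is missing is a direct diagrammatic proof of the dihedral claim, which the paper supplies as Claim~\ref{claim:dihedral_add_2m_vertex}:
\begin{enumerate}
\item Reduce to alternating $\ux={}_u\underline{n}$ (Lemma~\ref{lem:red_to_alternating}).
\item Discard $U0$ decorations over the vertex using Lemma~\ref{lem:jones_wenzl}.
\item Settle $l=0$ (all $U1$) by the Elias--Jones--Wenzl relation.
\item For $l>0$, use Lemma~\ref{lem:vertex_replacement} and induction to rewrite the first $n-1$ steps as a light leaf over a $2m$-valent vertex.
\item Treat the five cases for $(\dec_n(\ue),\dec_{n-1}(\uf))$ by two-color associativity, block minimality, Lemma~\ref{lem:add_neg_trivalent}, polynomial forcing, and the splitting of $B_t^{\scrL'}\star B_t^{\scrL'}$.
\end{enumerate}

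This works uniformly in the monodromic setting and for every rank of $W_{s,t}^{\scrL}$, so no transport is needed. It also avoids the transport-back issues you leave open: monodromic dihedral light leaves target $\rex(w)$ and are built from $W$-rex moves, so they are not literally images of endoscopic light leaves under $\tau\circ\Psi_{\scrL x}^{\iota,\mathrm{diag}}$.

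Separately, your rank $<2$ case is wrong as stated. $\beta_{s,t}$ is never a composite of cups, caps and identities. Such diagrams only insert or delete adjacent pairs $(u,u)$, and ${}_s\underline{m}\neq{}_t\underline{m}$ in the free product $\langle s\rangle*\langle t\rangle$. In that case you must instead use the Elias--Jones--Wenzl relation with $\JW_{s,t}^{\scrL}=\id$, as in the paper's base case.
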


The non-monodromic variant of Lemma \ref{lem:add_2m_vertex} is \cite[Claim 7.13]{EW}.
Our proof is quite different from \cite{EW}. In fact, there is a subtle error in the proof provided by Elias--Williamson.
The general strategy is to reduce to the case of $W$ being a dihedral group. 
From here, Elias--Williamson appeal to the calculations of morphisms spaces done in \cite{Elib} using Soergel's categorification theorem.
Unfortunately, \cite{Elib} crucially imposes additional constraints on realizations, namely that of ``lesser invertibility'' and ``local non-degeneracy''.
These constraints are much stronger than the rather relaxed constraints on realizations imposed in \cite{EW} or in this work. 
Our proof constitutes a corrected proof of \cite[Claim 7.13]{EW} by taking $\fr{o} = 1$.

\begin{lemma}[{\cite[Claim 7.12]{EW}}]\label{lem:thinning_reduction}
    Assume $(L_{M-1})$ and $\ell (\ux) = M$. 
    Let $f : B_{\ux}^{\scrL} \to B_{\uw}^{\scrL}$ such $f$ never returns to width $M$ after leaving it.
    Suppose further that $f$ can be written as follows for $0 < k < M$.
    \[
  % \tikzsetnextfilename{#1}
  \tikzstyle{every picture}=[tikzfig]
  \input{./figures/thinning_reduction_0.tikz}
\]
    Then $f$ is in the span of $X_{\ux}^{\scrL}$ modulo $I_{\uw}^{\scrL}$.  
\end{lemma}

\begin{claim}\label{claim:dihedral_add_2m_vertex}
    Assume $(L_{M-1})$. Let $W$ be a finite dihedral group of type $I_2 (m)$.
    Let $l \geq 0$ such that $l + m \leq M$ and take $\ux \in \Exp (W)$ such that $\ell (\ux) = l + m$.
    Take a subexpression $\ue$ of $\ux$ such that $e_i$ are decorated by $U1$ for all $1 \leq i \leq l$. I.e., $(\ux, \ue)_{\leq l}$ is a reduced expression and $e_i = 1$ for all $1 \leq i \leq l$.
    Then the following diagram
    \begin{equation}\label{thm:1}
            
  % \tikzsetnextfilename{#1}
  \tikzstyle{every picture}=[tikzfig]
  \input{./figures/add_2m_valent/thm_1.tikz}

    \end{equation}
    is in the span of $X_{\ux'}^{\scrL}$ modulo $I_{\ux'}^{\scrL}$ (where $\ux'$ is the expression obtained by applying a braid relation at the end of $\ux$).
\end{claim}

\begin{midsecproof}{Lemma \ref{lem:add_2m_vertex} assuming Claim \ref{claim:dihedral_add_2m_vertex}}
    We provide a sketch of the argument. For more details, see \cite[Claim 7.13]{EW}.
    Suppose that we add the $2m$-valent vertex of colors $s,t$ to strands $\ux_{l + 1}$ through $x_{l + m}$ of a light leaf $\LL_{\ux, \ue}^{\scrL}$.
    If $l + m < \ell (\ux)$, we can use Lemma \ref{lem:thinning_reduction} and be done.
    As a result, we may assume that $l+m = \ell (\ux) = M$.
    Another application of Lemma \ref{lem:thinning_reduction} allows us to further reduce to the following special case: $\ux_{\leq l}$ is a reduced expression and $e_{\leq l}$ is decorated solely by $U1$'s.

    By altering rex moves, we can choose any desired reduced expression $\underline{w}_l$ for $w_l$.
    Write $w_l = zv$ for $v \in W_{s,t}$ and $z$ a minimal right coset representative of $W_{s,t}$.
    We can then choose a reduced expression for $w_{l}$ of the form $\underline{z} \underline{v}$ where $\underline{z}$ and $\underline{v}$ are reduced expressions for $v$ and $u$ respectively.
    By further modifying rex moves, we may then assume that $\LL_{\ux, \ue}^{\scrL}$ is a concatenation of a light leaf with domain $B_{\underline{z}}^{\scrL}$ and a light leaf with domain $B_{\underline{v} {}_u \underline{m}}^{\scrL \underline{z}}$ for some $u \in \{s,t\}$ where $\{s,t\}$ are the colors on $\beta$.
    Without loss of generality, we may then assume that $w_l = v$ and that the diagram is an Elias--Williamson graph for $W_{s,t}$.
    We are then done by Claim \ref{claim:dihedral_add_2m_vertex}.  
\end{midsecproof}

The proof of Claim \ref{claim:dihedral_add_2m_vertex} will be by induction on the length of $\ux$. We first need to set up some preliminary lemmas which will allow the induction to be performed.

\begin{lemma}\label{lem:red_to_alternating}
    Assume that Claim \ref{claim:dihedral_add_2m_vertex} holds for all $\ux \in \Exp (W)$ of the form ${}_s \underline{n}$ and ${}_t \underline{n}$.
    Then Claim \ref{claim:dihedral_add_2m_vertex} holds for all $\ux \in \Exp (W)$.
\end{lemma}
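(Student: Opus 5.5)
The plan is to reduce a general $\ux$ to alternating ones by working on the prefix $\ux_{\leq l}$. By hypothesis $(\ux,\ue)_{\leq l}$ is reduced with every decoration $U1$, and $W$ is dihedral. Every reduced expression in $W=I_2(m)$ of length $<m$ is automatically alternating. The only reduced expressions of length $m$ are ${}_s\underline{m}$ and ${}_t\underline{m}$. So $\ux_{\leq l}$ is already an alternating word ${}_u\underline{l}$ for some $u\in\{s,t\}$, and the only freedom lies in how this prefix joins onto the final block ${}_{u'}\underline{m}$ where the $2m$-valent vertex is attached.

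First I handle the case where the last letter of $\ux_{\leq l}$ differs from the first letter $u'$ of the final block. Then $\ux={}_u\underline{l}\,{}_{u'}\underline{m}$ is itself alternating, of the form ${}_u\underline{l+m}$, and the hypothesis applies directly.

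Second, suppose the last letter of the prefix equals $u'$, so $\ux$ contains a repeated adjacent pair $(u',u')$ at positions $l,l+1$. Since $e_l=1$ with decoration $U1$, the light leaf $\LL_{\ux,\ue}^{\scrL}$ passes the $l$-th strand straight up into $\uw_l$ (a rex move of alternating words, which I may choose trivially). I would then treat two sub-cases according to the monodromy.

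If $\scrL\ux_{\leq l}u'\neq\scrL\ux_{\leq l}$, block minimality lets me insert a cup/cap pair. This rewrites the diagram as a light leaf on the shorter alternating expression obtained by cancelling the pair $(u',u')$, with the $2m$-valent vertex now attached to an alternating word, composed below with a negative cap. I would apply the alternating case of the claim, which is legitimate because it is a shorter alternating expression of length $\leq M$. Then Lemma \ref{lem:add_neg_cap} shows that attaching the cap keeps us in the span of $X_{\ux'}^{\scrL}$.

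If instead the $u'$-strand fixes the monodromy, I use the Frobenius structure: $\id_{B_{u'u'}}$ factors through a trivalent vertex. Concretely I would write the relevant identity using the merge/split relations, so that the diagram becomes a negative trivalent merge attached below a diagram on the shorter alternating expression. Modulo lower terms this is covered by $(L_{M-1})$ together with Lemma \ref{lem:add_neg_trivalent}, using Lemma \ref{lem:thinning_reduction} when the width drops below $M$.

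The main obstacle is this last sub-case. After the merge, the width in the middle is $M-1$, and I must check that the resulting light-leaf shape really falls under the alternating hypothesis with an all-$U1$ prefix. The subexpression on the merged strand may change decoration (for example to $D$ if $u'$ is a descent), so I would split further on whether $\ue_{\leq l}$ followed by $u'$ goes up or down. In the down case I would use the freedom to choose $\rex_{u'}$ so that the trailing $u'$ is absorbed, reducing to the up case.
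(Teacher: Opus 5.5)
There is a genuine gap in the repeated-junction case. You never explain how the cap or merge at strands $l,l+1$ gets past the $2m$-valent vertex, and the rewritings you assert do not exist.

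\emph{Case split.} The split that matters is by decoration, not by monodromy. Since $\ux_{\le l}$ is a reduced all-$U1$ expression ending in $u'$, we have $w_l u'<w_l$, so the label of $e_{l+1}$ is forced to be $D$. Your ``up versus down'' subcase therefore never arises. Choosing $\rex_{u'}$ cannot change a label either, since labels depend only on the Bruhat stroll. So $e_{l+1}$ is either $D1$, a cap (whether or not $u'$ fixes the monodromy), or $D0$, a merge (which forces $\scrL\ux_{\le l}u'=\scrL\ux_{\le l}$). After trivializing the rex move at step $l$, the light leaf already contains this cap or merge on strands $l,l+1$. Inserting a cup/cap pair adds nothing. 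Inserting a Frobenius decomposition of $\id_{B_{u'}B_{u'}}$ is not a single factorization through a trivalent vertex: it is a sum of two terms with polynomial decorations, and you do not handle those.

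\emph{Sub-case (a).} Deleting the pair $(u',u')$ from $\ux$ gives $\ux_{\le l-1}(x_{l+2},\dots,x_{l+m})$. This contains the repeated pair $(v',v')$ whenever $l\ge 2$, where $v'$ is the other colour, so it is not alternating. Moreover, the bottom word $\ux_{\le l}\,{}_{v'}\underline{m}$ is alternating, so no cap can be ``composed below'' the diagram. The alternating hypothesis therefore has nothing to apply to in the form you describe.

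\emph{What actually works.} The paper splits on $D1$ versus $D0$.
\begin{itemize}
\item In the $D1$ case, the cap joining strand $l$ to the top-left edge of the vertex is, by isotopy, a rotated vertex. Re-expand it as a $2m$-valent vertex on strands $l,\dots,l+m-1$ followed by a cap on its right side.
\begin{itemize}
\item That cap is a negative cap below a light leaf on a word of length $M-2$, so Lemma~\ref{lem:add_neg_cap} applies.
\item The leftover vertex is no longer at the right end and is handled by Lemma~\ref{lem:thinning_reduction}.
\end{itemize}
This treats both monodromy situations at once.
\item In the $D0$ case, two-color associativity trades the merge sitting on top of the vertex for three pieces.
\begin{itemize}
\item A vertex at the end of the alternating word $\ux_{\le l}(x_{l+2},\dots,x_{l+m})$, whose prefix $\ux_{\le l-1}$ is all $U1$. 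This is where the alternating hypothesis is used.
\item A merge of the other colour beneath it, handled by Lemma~\ref{lem:add_neg_trivalent}.
\item A second vertex not at the right end, again handled by Lemma~\ref{lem:thinning_reduction}.
\end{itemize}
\end{itemize}
Your sub-case (b) omits two-color associativity entirely. A merge attached to the top of the vertex cannot simply be regarded as ``a negative trivalent merge attached below a diagram on the shorter alternating expression.''
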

\begin{proof}
    By definition, $\ux$ is either of the form ${}_s \underline{l} {}_u \underline{m}$ or  ${}_t \underline{l} {}_u \underline{m}$ for some $u \in \{s,t\}$.
    The only case not covered by our assumption is when the last term of ${}_s \underline{l}$ is $u$. For simplicity, we set $u = s$. 
    At the cost of lower terms, we may modify the $l$-th step of the light leaf construction so that it has no rex moves.
    
    Assume that $e_{l+1}$ is decorated by $D1$.
    We can then compute
    \begin{equation}
        
  % \tikzsetnextfilename{#1}
  \tikzstyle{every picture}=[tikzfig]
  \input{./figures/add_2m_valent/red_to_alt_1.tikz}
 =  
  % \tikzsetnextfilename{#1}
  \tikzstyle{every picture}=[tikzfig]
  \input{./figures/add_2m_valent/red_to_alt_2.tikz}
 = 
  % \tikzsetnextfilename{#1}
  \tikzstyle{every picture}=[tikzfig]
  \input{./figures/add_2m_valent/red_to_alt_3.tikz}
 = 
  % \tikzsetnextfilename{#1}
  \tikzstyle{every picture}=[tikzfig]
  \input{./figures/add_2m_valent/red_to_alt_4.tikz}
 \textnormal{ mod } I_{\ux}^{\scrL} 
    \end{equation}
    where the last equality follows from Lemma \ref{lem:add_neg_cap}. 
    The case can then be finished by Lemma \ref{lem:thinning_reduction}.

    Now assume that $e_{l+1}$ is decorated by $D0$. We compute
    \begin{equation}
        
  % \tikzsetnextfilename{#1}
  \tikzstyle{every picture}=[tikzfig]
  \input{./figures/add_2m_valent/red_to_alt_5.tikz}
 =  
  % \tikzsetnextfilename{#1}
  \tikzstyle{every picture}=[tikzfig]
  \input{./figures/add_2m_valent/red_to_alt_6.tikz}
 = 
  % \tikzsetnextfilename{#1}
  \tikzstyle{every picture}=[tikzfig]
  \input{./figures/add_2m_valent/red_to_alt_7.tikz}
 = 
  % \tikzsetnextfilename{#1}
  \tikzstyle{every picture}=[tikzfig]
  \input{./figures/add_2m_valent/red_to_alt_4.tikz}
 \textnormal{ mod } I_{\ux}^{\scrL},
    \end{equation}
    where the second equality is 2-color associativity and the last equality is a combination of Claim \ref{claim:dihedral_add_2m_vertex} for $\ux$ alternating and Lemma \ref{lem:add_neg_trivalent}.
    As in the previous case, we are done by Lemma \ref{lem:thinning_reduction}.
\end{proof} 

\begin{lemma}\label{lem:vertex_replacement}
    Assume that Claim \ref{claim:dihedral_add_2m_vertex} holds for a fixed $l \geq 0$. Let $\ell (\ux) = l + m$.
    Then every light leaf $\LL_{\ux, \ue}^{\scrL}$ is in the span of morphisms of the form 
    \[ 
  % \tikzsetnextfilename{#1}
  \tikzstyle{every picture}=[tikzfig]
  \input{./figures/add_2m_valent/thm_1f.tikz}
\]
    modulo $I_{\ux}^{\scrL}$.
\end{lemma}
\begin{proof}
    By Claim \ref{claim:dihedral_add_2m_vertex}, the following map
    \begin{equation*}
            
  % \tikzsetnextfilename{#1}
  \tikzstyle{every picture}=[tikzfig]
  \input{./figures/add_2m_valent/thm_1.tikz}

    \end{equation*}
    is in the span of $X_{\ux'}^{\scrL}$ modulo $I_{\ux'}^{\scrL}$ (where $\ux'$ is the expression obtained by applying a braid relation at the end of $\ux$).
    We can then attach another $2m$-valent vertex under the right most $m$-strands and apply the Elias--Jones--Wenzl relation to complete the proof.
\end{proof}

\begin{midsecproof}{Claim \ref{claim:dihedral_add_2m_vertex}}
    By Lemma \ref{lem:red_to_alternating}, we can reduce to the case where $\ux$ is of the form ${}_u \underline{n}$ and $n = l + m$.
    We will then argue by induction on $l \geq 0$. To keep notation and pictures concise, we will always assume that $\ux = {}_u \underline{n}$ ends in $t$.
    Before proceeding with the induction, we can make a useful observation. Without loss of generality, we may assume that $\ue$ has no decorations of the form $U0$.
    Indeed, if $\ue$ has a decoration of $U0$ over the $2m$-valent vertex, then the Jones--Wenzl relations reduce the computation to checking whether adding a non-neutral generator below a light leaf is in the span of light leaves modulo lower terms.
    These have already been shown in \S\ref{subsec:add_pos_generators} and \S\ref{subsec:add_neg_generators}.

    If $l = 0$, then $\ux = \underline{m}_t$. Since we only consider $\ue$ with no $U0$ decorations, it easily follows that $\ue$ must be decorated solely by $U1$'s.
    In this case, $\LL_{\ux, \ue}^{\scrL}$ composed with a $2m$-valent vertex is simply a series of compositions of $2m$-valent vertices.
    The base case then follows from successive applications of the Elias--Jones--Wenzl relation.

    Now assume that $l > 0$. Write $\ue = \ue_{\leq n-1} e_n$.
    By Lemma \ref{lem:vertex_replacement} and induction, we may replace $\LL_{\ux_{\leq n-1}, \ue_{\leq n-1}}^{\scrL}$ by 
    \begin{equation}\label{eq:replacement_via_valent_symmetry}
        
  % \tikzsetnextfilename{#1}
  \tikzstyle{every picture}=[tikzfig]
  \input{./figures/add_2m_valent/thm_2.tikz}

    \end{equation}
    where $\uf$ is a subexpression of $\uy$ such that $\uf$ is decorated by $U1$ for all $1 \leq i \leq l-1$.
    There are 5 possible cases for the pair $(\dec_n (\ue), \dec_{n-1} (\uf))$ which avoids a $U0$ decoration on $\ue$: ($D1$, $U1$), ($D1$, $D0$), ($D0$, $U1$), ($D0$, $D0$), and ($U1$, $U0$).
    Throughout our case-by-case analysis, we will often abuse notation and write $\LL_k$ to refer to a light leaf of maxwidth $k$.

    \emph{Case 1: $\dec_n (\ue) = D1$ and $\dec_{n-1} (\uf) = U1$.}
    We can compute
    \[ 
  % \tikzsetnextfilename{#1}
  \tikzstyle{every picture}=[tikzfig]
  \input{./figures/add_2m_valent/case_1a_0.tikz}
 = 
  % \tikzsetnextfilename{#1}
  \tikzstyle{every picture}=[tikzfig]
  \input{./figures/add_2m_valent/case_1a_1.tikz}
 = 
  % \tikzsetnextfilename{#1}
  \tikzstyle{every picture}=[tikzfig]
  \input{./figures/add_2m_valent/case_1a_2.tikz}
 = 
  % \tikzsetnextfilename{#1}
  \tikzstyle{every picture}=[tikzfig]
  \input{./figures/add_2m_valent/case_1a_3.tikz}
 \text{ mod } I_{\ux'}^{\scrL}.\]
    Since the domain of the light leaf on the LHS of the first
    equation ends in $t$ and the codomain of the rex move ends in $t$, modulo lower terms,
    we pull the cap through the rex move and the light leaf. 
    The Jones--Wenzl morphism can be absorbed into the light leaf above it using the results from \S\ref{subsec:add_pos_generators} and \S\ref{subsec:add_neg_generators}.

    \emph{Case 2: $\dec_n (\ue) = D1$ and $\dec_{n-1} (\uf) = D0$.}
    We can compute
    \[ 
  % \tikzsetnextfilename{#1}
  \tikzstyle{every picture}=[tikzfig]
  \input{./figures/add_2m_valent/case_1a_0.tikz}
 = 
  % \tikzsetnextfilename{#1}
  \tikzstyle{every picture}=[tikzfig]
  \input{./figures/add_2m_valent/case_1b_1.tikz}
 = 
  % \tikzsetnextfilename{#1}
  \tikzstyle{every picture}=[tikzfig]
  \input{./figures/add_2m_valent/case_1b_2.tikz}
 = 
  % \tikzsetnextfilename{#1}
  \tikzstyle{every picture}=[tikzfig]
  \input{./figures/add_2m_valent/case_1b_3.tikz}
 = 
  % \tikzsetnextfilename{#1}
  \tikzstyle{every picture}=[tikzfig]
  \input{./figures/add_2m_valent/case_1b_4.tikz}
 \text{ mod } I_{\ux'}^{\scrL}.\]
    The third equality follows from an application of 2-color associativity. The result
    then follows from induction and Lemma \ref{lem:add_neg_trivalent}.

    \emph{Case 3: $\dec_n (\ue) = D0$ and $\dec_{n-1} (\uf) = U1$.}
    We can compute
    \[ 
  % \tikzsetnextfilename{#1}
  \tikzstyle{every picture}=[tikzfig]
  \input{./figures/add_2m_valent/case_2_0.tikz}
 = 
  % \tikzsetnextfilename{#1}
  \tikzstyle{every picture}=[tikzfig]
  \input{./figures/add_2m_valent/case_2a_1.tikz}
 = 
  % \tikzsetnextfilename{#1}
  \tikzstyle{every picture}=[tikzfig]
  \input{./figures/add_2m_valent/case_2a_2.tikz}
 = 
  % \tikzsetnextfilename{#1}
  \tikzstyle{every picture}=[tikzfig]
  \input{./figures/add_2m_valent/case_2a_3.tikz}
 = 
  % \tikzsetnextfilename{#1}
  \tikzstyle{every picture}=[tikzfig]
  \input{./figures/add_2m_valent/case_2a_4.tikz}
 \text{ mod } I_{\ux'}^{\scrL}.\]
    Here we are using that since the domain of the light leaf on the LHS of the first
    equation ends in $t$ and the codomain of the rex move ends in $t$, we can pull the
    trivalent vertex through the rex move and the light leaf at the cost of some lower
    terms. The third equality follows from 2-color associativity. The result then
    follows from induction and Lemma \ref{lem:add_neg_trivalent}.

    \emph{Case 4: $\dec_n (\ue) = D0$ and $\dec_{n-1} (\uf) = D0$.}
    We can compute
    \[ 
  % \tikzsetnextfilename{#1}
  \tikzstyle{every picture}=[tikzfig]
  \input{./figures/add_2m_valent/case_2_0.tikz}
 = 
  % \tikzsetnextfilename{#1}
  \tikzstyle{every picture}=[tikzfig]
  \input{./figures/add_2m_valent/case_2b_1.tikz}
 = 
  % \tikzsetnextfilename{#1}
  \tikzstyle{every picture}=[tikzfig]
  \input{./figures/add_2m_valent/case_2b_2.tikz}
 = 
  % \tikzsetnextfilename{#1}
  \tikzstyle{every picture}=[tikzfig]
  \input{./figures/add_2m_valent/case_2b_3.tikz}
 = 
  % \tikzsetnextfilename{#1}
  \tikzstyle{every picture}=[tikzfig]
  \input{./figures/add_2m_valent/case_2b_4.tikz}
 \text{ mod } I_{\ux'}^{\scrL}.\]
    The third equality follows from 2-color associativity. The result then follows from
    induction and Lemma \ref{lem:add_neg_trivalent}.

    \emph{Case 5: $\dec_n (\ue) = U1$ and $\dec_{n-1} (\uf) = U0$.} We are in the following case.
    \[
  % \tikzsetnextfilename{#1}
  \tikzstyle{every picture}=[tikzfig]
  \input{./figures/add_2m_valent/case_3_0.tikz}
 = 
  % \tikzsetnextfilename{#1}
  \tikzstyle{every picture}=[tikzfig]
  \input{./figures/add_2m_valent/case_3_1.tikz}
 = 
  % \tikzsetnextfilename{#1}
  \tikzstyle{every picture}=[tikzfig]
  \input{./figures/add_2m_valent/case_3_2.tikz}
.\]
    
    We must handle two subcases. First, suppose $\scrL \ux \neq \scrL \ux_{\leq n-1}$. We can then compute
    \[
  % \tikzsetnextfilename{#1}
  \tikzstyle{every picture}=[tikzfig]
  \input{./figures/add_2m_valent/case_3_13.tikz}
 = 
  % \tikzsetnextfilename{#1}
  \tikzstyle{every picture}=[tikzfig]
  \input{./figures/add_2m_valent/case_3_11.tikz}
 = 
  % \tikzsetnextfilename{#1}
  \tikzstyle{every picture}=[tikzfig]
  \input{./figures/add_2m_valent/case_3_12.tikz}
.\]
    The first equality follows from block minimality.
    The Jones--Wenzl morphism can be absorbed into the light leaf above it using the results from \S\ref{subsec:add_pos_generators} and \S\ref{subsec:add_neg_generators}.

    Now, suppose $\scrL \ux = \scrL \ux_{\leq n-1}$ and write $\scrL' = \scrL \ux$. Recall that using Demazure surjectivity, we have fixed $\delta = \delta_t \in R$ such that $\partial_t (\delta) = 1$. 
    We can then compute
    \[ 
  % \tikzsetnextfilename{#1}
  \tikzstyle{every picture}=[tikzfig]
  \input{./figures/add_2m_valent/case_3_14.tikz}
 = 
  % \tikzsetnextfilename{#1}
  \tikzstyle{every picture}=[tikzfig]
  \input{./figures/add_2m_valent/case_3_3.tikz}
 - 
  % \tikzsetnextfilename{#1}
  \tikzstyle{every picture}=[tikzfig]
  \input{./figures/add_2m_valent/case_3_4.tikz}
 \text{ mod } I_{\ux'}^{\scrL} \]
    \[ = 
  % \tikzsetnextfilename{#1}
  \tikzstyle{every picture}=[tikzfig]
  \input{./figures/add_2m_valent/case_3_5.tikz}
 - 
  % \tikzsetnextfilename{#1}
  \tikzstyle{every picture}=[tikzfig]
  \input{./figures/add_2m_valent/case_3_6.tikz}
 + 
  % \tikzsetnextfilename{#1}
  \tikzstyle{every picture}=[tikzfig]
  \input{./figures/add_2m_valent/case_3_7.tikz}
 \text{ mod } I_{\ux'}^{\scrL}\] 
    \[ = 
  % \tikzsetnextfilename{#1}
  \tikzstyle{every picture}=[tikzfig]
  \input{./figures/add_2m_valent/case_3_8.tikz}
 - 
  % \tikzsetnextfilename{#1}
  \tikzstyle{every picture}=[tikzfig]
  \input{./figures/add_2m_valent/case_3_9.tikz}
 + 
  % \tikzsetnextfilename{#1}
  \tikzstyle{every picture}=[tikzfig]
  \input{./figures/add_2m_valent/case_3_10.tikz}
 \text{ mod } I_{\ux'}^{\scrL}.\]
    The first equality follows from the splitting of $B_t^{\scrL'} \star B_t^{\scrL'}$ (cf., \cite[5.15]{EW}).
    The second and third equalities both use polynomial forcing. The first two summands of the RHS are in the span of light leaves by induction and Lemma \ref{lem:add_neg_trivalent}.
    The third summand can be resolved via the Jones--Wenzl relation as in Case 1.
\end{midsecproof}

\subsubsection{Completing the Proof}

\begin{midsecproof}{Proposition \ref{prop:the_evil_EW_prop}}
Let $f : B_{\ux}^{\scrL} \to B_{\uw}^{\scrL}$ be a diagram of maxwidth $M$.
Since $\ell (\uw) < M$, we can write $f = f' \circ g_n \circ \ldots \circ g_1$ such that $f'$ has maxwidth $< M$ and each $g_i$ has height 1 for $1 \leq i \leq n$.
By induction, we can replace $f'$ with a light leaf.
Each $g_i$ is a generator tensored with identity maps on either side. We can then apply Lemmas \ref{lem:add_neg_trivalent}, \ref{lem:add_neg_dot}, \ref{lem:add_neg_cap}, \ref{lem:add_pos_trivalent}, \ref{lem:add_pos_dot}, \ref{lem:add_pos_cap}, and \ref{lem:add_2m_vertex} inductively  
to replace $f' \circ g_n \circ \ldots \circ g_i$ with a light leaf.
As a result, $f$ is in the span of $X_{\ux}^{\scrL}$ modulo $I_{\uw}^{\scrL}$. 
\end{midsecproof}

    \section{Geometric Monodromic Hecke Category}\label{sec:geom}
    
    The celebrated Riche--Williamson theorem \cite[Theorem 10.5]{RW} gives a monoidal equivalence of categories
\[\EW^{\BS} (\fr{h}, W) \stackrel{\sim}{\to} \textnormal{Par}^{\BS} (B \backslash G / B, \k)\]
from the graded Elias--Williamson diagrammatic category to the category of Bott--Samelson parity sheaves on $B \backslash G / B$ where $G$ is a complex reductive group and $\k$ is an integral domain.\footnote{The Riche--Williamson theorem is in fact more general and holds when $G$ is a Kac--Moody group.}
In this section, we will establish a monodromic version of the Riche--Williamson theorem using the monodromic diagrammatic category.

Due to issues with choosing representatives of minimal IC sheaves in the (geometric) monodromic Hecke category, we will need to impose some constraints on our coefficient ring (see \S\ref{subsec:min_IC}).
Fix a finite extension $\O$ of $\Z_{\ell}$. Denote by $\K$ its field of fractions and by $\F$ its residue field.
The triple $(\K, \O, \F)$ is called an \emph{$\ell$-modular triple}. For this section, we will require $\k \in \{ \K, \O, \F\}$.
Note that $\k$ is either a complete local ring or a field.

\subsection{Preliminaries}

\subsubsection{Flag Varieties}

Let $G$ be a connected complex reductive group. Fix a Borel $B$ with Levi decomposition $B = T \ltimes U$.
We denote by $\bfX$ (resp. $\bfY$) the character (resp. cocharacter) lattice of $T$.
For any integral domain $\k$, one can consider the Kac--Moody realization $\fr{h}_{\k}^* = \k \otimes_{\Z} \bfX$ and $\fr{h}_{\k} = \k \otimes_{\Z} \bfY$.
Let $(W,S)$ denote the Weyl group of $G$ with respect to $T$. 
For each $s \in S$, there is a simple root $\alpha_s \in \bfX$ and a simple coroot $\alpha^{\vee}_s \in \bfY$.
We often view the simple roots (resp. coroots) as elements of $\fr{h}_{\k}^*$ (resp. $\fr{h}_{\k}$). 

Define a ring $\Z'$ as follows.
If all the maps $\alpha_s : \bfY \to \Z$ and $\alpha_s^{\vee} : \bfX \to \Z$ are surjective, we set $\Z' = \Z$, and otherwise, we set $\Z' = \Z [\frac{1}{2}]$.
For the entirety of this section, we assume that there exists a ring homomorphism $\Z' \to \k$.
The triple $(\fr{h}_{\k}, \{ \alpha_s^{\vee}\}_{s \in S}, \{\alpha_s\}_{s \in S})$ is a reflection-balanced reflection-stable Abe realization (Proposition \ref{prop:crystallographic_abe} and Remark \ref{rem:crystallographic_balancedness}).
Demazure surjectivity for $\fr{h}_{\k}$ is guaranteed from the ring homomorphism $\Z' \to \k$.
Since multiple coefficient rings will appear throughout this section, we will write $R_{\k} = \Sym (\fr{h}_{\k}^*)$ instead of $R$ to emphasize the dependence on $\k$.

Let $\eFl \coloneq U \backslash G$ denote the enhanced flag variety of $G$. Recall that the Bruhat decomposition
\[\eFl = \bigsqcup_{w \in W} \eFl_w\]
gives a stratification of $\eFl$
where each $\eFl_w$ is a $B$-orbit (non-canonically) isomorphic to $\A^{\ell (w)} \times T$.
We write $j_w : \eFl_w \to \eFl$ for the inclusion map.

\subsubsection{Hecke Categories}

Let $\Ch (T, \k)$ denote the set of rank 1 multiplicative local systems on $T$.
A local system $\scrL \in \Ch (T, \k)$ is said to be \emph{torsion} if there exists some $n \in \Z_{\geq 0}$ which is invertible in $\k$ such that $\scrL^{\otimes n} \cong \uk_T$.
We denote the subset of $\Ch (T, \k)$ consisting of torsion local systems by $\Ch^{\mu} (T, \k)$.
We also fix a $W$-orbit $\fr{o}$ in $\Ch^{\mu} (T, \k)$.

Let $\scrL, \scrL' \in \fr{o}$. We can view $\scrL'$ as a multiplicative local system on $B$ via pullback along $B \to T$. 
We can then define the derived category
\[\DEE{\scrL}{\scrL'} (G, \k) \coloneq D_{\cons} (T \backslash_{\scrL} \eFl /_{\scrL'} B, \k)\]
of $(T \times B, \scrL \boxtimes (\scrL')^{-1})$-equivariant sheaves on $\eFl$. For a precise construction of this category see \cite{Sandvik}.
As we vary $\scrL$ and $\scrL'$, we can assemble $\DEE{\scrL}{\scrL'} (G, \k)$ into a 2-category, denoted $\DEE{}{} (G, \k, \fr{o})$, whose object set is $\fr{o}$, and whose morphism categories are given by $\DEE{\scrL}{\scrL'} (G, \k)$ for $\scrL, \scrL' \in \fr{o}$.
The horizontal composition in $\DEE{}{} (G, \k, \fr{o})$ is by the convolution product $\star$ (see \cite[\S3.3]{Sandvik}).

For each $w \in W$ and lift $\dot{w} \in N_G (T)$ of $w$, by \cite[Proposition 3.2.2]{Sandvik}, there is a perverse $t$-exact equivalence of categories
\[\DEE{\scrL}{\scrL w} (w, \k) \coloneq D_{\cons} (T \backslash_{\scrL} \eFl_w /_{\scrL w} B, \k) \cong D_{\cons} (T \backslash \pt, \k)\]
given by taking the stalk at $\dot{w}$.
We can then define an object $\scrK_{\dot{w}}^{\scrL}$ in $\DEE{\scrL}{\scrL w} (w, \k)$ as the sheaf corresponding to the constant sheaf in $D_{\cons} (T \backslash \pt, \k)$ under the above equivalence.\footnote{The notation for objects is opposite to that of \cite{Sandvik} to match the presentation given for the diagrammatic and algebraic Hecke categories. In particular, in \emph{loc. cit.}, $\scrK_{\dot{w}}^{\scrL}$ refers to an object with right equivariant monodromy $\scrL$ whereas here it refers to an object with left equivariant monodromy $\scrL$.}
For two different lifts $\dot{w}, \dot{w}'$ of $w$, there is a non-unique isomorphism $\scrK_{\dot{w}}^{\scrL} \stackrel{\sim}{\to} \scrK_{\dot{w}'}^{\scrL}$. 
There are standard, costandard, and simple perverse sheaves defined by
\[\Delta_{\dot{w}}^{\scrL} \coloneq j_{w!} \scrK_{\dot{w}}^{\scrL} [\ell(w)], \qquad \nabla_{\dot{w}}^{\scrL} \coloneq j_{w*} \scrK_{\dot{w}}^{\scrL} [\ell(w)], \qquad \IC_{\dot{w}}^{\scrL} = \textnormal{Im} (\Delta_{\dot{w}}^{\scrL} \to \nabla_{\dot{w}}^{\scrL}),\]
respectively. These are all perverse sheaves in $\DEE{\scrL}{\scrL w} (G, \k)$. We will frequently replace the $\dot{w}$ with $w$ in the notation for these sheaves to refer to any object in their isomorphism class rather than a fixed representative.

\begin{remark}
  \begin{enumerate}
    \item There is a canonical representative of $\IC_e^{\scrL}$ given by the canonical lift of $e$ in $N_G (T)$.
  
    \item Let $s \in S$ such that $\scrL s = \scrL$.
    Consider the rank 1 Levi subgroup $L_s$ of $G$ corresponding to $s$.
    There is a unique extension of $\scrL$ to $L_s$, denoted $\scrL^s$ (\cite[Lemma 3.2.4]{Sandvik}).
    As a result, we have a canonical representative for $\IC_s^{\scrL}$ given by $\scrL^s [1]$ under the canonical isomorphism $\eFl_{\leq s} \cong U_s \backslash L_s$ where $U_s = U \cap L_s$.
    In particular, when we write $\IC_s^{\scrL}$, we will always refer to the canonical representative of its isomorphism class given here.
  \end{enumerate}
\end{remark}

Let $\scrL, \scrL' \in \fr{o}$. There is a forgetful functor
\[\ForME{\scrL} : \DEE{\scrL}{\scrL'} (G, \k) \to D_{\cons} (\eFl /_{\scrL'} B, \k)\]
given by forgetting the left $(T, \scrL)$-equivariance.
We define a full triangulated subcategory
\[\DME{\scrL}{\scrL'} (G, \k) \subset D_{\cons} (\eFl /_{\scrL'} B, \k)\]
generated by the essential image of $\ForME{\scrL}$. 
We often abuse notation and regard any object in $\DEE{\scrL}{\scrL'} (G, \k)$ as an object in $\DME{\scrL}{\scrL'} (G, \k)$ via $\ForME{\scrL}$. 
In particular, the standard, costandard, and IC sheaves can be viewed as objects therein.

\subsection{Minimal IC Sheaves}\label{subsec:min_IC}

For each block $\beta \in \uW{\scrL}{\scrL'}$, there is a minimal element $w^{\beta}$ under the partial order $\leq_{\scrL}$.
We can then consider the perverse sheaf $\IC_{w^{\beta}}^{\scrL} \in \DEE{\scrL}{\scrL'} (G, \k)$. 
A priori, these sheaves are only defined up to (non-unique) isomorphism, and there is no canonical representative of the isomorphism class.
Our first step will be to choose representatives in a coherent manner such that the resulting class is compatible with convolution.
The work in picking these representatives is covered by \cite[\S 5.8]{Sandvik}.

Let $\Xi^{\fr{o}} (W)$ denote the groupoid of blocks. In more detail, $\Xi^{\fr{o}} (W)$ is a category whose object set is $\fr{o}$ and whose morphism set from $\scrL$ to $\scrL'$ is given by $\uW{\scrL}{\scrL'}$.
In the following proposition, we view $\Xi^{\fr{o}} (W)$ as a 2-category by setting all the 2-morphisms to be identities.

\begin{proposition}[{\cite[Corollary 5.8.11]{Sandvik}}]\label{prop:min_IC_functor}
  There exists a 2-functor 
  \[\IC_{\min} : \Xi^{\fr{o}} (W) \to \DEE{}{} (G, \k, \fr{o})\]
   taking each $\beta \in \uW{\scrL}{\scrL'}$ to a sheaf in the isomorphism class $\IC_{w^{\beta}}^{\scrL}$.
\end{proposition}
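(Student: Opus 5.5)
The plan is to construct $\IC_{\min}$ by rigidifying, one block at a time, the choice of representative of $\IC_{w^{\beta}}^{\scrL}$, and then to check that these choices are compatible with convolution. The central input is that minimal IC sheaves are invertible under convolution and have trivial automorphisms up to scalars. More precisely, since $\ell_{\scrL}(w^{\beta})=0$, I expect $\IC_{w^{\beta}}^{\scrL} \cong \Delta_{w^{\beta}}^{\scrL} \cong \nabla_{w^{\beta}}^{\scrL}$; this is the geometric analogue of block minimality, and the same argument as in Example \ref{ex:stds_and_costds_for_simples}(3) applies, since $w^{\beta}$ is minimal in its block for the restricted Bruhat order. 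Consequently $\End(\IC_{w^{\beta}}^{\scrL}) \cong \End(\scrK_{\dot w^\beta}^{\scrL}) = \k$, and $\IC_{w^\beta}^{\scrL} \star \IC_{w^{\beta,-1}}^{\scrL'} \cong \IC_e^{\scrL}$.

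The construction proceeds in four steps.

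\begin{enumerate}
\item For each $\scrL$, take the canonical representative of $\IC_e^{\scrL}$ coming from the canonical lift of $e$, so that identities are preserved.
\item For each $\beta \in \uW{\scrL}{\scrL'}$, choose an endo-reduced expression $\uw^\beta = (s_1,\dots,s_k)$. Each step satisfies $\scrL s_1\cdots s_{i} \neq \scrL s_1 \cdots s_{i-1}$, so the corresponding sheaves $\IC_{s_i}$ have non-stabilized monodromy and their representatives need only be fixed up to $\k^\times$.
\item Reduce the whole construction to a choice of lifts $\dot w^\beta \in N_G(T)$, setting $\IC_{\min}(\beta) := j_{w^\beta !}\scrK^{\scrL}_{\dot w^\beta}[\ell(w^\beta)]$.
\item Make these lifts multiplicative. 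The natural way to do this is to work with a Tits-type section $W \to N_G(T)$, but this section fails to be a homomorphism. Instead, for a composable pair $\beta, \gamma$, the convolution $\IC_{\min}(\beta)\star\IC_{\min}(\gamma)$ is canonically $j_!\scrK^{\scrL}_{\dot w^\beta \dot w^\gamma}$, while $\IC_{\min}(\beta\gamma)$ uses $\dot w^{\beta\gamma}$. These two lifts differ by some $t_{\beta,\gamma} \in T$.
\end{enumerate}

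The isomorphism $\scrK_{\dot w} \to \scrK_{\dot w t}$ depends on a trivialization of the stalk of $\scrL$ at $t$, so it carries a scalar ambiguity. The 2-functor structure then amounts to choosing compatible isomorphisms $\phi_{\beta,\gamma}$ whose associativity defect is a $\k^\times$-valued 2-cocycle on the groupoid $\Xi^{\fr o}(W)$.

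The main obstacle is killing this 2-cocycle. First I would reduce to a single vertex: choose a spanning tree of the connected groupoid, which is possible because $\fr{o}$ is a single $W$-orbit, and transport everything to the automorphism group $\uW{\scrL}{\scrL} \cong W_{\scrL}/W_{\scrL}^{\circ}$. The problem then becomes showing that the class in $H^2(W_\scrL/W_\scrL^\circ, \k^\times)$ vanishes. I expect this to use two facts. The first is that $\scrL$ is torsion of order invertible in $\k$. The second is that $\k \in \{\K,\O,\F\}$ is a complete local ring or field containing enough roots of unity; if necessary, pass to a finite extension of the $\ell$-modular triple. Concretely, I would realize the cocycle as coming from the central extension of $W_\scrL/W^\circ_\scrL$ by $T$-stalks of $\scrL$, namely from the pullback of $N_G(T)\cap \text{stab}(\scrL)$. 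The claim is then that the pushout along $\scrL : T \to \k^\times$ splits. This splitting is exactly the content of \cite[\S5.8]{Sandvik}; its key input is that the extended character is defined on the full stabilizer, and with the splitting in hand I would conclude as in \cite[Corollary 5.8.11]{Sandvik}.
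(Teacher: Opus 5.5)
You never establish the one step that carries content: producing isomorphisms $b_{\beta,\gamma}\colon \theta^{\scrL}_{\beta}\star\theta^{\scrL'}_{\gamma}\to\theta^{\scrL}_{\beta\gamma}$ that are associative. That step is handed back to \cite[\S 5.8]{Sandvik}, and your description of it does not match what the paper says happens there.

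According to the remark after the proposition, that construction passes to \'etale sheaves in order to use a Whittaker model. This, not a supply of roots of unity in $\k$, is why $\k\in\{\K,\O,\F\}$ is imposed. The author even conjectures the statement for arbitrary noetherian domains of finite global dimension. The role of a Whittaker model is to rigidify each minimal IC sheaf against an object with only scalar automorphisms. The $b_{\beta,\gamma}$ are then forced and associativity is automatic, so there is no cohomology class to kill.

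A cohomological route could be a genuinely different proof, but as written yours does not close, for three reasons.
\begin{enumerate}
\item The associativity defect of a choice of $\phi_{\beta,\gamma}$ is a $3$-cocycle, not a $2$-cocycle. The obstruction to a 2-functor lies in $H^3(\Xi^{\fr{o}}(W),\k^{\times})\cong H^3(W_{\scrL}/W_{\scrL}^{\circ},\k^{\times})$. This group is generally nonzero: it equals $\mu_3(\k)\neq 1$ for $G=SL_3$ and an order-$3$ $\scrL$ whose stabilizer is the cyclic subgroup of order $3$ in $W=S_3$, which contains no reflections. So its vanishing is a genuine theorem, and you have not supplied it.
\item There is no homomorphism $\scrL\colon T\to\k^{\times}$ to push out along. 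The stalks of a nontrivial torsion $\scrL$ on the complex torus form an extension of the abstract group $T$ by $\k^{\times}$, and this extension does not split. A splitting would be a homomorphism from the divisible group $\mathrm{Lie}(T)$ to $\k^{\times}$ restricting to the monodromy on $\bfY$, and $\k^{\times}$ has no nontrivial divisible subgroup for these $\k$.
\item On a finite subgroup of $T$, a multiplicative trivialization can need roots of unity outside $\k$; for instance $\sqrt{-1}$ at a $2$-torsion point when $\scrL$ has order $2$. Enlarging the $\ell$-modular triple proves the statement over a bigger ring, not over $\k$. Separately, saying ``the extended character is defined on the full stabilizer'' restates the splitting you want; it is not an input.
\end{enumerate}

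Step 4 also has a gap of its own. $\IC_{\min}(\beta)\star\IC_{\min}(\gamma)$ is canonically $\Delta^{\scrL}_{\dot w^{\beta}\dot w^{\gamma}}$ only when $\ell(w^{\beta}w^{\gamma})=\ell(w^{\beta})+\ell(w^{\gamma})$. For minimal elements of blocks this typically fails: take $\gamma=\beta^{-1}$, or $\gamma=\beta$ with $w^{\beta}$ a $3$-cycle in the $SL_3$ example. One then needs cancellation isomorphisms $\Delta^{\scrL}_{s}\star\Delta^{\scrL s}_{s}\cong\Delta^{\scrL}_{e}$ for $\scrL s\neq\scrL$. Each of these carries its own scalar ambiguity, which your bookkeeping through $t_{\beta,\gamma}\in T$ does not register.

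Your preliminary observations are correct: cleanness of $\IC^{\scrL}_{w^{\beta}}$, scalar degree-$0$ endomorphisms, and invertibility under convolution. They are exactly what any rigidification argument needs. Step 2 plays no role.
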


We will fix a 2-functor $\IC_{\min} : \Xi^{\fr{o}} (W) \to \DEE{}{} (G, \k, \fr{o})$ arising from Proposition \ref{prop:min_IC_functor} for the remainder of the paper. 
It is useful to unpack what this 2-functor entails.
First, for each block $\beta \in \uW{\scrL}{\scrL'}$, there is an object
\[\theta_{\beta}^{\scrL} \coloneq \IC_{\min} (\beta) \in \DEE{\scrL}{\scrL'} (G, \k).\]
Second, for each pair of blocks $\beta \in \uW{\scrL}{\scrL'}$ and $\gamma \in \uW{\scrL'}{\scrL''}$, there is an isomorphism
\[b_{\beta, \gamma} : \theta_{\beta}^{\scrL} \star \theta_{\gamma}^{\scrL'} \stackrel{\sim}{\to} \theta_{\beta \gamma}^{\scrL}.\]
These isomorphisms are associative in the sense that if $\delta \in \uW{\scrL''}{\scrL'''}$, there is an equality
\[b_{\beta \gamma, \delta} (b_{\beta, \gamma} \times \id) = b_{\beta, \gamma \delta} (\id \times b_{\gamma, \delta}).\]

\begin{remark}
  Proposition \ref{prop:min_IC_functor} is where we crucially use the assumption that $\k \in \{\K, \O, \F\}$.
  The reason for this assumption is that the construction of $\IC_{\min}$ requires changing the sheaf-theoretic setting to étale sheaves to make use of a Whittaker model.
  The author conjectures that such a 2-functor exists under the same assumptions on $\k$ used throughout the paper (namely, noetherian domain of finite global dimension).
  If such a 2-functor can be constructed, the rest of our results, namely Theorem \ref{thm:mon_RW_thm}, can be easily derived using similar arguments for these more general coefficient rings. 
\end{remark}

\subsection{Parity Sheaves}\label{subsec:ew_par}

Let $s \in S$ and $\scrL \in \fr{o}$. Let $\beta \in \uW{\scrL}{\scrL s}$ be the block containing $s$.
Define an object in $\DEE{\scrL}{\scrL s} (G, \k)$ by
\[\scrE_s^{\scrL} \coloneq \begin{cases} \IC_s^{\scrL} & \scrL s = \scrL, \\ \theta_{\beta}^{\scrL} & \scrL s \neq \scrL. \end{cases}\]
Note that $\scrE_s^{\scrL}$ comes equipped with a morphism 
\[ \epsilon_s : \scrE_s^{\scrL} \to \theta_{\beta}^{\scrL} [\ell_{\scrL} (s)]\] 
defined as the adjunction morphism $\id \to j_{e*} j_e^*$ when $\scrL s = \scrL$ and $\epsilon_s = \id$ when $\scrL s \neq \scrL$.
More generally, if $\uw = (s_1, \ldots, s_k) \in \Exp (W)$, we can define the \emph{Bott--Samelson parity complex} associated to $\uw$ by
\[\scrE_{\uw}^{\scrL} \coloneq \scrE_{s_1}^{\scrL} \star \scrE_{s_2}^{\scrL s_1} \star \ldots \star \scrE_{s_k}^{\scrL s_1 \ldots s_{k-1}}.\]
Let $\beta \in \uW{\scrL}{\scrL \uw}$ be the block containing the evaluation of $\uw$, and for each $i =1, \ldots, k$ define a block $\beta_i \in \uW{\scrL \uw_{\leq i-1}}{\scrL \uw_{\leq i}}$ which contains $s_i$. 
Note that $\beta = \beta_1 \ldots \beta_k$.
We can define a morphism
\[\epsilon_{\uw} : \scrE_{\uw}^{\scrL} \to \theta_{\beta}^{\scrL} [\ell_{\scrL} (\uw)]\]
by the composition
\[\begin{tikzcd}
\scrE_{\uw}^{\scrL} \arrow[rr, "\epsilon_{s_1} \star \ldots \star \epsilon_{s_k}"] &  & { \theta^{\scrL}_{\beta_1} \star \ldots \star \theta^{\scrL \uw_{\leq k-1}}_{\beta_k} [\ell_{\scrL} (\uw)]} \arrow[r, "\sim"] & {\theta_{\beta}^{\scrL} [\ell_{\scrL} (\uw)]},
\end{tikzcd}\]   
where the second map is the canonical isomorphism of minimal IC sheaves arising from $\IC_{\min}$ (see Proposition \ref{prop:min_IC_functor} and the discussion following it).

Define a full subcategory $\PEE{\scrL}{\scrL'}^{\BS} (G, \k)$ of $\DEE{\scrL}{\scrL'} (G, \k)$ consisting of objects of the form $\scrE_{\uw}^{\scrL} [n]$ for $\uw \in \Exp (W)$ with $\scrL \uw = \scrL'$ and $n \in \Z$.
We call $\PEE{\scrL}{\scrL'}^{\BS} (G, \k)$ the category of \emph{Bott--Samelson parity complexes}. It is clear that convolution preserves Bott--Samelson parity complexes.
As a result, we can define a sub-2-category $\PEE{}{}^{\BS} (G, \k, \fr{o})$ of $\DEE{}{} (G, \k, \fr{o})$ whose morphism categories are given by  $\PEE{\scrL}{\scrL'}^{\BS} (G, \k)$.
We can further consider the idempotent completion of the additive hull of the Bott--Samelson parity complexes, denote by $\PEE{\scrL}{\scrL'} (G, \k)$.
The objects of $\PEE{\scrL}{\scrL'} (G, \k)$ are called \emph{parity sheaves}. 
These categories of parity sheaves can similarly be assembled into a 2-category denoted by $\PEE{}{} (G, \k, \fr{o})$.

We will also need the following right-equivariant version of the category of Bott--Samelson parity complexes. Consider the forgetful functor
\[\ForME{\scrL} : \DEE{\scrL}{\scrL'} (G, \k) \to \DME{\scrL}{\scrL'} (G, \k).\]
Define a full subcategory $\PME{\scrL}{\scrL'}^{\BS} (G, \k)$ of $\DME{\scrL}{\scrL'} (G, \k)$ given by the image of $\ForME{\scrL}$ restricted to $\PEE{\scrL}{\scrL'}^{\BS} (G, \k)$.
We abuse notation and write $\scrE_{\uw}^{\scrL}$ to refer to the object in $\PME{\scrL}{\scrL \uw}^{\BS} (G, \k)$ defined by $\ForME{\scrL} (\scrE_{\uw}^{\scrL})$.
We will also consider the idempotent completion of the additive hull of $\PME{\scrL}{\scrL'}^{\BS} (G, \k)$ which will be denoted by $\PME{\scrL}{\scrL'} (G, \k)$.

\subsection{Maximal IC Sheaves}

For this section, we require that $\k = \K$ so that $\k$ is a field of characteristic 0. 
There are variations of the results of this section that hold whenever $\k = \F$ provided certain small primes are invertible.
However, these variations require new proofs that are not provided in \cite{Sandvik}, and will not be need for the present work.
Note that when $\k = \K$, the decomposition theorem implies that $\PEE{\scrL}{\scrL'} (G, \K)$ is generated under direct sums and shifts by the simple perverse sheaves $\IC_w^{\scrL}$ for $w \in \W{\scrL}{\scrL'}$.

Let $\beta \in \uW{\scrL}{\scrL'}$ be a block. Since $W$ is finite, the block $\beta$ admits a maximal element $w_{\beta}$ under the partial order $\leq_{\beta}$.
We can then consider the perverse sheaf $\IC_{w_{\beta}}^{\scrL} \in \DEE{\scrL}{\scrL'} (G, \K)$. As with the minimal IC sheaves, these sheaves are only defined up to (non-unique) isomorphism.
The goal of the section is two-fold. First, we will use our choices of minimal IC sheaves to fix representatives of the maximal IC sheaves. 
Second, we will construct a canonical ``coalgebra'' structure on the maximal IC sheaves.

\subsubsection{Properties}

The following two propositions are mild generalizations of \cite[Proposition 4.4.3]{Sandvik} and \cite[Proposition 4.4.5]{Sandvik}. 
Related statements can be found in \cite[Proposition 6.2]{LY} and \cite[Proposition 6.4]{LY}.
We omit the proofs as they identical to the referenced propositions.

\begin{proposition}\label{prop:conv_of_max_ICs}
    Let $\beta \in \uW{\scrL}{\scrL'}$ and $\gamma \in \uW{\scrL'}{\scrL''}$.
    \begin{enumerate}
        \item The convolution $\IC_{w_{\beta}}^{\scrL} \star \IC_{w_{\gamma}}^{\scrL'}$ is isomorphic to a direct sum of shifts of $\IC_{w_{\beta \gamma}}^{\scrL}$.
        \item The perverse cohomology ${}^p H^i \left( \IC_{w_{\beta}}^{\scrL} \star \IC_{w_{\gamma}}^{\scrL'} \right) = 0$ unless $-\ell_{\scrL'} (w_{\gamma}) \leq i \leq \ell_{\scrL'} (w_{\gamma})$.
        \item There is an isomorphism
        \[{}^p H^{\pm \ell_{\scrL'} (w_{\gamma})} \left( \IC_{w_{\beta}}^{\scrL} \star \IC_{w_{\gamma}}^{\scrL'} \right) \cong \IC_{w_{\beta \gamma}}^{\scrL}.\]
    \end{enumerate}
\end{proposition}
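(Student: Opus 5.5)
We prove the three statements in the order (1), (2), (3), using the decomposition theorem over $\K$ together with the structure of maximal IC sheaves. The approach mirrors \cite[Proposition 4.4.3]{Sandvik} and \cite[Proposition 6.2]{LY}, and its heart is a geometric description of $\IC_{w_{\beta}}^{\scrL}$.

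The first step is to record that description. Since $w_\beta$ is maximal in its block $\beta$, the support of $\IC_{w_\beta}^{\scrL}$ is the closure $\overline{\eFl_{w_\beta}}$. On this closure the sheaf is (up to shift) the clean extension of a rank one local system, restricted to the union of the strata $\eFl_x$ with $x\in\beta$; it vanishes on the strata with $x\notin\beta$. Cleanliness follows by the argument of \cite[Proposition 4.4.3]{Sandvik}.

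Concretely, $\IC_{w_\beta}^{\scrL}\cong \theta_\beta^{\scrL}\star \IC_{w_0^{\circ}}$, where $w_0^{\circ}$ is the longest element of the endoscopic group $W_{\scrL'}^{\circ}$ acting on the right. The factor $\IC_{w_0^{\circ}}$ is a maximal IC sheaf in the neutral block. Through the monodromic--endoscopic equivalence of \cite{Sandvik} (the geometric analogue of Theorem~\ref{thm:alg_endoscopy}), it corresponds to the constant sheaf on the full flag variety of the endoscopic group, shifted to be perverse. Checking this identification is the main obstacle. I would attack it by comparing stalks, using that the neutral-block IC sheaves correspond to endoscopic IC sheaves and that the longest one is constant.

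With this description, the rest reduces to the neutral block. Using the isomorphisms $b_{\beta,\gamma}$ from Proposition~\ref{prop:min_IC_functor}, we have
\[
\IC_{w_\beta}^{\scrL}\star\IC_{w_\gamma}^{\scrL'} \cong \theta_\beta^{\scrL}\star \IC_{w_0^{\circ}}\star\theta_\gamma^{\scrL'}\star\IC_{w_0^{\circ\prime}} .
\]
Moving $\theta_\gamma$ to the left conjugates the endoscopic group, and this converts the product into
\[
\theta_{\beta\gamma}^{\scrL}\star\bigl(\IC_{w_0^{\circ}}\star\IC_{w_0^{\circ}}\bigr).
\]
The inner convolution is a neutral-block computation, and endoscopically it is the convolution of two constant sheaves on the endoscopic flag variety. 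That convolution equals the constant sheaf tensored with $H^\bullet$ of the endoscopic flag variety $H^\circ/B^\circ$. Hence it is a direct sum of shifts of the maximal IC sheaf, which proves (1).

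The multiplicities are the Poincaré polynomial $\sum_{x\in W^{\circ}} v^{2\ell(x)-\ell(w_0^{\circ})}$ of $H^\circ/B^\circ$, shifted symmetrically. This polynomial is supported exactly in degrees $-\ell_{\scrL'}(w_\gamma)$ through $\ell_{\scrL'}(w_\gamma)$, because $\ell(w_0^{\circ})=\ell_{\scrL'}(w_\gamma)$ for the endoscopic length. Its extreme coefficients are $1$, corresponding to the classes of degree $0$ and of top degree. This gives (2) and (3).
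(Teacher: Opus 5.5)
Your reduction to the neutral block through the minimal IC sheaves, and the closing count with the symmetric Poincar\'e polynomial of $W^\circ$, are fine. The problem is the step that carries all the content: it is imported from a result that sits downstream of this proposition.

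You evaluate $\IC_{w_0^\circ}\star\IC_{w_0^\circ}$ by transporting it through the monodromic--endoscopic equivalence. That equivalence is reached through the Soergel functor $\H=\Hom^\bullet(\Theta^{[\scrL,\scrL']},-)$. This holds in \cite{LY}, in \cite{Sandvik} (which follows it), and in this paper's own route via Theorems~\ref{thm:diag_endoscopy} and~\ref{thm:mon_RW_thm}. The monoidality of $\H$ rests on the coalgebra structure of the maximal IC sheaves, and that structure is built from exactly this statement: Lemma~\ref{lem:existence_of_comult} iterates (2) and (3). So you cannot borrow the identification of $\IC_{w_0^\circ}$ with the constant sheaf on $H^\circ/B^\circ$, which you yourself flag as the main obstacle. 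At best your argument derives the proposition from a theorem whose proof already uses its neutral-block case, and it would make the paper's alternative proof of geometric endoscopy circular.

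Your preliminary geometric description is also false. By Proposition~\ref{prop:stalks_of_max_ICs}, the stalk of $\IC_{w_\beta}^{\scrL}$ on $\eFl_w$ lies in degree $-(\ell_{\scrL}(w_\beta)+\ell(w)-\ell_{\scrL}(w))$, and $\ell(w)-\ell_{\scrL}(w)$ is not constant on $\beta$. For instance, take $G=GL_3$ and $\scrL$ with $W_{\scrL}=W_{\scrL}^{\circ}=\{e,sts\}$. The stalks of $\IC_{sts}^{\scrL}$ at $e$ and on $\eFl_{sts}$ sit in degrees $-1$ and $-3$. So $\IC_{w_\beta}^{\scrL}$ is not a shifted rank one local system on the union of the $\beta$-strata; it looks constant only on the endoscopic side.

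What is needed is a direct argument, as in \cite[Proposition 6.2]{LY} and \cite[Proposition 4.4.3]{Sandvik}, to which the paper defers. After the same reduction by minimal IC sheaves, the key local fact has two cases.
\begin{itemize}
\item If $s\in S$ with $\scrL's=\scrL'$, then $w_\beta s\in\beta$ and $w_\beta s<w_\beta$, so $\IC_{w_\beta}^{\scrL}\star\IC_s^{\scrL'}\cong\IC_{w_\beta}^{\scrL}[1]\oplus\IC_{w_\beta}^{\scrL}[-1]$.
\item If $\scrL's\neq\scrL'$, convolution with the clean object $\scrE_s^{\scrL'}$ carries the maximal IC sheaf of a block to that of the translated block.
\end{itemize}
By the decomposition theorem over $\K$, $\IC_{w_\gamma}^{\scrL'}$ is a direct summand of $\scrE_{\uw}^{\scrL'}$ for a reduced expression $\uw$ of $w_\gamma$. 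Hence $\IC_{w_\beta}^{\scrL}\star\IC_{w_\gamma}^{\scrL'}$ is a direct summand of $\IC_{w_{\beta\gamma}}^{\scrL}$ tensored with $(\K[1]\oplus\K[-1])^{\otimes\ell_{\scrL'}(w_\gamma)}$. This gives (1) and (2). The extreme multiplicities in (3) then come from a direct count, e.g.\ of classes in the monodromic Hecke algebroid, with no appeal to endoscopy.
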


\begin{proposition}\label{prop:stalks_of_max_ICs}
    Let $\beta \in \uW{\scrL}{\scrL'}$ and $w \in \beta$. We have isomorphisms
    \[ j_w^* \IC_{w_{\beta}}^{\scrL} \cong \scrK_w^{\scrL} [\ell_{\scrL} (w_{\beta}) + \ell (w) - \ell_{\scrL} (w)],\]
    \[ j_w^! \IC_{w_{\beta}}^{\scrL} \cong \scrK_w^{\scrL} [-\ell_{\scrL} (w_{\beta}) + \ell (w) + \ell_{\scrL} (w)].\]
\end{proposition}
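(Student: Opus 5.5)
We prove Proposition \ref{prop:stalks_of_max_ICs} by reducing to the neutral block and then to the non-monodromic endoscopic flag variety, following the argument of \cite[Proposition 4.4.5]{Sandvik} and \cite[Proposition 6.4]{LY}.

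The first step is to reduce to the neutral block. Let $\beta \in \uW{\scrL}{\scrL'}$. Convolution on the right with the minimal IC sheaf $\theta_{\beta'}$ for a block $\beta'$ is an equivalence whose inverse is convolution with $\theta_{\beta'^{-1}}$, by the 2-functor of Proposition \ref{prop:min_IC_functor}. Moreover, $\theta_{\beta'}$ is supported on a single closed stratum whose dimension equals its length. Convolving on the right with $\theta_{\beta}$ therefore carries $\IC_{w}^{\scrL}$, for $w \in W_{\scrL}^{\circ}$, to $\IC_{w w^{\beta}}^{\scrL}$. By the cleanness of $\IC_{w^\beta}$ (it is both $!$- and $*$-extended), this convolution also carries the stalk $\scrK_w$ to $\scrK_{ww^\beta}[\ell(w^\beta)]$-type shifts. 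One checks on lengths that $\ell(ww^\beta)=\ell(w)+\ell(w^\beta)$ whenever $w$ is the endoscopic-length-minimal representative, and that $\ell_\scrL(ww^\beta)=\ell_\scrL(w)$. We expect the shifts to cancel exactly as in the displayed formulas, since the expressions $\ell(w)-\ell_\scrL(w)$ and $\ell(w)+\ell_\scrL(w)$ both shift by $\ell(w^\beta)$. This reduces the claim to $\beta=W_{\scrL}^{\circ}$ with $w_\beta$ the longest element $w_\circ$ of $W_\scrL^\circ$.

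The second step handles the neutral block by decomposing $\IC_{w_\circ}^\scrL$. We realize it as a direct summand of the Bott--Samelson object $\scrE_{\uw}^{\scrL}$, where $\uw$ is an endo-reduced expression of $w_\circ$ built via an endosimple expansion datum. The key input is the stalk computation for $\scrE_{\uw}^\scrL$: by the geometric analogue of Lemma \ref{lem:defect_formula_for_pwx}, the stalk of $\scrE_{\uw}^\scrL$ at $w$ is $\scrK_w$ tensored with the graded module $p_{\uw}^{w,\scrL}$, up to shift. On the endoscopic side, the maximal IC for a finite Coxeter group is the constant sheaf, so its Kazhdan--Lusztig polynomials are $P_{x,w_\circ}=1$. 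Transporting this through the monodromic--endoscopic equivalence for the Hecke algebroid gives $[\IC_{w_\circ}^\scrL]=\sum_{w\in W_\scrL^\circ} v^{\ell_\scrL(w_\circ)-\ell_\scrL(w)}H_w^\scrL$, computed in the split Grothendieck group identified with $\scrH^{\mon}$ by \cite[Theorem 3.8.4]{Sandvik}. Since $\K$ has characteristic zero and we are working with pure (mixed, weight) objects, the stalks are concentrated in a single degree. That degree can be read off from the coefficient of $H_w^\scrL$, which yields the $*$-formula with shift $\ell_\scrL(w_\circ)+\ell(w)-\ell_\scrL(w)$. Here the extra $\ell(w)$ comes from the perverse normalization $[\ell(w)]$ on $\eFl_w\cong \A^{\ell(w)}\times T$. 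The $!$-formula then follows from Verdier self-duality of $\IC_{w_\circ}^\scrL$, using the fact that the dual of $\scrK_w$ on the stratum is $\scrK_w[2\ell(w)]$ up to the torus twist.

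The main obstacle is making the second step rigorous: we must show the stalks are free and concentrated in one degree, rather than merely having the right class in the Grothendieck group. As a first attempt, we would avoid the class computation and instead argue directly. By Proposition \ref{prop:conv_of_max_ICs}, $\IC_{w_\circ}^\scrL\star\IC_s^{\scrL}\cong \IC_{w_\circ}^\scrL[1]\oplus\IC_{w_\circ}^\scrL[-1]$ for each endosimple $s$. We would then induct on strata using the distinguished triangles relating $j_w^*$ and $j_{ws}^*$ under convolution with $\IC_s$, as in \cite[Proposition 4.4.5]{Sandvik}. This shows that $j_w^*\IC_{w_\circ}^\scrL\cong j_{w_\circ}^*\IC_{w_\circ}^\scrL$ up to the predicted shift, using pointwise purity over $\K$.
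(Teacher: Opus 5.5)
Your main step is sound and is the standard argument behind the paper's citation of \cite[Proposition 6.4]{LY} and \cite[Proposition 4.4.5]{Sandvik}. The class of $\IC_{w_\beta}^{\scrL}$ is the Kazhdan--Lusztig element $\sum_{w\in\beta}v^{\ell_{\scrL}(w_\beta)-\ell_{\scrL}(w)}H_w^{\scrL}$. A no-cancellation property then lets you read the $*$-stalks off these single-monomial coefficients, and Verdier self-duality gives the $!$-stalks.

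In this paper's setting (complex $G$, coefficients in $\K$), the natural no-cancellation input is parity rather than weights. Over $\K$ every $\IC_w^{\scrL}$ is an indecomposable parity sheaf. Hence $j_w^*\IC_{w_\beta}^{\scrL}$ is a direct sum of objects $\scrK_w[n]$ with $n$ of a fixed parity, and its graded rank is recorded exactly by the coefficient of $H_w^{\scrL}$. Using weights instead requires passing to a finite field, as in \cite{LY}.

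Your reduction to the neutral block, however, rests on a false identity. For non-parabolic $W_{\scrL}^{\circ}$ one does not have $\ell(ww^\beta)=\ell(w)+\ell(w^\beta)$ for $w\in W_{\scrL}^{\circ}$. For example, in type $B_2$ with $W_{\scrL}^{\circ}=\langle s,tst\rangle$ and $\beta$ the block containing $t$, one has $w^\beta=t$, and $(tst)t=ts$ has length $2$ rather than $4$.

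What right convolution with the clean object $\theta_\beta$ actually does is send $\Delta_x\mapsto\Delta_{xw^\beta}$ and $\nabla_x\mapsto\nabla_{xw^\beta}$ for all $x\in W_{\scrL}^{\circ}$. This holds because $\Delta_y\star\Delta_s\cong\Delta_{ys}$ whenever $\scrL ys\neq\scrL y$, whether or not $ys>y$. Consequently it changes stalk shifts by $\ell(xw^\beta)-\ell(x)$, not by $\ell(w^\beta)$.

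The reduction still survives. Both displayed shifts have the form $\ell(w)$ plus a quantity depending only on $\ell_{\scrL}$, and $\ell_{\scrL}$ is invariant under $x\mapsto xw^\beta$. The step is unnecessary anyway, since the character argument applies verbatim to an arbitrary block $\beta$.
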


\begin{definition}
    Let $\beta \in \uW{\scrL}{\scrL'}$.
    A \emph{rigidified maximal IC sheaf} for the block $\beta \in \uW{\scrL}{\scrL'}$ is a pair $(\Theta, \epsilon)$ where $\Theta \in \DEE{\scrL}{\scrL'} (G, \K)$ is such that $\Theta [\ell_{\scrL} (w_{\beta})] \cong \IC_{w_{\beta}^{\scrL}}$ and $\epsilon : \Theta \to \theta_{\beta}^{\scrL}$ is a nonzero map.
\end{definition}

By Proposition \ref{prop:stalks_of_max_ICs}, rigidified maximal IC sheaves exist. 
Moreover, there is a unique isomorphism between any two choices of rigidified maximal IC sheaves (cf., \cite[6.5]{LY}).
We can then fix, once-and-for-all, a rigidified maximal IC sheaf $(\Theta^{\scrL}_{\beta}, \epsilon_{\beta})$ for each $\beta \in \uW{\scrL}{\scrL'}$. 
When $\beta \in \uW{\scrL}{\scrL}$ is the neutral block, the rigidified maximal IC sheaf will be denoted by $(\Theta_{\circ}^{\scrL}, \epsilon_{\scrL})$.

\subsubsection{Coalgebra Structure on Maximal IC Sheaves}

Let $\Seq^{\scrL} (W, \fr{o})$ denote the set of sequences of composable blocks with left monodromy $\scrL$. 
Explicitly, the $\Seq^{\scrL} (W, \fr{o})$ consists of sequences $\beta_{\bullet} = (\beta_1, \ldots, \beta_n)$ where $\beta_i \in \uW{\scrL_i}{\scrL_{i+1}}$ for some $\scrL_i \in \fr{o}$ such that $\scrL_1 = \scrL$.
For each $\beta_{\bullet} \in \Seq^{\scrL} (W, \fr{o})$, we write $\pi (\beta_{\bullet})$ for the block $\beta_1 \beta_2 \ldots \beta_n$. 
Given $\beta_{\bullet} \in \Seq^{\scrL} (W, \fr{o})$, we define
\[\Theta^{\scrL}_{\star \beta_{\bullet}} = \Theta^{\scrL_1}_{\beta_1} \star \ldots \star \Theta^{\scrL_n}_{\beta_n} \qquad\text{and}\qquad \theta^{\scrL}_{\star \beta_{\bullet}} = \theta^{\scrL_1}_{\beta_1} \star \ldots \star \theta^{\scrL_n}_{\beta_n}. \]
We also define $\epsilon_{\beta_{\bullet}} : \Theta^{\scrL}_{\star \beta_{\bullet}} \to \theta^{\scrL}_{\star \beta_{\bullet}}$ as the convolution of the various $\epsilon_{\beta_i}$'s.

\begin{lemma}\label{lem:existence_of_comult}
    Let $\beta_{\bullet} \in \Seq^{\scrL} (W, \fr{o})$.
    There is a unique map
    \[\nu_{\beta_{\bullet}} : \Theta^{\scrL}_{\pi (\beta_{\bullet})} \to \Theta^{\scrL}_{\star \beta_{\bullet}}\]
    making the following diagram commute,
    \begin{equation}\label{eq:existence_of_comult_1}
        \begin{tikzcd}
            \Theta^{\scrL}_{\pi (\beta_{\bullet})} \arrow[r, "\nu_{\beta_{\bullet}}"] \arrow[d, "\epsilon_{\pi (\beta_{\bullet})}"] & \Theta^{\scrL}_{\star \beta_{\bullet}} \arrow[d, "\epsilon_{\beta_{\bullet}}"] \\
            \theta^{\scrL}_{\pi (\beta_{\bullet})} \arrow[r, "\sim"]                                                              & \theta^{\scrL}_{\star \beta_{\bullet}},                                        
            \end{tikzcd}
    \end{equation}
    where the bottom arrow is the canonical isomorphism for rigidified minimal IC sheaves.
\end{lemma}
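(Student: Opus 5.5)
Both existence and uniqueness follow once we know that the relevant Hom space is small, so the plan is to compute $\Hom(\Theta^{\scrL}_{\pi(\beta_\bullet)}, \Theta^{\scrL}_{\star\beta_\bullet})$ in degree $0$. Write $\gamma = \pi(\beta_\bullet)$ and $N = \sum_i \ell_{\scrL_i}(w_{\beta_i})$. Iterating Proposition \ref{prop:conv_of_max_ICs}(1), the convolution $\Theta^{\scrL}_{\star\beta_\bullet}$ is a direct sum of shifts of $\IC_{w_\gamma}^{\scrL}$. By Proposition \ref{prop:conv_of_max_ICs}(2),(3), applied inductively, we expect the following. After the normalization $\Theta = \IC[-\ell_\scrL(w)]$, the lowest shift appearing in $\Theta^{\scrL}_{\star\beta_\bullet}$ is exactly that of $\Theta^{\scrL}_\gamma$, with multiplicity one, and all other summands are $\Theta^{\scrL}_\gamma[-2k]$ for $k>0$. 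Equivalently, the summands sit in perverse degrees strictly on one side. It is cleanest to see this from Proposition \ref{prop:stalks_of_max_ICs}: compute the stalk of the convolution at the minimal element $w^\gamma$. There $\Theta_\gamma$ has stalk $\scrK_{w^\gamma}$ in a single degree, and the convolution's stalk is computed as a tensor product of the $\epsilon$-degrees.

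Granting this, we have $\Hom(\Theta^{\scrL}_\gamma,\Theta^{\scrL}_\gamma[-2k])=0$ for $k>0$, since $\End^{<0}$ of a perverse sheaf vanishes. Also $\End(\IC)=\K$, as it is simple over a field. Hence the degree-zero Hom space is one-dimensional, spanned by the inclusion $\iota$ of the distinguished summand.

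Next we need $\epsilon_{\beta_\bullet}\circ\iota \neq 0$. The composite is a map $\Theta_\gamma\to\theta_{\star\beta_\bullet}\cong\theta_\gamma$. By adjunction, maps $\Theta_\gamma \to \theta_\gamma$ are determined by the restriction to the stratum $w^\gamma$, where $\theta_\gamma$ is a clean extension and $\Theta_\gamma$ has its top stalk $\scrK_{w^\gamma}$. That Hom space is one-dimensional, spanned by $\epsilon_\gamma$. We then check on the stalk at $\dot w^\gamma$ that $\epsilon_{\beta_\bullet}$ restricts to an isomorphism on the relevant top-degree stalk, by Künneth for convolution on the minimal strata. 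Since $\iota$ is an isomorphism on the summand, the composite is a nonzero multiple $c\,\epsilon_\gamma$ composed with the canonical isomorphism $\theta_\gamma\cong\theta_{\star\beta_\bullet}$. Then $\nu_{\beta_\bullet}=c^{-1}\iota$ makes \eqref{eq:existence_of_comult_1} commute. Uniqueness follows because any other choice is a scalar multiple $a\iota$, and commutativity forces $a=c^{-1}$.

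The main obstacle is the multiplicity-one/lowest-shift claim, together with the nonvanishing of $\epsilon_{\beta_\bullet}$ on that summand. I would attack it first via stalks at $w^\gamma$ using Proposition \ref{prop:stalks_of_max_ICs}. If that bookkeeping fails, I would fall back on induction on $n$, using the $n=2$ case of Proposition \ref{prop:conv_of_max_ICs}(3) and functoriality of $\epsilon$ under convolution.
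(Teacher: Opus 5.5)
Your overall structure matches the paper's proof. Both arguments get a one-dimensional space $\Hom(\Theta^{\scrL}_{\pi(\beta_{\bullet})},\Theta^{\scrL}_{\star\beta_{\bullet}})$ from Proposition~\ref{prop:conv_of_max_ICs}, both use $\Hom(\Theta^{\scrL}_{\pi(\beta_{\bullet})},\theta^{\scrL}_{\pi(\beta_{\bullet})})\cong\K$, and both rescale once the composite with $\epsilon_{\beta_{\bullet}}$ is known to be nonzero. The step you single out as the main obstacle is not one. That the lowest summand of $\Theta^{\scrL}_{\star\beta_{\bullet}}$ is a single copy of $\Theta^{\scrL}_{\pi(\beta_{\bullet})}$, with all other summands in strictly higher perverse degree, follows by a direct induction on $n$ from parts (1)--(3) of Proposition~\ref{prop:conv_of_max_ICs}. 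You only need that $\ell_{\scrL}(w_{\beta})$ is the same for every block $\beta$ out of $\scrL$. No stalk bookkeeping is required, and the evenness of the shifts plays no role.

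The genuine gap is the nonvanishing of $\epsilon_{\beta_{\bullet}}$ on that summand. You justify it by ``K\"unneth for convolution on the minimal strata'', but this is not how stalks of a convolution are computed. The stalk of $\Theta^{\scrL_1}_{\beta_1}\star\cdots\star\Theta^{\scrL_n}_{\beta_n}$ at $\dot w^{\pi(\beta_{\bullet})}$ is the cohomology of the entire fiber of the multiplication map. That fiber meets every tuple of strata $(x_1,\dots,x_n)$ with $x_i\in\beta_i$ lying over $\eFl_{w^{\pi(\beta_{\bullet})}}$, not only the tuple of minimal strata. Already in the unipotent case with $n=2$, the stalk at $e$ is $H^{\bullet}$ of the flag variety, while a K\"unneth count over minimal strata gives a single copy of $\scrK_e$. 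What you actually need is that restriction from the whole fiber to the minimal-strata locus is an isomorphism in lowest degree. That statement is essentially the content of the lemma, and K\"unneth does not supply it.

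Your fallback is what the paper does. Using the uniqueness already established, reduce by induction to $\beta_{\bullet}=(\beta,\gamma)$. Then factor $\epsilon_{(\beta,\gamma)}=(\epsilon_{\beta}\star\theta^{\scrL'}_{\gamma})\circ(\id\star\epsilon_{\gamma})$. From this factorization and Proposition~\ref{prop:conv_of_max_ICs}, the paper reads off that $\epsilon_{(\beta,\gamma)}$ is surjective on the relevant (lowest) perverse cohomology. Hence $\nu_{(\beta,\gamma)}$ induces an isomorphism there, and the composite is nonzero. That is the argument you should carry out in place of the stalk computation.
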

\begin{proof}
    By iterating Proposition \ref{prop:conv_of_max_ICs}, we have that ${}^p H^i (\Theta^{\scrL}_{\star \beta_{\bullet}}) = 0$ for $i < 0$ 
    and ${}^p H^{0} (\Theta^{\scrL}_{\star \beta_{\bullet}}) \cong \Theta^{\scrL}_{\pi (\beta_{\bullet})}$.
    Therefore, there is a nonzero map $\nu_{\beta_{\bullet}} : \Theta^{\scrL}_{\pi (\beta_{\bullet})} \to \Theta^{\scrL}_{\star \beta_{\bullet}}$.
    Note that $\Hom (\Theta^{\scrL}_{\pi (\beta_{\bullet})}, \theta^{\scrL}_{\pi (\beta_{\bullet})}) \cong \K$. 
    As a consequence, if the composition
    \begin{equation}\label{eq:existence_of_comult_2}
        \Theta^{\scrL}_{\pi (\beta_{\bullet})} \stackrel{\nu_{\beta_{\bullet}}}{\to} \Theta^{\scrL}_{\star \beta_{\bullet}} \stackrel{\epsilon_{\beta_{\bullet}}}{\to} \theta^{\scrL}_{\star \beta_{\bullet}} \cong \theta^{\scrL}_{\pi (\beta_{\bullet})}
    \end{equation}
    is nonzero, then we can rescale $\nu_{\beta_{\bullet}}$ to ensure that (\ref{eq:existence_of_comult_1}) commutes.

    By induction, it suffices to prove (\ref{eq:existence_of_comult_2}) is nonzero for $\beta_{\bullet} = (\beta, \gamma)$ where $\beta \in \uW{\scrL}{\scrL'}$ and $\gamma \in \uW{\scrL'}{\scrL''}$.
    Since $\epsilon_{(\beta, \gamma)} = (\epsilon_{\beta} \star \theta_{\gamma}^{\scrL'}) \circ (\id \star \epsilon_{\gamma})$ and from Proposition \ref{prop:conv_of_max_ICs}, we obtain that ${}^p H^0 (\epsilon_{(\beta, \gamma)})$ is surjective.
    Therefore, ${}^p H^0 (\nu_{(\beta, \gamma)})$ is an isomorphism. In particular, ${}^p H^0 (\nu_{(\beta, \gamma)} \circ \epsilon_{(\beta, \gamma)} ) \neq 0$ which proves that (\ref{eq:existence_of_comult_2}) is nonzero.
\end{proof}

In light of Lemma \ref{lem:existence_of_comult}, we will write $\nu_{\beta_{\bullet}} : \Theta^{\scrL}_{\pi (\beta_{\bullet})} \to \Theta^{\scrL}_{\star \beta_{\bullet}}$ for the unique map which satisfies the lemma.
At times, we may simplify the notation and write $\nu = \nu_{\beta_{\bullet}}$.

\begin{proposition}\label{prop:coalg_axioms}
    Let $\scrL_1, \scrL_2, \scrL_3, \scrL_4 \in \fr{o}$ and $\beta \in \uW{\scrL_1}{\scrL_2}$,  $\gamma \in \uW{\scrL_2}{\scrL_3}$,  $\delta \in \uW{\scrL_3}{\scrL_4}$.
    The following diagrams commute
    \[\begin{tikzcd}
        \Theta^{\scrL_1}_{\beta \gamma \delta} \arrow[r, "\nu"] \arrow[d, "\nu"]                   & \Theta^{\scrL_1}_{\beta} \star \Theta^{\scrL_2}_{\gamma \delta}  \arrow[d, "\id \star \nu"]  & \Theta^{\scrL_1}_{\beta \gamma} \arrow[d, "\nu"] \arrow[r, "\nu"] \arrow[rrd, "\id"]     & \Theta^{\scrL_1}_{\beta} \star \Theta^{\scrL_2}_{\gamma} \arrow[r, "\id \star \epsilon"] & \Theta^{\scrL_1}_{\beta} \star \theta^{\scrL_2}_{\gamma} \arrow[d, "\sim"] \\
        \Theta^{\scrL_1}_{\beta \gamma} \star \Theta_{\scrL_3}^{\delta} \arrow[r, "\nu \star \id"] & \Theta^{\scrL_1}_{\beta} \star \Theta^{\scrL_2}_{\gamma} \star \Theta^{\scrL_3}_{\delta},     & \Theta^{\scrL_1}_{\beta} \star \Theta^{\scrL_2}_{\gamma} \arrow[r, "\epsilon \star \id"] & \theta^{\scrL_1}_{\beta} \star \Theta^{\scrL_2}_{\gamma} \arrow[r, "\sim"]               & \Theta^{\scrL_1}_{\beta \gamma}.                                           
        \end{tikzcd}\]
\end{proposition}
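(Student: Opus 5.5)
Both diagrams will be proved with the uniqueness half of Lemma \ref{lem:existence_of_comult}. The plan has two parts. First, each composite in the diagrams is a map out of a single rigidified maximal IC sheaf $\Theta^{\scrL_1}_{\pi(\beta_\bullet)}$ into some $\Theta^{\scrL_1}_{\star\beta_\bullet}$ (possibly convolved with minimal IC sheaves). Second, a map $\Theta^{\scrL}_{\pi(\beta_\bullet)}\to\Theta^{\scrL}_{\star\beta_\bullet}$ is pinned down by its composite with $\epsilon_{\beta_\bullet}$. This second fact is really the content of the lemma. Its proof gives ${}^pH^i(\Theta^{\scrL}_{\star\beta_\bullet})=0$ for $i<0$ and ${}^pH^0\cong\Theta^{\scrL}_{\pi(\beta_\bullet)}$, so the relevant Hom space $\Hom(\Theta_{\pi(\beta_\bullet)},\Theta_{\star\beta_\bullet})\cong\Hom(\Theta_{\pi(\beta_\bullet)},{}^pH^0)\cong\K$ is one-dimensional. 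On this line, $\epsilon_{\beta_\bullet}\circ(-)$ is nonzero.

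\emph{Coassociativity.} Both composites are maps $\Theta^{\scrL_1}_{\beta\gamma\delta}\to\Theta^{\scrL_1}_\beta\star\Theta^{\scrL_2}_\gamma\star\Theta^{\scrL_3}_\delta$. We postcompose each with $\epsilon_{(\beta,\gamma,\delta)}=\epsilon_\beta\star\epsilon_\gamma\star\epsilon_\delta$. For the upper route, interchange gives $\epsilon_{(\beta,\gamma,\delta)}\circ(\id\star\nu)=(\epsilon_\beta\star(\epsilon_{(\gamma,\delta)}\circ\nu))$. By the defining square (\ref{eq:existence_of_comult_1}) for $\nu_{(\gamma,\delta)}$ this equals $(\id\star b_{\gamma,\delta}^{-1})\circ(\epsilon_\beta\star\epsilon_{\gamma\delta})$, which in turn is $(\id\star b^{-1}_{\gamma,\delta})\circ\epsilon_{(\beta,\gamma\delta)}$. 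Precomposing with $\nu_{(\beta,\gamma\delta)}$ and applying the square once more turns the whole composite into $\epsilon_{\beta\gamma\delta}$ followed by the isomorphism $\theta_{\beta\gamma\delta}\cong\theta_\beta\star\theta_\gamma\star\theta_\delta$ built from $b_{\beta,\gamma\delta}$ and $b_{\gamma,\delta}$. The lower route gives the analogous isomorphism built from $b_{\beta\gamma,\delta}$ and $b_{\beta,\gamma}$. The two isomorphisms agree by the associativity of the $b$'s coming from the 2-functor $\IC_{\min}$ (Proposition \ref{prop:min_IC_functor}). Hence both composites have the same image under $\epsilon_{(\beta,\gamma,\delta)}\circ(-)$, and by the one-dimensionality above they are equal.

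\emph{Counit.} Here we must show that $\phi:=(\text{iso})\circ(\id\star\epsilon_\gamma)\circ\nu_{(\beta,\gamma)}$ is the identity of $\Theta^{\scrL_1}_{\beta\gamma}$, and likewise for the left-hand counit. The isomorphism $\Theta_\beta\star\theta_\gamma\cong\Theta_{\beta\gamma}$ is the one for which the obvious square with $\epsilon$'s commutes. It exists because convolution with the minimal IC sheaf $\theta_\gamma$ is an equivalence, since $\theta_\gamma$ is invertible via $b$. That equivalence sends $(\Theta_\beta,\epsilon_\beta)$ to a rigidified maximal IC sheaf for $\beta\gamma$, and rigidified maximal IC sheaves are unique up to unique isomorphism. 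Now $\End(\Theta_{\beta\gamma})=\K$ by simplicity, so $\phi$ is a scalar. Composing with $\epsilon_{\beta\gamma}$ and using (\ref{eq:existence_of_comult_1}) gives $\epsilon_{\beta\gamma}\circ\phi=\epsilon_{\beta\gamma}$, hence $\phi=\id$.

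The main obstacle is bookkeeping. One must fix that specific rigidifying isomorphism and check that the $b$-isomorphisms on minimal IC sheaves match across both routes. Beyond that, everything reduces to the one-dimensionality of the relevant Hom spaces established in the proof of Lemma \ref{lem:existence_of_comult}.
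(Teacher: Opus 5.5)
Your proposal is correct and follows the paper's proof. Coassociativity is exactly the uniqueness statement in Lemma~\ref{lem:existence_of_comult}: you make explicit the one-dimensionality of $\Hom(\Theta^{\scrL}_{\pi(\beta_\bullet)},\Theta^{\scrL}_{\star\beta_\bullet})$, and you use the associativity of the $b$'s to identify the two bottom isomorphisms. The counit triangles follow, as in the paper, by composing with $\epsilon_{\beta\gamma}$ and using $\End(\Theta^{\scrL_1}_{\beta\gamma})\cong\K$. Your choice of the isomorphism $\Theta_\beta\star\theta_\gamma\cong\Theta_{\beta\gamma}$ as the one compatible with the rigidifications is also the convention the paper's argument tacitly needs.
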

\begin{proof}
    The first diagram commutes by the uniqueness of $\nu_{(\beta, \gamma, \delta)}$ from Lemma \ref{lem:existence_of_comult}.
   For the second diagram, we can compose 
   \begin{equation}\label{eq:coalg_axioms_1}
    \Theta^{\scrL_1}_{\beta \gamma} \stackrel{\nu}{\to} \Theta^{\scrL_1}_{\beta} \star \Theta^{\scrL_2}_{\gamma} \stackrel{\epsilon_{\beta} \star \id}{\to} \theta^{\scrL_1}_{\beta} \star \Theta^{\scrL_2}_{\gamma} \cong \Theta^{\scrL_1}_{\beta \gamma}
   \end{equation}
   with $\epsilon_{\beta \gamma}$.
   The defining property (\ref{eq:existence_of_comult_1}) for $\nu$ ensures that this composition is $\epsilon_{\beta \gamma}$.
   Since $\End (\Theta^{\scrL_1}_{\beta \gamma}) \cong \K$, we may conclude that (\ref{eq:coalg_axioms_1}) is the identity map. The other triangle in the second diagram commutes by a similar argument.
\end{proof}

\subsection{Soergel Functor}

We keep the assumption from the previous section that $\k = \K$.

Let $\scrF, \scrG \in \DEE{\scrL}{\scrL'} (G, \K)$. The internal sheaf Hom object $\RHom (\scrF, \scrG)$ is naturally a $B$-biequivariant sheaf on $G$,
Moreover, there is an isomorphism
\[\Hom_{\DEE{\scrL}{\scrL'} (G, \K)}^{\bullet} (\scrF, \scrG) \coloneq \bigoplus_{n \in \Z} \Hom_{\DEE{\scrL}{\scrL'} (G, \K)} (\scrF, \scrG [n]) \cong H_{T \times T}^{\bullet} (\eFl, \RHom (\scrF, \scrG)).\]
As a result, there is an action of $R_{\K} \cong H_T^{\bullet} (\pt; \K)$ on both the left and right of $\Hom^{\bullet} (\scrF, \scrG)$ which makes $\Hom^{\bullet} (\scrF, \scrG)$ into a graded $R_{\K}$-bimodule.

Let $\scrL, \scrL' \in \fr{o}$, and write $\Theta^{[\scrL, \scrL']} \coloneq \bigoplus_{\beta \in \uW{\scrL}{\scrL'}} \Theta^{\scrL}_{\beta}.$
We define the Soergel $\H$-functor,
\[\H : \DEE{\scrL}{\scrL'} (G, \K) \to \grbim{R_{\K}}, \qquad \scrF \mapsto \Hom_{\DEE{\scrL}{\scrL'} (G, \K)}^{\bullet} (\Theta^{[\scrL, \scrL']}, \scrF).\]

We will also consider a ``right-equivariant'' version of the Soergel functor. 
\[\tilde{\H} : \DME{\scrL}{\scrL'} (G, \K) \to \grrmod{R_{\K}}, \qquad \scrF \mapsto \Hom_{\DME{\scrL}{\scrL'} (G, \K)}^{\bullet} (\ForME{\scrL} (\Theta^{[\scrL, \scrL']}), \scrF).\] 

\subsubsection{Behavior on Objects}

We recall some facts about the $\H$-functor from \cite{LY} or \cite{Sandvik}.

\begin{lemma}[{\cite[Lemma 7.3]{LY}}]\label{lem:H_on_min_ICs}
    There is a canonical isomorphism in $\grbim{R_{\K}}$
    \[\H (\theta^{\scrL}_{\beta}) \cong R_{w^{\beta}}\]
    under which the canonical map $\epsilon^{\beta} : \Theta^{\scrL}_{\beta} \to \theta_{\beta}^{\scrL}$ corresponds to $1 \in R_{w^{\beta}}$.
\end{lemma}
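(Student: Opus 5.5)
We need to compute $\H(\theta^{\scrL}_{\beta}) = \bigoplus_{\gamma \in \uW{\scrL}{\scrL'}} \Hom^{\bullet}(\Theta^{\scrL}_{\gamma}, \theta^{\scrL}_{\beta})$ and show it is $R_{w^\beta}$ as a graded bimodule, with $\epsilon_\beta \mapsto 1$.

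\emph{Vanishing of the off-diagonal summands.} The minimal IC sheaf $\theta^{\scrL}_{\beta}$ is supported on the single stratum $\eFl_{w^\beta}$, since $w^\beta$ is minimal in its block and the closure of $\eFl_{w^{\beta}}$ meets only strata indexed by elements of $\W{\scrL}{\scrL'}$ below $w^{\beta}$, which lie in other blocks and carry no $\scrL$-monodromic local systems compatible with the clean extension; this is the cleanness used in \cite{Sandvik}. Concretely, $\theta^{\scrL}_\beta \cong j_{w^\beta *}\scrK^{\scrL}_{w^\beta}[\ell(w^\beta)] \cong j_{w^\beta !}\scrK^{\scrL}_{w^\beta}[\ell(w^\beta)]$. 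By adjunction,
\[ \Hom^\bullet(\Theta^{\scrL}_{\gamma}, \theta^{\scrL}_{\beta}) \cong \Hom^\bullet\bigl(j_{w^\beta}^*\Theta^{\scrL}_\gamma, \scrK^{\scrL}_{w^\beta}[\ell(w^\beta)]\bigr). \]
If $\gamma\neq\beta$, then $w^\beta\notin\gamma$, and Proposition \ref{prop:stalks_of_max_ICs}, applied together with the fact that $\Theta^{\scrL}_\gamma$ is supported on the strata of $\gamma$, gives $j^*_{w^\beta}\Theta^{\scrL}_\gamma = 0$. So the summand vanishes.

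\emph{The diagonal summand.} For $\gamma=\beta$, Proposition \ref{prop:stalks_of_max_ICs} gives $j^*_{w^\beta}\Theta^{\scrL}_\beta \cong \scrK^{\scrL}_{w^\beta}[\ell(w^\beta)]$. Here $\ell_{\scrL}(w^\beta)=0$, and the shift normalization $\Theta[\ell_{\scrL}(w_\beta)]\cong\IC$ cancels the $\ell_{\scrL}(w_\beta)$ term. Hence
\[ \Hom^\bullet \cong \End^\bullet_{D_{\cons}(T\backslash \pt)}(\uk) = H^\bullet_T(\pt;\K) = R_{\K}, \]
using the stalk equivalence \cite[Proposition 3.2.2]{Sandvik}. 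This gives a free rank-one left module concentrated in degree $0$ at the generator. The generator corresponds to the nonzero degree-$0$ map, and since $\epsilon_\beta$ is nonzero, it is a generator; we declare $\epsilon_\beta\mapsto 1$.

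\emph{The right action.} The left $R$-action comes from the left $T$-equivariance. Right multiplication by $f$ is induced by the right $T$-equivariance. On the stratum $\eFl_{w^\beta} \cong \A^{\ell} \times T$ via $\dot w^\beta$, the right $T$-action is twisted by conjugation by $\dot w^\beta$, so right multiplication by $f$ equals left multiplication by $w^\beta(f)$. This is exactly the bimodule $R_{w^\beta}$. The identification is canonical because it is pinned down by $\epsilon_\beta\mapsto 1$, and degree-$0$ automorphisms of $R_{w^\beta}$ are scalars.

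\emph{Main obstacle.} The step requiring care is the vanishing of the off-diagonal summands together with the cleanness of $\theta_\beta$, since this is what guarantees that no other block contributes. I would cite the cleanness statement from \cite{Sandvik}/\cite{LY} rather than reprove it. A secondary point is the sign convention for the twist ($w^\beta$ versus $(w^\beta)^{-1}$); I would fix it by checking the case of a simple $s$ with $\scrL s\neq\scrL$ against $R_s$.
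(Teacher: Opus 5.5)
Your argument is correct and is essentially the standard proof behind \cite[Lemma 7.3]{LY}, which the paper cites without giving an argument of its own. Cleanness of $\theta^{\scrL}_{\beta}$ reduces $\H(\theta^{\scrL}_{\beta})$ by adjunction to the stalks $j_{w^{\beta}}^{*}\Theta^{\scrL}_{\gamma}$, only $\gamma=\beta$ contributes, and the stalk equivalence yields $H^{\bullet}_{T}(\pt;\K)$ with right action twisted by conjugation by $\dot{w}^{\beta}$, i.e.\ $R_{w^{\beta}}$ with $\epsilon_{\beta}\mapsto 1$.

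Two small remarks:
\begin{itemize}
\item Cleanness is better justified by writing $\Delta^{\scrL}_{w^{\beta}}$ and $\nabla^{\scrL}_{w^{\beta}}$ as convolutions of the $\Delta^{\scrL_{i-1}}_{s_i}$, resp.\ $\nabla^{\scrL_{i-1}}_{s_i}$, along a reduced word $(s_1,\ldots,s_k)$ with $\scrL_i=\scrL s_1\cdots s_i$. Since $\ell_{\scrL}(w^{\beta})=0$ forces $\scrL_i\neq\scrL_{i-1}$, you get $\Delta^{\scrL_{i-1}}_{s_i}=\nabla^{\scrL_{i-1}}_{s_i}$. Strata of other blocks inside $\W{\scrL}{\scrL'}$ do carry monodromic local systems, so your stated reason is not the right one.
\item Your proposed convention check on a simple $s$ cannot distinguish $w^{\beta}$ from $(w^{\beta})^{-1}$, because $s=s^{-1}$. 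It is the stabilizer computation itself that fixes the direction of the twist.
\end{itemize}
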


If $\scrL s = \scrL$, by \cite[Lemma 4.5.6]{Sandvik}, then there is a unique map $\xi_s : \Theta^{\scrL}_{\circ} \to \IC_s^{\scrL} [-1]$ making the following diagram commute,
\begin{equation}\label{eq:char_of_theta_s}
    \begin{tikzcd}
        \Theta^{\scrL}_{\circ} \arrow[rr, "\xi_s"] \arrow[rd, "\epsilon_{\scrL}"'] &               & \IC_s^{\scrL} [-1] \arrow[ld] \\
                                                                              & \IC_e^{\scrL}, &                         
        \end{tikzcd}
\end{equation}
where the right downwards arrow is induced from the unit of the adjunction $\id \to j_{e*} j_e^*$.

\begin{lemma}[{\cite[Lemma 7.4]{LY}}]\label{lem:H_conv_by_ICs}
    Let $s \in S$ and $\scrL \in \fr{o}$ such that $\scrL s = \scrL$.
    \begin{enumerate}
        \item Let $\scrF \in \DEE{\scrL}{\scrL'} (G, \K)$.
            There is a canonical isomorphism in $\grbim{R_{\K}}$,
            \begin{equation}\label{eq:H_conv_by_ICs_1}
                \H (\scrF) \otimes_{R^s} R(1) \stackrel{\sim}{\to} \H (\scrF \star \IC_s^{\scrL}).
            \end{equation}
        \item There is a canonical isomorphism in $\grbim{R_{\K}}$
        \begin{equation}\label{eq:H_conv_by_ICs_3}
            \H (\IC_s^{\scrL}) \cong C_s
        \end{equation}
        under which the map $\xi_s : \Theta_{\scrL}^{\circ} \to \IC_s^{\scrL} [-1]$ corresponds to $u_s \in C_s$. 
    \end{enumerate}
\end{lemma}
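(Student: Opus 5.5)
The plan is to reduce to the non-monodromic rank-one computation, which is what \cite[Lemma 7.4]{LY} does, with the monodromy entering only through the choice of $\Theta^{\scrL}_{\circ}$.

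\textbf{Step 1: the convolution $\scrF\star\IC_s^{\scrL}$.} Since $\scrL s=\scrL$, the canonical representative $\IC_s^{\scrL}=\scrL^s[1]$ is the pullback, up to shift, of the extended local system along the $\mathbb{P}^1$-fibration $\pi_s:\eFl\to U\backslash G/P_s$. Hence convolution with $\IC_s^{\scrL}$ is computed as $\pi_s^*\pi_{s*}$ twisted by $\scrL^s$, up to a shift of $1$.

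\textbf{Step 2: passing through $\H$.} Apply $\H=\Hom^\bullet(\Theta^{[\scrL,\scrL']},-)$. By adjunction,
\[\H(\scrF\star\IC_s^{\scrL})\cong \Hom^\bullet(\Theta^{[\scrL,\scrL']}\star\IC_s^{\scrL}[-1],\scrF)(\text{shift}),\]
using that convolution with $\IC_s^{\scrL}$ is self-adjoint up to shift.

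\textbf{Step 3: how $\IC_s^{\scrL}$ acts on $\Theta$.} Proposition \ref{prop:conv_of_max_ICs} shows that $\Theta^{\scrL}_\beta\star\IC_s^{\scrL}$ is a sum of shifts of $\Theta^{\scrL}_\beta$, namely $\Theta_\beta[1]\oplus\Theta_\beta[-1]$. Equivalently, and more canonically, one uses the $T$-equivariant cohomology of $\mathbb{P}^1=L_s/B_s$ with coefficients in the extension $\scrL^s$. This is the step where $\scrL s=\scrL$ is essential: it guarantees the unique extension \cite[Lemma 3.2.4]{Sandvik}, and then $H_T^\bullet(L_s/B_s,\scrL^s)\cong R\otimes_{R^s}\K$ exactly as in the constant-coefficient case.

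Using this, one builds the natural map $\H(\scrF)\otimes_{R^s}R(1)\to\H(\scrF\star\IC_s^{\scrL})$, sending $\phi\otimes f$ to $(\phi\star\xi_s)\cdot f$ via $\Theta\to\Theta\star\Theta_\circ\to\Theta\star\IC_s^{\scrL}[-1]$ (the comultiplication of Lemma \ref{lem:existence_of_comult}).

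\textbf{Step 4: the map is an isomorphism.} Filter $\scrF$ by strata using $j_w^*$ and $j_w^!$. This reduces the claim to the standard and costandard objects $\Delta_w$ and $\nabla_w$. For those, both sides are free, with ranks computable from Proposition \ref{prop:stalks_of_max_ICs}, and the map is checked on generators. This is the main obstacle: one must match the ranks and show injectivity on the graded pieces. I would first try the standard five-lemma argument over the Bruhat filtration.

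\textbf{Part (2).} Take $\scrF=\IC_e^{\scrL}$. Lemma \ref{lem:H_on_min_ICs} gives $\H(\IC_e^{\scrL})\cong R$ with $\epsilon_{\scrL}\mapsto 1$. Part (1) then gives $\H(\IC_s^{\scrL})\cong R\otimes_{R^s}R(1)=C_s$. Tracing $1\otimes1$ through the map of Step 3 shows it is exactly $\xi_s$, using the uniqueness characterization of $\xi_s$ in (\ref{eq:char_of_theta_s}). So $\xi_s$ corresponds to $u_s$.
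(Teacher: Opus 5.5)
The paper does not reprove this statement; it imports it from \cite[Lemma 7.4]{LY}. Judged on its own, your argument has a genuine gap exactly where you flag it: nothing shows that the map $\alpha_\scrF\colon \H(\scrF)\otimes_{R^s}R(1)\to\H(\scrF\star\IC_s^{\scrL})$ is an isomorphism.

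The five-lemma reduction in Step 4 is fine. Both sides are cohomological in $\scrF$ because $R$ is free over $R^s$, and $\alpha$ is natural and commutes with shifts. But afterwards you must still prove that $\alpha$ is an isomorphism on generators, say on costandard objects $\nabla_w$, and equality of graded ranks does not give that. You would have to:
\begin{itemize}
\item compute $\nabla_w\star\IC_s^{\scrL}$ through the (co)standard filtration of $\IC_s^{\scrL}$, separately for $ws>w$ and $ws<w$;
\item show that the resulting long exact sequence of $\H$'s splits;
\item verify that your specific map is surjective onto both pieces.
\end{itemize}
None of this is done. Steps 2--3 do not fill the hole. Self-adjointness plus $\Theta_\beta\star\IC_s^{\scrL}\cong\Theta_\beta[1]\oplus\Theta_\beta[-1]$ only gives an abstract isomorphism of graded left $R$-modules $\H(\scrF\star\IC_s^{\scrL})\cong\H(\scrF)(1)\oplus\H(\scrF)(-1)$. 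It gives no control of the right action or of $\alpha$. Moreover, that splitting does not follow from Proposition~\ref{prop:conv_of_max_ICs}, which concerns convolutions of two maximal IC sheaves; $\IC_s^{\scrL}$ is one only when $W_{\scrL}^{\circ}=\{e,s\}$.

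There are two further problems.

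\textbf{Well-definedness of your map.} The formula $\phi\otimes f\mapsto((\phi\star\xi_s)\circ\nu)\cdot f$ descends to $\H(\scrF)\otimes_{R^s}R$ only if $g\,\xi_s=\xi_s\,g$ in $\H(\IC_s^{\scrL})$ for all $g\in R^s$. You never check this, and it is essentially the content of part (2). So deriving (2) from (1), as you do, is circular unless this is proved independently. Also, the relevant cohomology is $H_T^\bullet(\mathbb{P}^1)\cong R\otimes_{R^s}R$; $R\otimes_{R^s}\K$ is the non-equivariant one.

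\textbf{The unused idea.} What actually makes the lemma work is the idea in your Step 1, which you never use. Because $\scrL s=\scrL$, right convolution with $\IC_s^{\scrL}$ is, up to shift, $\pi_s^*\pi_{s*}$ for the $\mathbb{P}^1$-fibration $\pi_s$ to the $(P_s,\scrL^s)$-equivariant category. Each block is stable under right multiplication by $s$, so $w_\beta s<w_\beta$. Hence each maximal IC sheaf $\Theta_\beta$ entering $\H$ is itself, up to shift, of the form $\pi_s^*\Theta'_\beta$. Set $\Theta'=\bigoplus_\beta\Theta'_\beta$.
\begin{itemize}
\item Adjunction gives $\H(\scrF)\cong\Hom^\bullet(\Theta',\pi_{s*}\scrF)$.
\item Equivariant Leray--Hirsch for the $\mathbb{P}^1$-bundle, with $H^\bullet_{L_s}(\pt)=R^s$ and $H^\bullet_T(\pt)=R$, gives $\H(\pi_s^*\scrG)\cong\Hom^\bullet(\Theta',\scrG)\otimes_{R^s}R$.
\end{itemize}
Together these yield (1) canonically for every $\scrF$ at once, bimodule structure included, with no stratum-by-stratum check. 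Part (2) is then the case $\scrF=\IC_e^{\scrL}$ together with Lemma~\ref{lem:H_on_min_ICs}, with the normalization of $\xi_s$ pinned down by (\ref{eq:char_of_theta_s}).
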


\subsubsection{2-Cateorical Structure}

Let $\grbim{R_{\K}}_{\fr{o}}$ denote the 2-category whose object set is $\fr{o}$ and where each morphism category is a copy of $\grbim{R_{\K}}$.
The horizontal composition is given by the tensor product of graded $R_{\K}$-bimodules.
By Proposition \ref{prop:coalg_axioms}, the Soergel $\H$-functor extends to a lax 2-functor
\[\H : \DEE{}{} (G, \K, \fr{o}) \to \grbim{R_{\K}}_{\fr{o}}\]

It is useful to unpack what the (lax) 2-functor condition means here.
Let $\scrF \in \DEE{\scrL}{\scrL'} (G, \K)$ and $\scrG \in \DEE{\scrL'}{\scrL''} (G, \K)$.
We have maps
\begin{align*}
  \Hom (\Theta^{[\scrL, \scrL']}, \scrF [i]) \times \Hom (\Theta^{[\scrL', \scrL'']}, \scrG [j]) &\stackrel{\star}{\to} \Hom (\Theta^{[\scrL, \scrL']} \star \Theta^{[\scrL', \scrL'']}, \scrF \star \scrG [i+j]) \\
  &\stackrel{(-) \circ \nu}{\longrightarrow} \Hom (\Theta^{[\scrL, \scrL'']}, \scrF \star \scrG [i+j]),
\end{align*}
where $\nu : \Theta^{[\scrL, \scrL'']} \to \Theta^{[\scrL, \scrL']} \star \Theta^{[\scrL', \scrL'']}$ is given by summing over all
\[\nu_{(\beta, \gamma)} : \Theta_{\beta \gamma}^{\scrL} \to \Theta_{\beta}^{\scrL} \star \Theta_{\gamma}^{\scrL'}\]
for $\beta \in \uW{\scrL}{\scrL'}$ and $\gamma \in \uW{\scrL'}{\scrL''}$.
This map factors through a morphism of graded $R_{\K}$-bimodules,
\[ c ( \scrF, \scrG) : \H (\scrF) \otimes_R \H (\scrG) \to \H (\scrF \star \scrG)\]
which produce natural transformations
\[c : \H (-) \otimes_R \H (-) \implies \H (- \star -) : \DEE{\scrL}{\scrL'} (G, \K) \times \DEE{\scrL'}{\scrL''} (G, \K) \to \grbim{R_{\K}}.\]
From here, one can observe that $\H$ is a lax 2-functor from Proposition \ref{prop:coalg_axioms} (cf., \cite[\S 7.5]{LY}).
Our goal is to show that on parity sheaves, the natural transformation $c$ is an equivalence.
The following is a combination of \cite[Lemma 7.6]{LY} and \cite[Lemma 7.7]{LY} (also see \cite[Lemma 4.5.8]{Sandvik}).

\begin{lemma}\label{lem:lax_morphisms_props}
    Let $\scrF \in \DEE{\scrL}{\scrL'} (G, \K)$, $\gamma \in \uW{\scrL'}{\scrL''}$, and $s \in S$.
    \begin{enumerate}
        \item The morphism $c (\scrF, \theta^{\scrL'}_{\gamma})$ is an isomorphism.
        \item If $\scrL' s = \scrL'$, then the morphism $c (\scrF, \IC_s^{\scrL'})$ is an isomorphism.
    \end{enumerate}
\end{lemma}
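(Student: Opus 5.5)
We prove both parts by reducing to the behaviour of $\H$ on the building blocks $\theta_\gamma^{\scrL'}$ and $\IC_s^{\scrL'}$, and comparing the lax structure map $c$ with the canonical isomorphisms already available.

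For (1), we first use that convolution with $\theta^{\scrL'}_{\gamma}$ is an equivalence $\DEE{\scrL}{\scrL'}(G,\K)\to\DEE{\scrL}{\scrL''}(G,\K)$. Its quasi-inverse is convolution with $\theta^{\scrL''}_{\gamma^{-1}}$, via the isomorphisms $b_{\gamma,\gamma^{-1}}$ coming from $\IC_{\min}$ (Proposition \ref{prop:min_IC_functor}). By Lemma \ref{lem:H_on_min_ICs}, $\H(\theta^{\scrL'}_\gamma)\cong R_{w^\gamma}$ is free of rank one on the class of $\epsilon_\gamma$. Hence the source of $c(\scrF,\theta_\gamma^{\scrL'})$ is $\H(\scrF)\otimes_R R_{w^\gamma}$, which is $\H(\scrF)$ with twisted right action. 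By construction, $c(\scrF,\theta_\gamma^{\scrL'})$ sends $f\otimes 1$ to the composite
\[\Theta^{[\scrL,\scrL'']}\xrightarrow{\nu}\Theta^{[\scrL,\scrL']}\star\Theta^{[\scrL',\scrL'']}\xrightarrow{f\star\epsilon_\gamma}\scrF\star\theta_\gamma^{\scrL'}.\]
Restricting to the summand $\Theta_{\beta\gamma}^{\scrL}$ and using the counit triangle of Proposition \ref{prop:coalg_axioms} (the right-hand diagram), this composite equals $(f\star\id)\circ\Phi_\beta$. Here $\Phi_\beta:\Theta^{\scrL}_{\beta\gamma}\xrightarrow{\sim}\Theta^{\scrL}_\beta\star\theta^{\scrL'}_\gamma$ is the canonical isomorphism. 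Note that blocks $\beta\mapsto\beta\gamma$ biject $\uW{\scrL}{\scrL'}\to\uW{\scrL}{\scrL''}$.

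So $c(\scrF,\theta_\gamma)$ is identified with the map
\[\Hom^\bullet(\Theta^{[\scrL,\scrL']},\scrF)\to\Hom^\bullet(\Theta^{[\scrL,\scrL']}\star\theta_\gamma,\scrF\star\theta_\gamma),\]
induced by the equivalence $(-)\star\theta_\gamma$ and precomposed with the isomorphism $\bigoplus_\beta\Phi_\beta$. This map is bijective, and hence an isomorphism of graded bimodules. Checking compatibility with the twisted right $R$-action is routine from the definition of the $R$-actions via equivariant cohomology.

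For (2), we use the canonical isomorphism \eqref{eq:H_conv_by_ICs_1} of Lemma \ref{lem:H_conv_by_ICs}(1), $\H(\scrF)\otimes_{R^s}R(1)\cong\H(\scrF\star\IC_s^{\scrL'})$, together with (2) of that lemma, $\H(\IC_s^{\scrL'})\cong C_s$ with $\xi_s\mapsto u_s$. The source $\H(\scrF)\otimes_R C_s$ is then identified with $\H(\scrF)\otimes_{R^s}R(1)$. It suffices to show that, under these identifications, $c(\scrF,\IC_s^{\scrL'})$ agrees with the canonical map. Both maps are right $R$-linear, and $C_s$ is generated as a right module by $u_s$ over the left action. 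So it suffices to compare them on elements $f\otimes u_s$. Unwinding, $c$ sends $f\otimes u_s$ to $(f\star\xi_s)\circ\nu_{(\beta,\circ)}$. The canonical map of Lemma \ref{lem:H_conv_by_ICs}(1) (as in \cite[Lemma 7.4]{LY}) is defined by the same recipe, using the unit $\Theta_\beta\to\Theta_\beta\star\Theta_\circ$ and $\xi_s$.

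The main obstacle is this comparison: showing that the $\nu$ of Lemma \ref{lem:existence_of_comult} coincides with whatever map was used to build \eqref{eq:H_conv_by_ICs_1}. I would try the uniqueness in Lemma \ref{lem:existence_of_comult}. Any candidate map $\Theta_\beta\to\Theta_\beta\star\Theta_\circ$ compatible with $\epsilon$ after composing with \eqref{eq:char_of_theta_s} must equal $\nu$, since $\Hom(\Theta_\beta,\theta_\beta)\cong\K$. Failing an exact match, the two maps differ by a nonzero scalar on each block summand, and an isomorphism rescaled by units is still an isomorphism. That suffices for (2).
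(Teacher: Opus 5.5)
The paper gives no argument of its own here; it cites \cite[Lemmas 7.6, 7.7]{LY}. Your proof of (1) is complete. Every element of $\H(\scrF)\otimes_R R_{w^\gamma}$ has the form $f\otimes 1$. The counit triangle of Proposition \ref{prop:coalg_axioms} rewrites $c(\scrF,\theta^{\scrL'}_\gamma)$ as $f\mapsto (f\star\id)\circ\Phi$. Full faithfulness of $(-)\star\theta^{\scrL'}_\gamma$, together with the bijection $\beta\mapsto\beta\gamma$, then gives bijectivity.

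Part (2) has a genuine gap, exactly at the step you call the main obstacle. Nothing in the paper says that the canonical isomorphism \eqref{eq:H_conv_by_ICs_1} is built ``by the same recipe'' from a map $\Theta_\beta\to\Theta_\beta\star\Theta_\circ$ followed by $\xi_s$. The uniqueness in Lemma \ref{lem:existence_of_comult} only constrains maps into $\Theta_\beta\star\Theta_\circ$ that are compatible with the $\epsilon$'s (via $\Hom(\Theta_\beta,\theta_\beta)\cong\K$). So it has nothing to act on here. Your fallback, ``the two maps differ by a nonzero scalar on each block summand'', is precisely the statement that needs proof.

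Here is how to close the gap.

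\emph{Reduction to one element.} Both $c(-,\IC_s^{\scrL'})$ and \eqref{eq:H_conv_by_ICs_1} are natural in $\scrF$ (compatibly with shifts; this is what ``canonical'' should mean, as in \cite{LY}) and right $R$-linear. By Yoneda, their values on $f\otimes u_s$ are $(f\star\id_{\IC_s^{\scrL'}})\circ\psi$ and $(f\star\id_{\IC_s^{\scrL'}})\circ\psi'$ respectively. Here $\psi,\psi'$ are the images of $\id\otimes u_s$ for $\scrF=\Theta^{[\scrL,\scrL']}$. By block orthogonality they split into components $\psi_\beta,\psi'_\beta\in\Hom(\Theta^{\scrL}_\beta,\Theta^{\scrL}_\beta\star\IC^{\scrL'}_s[-1])$, with $\psi_\beta=(\id\star\xi_s)\circ\nu_{(\beta,\circ)}$.

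\emph{The Hom space is one-dimensional.} Apply \eqref{eq:H_conv_by_ICs_1} with $\scrF=\Theta^{\scrL}_\beta$. The space is the degree-$0$ part of $\End^\bullet(\Theta^{\scrL}_\beta)\otimes_{R^s}R$. This is spanned by $\id\otimes 1$, because $\End^\bullet(\Theta^\scrL_\beta)$ lives in nonnegative degrees with $\End^0\cong\K$ and $R$ is free over $R^s$ on $1,\delta_s$.

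\emph{Both elements are nonzero.} For $\psi_\beta$, post-compose with $\id\star\epsilon_s[-1]$ and use \eqref{eq:char_of_theta_s}. This gives $(\id\star\epsilon_{\scrL'})\circ\nu_{(\beta,\circ)}$, which is an isomorphism by the counit triangle of Proposition \ref{prop:coalg_axioms}. For $\psi'_\beta$, note that \eqref{eq:H_conv_by_ICs_1} is injective and $\id_{\Theta_\beta}\otimes u_s\neq 0$.

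\emph{Conclusion.} Hence $\psi_\beta=\lambda_\beta\psi'_\beta$ with $\lambda_\beta\in\K^\times$. The right $R$-action preserves the summands $\Hom^\bullet(\Theta^\scrL_\beta,\scrF)\otimes_R C_s$. So $c(\scrF,\IC_s^{\scrL'})$ is \eqref{eq:H_conv_by_ICs_1} rescaled by the unit $\lambda_\beta$ on each summand, and is therefore an isomorphism.
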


\begin{corollary}\label{cor:H_functor_is_monoidal}
    The restriction of the lax 2-functor $\H$ to parity sheaves,
    \[\H : \PEE{}{} (G, \K, \fr{o}) \to \grbim{R_{\K}}_{\fr{o}},\]
    is a (strong) 2-functor.
\end{corollary}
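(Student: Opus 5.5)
The plan is to show that the structure maps $c(\scrF,\scrG)$ are isomorphisms whenever $\scrF,\scrG$ are parity sheaves, by reducing to the generating objects and applying Lemma \ref{lem:lax_morphisms_props} one factor at a time. Since $c$ is natural and additive in each variable, and an isomorphism is preserved under direct sums, summands and shifts, it suffices to treat $\scrG$ a Bott--Samelson parity complex $\scrE_{\uw}^{\scrL'}$ and $\scrF$ arbitrary in $\DEE{\scrL}{\scrL'} (G, \K)$ (or just parity). The lax associativity coming from Proposition \ref{prop:coalg_axioms} gives the usual compatibility square
\[ c(\scrF, \scrG\star\scrG')\circ(\id\otimes c(\scrG,\scrG')) = c(\scrF\star\scrG,\scrG')\circ(c(\scrF,\scrG)\otimes\id), \]
which will drive an induction on $\ell(\uw)$.

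First I settle the case of a single generator $\scrE_s^{\scrL'}$. If $\scrL's=\scrL'$, then $\scrE_s^{\scrL'}=\IC_s^{\scrL'}$ and $c(\scrF,\IC_s^{\scrL'})$ is an isomorphism by Lemma \ref{lem:lax_morphisms_props}(2). If $\scrL's\neq\scrL'$, then $\scrE_s^{\scrL'}=\theta_\beta^{\scrL'}$ for the block $\beta\ni s$, and Lemma \ref{lem:lax_morphisms_props}(1) applies. The empty expression is handled by the unit $\theta^{\scrL'}_{\circ}=\IC_e^{\scrL'}$, again case (1).

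For the inductive step, I write $\uw=\uw' s$, so that $\scrE_{\uw}^{\scrL'}=\scrE_{\uw'}^{\scrL'}\star\scrE_s^{\scrL'\uw'}$. In the compatibility square with $\scrG=\scrE_{\uw'}$ and $\scrG'=\scrE_s$, three of the maps are isomorphisms:
\begin{itemize}
\item[(i)] $c(\scrF,\scrE_{\uw'})\otimes \id$, by induction;
\item[(ii)] $c(\scrF\star\scrE_{\uw'},\scrE_s)$, by the base case applied to the arbitrary first argument $\scrF\star\scrE_{\uw'}$;
\item[(iii)] $\id\otimes c(\scrE_{\uw'},\scrE_s)$, by the base case with first argument $\scrE_{\uw'}$.
\end{itemize}
It follows that $c(\scrF,\scrE_{\uw})$ is an isomorphism. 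This is exactly why the lemma is stated with $\scrF$ arbitrary. Finally, the unit constraint $R_e\cong\H(\IC_e^{\scrL})$ is an isomorphism by Lemma \ref{lem:H_on_min_ICs}. Passing to the additive and idempotent completion, $\PEE{}{}(G,\K,\fr{o})$ is preserved, so the restricted lax 2-functor is strong.

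The main obstacle is making sure the compatibility square really is the one produced by the coassociativity of $\nu$ in Proposition \ref{prop:coalg_axioms}, including the block bookkeeping $\Theta^{[\scrL,\scrL'']}$ summed over products $\beta\gamma$. I would verify this directly from the definition of $c$ via precomposition with $\nu$. There is also a minor point to check: when several pairs $(\beta,\gamma)$ have the same product, the sum defining $\nu$ must still factor through $\otimes_R$. I expect this to be routine, following \cite[\S7.5]{LY}.
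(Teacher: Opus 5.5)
Your proposal is correct and is essentially the paper's argument. The paper likewise shows, by induction using Lemma~\ref{lem:lax_morphisms_props} one generator at a time (your compatibility square only makes the induction explicit), that $c(\scrF,\scrG)$ is an isomorphism on Bott--Samelson parity complexes, and then passes to direct summands to reach all of $\PEE{}{}(G,\K,\fr{o})$; the only cosmetic difference is that the paper cites the classification of parity sheaves \cite[Theorem 3.4.1]{Sandvik} for the summand step, whereas you use the definition of parity sheaves as the idempotent completion of the additive hull of the Bott--Samelson objects.
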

\begin{proof}
    By induction using Lemma \ref{lem:lax_morphisms_props}, the morphisms $c (\scrF, \scrG)$ are isomorphisms whenever $\scrF$ and $\scrG$ are Bott--Samelson parity complexes.
    The result then follows by taking direct summands using the classification of parity sheaves from \cite[Theorem 3.4.1]{Sandvik}.
\end{proof}

\subsubsection{Equivalence with Soergel Bimodules}

Define a 2-category $\grAmon{}{} (\fr{h}_{\K}, W, \fr{o})$ whose object set is $\fr{o}$. The 1-morphisms in $\grAmon{}{} (\fr{h}_{\K}, W, \fr{o})$ are given by symbols $M(n)$ where $M$ is a 1-morphism in $\Amon{}{} (\fr{h}_{\K}, W, \fr{o})$ and $n \in \Z$.
The 2-morphisms from $M(m)$ to $M(n)$ are the 2-morphisms $M \to N$ of degree $n-m$ in $\Amon{}{} (\fr{h}_{\K}, W, \fr{o})$. The horizontal and vertical compositions are those inherited from $\Amon{}{} (\fr{h}_{\K}, W, \fr{o})$.
Note that on morphism categories, using the notation of Appendix \ref{apdx:mdc}, $\grAmon{\scrL}{\scrL'} (\fr{h}_{\K}, W, \fr{o}) = \left( \Amon{\scrL}{\scrL} (\fr{h}_{\K}, W, \fr{o}) \right)^{\circ}$.

Since $\K$ is a characteristic 0 field, it contains a copy of $\Q$. The Kac--Moody realization of $W$ over $\Q$ is a faithful $W$-representation, and hence, we must have that $\fr{h}_{\K}$ is also faithful.
As a result, by Lemma \ref{lem:for_ff_on_faithful_realizations}, the forgetful 2-functor
\[\grAmon{}{} (\fr{h}_{\K}, W, \fr{o}) \to \grbim{R_{\K} }_{\fr{o}}\] 
is locally fully faithful. By Lemma \ref{lem:H_on_min_ICs}, Lemma \ref{lem:H_conv_by_ICs}, and Corollary \ref{cor:H_functor_is_monoidal}, we then have that $\H$-functor factors through a 2-functor
\[\H : \PEE{}{} (G, \K, \fr{o}) \to \grAmon{}{} (\fr{h}_{\K}, W, \fr{o}).\]

We are ready to state the main result of the section.

\begin{theorem}\label{thm:H_functor_properties}
  The $\H$-functor restricts to an equivalence of 2-categories
  \[\H : \PEE{}{} (G, \K, \fr{o}) \to \grAmon{}{} (\fr{h}_{\K}, W, \fr{o})\]
  such that $\H (\scrE_{\uw}^{\scrL} [n]) \cong B_{\uw}^{\scrL} (n)$ for all $\uw \in \Exp (W)$, $\scrL \in \fr{o}$, and $n \in \Z$.
\end{theorem}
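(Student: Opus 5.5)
We prove Theorem \ref{thm:H_functor_properties} in three steps: first the images of Bott--Samelson objects, then local full faithfulness on Bott--Samelson parity complexes, and finally passage to idempotent completions.

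\emph{Step 1: images of objects.} We show $\H(\scrE_{\uw}^{\scrL}[n]) \cong B_{\uw}^{\scrL}(n)$ by induction on $\ell(\uw)$. Write $\uw = \ux s$.
\begin{itemize}
\item If $\scrL \ux s = \scrL\ux$, combine Corollary \ref{cor:H_functor_is_monoidal} with Lemma \ref{lem:H_conv_by_ICs} to get $\H(\scrE_{\ux}^{\scrL}\star \IC_s^{\scrL\ux}) \cong \H(\scrE_{\ux}^{\scrL})\otimes_R C_s$.
\item Otherwise $\scrE_s^{\scrL\ux} = \theta_\beta^{\scrL\ux}$. Here Lemma \ref{lem:H_on_min_ICs} gives $\H(\theta_\beta) \cong R_{w^\beta} = R_s$, and Lemma \ref{lem:lax_morphisms_props}(1) shows this is compatible with the tensor product.
\end{itemize}
Shifts go to grading shifts by construction of $\Hom^\bullet$. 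The localization data is handled automatically: $\fr{h}_\K$ is faithful, so Lemma \ref{lem:for_ff_on_faithful_realizations} makes the forgetful functor to $\grbim{R_\K}$ fully faithful. An isomorphism of underlying bimodules is therefore an isomorphism in $\grAmon{}{}$. Essential surjectivity onto Bott--Samelson objects follows, and hence onto the Karoubian hull once full faithfulness is established.

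\emph{Step 2: full faithfulness on $\PEE{\scrL}{\scrL'}^{\BS}$ (main obstacle).} We use the standard rank argument.
\begin{enumerate}
\item \emph{Reduce to $\ux = \emptyset$.} Biadjunction of $\scrE_s$ with itself (for $\scrL s\neq \scrL$ it is an invertible twist), together with Lemma \ref{lem:BS_tensor_adjunction} on the algebraic side and monoidality of $\H$, reduces to morphisms $\theta^{\scrL}_\circ = \IC_e^{\scrL} \to \scrE_{\uy}^{\scrL}$.
\item \emph{Compare graded ranks.} By parity vanishing, $\Hom^\bullet(\scrE_{\ux},\scrE_{\uy})$ is graded free over $R_\K$, with graded rank computed from stalks and costalks. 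These come from the standard filtration: Proposition \ref{prop:stalks_of_max_ICs} together with the Bott--Samelson stalk computation, which follows from Proposition \ref{prop:mixed_conv_rules}-type convolution rules for $\Delta,\nabla$. The resulting formula is $\sum_w p_{\ux}^{w,\scrL}(v^{-1})p_{\uy}^{w,\scrL}(v^{-1})$. This matches Corollary \ref{cor:abe_soergel_hom_application}, since by \cite[Theorem 3.8.4]{Sandvik} the Grothendieck classes of $\scrE_{\uw}$ are $\uH_{\uw}^{\scrL}$.
\item \emph{Prove injectivity.} A morphism $f:\IC_e\to \scrE_{\uy}[n]$ killed by $\H$ satisfies $f\circ\epsilon_\scrL = 0$, where $\epsilon_\scrL:\Theta_\circ\to\IC_e$. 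Since $\Theta_\circ$ is a projective cover of $\IC_e$ in the appropriate perverse sense (the case $\ell_\scrL(w_\beta)$ maximal), $f=0$. Equivalently, we invoke the Ginzburg/Soergel Struktursatz argument of \cite[\S7]{LY}: $\H$ restricted to semisimple complexes is faithful because every IC appearing in a Bott--Samelson object is a quotient of some $\Theta$-translate.
\end{enumerate}
Injectivity plus equal graded ranks of free modules over the graded field-coefficient ring $R_\K$, checked degreewise with finite-dimensional pieces, gives bijectivity. I expect this step to be the hard one: establishing faithfulness in the monodromic setting with multiple blocks, which needs the coalgebra compatibility of Proposition \ref{prop:coalg_axioms} to ensure $\H$ respects composition across blocks.

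\emph{Step 3: Karoubian completion.} A fully faithful additive 2-functor between Bott--Samelson categories extends to an equivalence of idempotent completions. The source is $\PEE{}{}(G,\K,\fr{o})$ (by definition, or via the decomposition theorem); the target is $\grAmon{}{}(\fr{h}_\K,W,\fr{o})$ (by definition as an idempotent completion inside the Karoubian category $\Cmon{}{}$). This gives the equivalence, compatible with the 2-structure by Corollary \ref{cor:H_functor_is_monoidal}.
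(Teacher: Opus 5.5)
Your architecture is viable: reduce by adjunction to $\Hom(\IC^{\scrL}_e,\scrE^{\scrL}_{\uy}[n])$, then prove injectivity and compare graded ranks. But the injectivity step in 2(3) rests on a false claim, and that step is the entire content of full faithfulness.

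$\Theta^{\scrL}_\circ$ is not a projective cover of $\IC^{\scrL}_e$ in any sense. It is $\IC^{\scrL}_{w_\beta}[-\ell_{\scrL}(w_\beta)]$, a shift of a \emph{simple} perverse sheaf (in the unipotent case just the constant sheaf). Moreover, $\epsilon_{\scrL}$ is a class in $\Hom(\IC^{\scrL}_{w_\beta},\IC^{\scrL}_e[\ell_{\scrL}(w_\beta)])$, i.e.\ a positive-degree extension, not an epimorphism in a heart. Your fallback sentence (every IC is a quotient of a $\Theta$-translate) has the same problem. In a triangulated category, the failure of $(-)\circ\epsilon_{\scrL}$ to be injective is measured by a connecting morphism, and nothing in your argument controls it.

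The missing input is parity (or localization). The argument goes as follows.
\begin{enumerate}
\item By Proposition \ref{prop:stalks_of_max_ICs}, $j_e^*\Theta^{\scrL}_\circ\cong\scrK^{\scrL}_e$. So $\epsilon_{\scrL}$ is, up to scalar, the adjunction map $\Theta^{\scrL}_\circ\to j_{e*}j_e^*\Theta^{\scrL}_\circ$. It sits in a triangle $j_!j^*\Theta^{\scrL}_\circ\to\Theta^{\scrL}_\circ\to\IC^{\scrL}_e\to$, where $j$ is the inclusion of the open complement of $\eFl_e$.
\item Hence the kernel of $(-)\circ\epsilon_{\scrL}$ on $\Hom(\IC^{\scrL}_e,\scrE^{\scrL}_{\uy}[n])$ is the image of $\Hom(j^*\Theta^{\scrL}_\circ,j^*\scrE^{\scrL}_{\uy}[n-1])$.
\item The same proposition shows that the stalks and costalks of $\Theta^{\scrL}_\circ$ at $w$ in the neutral block lie in degrees congruent to $\ell(w)-\ell_{\scrL}(w)$. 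This number is even, since it is the length of $\underline{b}_{\scrL}(\uw)$, an expression evaluating to $e$. So $\Theta^{\scrL}_\circ$ and $\IC^{\scrL}_e$ have the same parity.
\item Parity vanishing then shows the two Hom spaces in step 2 are never simultaneously nonzero, which gives injectivity.
\end{enumerate}

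The paper avoids your rank count altogether. Lemma \ref{lem:H_functor_ff_base_case}, via the argument of \cite[Lemma 7.10]{LY}, shows that $\Hom(\IC^{\scrL}_e,\scrG)\to\Hom(R,\H(\scrG))$ is an \emph{isomorphism} for every parity sheaf $\scrG$. Lemma \ref{lem:compat_of_self_adjointness} then propagates this by induction on $\ell(\uw)$.

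A minor point on 2(2): the graded rank you need is the monodromic Soergel Hom formula \cite[Proposition 3.8.13]{Sandvik}. Propositions \ref{prop:stalks_of_max_ICs} and \ref{prop:mixed_conv_rules} are not the relevant inputs there.
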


\begin{corollary}\label{cor:re_H_functor_properties}
  Let $\scrL, \scrL' \in \fr{o}$. The $\tilde{\H}$-functor restricts to a fully faithful functor
  \[\tilde{\H} : \PME{\scrL}{\scrL'} (G, \K) \to \grrmod{R_{\K}}\]
  such that $\H (\scrE_{\uw}^{\scrL} [n]) \cong B_w^{\scrL} (n)$ for all $\uw \in \Exp (W)$ and $n \in \Z$ such that $\scrL \uw = \scrL'$. 
  Moreover, we have that $\tilde{\H}$ is related to $\H$ by the following diagram which commutes up to natural isomorphism,
  \begin{equation}\label{eq:re_H_functor_properties_1}
    \begin{tikzcd}
\PEE{\scrL}{\scrL'} (G, \K) \arrow[d, "\ForME{\scrL}"'] \arrow[r, "\H"] & \grAmon{\scrL}{\scrL'} (\fr{h}_{\K}, W, \fr{o}) \arrow[d, "\K \otimes_{R_{\K}} (-)"] \\
\PME{\scrL}{\scrL'} (G, \K) \arrow[r, "\tilde{\H}"]                     & \grrmod{R_{\K}}.                                        
\end{tikzcd}
  \end{equation}
\end{corollary}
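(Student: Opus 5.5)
The plan is to deduce the corollary from Theorem \ref{thm:H_functor_properties} by constructing the square (\ref{eq:re_H_functor_properties_1}) first and then transporting full faithfulness across it.

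\emph{Step 1: the square.} For $\scrF \in \PEE{\scrL}{\scrL'} (G, \K)$ the forgetful functor $\ForME{\scrL}$ induces a map of graded right $R_{\K}$-modules
\[\H (\scrF) = \Hom^{\bullet} (\Theta^{[\scrL, \scrL']}, \scrF) \to \Hom^{\bullet} (\ForME{\scrL} \Theta^{[\scrL, \scrL']}, \ForME{\scrL} \scrF) = \tilde{\H} (\ForME{\scrL} \scrF).\]
This map kills $\fr{h}_{\K}^* \cdot \H (\scrF)$, so it factors through $\K \otimes_{R_{\K}} \H (\scrF)$. I will show the factored map is an isomorphism by the standard equivariant-formality argument. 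Both $\Theta^{[\scrL, \scrL']}$ and $\scrF$ are parity (for $\Theta$ this uses Proposition \ref{prop:stalks_of_max_ICs}), so the spectral sequence computing $\Hom^{\bullet}$ via the stratification degenerates. By Proposition \ref{prop:stalks_of_max_ICs} and adjunction, the contribution of each stratum $\eFl_w$ to $\Hom^{\bullet}$ is $\Hom^{\bullet}_{T\backslash \pt}(\K, j_w^! \scrF)$ on the equivariant side and $\Hom^\bullet_{\pt}(\K, \For j_w^!\scrF)$ on the forgetful side. The $j_w^!\scrF$ are free graded modules over $H_T^\bullet(\pt)$, so the latter is obtained from the former by $\K\otimes_{R_\K}(-)$. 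By parity vanishing, the resulting filtrations on both sides are split and compatible, so the map is an isomorphism on associated graded pieces and hence an isomorphism. This is the analogue of \cite[Lemma 4.6]{Soergel}/\cite{LY} type arguments.

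\emph{Step 2: objects.} Combining the square with Theorem \ref{thm:H_functor_properties} gives $\tilde{\H} (\scrE_{\uw}^{\scrL} [n]) \cong \K \otimes_{R_\K} B_{\uw}^{\scrL}(n)$, which is the claimed object identification. Here the right-module structure is inherited as in the statement.

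\emph{Step 3: full faithfulness.} I would first treat Bott--Samelson objects and then pass to summands, since both sides are additive and $\PME{\scrL}{\scrL'}$ is the Karoubian hull. The key input is a degeneration on both sides:
\[\Hom^\bullet_{\PME{}{}}(\scrE_{\ux}, \scrE_{\uy}) \cong \K\otimes_{R_\K}\Hom^\bullet_{\PEE{}{}}(\scrE_{\ux},\scrE_{\uy}),\]
which follows from the same parity-vanishing argument as in Step 1, with $\scrE_{\ux}$ in place of $\Theta$. Using Theorem \ref{thm:H_functor_properties}, this becomes $\K \otimes_{R_\K} \Hom_{\Amon{}{}}(B_{\ux}, B_{\uy})$, which is free over $\K$ by the double leaves basis (Theorem \ref{thm:abe_dl_are_basis}). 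On the module side, I must then show that
\[\K\otimes \Hom_{\Amon{}{}}(B_{\ux},B_{\uy}) \to \Hom_{\textnormal{mod-}R}(\K\otimes B_{\ux}, \K\otimes B_{\uy})\]
is bijective.

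\emph{Main obstacle.} This last bijectivity is the step I expect to be hardest. It is Soergel's theorem in right-equivariant form. I would prove it by reducing to the neutral block via block translation (Lemma \ref{lem:abe_block_decomp} and Theorem \ref{thm:alg_endoscopy}). That reduction replaces the problem with the non-monodromic statement for $W_{\scrL}^{\circ}$ in characteristic $0$ with a faithful realization. There, the statement is the classical result that $\K\otimes_R(-)$ is fully faithful on Soergel bimodules: via $\K\otimes_R B \cong$ the coinvariant-algebra module, i.e.\ Soergel's Endomorphismensatz. The final ingredient is a graded dimension comparison: compare the Hom formula (Corollary \ref{cor:abe_soergel_hom_application}) against the dimensions of module maps, and combine it with injectivity coming from the localization description.
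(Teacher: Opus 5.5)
\textbf{The gap is in Step~3, in the reduction ``to the neutral block via block translation''.}

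Lemma~\ref{lem:abe_block_decomp} and Theorem~\ref{thm:alg_endoscopy} only compare objects in the \emph{same} block $\beta\in\uW{\scrL}{\scrL'}$. Take instead $\ux\in\beta$ and $\uy\in\gamma$ with $\beta\neq\gamma$. Your base-change formula, Theorem~\ref{thm:H_functor_properties} and Lemma~\ref{lem:abe_block_decomp} together give $\Hom(\scrE_{\ux}^{\scrL},\scrE_{\uy}^{\scrL}[n])=0$ in $\PME{\scrL}{\scrL'}(G,\K)$. Fullness would then force $\Hom_{\grrmod{R_{\K}}}(\K\otimes_{R_{\K}}B_{\ux}^{\scrL},\K\otimes_{R_{\K}}B_{\uy}^{\scrL}(n))=0$. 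This vanishing fails in general, because $\K\otimes_{R_{\K}}(-)$ forgets exactly what separates blocks: for instance, $\K\otimes_{R_{\K}}R_w$ is the trivial module $\K$ for every $w\in W$.

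Here is a concrete counterexample. Let $G=SO_4$ (type $A_1\times A_1$, $S=\{s,t\}$, $\ell$ odd). Let $\scrL$ correspond to $\rho\colon\bfY\to\K^{\times}$ with $\rho(\alpha_s^{\vee})=\rho(\alpha_t^{\vee})=-1$ and $\rho(\tfrac12(\alpha_s^{\vee}+\alpha_t^{\vee}))=1$.
\begin{itemize}
\item Then $\scrL s\neq\scrL$ and $\scrL t\neq\scrL$, but $\scrL st=\scrL$.
\item So $\W{\scrL}{\scrL}=\{e,st\}$ splits into the two blocks $\{e\}$ and $\{st\}$.
\item $\tilde{\H}(\IC_e^{\scrL})$ and $\tilde{\H}(\scrE_{(s,t)}^{\scrL})$ are both the trivial module $\K$ in degree $0$.
\item Yet $\Hom(\IC_e^{\scrL},\scrE_{(s,t)}^{\scrL})=0$.
\end{itemize}
So as soon as $W_{\scrL}\neq W_{\scrL}^{\circ}$, fullness into $\grrmod{R_{\K}}$ on all of $\PME{\scrL}{\scrL'}(G,\K)$ cannot be proved by any argument. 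What your approach can establish is full faithfulness on the subcategory attached to each single block. Equivalently, it gives fullness into modules that remember the decomposition $\Theta^{[\scrL,\scrL']}=\bigoplus_{\beta}\Theta^{\scrL}_{\beta}$. Faithfulness, the object identification and the square (\ref{eq:re_H_functor_properties_1}) are unaffected, and the square is what the paper actually uses later.

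\textbf{Within a single block, your plan works and differs from the paper mainly in where fullness comes from.}

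Step~1 is the formality/base-change argument that the paper imports as \cite[Lemma 3.6.5]{Sandvik}, with Theorem~\ref{thm:abe_dl_are_basis} supplying freeness. Step~2 is then immediate.

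For fullness, the paper runs the argument of \cite[Proposition 4.7.1]{Sandvik} on top of Theorem~\ref{thm:H_functor_properties}. You instead transport to $W_{\scrL}^{\circ}$ via Theorem~\ref{thm:alg_endoscopy} and import the classical non-monodromic result. The transport is harmless, since $(-)\otimes_{R}R_{w^{\beta}}$ only twists the right action on $\K\otimes_{R_{\K}}(-)$.

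The import must cite the right theorem. What you need is Soergel's Struktursatz in the bimodule form
\[\K\otimes_{R}\Hom^{\bullet}(B,B')\cong\Hom^{\bullet}_{\grrmod{R}}(\K\otimes_R B,\K\otimes_R B'),\]
not the Endomorphismensatz. You also have to check that it applies to $\fr{h}_{\K}$ viewed as a realization of $W_{\scrL}^{\circ}$: this is the Cartan of the endoscopic group, and one base-changes from $\C$ via $\Q$.

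Your closing sentence should be dropped. Once $\K\otimes_{R_{\K}}(-)$ is applied, every root acts by $0$, so the localization description gives no injectivity for $\K\otimes_{R_{\K}}\Hom(B,B')\to\Hom(\K\otimes_{R_{\K}}B,\K\otimes_{R_{\K}}B')$. You also have no independent computation of the graded dimension of the target. The cited classical theorem has to carry this step on its own.
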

\begin{proof}
  The first part of the corollary follows from Theorem \ref{thm:H_functor_properties} using the same argument as \cite[Proposition 4.7.1]{Sandvik}.
  Diagram (\ref{eq:re_H_functor_properties_1}) commutes by \cite[Lemma 3.6.5]{Sandvik} and Theorem \ref{thm:abe_dl_are_basis}.
\end{proof}

It is clear from Lemma \ref{lem:H_on_min_ICs} and Lemma \ref{lem:H_conv_by_ICs} that $\H$ is locally essentially surjective.
As a result, to prove Theorem \ref{thm:H_functor_properties}, it suffices to show that $\H$ is locally fully faithful.
Before proving full faithfulness, we need some preliminary lemmas.

\begin{lemma}\label{lem:H_functor_ff_base_case}
  Let $\scrG \in \PEE{\scrL}{\scrL'} (G, \K)$. The natural map
  \[ \Hom (\IC_e^{\scrL}, \scrG) \to \Hom (R, \H (\scrG))\]
  induced by $\H$ is an isomorphism.
\end{lemma}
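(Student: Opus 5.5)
We need to show that $\Hom^\bullet(\IC_e^{\scrL},\scrG)\to \Hom_{\grbim{R_\K}}(R,\H(\scrG))$ is an isomorphism for $\scrG$ parity. Both sides are additive in $\scrG$ and compatible with shifts, and $\PEE{\scrL}{\scrL'}(G,\K)$ is generated under sums, summands and shifts by Bott--Samelson objects. So it suffices to treat $\scrG=\scrE_{\uw}^{\scrL}$. Moreover $\IC_e^{\scrL}$ lives in the neutral block, so $\Hom(\IC_e^{\scrL},\scrG)=0$ unless $\scrL'=\scrL$ and $\scrG$ lies in the neutral block. On the algebraic side, $\H(\scrG)\cong B_{\uw}^{\scrL}$ has support in the block of $\uw$ by Lemma \ref{lem:localization_sharpening_along_blocks}, so $\Hom(R,\H(\scrG))=0$ as well unless the block is neutral. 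Hence we may assume $\uw$ expresses an element of $W_{\scrL}^\circ$.

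The plan is to prove injectivity, compute graded dimensions, and compare.

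\emph{Dimension of the geometric side.} Since $\{e\}$ is closed, $\IC_e^{\scrL}=j_{e*}\scrK_e^{\scrL}$ and adjunction gives $\Hom^\bullet(\IC_e^{\scrL},\scrG)\cong H^\bullet_T(\pt; j_e^!\scrG)$. Parity of $\scrG$ makes $j_e^!\scrG$ a free $R_\K$-module. Its graded rank is computed from the costalk of $\scrE_{\uw}^{\scrL}$ at $e$, which by the usual convolution/induction argument (as in \cite{Sandvik}) equals $p_{\uw}^{e,\scrL}(v^{-1})$ up to the standard normalization. This is exactly the graded rank of $B_{\uw,e}^{\scrL}\cong \Hom(R,B_{\uw}^{\scrL})$ from Corollary \ref{cor:bott_samelson_graded_free}, so the two sides are graded free $R_\K$-modules of the same graded rank.

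\emph{Injectivity.} A map $f:\IC_e^{\scrL}\to\scrG$ is sent to $\H(f)(\epsilon_{\scrL})$, viewing $1\in\H(\IC_e^{\scrL})=R$ as $\epsilon_{\scrL}$ by Lemma \ref{lem:H_on_min_ICs}; that is, to the composite $f\circ\epsilon_{\scrL}:\Theta^{\scrL}_\circ\to\scrG$. Since $\epsilon_{\scrL}$ is the adjunction map onto the top costalk quotient, the goal is to show that precomposition with $\epsilon_{\scrL}$ is injective on $\Hom^\bullet(\IC_e^{\scrL},\scrG)$. By Proposition \ref{prop:stalks_of_max_ICs}, $j_e^*\Theta^{\scrL}_\circ\to j_e^*\IC_e^{\scrL}$ is split surjective on the relevant stalk. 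Combined with the fact that morphisms out of $\IC_e^{\scrL}=j_{e*}(\cdot)$ factor through $j_e^!$, this gives $\Hom(\IC_e^{\scrL},\scrG)\cong\Hom(\scrK_e,j_e^!\scrG)\hookrightarrow\Hom(j_e^*\Theta^{\scrL}_\circ,j_e^!\scrG)$, and the latter embeds in $\Hom(\Theta^{\scrL}_\circ,\scrG)$ through the $j_{e!}j_e^*$ term. Compatibility with the $R$-action holds because everything is computed by equivariant cohomology.

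\emph{Conclusion.} An injective $R_\K$-linear map between graded free modules of equal finite graded rank over $R_\K$ need not be surjective, so a dimension count alone does not finish the argument. Instead I would reduce modulo the augmentation, or argue inductively on $\ell(\uw)$ using Lemma \ref{lem:H_conv_by_ICs}: convolution with $\IC_s^{\scrL}$ corresponds to $\otimes_{R^s}R(1)$, and the adjunctions (Lemma \ref{lem:BS_tensor_adjunction} and its geometric analogue) let us move $\scrE_s$ across to the other side, reducing to $\uw$ of length $0$ or to minimal blocks, where both sides are $R$ via Lemma \ref{lem:H_on_min_ICs}. This induction is the main obstacle: we must check that the geometric and algebraic adjunction units and counits correspond under $\H$, which needs the canonical map $\xi_s$ of (\ref{eq:char_of_theta_s}) mapping to $u_s$.
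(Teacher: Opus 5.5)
Your injectivity step does not go through as written. Since $\eFl_e$ is closed, the only natural map between $\Theta^{\scrL}_{\circ}$ and $j_{e!}j_e^*\Theta^{\scrL}_{\circ}=j_{e*}j_e^*\Theta^{\scrL}_{\circ}$ is the unit $\epsilon_{\scrL}\colon\Theta^{\scrL}_{\circ}\to j_{e*}j_e^*\Theta^{\scrL}_{\circ}$. There is no map in the other direction through which $\Hom^\bullet(j_e^*\Theta^{\scrL}_{\circ},j_e^!\scrG)$ could ``embed'' into $\Hom^\bullet(\Theta^{\scrL}_{\circ},\scrG)$. Unwinding the adjunctions, your chain is the map $\Hom^\bullet(\Theta^{\scrL}_{\circ},j_{e*}j_e^!\scrG)\to\Hom^\bullet(\Theta^{\scrL}_{\circ},\scrG)$ induced by the counit $j_{e*}j_e^!\scrG\to\scrG$, which is exactly $f\mapsto f\circ\epsilon_{\scrL}$. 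So calling it an embedding assumes what you must prove. Your remark about $j_e^*\epsilon_{\scrL}$ is tautological (it is an isomorphism) and does not bear on this.

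Apply $\Hom^\bullet(\Theta^{\scrL}_{\circ},-)$ to the triangle $j_{e*}j_e^!\scrG\to\scrG\to j_{U*}j_U^*\scrG\xrightarrow{+1}$, with $U=\eFl\smallsetminus\eFl_e$. Injectivity is then equivalent to the vanishing of the connecting map out of $\Hom^{\bullet-1}(j_U^*\Theta^{\scrL}_{\circ},j_U^*\scrG)$, and your argument never controls that term. The missing input is parity vanishing:
\begin{itemize}
\item By Proposition~\ref{prop:stalks_of_max_ICs}, $j_w^*\Theta^{\scrL}_{\circ}\cong\scrK_w^{\scrL}[\ell(w)-\ell_{\scrL}(w)]$, and $\ell(w)-\ell_{\scrL}(w)$ is even for $w\in W^\circ_{\scrL}$.
\item Since $\scrG$ is parity, each $\Hom^\bullet(j_w^*\Theta^{\scrL}_{\circ},j_w^!\scrG)$ is free and concentrated in degrees of one parity, the same for all $w$ in the neutral block.
\item Filtering $U$ by strata, the long exact sequences therefore split, giving a short exact sequence $0\to\Hom^\bullet(\IC^{\scrL}_e,\scrG)\to\Hom^\bullet(\Theta^{\scrL}_{\circ},\scrG)\to\Hom^\bullet(j_U^*\Theta^{\scrL}_{\circ},j_U^*\scrG)\to0$.
\end{itemize}

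Your final step is also mistaken. You discard the rank count because injective maps between free modules of equal rank need not be surjective, but that only happens for maps that do not preserve degree. Here the map is $R_{\K}$-linear of degree $0$, and $R_{\K}^0=\K$ is a field. Both sides are graded free of finite rank with the same graded rank, so in each degree you have an injective $\K$-linear map between finite-dimensional spaces of equal dimension, hence a bijection.

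The induction you propose instead would not terminate. The biadjunctions for $(-)\star\scrE_s$ and $(-)\otimes_R B_s$ only move a factor from target to source, so they turn the claim into a comparison $\Hom(\scrE_{\underline{v}},\scrG')\to\Hom(\H(\scrE_{\underline{v}}),\H(\scrG'))$ with nontrivial source. It never reaches length zero on both sides. That general comparison is Theorem~\ref{thm:H_functor_properties}, whose proof via Lemma~\ref{lem:compat_of_self_adjointness} uses the present lemma as its base case.

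Your outline does become a proof once you supply the parity splitting for injectivity and keep the rank comparison. On the algebraic side, $\Hom_{\grbim{R_{\K}}}(R,B_{\uw}^{\scrL})=B_{\uw,e}^{\scrL}$ by Lemma~\ref{lem:for_ff_on_faithful_realizations} (as $\fr{h}_{\K}$ is faithful), and its rank is given by Corollary~\ref{cor:bott_samelson_graded_free}. On the geometric side, use the monodromic Soergel Hom formula of \cite{Sandvik}. The paper itself simply transports the argument of \cite[Lemma 7.10]{LY}.
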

\begin{proof}
  The lemma follows from the proof of \cite[Lemma 7.10]{LY} after the obvious modifications are made.
\end{proof}

\begin{lemma}\label{lem:compat_of_self_adjointness}
  Let $s \in S$, $\scrF\in \PEE{\scrL}{\scrL'} (G, \K)$, and $\scrG\in \PEE{\scrL}{\scrL' s} (G, \K)$. There is a natural commutative diagram
  \begin{equation}\label{eq:compat_of_self_adjointness_1}
    \begin{tikzcd}
      {\Hom (\scrF \star \scrE_s^{\scrL'}, \scrG)} \arrow[rr, "\sim"] \arrow[d, "{\H}"]  &  & {\Hom (\scrF, \scrG \star \scrE_s^{\scrL' s})} \arrow[d, "{\H}"]                                                                               \\
      {\Hom (\H (\scrF \star \scrE_s^{\scrL'}), \H (\scrG))} \arrow[d, "{(-)\circ c (\scrF, \scrE_s^{\scrL'})}"] &  & {\Hom (\H (\scrF ), \H (\scrG \star \scrE_s^{\scrL' s}) )} \arrow[d, "{c (\scrG, \scrE_s^{\scrL' s})^{-1} \circ (-)}"] \\
      {\Hom (\H (\scrF) \otimes_R B_s^{\scrL'}, \H (\scrG))} \arrow[rr, "\sim"]   &  & {\Hom (\H (\scrF) , \H (\scrG) \otimes_R B_s^{\scrL' s})}
    \end{tikzcd}
  \end{equation}
  where the horizontal maps are given by the self-adjunction of $(-) \star \scrE_s^{\scrL'}$ and $(-) \otimes_R B_s^{\scrL'}$. 
\end{lemma}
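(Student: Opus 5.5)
The plan is to split into the two cases $\scrL' s = \scrL'$ and $\scrL' s \neq \scrL'$, and in each case to write both horizontal isomorphisms explicitly as "compose with unit/counit". Since both columns are built from the lax structure maps $c$, naturality of $c$ then reduces commutativity of (\ref{eq:compat_of_self_adjointness_1}) to a statement about where $\H$ sends the unit and counit of the self-adjunction of $(-)\star\scrE_s$.

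\textbf{Unwinding the horizontal maps.} On the geometric side, the top isomorphism sends $\varphi : \scrF\star\scrE_s^{\scrL'}\to\scrG$ to
\[
(\varphi\star\id_{\scrE_s^{\scrL's}})\circ(\id_{\scrF}\star \eta^{\mathrm{geo}}),
\qquad \eta^{\mathrm{geo}}:\IC_e^{\scrL'}\to \scrE_s^{\scrL'}\star\scrE_s^{\scrL's},
\]
after identifying $\scrF\cong\scrF\star\IC_e^{\scrL'}$. On the algebraic side, the bottom isomorphism is the analogous formula with $\cup^s$ from \S\ref{subsec:abe_mors}, by Lemma \ref{lem:BS_tensor_adjunction}. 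Applying $\H$ to this composite and using naturality of $c$ (the lax 2-functor structure coming from Proposition \ref{prop:coalg_axioms}), together with the associativity of $c$ for triple products, gives
\[
\H(\text{top}(\varphi)) = c(\scrG,\scrE_s)\circ\bigl((\H(\varphi)\circ c(\scrF,\scrE_s))\otimes\id\bigr)\circ\bigl(\id_{\H\scrF}\otimes c(\scrE_s,\scrE_s)^{-1}\H(\eta^{\mathrm{geo}})\bigr).
\]
All the $c$'s appearing here are isomorphisms by Lemma \ref{lem:lax_morphisms_props} and Corollary \ref{cor:H_functor_is_monoidal}. It therefore suffices to show that, under $\H(\IC_e)\cong R$ and $\H(\scrE_s\star\scrE_s)\cong B_s\otimes B_s$, the map $\H(\eta^{\mathrm{geo}})$ equals $\cup^s$, possibly after rescaling the chosen geometric unit.

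\textbf{Identifying the unit.}

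If $\scrL's\neq\scrL'$, then $\scrE_s^{\scrL'}=\theta_\beta$ is invertible. The unit is the inverse of $b_{\beta,\beta^{-1}}$ from Proposition \ref{prop:min_IC_functor}. By Lemma \ref{lem:H_on_min_ICs} and Lemma \ref{lem:lax_morphisms_props}(1), $\H$ of this map is the unique bimodule isomorphism $R\to R_s\otimes R_s$ sending $1\mapsto 1\otimes 1$, which is exactly $\cup^s$.

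If $\scrL's=\scrL'$, I would use that $\cup^s=\nu^s\circ\eta^s$ and that $\eta^{\mathrm{geo}}$ factors as
\[
\IC_e\to\IC_s[1]\to \IC_s\star\IC_s .
\]
The map $\H(\IC_e\to\IC_s[1])$ is the degree-2 map $R\to C_s$. It is determined up to scalar by degree, since Proposition \ref{prop:abe_soergel_hom_formula} computes that the relevant Hom space has rank $1$ in this degree. The scalar is then fixed by dualising the characterization (\ref{eq:char_of_theta_s}) together with Lemma \ref{lem:H_conv_by_ICs}(2), which sends $\xi_s$ to $u_s$.

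\textbf{Main obstacle.} The hardest step is the case $\scrL's=\scrL'$: pinning down the precise scalar so that the adjunction on the geometric side matches $\cup^s/\cap^s$ on the nose. My first approach is to avoid computing it directly. Since both sides define adjunctions for the same functor up to the isomorphism $\H$, any two units differ by an automorphism of $\scrE_s$ of degree $0$, and such an automorphism is a scalar in $\K^\times$ (again by the rank count in Proposition \ref{prop:abe_soergel_hom_formula}). I would then choose the geometric self-adjunction data to be the transported one, so the square commutes by construction. This is legitimate because the statement only asserts that \emph{a} natural commutative diagram exists.
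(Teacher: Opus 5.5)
Your proposal is correct and follows the paper's route. The paper likewise factors each horizontal isomorphism as $(-)\star\scrE_s^{\scrL's}$ (resp.\ $(-)\otimes_R B_s^{\scrL's}$) followed by precomposition with the unit, and then checks the resulting squares by functoriality of $\H$ and Corollary~\ref{cor:H_functor_is_monoidal}.

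Your explicit matching of $c^{-1}\circ\H(\text{unit})$ with $\cup^s$ spells out a point that the paper's appeal to monoidality leaves implicit. When $\scrL's\neq\scrL'$ this goes via $b_{\beta,\beta^{-1}}$ together with Lemma~\ref{lem:existence_of_comult}. When $\scrL's=\scrL'$ you rescale the geometric self-duality, which is legitimate because $\H$ is a strong 2-functor on parity sheaves and $\End^0(C_s)=\K$. Your alternative of fixing the scalar by ``dualising'' (\ref{eq:char_of_theta_s}) is unnecessary, and nothing in the paper supports it; the paper later fixes that scalar through $\epsilon_s\circ\eta_s=\alpha_s$.
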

\begin{proof}
  We can factor the diagram (\ref{eq:compat_of_self_adjointness_1}) as follows:
  \[\adjustbox{scale=0.95, center}{ % scale
      \begin{tikzcd}
        {\Hom (\scrF \star \scrE_s^{\scrL'}, \scrG)} \arrow[d, "\H"'] \arrow[r, "(-) \star \scrE_s^{\scrL' s}"] & {\Hom (\scrF \star \scrE_{(s,s)}^{\scrL'} , \scrG \star \scrE_s^{\scrL' s})} \arrow[r] \arrow[d, "\H"]  & {\Hom (\scrF, \scrG \star \scrE_s^{\scrL' s})} \arrow[d, "\H"] \\
        {\Hom (\H (\scrF \star \scrE_s^{\scrL'}), \H (\scrG))} \arrow[d, "{(-)\circ c (\scrF, \scrE_s^{\scrL'})}"'] \arrow[r, "\H (- \star \scrE_s^{\scrL' s})"] & {\Hom (\H (\scrF \star \scrE_{(s,s)}^{\scrL' }), \H ( \scrG \star \scrE_s^{\scrL' s}))} \arrow[r] \arrow[d] & {\Hom (\H (\scrF ), \H (\scrG \star \scrE_s^{\scrL' s}) )} \arrow[d, "{c (\scrG, \scrE_s^{\scrL's})^{-1} \circ (-)}"] \\
        {\Hom (\H (\scrF) \otimes_R B_s^{\scrL'}, \H (\scrG))} \arrow[r, "(-) \otimes_R B_s^{\scrL' s}"] & {\Hom (\H (\scrF) \otimes_R B_{(s,s)}^{\scrL'} , \H (\scrG) \otimes_R B_s^{\scrL' s})} \arrow[r] & {\Hom (\H (\scrF) , \H (\scrG) \otimes_R B_s^{\scrL' s})}
  \end{tikzcd}}\]
  where the horizontal morphisms on the right are given by unit morphisms of adjunctions.
  The top two squares commute by functorality of $\H$. The bottom two squares commute by Corollary \ref{cor:H_functor_is_monoidal}.
\end{proof}

\begin{midsecproof}{Theorem \ref{thm:H_functor_properties}}
  Let $\scrL, \scrL' \in \fr{o}$ and $\scrF, \scrG \in \PEE{\scrL}{\scrL'} (G, \K)$.
  Consider the morphism
  \[\alpha (\scrF, \scrG) : \Hom (\scrF, \scrG) \to \Hom (\H (\scrF), \H (\scrG))\]
  Since every sheaf in $\PEE{\scrL}{\scrL'} (G, \K)$ is in the idempotent completion of the additive hull of the Bott--Samelson sheaves it suffices to
  prove that $\alpha
  (\scrF, \scrG)$ is an isomorphism for $\scrF = \scrE_{\uw}^{\scrL}$ for all expressions $\uw = (s_1, \ldots, s_k) \in \Exp (W)$ with $\scrL \uw = \scrL'$.
  We will prove $\alpha (\scrE_{\uw}^{\scrL}, \scrG)$ is an isomorphism by induction on $k$. The base case of $w = e$ is covered by Lemma
  \ref{lem:H_functor_ff_base_case}.

  Let $s = s_k$.
  By Lemma \ref{lem:compat_of_self_adjointness}, we get a commutative diagram
  \begin{equation}
    \begin{tikzcd}
      {\Hom (\scrE_{\uw_{\leq k-1}}^{\scrL} \star \scrE_s^{\scrL' s}, \scrG)} \arrow[rr, "\sim"] \arrow[d, "{\H}"] &  & {\Hom (\scrE_{\uw_{\leq k-1}}^{\scrL}, \scrG \star \scrE_s^{\scrL' })} \arrow[d, "{\H}"] \\
      {\Hom (\H (\scrE_{\uw_{\leq k-1}}^{\scrL} \star \scrE_s^{\scrL' s}), \H (\scrG))} \arrow[d, "\sim", "\ref{cor:H_functor_is_monoidal}"'] &  & {\Hom (\H (\scrE_{\uw_{\leq k-1}}^{\scrL} ), \H (\scrG \star \scrE_s^{\scrL'}) )} \arrow[d, "\sim", "\ref{cor:H_functor_is_monoidal}"'] \\
      {\Hom (\H (\scrE_{\uw_{\leq k-1}}^{\scrL}) \otimes_R B_s^{\scrL' s}, \H (\scrG))} \arrow[rr, "\sim"] &  & {\Hom (\H (\scrE_{\uw_{\leq k-1}}^{\scrL}) , \H (\scrG) \otimes_R B_s^{\scrL'})}.
    \end{tikzcd}
  \end{equation}
  The top right vertical map is an isomorphism by induction; therefore, we must have that $\alpha (\scrE_{\uw}^{\scrL}, \scrG)$
  is an isomorphism. We can then conclude that $\H$ is locally fully faithful.
\end{midsecproof}

\subsection{Constructing the Riche--Williamson Functor}

\begin{lemma}\label{lem:eos_and_torsion_loc_sys}
  Let $\k, \k'$ be noetherian domains of finite global dimension, and let $\varphi : \k \to \k'$ be a ring homomorphism.
  Let $\scrL \in \Ch^{\mu} (T, \k)$.
  For all $w \in W$, one has that $\scrL w = \scrL$ if and only if $\k' (\scrL) w = \k' (\scrL)$.
\end{lemma}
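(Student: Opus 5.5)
The plan is to reduce to a statement about the character groups of $T$ with values in $\k^\times$. A rank one multiplicative local system on $T$ is determined up to isomorphism by its monodromy character $\pi_1(T) = \bfY \to \k^\times$. The $W$-action on $\Ch(T,\k)$ corresponds to precomposition with the $W$-action on $\bfY$. The base change $\k'(\scrL) = \k' \otimes_{\k} \scrL$ corresponds to composing the character with $\varphi|_{\k^\times} : \k^\times \to \k'^\times$.

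Write $\chi : \bfY \to \k^\times$ for the character of $\scrL$. Then $\scrL w = \scrL$ if and only if $\chi(w(y)) = \chi(y)$ for all $y \in \bfY$, that is, $\chi$ kills the subgroup $(w-1)\bfY$. Likewise, $\k'(\scrL) w = \k'(\scrL)$ if and only if $\varphi \circ \chi$ kills $(w-1)\bfY$. The forward implication is therefore immediate. For the converse, it suffices to show that $\varphi$ is injective on the image of $\chi$.

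The torsion hypothesis gives this. Since $\scrL^{\otimes n} \cong \uk_T$ with $n$ invertible in $\k$, the image of $\chi$ lies in the group $\mu_n(\k)$ of $n$-th roots of unity in the domain $\k$. Take $\zeta \in \mu_n(\k)$ with $\zeta \neq 1$ and $\varphi(\zeta) = 1$. Then $\zeta$ is a root of $x^n - 1 = \prod_{\xi \in \mu_n(\k)}(x-\xi)$, at least after factoring over a fraction field, and so on. The cleaner route is to use the derivative. Put $d = \operatorname{ord}(\zeta)$, which divides $n$, and write $x^d - 1 = (x-1)\,g(x)$ with $g(x) = 1 + x + \cdots + x^{d-1}$. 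Because $\zeta \neq 1$ and $\k$ is a domain, $g(\zeta) = 0$. Applying $\varphi$ gives $g(1) = d = 0$ in $\k'$. But $d$ divides $n$ and $n$ is invertible in $\k$, so $\varphi(n)$ is invertible in $\k'$, hence so is $d$. This is a contradiction, so $\varphi$ is injective on $\mu_n(\k)$, and the converse implication follows.

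The main care point is the first step, the identification of isomorphism classes of rank one multiplicative local systems with characters of $\bfY$ compatibly with the $W$-action and base change. I would take this as standard (cf.\ \cite{Sandvik}). The only content is that multiplicativity forces the local system to be determined by its monodromy representation of $\pi_1(T) \cong \bfY$, which is abelian, and that extension of scalars commutes with taking monodromy. Everything else is the elementary roots-of-unity argument above.
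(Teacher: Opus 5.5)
Your proposal is correct and follows the paper's proof: identify $\scrL$ with a character $\bfY \to \mu_n(\k)$, note that the forward direction is immediate, and get the converse from injectivity of $\mu_n(\k) \to \mu_n(\k')$, since $1+\zeta+\cdots+\zeta^{n-1}=0$ for $\zeta \neq 1$ maps to $n=0$ in $\k'$. Passing to the order $d$ of $\zeta$ is harmless but unnecessary, because the same argument works directly with $n$ as in the paper; you should also delete the discarded aside that factors $x^n-1$ over $\mu_n(\k)$, since that factorization is false in general.
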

\begin{proof}
  If $\scrL w = \scrL$, then clearly $\k' (\scrL) w = \k' (\scrL)$ since $\k' (-)$ commutes with the $W$-action.
  Now suppose that $\k' (\scrL) w = \k' (\scrL)$
  There exists some $n \in \Z$ invertible in $\k$ such that $\scrL$ is defined by a group homomorphism
  \[\rho_{\scrL} : \bfY \to \mu_n (\k)\]
  where $\mu_n (\k)$ is the group of $n$-th roots of unity in $\k$. By assumption, we have that for all $\lambda \in \bfY$,
  \[\varphi (\rho_{\scrL} (\lambda \cdot w - \lambda)) = 1.\]
  In other words, $\rho_{\scrL} (\lambda \cdot w - \lambda)$ is in the kernel of the group homomorphism $\mu_n (\k) \to \mu_n (\k')$.
  We claim that $\mu_n (\k) \to \mu_n (\k')$ is injective. Let $\zeta \in \mu_n (\k)$ such that $\varphi (\zeta) = 1$.
  Assume that $\zeta \neq 1$. Since $\k$ is a domain and $\zeta^n - 1 =0$, we must have that $\zeta^{n-1} + \zeta^{n-2} + \ldots + 1 = 0$.
  We can then compute
  \[0 = \varphi  (\zeta^{n-1} + \zeta^{n-2} + \ldots + 1) = \varphi(\zeta)^{n-1} + \varphi (\zeta)^{n-2} + \ldots + \varphi (1) = n.\]
  However, $n$ is assumed to be invertible in $\k$ and hence also in $\k'$, so this contradicts our assumption that $\zeta \neq 1$.
  We conclude that $\rho_{\scrL} (\lambda \cdot w - \lambda) = 1$ which translates into the condition that $\scrL w = \scrL$.
\end{proof}

We return to the general case where $\k \in \{\K, \O, \F\}$ with ring homomorphism $\Z' \to \k$.

For each $\scrL \in \fr{o}$, we can find some $\scrL_{\O} \in \Ch^{\mu} (T, \O)$ such that $\k (\scrL_{\O}) \cong \scrL$.
Indeed, if $\k = \F$, this follows from the existence of a Teichmüller lift $\F^{\times} \to \O^{\times}$. For $\k = \K$, this can be constructed using the fact that the roots of unity of $\K$ are contained in $\O$.
Define $\fr{o}_{\O} = \scrL_{\O} \cdot W$ for some $\scrL \in \fr{o}$. We can then set $\fr{o}_{\K} = \K (\scrL_{O}) \cdot W$ and $\fr{o}_{\F} = \F (\scrL_{\O})$.
Note that $\fr{o}_{\k} = \fr{o}$.
For $\scrL \in \fr{o}$, the sheaf $\scrL_{\O}$ is torsion, and hence $\fr{o}_{\k} \subseteq \Ch^{\mu} (T, \k)$.
By Lemma \ref{lem:eos_and_torsion_loc_sys}, extension of scalars $\k (-) : \fr{o}_{\O} \to \fr{o}_{\k}$ gives a $W$-equivariant bijection.
We can then abuse notation and simply write $\fr{o}$ to refer to any $\fr{o}_{\k}$ with $\k \in \{\K, \O, \F\}$ under these bijections. 
Likewise, for any $\scrL \in \fr{o}$, we will simultaneously view it as an object in $\fr{o}_{\k}$ for any such $\k$.
Moreover, extension of scalars defines a 2-functor
\[\k (-) : \PEE{}{}^{\BS} (G, \O, \fr{o}) \to  \PEE{}{}^{\BS} (G, \k, \fr{o}).\]

Recall the 2-category $\grEWmon{}{}^{\BS} (\fr{h}_{\k}, W, \fr{o})$ constructed in \S\ref{subsubsec:cat_of_diag_hecke}.
The goal of this section is to construct a 2-functor
\[\Upsilon_{\textnormal{RW}}^{\k} :  \grEWmon{}{}^{\BS} (\fr{h}_{\k}, W, \fr{o}) \to \PEE{}{}^{\BS} (G, \k, \fr{o}).\]
The construction is analogous to the non-monodromic version constructed by Riche and Williamson \cite{RW}.
We will first define $\Upsilon_{\RW}^{\O}$. We will then check that the defining relations in $\grEWmon{}{}^{\BS} (\fr{h}_{\K}, W, \fr{o})$ are satisfied by extending scalars of $\Upsilon_{\RW}^{\O}$ to $\K$-coefficients.
Finally, we will extend scalars to construct $\Upsilon_{\RW}^{\F}$.

It is easy to define $\Upsilon_{\RW}^{\O}$ on 1-morphisms. For an expression $\uw \in \Exp (W)$, $\scrL \in \fr{o}$, and $n \in \Z$, we define
\[\Upsilon_{\RW}^{\O} (B_{\uw}^{\scrL} (n)) \coloneq \scrE_{\uw}^{\scrL} [n] \in \grEWmon{\scrL}{\scrL \uw} (\fr{h}_{\O}, W, \fr{o}).\]
We then must specify the image of the generating 2-morphisms in $\grEWmon{}{}^{\BS} (\fr{h}_{\O}, W, \fr{o})$.

\subsubsection{Polynomials}

Let $\scrL \in \fr{o}$.
Since $\scrE_{\emptyset}^{\scrL} = \IC_e^{\scrL} = \Delta_e^{\scrL}$, by adjunction, we have a canonical isomorphism,
\[\xi : \End^{\bullet} (\scrE_{\emptyset}^{\scrL}) \stackrel{\sim}{\to} H_T^{\bullet} (\pt; \O) = R_{\O}.\]
We then define
\[\Upsilon_{\RW}^{\O} \left( 
  % \tikzsetnextfilename{#1}
  \tikzstyle{every picture}=[tikzfig]
  \input{./figures/poly2.tikz}
\right) = \xi^{-1} (f)\]
for $f \in R_{\O}$.

\subsubsection{1-color morphisms}

Let $s \in S$ and $\scrL \in \fr{o}$ such that $\scrL s = \scrL$.
By \cite[Lemma 4.3.7]{Sandvik}, the object $\scrE_s^{\scrL}$ carries a graded Frobenius algebra structure with structure maps
\[\mu_s : \scrE_s^{\scrL} \star \scrE_s^{\scrL} \to \scrE_s^{\scrL} [-1], \qquad \nu_s : \scrE_s^{\scrL}  \to \scrE_s^{\scrL}  \star \scrE_s^{\scrL}[-1],\]
\[\eta_s : \IC_e^{\scrL} \to \scrE_s^{\scrL} [1], \qquad \epsilon_s : \scrE_s^{\scrL} \to \IC_e^{\scrL} [1],\]
where $\epsilon_s$ is the map defined in \S\ref{subsec:ew_par} and $\eta_s$ is defined by the adjunction counit $j_{e*} j_e^! \to \id$.
We can then define  
\begin{align*}
 \Upsilon_{\RW}^{O} \left(
  % \tikzsetnextfilename{#1}
  \tikzstyle{every picture}=[tikzfig]
  \input{./figures/enddot2.tikz}
 \right) &\coloneq \eta_s , & \Upsilon_{\RW}^{\O} \left(
  % \tikzsetnextfilename{#1}
  \tikzstyle{every picture}=[tikzfig]
  \input{./figures/startdot2.tikz}
 \right) &\coloneq \epsilon_s,  & \Upsilon_{\RW}^{\O} \left(
  % \tikzsetnextfilename{#1}
  \tikzstyle{every picture}=[tikzfig]
  \input{./figures/mult2.tikz}
 \right) &\coloneq  \mu_s, & \Upsilon_{\RW}^{\O} \left(
  % \tikzsetnextfilename{#1}
  \tikzstyle{every picture}=[tikzfig]
  \input{./figures/comult2.tikz}
 \right) = \nu_s.
\end{align*}

Now let $s \in S$ and $\scrL \in \fr{o}$ such that $\scrL s \neq \scrL$. By Proposition \ref{prop:min_IC_functor}, we have a canonical isomorphism of minimal IC sheaves $\cap_s : \scrE_s^{\scrL} \star \scrE_s^{\scrL s} \to \IC_e^{\scrL}$ with inverse $\cup_s$.
We can then define  
\[\Upsilon_{\RW}^{\O} \left(
  % \tikzsetnextfilename{#1}
  \tikzstyle{every picture}=[tikzfig]
  \input{./figures/cap2.tikz}
 \right) \coloneq \cap_s \qquad\text{and}\qquad \Upsilon_{\RW}^{\O} \left(
  % \tikzsetnextfilename{#1}
  \tikzstyle{every picture}=[tikzfig]
  \input{./figures/cup2.tikz}
 \right) \coloneq \cup_s.\]

\subsubsection{2\texorpdfstring{$m_{s,t}$}{mst}-valent vertices}

Let $s, t \in S$ be distinct simple reflections and $\scrL \in \fr{o}$. Define $m = m_{s,t}$ and $w = {}_s m = {}_t m$. 

\begin{lemma}\label{lem:behavior_of_2m_valent_verts_geom}
    We use the same notation as above. The pullback $j_w^*$ induces an isomorphism of $\O$-modules
    \begin{equation}\label{eq:behavior_of_2m_valent_verts_geom_1}
        \Hom (\scrE_{{}_s \underline{m}}^{\scrL}, \scrE_{{}_t \underline{m}}^{\scrL}) \stackrel{\sim}{\to} \Hom (j_w^* \scrE_{{}_s \underline{m}}^{\scrL}, j_w^* \scrE_{{}_t \underline{m}}^{\scrL}).
    \end{equation}
    Moreover, both sides of (\ref{eq:behavior_of_2m_valent_verts_geom_1}) are free $\O$-modules of rank 1.
\end{lemma}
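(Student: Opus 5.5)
The plan follows the non-monodromic argument of Riche--Williamson, which uses the open--closed recollement for the stratum $\eFl_w$ together with parity vanishing. First, I compute both sides of (\ref{eq:behavior_of_2m_valent_verts_geom_1}). Since ${}_s\underline{m}$ and ${}_t\underline{m}$ are reduced expressions for $w$, the element $w$ is maximal in the support of both $\scrE_{{}_s\underline{m}}^{\scrL}$ and $\scrE_{{}_t\underline{m}}^{\scrL}$. Each restriction $j_w^*\scrE_{{}_u\underline{m}}^{\scrL}$ is a single shifted copy $\scrK_w^{\scrL}[\ell(w)+\ell_{\scrL}(w)]$, up to the normalisation built into $\scrE$. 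I would prove this by induction on the expression, using the stalk computation for convolution by $\scrE_s$: when $\scrL s\neq\scrL$, convolution by $\scrE_s^{\scrL}$ is an equivalence that translates strata; when $\scrL s=\scrL$, one uses the usual $\IC_s$ stalks. In each case the top stratum appears exactly once, with multiplicity one, because the expression is reduced. The right-hand side is therefore $\End(\scrK_w^{\scrL})$ in degree $0$, which is $H^0_T(\pt;\O)=\O$. This gives freeness of rank one on the right.

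Next, I show that $j_w^*$ is an isomorphism. Write $i$ for the inclusion of the closed complement $\eFl_{<w}\cap\overline{\eFl_w}$; on the support this complement is the union of the strata $\eFl_v$ with $v<w$. The recollement triangle $j_{w!}j_w^*\scrE_{{}_s\underline{m}}\to\scrE_{{}_s\underline{m}}\to i_*i^*\scrE_{{}_s\underline{m}}\to$ gives a long exact sequence after applying $\Hom(-,\scrE_{{}_t\underline{m}})$. Its outer terms are $\Hom(i^*\scrE_{{}_s\underline{m}},i^!\scrE_{{}_t\underline{m}}[k])$ for $k=0,1$. By the parity property of Bott--Samelson parity complexes (Sandvik, the monodromic parity theory), $i^*\scrE_{{}_s\underline{m}}$ is $*$-even and $i^!\scrE_{{}_t\underline{m}}$ is $!$-even, up to a common shift. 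The standard filtration-by-strata argument then shows that $\Hom(i^*\scrE_{{}_s\underline{m}},i^!\scrE_{{}_t\underline{m}}[\text{odd}])=0$, and that all these Hom groups are free over $\O$.

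The remaining outer term requires a degree argument. I would prove $\Hom(i^*\scrE_{{}_s\underline{m}},i^!\scrE_{{}_t\underline{m}})=0$ by computing degrees stratum by stratum. For $v<w$, the space $\Hom(j_v^*\scrE_{{}_s\underline{m}},j_v^!\scrE_{{}_t\underline{m}})$ is a sum of shifted copies of $H_T^\bullet(\pt)$. Its graded rank is governed by $p^{v,\scrL}_{{}_s\underline{m}}p^{v,\scrL}_{{}_t\underline{m}}$, in the same way as Corollary \ref{cor:abe_soergel_hom_application}, and its degree-zero part vanishes because every defect for $v<w$ is strictly positive: there are no degree-$0$ double leaves through lower strata. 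The cleanest route may be to compare with the algebraic side. After $\K\otimes$ one can use Theorem \ref{thm:H_functor_properties} and Lemma \ref{lem:uniqueness_of_beta}, which show that degree-$0$ morphisms $B_{{}_s\underline{m}}\to B_{{}_t\underline{m}}$ form a rank-one space. The geometric Hom over $\O$ is free with graded rank independent of the coefficients, by parity, so the degree-$0$ rank over $\O$ is also $1$. Combining this rank with the rank-one right-hand side and injectivity of $j_w^*$, which follows from the vanishing of the $k=0$ term, gives an isomorphism, provided the cokernel vanishes. The cokernel injects into $\Hom(i^*,i^![1])$, which is zero by parity.

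I expect the main obstacle to be the vanishing of the degree-$0$ lower-stratum contribution, that is, checking that the graded shifts on $v<w$ never reach degree $0$ in the monodromic setting, where the shifts are $\ell_{\scrL}$-corrected (Proposition \ref{prop:stalks_of_max_ICs}-type formulas). I would first try to handle it directly from the stalk formulas. If that proves messy, the fallback is the rank comparison with the algebraic category via $\H$ over $\K$ described above.
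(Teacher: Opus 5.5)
Your argument is correct and is essentially the paper's. There too, surjectivity of $j_w^*$ comes from $\eFl_w$ being open in the supports, i.e.\ from the parity vanishing of $\Hom(i^*\scrE_{{}_s\underline{m}}^{\scrL}, i^!\scrE_{{}_t\underline{m}}^{\scrL}[1])$. The right-hand side is $\End(\scrK_w^{\scrL})\cong\O$, the left-hand side is free of rank one by the monodromic Soergel Hom formula of \cite{Sandvik}, and a surjection between free rank-one $\O$-modules is an isomorphism.

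This route makes the vanishing of your $k=0$ term unnecessary. That is just as well, because the reason you give for it is false. Individual defects for $v<w$ can vanish already in the non-monodromic case. For $m=3$, the subexpression $\ue=(1,0,0)$ of $(s,t,s)$ has decorations $U1,U0,D0$, evaluates to $s<w$, and has defect $0$. What is true is only that $d({}_s\underline{m},\ue)+d({}_t\underline{m},\uf)>0$ whenever the common evaluation is $<w$. In the example, the matching subexpression $\uf=(0,1,0)$ of $(t,s,t)$ has defect $2$. That inequality is exactly the statement that the degree-zero Hom has rank one.

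So you should rely on your fallback. Either cite the Soergel Hom formula directly, as the paper does, or go through $\H$ and Theorem~\ref{thm:H_functor_properties} plus base change from $\O$ to $\K$. The latter is not circular, since that theorem is proved independently of this lemma, but it is longer.
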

\begin{proof}
    The lemma is essentially a variation of \cite[Lemma 5.4.4]{Sandvik}. By the Soergel Hom formula for the monodromic Hecke category (see \cite[Proposition 3.8.13]{Sandvik}), there is an isomorphism $\Hom (\scrE_{{}_s \underline{m}}^{\scrL}, \scrE_{{}_t \underline{m}}^{\scrL}) \cong \O$.
    By the same argument as in \cite[Theorem 3.4.1]{Sandvik}, we have an isomorphism $j_w^* \scrE_{{}_s \underline{m}}^{\scrL} \cong \scrK_w^{\scrL} [\ell_{\scrL} (w)] \cong j_w^* \scrE_{{}_s \underline{m}}^{\scrL}$. Moreover, $\End (\scrK_w^{\scrL}) \cong \O$.
    Since $\eFl_w$ is open in the support of $\scrE_{{}_s \underline{m}}^{\scrL}$ and $\scrE_{{}_t \underline{m}}^{\scrL}$, we have that the map (\ref{eq:behavior_of_2m_valent_verts_geom_1}) is surjective.
    Both sides of (\ref{eq:behavior_of_2m_valent_verts_geom_1}) are free of rank 1; therefore, (\ref{eq:behavior_of_2m_valent_verts_geom_1}) is an isomorphism.
\end{proof}

Define the map $g_{s,t} : \scrE_{{}_s \underline{m}}^{\scrL} \to \scrE_{{}_t \underline{m}}^{\scrL}$ as the pre-image under (\ref{eq:behavior_of_2m_valent_verts_geom_1}) of an isomorphism $f_{s,t}^w :j_w^* \scrE_{{}_s \underline{m}}^{\scrL} \stackrel{\sim}{\to} j_w^* \scrE_{{}_t \underline{m}}^{\scrL}$.
Likewise, by switching the roles of $s$ and $t$, there is a map $g_{t,s} : \scrE_{{}_t \underline{m}}^{\scrL} \to \scrE_{{}_s \underline{m}}^{\scrL}$ lifted from an isomorphism $g_{t,s}^w :j_w^* \scrE_{{}_t \underline{m}}^{\scrL} \stackrel{\sim}{\to} j_w^* \scrE_{{}_s \underline{m}}^{\scrL}$. 
We pick $g_{s,t}^w$ and $g_{t,s}^w$ such that $g_{t,s}^w \circ g_{s,t}^w = \id_{j_w^* \scrE_{{}_s \underline{m}}^{\scrL}}$.

To define the image of the $2m_{s,t}$-valent vertices, we will need another lemma. 

\begin{lemma}\label{lem:hom_generated_by_counit_soergel}
  We use the same notation as above. Let $\beta \in \uW{\scrL}{\scrL w}$ be the block containing $w$. For $u \in \{s,t\}$, we have
  \[\Hom_{\DME{\scrL}{\scrL w} (G, \O)} (\scrE_{{}_u \underline{m}}^{\scrL}, \theta_{\beta}^{\scrL} [m]) = \O \cdot \ForME{\scrL} (\epsilon_{{}_u \underline{m}}).\]
\end{lemma}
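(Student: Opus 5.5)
We first reduce the statement to a rank computation over $\O$ and then identify a generator. The argument is the right-equivariant analogue of the Soergel Hom formula.

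\textbf{Step 1: the Hom space is free of rank one.} Write $\scrE = \scrE_{{}_u \underline{m}}^{\scrL}$. By adjunction for the forgetful functor (cf.\ \cite[Lemma 3.6.5]{Sandvik}), $\Hom_{\DME{\scrL}{\scrL w}}(\ForME{\scrL}\scrE, \ForME{\scrL}\theta_\beta^{\scrL}[m])$ is identified with $\O \otimes_{R_\O} \Hom^\bullet_{\DEE{\scrL}{\scrL w}}(\scrE, \theta_\beta^{\scrL})$ in the appropriate degree. This uses that the graded Hom between Bott--Samelson parity complexes is graded free over $R_\O$, which follows from the parity vanishing arguments of \cite{Sandvik}.

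The monodromic Soergel Hom formula \cite[Proposition 3.8.13]{Sandvik} computes the graded rank as $\langle \uH_{{}_u\underline m}^{\scrL}, H_{w^\beta}^{\scrL}\rangle$ up to the usual normalization. This rank equals $p^{w^\beta,\scrL}_{{}_u\underline m}$, and by Lemma \ref{lem:defect_formula_for_pwx} it is a sum over monodromic subexpressions of ${}_u\underline m$ evaluating to $w^\beta$, weighted by defect. To find the degree-$m$ part after tensoring with $\O$, we need the top-degree contribution. The subexpression $\uf$ with $f_i=0$ for all $i\in K({}_u\underline m,\scrL)$ and $f_i=1$ otherwise evaluates to $b_\scrL({}_u\underline m)=w^\beta$ and has defect $\ell_\scrL$. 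Every other monodromic subexpression has strictly smaller degree, as in the proof of Lemma \ref{lem:uniqueness_of_beta}. Comparing with the shift $[m]$, and noting that the normalization of $\theta_\beta^{\scrL}$ absorbs the non-$K$ indices, the relevant graded piece of $\O\otimes_{R_\O}(-)$ is free of rank one over $\O$.

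\textbf{Step 2: $\ForME{\scrL}(\epsilon_{{}_u\underline m})$ generates.} Next we show that $\ForME{\scrL}(\epsilon_{{}_u\underline m})$ is a generator, not merely nonzero. The plan is to compare with the algebraic side after extending scalars to $\K$. Under $\H$ (Lemma \ref{lem:H_on_min_ICs}, Lemma \ref{lem:H_conv_by_ICs}), $\epsilon_{{}_u\underline m}$ goes to the bimodule map $\epsilon^{{}_u\underline m}: B^{\scrL}_{{}_u\underline m}\to R_{w^\beta}$, which sends the one-tensor to $1$. The argument in Lemma \ref{lem:uniqueness_of_beta}(2) shows that $\k\otimes_R \epsilon^{{}_u\underline m}$ spans the degree-$m$ part of $\k\otimes_R\Hom(B^{\scrL}_{{}_u\underline m},R_{w^\beta})$, because it is the light leaf $\LL_{{}_u\underline m,\uf}$ and light leaves form a basis (Proposition \ref{prop:basis_for_S_uw_x}(3)). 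Over $\K$, Corollary \ref{cor:re_H_functor_properties} then gives generation.

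Over $\O$, we avoid passing through $\K$ and argue integrally instead. The morphism $\epsilon_{{}_u\underline m}$ is a convolution of adjunction units $\id\to j_{e*}j_e^*$ and canonical isomorphisms of minimal IC sheaves. Restricting to the open stratum $\eFl_{w^\beta}$ inside the support of $\theta_\beta^{\scrL}$, the map $j_{w^\beta}^*\epsilon_{{}_u\underline m}$ is an isomorphism onto $\scrK^{\scrL}_{w^\beta}$ (up to shift). This is because the stalk of each factor $\epsilon_s$ at the identity is an isomorphism, and these isomorphisms compose under convolution. The restriction map from our Hom space to $\Hom(j^*_{w^\beta}\ForME{\scrL}\scrE, \ForME{\scrL}\scrK_{w^\beta}^{\scrL}[\cdot])\cong\O$ is then injective, since $\theta_\beta^{\scrL}$ is supported on $\overline{\eFl_{w^\beta}}$ and minimality makes it $j_{w^\beta!}$-extended. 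The map sends $\epsilon$ to a unit, so $\epsilon$ generates the rank-one free module.

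\textbf{Main obstacle.} The hardest step is this integral injectivity and unit claim. It requires checking that $\theta_\beta^{\scrL}$ is clean, meaning $\Delta=\nabla$ for minimal elements of a block (as in \cite[\S5]{Sandvik}), so that Hom into it is computed by $j_{w^\beta}^*$. It also requires checking compatibility with the forgetful functor.
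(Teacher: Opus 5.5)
Your Step~1 is the same as the paper's first step, which gets freeness of rank one directly from \cite[Lemma 3.6.5]{Sandvik} and \cite[Proposition 3.8.13]{Sandvik}. The shift should really be $[\ell_{\scrL}({}_u\underline{m})]$, the degree of $\epsilon_{{}_u\underline{m}}$. The $[m]$ in the statement is a slip (the paper writes $[d]$ when it applies the lemma), so nothing needs to be ``absorbed'' by $\theta^{\scrL}_\beta$.

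The gap is in the integral part of Step~2. Since $\theta^{\scrL}_\beta$ is clean, restriction to $\eFl_{w^\beta}$ is a \emph{bijection} on this Hom space. (For Homs into $\theta^{\scrL}_\beta$ it is the $j_{w^\beta*}$-description that matters, not the $j_{w^\beta!}$ one.) So your claim that $j^*_{w^\beta}\ForME{\scrL}(\epsilon_{{}_u\underline{m}})$ is a unit is equivalent to the lemma itself, and the reason you give for it does not hold up.

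First, $j^*_{w^\beta}\scrE^{\scrL}_{{}_u\underline{m}}$ is not a single shift of $\scrK^{\scrL}_{w^\beta}$. It is a sum of shifted copies, one for each $\scrL$-monodromic subexpression of ${}_u\underline{m}$ evaluating to $w^\beta$; there are already two for $(s,t,s)$ when $w^\beta=e$. So $j^*_{w^\beta}\epsilon_{{}_u\underline{m}}$ is not an isomorphism. What you need is that it is split surjective on stalks in the relevant degree.

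Second, restriction to a stratum does not commute factor by factor with convolution. By base change, $j^*_{w^\beta}(\epsilon_{s_1}\star\cdots\star\epsilon_{s_k})$ is the restriction map from the cohomology, with twisted coefficients, of the whole fibre of the convolution map over $\dot w^\beta$ to a single point of that fibre. All those subexpressions contribute to the fibre. The point is the locus where the $K$-letters are $e$ and the other letters are $s_i$; it is a point because the non-$K$ letters form a reduced word for $w^\beta$. Knowing that each $j_e^*\epsilon_s$ is an isomorphism says nothing about whether this restriction is onto over $\O$.

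Your $\K$-paragraph does not help either. Generation over $\K$ only shows that $\ForME{\scrL}(\epsilon_{{}_u\underline{m}})$ is a nonzero multiple of a generator, which does not rule out divisibility by a uniformizer of $\O$.

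The paper handles integrality without touching stalks. The Hom space is free of rank one over the local ring $\O$, so an element generates it if and only if its image under $\F\otimes_{\O}(-)$ is nonzero. These Hom spaces are compatible with extension of scalars. Moreover, $\epsilon_{{}_u\underline{m}}$ is assembled from convolution, the adjunction units $\id\to j_{e*}j_e^*$, and the canonical isomorphisms of minimal IC sheaves, all of which commute with extension of scalars. Hence the reduction of $\ForME{\scrL}(\epsilon_{{}_u\underline{m}})$ is the same map built with field coefficients, whose nonvanishing the paper takes as clear from the construction.

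If you want to keep your stratum argument, the missing input would be an induction on length. For $\uw=\uw's$ with $s$ a $K$-letter, write $\epsilon_{\uw}=\epsilon_{\uw'}[1]\circ(\id\star\epsilon_s)$. Then use the triangle $\scrF\star\Delta_s\to\scrF\star\IC_s\to\scrF[1]$ with $\scrF=\scrE^{\scrL}_{\uw'}$; its connecting map vanishes on stalks along $\eFl_{w^\beta}$ for parity reasons, so each such step is split surjective on those stalks. For a non-$K$ letter, convolution with the minimal IC sheaf is an equivalence carrying the relevant stratum to the relevant stratum.
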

\begin{proof}
    By \cite[Lemma 3.6.5]{Sandvik} and \cite[Proposition 3.8.13]{Sandvik}, the space in question is free of rank 1 over $\O$.
    It then suffices to show that $\ForME{\scrL} (\epsilon_{{}_u \underline{m}})$ is nonzero after extension of scalars to any field. This is clear since extension of scalars commutes with convolution and the unit of the adjunction $\id \to j_{s*} j_s^*$ for any $s \in S_{\scrL}^{\circ}$.
\end{proof}

By Lemma \ref{lem:hom_generated_by_counit_soergel}, we have 
\begin{equation}\label{eq:cst_constants_soergel}
  \ForME{\scrL} (\epsilon_{{}_t \underline{m}} \circ g_{s,t}) = c_{s,t} \cdot \left( \ForME{\scrL} (\epsilon_{{}_s \underline{m}})\right) \qquad\text{and}\qquad \ForME{\scrL} (\epsilon_{{}_s \underline{m}} \circ g_{t,s}) = c_{t,s} \cdot \left( \ForME{\scrL} (\epsilon_{{}_t \underline{m}}) \right)
\end{equation}
for some $c_{s,t}, c_{t,s} \in \O$. We can now set
\[f_{s,t} \coloneq c_{t,s} g_{s,t} : \scrE_{{}_s \underline{m}}^{\scrL} \to \scrE_{{}_t \underline{m}}^{\scrL} \qquad\text{and}\qquad f_{t,s} \coloneq c_{s,t} g_{t,s} : \scrE_{{}_t \underline{m}}^{\scrL} \to \scrE_{{}_s \underline{m}}^{\scrL},\]
and then define these to be the image of the $2m$-valent vertices under $\Upsilon_{\RW}^{\O}$:
\[\Upsilon_{\RW}^{\O} \left( 
  % \tikzsetnextfilename{#1}
  \tikzstyle{every picture}=[tikzfig]
  \input{./figures/mst_valent2.tikz}
\right) \coloneq f_{s,t} \qquad\text{and}\qquad\Upsilon_{\RW}^{\O} \left( 
  % \tikzsetnextfilename{#1}
  \tikzstyle{every picture}=[tikzfig]
  \input{./figures/mst_valent2_flipped.tikz}
\right) \coloneq f_{t,s}.\]
 
\subsection{Verifying Relations}

In order to check that $\Upsilon_{\RW}^{\O}$ is well-defined, one must check that the image of a morphism under $\Upsilon_{\RW}^{\O}$ is invariant under the defining relations in $\grEWmon{}{}^{\BS} (\fr{h}_{\O}, W, \fr{o})$.
By \cite[Lemma 3.6.5]{Sandvik} and Theorem \ref{thm:dll_is_a_basis}, it suffices to check this after extending scalars to $\K$-coefficients.
Write $\Upsilon_{\RW}^{\K}$ for the extension of $\Upsilon_{\RW}^{\O}$ along $\O \to \K$.
By composing with $\H$ and using Theorem \ref{thm:H_functor_properties}, we can further reduce to checking that the following diagram strictly commutes
\[\begin{tikzcd}
    {\grEWmon{}{}^{\BS} (\fr{h}_{\K}, W, \fr{o})} \arrow[rr, "\Upsilon_{\RW}^{\K}"] \arrow[rd, "\Upsilon_{\Abe}"'] &                                       & \PEE{}{}^{\BS} (G, \K, \fr{o}) \arrow[ld, "\H"] \\
                                                                                        & {\grAmon{}{} (\fr{h}_{\K}, W,  \fr{o})} &                                     
    \end{tikzcd}\]

We can now calculate the image of the generating 2-morphisms under $\H$.
\begin{enumerate}
    \item \emph{Polynomials:} $\H (\Upsilon_{\RW}^{\K} (f))$ for $f \in R_{\K}$ is given by multiplication by $f$ on $R_{\K}$.
    \item \emph{The upper dot:} Let $s \in S$ and $\scrL \in \fr{o}$ such that $\scrL s = \scrL$. 
            Any morphism of graded $R_{\K}$-bimodules $C_s \to R_{\K} (1)$ is a scalar multiple of $\epsilon^s : C_s \to R_{\K} (1)$.
            Recall from (\ref{eq:char_of_theta_s}) that there is a unique morphism $\xi_s : \Theta_{\circ}^{\scrL} \to \scrE_s^{\scrL} [-1]$ such that $(\epsilon_s [-1]) \circ \xi_s = \epsilon_{\scrL}$. 
            Moreover, by Lemma \ref{lem:H_conv_by_ICs}, $\xi_s$ identifies with $u_s \in C_s $ under the isomorphism $\H (\scrE_s^{\scrL}) \cong C_s$. 
            Similarly, by Lemma \ref{lem:H_on_min_ICs}, $\epsilon_{\scrL}$ identifies with $1 \in R$ under the isomorphism $\H (\IC_e^{\scrL}) \cong R$.
            As a result, the relation $(\epsilon_s [-1]) \circ \xi_s = \epsilon_{\scrL}$ becomes $(\H (\epsilon_s)) (u_s) = 1$. Hence, $\H (\epsilon_s) = \epsilon^s$.
    \item \emph{The lower dot:} 
            Let $s \in S$ and $\scrL \in \fr{o}$ such that $\scrL s = \scrL$. 
            The map
            \[\IC_e^{\scrL} [-1] = j_{e*} j_e^! \scrE_s^{\scrL} \stackrel{\eta_s}{\to} \scrE_s^{\scrL} \stackrel{\epsilon_s}{\to} j_{e*} j_e^* \scrE_s^{\scrL} [1] \in \Hom (\IC_e^{\scrL} [-1] , \IC_e^{\scrL} [1]) \cong H_T^2 (\pt; \K)\]
            is given by $\lambda \cdot \id_{\IC_e^{\scrL} [-1]}$ for some $\lambda \in H_T^2 (\pt; \K)$.  
            Let $L_s$ be the rank 1 Levi subgroup of $G$ corresponding to $s$ and write $j : T \hookrightarrow L_s$ for the inclusion.
            Since $\scrL s = \scrL$, there is a unique multiplicative local system $\scrL^s$ on $L_s$ which extends $\scrL$.
            There is an isomorphism of varieties $\eFl_{\leq s} \cong U_s \backslash L_s$ where $U_s = U \cap L_s$.
            Under this isomorphism, the simple perverse sheaf $\scrE_s^{\scrL}$ is canonically identified with $\scrL^s$. 
            We can then see under this isomorphism and adjunction that the natural morphism of $(T \times T, \scrL \boxtimes \scrL^{-1})$-equivariant sheaves
            \begin{equation}\label{eq:lower_dot_reln_1}
              \scrL [-1] = j^! \scrL^s [1] \to j^* \scrL^s [1] = \scrL [1]
            \end{equation}
            is given by $\lambda \cdot \id_{\scrL [-1]}$.
            After applying $(-) \otimes^L \scrL^{-1}$ to (\ref{eq:lower_dot_reln_1}), we see that the natural morphism of $T\times T$-equivariant sheaves
            \[
              \underline{\K}_{T} [-1] = j^! \underline{\K}_{L_s} [1] \to j^* \underline{\K}_{L_s} [1] = \underline{\K}_T [1]
            \]
            is also given by $\lambda \cdot \id_{\underline{\K}_T [-1]}$. It then follows from the non-monodromic case (cf., \cite[\S10.5]{RW}) that $\lambda = \alpha_s$, and as a result, $\H (\epsilon_s \circ \eta_s) = \alpha_s$.
            Note that $\eta^s$ is uniquely determined by $\epsilon^s \circ \eta^s = \alpha_s$. As a consequence, we deduce from the upper dot computation that $\H (\eta_s) = \eta^s$. 
    \item \emph{Cups and caps:} Let $s \in S$ and $\scrL \in \fr{o}$ such that $\scrL s \neq \scrL$. Let $\beta \in \uW{\scrL}{\scrL s}$ be the block containing $s$.
            By Lemma \ref{lem:existence_of_comult}, there is a commutative diagram
            \[\begin{tikzcd}
                \Theta_{\scrL}^{\circ}  \arrow[d, "\epsilon_{\scrL}"] \arrow[r, "\nu"] & \Theta_{\scrL}^{\beta} \star \Theta_{\scrL s}^{\beta^{-1}} \arrow[d, "\epsilon_{\beta} \star \epsilon_{\beta^{-1}}"] \\
                \scrE_{\emptyset}^{\scrL} \arrow[r, "\cup_s"]                          & \scrE_s^{\scrL} \star \scrE_s^{\scrL s}.                                                                          
                \end{tikzcd}\]
                By Lemma \ref{lem:H_on_min_ICs}, under the canonical isomorphisms $\H (\scrE_s^{\scrL} \star \scrE_s^{\scrL s}) \cong (R_{\K})_s \otimes_{R_{\K}} (R_{\K})_s$ and $\H (\scrE_{\emptyset}^{\scrL}) \cong R_{\K}$, the above diagram translates to the condition that $\H (\cup_s) (u_{(s,s)}) = 1$.
                Therefore, $\H (\cup_s) = \cup^s$. Since $\cap_s$ is the inverse of $\cup_s$, we must also have that $\H (\cap_s) = \cap^s$.
    \item \emph{The trivalent vertices: } Let $s \in S$ and $\scrL \in \fr{o}$ such that $\scrL s = \scrL$. We have isomorphisms of vector spaces
        \[\Hom (C_s, C_s \otimes_{R_{\K}} C_s (-1)) \cong \K \qquad\text{and}\qquad \Hom (C_s \otimes_{R_{\K}} C_s, C_s (-1)) \cong \K.\]
        These morphism spaces are generated by the maps $\nu^s : C_s \to C_s \otimes_{R_{\K}} C_s (-1)$ and $\mu^s : C_s \otimes_{R_{\K}} C_s \to C_s (-1)$ respectively. Therefore, $\H (\nu_s)$ (resp. $\H (\mu_s)$) is a scalar multiple of $\nu^s$ (resp. $\mu^s$). 
        Note that $\nu^s$ (resp. $\mu^s$) is uniquely characterized by the identity
        \[(\epsilon^s (-1) \otimes_{R_{\K}} \id_{C_s}) \circ \nu^s = \id_{C_s} \qquad \text{resp.} \qquad \mu^s (1) \circ (\eta^s \otimes_{R_{\K}} \id_{C_s}) = \id_{C_s}.\]
        Therefore, $\H (\nu_s) = \nu^s$ and $\H (\mu_s) = \mu^s$ follow from the upper and lower dot computations and the Frobenius algebra structure on $\scrE_s^{\scrL}$.
    \end{enumerate}

Computing the image of the $2m_{s,t}$-valent vertices takes some work. Let $s,t \in S$ be distinct simple reflections and let $\scrL \in \fr{o}$. Denote the longest element in $\langle s, t \rangle$ by $w = {}_s m = {}_t m$.
We will write $d = \ell_{\scrL} (w)$ and denote the block in $\uW{\scrL}{\scrL w}$ containing $w$ by $\beta$.
Recall from \S\ref{subsec:alg_dll} that we have morphisms
\[\epsilon^{{}_u \underline{m}} : B_{{}_u \underline{m}}^{\scrL} \to (R_{\K})_{w^{\beta}} (d)\]
for $u \in \{s,t\}$.
By Lemma \ref{lem:uniqueness_of_beta} (2), $\beta_{s,t}$ is the unique morphism of Bott--Samelson bimodules such that there is an equality of maps 
\begin{equation}
  \K \otimes_{R_{\K}} (\epsilon^{{}_t \underline{m}} \circ \beta_{s,t}) = \K \otimes_{R_{\K}} \epsilon^{{}_s \underline{m}} : \K \otimes_{R_{\K}} B_{{}_s \underline{m}}^{\scrL} \to \K (d).
\end{equation}

\begin{lemma}\label{lem:cst_constants_inverses_soergel}
  The constants $c_{s,t}, c_{t,s} \in \O$ defined by (\ref{eq:cst_constants_soergel}) satisfy
  \[c_{s,t} c_{t,s} = 1.\]
\end{lemma}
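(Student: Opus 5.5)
The plan is to compose the two defining relations (\ref{eq:cst_constants_soergel}) and then compute the composite $g_{t,s}\circ g_{s,t}$ directly. From the second relation, precomposed with $g_{s,t}$, and then the first relation, we get
\[\ForME{\scrL}(\epsilon_{{}_s \underline{m}} \circ g_{t,s} \circ g_{s,t}) = c_{t,s}\,\ForME{\scrL}(\epsilon_{{}_t \underline{m}}\circ g_{s,t}) = c_{t,s}c_{s,t}\,\ForME{\scrL}(\epsilon_{{}_s \underline{m}}).\]
So it suffices to show $\ForME{\scrL}(\epsilon_{{}_s \underline{m}}\circ g_{t,s}\circ g_{s,t}) = \ForME{\scrL}(\epsilon_{{}_s \underline{m}})$. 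Since $\ForME{\scrL}(\epsilon_{{}_s \underline{m}})$ generates a free rank one $\O$-module (Lemma \ref{lem:hom_generated_by_counit_soergel}), this forces $c_{s,t}c_{t,s}=1$.

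To compute $g_{t,s}\circ g_{s,t}$, I use Lemma \ref{lem:behavior_of_2m_valent_verts_geom} with $s=t$: restriction $j_w^*$ identifies $\End(\scrE_{{}_s \underline{m}}^{\scrL})$ (degree $0$) with $\End(j_w^*\scrE_{{}_s \underline{m}}^{\scrL})\cong\O$. The rank statement there comes from the Soergel Hom formula, and the surjectivity argument uses only that $\eFl_w$ is open in the support, so it applies equally to endomorphisms. By the normalization $g_{t,s}^w\circ g_{s,t}^w=\id$, we have $j_w^*(g_{t,s}\circ g_{s,t}) = j_w^*(\id)$, and injectivity gives $g_{t,s}\circ g_{s,t}=\id_{\scrE_{{}_s \underline{m}}^{\scrL}}$. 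The desired equality is then immediate.

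The main point to check is that the degree-$0$ endomorphism space of $\scrE_{{}_s \underline{m}}^{\scrL}$ is free of rank one over $\O$, so that the restriction map is injective and not merely surjective. This follows from the monodromic Soergel Hom formula \cite[Proposition 3.8.13]{Sandvik}: in degree $0$ only the subexpression pairing the top stratum $w$ with itself contributes, since ${}_s\underline{m}$ is reduced. If that looks delicate, there is a fallback. Extend scalars to $\K$, where $\Hom$ of parity complexes is free and base change commutes (\cite[Lemma 3.6.5]{Sandvik}), and check $c_{s,t}c_{t,s}=1$ in $\K$. This is enough because $\O\subset\K$.
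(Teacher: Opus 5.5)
The step that fails is your claim that the degree-zero endomorphism space of $\scrE^{\scrL}_{{}_s\underline{m}}$ is free of rank one over $\O$. With it goes the conclusion $g_{t,s}\circ g_{s,t}=\id$.

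Reducedness of ${}_s\underline{m}$ does not make the all-ones subexpression the only one of defect zero. Take $m=3$ and $\scrL s=\scrL=\scrL t$ (e.g.\ $\scrL$ trivial and $G=GL_3$). The subexpression $(1,0,0)$ of $(s,t,s)$ has Bruhat stroll $e,s,s,s$ and decorations $U1,U0,D0$. It therefore has defect $0$ and evaluates to $s$. The corresponding double leaf through $B_s$ is a second degree-zero basis element besides the identity. Over $\K$ this is just the decomposition $\scrE^{\scrL}_{sts}\cong \IC^{\scrL}_{sts}\oplus\IC^{\scrL}_{s}$.

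That extra endomorphism vanishes on $\eFl_w$. So on endomorphisms $j_w^*$ is surjective but not injective. In fact $g_{t,s}\circ g_{s,t}$ cannot be the identity here. If it were, every degree-zero endomorphism $f=(f\circ g_{t,s})\circ g_{s,t}$ would be an $\O$-multiple of the identity. This is because $f\circ g_{t,s}$ lies in the rank-one space of Lemma~\ref{lem:behavior_of_2m_valent_verts_geom}, which holds precisely because $s\neq t$, and is therefore a multiple of $g_{t,s}$. After extending to $\K$, the Soergel functor sends $g_{t,s}\circ g_{s,t}$ to a multiple of $\beta_{t,s}\circ\beta_{s,t}$, which is the Jones--Wenzl idempotent and not the identity. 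Your fallback to $\K$ changes nothing, since the rank count is the same there.

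What the rank-one statement for $s\neq t$ does give is $g_{s,t}\circ g_{t,s}\circ g_{s,t}=g_{s,t}$. Hence $h=g_{t,s}\circ g_{s,t}$ is idempotent. Your composition argument then only yields $(c_{s,t}c_{t,s})^{2}=c_{s,t}c_{t,s}$, and the paper extracts $c_{s,t}(c_{s,t}c_{t,s}-1)=0$ from the same identity.

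The missing idea is the nonvanishing $c_{s,t}\neq 0$, i.e.\ $\ForME{\scrL}(\epsilon_{{}_t\underline{m}}\circ g_{s,t})\neq 0$. This does not follow from the normalization of $g^w_{s,t}$ on the open stratum. The paper proves it by extending scalars to $\K$ and applying the Soergel functor. Using $\H(\epsilon_s)=\epsilon^s$, Theorem~\ref{thm:H_functor_properties}, and the square (\ref{eq:re_H_functor_properties_1}) of Corollary~\ref{cor:re_H_functor_properties}, it suffices to exhibit some $\varphi\colon B^{\scrL}_{{}_s\underline{m}}\to B^{\scrL}_{{}_t\underline{m}}$ with $\K\otimes_{R_{\K}}(\epsilon^{{}_t\underline{m}}\circ\varphi)\neq 0$. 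The choice $\varphi=\beta_{s,t}$ works by Lemma~\ref{lem:uniqueness_of_beta}(2).
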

\begin{proof}
  It follows from the definitions of $g_{s,t}$ and $g_{t,s}$ that
  \[g_{s,t} \circ g_{t,s} \circ g_{s,t} = g_{s,t}.\]
  Applying $\ForME{\scrL} (\epsilon_{{}_t \underline{m}} \circ (-))$ both sides and using (\ref{eq:cst_constants_soergel}), we deduce that
  $c_{s,t} c_{t,s} c_{s,t} = c_{s,t}$, or alternatively that $c_{s,t} (c_{t,s} c_{s,t} - 1) = 0$.
  
  It remains to prove that $c_{s,t} \neq 0$. 
  Since $g_{s,t}$ is a generator for $\Hom (\scrE_{{}_s \underline{m}}^{\scrL}, \scrE_{{}_t \underline{m}}^{\scrL})$, it suffices to show that the map
  \begin{align*}
    \Hom_{\DEE{\scrL}{\scrL} (G, \O)} (\scrE_{{}_s \underline{m}}^{\scrL}, \scrE_{{}_t \underline{m}}^{\scrL}) &\to \Hom_{\DME{\scrL}{{\scrL}} (G, \O)} (\scrE_{{}_s \underline{m}}^{\scrL}, \IC_{w^{\beta}}^{\scrL} [d]) \\
    f &\mapsto \ForME{\scrL} (\epsilon_{{}_t \underline{m}} \circ f)
  \end{align*}
  is nonzero. It suffices to prove that this map is nonzero after applying $\K (-)$.

  Let $f : \scrE_{{}_s \underline{m}}^{\scrL} \to \scrE_{{}_t \underline{m}}^{\scrL}$. By the upper dot relation we have already verified and (\ref{eq:re_H_functor_properties_1}), we have a commutative diagram
  \[\begin{tikzcd}
{\Hom (\scrE_{{}_s \underline{m}}^{\scrL}, \scrE_{{}_t \underline{m}}^{\scrL})} \arrow[d, "\H"] \arrow[r, "\epsilon_{{{}_t \underline{m}}} \circ f"] & {\Hom (\scrE_{\underline{m}}^{\scrL}, \IC_{w^{\beta}}^{\scrL} [d])} \arrow[d, "\H"] \arrow[r, "\ForME{\scrL}"] & {\Hom_{\DME{\scrL}{\scrL w} (G, \K)} (\scrE_{{}_s \underline{m}}^{\scrL}, \IC_{w^{\beta}}^{\scrL} [d])} \arrow[d, "\tilde{\H}"] \\
{\Hom (B_{{}_s \underline{m}}^{\scrL}, B_{{}_t \underline{m}}^{\scrL})} \arrow[r, "\epsilon^{{{}_t \underline{m}}} \circ \H (f)"]                                               & {\Hom (B_{{}_s \underline{m}}^{\scrL}, (R_{\K})_{w^{\beta}} (d))} \arrow[r, "\K \otimes_{R_{\K}} (-)"]                                                      & {\Hom_{\grrmod{R_{\K}}} (\K \otimes_{R_{\K}} B_{{}_s \underline{m}}^{\scrL}, \K (d))}.                                                                                             
\end{tikzcd}\]
By Theorem \ref{thm:H_functor_properties} and Corollary \ref{cor:re_H_functor_properties}, it suffices to prove that 
$\K \otimes_{R_{\K}} (\epsilon^{{}_s \underline{m}} \circ \varphi) \neq 0$ for some $\varphi : B_{{}_s \underline{m}}^{\scrL} \to B_{{}_t \underline{m}}^{\scrL}$, but the existence of such a morphism follows from Lemma \ref{lem:uniqueness_of_beta}.
\end{proof}

We can now compute the image of the $2m_{s,t}$-valent vertices.
\begin{enumerate}
  \item[5.] \emph{$2m_{s,t}$-valent vertices: } Let $m = m_{s,t}$.
  By our definition for $f_{s,t}$ and Lemma \ref{lem:cst_constants_inverses_soergel}, we have $\ForME{\scrL} (\epsilon_{{}_t \underline{m}} \circ f_{s,t}) = \ForME{\scrL} (\epsilon_{{}_s \underline{m}})$.
  We can now apply $\tilde{\H}$ and use (\ref{eq:re_H_functor_properties_1}) to deduce that 
  \[\K \otimes_{R_{\K}} \tilde{\H} (\epsilon_{{}_t \underline{m}}) \circ \tilde{\H} (f_{s,t}) = \K \otimes_{R_{\K}} \tilde{\H} (\epsilon_{{}_s \underline{m}}).\]
  Since $\tilde{\H} (\epsilon_{{}_u \underline{m}}) = \epsilon^{{}_u \underline{m}}$ for $u \in \{s,t\}$, we conclude from Lemma \ref{lem:uniqueness_of_beta} (2) that $\H (f_{s,t}) = \beta_{s,t}$.
  By a symmetric argument, we also have that $\H (f_{t,s}) = \beta_{t,s}$
\end{enumerate}

As a result, we have 2-functors
\[\Upsilon_{\RW}^{\O} : \grEWmon{}{}^{\BS} (\fr{h}_{\O}, W, \fr{o}) \to \PEE{}{}^{\BS} (G, \O, \fr{o}) \qquad\text{and}\qquad \Upsilon_{\RW}^{\K} : \grEWmon{}{}^{\BS} (\fr{h}_{\K}, W, \fr{o}) \to \PEE{}{}^{\BS} (G, \K, \fr{o}).\]
Since all the relevant Hom spaces in are graded free $R$-modules (Theorem \ref{thm:dll_is_a_basis} and \cite[Lemma 3.6.3]{Sandvik}), we can extend scalars from $\O$ to $\F$ to produce a 2-functor
\[\Upsilon_{\RW}^{\F} : \grEWmon{}{}^{\BS} (\fr{h}_{\F}, W, \fr{o}) \to \PEE{}{}^{\BS} (G, \F, \fr{o}).\]

\subsection{Monodromic Riche--Williamson Theorem} 

\begin{theorem}\label{thm:mon_RW_thm}
    Let $\k \in \{\K, \O, \F\}$ and $\fr{o} \subseteq \Ch^{\mu} (T, \k)$ be a $W$-orbit. The 2-functor
    \[\Upsilon_{\RW}^{\k} : \grEWmon{}{}^{\BS} (\fr{h}_{\k}, W, \fr{o}) \to \PEE{}{}^{\BS} (G, \k, \fr{o})\]
    is an equivalence of 2-categories.
\end{theorem}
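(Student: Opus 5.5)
We follow the strategy of \cite[Theorem 10.5]{RW}. Essential surjectivity is immediate: by definition every object of $\PEE{\scrL}{\scrL'}^{\BS} (G, \k)$ has the form $\scrE_{\uw}^{\scrL}[n] = \Upsilon_{\RW}^{\k}(B_{\uw}^{\scrL}(n))$. So the content of the theorem is that for all $\ux, \uy \in \Exp(W)$ with $\scrL\ux = \scrL\uy = \scrL'$ the map
\[\Hom^{\bullet}_{\grEWmon{}{}^{\BS}}(B_{\ux}^{\scrL}, B_{\uy}^{\scrL}) \to \Hom^{\bullet}(\scrE_{\ux}^{\scrL}, \scrE_{\uy}^{\scrL})\]
induced by $\Upsilon_{\RW}^{\k}$ is an isomorphism of graded $\k$-modules. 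First we record that both sides are graded free of the same finite graded rank. On the diagrammatic side this follows from Theorem \ref{thm:dll_is_a_basis} together with Corollary \ref{cor:soergel_hom_defects}, which give graded rank $\grk_R \cdot \grk R$ with $\grk_R = \sum_{w} p_{\ux}^{w,\scrL}(v^{-1})p_{\uy}^{w,\scrL}(v^{-1})$. On the geometric side we use the monodromic Soergel Hom formula \cite[Proposition 3.8.13]{Sandvik} and the freeness statement \cite[Lemma 3.6.3]{Sandvik}, and we check that the two polynomials agree through the monodromic Hecke algebroid, i.e. the standard-form pairing $\langle \uH_{\ux}^{\scrL}, \uH_{\uy}^{\scrL}\rangle$.

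We first treat $\k = \K$. The verification of the relations already gives $\H \circ \Upsilon_{\RW}^{\K} = \Upsilon_{\Abe}$ on $\grEWmon{}{}^{\BS}(\fr{h}_{\K}, W, \fr{o})$. By Theorem \ref{thm:mon_Abe}, $\Upsilon_{\Abe}$ is locally fully faithful. By Theorem \ref{thm:H_functor_properties}, $\H$ is locally fully faithful on parity sheaves. Hence $\Upsilon_{\RW}^{\K}$ is locally fully faithful, and so it is an equivalence.

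Next we treat $\k = \O$. Extension of scalars commutes with $\Upsilon$, giving a commutative square
\[\K \otimes_{\O} \Hom_{\mathrm{diag},\O} \to \Hom_{\mathrm{diag},\K}, \qquad \K\otimes_{\O}\Hom_{\mathrm{geom},\O} \to \Hom_{\mathrm{geom},\K},\]
and both horizontal maps are isomorphisms because both sides are free ($\O$-freeness of double leaves and \cite[Lemma 3.6.5]{Sandvik}). Therefore the $\O$-map between free $\O$-modules of equal finite rank in each degree becomes an isomorphism after $\K\otimes_{\O}(-)$; in particular it is injective.

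The main obstacle is surjectivity over $\O$, equivalently the isomorphism over $\F$. Since the ranks match, it suffices to prove that $\F \otimes_{\O}(-)$ of the map is injective, or equivalently surjective. The $\O$-map is then an isomorphism by Nakayama, and the $\F$ case follows, since $\Upsilon_{\RW}^{\F}$ is the base change of $\Upsilon_{\RW}^{\O}$ and both Hom spaces commute with $\F\otimes_{\O}(-)$.

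To prove surjectivity modulo $\ell$, we argue as in \cite{RW} with a geometric light-leaves argument, inducting on $\ell(\uy)$. First, the biadjunctions of Lemma \ref{lem:BS_tensor_adjunction} are matched by $\Upsilon$ through the Frobenius structures and through the cup/cap isomorphisms of $\IC_{\min}$. This lets us reduce to $\Hom(\scrE_{\ux}^{\scrL}, \theta^{\scrL}_{\beta}[n])$, or more generally to morphisms into $\scrE_{\uw}^{\scrL}$ with $\uw$ reduced. We then filter both sides by the closed subsets $\{\le w\}$. On the geometric side the associated graded pieces are $\Hom(j_w^*\scrE_{\ux}^{\scrL}, j_w^!\scrE_{\uy}^{\scrL})$, which are free with bases indexed by pairs of monodromic subexpressions (stalks and costalks of parity complexes, as in \cite[Theorem 3.4.1]{Sandvik}). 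On the diagrammatic side the associated graded pieces are spanned by double leaves through $w$, by Proposition \ref{prop:dmhc_is_cellular}.

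The key check is that the image of each light leaf $\LL_{\ux,\ue}^{\scrL}$, restricted to the stratum $\eFl_w$, is a basis element up to a unit. We expect to verify this stepwise along the light-leaf algorithm: each step $\phi_i$ (the dots, the trivalent vertices, $\mu_s$, $\cap_s$, and the $f_{s,t}$, the latter being isomorphisms over $\eFl_w$ by Lemma \ref{lem:behavior_of_2m_valent_verts_geom}) induces the expected map on stalks. Here the monodromic steps with $\scrL s\neq\scrL$ act as isomorphisms of minimal IC sheaves.
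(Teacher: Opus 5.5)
You have several steps right: essential surjectivity, equality of graded ranks, injectivity over $\O$, and the Nakayama reduction to surjectivity modulo $\ell$. Your treatment of $\k=\K$ is also a valid shortcut compared with the paper, which deduces $\K$ from $\F$ via $\O$. The verification of relations gives $\H\circ\Upsilon_{\RW}^{\K}=\Upsilon_{\Abe}$, and both $\H$ and $\Upsilon_{\Abe}$ are locally fully faithful (Theorems~\ref{thm:H_functor_properties} and~\ref{thm:mon_Abe}), so $\Upsilon_{\RW}^{\K}$ is as well.

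The gap is that, after these reductions, the entire content of the theorem over $\O$ and $\F$ is your ``key check'', and you do not prove it. You need the restrictions $j_w^*\Upsilon_{\RW}^{\F}(\LL_{\ux,\ue}^{\scrL})$, for $\ue\in\Subexp^{\scrL}(\ux,w)$, to form an $R_{\F}$-basis of $\Hom^{\bullet}(j_w^*\scrE_{\ux}^{\scrL},j_w^*\scrE_{\uw}^{\scrL})$. Saying ``we expect to verify this stepwise'' is not an argument, and the statement is not routine, for two reasons.
\begin{itemize}
\item As phrased (``a basis element up to a unit'') it is not well-posed. The stalk $j_w^*\scrE_{\ux}^{\scrL}$ carries no canonical basis indexed by subexpressions, only a filtration. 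What you actually need is unitriangularity with respect to an order on $\Subexp^{\scrL}(\ux,w)$, i.e.\ a geometric, mod-$\ell$ analogue of Proposition~\ref{prop:path_dominance_upper_triangularity}. The paper proves that proposition only algebraically, by localizing to $Q$.
\item Restriction to a stratum does not commute with convolution. For instance, suppose $\scrL w_{k-1}s_k=\scrL w_{k-1}$ and $w_{k-1}s_k<w_{k-1}$. Then the stalk of $\scrE_{\uw_{k-1}}^{\scrL}\star\scrE_{s_k}^{\scrL w_{k-1}}$ at $w_{k-1}s_k$ receives a contribution from the stalk of $\scrE_{\uw_{k-1}}^{\scrL}$ on the non-open stratum $\eFl_{w_{k-1}s_k}$ of its support. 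So you must control light leaves, and in particular the rex moves $f_{s,t}$, on lower strata. Lemma~\ref{lem:behavior_of_2m_valent_verts_geom} only tells you $f_{s,t}$ is an isomorphism over the open stratum.
\end{itemize}

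The paper never analyzes light leaves on stalks. It reduces to $\F$ as you do, and via the graded Nakayama lemma over $R_{\k}$ it reduces to the right-equivariant functor $\tilde{\Upsilon}_{\RW}^{\F}$. It then passes to the mixed derived categories $K^b$ and applies Beilinson's lemma:
\begin{itemize}
\item Lemma~\ref{lem:Psi_on_stds} and its right-equivariant version show that standards go to standards and costandards to costandards. This comes from the rank-one triangles and Proposition~\ref{prop:mixed_conv_rules}.
\item On both sides, $\Hom(\Delta_x^{\scrL},\nabla_y^{\scrL}(m)[n])$ is $\F$ when $x=y$ and $m=n=0$, and zero otherwise (Lemma~\ref{lem:hom_vanishing_for_stds_and_costds} and its geometric counterpart).
\item It then only remains to exhibit, for each $w$, one morphism $\Delta_w^{\scrL}\to\nabla_w^{\scrL}$ with nonzero image. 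The chain map equal to $\id_{B_{\uw}^{\scrL}}$ in degree $0$ does this.
\end{itemize}
If you want to keep your cellular route, you must give a genuine proof of unitriangularity of the restricted light leaves on all relevant strata. As written, your proposal proves the theorem only for $\k=\K$, plus injectivity over $\O$.
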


\begin{remark}
    We expect that it should be possible to construct $\Upsilon_{\textnormal{RW}}^{\k}$ when $G$ is a Kac--Moody group which extends the neutral block equivalence from \cite[Theorem 5.3.1]{Sandvik}.
    The main problem is that $\IC_s^{\scrL}$ for $\scrL s \neq \scrL$ has no canonical representative and the Whittaker model is too large to rigidify these choices.
\end{remark}

\begin{remark}
    Theorem \ref{thm:diag_endoscopy} and Theorem \ref{thm:mon_RW_thm} can be combined to give another proof of the monodromic-endoscopic equivalence for the geometric Hecke categories of parity sheaves (\cite[Theorem 5.11.1]{Sandvik}).
\end{remark}

\begin{remark}
  \begin{enumerate}
    \item While Theorem \ref{thm:mon_RW_thm} is only stated for $\k \in \{\K, \O, \F\}$, it is easy to deduce a version for $\k = \overline{\Q}_{\ell}$ or $\k = \overline{\F}_{\ell}$.
  In these cases, the torsion constraint on $\scrL \in \fr{o}$ ensures that we can find some $\ell$-modular triple $(\K, \O, \F)$ such that $\scrL$ (as well as any member of its $W$-orbit) descends to an $\O$-local system along either the morphism $\O \to \K \to \overline{\Q}_{\ell}$ or $\O \to \F \to \overline{\F}_{\ell}$.
  From here, one can apply extension-of-scalars to deduce the $\overline{\Q}_{\ell}$- and $\overline{\F}_{\ell}$-variants from Theorem \ref{thm:mon_RW_thm}.
    \item Instead of taking $\fr{o}$ to be a $W$-orbit in $\Ch^{\mu} (T, \k)$, we could instead take $\fr{o} = \Ch^{\mu} (T, \k)$. In this generality, Theorem \ref{thm:mon_RW_thm} still holds.
    This can be easily deduced from the decomposition of $\grEWmon{}{}^{\BS} (\fr{h}_{\k}, W, \Ch^{\mu} (T, \k))$ and $\PEE{}{}^{\BS} (G, \k, \Ch^{\mu} (T, \k))$ into sub-2-categories,
    \[\grEWmon{}{}^{\BS} (\fr{h}_{\k}, W, \Ch^{\mu} (T, \k)) \cong \bigsqcup_{W \text{-orbits } \fr{o} \subseteq \Ch^{\mu} (T, \k)} \grEWmon{}{}^{\BS} (\fr{h}_{\k}, W, \fr{o}),\]
    \[\PEE{}{}^{\BS} (G, \k, \Ch^{\mu} (T, \k)) \cong \bigsqcup_{W \text{-orbits } \fr{o} \subseteq \Ch^{\mu} (T, \k)} \PEE{}{}^{\BS} (G, \k, \fr{o}).\]
\end{enumerate}  
\end{remark}

We will prove Theorem \ref{thm:mon_RW_thm} using the same strategy invoked in \cite[Theorem 5.3.1]{Sandvik}. 
It is clear from its definition that $\Upsilon_{\RW}^{\k}$ is a bijection on 0- and 1-morphisms.
Therefore, we only need to show that the induced functor on morphism categories
\[\Upsilon_{\RW}^{\k} : \grEWmon{\scrL}{\scrL'}^{\BS} (\fr{h}_{\k}, W, \fr{o}) \to \PEE{\scrL}{\scrL'}^{\BS} (G, \k)\]
is fully faithful. 

Define a category $\grEWmonME{\scrL}{\scrL'}^{\BS} (\fr{h}_{\k}, W, \fr{o})$ obtained from $\grEWmon{\scrL}{\scrL'}^{\BS} (\fr{h}_{\k}, W, \fr{o})$ by applying $\k \otimes_{R_{\k}} (-)$ to the graded Hom spaces.
This category is well-defined by Theorem \ref{thm:dll_is_a_basis}.
By Theorem \ref{thm:dll_is_a_basis} and \cite[Lemma 3.6.3]{Sandvik}, the functor $\Upsilon_{\RW}^{\k}$ induces a functor
\[\tilde{\Upsilon}_{\RW}^{\k} : \grEWmonME{\scrL}{\scrL'}^{\BS} (\fr{h}_{\k}, W, \fr{o}) \to \PME{\scrL}{\scrL'}^{\BS} (G, \k).\]
Along the way towards proving Theorem \ref{thm:mon_RW_thm}, we will prove the following result.

\begin{proposition}\label{prop:L_mon_RW_thm}
    Let $\k \in \{\K, \O, \F\}$ and $\fr{o} \subseteq \Ch^{\mu} (T, \k)$ be a $W$-orbit. For all $\scrL, \scrL' \in \fr{o}$, the functor
    \[\tilde{\Upsilon}_{\RW}^{\k} : \grEWmonME{\scrL}{\scrL'}^{\BS} (\fr{h}_{\k}, W, \fr{o}) \to \PME{\scrL}{\scrL'}^{\BS} (G, \k)\]
    is an equivalence of categories.
\end{proposition}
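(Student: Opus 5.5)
We follow the strategy of \cite[Theorem 5.3.1]{Sandvik}, itself modelled on \cite[\S 10]{RW}. The functor $\tilde{\Upsilon}_{\RW}^{\k}$ is a bijection on objects, so the content is full faithfulness. First we compare graded ranks of Hom spaces. On the diagrammatic side, Theorem \ref{thm:dll_is_a_basis} shows that $\Hom_{\grEWmonME{\scrL}{\scrL'}^{\BS}}(B_{\ux}^{\scrL}, B_{\uy}^{\scrL}(n))$ is a free $\k$-module. It has a basis given by the images $\k \otimes_{R_{\k}} \dLL_{\ue,\uf}^{w,\scrL}$ of the double leaves of the appropriate degree, and its graded rank is $\sum_{\ux^{\ue}=\uy^{\uf}} v^{d(\ux,\ue)+d(\uy,\uf)}$. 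On the geometric side, the monodromic Soergel Hom formula \cite[Proposition 3.8.13]{Sandvik} together with \cite[Lemma 3.6.5]{Sandvik} shows that $\Hom_{\DME{\scrL}{\scrL'}}(\scrE_{\ux}^{\scrL}, \scrE_{\uy}^{\scrL}[n])$ is free over $\k$ with graded rank $\langle \uH_{\ux}^{\scrL}, \uH_{\uy}^{\scrL}\rangle$ evaluated appropriately. By Corollary \ref{cor:soergel_hom_defects}, both ranks agree. It therefore suffices to prove that $\tilde{\Upsilon}_{\RW}^{\k}$ is injective on Hom spaces for $\k$ a field, or, for $\k=\O$, that it is injective after reduction to $\F$.

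Next we reduce to $\k=\K$. Both sides are compatible with extension of scalars: the diagrammatic side by the double leaves basis, the geometric side by \cite[Lemma 3.6.3]{Sandvik}. Moreover $\Upsilon_{\RW}^{\F}$ and $\Upsilon_{\RW}^{\K}$ were defined as base changes of $\Upsilon_{\RW}^{\O}$. A map of free $\O$-modules of equal finite rank is an isomorphism if and only if its reduction mod the maximal ideal is, so the case $\k=\O$ follows from $\k=\F$. For $\k=\K$, we argue as follows. Combining the commutative triangle $\H\circ\Upsilon_{\RW}^{\K}=\Upsilon_{\Abe}$ verified above with Theorem \ref{thm:mon_Abe} and Theorem \ref{thm:H_functor_properties} shows that $\Upsilon_{\RW}^{\K}$ is locally fully faithful. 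Applying $\K\otimes_{R_{\K}}(-)$ and diagram (\ref{eq:re_H_functor_properties_1}) of Corollary \ref{cor:re_H_functor_properties} then gives full faithfulness of $\tilde{\Upsilon}_{\RW}^{\K}$.

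The main obstacle is the case $\k=\F$, where no Soergel functor is available. Here we plan to use a triangularity argument like that of \cite{RW}, phrased through the cellular structure. The right-equivariant diagrammatic category is an OACC (Proposition \ref{prop:dmhc_is_cellular}). We will show that $\tilde{\Upsilon}_{\RW}^{\F}$ carries the ideal of maps factoring through $\{<w\}$ into the corresponding geometric ideal of maps factoring through objects supported on $\eFl_{<w}$. We will then check that for each $w$ it induces an isomorphism on the associated graded pieces, namely on the $\F$-spaces spanned by images of light leaves $\LL_{\ux,\ue}^{\scrL}$ into $B_{\uw}^{\scrL}$ modulo lower terms. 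Geometrically, this amounts to showing that $j_w^*$ of the image of $\overline{\LL}_{\ux,\ue}$, composed with the generator of $\Hom(j_w^*\scrE_{\uw}^{\scrL}, \scrK_w^{\scrL}[\ell_{\scrL}(w)])$, yields a basis of the costalk. We intend to prove this by induction along the light leaf algorithm. At each step the image of a generating morphism ($\epsilon_s$, $\mu_s$, cups/caps, or $f_{s,t}$) restricts on the open stratum to a nonzero multiple of the identity, using Lemma \ref{lem:behavior_of_2m_valent_verts_geom} for the $2m_{s,t}$-valent vertices. This then gives a triangular matrix with unit diagonal, following \cite[\S 10]{RW}.

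As an alternative route, one could avoid the costalk computation. Over $\O$, the image of the double leaves basis is a set of elements of a free $\O$-module of the correct rank whose base change to $\K$ is a basis. It would then suffice to show that its determinant is a unit. The triangularity argument above is exactly what controls that determinant, so we expect this step to carry the real work. Once Proposition \ref{prop:L_mon_RW_thm} is established, Theorem \ref{thm:mon_RW_thm} follows by the standard graded Nakayama-type argument of \cite[Lemma 3.6.5]{Sandvik}, lifting the equivalence from $\k\otimes_{R_\k}(-)$-quotients to the biequivariant categories.
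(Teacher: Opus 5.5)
Your reductions are sound. The rank comparison is fine, and the Nakayama reduction of $\O$ to $\F$ is exactly the paper's argument. Your direct treatment of $\K$ through $\H\circ\Upsilon_{\RW}^{\K}=\Upsilon_{\Abe}$, Theorems \ref{thm:mon_Abe} and \ref{thm:H_functor_properties}, and diagram (\ref{eq:re_H_functor_properties_1}) is a valid alternative to the paper's deduction of $\K$ from $\O$ by base change.

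The gap is the case $\k=\F$, which carries all the content and which you only announce. The mechanism you propose does not yield triangularity, because restriction to a stratum does not commute with convolution: $j_w^*(\scrF\star\scrE_s^{\scrL'})$ is built from both $j_w^*\scrF$ and $j_{ws}^*\scrF$.
\begin{itemize}
\item In a light leaf step decorated $D0$ or $D1$ (so $w_is<w_i$), the restriction of $\phi_{i+1}\circ(\LL_i\star\id)$ to the stratum of $w_{i+1}$ depends on $j_{w_is}^*$ of the image of $\LL_i$. That is its behaviour on a non-open stratum of the support of $\scrE_{\uw_i}^{\scrL}$, which an induction hypothesis about the open stratum does not control.
\item Likewise, separating different subexpressions $\ue$ with the same evaluation requires information away from the top stratum.
\end{itemize}
So knowing that each generator (including the $f_{s,t}$ of Lemma \ref{lem:behavior_of_2m_valent_verts_geom}) restricts on the open stratum to a nonzero multiple of the identity does not give a unitriangular matrix. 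What you would actually need is a geometric counterpart of Lemma \ref{lem:ll_are_lin_indep} or Proposition \ref{prop:path_dominance_upper_triangularity}: a subexpression-indexed description of the stalks of $\scrE_{\ux}^{\scrL}$ together with control of all off-diagonal terms. Over $\F$, with only right equivariance, the localization tools used on the algebraic side are unavailable. Your determinant variant over $\O$ reduces to the same missing computation.

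The paper avoids computing light leaves on strata altogether. It passes to the mixed derived categories ($K^b$ of both sides). There the OACC formalism of Appendix \ref{apdx:mdc} and \cite[\S 3.9]{Sandvik} supply standard and costandard objects, and on both sides $\Hom(\Delta_x,\nabla_y(m)[n])$ is $\F$ when $x=y$ and $m=n=0$, and zero otherwise. Lemma \ref{lem:Psi_on_stds} shows the induced functor sends $\Delta_w\mapsto\uDelta_w$ and $\nabla_w\mapsto\unabla_w$. By Proposition \ref{prop:mixed_conv_rules} this reduces to $w=s$, where the triangle $\Delta_s\to B_s\to\Delta_e(1)$ maps to the open--closed triangle. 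Beilinson's lemma then reduces full faithfulness to exhibiting, for each $w$, a single morphism $\Delta_w\to\nabla_w$ with nonzero image. The chain map that is $\id_{B_{\uw}}$ in degree $0$ does this, between the models of $\Delta_w$ and $\nabla_w$ with $B_{\uw}$ in degree $0$ and vanishing in negative (resp.\ positive) degrees. This is the idea missing from your treatment of $\k=\F$.
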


\subsubsection{Reduction to Fields}

We will prove that $\Upsilon_{\RW}^{\k}$ and $\tilde{\Upsilon}_{\RW}^{\k}$ are equivalences assuming the following special case.

\begin{claim}\label{claim:field_case}
     $\tilde{\Upsilon}_{\RW}^{\F}$ is an equivalence.
\end{claim}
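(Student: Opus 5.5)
Since $\tilde{\Upsilon}_{\RW}^{\F}$ is bijective on objects, the claim amounts to full faithfulness: for expressions $\ux,\uy$ with $\scrL\ux=\scrL'=\scrL\uy$, the induced map
\[\F\otimes_{R_{\F}}\Hom^{\bullet}_{\EWmon{\scrL}{\scrL'}^{\BS}}(B_{\ux}^{\scrL},B_{\uy}^{\scrL})\to \Hom^{\bullet}_{\DME{\scrL}{\scrL'}(G,\F)}(\scrE_{\ux}^{\scrL},\scrE_{\uy}^{\scrL})\]
should be an isomorphism. I would proceed as for \cite[Theorem 5.3.1]{Sandvik}, using the mixed/cellular formalism of \S\ref{subsec:mixed_diagrammatic_cat} on the diagrammatic side. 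First, compare graded dimensions. By Theorem \ref{thm:dll_is_a_basis}, the left side has an $\F$-basis given by the images of the double leaves, so its graded dimension is $\sum_{\ux^{\ue}=\uy^{\uf}} v^{d(\ux,\ue)+d(\uy,\uf)}$ (Corollary \ref{cor:soergel_hom_defects}). The right side is computed by the geometric Soergel Hom formula \cite[Proposition 3.8.13, Lemma 3.6.5]{Sandvik}, which gives the same standard-form pairing of $\uH_{\ux}^{\scrL}$ and $\uH_{\uy}^{\scrL}$ in $\scrH^{\mon}$. Both spaces are therefore finite-dimensional of equal graded dimension, and it is enough to prove surjectivity, or equivalently injectivity.

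To prove surjectivity, I would use adjunction to reduce to $\ux=\emptyset$. On the diagrammatic side this is Lemma \ref{lem:diag_block_props} together with the biadjointness of $B_s^{\scrL}$. On the geometric side it is the self-adjointness of $(-)\star\scrE_s^{\scrL}$, and $\Upsilon_{\RW}^{\F}$ intertwines the two adjunctions because cups and caps are sent to the unit and counit. After this reduction, I would filter $\scrE_{\uy}^{\scrL}$ by the Bruhat stratification. The stalks $j_w^*\scrE_{\uy}^{\scrL}$ are free, with ranks given by $p_{\uy}^{w,\scrL}$. Parity vanishing makes the associated graded of $\Hom(\theta_\beta^{\scrL},\scrE_{\uy}^{\scrL})$ split, with pieces $\Hom(\nabla\text{-type stalk},\, j_w^!\scrE_{\uy}^{\scrL})$. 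I would then show that the images of the upside-down light leaves $\overline{\LL}_{\uy,\uf}^{\scrL}$ with $\uy^{\uf}=w$ restrict, modulo lower strata, to a basis of the $w$-piece. This is the geometric counterpart of Proposition \ref{prop:intermediate_bases}. The proof should go by induction on $\ell(\uy)$, checking one generator at a time how a light-leaf step acts on the costalk at $w$. For $\scrL s=\scrL$ this is the non-monodromic computation of \cite{RW} applied to $\scrE_s^{\scrL}$. For $\scrL s\neq\scrL$ the step is the invertible map $\cup_s$ or $\cap_s$, and costalks are simply translated by $s$.

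I expect the main obstacle to be the rex-move steps, meaning the images $f_{s,t}$ of the $2m$-valent vertices. On the open stratum, Lemma \ref{lem:behavior_of_2m_valent_verts_geom} says that $f_{s,t}$ restricts to an isomorphism on $j_w^*$, and this is all the leading-term argument needs. Lemma \ref{lem:cst_constants_inverses_soergel} ensures the normalising constant is a unit in $\O$, so this survives reduction to $\F$. Lower-order terms only change things within the Bruhat filtration and do not affect the triangularity. Once surjectivity holds on each graded piece, the dimension count gives full faithfulness, and hence Claim \ref{claim:field_case}.
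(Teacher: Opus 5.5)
\paragraph{Verdict.} Your route differs from the paper's, and it has a gap at the one step that carries all the content.

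\paragraph{What works.} You match graded dimensions using Corollary \ref{cor:soergel_hom_defects} together with the geometric Hom formula, and then aim for surjectivity via a geometric light-leaves basis. Several pieces of this are sound: the adjunction reduction to $\ux=\emptyset$, the dimension count, and your treatment of rex moves (isomorphisms on the top stratum, with $c_{t,s}\in\O^{\times}$ by Lemma \ref{lem:cst_constants_inverses_soergel}).

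\paragraph{The gap.} After reducing to $\ux=\emptyset$, the source is $\scrE^{\scrL}_{\emptyset}=\IC^{\scrL}_e=\Delta^{\scrL}_e$; the clean $\theta^{\scrL}_{\beta}$ you wrote behaves the same way. Hence $\Hom^{\bullet}(\scrE^{\scrL}_{\emptyset},\scrE^{\scrL}_{\uy})$ is the single costalk $H^{\bullet}(j_e^{!}\scrE^{\scrL}_{\uy})$. It has no Bruhat filtration with a piece for each $w$, and only the $\overline{\LL}_{\uy,\uf}$ with $\uy^{\uf}=e$ occur.

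But $\overline{\LL}_{\uy,\uf}$ is built from $\overline{\LL}_{\uy',\uf'}$, whose source is $\scrE_{\uw'}$ for an arbitrary $w'={\uy'}^{\uf'}$. So an induction on $\ell(\uy)$ only closes if you prove the following for every $w$: the maps $\overline{\LL}_{\uy,\uf}\circ c_w$ with $\uy^{\uf}=w$ form a basis of $\Hom^{\bullet}(\Delta^{\scrL}_w,\scrE^{\scrL}_{\uy})$, where $c_w\colon\Delta^{\scrL}_w\to\scrE^{\scrL}_{\uw}$ is the canonical map.

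The inductive step would proceed as follows.
\begin{itemize}
\item Rewrite $\Hom(\Delta_w,\scrE_{\uy'}\star\scrE_s)\cong\Hom(\Delta_w\star\scrE_s,\scrE_{\uy'})$.
\item When $\scrL ws=\scrL w$, use the triangles relating $\Delta_w\star\scrE_s$ to shifts of $\Delta_w$ and $\Delta_{ws}$; these differ according to whether $ws>w$ or $ws<w$.
\item Split these triangles by parity.
\item Show that the light leaves with last decoration $U0$, $U1$, $D0$, $D1$ are triangular with invertible diagonal with respect to that splitting.
\end{itemize}
Unlike rex moves, the dot, trivalent and cap steps are not isomorphisms on the top stratum, so their contribution to each piece has to be computed. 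The phrases ``one generator at a time'' and an appeal to the non-monodromic case in \cite{RW} do not carry out this computation in the right-equivariant monodromic setting. As written, the proposal restates the crux rather than proving it.

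\paragraph{How the paper avoids this.} The paper passes to the mixed derived categories $\DDME{\scrL}{\scrL'}(\fr{h}_{\F},W,\fr{o})$ and $\DGME{\scrL}{\scrL'}(G,\F)$. It shows that $\underline{\tilde{\Upsilon}}^{\F}_{\RW}(\Delta_w)\cong\uDelta_w$ and $\underline{\tilde{\Upsilon}}^{\F}_{\RW}(\nabla_w)\cong\unabla_w$ (Lemma \ref{lem:Psi_on_stds}); via Proposition \ref{prop:mixed_conv_rules} this reduces to a simple reflection, where it is the open--closed triangle.

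On both sides, $\Hom(\Delta_x,\nabla_y(m)[n])$ is $\F$ when $x=y$ and $m=n=0$, and zero otherwise (Lemma \ref{lem:hom_vanishing_for_stds_and_costds} and \cite[Lemma 2.3.4]{Sandvik}). Beilinson's lemma \cite[Lemma 3.9.3]{ABG} then reduces full faithfulness to exhibiting, for each $w$, one map $\Delta_w\to\nabla_w$ with nonzero image. The identity of $B_{\uw}$ in cohomological degree $0$ works.

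Full faithfulness on the mixed categories then restricts to the Bott--Samelson objects. This needs no dimension count and no stalkwise analysis of light leaves. Your route, if completed, would yield more (a geometric cellular basis), but it is the substantially harder argument, and the hard part is exactly what is missing.
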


We delay the proof of Claim \ref{claim:field_case} until \S\ref{subsubsec:field_case_soergel}.

\begin{midsecproof}{Proposition \ref{prop:L_mon_RW_thm} assuming Claim \ref{claim:field_case}}
  Let $\scrL, \scrL' \in \fr{o}$ and $\ux, \uy \in \Exp (W)$ such that $\scrL \ux = \scrL' = \scrL \uy$.
  Consider the morphism of free $\O$-modules of finite rank
  \[\alpha : \Hom_{\grEWmonME{\scrL}{\scrL'}^{\BS} (\fr{h}_{\O}, W, \fr{o})} (B_{\ux}^{\scrL}, B_{\uy}^{\scrL} (n)) \to  \Hom_{\PME{\scrL}{\scrL'}^{\BS} (G, \O)} (\scrE_{\ux}^{\scrL}, \scrE_{\uy}^{\scrL} [n])\]
  given by $\tilde{\Upsilon}_{\RW}^{\O}$.
  By the Nakayama lemma, $\alpha$ is an isomorphism if and only if $\F \otimes_{\O} \alpha$ is an isomorphism.
  Note that by definition, $\F \otimes_{\O} \alpha$ is induced by $\tilde{\Upsilon}_{\RW}^{\F}$. We then have that $\F \otimes_{\O} \alpha$ is an isomorphism by Claim \ref{claim:field_case}.
  Finally, we deduce the full faithfulness of $\tilde{\Upsilon}_{\RW}^{\K}$ from $\tilde{\Upsilon}_{\RW}^{\O}$ by extension-of-scalars.
\end{midsecproof}

\begin{midsecproof}{Theorem \ref{thm:mon_RW_thm} assuming Proposition \ref{prop:L_mon_RW_thm}}
  Let $\scrL, \scrL' \in \fr{o}$ and $\ux, \uy \in \Exp (W)$ such that $\scrL \ux = \scrL' = \scrL \uy$.
  Consider the morphism of graded left $R_{\k}$-modules
  \[\beta : \Hom_{\grEWmon{\scrL}{\scrL'}^{\BS} (\fr{h}_{\k}, W, \fr{o})}^{\bullet} (B_{\ux}^{\scrL}, B_{\uy}^{\scrL}) \to  \Hom_{\PEE{\scrL}{\scrL}^{\BS} (G, \k)}^\bullet (\scrE_{\ux}^{\scrL}, \scrE_{\uy}^{\scrL})\]
  given by $\Upsilon_{\RW}^{\k}$.
  As graded left $R_{\k}$-modules, the left-hand side of $\beta$ is free of finite rank by Theorem \ref{thm:dll_is_a_basis} and the right-hand side of $\beta$ is free of finite rank by \cite[Lemma 3.6.3]{Sandvik}.
  By the graded Nakayama lemma, to show that $\beta$ is an isomorphism it suffices to show that $\k \otimes_{R_{\k}} \beta$ is an isomorphism.
  Note that $\k \otimes_{R_{\k}} \beta$ is the map induced from $\tilde{\Upsilon}_{\RW}^{\k}$.
  It follows from Proposition \ref{prop:L_mon_RW_thm} that $\k \otimes_{R_{\k}} \beta$ is an isomorphism.
\end{midsecproof}

\subsubsection{Mixed Derived Categories}\label{subsubsec:field_case_soergel}

Our goal is to prove Claim \ref{claim:field_case}. This will be proved by first passing to mixed derived categories. 
Let $\scrL, \scrL' \in \fr{o}$.
We consider the geometrically-defined biequivariant and right equivariant mixed derived categories,
\[\DGEE{\scrL}{\scrL'} (G, \F) \coloneq K^b \PEE{\scrL}{\scrL'} (G, \F) \qquad\text{and}\qquad \DGME{\scrL}{\scrL'} (G, \F) \coloneq K^b \PME{\scrL}{\scrL'} (G, \F).\]
These categories were introduced in \cite[\S 3.9]{Sandvik}. 
They share many similar features to the mixed derived category of \S\ref{subsec:mixed_diagrammatic_cat}.
In particular, there are standard objects $\underline{\Delta}_w^{\scrL}$ and costandard objects $\underline{\nabla}_w^{\scrL}$ defined for any $w \in \W{\scrL}{\scrL'}$.

The geometrically-defined biequivariant mixed derived categories can be assembled into a 2-category $\DGEE{}{} (G, \F, \fr{o})$ with object set $\fr{o}$ and morphism categories from $\scrL$ to $\scrL'$ given by $\DGEE{\scrL}{\scrL'} (G, \F)$.
The 2-functor  $\Upsilon_{\RW}^{\F} : \grEWmon{}{} (\fr{h}_{\F}, W, \fr{o}) \to \PEE{}{} (G, \F, \fr{o})$ induces a 2-functor
\[ \underline{\Upsilon}_{\RW}^{\F} :  \DDBE{}{} (\fr{h}_{\F}, W, \fr{o}) \to \DGEE{}{} (G, \F, \fr{o}).\]

\begin{lemma}\label{lem:Psi_on_stds}
  For all $w \in W$ and $\scrL \in \fr{o}$, there are isomorphisms
  \[\underline{\Upsilon}_{\RW}^{\F} (\Delta_w^{\scrL}) \cong \uDelta_w^{\scrL} \qquad \text{and} \qquad \underline{\Upsilon}_{\RW}^{\F} (\nabla_w^{\scrL}) \cong \unabla_w^{\scrL}.  \]
\end{lemma}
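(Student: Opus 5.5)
The plan is to reduce to simple reflections and then propagate along reduced expressions using convolution.

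On the diagrammatic side, Proposition \ref{prop:mixed_conv_rules} (1) gives $\Delta_w^{\scrL} \cong \Delta_{s_1}^{\scrL} \star \cdots \star \Delta_{s_k}^{\scrL s_1\cdots s_{k-1}}$ for any reduced expression $(s_1,\ldots,s_k)$ of $w$, and similarly for costandards. The geometric side has the analogous convolution formula for $\uDelta_w^{\scrL}$ and $\unabla_w^{\scrL}$ in $\DGEE{}{}(G,\F,\fr{o})$, established in \cite[\S 3.9]{Sandvik}. Since $\underline{\Upsilon}_{\RW}^{\F}$ is a 2-functor, it commutes with $\star$. It therefore suffices to treat $w = e$ and $w = s \in S$. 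The costandard case then follows either by the same argument with $\eta$ in place of $\epsilon$, or by duality, since $\underline{\Upsilon}_{\RW}^{\F}$ intertwines horizontal reflection with Verdier duality on generators.

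For $w=e$, Example \ref{ex:stds_and_costds_for_simples} (1) gives $\Delta_e^{\scrL}\cong B_\emptyset^{\scrL}$, which maps to $\scrE_\emptyset^{\scrL} = \IC_e^{\scrL}$. This equals $\uDelta_e^{\scrL}$ because $\{e\}$ is closed in $\W{\scrL}{\scrL}$.

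For $s$ with $\scrL s \neq \scrL$, Example \ref{ex:stds_and_costds_for_simples} (3) gives $\Delta_s^{\scrL}\cong B_s^{\scrL}$. This maps to $\scrE_s^{\scrL} = \theta_\beta^{\scrL}$, a minimal IC sheaf supported on the closed stratum $\eFl_s$ of its block. So it coincides with both $\uDelta_s^{\scrL}$ and $\unabla_s^{\scrL}$.

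For $s$ with $\scrL s = \scrL$, Example \ref{ex:stds_and_costds_for_simples} (2) realizes $\Delta_s^{\scrL}$ as the two-term complex $B_s^{\scrL} \xrightarrow{\epsilon^s} B_\emptyset^{\scrL}(1)$, in degrees $0$ and $1$. Its image is the complex $\scrE_s^{\scrL} \xrightarrow{\epsilon_s} \IC_e^{\scrL}[1]$ in $K^b\PEE{\scrL}{\scrL}(G,\F)$. I must identify this complex with $\uDelta_s^{\scrL}$. I expect this to be the main obstacle, because it requires unwinding how \cite{Sandvik} defines mixed standard objects, namely via the recollement $j_{s!}$ on $K^b$ of parity sheaves, in the manner of \cite{AR}.

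My approach is to reproduce the non-monodromic argument (\cite[Lemma 4.2.4]{AMRW1} and its analogue in \cite{RW}) inside the rank-one Levi $L_s$. Via the canonical identification $\scrE_s^{\scrL} = \scrL^s[1]$ on $\eFl_{\leq s}\cong U_s\backslash L_s$, the object $\uDelta_s^{\scrL}$ is characterized as the unique object of $\DGEE{\scrL}{\scrL}(G,\F)$ whose restriction to $\eFl_s$ is $\scrK_s^{\scrL}[1]$ and whose $*$-restriction to $\eFl_e$ vanishes. The cone of $\epsilon_s$ (adjunction $\id\to j_{e*}j_e^*$) restricts on $\eFl_e$ to the identity cone, which is zero, and restricts on $\eFl_s$ to $\scrE_s^{\scrL}|_{\eFl_s}$. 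This identifies the complex with $\uDelta_s^{\scrL}$.

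If this characterization is not directly available in \cite{Sandvik}, I would fall back on comparing both objects through the triangle $\IC_e^{\scrL}\langle -1\rangle \to \Delta_s^{\scrL}\to B_s^{\scrL}\to$ from Example \ref{ex:stds_and_costds_for_simples} (2). The connecting map spans a rank-one $\F$-module on both sides, so it suffices to check that its image under $\underline{\Upsilon}_{\RW}^{\F}$ is nonzero. This nonvanishing can be tested after applying $\ForME{\scrL}$ and using Lemma \ref{lem:hom_generated_by_counit_soergel}.
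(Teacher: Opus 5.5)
Your proposal is correct and follows essentially the same route as the paper: reduce to simple reflections via Proposition \ref{prop:mixed_conv_rules} and its geometric counterpart in \cite{Sandvik}, dispose of $\scrL s\neq\scrL$ directly, and for $\scrL s=\scrL$ identify the image of the triangle $\Delta_s^{\scrL}\to B_s^{\scrL}\to B_\emptyset^{\scrL}(1)\to$ with the open--closed triangle for $(\eFl_e,\eFl\setminus\eFl_e)$, which is what your restriction-to-strata characterization of $\uDelta_s^{\scrL}$ amounts to. Two small corrections: the two-term complex is the fiber of $\epsilon_s$, not its cone; and your characterization also needs the fact, used in the paper, that $\scrE_s^{\scrL}$ is supported on $\eFl_e\sqcup\eFl_s$.
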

\begin{proof}
  We will just prove the first isomorphism. The second isomorphism follows from a similar argument.
  By 2-functorality of $\underline{\Upsilon}_{\RW}^{\F}$, Proposition \ref{prop:mixed_conv_rules}, and \cite[Proposition 3.9.6]{Sandvik}, it suffices to consider the case of $s \in S$.
  
  The case when $\scrL s \neq \scrL$ is obvious since $\Delta_s^{\scrL} = B_s^{\scrL}$ and $\uDelta_s^{\scrL} = \scrE_s^{\scrL}$, so we may assume that $\scrL s = \scrL$.
  In $\DDBE{\scrL}{\scrL} (\fr{h}_{\F}, W, \fr{o})$ there is a distinguished triangle
  \begin{equation}\label{eq:Psi_on_stds_1}
    \Delta_s^{\scrL} \to B_s^{\scrL} \to \Delta_e^{\scrL} (1) \to.
  \end{equation}
  We can then apply $\underline{\Upsilon}_{\RW}^{\F}$ to (\ref{eq:Psi_on_stds_1}) to produce a distinguished triangle
  \begin{equation}\label{eq:Psi_on_stds_2}
    \underline{\Upsilon}_{\RW}^{\F} \left( \Delta_s^{\scrL} \right) \to \scrE_{s}^{\scrL} \to \uDelta_e^{\scrL} (1) \to,
  \end{equation}
  where the second map is given by the unit map $\id \to j_{e*} j_e^*$ in the non-mixed category.
  Since $j_x^* \scrE_x^{\scrL} = 0$ for all $x \notin \{e,s\}$, the distinguished triangle (\ref{eq:Psi_on_stds_2}) coincides with the open-closed distinguished triangle for the pair ($\eFl_{e}, \eFl \setminus \eFl_e$).
  As a result, we have produced an isomorphism $ \underline{\Upsilon}_{\RW}^{\F} \left( \Delta_s^{\scrL} \right) \cong \uDelta_s^{\scrL}$ as desired.
\end{proof}

Let $\scrL, \scrL' \in \fr{o}$.
The functor $\tilde{\Upsilon}_{\RW}^{\F} : \grEWmonME{\scrL}{\scrL'} (\fr{h}_{\F}, W, \fr{o}) \to \PME{\scrL}{\scrL'} (G, \F)$ induces a functor
\[\underline{\tilde{\Upsilon}}_{\RW}^{\F} : \DDME{\scrL}{\scrL'} (\fr{h}_{\F}, W) \to \DGME{\scrL}{\scrL'} (G, \F)\]
By Lemma \ref{lem:Psi_on_stds}, we also have isomorphisms
\begin{equation}\label{eq:ME_Psi_on_stds}
  \underline{\tilde{\Upsilon}}_{\RW}^{\F} (\Delta_w^{\scrL}) \cong \uDelta_w^{\scrL} \qquad \text{and} \qquad \underline{\tilde{\Upsilon}}_{\RW}^{\F} (\nabla_w^{\scrL}) \cong \unabla_w^{\scrL} 
\end{equation}
for all $\scrL \in \fr{o}$ and $w \in \W{\scrL}{\scrL'}$.

\begin{midsecproof}{Claim \ref{claim:field_case}}
  It suffices to show that $\underline{\tilde{\Upsilon}}_{\RW}^{\F}$ is fully faithful.
  Let $\scrL \in \fr{o}$ and $x,y \in \W{\scrL}{\scrL'}$.
  By Lemma \ref{lem:hom_vanishing_for_stds_and_costds}, we have that 
  \begin{equation}\label{eq:field_case_1}
    \Hom_{\DDME{\scrL}{\scrL'}} (\Delta_x^{\scrL} , \nabla_y^{\scrL} (m) [n]) \cong \begin{cases} \F & \text{if } x=y \text{ and } n=m=0, \\ 0 & \text{otherwise.}\end{cases}
  \end{equation}
  Similarly, by \cite[Lemma 2.3.4]{Sandvik}, we have an isomorphism
  \begin{equation}\label{eq:field_case_2}
    \Hom_{\DGME{\scrL}{\scrL'} (\F)} (\uDelta_x^{\scrL} , \unabla_y^{\scrL} (m) [n]) \cong \begin{cases} \F & \text{if } x=y \text{ and } n=m=0,\\ 0 & \text{otherwise.}\end{cases}.
  \end{equation}
   In view of this, (\ref{eq:ME_Psi_on_stds}), and Beĭlinson's lemma (\cite[Lemma 3.9.3]{ABG}), it suffices to prove that for all $w \in \W{\scrL}{\scrL'}$, there exists a morphism $f  : \Delta_w^{\scrL} \to \nabla_w^{\scrL}$ such that $\underline{\tilde{\Upsilon}}_{\RW}^{\F} (f) : \uDelta_w^{\scrL} \to \unabla_w^{\scrL}$ is nonzero. 

   Let $\uw \in \Exp (W)$ be a reduced expression of $w$. 
   By Proposition \ref{prop:mixed_conv_rules} and the chain complex description of $\Delta_s^{\scrL}$ (resp. $\nabla_s^{\scrL}$) for $s \in S$ (Example \ref{ex:stds_and_costds_for_simples}), the complex $\Delta_w^{\scrL}$ (resp. $\nabla_w^{\scrL}$) is isomorphic to a complex $\scrF_{\uw}^{\diag, \scrL}$ (resp. $\scrG_{\uw}^{\diag, \scrL}$)
   such that 
   \[(\scrF_{\uw}^{\diag, \scrL})^n = \begin{cases} B_{\uw}^{\scrL} & n = 0, \\ 0 & n < 0, \end{cases} \qquad\text{and}\qquad (\scrG_{\uw}^{\diag, \scrL})^n = \begin{cases} B_{\uw}^{\scrL} & n = 0, \\ 0 & n > 0. \end{cases} \]
   Similarly, by \cite[Proposition 3.9.6]{Sandvik} and the chain complex description of $\uDelta_s^{\scrL}$ (resp. $\unabla_s^{\scrL}$) for $s \in S$ (\cite[(3.9.1)]{Sandvik}), the complex $\uDelta_w^{\scrL}$ (resp. $\unabla_w^{\scrL}$) is isomorphic to a complex $\scrF_{\uw}^{\geom, \scrL}$ (resp. $\scrG_{\uw}^{\geom, \scrL}$)
   such that
  \[(\scrF_{\uw}^{\geom, \scrL})^n = \begin{cases} \scrE_{\uw}^{\scrL} & n = 0, \\ 0 & n < 0, \end{cases} \qquad\text{and}\qquad (\scrG_{\uw}^{\geom, \scrL})^n = \begin{cases} \scrE_{\uw}^{\scrL} & n = 0, \\ 0 & n > 0. \end{cases} \]

   Consider the map $f : \Delta_w^{\scrL} \to \nabla_w^{\scrL}$ defined by the morphism of chain complexes
   \[\begin{tikzcd}[row sep=small]
    \vdots                                                & \vdots                                    \\
    {(\scrF_{\uw}^{\diag, \scrL})^1} \arrow[r, "0"] \arrow[u] & 0 \arrow[u]                               \\
    {B_{\uw}^{\scrL}} \arrow[r, "\id"] \arrow[u]   & {B_{\uw}^{\scrL}} \arrow[u]        \\
    0 \arrow[r, "0"] \arrow[u]                            & {(\scrG_{\uw}^{\diag, \scrL})^{-1}} \arrow[u] \\
    \vdots \arrow[u]                                      & \vdots \arrow[u]                         
    \end{tikzcd}\]
   Under the isomorphisms $\uDelta_w^{\scrL} \cong \scrF_{\uw}^{\geom, \scrL}$ and $\unabla_w^{\scrL} \cong \scrG_{\uw}^{\geom, \scrL}$, the map $\scrF_{\uw}^{\geom, \scrL} \to \scrG_{\uw}^{\geom, \scrL}$ induced by $\underline{\tilde{\Upsilon}}_{\RW}^{\F} (f)$ corresponds to the chain map given by 
   \[ (\scrF_{\uw}^{\geom, \scrL})^0 = \scrE_{\uw}^{\scrL} \stackrel{\id}{\to} \scrE_{\uw}^{\scrL} = (\scrG_{\uw}^{\geom, \scrL})^0 \qquad\text{and} \qquad (\scrF_{\uw}^{\geom, \scrL})^n \stackrel{0}{\to} (\scrG_{\uw}^{\geom, \scrL})^n\]
   for all $n \neq 0$.
   In particular, $\underline{\tilde{\Upsilon}}_{\RW}^{\F} (f)$ is nonzero.
\end{midsecproof} 
    
    \appendix
    \section{Three-Color Relations}\label{apdx:three_color}
    This appendix serves to fill in the details needed to complete the proof of Claim \ref{claim:Phi_well_defined}.
The goal is to verify the three-color relations need to construct the functor
\[\Upsilon_{\Abe} : \EWmon{}{} (W, \fr{h}, \fr{o}) \to \Amon{}{}^{\BS} (W, \fr{h}, \fr{o}).\]

We will first explain the general strategy. We assume that $W$ is a rank 3 finite Coxeter group.
Let $\Gamma_L : B_{\ux}^{\scrL} \to B_{\uy}^{\scrL}$ and $\Gamma_R : B_{\ux}^{\scrL} \to B_{\uy}^{\scrL}$ denote the (linear combinations of) monodromic Elias--Williamson graphs for the left and right sides of a 3-color relation, respectively. 
Note that by definition $\Upsilon_{\Abe} (\Gamma_L)$ and $\Upsilon_{\Abe} (\Gamma_R)$ both take 1-tensors to 1-tensors. As a result, if the space of degree 0 homomorphisms $\Hom_{\grAmon{}{}^{\BS}} (B_{\ux}^{\scrL}, B_{\uy}^{\scrL})$ is a rank 1 $\k$-module, then we are done.
It turns out that this happens in almost every case with the following exceptions:
\begin{enumerate}
    \item[(Case 1)] In type $A_3$, when $W_{\scrL}^{\circ} = W$, the degree 0 Hom space has rank 2. 
    \item[(Case 2)] In type $B_3$, when $W_{\scrL}^{\circ} = W$, the degree 0 Hom space has rank $\rk_{\k} (\fr{h}) + 68$. 
    \item[(Case 3)] In type $B_3$, when $W_{\scrL}^{\circ}$ is type $A_3$, the degree 0 Hom space has rank 3.
    \item[(Case 4)] In type $H_3$, when $W_{\scrL}^{\circ} = W$, the degree 0 Hom space has rank
    \[  \binom{\rk_{\k} (\fr{h}) +3}{4} + 34 \binom{\rk_{\k} (\fr{h}) +2}{3} + 578 \binom{\rk_{\k} (\fr{h}) +1}{2} + 6644 \rk_{\k} (\fr{h}) + 71160.\] 
\end{enumerate}
Cases 1 and 2 follow from the well-definedness of the Soergel functor in the non-monodromic setting. This was already proved using localization in \cite{EW}.
Case 4 is omitted by Assumption \ref{ass:no_H3_copy}.
As a result, the only case that requires a meaningful amount of work is Case 3. 

In this section, we will first explain how we can enumerate all the possible labelings of $\Gamma_L$ and $\Gamma_R$ by monodromy parameters.
It is impossible to enumerate all the $W$-sets; however, we only have to restrict our attention to the small subset which may appear on a given Elias--Williamson graph.
Given a monodromic Elias--Williamson graph $\Gamma : B_{\ux}^{\scrL} \to B_{\uy}^{\scrL}$, the relations in the Hecke category only depend on whether two adjacent faces are labelled the same along an edge.
As a result, we will be able to iterate over labelings of the faces of an Elias--Williamson graph where we only keep track of whether faces are equal or not.
This gives us a finite parameter space. 

Next, we will use this parameter space to compute the rank of the degree 0 Hom spaces for types $A_3, B_3$, and $H_3$ using a case-by-case analysis on the parameter space and the defect formula (Corollary \ref{cor:soergel_hom_defects}).
Using the argument given earlier, this computation will show that $\Upsilon_{\Abe} (\Gamma_L) = \Upsilon_{\Abe} (\Gamma_R)$ in all of these cases except when $W$ is of type $B_3$ and $W_{\scrL}^{\circ}$ is of type $A_3$.
We will then handle this case separately. We will use the algebraic monodromic-endoscopic equivalence (Proposition \ref{prop:abe_endo}) to handle this case.

Finally, in types $A_1 \times I_2 (m)$, since there are infinitely many cases, we are unable to give an exhaustion of the parameter space. Instead, we will appeal to a Coxeter theoretic argument to show using the defect formula that the degree 0 Hom spaces are rank 1.  

The computations in this appendix crucially rely on computer assisted computation. The author has implemented the necessary algorithms in Python and SageMath-- the code can be found here: 
\begin{equation}\label{eq:github}
    \textnormal{\href{https://github.com/colton5007/Monodromic-Three-Color-Relations}{https://github.com/colton5007/Monodromic-Three-Color-Relations}}
\end{equation}

\noindent It includes the exhaustion of the monodromy parameter spaces and the defect calculations used to compute the rank of the degree 0 Hom spaces.

\subsection{Finding Possible Cases}

\begin{definition}
    Let $\Gamma$ be an Elias--Williamson graph.
    Let $\scrF$ be a subset of the faces of $\Gamma$.
    A \emph{$\scrF$-partial monodromy labeling} of an Elias--Williamson graph $\Gamma$ is a surjective map
    \[c : \scrF \twoheadrightarrow \{1 , \ldots, n\}\]
    for some $n \in \Z_{\geq 1}$ such that for all $F \in \scrF$ and $G_1, G_2 \in \scrF$ faces adjacent to $F$, if the edge separating $F$ and $G_1$ and the edge separating $F$ and $G_2$ are both colored by some $s \in S$, then $c (G_1) = c(G_2)$.

    Two $\scrF$-partial monodromy labelings $c, c' : \scrF \to \{1, \ldots, n\}$ are said to be equivalent if there exists a permutation $\sigma$ of $\{1, \ldots, n\}$ such that $\sigma \circ c' = c$. 
    We write $\abs{c} = n$ and say that $n$ is the \emph{number of parameters} for $c$.
    If $\scrF$ is the set of all faces of $\Gamma$, then we say that $c$ is a \emph{complete monodromy labeling}.
\end{definition}

Of course, any monodromic Elias--Williamson graph automatically comes equipped with a complete monodromy labeling.
It is not always the case that a complete monodromy labeling arises via a group action $W$ on $\{1, \ldots, n\}$. 
Nonetheless, a complete monodromy labeling of an Elias--Williamson graph is enough to determine the monodromic subexpressions of the domain and codomain.
In particular, a complete monodromy labeling is sufficient for computing right-hand side of the defect formula (Corollary \ref{cor:soergel_hom_defects}) even if the left-hand side is not well-defined.

The following algorithm provides a means of computing the parameter space of complete monodromy labelings. 

\begin{algorithm}\label{algo:monodromy_labelings}
    Let $\Gamma$ be an Elias--Williamson graph.
    Let $F_1, \ldots, F_n$ be an enumeration of the faces of $\Gamma$. Define $\scrF_i = \{F_1, \ldots, F_i\}$ for each $1 \leq i \neq n$.
    \begin{enumerate}
        \item There is a unique $\scrF_1$-partial monodromy labeling $c_1$ taking $F_1$ to $1$. Define $P_1 = \{c_1\}$.
        \item Recursively, assume we have defined the set $P_i$ of $\scrF_i$-partial monodromy labelings.
            For each $c \in P_i$ and $1 \leq j \leq \abs{c} + 1$, we can attempt to define a $\scrF_{i+1}$-partial monodromy labeling $c'$ by extending $c$ on $\scrF_i$ and labeling $F_{i+1}$ by $j$.
            
            We will then check that $c'$ is a valid partial monodromy labeling as follows. 
            We iterate through each $1 \leq k \leq i+1$ and each pair of adjacent faces $F, F'$ of $F_k$ such that the edges separating $F_k$ from $F$ and $F'$ are both colored by $s$.
            If $F, F' \in \scrF_{i+1}$ and $c' (F) \neq c' (F')$, then $c'$ is not a valid partial monodromy labeling and can be discarded from consideration. Otherwise, continue until all such $k$ and pairs $(F, F')$ are checked.
            If the process completes without $c'$ being rendered invalid, then $c'$ is a valid $\scrF_{i+1}$-partial monodromy labeling.
            
            We then define a list $P_{i+1}$ consisting of the $\scrF_{i+1}$-partial monodromy labelings produced as above.
        \item After $n$ iterations, the above procedure produces a set $P_n$ consisting of complete monodromy labelings.
    \end{enumerate}
\end{algorithm}

It is not hard to check that every complete monodromy labeling (up to equivalence) arises via Algorithm \ref{algo:monodromy_labelings}. 
We keep the notation from the above.
Given a complete monodromy labeling $c : \{F_1, \ldots, F_n\} \twoheadrightarrow \{1, \ldots, k\}$, we can replace $c$ by an equivalent complete monodromy labeling such that $c (F_i) \leq \max_{1 \leq j \leq i-1} (c (F_j) + 1)$.
Let $c_i$ be the $\scrF_i$-partial monodromy labeling obtained by restricting $c$ to $\scrF_i$.
It is clear that $\scrF_1 \in P_1$. Arguing by induction, if $c_i \in P_i$, then our condition that $c (F_i) \leq \max_{1 \leq j \leq i-1} (c (F_j) + 1)$ ensures that $c_{i+1} \in P_{i+1}$.

\begin{remark}
    Algorithm \ref{algo:monodromy_labelings} is not very optimized. It is quick enough for the purposes of the present work, but grows quite quickly as the number of faces and size of $W$ increases.
    The algorithm's efficiency also highly depends on choice of face enumeration. When choosing an enumeration at the $(i+1)$-st step, it is best to choose a face adjacent to the first $i$ faces with maximal valence.
\end{remark}

\subsection{Types \texorpdfstring{$A_3$}{A3}, \texorpdfstring{$B_3$}{B3}, and \texorpdfstring{$H_3$}{H3}}

While Algorithm \ref{algo:monodromy_labelings} is sufficient for labeling one diagram, we actually need to label two diagrams simultaneously, both $\Gamma_L$ and $\Gamma_R$.
To handle this, we will simply apply the algorithm to the horizontal concatenation $\Gamma_{R}' \star \Gamma_{L}$ where $\Gamma_{R}'$ is the horizontal reflection of $\Gamma_{R}$. 
\begin{enumerate}
    \item In type $A_3$, there are 30 complete monodromy labelings of $\Gamma_L$ and $\Gamma_R$.
    \item In type $B_3$, there are 98 complete monodromy labelings of $\Gamma_L$ and $\Gamma_R$.
    \item In type $H_3$, there are 164 complete monodromy labelings of $\Gamma_L$ and $\Gamma_R$. 
\end{enumerate}
As discussed before, one can compute the graded ranks of the Hom spaces in which $\Gamma_L$ and $\Gamma_R$ reside using Corollary \ref{cor:soergel_hom_defects} and the code provided in (\ref{eq:github}). 
All these graded ranks are 1 except in 3 cases: the non-monodromic cases in type $A_3$ and $B_3$ and the monodromic case in type $B_3$ where the endoscopic group has type $A_3$. When the graded rank of the Hom space is 1, then the $1$-tensor argument from before is enough to show that $\Upsilon_{\Abe} (\Gamma_L) = \Upsilon_{\Abe} (\Gamma_R)$. 
The non-monodromic cases in type $A_3$ and $B_3$ follow from the non-monodromic case considered in \cite{EW}.
As a result, we only have to check one case (type $B_3$ with endoscopic type $A_3$) which is depicted below.
\begin{equation}\label{eq:B3_A3_1}
    
  % \tikzsetnextfilename{#1}
  \tikzstyle{every picture}=[tikzfig]
  \input{./figures/B3_A3_three_color.tikz}

\end{equation}
To show that $\Upsilon_{\Abe} (\Gamma_L) = \Upsilon_{\Abe} (\Gamma_R)$ in this case, we will appeal to algebraic endoscopy. Let $w_0$ denote the longest element of $W$.
Note that $\scrL w_0 \neq \scrL$. Instead, $w_0 \in \W{\scrL}{\scrL w_0}$ is in the block containing $u$. 
By Proposition \ref{prop:abe_endo}, it suffices to show that
\[\Psi_{\scrL}^{\alg} \Upsilon_{\Abe} (\Gamma_L \star \id_{B_u^{\scrL w_0}}) = \Psi_{\scrL}^{\alg} \Upsilon_{\Abe} (\Gamma_R \star \id_{B_u^{\scrL w_0}}).\]
We will compute the left-hand side and right-hand side separately.

We fix an endosimple expansion datum $\iota$ as follows: $s' = utu$, $t' = s$, and $u' = t$.
For each Elias--Williamson graph $\Gamma$ appearing in (\ref{eq:B3_A3_1}) the strategy for computing $\Psi_{\scrL}^{\alg} \Upsilon_{\Abe} (\Gamma \star \id_{B_u^{\scrL w_0}})$ will be the same.
First, we will construct an Elias--Williamson graph $\Gamma'$ for the endoscopic Coxeter group $W_{\scrL}^{\circ}$.
We will then compute $\Psi_{\scrL}^{\alg, -1} (\Gamma') : M \to N$ after fixing 1-tensor preserving isomorphisms $M \cong B_{\iota (\ux')}^{\scrL}$ and $N \cong B_{\iota (\uy')}^{\scrL}$ where $\ux' = (t', u',t', s',t',u')$ and $\uy' = (s',t',u',t',s',t')$.
Consider the monodromic Elias--Williamson graphs
\[
  % \tikzsetnextfilename{#1}
  \tikzstyle{every picture}=[tikzfig]
  \input{./figures/endotransfer_1.tikz}
 \qquad\text{and}\qquad
  % \tikzsetnextfilename{#1}
  \tikzstyle{every picture}=[tikzfig]
  \input{./figures/endotransfer_2.tikz}
.\]
Via $\Upsilon_{\Abe}$, these give rise to isomorphisms $\xi_{\ux u, \iota (\ux')} : B_{\ux u}^{\scrL} \stackrel{\sim}{\to} B_{\iota (\ux')}^{\scrL}$ and $\xi_{\uy u, \iota (\uy')} : B_{\uy u}^{\scrL} \stackrel{\sim}{\to} B_{\iota (\uy')}^{\scrL}$ in $\Amon{\scrL}{\scrL}^{\oplus}$.
We will then show that 
\[\Upsilon_{\Abe} (\Gamma) = \xi_{\uy u, \iota (\uy')}^{-1} \circ \Psi_{\scrL}^{\alg, -1} (\Gamma') \circ \xi_{\ux u, \iota (\ux')}.\]
This will allow us to check the 3-color relation holds after using endoscopy to transfer the relation to a relation in $\Amon{1}{1}^{\oplus} (\fr{h}, W_{\scrL}^{\circ}, 1)$. 

We will use morphisms in the Elias--Williamson diagrammatic category $\EWmon{1}{1}^{\BS} (\fr{h}, W_{\scrL}^{\circ}, 1)$ to denote morphisms in $\Amon{1}{1}^{\BS} (\fr{h}, W_{\scrL}^{\circ}, 1)$.
This is justified by \cite[Theorem 5.6]{Abe19}. 

The following examples are illustrative and will be used in the proof. 
\begin{example}\label{ex:B3_A3_endosimple_expansions}
    \begin{enumerate}
        \item Let
        \[\Gamma = 
  % \tikzsetnextfilename{#1}
  \tikzstyle{every picture}=[tikzfig]
  \input{./figures/endosimp_ex1.tikz}
 \qquad\text{and}\qquad \Gamma' = 
  % \tikzsetnextfilename{#1}
  \tikzstyle{every picture}=[tikzfig]
  \input{./figures/endosimp_ex2.tikz}
.\]
        A simple rank computation using Corollary \ref{cor:soergel_hom_defects} shows that 
        \[\rk_{\k} \Hom_{\grAmon{}{}^{\oplus}} (B_{(s,u,t,u,s)}^{\scrL}, B_{(u,t,u,s,u,t,u)}^{\scrL}) = 1.\]
        Moreover, $\Upsilon_{\Abe} (\Gamma)$ and $\Psi_{\scrL}^{\alg, -1} (\Gamma')$ both preserve 1-tensors. As a result, we must have that $\Upsilon_{\Abe} (\Gamma) = \Psi_{\scrL}^{\alg, -1} (\Gamma')$.
        \item Let
        \[\Gamma = 
  % \tikzsetnextfilename{#1}
  \tikzstyle{every picture}=[tikzfig]
  \input{./figures/endosimp_ex3.tikz}
 \qquad\text{and}\qquad \Gamma' = 
  % \tikzsetnextfilename{#1}
  \tikzstyle{every picture}=[tikzfig]
  \input{./figures/endosimp_ex4.tikz}
.\]
        The uniqueness of the Frobenius algebra structure on $B_{(u,t,u)}^{\scrL}$ implies that $\Upsilon_{\Abe} (\Gamma) = \Psi_{\scrL}^{\alg, -1} (\Gamma')$.
    \end{enumerate}
\end{example}

Let 
\[\Gamma' = 
  % \tikzsetnextfilename{#1}
  \tikzstyle{every picture}=[tikzfig]
  \input{./figures/A3_B3_calc_1.tikz}
 \]
By writing $\Gamma'$ as a composition of generators, we can calculate analogous to Example \ref{ex:B3_A3_endosimple_expansions} (1) that
\[\Psi_{\scrL}^{\alg, -1} (\Gamma') = \Upsilon_{\Abe} \left( 
  % \tikzsetnextfilename{#1}
  \tikzstyle{every picture}=[tikzfig]
  \input{./figures/A3_B3_calc_2.tikz}
 \right)\]
It then follows by a series of block minimality, 2-color associativity, and the Elias--Jones--Wenzl relations that
\[\xi_{\uy u, \iota (\uy')}^{-1} \circ \Psi_{\scrL}^{\alg, -1} (\Gamma') \circ \xi_{\ux u, \iota (\ux')} = \Upsilon_{\Abe} \left( 
  % \tikzsetnextfilename{#1}
  \tikzstyle{every picture}=[tikzfig]
  \input{./figures/A3_B3_calc_3.tikz}
 \right) = \Upsilon_{\Abe} \left(  
  % \tikzsetnextfilename{#1}
  \tikzstyle{every picture}=[tikzfig]
  \input{./figures/A3_B3_calc_4.tikz}
 \right).\]

Let 
\[\Gamma' = 
  % \tikzsetnextfilename{#1}
  \tikzstyle{every picture}=[tikzfig]
  \input{./figures/A3_B3_calc_5.tikz}
\]
By writing $\Gamma'$ as a composition of generators, we can calculate analogous to Example \ref{ex:B3_A3_endosimple_expansions} that
\[\Psi_{\scrL}^{\alg, -1} (\Gamma') = \Upsilon_{\Abe} \left( 
  % \tikzsetnextfilename{#1}
  \tikzstyle{every picture}=[tikzfig]
  \input{./figures/A3_B3_calc_6.tikz}
\right)\]
It then follows by a series of block minimality and 2-color associativity relations that
\[\xi_{\uy u, \iota (\uy')}^{-1} \circ \Psi_{\scrL}^{\alg, -1} (\Gamma') \circ \xi_{\ux u, \iota (\ux')} = \Upsilon_{\Abe} \left( 
  % \tikzsetnextfilename{#1}
  \tikzstyle{every picture}=[tikzfig]
  \input{./figures/A3_B3_calc_7.tikz}
\right) = \Upsilon_{\Abe} \left( 
  % \tikzsetnextfilename{#1}
  \tikzstyle{every picture}=[tikzfig]
  \input{./figures/A3_B3_calc_8.tikz}
 \right).\]

There are two remaining monodromic Elias--Williamson graphs in (\ref{eq:B3_A3_1}) that must also be considered. The arguments are entirely similar to the two graphs considered previously and will be omitted.
We conclude that there are equalities of morphisms $B_{\ux u}^{\scrL} \to B_{\uy u}^{\scrL}$ in $\Amon{\scrL}{\scrL}^{\oplus}$.
\[\left( \xi_{\uy u, \iota (\uy')}^{-1} \circ \Psi_{\scrL}^{\alg, -1} \right) \left(  
  % \tikzsetnextfilename{#1}
  \tikzstyle{every picture}=[tikzfig]
  \input{./figures/A3_B3_calc_9.tikz}
\right) \circ \xi_{\ux u, \iota (\ux')} = \Upsilon_{\Abe} \left( 
  % \tikzsetnextfilename{#1}
  \tikzstyle{every picture}=[tikzfig]
  \input{./figures/A3_B3_calc_10.tikz}
 \right),\]
and
\[\left( \xi_{\uy u, \iota (\uy')}^{-1} \circ \Psi_{\scrL}^{\alg, -1} \right) \left( 
  % \tikzsetnextfilename{#1}
  \tikzstyle{every picture}=[tikzfig]
  \input{./figures/A3_B3_calc_11.tikz}
 \right) \circ \xi_{\ux u, \iota (\ux')} = \Upsilon_{\Abe} \left( 
  % \tikzsetnextfilename{#1}
  \tikzstyle{every picture}=[tikzfig]
  \input{./figures/A3_B3_calc_12.tikz}
 \right).\]

It then suffices to show the following equality holds in $\EWmon{1}{1}^{\BS} (\fr{h}, W_{\scrL}^{\circ}, 1)$,
\begin{equation}\label{eq:B3_A3_2}
    
  % \tikzsetnextfilename{#1}
  \tikzstyle{every picture}=[tikzfig]
  \input{./figures/A3_B3_calc_13.tikz}

\end{equation}
Equation (\ref{eq:B3_A3_2}) can be checked using the Zamolodchikov relation and the Elias--Jones--Wenzl relation for $A_3$. 
Alternatively, it can be checking via localization using the ASLoc software package \cite{GJW23}.

\subsection{Type \texorpdfstring{$A_1 \times I_2 (m)$}{A1 x I2(m)}}

Any reflection subgroup of a product of Coxeter groups can be written as a product of reflection subgroups of the individual factors.
As a result, the possible reflection subgroups of the Coxeter group of type $A_1 \times I_2 (m)$ are of the following types: $A_1 \times I_2 (v)$ and $I_2 (v)$ where $0 \leq v \leq m$ divides $m$ (by definition, we take $I_2 (0) = 1$ and $I_2 (1) = A_1$).
Since $\Upsilon_A (\Gamma_L)$ and $\Upsilon_A (\Gamma_R)$ both take 1-tensor to 1-tensors, the desired relation will follow from the following lemma.

\begin{lemma}\label{lem:homs_for_A1_I2m}
    Write $S = \{s,t,u\}$ where $m_{s,t} = m$, $m_{s,u} = 2$, and $m_{t,u}= 2$. 
    The $\k$-module of degree 0 homomorphisms
    \[ \Hom_{\grAmon{}{}^{\BS}} (B_{{}_s \underline{m} u}^{\scrL}, B_{u {}_t \underline{m}}^{\scrL}) \]
    is rank 1.
\end{lemma}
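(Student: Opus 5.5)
We compute the degree $0$ rank with the defect formula of Corollary \ref{cor:soergel_hom_defects}. It gives
\[\grk_R \Hom (B_{{}_s \underline{m} u}^{\scrL}, B_{u {}_t \underline{m}}^{\scrL}) = \sum_{\ux^{\ue} = \uy^{\uf}} v^{d(\ux,\ue)+d(\uy,\uf)},\]
where $\ux = {}_s\underline{m}u$, $\uy = u\,{}_t\underline{m}$, and $\ue,\uf$ range over $\scrL$-monodromic subexpressions. The Hom space is graded free over $R$, and $R$ is concentrated in nonnegative degrees with $R^0=\k$. So the degree $0$ rank over $\k$ is the sum, over the terms with $d(\ux,\ue)+d(\uy,\uf)\le 0$, of the rank of $R$ in the corresponding degree. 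It therefore suffices to show two things: every pair has total defect $\ge 0$, and exactly one pair has total defect $0$. That pair is the all-$1$'s pair, both evaluating to $w_0 = ({}_s m)u$, and it has defect $0$.

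\textbf{Reduction to the endoscopic side.} Defects only involve the indices in $K(\ux,\scrL)$, since non-$K$ indices are forced to be $1$. The plan is to rewrite both sums in terms of the endoscopic group $W_{\scrL}^{\circ}\cap W$, which has type $A_1\times I_2(v)$ or $I_2(v)$. Two cases arise.

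\emph{Case $\scrL u\neq\scrL$.} Then $B_u$ is an invertible standard bimodule $R_u$. Because $u$ commutes with $s$ and $t$, the monodromy at every position changes only by right multiplication by $u$. Hence the subexpression data on the ${}_s\underline m$ part of $\ux$ and the ${}_t\underline m$ part of $\uy$ are both computed relative to $\scrL$ and $\scrL u$ in a way that matches up. Using block minimality (Lemma \ref{lem:BS_tensor_adjunction}), the Hom space is isomorphic to $\Hom(B^{\scrL}_{{}_s\underline m}, B^{\scrL'}_{{}_t\underline m}\otimes R_{u}\otimes R_u)$, which collapses to the dihedral Hom space $\Hom(B_{{}_s\underline m}^{\scrL}, B_{{}_t\underline m}^{\scrL})$. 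That space has degree $0$ rank $1$ by Lemma \ref{lem:uniqueness_of_beta}.

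\emph{Case $\scrL u=\scrL$.} Since $u$ commutes with $W_{\{s,t\}}$, we get $\scrL x u = \scrL x$ for every $x\in W_{\{s,t\}}$. The $u$-strand is therefore always in $K$, and the monodromy along the $s,t$ strands is unaffected by inserting $u$. A subexpression of $\ux$ then splits as a pair: a monodromic subexpression $\ue'$ of ${}_s\underline m$ together with $e_u\in\{0,1\}$. The defect is additive, $d(\ux,\ue)=d({}_s\underline m,\ue')+d(u,e_u)$. This holds because the label of the $u$ position depends only on whether $u$ has already appeared, and $u$ commutes with the rest. The same splitting holds for $\uy$, with the $u$ placed first.

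The equation $\ux^{\ue}=\uy^{\uf}$ splits into independent equations in $W_{\{s,t\}}$ and in $\langle u\rangle$. So the generating function factors as the product of the dihedral sum for $({}_s\underline m,{}_t\underline m)$ and the one-color sum for $(u,u)$. The one-color factor is the graded rank of $\End(C_u)$, namely $1+v^2$. The dihedral factor is the graded rank of $\Hom(B^{\scrL}_{{}_s\underline m},B^{\scrL}_{{}_t\underline m})$, which has lowest term $1$ with coefficient $1$ by Lemma \ref{lem:uniqueness_of_beta}, relying on Theorem \ref{thm:abe_dl_are_basis}. Their product has constant term $1$ and no negative powers. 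Hence the degree $0$ part is $\k$ in rank $1$.

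\textbf{Main obstacle.} The delicate step is justifying that defects are additive and that the Hom generating function genuinely factorizes. In particular, we must check that positivity of the graded dimension in each factor (no negative-degree terms) survives the product. This uses Lemma \ref{lem:uniqueness_of_beta}, which requires the dihedral Hom space to have degree-$0$ rank $1$ and no negative-degree part. I would verify negativity-freeness via the degree-$0$ statement in the proof of Lemma \ref{lem:uniqueness_of_beta}, combined with the fact that the defect minimum is attained only by the all-ones subexpression of a reduced expression.
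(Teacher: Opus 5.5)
Your argument is correct, but it is organized differently from the paper's.

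\textbf{The paper's route.} It works pair by pair. It asserts that every proper monodromic subexpression of ${}_s\underline{m}u$, and of $u\,{}_t\underline{m}$, \emph{individually} has positive defect. To get this it splits off the $u$-position and cites a dihedral claim: $d({}_s\underline{m},\ue')\geq 0$, with equality only for the all-ones $\ue'$.

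\textbf{Your route.} You split off $u$ at the level of the whole double-leaf generating function. When $\scrL u=\scrL$, it is the dihedral sum for $({}_s\underline{m},{}_t\underline{m})$ times $1+v^2$. When $\scrL u\neq\scrL$, it is just the dihedral sum. In that case the $u$-strand is forced to be $1$, and the $\scrL u$-monodromic subexpressions of ${}_t\underline{m}$ coincide with the $\scrL$-monodromic ones, so you do not even need the bimodule isomorphism you sketch. You then invoke the \emph{joint} degree-$0$ rank-one statement for the dihedral $\Hom$ space, from the proof of Lemma \ref{lem:uniqueness_of_beta}.

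\textbf{Why the joint comparison is what is needed.} Individual positivity already fails for $m=3$. The subexpression $(1,0,0)$ of $(s,t,s)$ is decorated $U1,U0,D0$ and has defect $0$. Hence $(1,0,0,1)$ is a proper subexpression of $(s,t,s,u)$ with defect $0$. Only its unique partner $(1,0,1,0)$ in $(u,t,s,t)$, which has defect $2$, makes the total positive. So the paper's intermediate claim is false, while your route avoids it. A side benefit is that your route computes the whole graded rank, not just its degree-$0$ part.

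\textbf{One correction to your last paragraph.} Drop the ``fact'' that the all-ones subexpression is the unique defect minimizer of a reduced expression; the same example shows it is false. You do not need it. The all-ones pair already contributes $\k$ in degree $0$, and $R^{2k}\neq 0$ for all $k\geq 0$. So degree-$0$ rank one forces every other dihedral pair to have $d+d'>0$ or $d+d'$ odd. Odd totals cannot occur: for any expression $\underline{x}$ and subexpression $\ue$,
\[
\ell(\underline{x})-\ell(\underline{x}^{\ue})=\#U0+\#D0+2\#D1\equiv d(\underline{x},\ue)\pmod 2.
\]
For the pair $({}_s\underline{m},{}_t\underline{m})$ with common evaluation, the total defect is therefore $\equiv 2m\equiv 0\pmod 2$. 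This gives exactly the absence of negative powers that your product argument needs.
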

\begin{proof}
    Let $w_0$ denote the maximal element in $W$.
    By Corollary \ref{cor:soergel_hom_defects}, it suffices to show that for all proper (i.e., not all 1's) $\scrL$-monodromic subexpressions $\ue$ of ${}_s \underline{m} u$ and $\uf$ of $u {}_t \underline{m}$ that
    $d ({}_s \underline{m} u, \ue) + d (u {}_t \underline{m}, \uf) > 0$.
    We will show something even stronger, namely, that 
    \[d ({}_s \underline{m} u, \ue) > 0 \qquad \text{and}\qquad d (u {}_t \underline{m}, \uf) > 0.\]
    We will just show the first inequality as the other follows from a similar argument.
    Note that $\dec_1 ({}_s \underline{m} u, \ue) = \dec_{m+1} (u {}_t \underline{m}, \uf)$ and can only be equal to $U1$ or $U0$.
    Let $\ue' = \ue_{\geq 2}$ which is a subexpression of ${}_s \underline{m}$.
    A simple computation for dihedral groups shows that $d ({}_s \underline{m}, \ue') \geq 0$ with equality if and only if $\ue'$ is the all 1's expression.
    We are then done unless $\ue'$ is the all 1's subexpression. Since we are assuming that $\ue$ is proper, this means that $e_1 = U0$ which implies that $d ({}_s \underline{m} u, \ue) > 0$.
\end{proof}

    \section{Mixed Derived Categories}\label{apdx:mdc}
    Achar--Riche--Vay introduced a mixed derived category associated to the Elias--Williamson diagrammatic Hecke category \cite{ARV}.
Their mixed derived category was later used in \cite{RV} to extend the Koszul duality equivalence of \cite{AMRW2} to arbitrary Coxeter groups.
One of the main reasons for studying the mixed derived category is that it comes equipped with a recollement formalism.
This gives rise to a theory of standard and costandard objects which generate the mixed derived category. Moreover, the Hom spaces between standards and costandards are simple to compute. 

In fact, most of the constructions in \cite{ARV} hold in larger generality. Namely, the key properties used are about the double leaves basis. 
We can then generalize their construction to produce a mixed derived category for any object-adapted cellular category.
The principal goal of this section is to develop this theory with two key examples in mind: (1) the category of monodromic Bott--Samelson bimodules and (2) the monodromic diagrammatic Hecke category.
We omit many of the proofs in this section and refer to \cite{ARV} for detailed arguments.

\subsection{Object-Adapted Cellular Categories}

Object-adapted cellular categories were first introduced in \cite{EL}. A gentle treatment of the subject can be found in \cite{EMTW} whose terminology we closely follow.

Let $\k$ be a noetherian domain of finite global dimension.
Let $R$ be a graded commutative ring such that $R^0 \cong \k$. 

\begin{definition}\label{def:OACC}
    An \emph{object-adapted cellular category}\footnote{In \cite{EL}, these are called strictly object-adapted cellular categories.} (abbv. OACC) is a graded $R$-linear category $\scrC$ equipped with the following data:
    \begin{itemize}
        \item There is a set $\Lambda$ of objects of $\scrC$ and a partial order $\leq$ on $\Lambda$.
        \item There is an $R$-linear contravariant involution $\DD$ on $\scrC$.
        \item For each $X \in \scrC$ and each $\lambda \in \Lambda$, there are two finite families
                \[(\LL_T)_{T \in E(X, \lambda)} \subset \Hom_{\scrC} (X, \lambda) \qquad\text{and}\qquad (\overline{\LL}_S)_{S \in \overline{E}(\lambda, X)} \subset \Hom_{\scrC} (\lambda, X)\]
                of homogenous morphisms, indexed by finite sets $E(X, \lambda)$ and $\overline{E} (\lambda, X)$.
        \item For each object $X \in \scrC$ and $\lambda \in \Lambda$, there are pairwise inverse maps between the indexing sets $E(X, \lambda)$ and $\overline{E} (\lambda, X)$ that we denote by $\overline{\:\cdot\:}$ in either direction. 
    \end{itemize}
    
    This data is subject to the following requirements:
    \begin{enumerate}
        \item[(CC1)]\customlabel{oacc:cc1}{CC1} Define $\dLL_{S,T}^{\lambda} \coloneq \overline{LL}_S \circ \LL_T \in \Hom_{\scrC} (X, Y)$ for $T \in E(X, \lambda)$ and $S \in \overline{E}(\lambda, Y)$.
        Consider the family 
        \[ \dLL^{\lambda} (X, Y) \coloneq \{ \dLL_{S,T}^{\lambda} \colon T \in E(X, \lambda), S \in \overline{E}(\lambda, Y)\} \subseteq \Hom_{\scrC} (X,Y). \]
        Then the family $\dLL (X, Y) \coloneq \bigcup_{\lambda \in \Lambda} \dLL^{\lambda} (X,Y)$ is a graded $R$-basis of $\Hom_{\scrC} (X,Y)$.
        \item[(CC2)]\customlabel{oacc:cc2}{CC2} For all $T \in E(X, \lambda)$, we have that $\DD (\LL_T) = \overline{\LL}_{\overline{T}}$.
        \item[(CC3)]\customlabel{oacc:cc3}{CC3} For each $\lambda \in \Lambda$, the sets $E(\lambda, \lambda)$ and $\overline{E} (\lambda, \lambda)$ are just singletons $\{*\}$ and
        \[\LL_{*} = \id_{\lambda} = \overline{\LL}_{*}.\]
        \item[(CC4)]\customlabel{oacc:cc4}{CC4} For each $\lambda \in \Lambda$, the collection of morphisms
            \[ \Hom_{\leq \lambda} (X,Y) \coloneq \textnormal{span}_R \left( \bigcup_{\mu \leq \lambda} \dLL^{\mu} (X,Y) \right) \subset \Hom_{\scrC} (X,Y) \]
            forms a two-sided ideal $\scrC_{\leq \lambda}$ in $\scrC$, i.e., it is stable under pre- and post-composition with arbitrary morphisms in $\scrC$.
    \end{enumerate}
\end{definition}

\begin{definition}
    Let $\scrC$ be an object-adapted cellular category. A map $\pi : \Ob (\scrC) \to \Lambda$ is called an \emph{evaluation} if
    \begin{enumerate}
        \item $\lambda \leq \pi (X)$ whenever $E(X, \lambda) \neq \emptyset$,
        \item  $\pi (\lambda) = \lambda$ for all $\lambda \in \Lambda$.
    \end{enumerate}

    An object $X \in \scrC$ is said to be \emph{reduced} with respect to $\pi$ if 
    \begin{enumerate}
        \item $E(X, \pi (X))$ is a singleton $\{*\}$,
        \item the coefficient of $\id_{\pi (X)}$ in $\LL_* \circ \overline{\LL}_{*}$ is 1.
    \end{enumerate}
\end{definition}

\begin{remark}\label{rem:OACC_with_evals}
    Let $\scrC$ be an object-adapted cellular category with evaluation $\pi$.
    \begin{enumerate}
        \item For all $\lambda \in \Lambda$, by (\ref{oacc:cc3}), we have that $\lambda$ is reduced.
        \item If $X, Y$ are reduced and $\pi (X) = \lambda = \pi (Y)$, then there is a unique map $\dLL^{\lambda} : X \to Y$ in $\dLL^{\lambda} (X,Y)$.
    \end{enumerate}
\end{remark}

\subsection{Homological Algebra Preliminaries}

Let $\scrC$ be a graded $R$-linear category. 
We can define a category $\scrC^{\circ}$ whose objects are symbols $X(n)$ where $X \in \Ob (\scrC)$ and $n \in \Z$, and whose morphisms are defined by
\[\Hom_{\scrC^{\circ}} (X (n), Y(m)) \coloneq \Hom_{\scrC}^{m-n} (X,Y).\]
We will also define a graded $R$-module
\[\Hom_{\scrC^{\circ}}^{\bullet} (X (n), Y(m)) \coloneq \bigoplus_{k \in \Z} \Hom_{\scrC^{\circ}} (X (n), Y(m + k)) = \Hom_{\scrC} (X, Y) (m-n).\]
The category $\scrC^{\circ}$ admits a natural autoequivalence $(1) : \scrC^{\circ} \to \scrC^{\circ}$ defined by $X(n) \mapsto X(n+1)$.
For $j \in \Z_{\geq 0}$, we can define $(j)$ as the $j$-th power of $(1)$ whose inverse is denoted by $(-j)$. 
We define $\scrC^{\circ, \oplus}$ as the additive hull of $\scrC^{\circ}$. It inherits a grading shift autoequivalence $(n)$ for each $n \in \Z$ from $\scrC^{\circ}$.

We can view $R$ as a graded $R$-linear category itself with one object $*$. Observe that $R^{\circ}$ is a monoidal category with $* (n) \otimes * (m) = * (n+m)$. 
Note that the additive hull of $R^{\circ}$ is equivalent to the monoidal category of finitely generated graded free $R$-modules $\Free^{\fg, \Z} (R)$.
Since $\scrC$ is enriched over graded $R$-modules, the category $\scrC^{\circ}$ (resp. $\scrC^{\circ, \oplus}$) is naturally an $R^{\circ}$-module (resp. $\Free^{\fg, \Z} (R)$-module).
In particular, for $X \in \scrC^{\circ, \oplus}$ and $M \in \Free^{\fg, \Z} (R)$, we have the internal tensor object, denoted $X \underline{\otimes}_R M$, defined as the object representing the functor
\[Y \mapsto (\Hom_{\scrC^{\circ, \oplus}}^{\bullet} (X,Y) \otimes_R M)^{0}.\]
We then deduce that there is a natural isomorphism
\begin{equation}\label{eq:internal_tensor_defining}
    \Hom_{\scrC^{\circ, \oplus}}^{\bullet} (Y, X \underline{\otimes}_R M) \cong \Hom_{\scrC^{\circ, \oplus}}^{\bullet} (Y, X) \otimes_R M.
\end{equation}

Now, let $X, Y \in \scrC^{\circ, \oplus}$ and assume that $\Hom_{\scrC^{\circ, \oplus}} (X, Y)$ is a graded free as an $R$-module.
We will define a canonical morphism
\begin{equation}\label{eq:canon_mor_internal_tensor}
    X \underline{\otimes}_R \Hom_{\scrC^{\circ, \oplus}}^{\bullet} (X, Y) \to Y.
\end{equation}
If $(\varphi_i)_{i \in I}$ is a graded basis of the $R$-module $\Hom_{\scrC^{\circ, \oplus}}^{\bullet} (X, Y)$, then this choice allows us to identify $ X \underline{\otimes}_R \Hom_{\scrC^{\circ, \oplus}}^{\bullet} (X, Y)$ with $\bigoplus_{i \in I} X (-\deg (\varphi_i))$.
We then define (\ref{eq:canon_mor_internal_tensor}) by $\bigoplus_{i \in I} \varphi_i (- \deg (\varphi_i))$.
It is not hard to check that this morphism is independent of the choice of basis. For any $Z \in \scrC^{\circ, \oplus}$, the induced morphism
\[\Hom_{\scrC^{\circ, \oplus}}^{\bullet} (Z, X \underline{\otimes}_R \Hom_{\scrC^{\circ, \oplus}}^{\bullet} (X, Y)) \to \Hom_{\scrC^{\circ, \oplus}}^{\bullet} (Z, Y)\]
is identified, via (\ref{eq:internal_tensor_defining}), with the morphism
\[\Hom_{\scrC^{\circ, \oplus}}^{\bullet} (Z, X) \otimes_R \Hom_{\scrC^{\circ, \oplus}}^{\bullet} (X, Y) \to \Hom_{\scrC^{\circ, \oplus}}^{\bullet} (Z, Y)\]
given by morphism composition.

\subsection{Categories Attached to Locally Closed Subsets}

\subsubsection{Topology on \texorpdfstring{$\Lambda$}{Lambda}}

Let $(\Lambda, \leq)$ be a poset. We say that a subset $I \subseteq \Lambda$ is \emph{closed} if for any $\lambda,\mu \in \Lambda$ with $\mu \in I$ and $\lambda \leq \mu$, then $\lambda \in I$.
A subset $J \subseteq \Lambda$ is called \emph{open} if $\Lambda \setminus J$ is closed. Given any subset $K \subseteq \Lambda$, we can consider its \emph{closure}
\[\overline{K} \coloneq \{ \lambda \in \Lambda \mid \exists \mu \in K \textnormal{ such that } \lambda \leq \mu \},\]
which is the smallest closed set containing $K$.
Finally, we say that $K \subseteq X$ is \emph{locally closed} if $K$ is open in $\overline{K}$, or equivalently if $K$ is closed in $\Lambda \setminus (\overline{K} \setminus K)$.

Let $\lambda \in \Lambda$, we will write 
\[\{\leq \lambda\} \coloneq \{ \mu \in \Lambda \mid \mu \leq \lambda \} \qquad\text{and}\qquad \{< \lambda\} \coloneq \{ \mu \in \Lambda \mid \mu < \lambda \}.\]

\subsubsection{Locally Closed Subsets}

Let $\scrC$ be an OACC with evaluation $\pi$ and fix a closed subset $I \subseteq \Lambda$.
Define a category $\scrC_I^{\circ}$ as the full subcategory of $\scrC^{\circ}$ whose objects are of the form $X (n)$ for $n \in \Z$ and $X$ reduced in $\scrC$ such that $\pi (X) \in I$.
We also write $\scrC_{I}^{\oplus, \circ}$ for the additive hull of $\scrC_I^{\circ}$. Note that  $\DD$ stabilizes $\scrC_{I}^{\oplus, \circ}$ by (\ref{oacc:cc2}).

\begin{remark}\label{rem:failure_of_additive_generation}
    When $I = \Lambda$, it is generally not true that $\scrC_{\Lambda}^{\oplus, \circ}$ is equivalent to the additive hull of $\scrC^{\circ}$.
\end{remark}

Let $X, Y \in \scrC^{\circ}$, we will denote by
\[\mathfrak{J}_{I} (X,Y) \subseteq \Hom_{\scrC^{\circ, \oplus}}^{\bullet} (X,Y)\]
the $R$-submodule of morphisms which factor through $\scrC_{I}^{\oplus, \circ}$.

\begin{lemma}[{\cite[Lemma 4.2]{ARV}}]\label{lem:OACC_ideals}
    Let $X,Y \in \scrC$, then
    \begin{equation}\label{eq:OACC_ideals}
        \mathfrak{J}_{I} (X,Y) = \textnormal{span}_R \left( \bigcup_{\lambda \in I} \dLL^{\lambda} (X,Y) \right).
    \end{equation}
\end{lemma}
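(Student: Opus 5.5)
The plan is to prove the two inclusions separately, using the cellular axioms and the reducedness of objects in $\scrC_I^{\circ}$.

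\emph{The inclusion $\supseteq$.} Let $\lambda \in I$, $T \in E(X,\lambda)$ and $S \in \overline{E}(\lambda,Y)$. The double leaf $\dLL^{\lambda}_{S,T} = \overline{\LL}_S \circ \LL_T$ factors through the object $\lambda$. By Remark \ref{rem:OACC_with_evals}(1), $\lambda$ is reduced, and $\pi(\lambda)=\lambda \in I$, so $\lambda$ (with a suitable shift) lies in $\scrC_I^{\circ}$. Since $\mathfrak{J}_I(X,Y)$ is an $R$-submodule, the whole $R$-span on the right-hand side of (\ref{eq:OACC_ideals}) is contained in $\mathfrak{J}_I(X,Y)$.

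\emph{The inclusion $\subseteq$.} A morphism factoring through $\scrC_I^{\oplus,\circ}$ is a finite sum of composites $g\circ f$ with $f : X \to Z$ and $g : Z \to Y$, where $Z$ is reduced and $\pi(Z)\in I$. It therefore suffices to show that each such $g \circ f$ lies in the right-hand side of (\ref{eq:OACC_ideals}).

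We proceed in the following steps.
\begin{enumerate}
\item By (\ref{oacc:cc1}), $\id_Z$ is an $R$-combination of double leaves $\dLL^{\mu}_{S',T'}$ with $E(Z,\mu)\neq\emptyset$.
\item By the definition of an evaluation, every such $\mu$ satisfies $\mu \le \pi(Z)$. Hence $\mu \in I$, because $I$ is closed.
\item Thus $\id_Z \in \Hom_{\le \pi(Z)}(Z,Z)$. Better, $\id_Z$ lies in the $R$-span $\mathcal{I}(Z,Z)$ of double leaves through cells of $I$.
\item Note that $\mathcal{I} = \sum_{\lambda\in I}\scrC_{\le\lambda}$, since $I$ is closed and a downward union of the sets $\{\le\lambda\}$. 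By (\ref{oacc:cc4}) each $\scrC_{\le\lambda}$ is a two-sided ideal, so $\mathcal{I}$ is a two-sided ideal as well.
\item Hence $g\circ f = g\circ \id_Z\circ f \in \mathcal{I}(X,Y)$, which is the right-hand side of (\ref{eq:OACC_ideals}).
\end{enumerate}

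The main point needing care is step 4. We must identify $\textnormal{span}_R\bigl(\bigcup_{\lambda\in I}\dLL^{\lambda}(X,Y)\bigr)$ with $\sum_{\lambda\in I}\Hom_{\le\lambda}(X,Y)$. This holds because, for every $\lambda \in I$, each $\mu\le\lambda$ also lies in $I$.

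Reducedness of $Z$ is not really needed for this argument; only $\pi(Z)\in I$ and the evaluation axiom are used. The grading shifts are harmless, because all the constructions above are compatible with $(n)$.
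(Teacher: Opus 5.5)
Your proof is correct and follows essentially the same route as the paper: both inclusions rest on (CC1), the evaluation axiom, closedness of $I$, and the ideal property (CC4). The only difference is cosmetic. The paper expands the map $X \to Z$ in double leaves, implicitly using the bijection $\overline{E}(\lambda,Z)\leftrightarrow E(Z,\lambda)$ to invoke the evaluation axiom, whereas you expand $\id_Z$, which applies that axiom to $E(Z,\mu)$ directly.
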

\begin{proof}
    It is clear from the definition that if $\lambda \in I$, then $\dLL_{S,T}^{\lambda} \in \mathfrak{J}_{I} (X,Y)$. As a result, the right-hand side of (\ref{eq:OACC_ideals}) is contained in $\mathfrak{J}_{I} (X,Y)$.
    
    Let $Z$ be reduced with $\pi (Z) \in I$ and $f : X \to Y$ be a morphism in $\scrC$ which factors through $\pi (Z)$.
    By (\ref{oacc:cc1}), we can replace $f$ by a composition of the form
    \[X \stackrel{\dLL_{S,T}^{\lambda}}{\longrightarrow} Z \to Y\]
    for some $\lambda \in \Lambda$, $S \in E(X, \lambda)$, and $T \in E(\lambda, Z)$. 
    Since $\pi$ is an evaluation, we must have that $\lambda \leq \pi (Z)$, and hence $\lambda \in I$. We are then done by (\ref{oacc:cc4}).
\end{proof}

Let $I_0 \subset \Lambda$ and $I_1 \subseteq I_0$ be closed subsets.
We define a category
\[\scrC_{I_0, I_1}^{\circ, \oplus} \coloneq \scrC_{I_0}^{\circ, \oplus} \naivequotient \scrC_{I_1}^{\circ, \oplus},\]
where the quotient appearing on the right-hand side is the ``naive'' quotient. I.e., its objects are the same as those in $\scrC_{I_0}^{\circ, \oplus}$, and its morphisms are defined by
\[\Hom_{\scrC_{I_0, I_1}^{\circ, \oplus}} (X,Y) = \left( \Hom_{\scrC_{I_0}^{\circ, \oplus}}^{\bullet} (X,Y) / \mathfrak{J}_{I_1} (X,Y) \right)^0\]
for $X,Y \in \scrC_{I_0}^{\circ, \oplus}$.

The shift functor (1) induces an autoequivalence on $\scrC_{I_0, I_1}^{\circ, \oplus}$. Likewise, $\DD$ on $\scrC_{I_0}^{\circ, \oplus}$ restricts to a contravariant autoequivalence of $\scrC_{I_0, I_1}^{\circ, \oplus}$. 
By Lemma \ref{lem:OACC_ideals}, we have that $\bigcup_{\lambda \in I_0 \setminus I_1} \dLL^{\lambda} (X,Y)$ forms a graded $R$-linear basis for $\Hom_{\scrC_{I_0, I_1}^{\circ, \oplus}}^{\bullet} (X,Y)$.
It then follows (see \cite[Lemma 4.3]{ARV}) that $\scrC_{I_0, I_1}^{\circ, \oplus}$ only depends on $I_0 \setminus I_1$ up to a canonical equivalence of categories.

We can then define, for any locally closed subset $I \subseteq \Lambda$, the category 
\[\scrC_{I}^{\circ, \oplus} \coloneq \scrC_{I_0, I_1}^{\circ, \oplus}\]
where $I_1 \subset I_0$ are closed subsets of $\Lambda$ such that $I = I_0 \setminus I_1$. Of course, when $I$ itself is closed, the notation agrees with definition of the category associated to a closed subset from earlier.

\subsubsection{Case of a Singleton}

We consider the special case when $I = \{\lambda\}$ for some $\lambda \in \Lambda$.

\begin{lemma}\label{lem:singleton_identification}
    Let $X \in \scrC$ be reduced and $\pi (X) = \lambda$. By definition $E (X,\lambda) = \{*\}$ is a singleton. 
    The image of the canonical map $\LL_* : X \to \lambda$ in $\scrC_{\{\lambda\}}^{\circ, \oplus}$ is an isomorphism.
\end{lemma}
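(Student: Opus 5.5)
The plan is to construct an explicit inverse in $\scrC_{\{\lambda\}}^{\circ, \oplus}$ using the upside-down light leaf $\overline{\LL}_{*} = \DD(\LL_*) : \lambda \to X$. Here $\overline{E}(\lambda, X)$ is also the singleton $\{\overline{*}\}$, via the bijection $\overline{\:\cdot\:}$ and (\ref{oacc:cc2}). I will check that $\LL_* \circ \overline{\LL}_* = \id_\lambda$ and $\overline{\LL}_* \circ \LL_* = \id_X$ in the quotient category. Recall that the quotient category is realized as $\scrC_{I_0}^{\circ,\oplus} \naivequotient \scrC_{I_1}^{\circ,\oplus}$ with $I_0 = \{\leq \lambda\}$ and $I_1 = \{<\lambda\}$. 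By Lemma \ref{lem:OACC_ideals}, the graded Hom spaces in the quotient have $R$-basis $\dLL^{\lambda}(-,-)$, i.e. only the double leaves passing through $\lambda$ itself survive.

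First I compute $\LL_* \circ \overline{\LL}_* \in \End^\bullet(\lambda)$. By (\ref{oacc:cc1}) and (\ref{oacc:cc3}), $\End^\bullet_{\scrC}(\lambda)$ has basis $\dLL^{\mu}(\lambda,\lambda)$ over all $\mu$. The sets $E(\lambda,\mu)$ are nonempty only when $\mu \leq \pi(\lambda) = \lambda$, by the evaluation axiom. For $\mu = \lambda$, the only basis element is $\dLL^\lambda_{*,*} = \id_\lambda$. Every other term lies in $\mathfrak{J}_{\{<\lambda\}}$. Reducedness of $X$ says the coefficient of $\id_\lambda$ in $\LL_*\circ\overline{\LL}_*$ is $1$. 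Hence $\LL_*\circ\overline{\LL}_* \equiv \id_\lambda$ modulo $\mathfrak{J}_{\{<\lambda\}}$.

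Next I treat $\overline{\LL}_* \circ \LL_* \in \End^\bullet(X)$. This is exactly the double leaf $\dLL^{\lambda}_{\overline{*},*}$. Again by the evaluation axiom, $E(X,\mu)\neq\emptyset$ forces $\mu \leq \lambda$. So in the quotient the basis of $\End^\bullet(X)$ is $\dLL^\lambda(X,X)$, which is the single element $\overline{\LL}_*\circ\LL_*$ since $E(X,\lambda)$ is a singleton. Thus $\End^\bullet_{\scrC_{\{\lambda\}}}(X)$ is free of rank one over $R$ on $e \coloneq \overline{\LL}_*\circ\LL_*$. From the first step, $e\circ e = \overline{\LL}_*(\LL_*\overline{\LL}_*)\LL_* \equiv e$. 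Since $\id_X$ is in this rank-one module, $\id_X = r e$ for some $r \in R$, necessarily homogeneous of degree $0$, so $r\in\k$. Then $\id_X = \id_X^2 = r^2 e$, hence $r e = r^2 e$. Freeness gives $r = r^2$, and $r \neq 0$ because $\id_X \neq 0$ in the quotient: its image under $\LL_*$ is $\LL_* \neq 0$, being a basis element, since $\LL_* = \dLL^\lambda_{*,*}$ in $\Hom(X,\lambda)$ by (\ref{oacc:cc3}). As $\k$ is a domain, $r = 1$, so $e = \id_X$.

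The main obstacle is this last point. One must justify that $\id_X$ is nonzero in the quotient and that $\End$ is genuinely free on $e$, which needs Lemma \ref{lem:OACC_ideals} applied with care about where the quotient basis comes from. Once that is settled, $\LL_*$ and $\overline{\LL}_*$ are mutually inverse in $\scrC_{\{\lambda\}}^{\circ,\oplus}$.
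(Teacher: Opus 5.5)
Your proof is correct and follows essentially the same route as the paper: reducedness gives $\LL_*\circ\overline{\LL}_* = \id_\lambda$ in the quotient, and Lemma~\ref{lem:OACC_ideals} makes $\End(X)$ free of rank one on $e = \overline{\LL}_*\circ\LL_*$, so $\id_X = re$. The only difference is in pinning down $r$. The paper post-composes with $\LL_*$ to get $\LL_* = r\LL_*$ directly, which gives $r=1$ by freeness without needing idempotency or the domain hypothesis; this is the same computation you already use to show $r\neq 0$.
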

\begin{proof}
   Since $X$ is reduced, we must have that $\LL_* \circ \overline{\LL}_{\overline{*}} = \id_{\lambda}$ in $\scrC_{\{\lambda\}}^{\circ, \oplus}$.
   On the other hand, by Lemma \ref{lem:OACC_ideals}, we have an isomorphism $\End_{\scrC_{\{\lambda\}}^{\circ, \oplus}} (X) \cong R^0$.
   As a result, $\id_X = \alpha \dLL_{*, \overline{*}}^{\lambda}$ for some $\alpha \in R^0$. 
   By applying $\LL_{*} \circ (-)$ to both sides, we get that $\LL_* = \alpha \LL_{*}$. Therefore, $\alpha = 1$.
\end{proof}

By Lemma \ref{lem:singleton_identification}, for any reduced objects $X, Y \in \scrC$ with $\pi (X) = \lambda = \pi (Y)$, there is a canonical isomorphism $X \stackrel{\sim}{\to} Y$ in $\scrC_{\{\lambda\}}^{\circ, \oplus}$.
As a result, any object in $\scrC_{\{\lambda\}}^{\circ, \oplus}$ is canonically isomorphism to a direct sum of shifts of $\lambda$.
Moreover, by Lemma \ref{lem:OACC_ideals}, we know that $\End_{\scrC_{\{\lambda\}}^{\circ, \oplus}}^{\bullet} (\lambda) = R$.
From this, we deduce the following lemma.

\begin{lemma}\label{lem:single_stratum_equivalence}
    There is a canonical equivalence of categories
    \[\gamma : \scrC_{\{\lambda\}}^{\circ, \oplus} \stackrel{\sim}{\to} \Free^{\fg, \Z} (R)\]
    such that $\gamma (\lambda) = R$. Moreover, under $\gamma$, the autoequivalence (1) identifies with the shift of grading autoequivalence (1) on $\Free^{\fg, \Z} (R)$.
\end{lemma}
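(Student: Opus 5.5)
We will construct $\gamma$ as the functor represented by $\lambda$, namely $\gamma(Z) \coloneq \Hom_{\scrC_{\{\lambda\}}^{\circ, \oplus}}^{\bullet} (\lambda, Z)$, regarded as a graded $R$-module via precomposition with $\End^{\bullet}(\lambda)$. This is additive and commutes with shifts, since $\Hom^{\bullet}(\lambda, Z(1)) = \Hom^{\bullet}(\lambda, Z)(1)$ by the definition of $\Hom^{\bullet}$ in $\scrC^{\circ}$. It therefore intertwines $(1)$ with the grading shift on graded modules.

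The first step is to compute $\End_{\scrC_{\{\lambda\}}^{\circ,\oplus}}^{\bullet}(\lambda)$. Present $\{\lambda\}$ as $I_0 \setminus I_1$ with $I_0 = \{\leq \lambda\}$ and $I_1 = \{<\lambda\}$. By Lemma \ref{lem:OACC_ideals}, a graded $R$-basis of this endomorphism module is $\dLL^{\lambda}(\lambda,\lambda)$. By (\ref{oacc:cc3}) this set is the single element $\id_\lambda$. Hence $\End^{\bullet}(\lambda) \cong R$ as a graded ring, via $r \mapsto r\cdot\id_\lambda$. In particular $\gamma(\lambda) = R$, and $\gamma(\lambda(n)) = R(n)$, so $\gamma$ lands in $\Free^{\fg,\Z}(R)$.

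The second step is essential surjectivity and full faithfulness. Essential surjectivity onto finitely generated graded free modules is immediate, since every such module is a finite sum of $R(n)$'s. For full faithfulness, we first show that every object is isomorphic to a direct sum of shifts of $\lambda$. Objects of $\scrC_{\{\lambda\}}^{\circ,\oplus}$ are finite sums of $X(n)$ with $X$ reduced and $\pi(X) \in I_0$. If $\pi(X) < \lambda$, then $\id_X$ lies in $\mathfrak{J}_{I_1}$, so $X \cong 0$ in the quotient. If $\pi(X) = \lambda$, then Lemma \ref{lem:singleton_identification} gives a canonical isomorphism $X \cong \lambda$. By additivity it then suffices to check that $\gamma$ induces an isomorphism
\[\Hom(\lambda(n),\lambda(m)) = \End^{\bullet}(\lambda)^{m-n} = R^{m-n} \to \Hom_{R}(R(n),R(m)) = R^{m-n},\]
which is the identity under the first step.

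The main obstacle is the canonicity claim: we must check that the identifications do not depend on the chosen presentation $I_0\setminus I_1$. This should follow from the canonical equivalence cited from \cite[Lemma 4.3]{ARV}. The same canonicity is needed for the isomorphisms $X \cong \lambda$ given by $\LL_*$, and for those we would verify compatibility with composition via Remark \ref{rem:OACC_with_evals}(2).
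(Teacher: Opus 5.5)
Your proposal is correct and follows the paper's argument, which likewise deduces the lemma from two facts: every object of $\scrC_{\{\lambda\}}^{\circ,\oplus}$ is canonically isomorphic to a sum of shifts of $\lambda$ (Lemma \ref{lem:singleton_identification}), and $\End^{\bullet}(\lambda) = R$ (Lemma \ref{lem:OACC_ideals} with (CC3)); you additionally make the functor explicit as $\Hom^{\bullet}(\lambda,-)$ and treat the reduced objects with $\pi(X)<\lambda$, which vanish in the quotient. Because you define $\gamma$ representably it is already choice-free, so the canonicity you flag as the main obstacle is essentially automatic, given that the comparison functors between different presentations $I_0\setminus I_1$ send $\lambda$ to $\lambda$.
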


\subsubsection{Open and Closed Inclusions}

Let $I \subseteq \Lambda$ be a locally closed subset. Write $I = I_0 \setminus I_1$ for some closed subsets $I_1 \subset I_0 \subseteq \Lambda$.
Let $J \subseteq I$ be a closed subset of $I$. We can find some closed subset $J_0 \subseteq I_0$ such that $J = J_0 \setminus (J_0 \cap I_1)$. 
The natural embedding $\scrC_{J_0}^{\circ, \oplus} \hookrightarrow \scrC_{I_0}^{\circ, \oplus}$ induces a functor
\[j_{J*}^I : \scrC_{J}^{\circ, \oplus} = \scrC_{J_0}^{\circ, \oplus} \naivequotient \scrC_{J_0\cap I_1}^{\circ, \oplus} \to \scrC_{I_0}^{\circ, \oplus} \naivequotient \scrC_{I_1}^{\circ, \oplus} = \scrC_I^{\circ, \oplus}.\]
It is not hard to check from Lemma \ref{lem:OACC_ideals} that $j_{J*}^I$ functor is fully faithful and independent of the choices of $I_0$ and $J_0$.
Note that $j_{J*}^I$ commutes with $\DD$ and this construction is functorial with respect to compositions of closed inclusions in the obvious way.

Let $K \subset I$ be open in $I$.
Let $J = I \setminus K$ be the closed complement. We can write $J = J_0 \setminus (J_0 \cap I_1)$ as above. Then $K = I_0 \setminus K_1$ where $K_1 = J_0 \cup I_1$.
We then have a natural functor
\[j_{K}^{I*} : \scrC_I^{\circ, \oplus} = \scrC_{I_0}^{\circ, \oplus} \naivequotient \scrC_{I_1}^{\circ, \oplus} \to \scrC_{I_0}^{\circ, \oplus} \naivequotient \scrC_{K_1}^{\circ, \oplus} =  \scrC_K^{\circ, \oplus}.\]
It is not hard to check from the morphism space computations earlier that $j_{K}^{I*}$ is full and does not depend on the choices of $I_0$ and $J_0$.
Note that $j_{K}^{I*}$ commutes with $\DD$ and this construction is functorial with respect to compositions of open inclusions in the obvious way.

\subsection{Recollement}

\subsubsection{Mixed Derived Category}

Let $\scrC$ be an OACC with evaluation $\pi$.

\begin{definition}
    Let $I \subseteq \Lambda$ be a locally closed subset.
    We define the \emph{mixed derived category} of $\scrC$ supported on $I$ as the bounded homotopy category
    \[D_I^m (\scrC) \coloneq K^b \scrC_{I}^{\circ, \oplus}.\]
\end{definition}

The mixed derived category admits two ``shift'' functors:
\begin{enumerate}
    \item the internal shift inherited from $\scrC_{I}^{\circ, \oplus}$ which is denoted by $(1)$;
    \item the cohomological shift which is denoted by $[1]$.
\end{enumerate} 
We also define the \emph{Tate twist} by $\langle 1 \rangle \coloneq [1] (-1)$.
The contravariant involution $\DD$ on $\scrC_{I}^{\circ, \oplus}$ also induces a contravariant involution on $D_I^m (\scrC)$ which will also be denoted by $\DD$.

If $J \subseteq I$ is a closed subset and $K \subseteq I$ is an open subset. The functors $j_{J*}^I$ and $j_{K}^{I*}$ induce similarly denoted functors on bounded homotopy categories
\[j_{J*}^I : D_J^m (\scrC) \to D_I^m (\scrC) \qquad\text{and}\qquad j_{K}^{I*} : D_I^m (\scrC) \to D_K^m (\scrC).\]
Both $j_{J*}^I$ and $j_{K}^{I*}$ commute with $\DD$.

\subsubsection{Recollement}

\begin{proposition}[{\cite[Proposition 5.6]{ARV}}]\label{prop:recollement}
    Let $I \subseteq \Lambda$ be a locally closed subset, and $J \subset I$ be a finite closed subset.
    Then the functor $j_{I \setminus J}^{I*} : D_I^m (\scrC) \to D_{I \setminus J}^m (\scrC)$ admits a left adjoint  $j_{I \setminus J !}^{I}$ and a right adjoint $j_{I \setminus J *}^{I}$.
    Similarly, the functor $j_{J*}^I : D_J^m (\scrC) \to D_I^m (\scrC)$ admits a left adjoint $j_{J}^{I*}$ and a right adjoint $j_J^{I!}$.
    Together, these functors constitute a recollement diagram
    \[\begin{tikzcd}
        {D_J^m (\scrC)} \arrow[rr, "j_{J*}^{I}" description] &  & {D_I^m (\scrC)} \arrow[rr, "j_{I \setminus J}^{I*}" description] \arrow[ll, "j_J^{I*}"', bend right, shift left] \arrow[ll, "j_J^{I!}", bend left] &  & {D_{I \setminus J}^m (\scrC).} \arrow[ll, "j_{I \setminus J !}^{I}"', bend right] \arrow[ll, "j_{I \setminus J *}^{I}", bend left]
        \end{tikzcd}\]
\end{proposition}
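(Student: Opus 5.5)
The argument follows \cite[Proposition 5.6]{ARV}, whose input is only the double leaves formalism encoded in the OACC axioms. We first reduce to $J=\{\lambda\}$ a single closed point of $I$. A finite closed $J$ admits an enumeration $\lambda_1,\ldots,\lambda_k$ with each $\{\lambda_1,\ldots,\lambda_i\}$ closed, and adjoints compose. The recollement axioms (fully faithful $j_{J*}^I$ and $j_{I\setminus J*}^I$, vanishing $j_{I\setminus J}^{I*}j_{J*}^I=0$, and the gluing triangles) can then be assembled inductively from the one-point case. We also reduce to $I$ closed by working in $\scrC_{I_0}^{\circ,\oplus}\naivequotient\scrC_{I_1}^{\circ,\oplus}$, using Lemma \ref{lem:OACC_ideals}: Hom spaces there are free with basis the double leaves through $I$.

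For $J=\{\lambda\}$ closed in $I$, we construct $j_J^{I*}$ and $j_J^{I!}$ at the level of additive categories of objects. Each reduced $X$ with $\pi(X)\in I$ yields a free graded $R$-module $\Hom^\bullet(X,\lambda)$ with basis $\{\LL_T\}_{T\in E(X,\lambda)}$. Under the equivalence $\gamma$ of Lemma \ref{lem:single_stratum_equivalence}, we set
\[
j_J^{I*}X=\lambda\,\underline{\otimes}_R\,\DD\Hom^\bullet(X,\lambda)
\]
(suitably dualized), with the canonical morphism (\ref{eq:canon_mor_internal_tensor}) giving $X\to j_{J*}j_J^{I*}X$. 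Lemma \ref{lem:OACC_ideals} shows this is a bijection on Homs into objects of $\scrC_J^{\circ,\oplus}$, since maps factoring through $\lambda$ are spanned by $\dLL^\lambda$. It therefore induces the left adjoint on $K^b$ termwise. The right adjoint $j_J^{I!}$ is obtained by applying $\DD$ (CC2).

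The adjoints to $j_{I\setminus J}^{I*}$ are harder, since $j^*$ is only full and not an equivalence on objects. For $Y\in\scrC_{I\setminus J}^{\circ,\oplus}$ we follow ARV and define $j_!Y$ as the cone of the canonical map
\[
Y\to \lambda\,\underline{\otimes}_R\,\Hom^\bullet(Y,\lambda),
\]
and dually $j_*Y$ as the cocone of
\[
\lambda\,\underline{\otimes}_R\,\DD\Hom^\bullet(\lambda,Y)\to Y,
\]
with the lift of $Y$ taken to be the same object regarded in $\scrC_I^{\circ,\oplus}$. We then extend to complexes by totalization. The key computation is that $\Hom_{D_I^m}(j_!Y,j_{J*}Z)=0$ for $Z$ supported on $\lambda$. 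This holds because the canonical map induces an isomorphism on $\Hom^\bullet(-,\lambda)$, by the cellular basis (CC1), (CC4), and $\lambda$ reduced (CC3). Combined with the fact that Homs from $Y$ to objects of $\scrC_I$ surject onto Homs in the quotient with kernel spanned by $\dLL^\lambda$ (Lemma \ref{lem:OACC_ideals}), this yields
\[
\Hom(j_!Y,X)\cong\Hom(Y,j^*X).
\]

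Verifying this on the homotopy category requires care with morphisms of complexes, not just objects, and we expect it to be the main obstacle. Functoriality of $j_!$ on maps $Y\to Y'$ in the naive quotient is also delicate: a map in the quotient lifts only up to an element of $\mathfrak{J}_{\{\lambda\}}$, and one must check that the cone construction absorbs this ambiguity up to homotopy. We plan to handle it by a two-term-complex argument using the canonicity of (\ref{eq:canon_mor_internal_tensor}), as in ARV. Once the adjoints exist, the remaining recollement axioms, namely full faithfulness and the triangles $j_!j^*\to\id\to j_{J*}j_J^*$, follow formally from $j^*j_{J*}=0$ and $j^*j_!\cong\id$, which are visible directly from the constructions.
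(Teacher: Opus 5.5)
Your outline follows the Achar--Riche--Vay argument that the paper cites without proof. That means: reduce to a closed point $\lambda$ of $I$, build $j_J^{I*}$ and $j_J^{I!}$ additively from $\Hom^\bullet(-,\lambda)$ and $\Hom^\bullet(\lambda,-)$, and build $j^I_{I\setminus J!}$, $j^I_{I\setminus J*}$ from two-term complexes. However, your explicit formulas are reversed.

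\begin{itemize}
\item A canonical map out of $Y$ exists only into $\lambda\underline{\otimes}_R\Hom^\bullet(Y,\lambda)^\vee$ (the $R$-linear dual, via the coevaluation element). By contrast, (\ref{eq:canon_mor_internal_tensor}) is a canonical map into $Y$ from the undualized $\lambda\underline{\otimes}_R\Hom^\bullet(\lambda,Y)$. So your duals sit on the wrong sides.
\item Relatedly, (\ref{eq:canon_mor_internal_tensor}) gives the counit $j^I_{J*}j_J^{I!}X\to X$, not the unit $X\to j^I_{J*}j_J^{I*}X$.
\item $j^I_{I\setminus J!}Y$ must be the \emph{cocone} $[Y\to\lambda\underline{\otimes}_R\Hom^\bullet(Y,\lambda)^\vee]$ with $Y$ in degree $0$; compare $\Delta_s^{\scrL}$ in Example \ref{ex:stds_and_costds_for_simples}.
\item $j^I_{I\setminus J*}Y$ must be the \emph{cone} of $\lambda\underline{\otimes}_R\Hom^\bullet(\lambda,Y)\to Y$, as in Example \ref{ex:min_rec_dts}.
\end{itemize}

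With your choices, $j^{I*}_{I\setminus J}j^I_{I\setminus J!}\cong[1]$ and $j^{I*}_{I\setminus J}j^I_{I\setminus J*}\cong[-1]$. So the identities you call ``visible from the constructions'' are false as written, though a shift repairs them.

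The genuine gap is the step ``extend to complexes by totalization.''

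\emph{Why it fails.} A complex $(Y^\bullet,d)$ in $\scrC_{I\setminus J}^{\circ,\oplus}$ satisfies $d^2=0$ only modulo $\mathfrak{J}_{\{\lambda\}}$. Lifted differentials $\tilde d$ in $\scrC_I^{\circ,\oplus}$ therefore have $\tilde d^2$ equal to a generally nonzero combination of the $\dLL^\lambda_{S,T}$. The termwise two-term complexes then do not form a double complex, and they cannot simply be totalized. Your ``two-term-complex argument using canonicity'' does not supply the extra input this needs.

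\emph{Missing ingredient: uniqueness.} For $\lambda$ closed in $I$ and $Y,Z\in\scrC_I^{\circ,\oplus}$, the composition map $\Hom^\bullet(Y,\lambda)\otimes_R\Hom^\bullet(\lambda,Z)\to\Hom^\bullet(Y,Z)$ is injective.
\begin{itemize}
\item By (CC1), (CC3) and minimality of $\lambda$, the source has basis $\LL_T\otimes\overline{\LL}_S$. Minimality is used to force $\overline{E}(\mu,\lambda)=\emptyset$ for $\mu\in I\setminus\{\lambda\}$.
\item This basis is sent to the $\dLL^\lambda_{S,T}$, which are linearly independent.
\item Equivalently, post-composition with the canonical map $c_Z\colon\lambda\underline{\otimes}_R\Hom^\bullet(\lambda,Z)\to Z$ is injective.
\end{itemize}

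\emph{How it closes the gap.} Write $\tilde d^{n+1}\tilde d^n=-c_{Y^{n+2}}h^n$ for a unique $h^n$. Take the complex with terms $Y^n\oplus\lambda\underline{\otimes}_R\Hom^\bullet(\lambda,Y^{n+1})$ and differential $\begin{pmatrix}\tilde d^n & c_{Y^{n+1}}\\ h^n & -\lambda\underline{\otimes}_R(\tilde d^{n+1}\circ-)\end{pmatrix}$. This squares to zero on the nose:
\begin{itemize}
\item The top row of $D^2$ vanishes by the choice of $h$ and naturality of $c$.
\item The bottom row vanishes after composing with $c$, hence vanishes by injectivity.
\end{itemize}
The same uniqueness corrects chain maps and homotopies that commute only modulo $\mathfrak{J}_{\{\lambda\}}$, and it shows independence of the chosen lifts. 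Together these are what make $j^I_{I\setminus J*}$ (and dually $j^I_{I\setminus J!}$) into functors on $K^b$.

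Finally, the gluing triangles do not follow from $j^{I*}_{I\setminus J}j^I_{J*}=0$ and $j^{I*}_{I\setminus J}j^I_{I\setminus J!}\cong\id$ alone. You also need $\ker j^{I*}_{I\setminus J}$ to equal the essential image of $j^I_{J*}$. This follows from the object-level triangle $j_!j^*X\to X\to j_{J*}j_J^*X$ together with generation of $D^m_I(\scrC)$ by single-term complexes.
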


We will not provide a proof of Proposition \ref{prop:recollement} instead referring to \cite{ARV}.
The argument provided in \emph{loc. cit.} is rather technical; however, it translates using the facts provided in the previous subsections.

\begin{example}\label{ex:min_rec_dts}
It is enlightening to see how $j_{I \setminus \{\lambda\} *}^I$ is defined on objects when $I$ is locally closed and $\lambda \in I$ is minimal (see \cite[\S 5.2]{ARV}).
Let $X \in \scrC$ be reduced such that $\pi (X) \in I \setminus \{\lambda\}$. The underlying complex of $j_{I \setminus \{\lambda\} *}^I X$ is given by
\[\ldots \to 0 \to \lambda \underline{\otimes}_R \Hom_{\scrC_{I}^{\circ, \oplus}}^{\bullet} (\lambda, X) \to X \to 0 \to \ldots,\]
where $X$ is in cohomological degree $0$. The only nontrivial differential is given by the canonical morphism from (\ref{eq:canon_mor_internal_tensor}),
\[\lambda \underline{\otimes}_R \Hom_{\scrC_{I}^{\circ, \oplus}}^{\bullet} (\lambda, X) \to X.\]
Moreover, there is a canonical distinguished triangle
\begin{equation}
    X \to j_{I \setminus \{\lambda\} *}^I X \to \lambda \underline{\otimes}_R \Hom_{\scrC_{I}^{\circ, \oplus}}^{\bullet} (\lambda, X) [1] \to
\end{equation}
in $D_I^m (\scrC)$. 
\end{example}

\subsubsection{Pushforward and Pullback under Locally Closed Inclusions}

Let $J \subseteq I \subseteq \Lambda$ be finite locally closed subsets.
We can write $J = J_0 \setminus J_1$ for some closed subsets $J_1 \subset J_0 \subset I$.
We define the pushforward and pullback functors
\begin{align*}
    j_{J*}^I &\coloneq j_{J_0 *}^I \circ j_{J*}^{J_0} : D_J^m (\scrC) \to D_I^m (\scrC), & j_{J!}^I &\coloneq j_{J_0 !}^I \circ j_{J!}^{J_0} : D_J^m (\scrC) \to D_I^m (\scrC), \\
j_J^{I*} &\coloneq j_J^{I \setminus J_1 *} \circ j_{I \setminus J_1}^{I*} : D_I^m (\scrC) \to D_J^m (\scrC), & j_J^{I!} &\coloneq j_J^{I \setminus J_1 !} \circ j_{I \setminus J_1}^{I!} : D_I^m (\scrC) \to D_J^m (\scrC).
\end{align*}
It follows from \cite[Lemma 5.10]{ARV} that these functors are naturally independent of the choices of $J_0$ and $J_1$.  
Moreover, $\left( j_{J!}^I, j_{J}^{I!} \right)$ and $\left( j_{J}^{I*}, j_{J*}^I\right)$ are adjoint pairs of functors.
We also have canonical isomorphisms
\begin{equation}\label{eq:verdier_duality_and_pushforwards}
    \DD \circ j_{J*}^{I} \cong j_{J!}^{I} \circ \DD \qquad\text{and}\qquad \DD \circ j_{J}^{I!} \cong j_{J}^{I*} \circ \DD.
\end{equation}
By Proposition \ref{prop:recollement}, the adjunction morphisms
\[j_{J}^{I*} \circ j_{J*}^{I} \to \id \qquad\text{and}\qquad \id \to j_{J}^{I!} \circ j_{J!}^I \]
are isomorphisms. Finally, we note that when $J \subset I$ is closed, the $*$-pushforward and $!$-pushforward coincide.
Similarly, when $J \subseteq I$ is open, the $*$-pullback and $!$-pullback coincide.

The four functors are also functorial with respect to compositions of locally closed subsets.
\begin{lemma}
    Let $I \subseteq \Lambda$ be a finite locally closed subset, and let $J \subset I$ and $K \subset J$ be locally closed subsets.
    Then there exist canonical isomorphisms
    \begin{align*}
        j_{J*}^I \circ j_{K*}^J &\cong j_{K*}^I, & j_{J!}^I \circ j_{K!}^J &\cong j_{K!}^I, \\
        j_{K}^{J*} \circ j_J^{I*} &\cong j_K^{I*}, & j_{K}^{J!} \circ j_J^{I!} &\cong j_K^{I!}.
    \end{align*}
\end{lemma}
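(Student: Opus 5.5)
The plan is to build all four isomorphisms from the two elementary cases, closed-in-closed and open-in-open, and then handle the mixed cases by a base-change comparison. First, when $K \subseteq J \subseteq I$ with $K$ closed in $J$ and $J$ closed in $I$, I will check that $j_{J*}^I \circ j_{K*}^J = j_{K*}^I$ holds already on the level of the naive quotients $\scrC^{\circ,\oplus}$. Indeed, all three functors are induced by the full embeddings $\scrC_{K_0}^{\circ,\oplus} \hookrightarrow \scrC_{J_0}^{\circ,\oplus} \hookrightarrow \scrC_{I_0}^{\circ,\oplus}$, and Lemma \ref{lem:OACC_ideals} shows they do not depend on the choice of presentations. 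Similarly, for $K$ open in $J$ open in $I$, the functors $j^{*}$ are induced by successive naive quotients, and these compose to the quotient by the larger ideal. Passing to $K^b$ gives the $*$-pushforward statement for closed embeddings and the $*$-pullback statement for open embeddings. Uniqueness of adjoints then yields the $!$-pushforward and $!$-pullback in these two cases. For example, $j_{K!}^{J}$ for $K$ open is left adjoint to $j_K^{J*}$, so the identity $j^{J*}_K j^{I*}_J \cong j^{I*}_K$ produces $j^I_{J!} j^J_{K!} \cong j^I_{K!}$. The same reasoning with the right adjoints from Proposition \ref{prop:recollement} handles $j_*$ for open and $j^!$ for closed embeddings.

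Next I will reduce the general statement to these cases. Write $K = K_0 \setminus K_1$ inside $J$, and write $J = J_0 \setminus J_1$ inside $I$. By definition, $j_{K*}^I = j_{\overline{K}*}^I \circ j_{K*}^{\overline{K}}$ (closed after open), where the closure is taken in $I$, and similarly for $J$. The key intermediate claim is a base-change isomorphism. If $Z \subseteq I$ is closed and $U \subseteq I$ is open, then
\[ j_{U}^{I*} \circ j_{Z*}^{I} \cong j_{Z\cap U *}^{U} \circ j_{Z \cap U}^{Z *}, \]
which will let me commute an open restriction past a closed pushforward. This follows on the naive quotient level for the same reason as before, namely both sides are induced by the inclusion of $\scrC_{Z_0}$ modulo the appropriate ideal. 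Its left-adjoint counterpart, $j_{U!}$ versus $j_{Z}^{*}$, comes from adjunction, in the form of the usual recollement base change. With this in hand, $j_{J*}^I j_{K*}^J = j^I_{\overline{J}*} j^{\overline J}_{J*} j^{J}_{\overline K^J *} j^{\overline K^J}_{K*}$. I will rewrite the middle pair as $j^{\overline J}_{Z *} j^{Z}_{Z\cap J *}$, with $Z$ the closure of $K$ in $\overline J$. This is the step where base change for $j_*$ along an open embedding and a closed embedding is needed, and it follows by taking right adjoints of the base change above. The composite then collapses to $j^I_{\overline K*} j^{\overline K}_{K*}$ using the elementary cases, and this is the definition of $j^I_{K*}$. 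The other three isomorphisms follow either dually, via $\DD$ and \eqref{eq:verdier_duality_and_pushforwards}, or by adjunction: the statement for $j^*$ is the left adjoint of the one for $j_*$, and the statement for $j_!$ transforms under $\DD$ into the one for $j_*$.

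I expect the main obstacle to be the mixed base-change step, where an open pushforward $j_*$ (the right adjoint built in Proposition \ref{prop:recollement}) must be interchanged with a closed inclusion. Here one cannot simply argue on naive quotients, because $j_{U*}$ is not induced by a functor of additive categories. My first attempt will be to verify the isomorphism on the generating objects $X$ reduced with $\pi(X)\in U$. For a minimal removed stratum I can use the explicit cone description of Example \ref{ex:min_rec_dts}, and then induct on the finitely many strata removed, using that both sides are triangulated functors. This is the argument of \cite[Lemma 5.10]{ARV}, and I will follow it, checking that it relies only on axioms (\ref{oacc:cc1})--(\ref{oacc:cc4}).
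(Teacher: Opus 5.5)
Your reduction is organized correctly. The closed--closed and open--open cases do hold on the naive quotients. The statements for $j^*$, $j^!$, $j_!$ follow from the one for $j_*$ by the stated adjunctions and \eqref{eq:verdier_duality_and_pushforwards}. Once the middle pair is commuted, the composite collapses to $j^I_{\overline{K}*}j^{\overline{K}}_{K*}$. (The paper itself gives no argument here; the lemma is imported from \cite{ARV}.)

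The gap is in the one step with real content. Put $X=\overline{J}$, $U=J$, $Z$ the closure of $K$ in $X$, and $Z'=Z\cap U$. You need $j^X_{U*}\circ j^U_{Z'*}\cong j^X_{Z*}\circ j^Z_{Z'*}$, which is a commutation of two pushforwards. Taking right adjoints of $j^{X*}_U j^X_{Z*}\cong j^U_{Z'*}j^{Z*}_{Z'}$ does not give this; it gives $j^{X!}_Z j^X_{U*}\cong j^Z_{Z'*}j^{U!}_{Z'}$. From that you only get $j^X_{Z*}j^Z_{Z'*}\cong j^X_{Z*}j^{X!}_Z\,j^X_{U*}j^U_{Z'*}$. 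To remove $j^X_{Z*}j^{X!}_Z$ you must know that $j^X_{U*}j^U_{Z'*}$ is supported on $Z$, i.e.\ that $j^{X*}_{X\setminus Z}\,j^X_{U*}\,j^U_{Z'*}=0$. This is a genuine support property and does not come out of taking adjoints of the additive-level identities. So the justification in your second paragraph fails, and your last paragraph, which rightly flags this step, offers only a plan. Also, an objectwise check on generators needs a natural comparison map first; the unit of $(j^{X*}_U,j^X_{U*})$ provides one.

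The missing input is the following vanishing: for disjoint closed $C,W\subseteq X$, $\Hom_{D^m_X(\scrC)}(j^X_{C*}A,j^X_{W*}B)=0$. This holds already at chain level. By Lemma~\ref{lem:OACC_ideals}, the evaluation axiom and the bijection $E(Y,\mu)\leftrightarrow\overline{E}(\mu,Y)$, a basis element $\dLL^\mu$ from a reduced object over $C$ to one over $W$ forces $\mu\in C\cap W=\emptyset$.

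With this vanishing you get open--open base change, $j^{X*}_V j^X_{U*}\cong j^V_{U\cap V*}j^{U*}_{U\cap V}$:
\begin{itemize}
\item Reduce to $X=U\cup V$, so that $X\setminus U$ and $X\setminus V$ are disjoint closed subsets.
\item Show that $j^{V!}_{X\setminus U}$ of the left side vanishes, using the recollement triangle for $(X\setminus V,V)$ together with $j^{X!}_{X\setminus U}j^X_{U*}=0$.
\end{itemize}
Taking $V=X\setminus Z$ yields the required vanishing, since $j^{U*}_{U\cap V}j^U_{Z'*}=0$ already on naive quotients. The commutation then follows.

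Either this argument, or the explicit cone computation of \cite{ARV} you allude to, has to be supplied. As written, the proposal assumes its only non-formal step.
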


Let $I \subseteq \Lambda$ be a finite locally closed subset.
If $\lambda \in I$, then we will simplify the notation for the pullbacks and pushforwards corresponding to the inclusion $\{ \lambda \} \subset I$ by replacing $\{ \lambda \}$ with $\lambda$.

\subsection{Standards and Costandards}\label{subsub:stds_and_costds}

\subsubsection{Generation}

\begin{definition}
    The \emph{mixed derived category} of $\scrC$ is the bounded homotopy category
\[D^m (\scrC) \coloneq K^b \scrC^{\circ, \oplus}.\]
\end{definition}

As in Remark \ref{rem:failure_of_additive_generation}, $\scrC^{\circ, \oplus}$ is not generally equivalent to $\scrC_{\Lambda}^{\circ, \oplus}$.
The situation is somewhat rectified after passing to mixed derived categories provided the following generation assumption holds.

\begin{assumption}\label{ass:generation_assumption}
    Every object $X \in \scrC$ when viewed as an object in $D^m (\scrC)$ is isomorphic to an object in $D_I^m (\scrC)$ for some finite locally closed subset $I \subseteq \Lambda$.
\end{assumption}

Whenever Assumption \ref{ass:generation_assumption} holds, the canonical embedding
\[D_{\Lambda}^m (\scrC) \hookrightarrow D^m (\scrC)\]
is an equivalence of categories. Indeed, the assumption implies that $D^m (\scrC)$ is generated as a triangulated category by reduced objects in $\scrC$. Therefore, the embedding is essentially surjective.

\begin{remark}
    Assumption \ref{ass:generation_assumption} is fairly mild. For example, this is known to hold whenever $\k$ is a complete local ring or a field by \cite[Proposition 2.24]{EL}.
    Additionally, it holds without any restriction on $\k$ when $\scrC$ is the Hecke category of a Coxeter group \cite[Lemma 6.2]{ARV}; although, their proof crucially uses properties of the Hecke category which are not featured in a general OACC.
    It would be interesting to reinterpret Assumption \ref{ass:generation_assumption} as a condition on the cellular structure. 
    This could lead to interesting generalizations of categorification results that historically have been beholden to constraints on $\k$ (see Proposition \ref{prop:OACC_and_categorification}).   
\end{remark}

\subsubsection{Definitions of Standards and Costandards}

Let $I \subseteq \Lambda$ be a finite locally closed subset.
For each $\lambda \in I \subseteq \Lambda$, we define the \emph{standard} and \emph{costandard} objects in $D_I^m (\scrC)$ respectively by
\[\Delta_{\lambda}^I \coloneq j_{\lambda !}^I \lambda \qquad\text{and}\qquad \nabla_{\lambda}^I \coloneq j_{\lambda *}^I \lambda.\]
When $I = \Lambda$, we will sometimes omit the superscript in the notation.

The following lemmas are simple consequences of recollement (Proposition \ref{prop:recollement}), Lemma \ref{lem:single_stratum_equivalence}, and other basic properties of pushforwards/pullbacks along locally closed subsets.

\begin{lemma}[{\cite[Lemma 6.6]{ARV}}]\label{lem:hom_vanishing_for_stds_and_costds}
    Let $I \subseteq \Lambda$ be a finite locally closed subset. Let $\lambda, \mu \in I$.
    Then we have
    \[\Hom_{D_I^m (\scrC)} (\Delta_{\lambda}^I, \nabla_{\mu}^I \langle n \rangle [m]) \cong \begin{cases} R^m & \text{if } \lambda=\mu\text{ and }m=-n, \\ 0 & \text{otherwise.}\end{cases}\]
\end{lemma}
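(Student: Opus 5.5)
We use adjunction to move the computation onto the single stratum $\{\lambda\}$ or $\{\mu\}$, where Lemma \ref{lem:single_stratum_equivalence} makes everything explicit. Since $\Delta_\lambda^I = j_{\lambda!}^I \lambda$ and $(j_{\lambda!}^I, j_\lambda^{I!})$ is an adjoint pair, we have
\[\Hom_{D_I^m(\scrC)}(\Delta_\lambda^I, \nabla_\mu^I\langle n\rangle[m]) \cong \Hom_{D_{\{\lambda\}}^m(\scrC)}(\lambda, j_\lambda^{I!} j_{\mu*}^I \mu \langle n\rangle [m]),\]
where we use that shifts and twists commute with the pushforward and pullback functors, since they are induced from additive functors compatible with $(1)$.

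\emph{Case $\lambda=\mu$.} The unit $\id \to j_\lambda^{I!} j_{\lambda!}^I$ is an isomorphism. Dually, and via the stated isomorphism $j_\lambda^{I*} j_{\lambda*}^I\cong\id$ together with the comparison of $!$ and $*$ pullbacks on a singleton, we get $j_\lambda^{I!} j_{\lambda*}^I \cong \id$. To justify the latter cleanly, factor $\{\lambda\} \subseteq I$ as $\{\lambda\}$ closed in an open $I' \subseteq I$. Then $j_{\lambda*}^I = j_{I'*}^I j_{\lambda*}^{I'}$, and $j_\lambda^{I!} = j_\lambda^{I'!} j_{I'}^{I*}$ because open $!$-pullback equals $*$-pullback. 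Now $j_{I'}^{I*} j_{I'*}^I \cong \id$ by recollement, and $j_\lambda^{I'!} j_{\lambda *}^{I'}\cong\id$ since the closed pushforward is fully faithful (Proposition \ref{prop:recollement}). This reduces the Hom space to
\[\Hom_{D^m_{\{\lambda\}}}(\lambda, \lambda\langle n\rangle[m]) = \Hom_{K^b\Free^{\fg,\Z}(R)}(R, R(-n)[n+m])\]
via $\gamma$. Since $R$ is free, this is $0$ unless $n+m=0$, in which case it is $(R(-n))^0 = R^{-n} = R^m$.

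\emph{Case $\lambda\neq\mu$.} We show $j_\lambda^{I!} j_{\mu*}^I = 0$. Here we use the recollement identities $j_{I\setminus J}^{I*} j_{J*}^I = 0$ and $j_J^{I!} j_{I\setminus J*}^I = 0$, with $J$ chosen depending on the relative position of $\lambda$ and $\mu$. Replacing $I$ by the locally closed set $\overline{\{\lambda,\mu\}}\cap I$ is harmless by functoriality of compositions. If $\mu\not\le\lambda$, take $U = I \setminus \overline{\{\mu\}}$; it is open, contains $\lambda$, and misses $\mu$. Then $j_\lambda^{I!}$ factors through $j_U^{I*}$, which kills anything pushed forward from the closed set $\overline{\{\mu\}}\cap I \ni \mu$. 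If instead $\mu<\lambda$, then $\lambda\not\le\mu$, so $\{\mu\}$ lies in the open set $V = I\setminus\overline{\{\lambda\}}$. Factor $j_{\mu*}^I = j_{V*}^I j_{\mu*}^V$, and note that $j_\lambda^{I!}$ factors through $j^{I!}_{\overline{\{\lambda\}}\cap I}$, which kills the image of $j_{V*}^I$.

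The main obstacle is purely bookkeeping: making sure that the factorizations of $j_{\lambda}^{I!}$ and $j_{\mu*}^I$ used above agree with the definitions of the locally closed pushforwards and pullbacks up to the canonical isomorphisms of \cite[Lemma 5.10]{ARV}. Granting that, both cases follow formally from Proposition \ref{prop:recollement} and Lemma \ref{lem:single_stratum_equivalence}.
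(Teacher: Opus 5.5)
Your strategy is the one the paper intends. The paper gives no argument beyond citing \cite[Lemma 6.6]{ARV} and calling the lemma a consequence of recollement and Lemma \ref{lem:single_stratum_equivalence}. Your plan is: adjunction $(j_{\lambda!}^I, j_\lambda^{I!})$; then $j_\lambda^{I!} j_{\lambda*}^I \cong \id$; then $j_\lambda^{I!} j_{\mu*}^I = 0$ for $\lambda \neq \mu$; then a computation in $D^m_{\{\lambda\}}(\scrC)$. Your $\lambda = \mu$ case is correct, including the factorization through $I' = I \setminus (\{<\lambda\} \cap I)$ and the count $R(-n)^0 = R^{-n} = R^m$.

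\textbf{The gap: the case split for $\lambda \neq \mu$ is wrong as written.} In the paper's conventions closed sets are downward closed, so $\overline{\{\mu\}} = \{\le \mu\}$.
\begin{itemize}
\item First case. $U = I \setminus \overline{\{\mu\}}$ contains $\lambda$ if and only if $\lambda \not\le \mu$, not $\mu \not\le \lambda$. If $\lambda < \mu$, your hypothesis $\mu \not\le \lambda$ holds, but $\lambda \notin U$. Then $j_\lambda^{I!}$ does not factor through $j_U^{I*}$, and the argument breaks.
\item Second case. $\mu < \lambda$ gives $\mu \in \overline{\{\lambda\}}$, so $\mu \notin V = I \setminus \overline{\{\lambda\}}$. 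The factorization $j_{\mu*}^I = j_{V*}^I j_{\mu*}^V$ is then unavailable.
\end{itemize}
You appear to have used upward closures. As stated, the cases $\lambda < \mu$ and $\mu < \lambda$ are both left unproved.

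\textbf{The fix is to swap the hypotheses.} If $\lambda \not\le \mu$, use $U = I \setminus \overline{\{\mu\}}$ (your first argument). If $\mu \not\le \lambda$, use $V = I \setminus \overline{\{\lambda\}}$ (your second argument). Since $\lambda \neq \mu$, antisymmetry guarantees at least one of these holds. With that change both arguments go through exactly as you wrote them.

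Two smaller points:
\begin{itemize}
\item Drop the sentence appealing to a ``comparison of $!$ and $*$ pullbacks on a singleton''. A singleton stratum is in general neither open nor closed in $I$, so $j_\lambda^{I!}$ and $j_\lambda^{I*}$ need not agree. Your factorization through the open set $I'$ is the correct justification.
\item The reduction to $\overline{\{\lambda,\mu\}} \cap I$ is never used and can be omitted.
\end{itemize}
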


\begin{lemma}[{\cite[Lemma 6.8]{ARV}}]\label{lem:pushforwards_and_pullbacks_of_stds}
    Let $I \subseteq \Lambda$ be a finite locally closed subset, and let $J \subset I$ be a locally closed subset.
    Then for any $\lambda \in J$, we have
    \[j_{J!}^I \Delta_{\lambda}^J \cong \Delta_{\lambda}^I \qquad\text{and}\qquad j_{J*}^I \nabla_{\lambda}^J \cong \nabla_{\lambda}^I. \]
    Likewise for any $\mu \in I$, we have
    \[j_J^{I*} \Delta_{\mu}^I \cong \begin{cases} \Delta_{\mu}^J & \text{if } \mu \in J, \\ 0 & \text{otherwise,}\end{cases} \qquad\text{and}\qquad j_J^{I!} \nabla_{\mu}^I \cong \begin{cases} \nabla_{\mu}^J & \text{if } \mu \in J, \\ 0 & \text{otherwise.}\end{cases}\]
\end{lemma}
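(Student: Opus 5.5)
The plan is to prove the four isomorphisms by combining the recollement of Proposition \ref{prop:recollement} with the transitivity of pushforwards and pullbacks along compositions of locally closed inclusions. I treat the standard objects first; the costandard statements then follow either by the same argument with $*$ and $!$ exchanged, or by applying $\DD$ together with (\ref{eq:verdier_duality_and_pushforwards}) and $\DD(\lambda)\cong\lambda$.

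\textbf{Pushforward.} For the first isomorphism I unwind the definitions: $j_{J!}^I\Delta_\lambda^J = j_{J!}^I j_{\lambda!}^J\lambda$. The composition lemma for locally closed inclusions $\{\lambda\}\subset J\subset I$ identifies this with $j_{\lambda!}^I\lambda=\Delta_\lambda^I$. The isomorphism $j_{J*}^I\nabla_\lambda^J\cong\nabla_\lambda^I$ is obtained in exactly the same way with $*$ in place of $!$.

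\textbf{Pullback, case $\mu\in J$.} For $j_J^{I*}\Delta_\mu^I$ with $\mu\in J$, I use that $\{\mu\}\subset J$ is locally closed and factor $j_{\mu!}^I \cong j_{J!}^I\circ j_{\mu!}^J$. This gives
\[
j_J^{I*}\Delta_\mu^I\cong j_J^{I*}j_{J!}^I\Delta_\mu^J .
\]
The remaining point is that $j_J^{I*}j_{J!}^I\cong\id$. To see this, write $J=J_0\setminus J_1$ with $J_1\subset J_0\subset I$ closed, so that $j_{J!}^I=j_{J_0!}^I j_{J!}^{J_0}$. Since $J_0$ is closed in $I$, the closed pushforward satisfies $j_{J_0!}^I=j_{J_0*}^I$, and recollement gives $j_{J_0}^{I*}j_{J_0*}^I\cong\id$. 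The open part $J\subset J_0$ is handled by the recollement identity $j^{*}j_!\cong\id$ for open inclusions. Combining the two pieces via the transitivity isomorphisms yields $j_J^{I*}j_{J!}^I\cong\id$.

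\textbf{Pullback, case $\mu\notin J$.} This vanishing statement is the step I expect to need the most care. Here I factor $\Delta_\mu^I$ through an open or closed piece that avoids $J$, using the fact that in a recollement $j_{\text{closed}}^{*}\,j_{\text{open}!}=0$. I first reduce to $J$ being either open or closed in $I$, by factoring $J=J_0\setminus J_1$ as above and using transitivity.
\begin{itemize}
\item If $J$ is closed and $\mu\in I\setminus J$, which is open, then $\Delta_\mu^I\cong j_{I\setminus J!}^I\Delta_\mu^{I\setminus J}$, and $j_J^{I*}j_{I\setminus J!}^I=0$ by recollement.
\item If $J$ is open and $\mu$ lies in the closed complement $Z=I\setminus J$, then $\Delta_\mu^I\cong j_{Z*}^I\Delta_\mu^Z$, and $j_J^{I*}j_{Z*}^I=0$, again by recollement.
\end{itemize}

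For the general case I check that $\mu\notin J=J_0\setminus J_1$ falls into one of these two situations at some stage of the factorization $J\subset J_0\subset I$. Either $\mu\notin J_0$, in which case the closed-inclusion vanishing applies at the first stage. Or $\mu\in J_1$, in which case $\mu$ lies in the closed complement of the open set $J\subset J_0$, and after applying the first-case identification $j_{J_0}^{I*}\Delta_\mu^I\cong\Delta_\mu^{J_0}$ the open-inclusion vanishing applies at the second stage.

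The statements for $j_J^{I!}\nabla_\mu^I$ are dual: they follow from the above by applying $\DD$ and (\ref{eq:verdier_duality_and_pushforwards}).
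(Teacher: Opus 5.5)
Your proposal is correct and is essentially the argument the paper has in mind: the paper gives no separate proof, citing \cite[Lemma 6.8]{ARV} and calling these statements simple consequences of the recollement of Proposition~\ref{prop:recollement} and the transitivity properties of the pushforward and pullback functors. That is exactly what you use: composition of $!$-pushforwards for the first pair, $j^*j_!\cong\id$ via the factorization $J\subset J_0\subset I$ when $\mu\in J$, the recollement vanishings $i^*j_!=0$ and $j^*i_*=0$ when $\mu\notin J$, and duality via (\ref{eq:verdier_duality_and_pushforwards}) together with $\DD$ fixing objects for the costandard statements.
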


Suppose $I \subseteq \Lambda$ is locally closed (but possible infinite). 
Let $\lambda \in I$. We can pick $J \subseteq \Lambda$ finite and closed such that $\lambda \in J$.
We can define $\Delta_{\lambda}^I$ and $\nabla_{\lambda}^I$ by
\[\Delta_{\lambda}^I \coloneq j_{J !}^I \Delta_{\lambda}^J \qquad\text{and}\qquad \nabla_{\lambda}^I \coloneq j_{J !}^I \nabla_{\lambda}^J.\]
These objects are independent of the choice of $J$.

\begin{lemma}[{\cite[Lemma 6.9]{ARV}}]\label{lem:stds_generate}
    For any locally closed subset $I \subseteq \Lambda$, the category $D_I^m (\scrC)$ is generated, as a triangulated category, by objects of the form $\Delta_{\lambda}^I (m)$ for $\lambda \in I$ and $m \in \Z$.
    Alternatively, $D_I^m (\scrC)$ is also generated by objects of the form $\nabla_{\lambda}^I (m)$ for $\lambda \in I$ and $m \in \Z$.
\end{lemma}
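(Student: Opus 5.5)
The approach is induction on the finite locally closed subset via recollement, following the argument of \cite[Lemma 6.9]{ARV}. First I reduce to the case where $I$ is finite. Any object of $D_I^m (\scrC)$ is a bounded complex with finitely many terms. Each term is a finite direct sum of shifts of reduced objects $X$ with $\pi (X) \in I$. So only finitely many $\lambda \in I$ are involved, and the object lies in the image of $j_{J*}^I$ (more precisely, of $j_{J!}^I$ for a suitable finite locally closed $J \subseteq I$). One can take $J$ to be the intersection of $I$ with the closure of the relevant finite set, and then intersect with the open set coming from $I$'s presentation. This intersection is finite because closures of finite sets are assumed finite in our setting, or else one reduces to finite closed $J_0$ as in the definition of $\Delta^I_\lambda$. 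Since $j_{J!}^I \Delta_\lambda^J \cong \Delta_\lambda^I$ by Lemma \ref{lem:pushforwards_and_pullbacks_of_stds}, and $j_{J!}^I$ is triangulated, generation for $J$ gives generation for $I$.

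For finite $I$, I induct on $\#I$. If $I = \{\lambda\}$, then by Lemma \ref{lem:single_stratum_equivalence} every object of $\scrC_{\{\lambda\}}^{\circ,\oplus}$ is a sum of $\lambda(m)$. Hence $D^m_{\{\lambda\}}(\scrC)$ is generated by $\lambda(m) = \Delta^{\{\lambda\}}_\lambda(m)$. Otherwise, pick $\lambda \in I$ minimal, so that $\{\lambda\}$ is closed in $I$. For any $X \in D^m_I(\scrC)$, the recollement of Proposition \ref{prop:recollement} gives a distinguished triangle
\[ j_{I\setminus\{\lambda\}!}^I j_{I \setminus\{\lambda\}}^{I*} X \to X \to j_{\lambda*}^I j_\lambda^{I*} X \to . \]
The third term is $j_{\lambda*}^I$ applied to an object generated by $\lambda(m)$, and $j_{\lambda*}^I \lambda = j_{\lambda!}^I\lambda = \Delta_\lambda^I$ because $\{\lambda\}$ is closed. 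For the first term, induction shows that $j^{I*}_{I\setminus\{\lambda\}} X$ is generated by the $\Delta^{I\setminus\{\lambda\}}_\mu(m)$, and $j_{I\setminus\{\lambda\}!}^I$ sends these to $\Delta^I_\mu(m)$ by Lemma \ref{lem:pushforwards_and_pullbacks_of_stds}. So $X$ lies in the triangulated subcategory generated by the standards.

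The costandard statement follows either by the dual triangle $j_{\lambda!}j^!_\lambda X \to X \to j_{I\setminus\{\lambda\}*} j^* X \to$, or by applying $\DD$. The functor $\DD$ is a contravariant triangulated involution exchanging $\Delta$ and $\nabla$ by \eqref{eq:verdier_duality_and_pushforwards}, and $\DD\lambda \cong \lambda$ by (CC2) and (CC3).

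The main obstacle I expect is the first reduction: making sure the finite $J$ can be chosen locally closed with the pushforward compatibility, when $I$ is infinite. The base case and the induction step are formal consequences of the recollement.
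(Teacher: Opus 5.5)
Your argument is correct and is essentially the paper's route: the paper gives no separate proof, deferring to \cite[Lemma 6.9]{ARV} and describing the lemma as a consequence of the recollement (Proposition \ref{prop:recollement}), Lemma \ref{lem:single_stratum_equivalence} and the pushforward/pullback compatibilities, which is exactly your induction on $\# I$ together with the reduction to a finite closed $J \subseteq I$. In that reduction you rely on closures of finite sets being finite, which is the same implicit hypothesis the paper uses to define $\Delta^I_\lambda$ for infinite $I$; lifting the differentials of your complex to $\scrC_J^{\circ,\oplus}$ uses the full faithfulness of $j_{J*}^I$ at the additive level, which you use without saying so.
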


\subsection{Grothendieck Groups}

We $\scrA$ is an essentially small triangulated category (resp. additive category), we denote by $[\scrA]_{\Delta}$ (resp. $[\scrA]_{\oplus}$), the Grothendieck group of $\scrA$ (resp. the split Grothendieck group of $\scrA$).

\begin{lemma}[{\cite[Theorem 1.1]{Rose}}]\label{lem:red_to_tri_groth_gps}
    Let $\scrA$ be an essentially small additive category. The natural group homomorphism
    \[[\scrA]_{\oplus} \to [K^b \scrA]_{\Delta}\]
    is an isomorphism.
\end{lemma}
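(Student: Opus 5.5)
The approach is to construct a map in each direction and show the two composites are the identity, so that the natural homomorphism is an isomorphism. Write $\iota : [\scrA]_{\oplus} \to [K^b \scrA]_{\Delta}$ for the map sending $[X]$ to the class of $X$ viewed as a complex concentrated in degree $0$. It is well defined because a split sum $X \oplus Y$ sits in a split distinguished triangle $X \to X \oplus Y \to Y \to$.

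For the inverse, I will use the Euler characteristic. For a bounded complex $C = (C^n, d^n)$ in $K^b \scrA$, set
\[ \chi(C) \coloneq \sum_{n} (-1)^n [C^n] \in [\scrA]_{\oplus}. \]
The key steps are as follows.

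First, check that $\chi$ is invariant under homotopy equivalence. This is the main obstacle, because a homotopy equivalence $C \simeq D$ does not give termwise isomorphisms. The standard fix is to show that every contractible bounded complex has $\chi = 0$. A contractible complex $C$ with homotopy $h$ satisfying $dh + hd = \id$ splits: the maps $d^n h^{n+1}$ are idempotents, and in any additive category the relevant idempotents split on the complex itself, since $C^n \cong \operatorname{im}(d^{n-1}h^n) \oplus \operatorname{im}(h^{n+1}d^n)$ is realized by the explicit maps $d$ and $h$. So $C$ is isomorphic to a direct sum of complexes of the form $Z \xrightarrow{\id} Z$, whose Euler characteristic is $0$. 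Even without idempotent completeness, one can argue by induction on the length of $C$: $d^{\min}$ is a split monomorphism with retraction $h$, and $C$ is isomorphic to $\operatorname{cone}(\id_{C^{\min}})$ plus a shorter contractible complex. This gives $[C^{\min+1}] = [C^{\min}] + [C']$ in the split Grothendieck group, which is exactly what is needed. Then, for a homotopy equivalence $f : C \to D$, the cone of $f$ is contractible and has $\chi(\operatorname{cone} f) = \chi(D) - \chi(C)$, so $\chi(C) = \chi(D)$.

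Second, show that $\chi$ is additive on distinguished triangles. Every distinguished triangle is isomorphic to a standard cone triangle $C \to D \to \operatorname{cone}(f) \to$. Termwise we have $\operatorname{cone}(f)^n = D^n \oplus C^{n+1}$, which gives $\chi(\operatorname{cone} f) = \chi(D) - \chi(C)$ directly. Combined with the first step, $\chi$ descends to a homomorphism $[K^b \scrA]_{\Delta} \to [\scrA]_{\oplus}$.

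Finally, compute the two composites. The composite $\chi \circ \iota$ is clearly the identity. For $\iota \circ \chi$, note that $[K^b \scrA]_{\Delta}$ is generated by the classes of complexes. By induction on length, using the brutal truncation triangle $\sigma_{>n} C \to C \to C^n[-n] \to$, every class $[C]$ equals $\sum_n (-1)^n [C^n]$, and this is $\iota(\chi(C))$. Hence $\iota$ is surjective with left inverse $\chi$, and therefore $\iota$ is an isomorphism.
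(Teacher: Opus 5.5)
The overall architecture is sound: the Euler characteristic serves as the inverse, it is additive on cone triangles, and $\iota\circ\chi=\id$ follows from brutal truncations. But the step you single out as the crux is not established in the generality of the lemma, which concerns an arbitrary additive category.

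Both of your arguments that a contractible complex $C$ has $\chi(C)=0$ need idempotents of $\scrA$ to split:
\begin{itemize}
\item The decomposition $C^n\cong\operatorname{im}(d^{n-1}h^n)\oplus\operatorname{im}(h^{n+1}d^n)$ presupposes that these images exist as objects of $\scrA$.
\item Your inductive fallback presupposes that the split monomorphism $d^{\min}$ has a complement, $C^{\min+1}\cong C^{\min}\oplus Y$ with $Y\in\scrA$. That is exactly the statement that the idempotent $1-d^{\min}h^{\min+1}$ splits, and a retraction does not provide a complement in a non-Karoubian category.
\end{itemize}
For instance, let $\scrA$ be the full subcategory of finite-dimensional vector spaces over a field $F$ consisting of those of dimension $\neq 1$. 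It is additive. An exact complex $F^2\to F^3\to F^3\to F^2$ is contractible in $\scrA$: it is split exact, and the contracting homotopy consists only of maps between its terms. Yet there is no $Y\in\scrA$ with $F^3\cong F^2\oplus Y$. So $C$ is neither a sum of complexes $Z\xrightarrow{\id}Z$ with $Z\in\scrA$ nor of the form $\operatorname{cone}(\id_{F^2})\oplus C'$, although of course $\chi(C)=0$.

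This case matters for the paper. The lemma is applied to additive hulls such as $\scrC^{\circ,\oplus}$, which are generally not idempotent complete. It is used there precisely to obtain categorification results with no constraint on $\k$.

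The missing idea is to compare the even and odd parts of $C$ directly, which requires no splitting.
\begin{itemize}
\item From $dh+hd=1$ one gets $dhd=d$. Set $h'=hdh$.
\item Then $dh'=dh$ and $h'd=hd$, so $dh'+h'd=1$.
\item Also $h'^2=h(dhhd)h=0$, because $hhd=h(1-dh)$ gives $dhhd=dh-dhdh=0$.
\item Hence $\phi=d+h'$, an endomorphism of the finite direct sum $\bigoplus_n C^n$, satisfies $\phi^2=1$. Since $d$ and $h'$ both change degree parity, $\phi$ interchanges $C^{\mathrm{ev}}=\bigoplus_i C^{2i}$ and $C^{\mathrm{odd}}=\bigoplus_i C^{2i+1}$.
\item Therefore $C^{\mathrm{ev}}\cong C^{\mathrm{odd}}$ in $\scrA$, and $\chi(C)=[C^{\mathrm{ev}}]-[C^{\mathrm{odd}}]=0$ in $[\scrA]_{\oplus}$.
\end{itemize}
With this replacing your splitting argument, the rest of your proof goes through as written. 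That includes contractibility of the cone of a homotopy equivalence, additivity of $\chi$ on standard triangles, and the computation of the two composites.
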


\begin{proposition}\label{prop:OACC_and_categorification}
    Assume that $\scrC$ is an OACC with evaluation $\pi$ satisfying Assumption \ref{ass:generation_assumption}. 
    \begin{enumerate}
        \item There exists an isomorphism of $\Z [v^{\pm}]$-modules
        \[\Z [v^{\pm}] [\Lambda] \stackrel{\sim}{\to} [D^m (\scrC)]_{\Delta} \]
        sending $v^n \lambda$ to $\Delta_{\lambda} (n)$ for all $\lambda \in \Lambda$ and $n \in \Z$.
        \item There exists an isomorphism of $\Z [v^{\pm}]$-modules
        \[\Z [v^{\pm}] [\Lambda] \stackrel{\sim}{\to} [\scrC^{\circ, \oplus}]_{\oplus}. \]
    \end{enumerate}
\end{proposition}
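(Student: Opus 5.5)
The plan is to prove (1) first, using the recollement formalism together with a stratification of $D^m(\scrC)$, and then to deduce (2) from (1) via Lemma \ref{lem:red_to_tri_groth_gps}.

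For (1), Assumption \ref{ass:generation_assumption} gives $D^m(\scrC) \simeq D^m_\Lambda(\scrC)$. Since every object is supported on a finite locally closed subset, we have $[D^m(\scrC)]_\Delta = \varinjlim_I [D_I^m(\scrC)]_\Delta$, with the colimit over finite closed $I \subseteq \Lambda$ and transition maps $j_{J*}^I$. The map $v^n\lambda \mapsto [\Delta_\lambda(n)]$ is well defined and $\Z[v^{\pm}]$-linear, because $(1)$ corresponds to multiplication by $v$.

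Surjectivity follows from Lemma \ref{lem:stds_generate}: the classes $[\Delta_\lambda(n)]$ generate the Grothendieck group.

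For injectivity, I will argue by induction on $\#I$ for finite closed $I$ that the classes $\{[\Delta^I_\lambda(n)]\}_{\lambda \in I,\, n\in\Z}$ are a $\Z$-basis of $[D_I^m(\scrC)]_\Delta$. The base case is a singleton, where Lemma \ref{lem:single_stratum_equivalence} gives $D^m_{\{\lambda\}}(\scrC) \simeq K^b\Free^{\fg,\Z}(R)$. Its Grothendieck group is $\Z[v^{\pm}]$, free on the classes $[R(n)]$. To see this, apply Lemma \ref{lem:red_to_tri_groth_gps} and note that $\Free^{\fg,\Z}(R)$ is Krull--Schmidt up to graded rank, since $R^0=\k$ is a domain and the graded rank is additive and well defined (compute it after $\otimes_R \k$ and then tensor with the fraction field). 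For the inductive step, choose $\lambda \in I$ maximal, so that $\{\lambda\}$ is open in $I$. The recollement of Proposition \ref{prop:recollement} gives a short exact sequence
\[0 \to [D^m_{I\setminus\{\lambda\}}] \to [D^m_I] \to [D^m_{\{\lambda\}}] \to 0,\]
with maps induced by $j_*$ and $j^*$. Exactness holds because $j^*_\lambda j_{\lambda!} \cong \id$ splits the right map, and any object $X$ sits in a triangle $j_!j^*X \to X \to j_*j^*_{\text{closed}}X$. Injectivity of the left map follows from the retraction $j^{I*}_{I\setminus\{\lambda\}}$, using $j^* j_* \cong \id$. By Lemma \ref{lem:pushforwards_and_pullbacks_of_stds}, the standards are compatible with both maps, so the basis statement passes to $I$. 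The transition maps in the colimit send basis elements to basis elements, again by Lemma \ref{lem:pushforwards_and_pullbacks_of_stds}, so the colimit is free on all $[\Delta_\lambda(n)]$.

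For (2), Lemma \ref{lem:red_to_tri_groth_gps} gives $[\scrC^{\circ,\oplus}]_\oplus \cong [K^b\scrC^{\circ,\oplus}]_\Delta = [D^m(\scrC)]_\Delta$, and composing with (1) finishes the proof.

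I expect the main obstacle to be the base case: pinning down $[K^b\Free^{\fg,\Z}(R)]_\Delta \cong \Z[v^{\pm}]$ for a general noetherian domain $\k$, where one cannot rely on Krull--Schmidt. The plan is to use the graded rank map, which is additive on split sequences, inverse to $v^n \mapsto R(n)$, and well defined on the split Grothendieck group, and then to apply Rose's lemma.
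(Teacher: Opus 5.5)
Your argument is correct, but its injectivity step differs from the paper's.

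\textbf{The paper's route.} It argues by contradiction. It writes a putative relation as $[X_1]=[X_2]$ for direct sums of standards. It then invokes Thomason's lemma \cite[Lemma 2.6]{Th} to realize this equality by two distinguished triangles $Y\oplus X_i\to Y'\to Y''\to$ with common $Y,Y',Y''$. All these objects are placed in $D^m_I(\scrC)$ for a finite closed $I$. Finally it applies the single exact functor $j^{I*}_{\lambda}$, which kills every standard not at $\lambda$, reducing to $[D^m_{\{\lambda\}}(\scrC)]_\Delta\cong\Z[v^{\pm}]$.

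\textbf{Your route.} You replace Thomason's lemma by the fact that $K_0$ commutes with filtered unions of full triangulated subcategories. You replace the one-shot detection by an induction on $\#I$ that peels off a maximal, hence open, point. From $i^*i_*\cong\id$, $j^*j_!\cong\id$ and the triangles $j_!j^*X\to X\to i_*i^*X\to$, one gets $\id=i_*i^*+j_!j^*$ on $K_0$. Hence
\[
[D^m_I(\scrC)]_\Delta\cong[D^m_{I\setminus\{\lambda\}}(\scrC)]_\Delta\oplus[D^m_{\{\lambda\}}(\scrC)]_\Delta .
\]

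\textbf{Comparison.} Your route gives a more structural output: a canonical decomposition of $[D^m_I(\scrC)]_\Delta$ over the strata, which yields spanning and independence at once. The paper's route is shorter, since detecting one coefficient needs only one pullback.

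Both arguments rest on the same two inputs. The first is that every object and triangle of $D^m(\scrC)$ lies in $D^m_I(\scrC)$ for a \emph{finite} closed $I$; this implicitly requires the lower sets $\{\leq\lambda\}$ to be finite, as they are for the Bruhat order. The second is the identification $[\Free^{\fg,\Z}(R)]_{\oplus}\cong\Z[v^{\pm}]$. The paper takes this for granted; you justify it via graded rank, which uses that $R$ is non-negatively graded so that $-\otimes_R\k$ is available, as in all cases of interest.
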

\begin{proof}
    The proof is essentially the same as \cite[Theorem 6.13]{ARV}.

    By Lemma \ref{lem:red_to_tri_groth_gps}, the second isomorphism will follow from the first isomorphism.
    By Assumption \ref{ass:generation_assumption}, we have that $[D^m (\scrC)]_{\Delta} \cong [D_{\Lambda}^m (\scrC)]_{\Delta}$.
    We can construct a $\Z [v^{\pm}]$-module homomorphism
    \[\Z[v^{\pm}] [\Lambda] \to [D_{\Lambda}^m (\scrC)]_{\Delta}\]
    by $v^n \lambda \mapsto [\Delta_{\lambda} (n)]$ for $\lambda \in \Lambda$ and $n \in \Z$.
    By Lemma \ref{lem:stds_generate}, the elements $\{ [\Delta_{\lambda} (n)]\}_{\lambda \in \Lambda, n \in \Z}$ form a $\Z$-spanning set of $[D_{\Lambda}^m (\scrC)]_{\Delta}$.
    In order to prove this spanning set is linearly independent over $\Z$, we will argue by contradiction. Suppose there is a relation
    \[\sum_{\substack{\lambda \in K_1 \\ m \in \Z}} c_{\lambda, m} [\Delta_{\lambda} (m)] = \sum_{\substack{\mu \in K_2 \\ n \in \Z}} c_{\mu, n} [\Delta_{\mu} (n)]\]
    for some disjoint finite subsets $K_1, K_2 \subset \Lambda$ and integers $c_{\lambda, n} \in \Z_{\geq 0}$ such that (1) $K_1 \neq \emptyset$, (2) $c_{\lambda, m} \neq 0$ for some $\lambda \in K_1$ and $m \in \Z$, and (3) $c_{\lambda, m}$ and $c_{\mu, n}$ are only nonzero for finitely many $m$ and $n$'s. 
    We can then define
    \[X_1 = \bigoplus_{\substack{\lambda \in K_1 \\ m \in \Z}} \left( \Delta_{\lambda} (m)\right)^{\oplus c_{\lambda, m}} \qquad\text{and}\qquad X_2 = \bigoplus_{\substack{\mu \in K_2 \\ n \in \Z}} \left( \Delta_{\mu} (n)\right)^{\oplus c_{\mu, n}}.\]
    By \cite[Lemma 2.6]{Th}, there exists objects $Y, Y', Y'' \in D_{\Lambda}^m (\scrC)$ and distinguished triangles
    \[Y \oplus X_1 \to Y' \to Y'' \to \qquad\text{and}\qquad Y \oplus X_2 \to Y' \to Y'' \to.\]
    We can find a finite closed subset $I \subseteq \Lambda$ such that all the above objects belong to $D_I^m (\scrC)$.
    Choose $\lambda \in K_1$ such that $c_{\lambda, m} \neq 0$ for at least one $m$. By Lemma \ref{lem:pushforwards_and_pullbacks_of_stds}, we obtain distinguished triangles
    \[ j_{\lambda}^{I*} Y \oplus j_{\lambda}^{I*} X_1 \to j_{\lambda}^{I*} Y' \to j_{\lambda}^{I*} Y'' \to, \qquad\qquad j_{\lambda}^{I*} Y  \to j_{\lambda}^{I*} Y' \to j_{\lambda}^{I*} Y'' \to.\]
    Therefore, $[j_{\lambda}^{I*} X_1 ] = 0$ in $[D_{\{\lambda\}}^m (\scrC)]_{\Delta}$.
    However, by Lemma \ref{lem:single_stratum_equivalence} and Lemma \ref{lem:red_to_tri_groth_gps}, there is an isomorphism of $\Z[v^{\pm}]$-modules, 
    \[ \Z[v^{\pm}] \stackrel{\sim}{\to} [D_{\{\lambda\}}^m (\scrC)]_{\Delta}\]
    defined by $v^n \mapsto \lambda (n)$ for $n \in \Z$. Under this equivalence, we see that $c_{\lambda, m} = 0$ for all $m$ which is a contradiction.
\end{proof}

\end{document}